\newcommand{\Bmu}{\mbox{$\raisebox{-0.59ex}
  {$l$}\hspace{-0.18em}\mu\hspace{-0.88em}\raisebox{-0.98ex}{\scalebox{2}
  {$\color{white}.$}}\hspace{-0.416em}\raisebox{+0.88ex}
  {$\color{white}.$}\hspace{0.46em}$}{}}
\numberwithin{equation}{section}
\newfont{\cyr}{wncyr10 scaled 1100}
\newfont{\cyrr}{wncyr9 scaled 1000}
\theoremstyle{plain}
\newtheorem{theorem}{Theorem}[section]
\newtheorem*{ThmA}{Theorem A}
\newtheorem*{ThmB}{Theorem B}
\newtheorem*{ThmC}{Theorem C}
\newtheorem*{ThmD}{Theorem D}
\newtheorem{proposition}[theorem]{Proposition}
\newtheorem{lemma}[theorem]{Lemma}
\newtheorem{corollary}[theorem]{Corollary}
\newtheorem{conjecture}[theorem]{Conjecture}
\theoremstyle{definition}
\newtheorem{definition}[theorem]{Definition}
\newtheorem{assumption}[theorem]{Assumption}
\theoremstyle{remark}
\newtheorem{remark}[theorem]{Remark}
\newtheorem{notation}[theorem]{Notation}
\newtheorem{caveat}[theorem]{Caveat}
\newtheorem{remark/notation}[theorem]{Remark/Notation}
\newtheorem{notation/convention}[theorem]{Notation/Convention}
\newcommand{\Q}{\mathds Q}
\newcommand{\N}{\mathds N}
\newcommand{\Z}{\mathds Z}
\newcommand{\R}{\mathds R}
\newcommand{\C}{\mathds C}
\newcommand{\F}{\mathds F}
\newcommand{\T}{\mathds T}
\newcommand{\PP}{\mathds P}
\newcommand{\defeq}{\vcentcolon=}
\DeclareMathOperator{\Spec}{Spec}
\DeclareMathOperator{\Pic}{Pic}
\DeclareMathOperator{\End}{End}
\DeclareMathOperator{\Aut}{Aut}
\DeclareMathOperator{\Frob}{Frob}
\DeclareMathOperator{\Hom}{Hom}
\DeclareMathOperator{\Gal}{Gal}
\DeclareMathOperator{\GL}{GL}
\DeclareMathOperator{\Sel}{Sel}
\DeclareMathOperator{\BKK}{BK}
\DeclareMathOperator{\Nek}{Nek}
\DeclareMathOperator{\CH}{CH}
\DeclareMathOperator{\AJ}{AJ}
\DeclareMathOperator{\sspp}{sp}
\DeclareMathOperator{\bsspp}{\overline{sp}}
\DeclareMathOperator{\vol}{vol}
\DeclareMathOperator{\Tam}{Tam}
\DeclareMathOperator{\Fil}{Fil}
\DeclareMathOperator{\Symb}{Symb}
\DeclareMathOperator{\Heeg}{Heeg}
\DeclareMathOperator{\Bound}{Bound}
\DeclareMathOperator{\Proj}{Proj}
\DeclareMathOperator{\id}{id}
\DeclareMathOperator{\im}{im}
\DeclareMathOperator{\corank}{corank}
\DeclareMathOperator{\rk}{rk}
\DeclareMathOperator{\length}{length}
\DeclareMathOperator{\Comp}{Comp}
\DeclareMathOperator{\Tors}{Tors}
\DeclareMathOperator{\f}{\boldsymbol f}
\newcommand{\res}{\mathrm{res}}
\newcommand{\cores}{\mathrm{cores}}
\newcommand{\cyc}{{\rom{cyc}}}
\newcommand{\tr}{\mathrm{tr}}
\newcommand{\ord}{\mathrm{ord}}
\newcommand{\cont}{\mathrm{cont}}
\newcommand{\mot}{\mathrm{mot}}
\newcommand{\an}{\mathrm{an}}
\newcommand{\alg}{\mathrm{alg}}
\newcommand{\st}{\mathrm{st}}
\newcommand{\arith}{\mathrm{arith}}
\newcommand{\divv}{\mathrm{div}}
\newcommand{\reg}{\mathtt{reg}}
\newcommand{\rat}{\mathrm{rat}}
\newcommand{\hhom}{\mathrm{hom}}
\newcommand{\Sha}{\mbox{\cyr{X}}}
\definecolor{Indigo}{rgb}{0.2,0.1,0.7}
\definecolor{Violet}{rgb}{0.5,0.1,0.7}
\definecolor{White}{rgb}{1,1,1}
\definecolor{Green}{rgb}{0.1,0.9,0.2}
\newcommand{\longmono}{\mbox{\;$\lhook\joinrel\longrightarrow$\;}}
\newcommand{\longepi}{\mbox{\;$\relbar\joinrel\twoheadrightarrow$\;}}
\newcommand{\smallmat}[4]{\bigl(\begin{smallmatrix}#1&#2\\#3&#4\end{smallmatrix}\bigr)}
\newcommand{\E}{\mathcal E}
\newcommand{\bX}{\boldsymbol X}
\newcommand{\bp}{\boldsymbol p}
\newfont{\gotip}{eufb10 at 12pt}
\newcommand{\cO}{{\mathcal O}}
\newcommand{\m}{\mathfrak{m}}
\newcommand{\p}{\mathfrak{p}}
\newcommand{\fP}{\mathfrak{P}}
\newcommand{\XX}{\mathscr X}
\newcommand{\YY}{\mathscr Y}
\newcommand{\crys}{\mathrm{cris}}
\newcommand{\Sym}{\operatorname{Sym}}
\DeclareMathOperator{\Kol}{Kol}
\DeclareMathOperator{\GS}{GS}
\DeclareMathOperator{\Ta}{Ta}
\DeclareMathOperator{\Reg}{Reg}
\DeclareMathOperator{\Det}{Det}
\DeclareMathOperator{\Corr}{Corr}
\DeclareMathOperator{\Ext}{Ext}
\newcommand{\rom}{\mathrm}
\newcommand{\B}{\mathrm{B}}
\newcommand{\dR}{\mathrm{dR}}
\newcommand{\et}{\text{\'et}}
\newcommand{\MM}{\mathcal{M}}
\newcommand{\TT}{\mathbf{T}}
\newcommand{\KK}{\mathscr{K}}
\newcommand{\RR}{\mathbf{R}}
\newcommand{\uH}{\underline{H}}
\begin{document}

\title[TNC and Kolyvagin's conjecture for modular motives]{The Tamagawa number conjecture and Kolyvagin's conjecture for motives of modular forms}
\author{Matteo Longo and Stefano Vigni}

\thanks{The authors are partially supported by PRIN 2017 ``Geometric, algebraic and analytic methods in arithmetic''.}

\begin{abstract}
Assuming specific instances of two general conjectures in arithmetic algebraic geometry (bijectivity of $p$-adic regulator maps, injectivity of $p$-adic Abel--Jacobi maps), we prove several cases of the $p$-part of the Tamagawa number conjecture ($p$-TNC) of Bloch--Kato and Fontaine--Perrin-Riou for (homological) motives of modular forms of even weight $\geq4$ in analytic rank $1$. More precisely, we prove our results for a large class of newforms $f$ and prime numbers $p$ that are ordinary for $f$ and such that the weight of $f$ is congruent to $2$ modulo $2(p-1)$. Inspired by work of W. Zhang in weight $2$, the key ingredient in our strategy is an analogue for $p$-adic Galois representations attached to higher (even) weight newforms of Kolyvagin's conjecture on the $p$-indivisibility of derived Heegner points on elliptic curves, which we prove via a $p$-adic variation method exploiting the arithmetic of Hida families. Along the way, we also prove (under similar assumptions) the $p$-TNC for modular motives in analytic rank $0$ and the rationality conjecture of Beilinson and Deligne on the existence of zeta elements on the fundamental line in analytic ranks $0$ and $1$. Prior to this work, the only known results on (questions related to) the $p$-TNC for modular motives were in weight $2$ and analytic rank $\leq1$ and in even weight and analytic rank $0$. As further applications of our result on Kolyvagin's conjecture in higher weight, we deduce a structure theorem for Selmer groups, $p$-parity results, converse theorems and higher rank results for modular forms and modular motives. 
\end{abstract}

\address{Dipartimento di Matematica, Universit\`a di Padova, Via Trieste 63, 35121 Padova, Italy}
\email{mlongo@math.unipd.it}
\address{Dipartimento di Matematica, Universit\`a di Genova, Via Dodecaneso 35, 16146 Genova, Italy}
\email{stefano.vigni@unige.it}

\subjclass[2020]{11F11, 14C15}

\keywords{Modular forms, Tamagawa number conjecture, Kolyvagin's conjecture.}

\maketitle

\tableofcontents

\section{Introduction} \label{Intro}

The Tamagawa number conjecture (TNC, for short) of Bloch and Kato (\cite{BK}) predicts formulas for special values of $L$-functions of motives and represents a vast generalization of the analytic class number formula and of the Birch--Swinnerton-Dyer conjecture for abelian varieties. The conjecture of Bloch--Kato, which was originally expressed (by analogy with the theory of algebraic groups) in terms of Haar measures and Tamagawa numbers, was later reformulated and extended by Fontaine and Perrin-Riou (\cite{FPR}; \emph{cf.} also \cite{fontaine-bourbaki}) using the language of determinants of complexes and Galois cohomology; similar ideas were developed also by Kato (\cite{Kato-kodai}, \cite{Kato-Iwasawa}). The Tamagawa number conjecture was then generalized by Burns and Flach to an equivariant setting that covers the case of motives with not necessarily commutative coefficients (\cite{BF}, \cite{BF2}), thus giving birth to the so-called equivariant Tamagawa number conjecture.

The main result of the present paper is a proof of the $p$-part of the Tamagawa number conjecture ($p$-TNC) for the Grothendieck (\emph{i.e.}, homological) motive of a modular form $f$ in analytic rank $1$, under some very specific instances of two general conjectures in arithmetic algebraic geometry (bijectivity of $p$-adic regulator maps, injectivity of $p$-adic Abel--Jacobi maps) and some technical assumptions on $f$ and $p$. In the rest of this introduction we will describe our results; this will also give us an occasion to outline the structure of the article.

\subsection{A reformulation of $p$-TNC for modular motives}

The first result we describe is a reformulation of the $p$-part of the TNC for the motive of a higher, even weight modular form.

\subsubsection{Modular motives and their arithmetic invariants} \label{modular-arithmetic-subsubsec}

Let $N\geq1$ be an integer, let $k\geq4$ be an even integer and let $f\in S_k(\Gamma_0(N))$ be a normalized newform of weight $k$ and level $\Gamma_0(N)$, whose $q$-expansion will be denoted by $f(q)=\sum_{n\geq1}a_n(f)q^n$. Let $F\defeq\Q\bigl(a_n(f)\mid n\geq1\bigr)\subset\C$ be the totally real number field generated over $\Q$ by the Fourier coefficients of $f$ and let $\cO_F$ be its ring of integers. Put $F_\infty\defeq F\otimes_\Q\R$; moreover, for a prime number $p$ set also $F_p\defeq F\otimes_\Q\Q_p$ and $\cO_p\defeq\cO_F\otimes_\Z\Z_p$. We attach to $f$ (and a prime $p$) the following objects.

\begin{itemize}
\item The motive $\MM=(X,\Pi,k/2)$ of $f$. This is a Grothendieck (\emph{i.e.}, homological) motive defined over $\Q$ with coefficients in $F$, equipped with its \'etale realization $V_p$ for each prime number $p$ (which is an $F_p$-module), its Betti realization $V_\B$ and its de Rham realization $V_\dR$ (which are $F$-vector spaces), and comparison isomorphisms between these realisations. Here $X$ is the Kuga--Sato variety of level $N$ and weight $k$, while $\Pi$ is a projector on the ring of correspondences of $X$; see \S \ref{subsecmot} and \S \ref{realizations-subsec} for details. 
\item The (Bloch--Kato) Shafarevich--Tate group $\Sha_p^{\BKK}(\Q,\MM)$ of $\MM$ at $p$, which is defined as the quotient of the Bloch--Kato Selmer group of $V_p/T_p$ by its maximal $p$-divisible subgroup, where $T_p$ is a suitable Galois-stable $\cO_p$-lattice in $V_p$ (see \S \ref{STsubsec}). In our arguments, the interplay between the \emph{finite} group $\Sha_p^{\BKK}(\Q,\MM)$ and the Shafarevich--Tate group $\Sha_p^{\Nek}(\Q,\MM)$ of Nekov\'a\v{r}, which is the quotient of the Bloch--Kato Selmer group of $V_p/T_p$ by the image of a certain $p$-adic Abel--Jacobi map, will be crucial.
\item For every place $v$ of $\Q$ and prime $p$, a Tamagawa $\cO_p$-ideal $\Tam_v^{(p)}(\MM)$, whose definition is recalled in \S\ref{sectam} (in particular, $\Tam_v^{(p)}(\MM)=\cO_p$ for all but finitely many $v$). 
\item The $p$-torsion part $\Tors_p(\MM)$ of $\MM$ (see \S \ref{p-torsion-M-subsubsec}).
\item The period $\Omega_\MM\in(F\otimes_\Q\C)^\times$ coming from the comparison isomorphism between Betti and de Rham realizations (actually, in developing our arguments we work with a period $\Omega_\infty\in F_\infty^\times$, defined in \S\ref{periodmap}, that takes care of an appropriate twist in the Betti realization and is related to $\Omega_\MM$ by the equality $\Omega_\MM=\Omega_\infty/(2\pi i)^{k/2})$.
\item The motivic cohomology group $H^1_\mot(\Q,\MM)$, defined in \S \ref{motiviccohom}. This is a conjecturally finite-dimensional $F$-vector space; assuming this finite-dimensionality (see Conjecture \ref{finitenessconj}), we set 
\[ r_\alg(\MM)\defeq\dim_F\bigl(H^1_\mot(\Q,\MM)\bigr). \]
The $F_\infty$-module $H^1_\mot(\Q,\MM)\otimes_FF_\infty$ is equipped with a conjecturally non-degenerate height pairing in the vein of Gillet--Soul\'e (see \S \ref{GS-subsec}); we write $\Reg_{\mathscr B}(\MM)$ for the determinant of this pairing with respect to an $F$-basis $\mathscr B$ of $H^1_\mot(\Q,\MM)$, so that $\Reg_{\mathscr B}(\MM)\not=0$ if the pairing is non-degenerate. The $F_p$-module $H^1_\mot(\Q,\MM)\otimes_FF_p$ is endowed with a $p$-adic regulator map 
\[ \reg_p:H^1_\mot(\Q,\MM)\otimes_FF_p\longrightarrow H^1_f(\Q,V_p) \]
with values in the Bloch--Kato Selmer group $H^1_f(\Q,V_p)$ of $V_p$; this map is conjectured to be an isomorphism of $F_p$-modules (see Conjecture \ref{regpconj}).  
\item The \emph{completed} $L$-function $\Lambda(\MM,s)$ of $\MM$, which is an entire function on $\C$. We write $r_\an(\MM)$ (respectively, $\Lambda^*(\MM,0)$) for the order of vanishing (respectively, the leading term of the Taylor expansion) of $\Lambda(\MM,s)$ at $s=0$ (see \S \ref{Lfunctsec}). 
\end{itemize}
All these invariants will appear in our reformulation of the $p$-part of the TNC for $\MM$, which uses the language of determinants of (complexes of) projective modules, as proposed by Fontaine--Perrin-Riou in \cite{FPR} (at least when the field of coefficients is $\Q$, the formulation of Fontaine--Perrin-Riou is indeed equivalent to the one originally given by Bloch--Kato: see, \emph{e.g.}, \cite{fontaine-bourbaki} and \cite{Do} for details).

\subsubsection{A reformulation of $p$-TNC for $\MM$: assumptions} \label{assumptions-intro-subsubsec}

As above, $p$ is a prime number. To prove the result below, we work under the following assumptions, for precise statements of which we refer to later sections:

\begin{enumerate}
\item the Gillet--Soul\'e height pairing is non-degenerate (\emph{i.e.}, Conjecture \ref{nondegconj} holds true);
\item the rationality conjecture of Beilinson and Deligne on the existence of zeta elements on the fundamental line (Conjecture \ref{ratconj}) holds true;
\item the $p$-adic regulator $\reg_p$ is an isomorphism (\emph{i.e.}, Conjecture \ref{regpconj} over $\Q$ holds true);
\end{enumerate}
If $\reg_p$ is an isomorphism for some prime $p$, then $H^1_\mot(\Q,\MM)$ has finite dimension over $F$ (\emph{i.e.}, Conjecture \ref{finitenessconj} over $\Q$ holds true). Note that the non-degeneracy condition in (1) is imposed only to force $\Reg_{\mathscr B}(\MM)$ to be non-zero and thus can be removed once we know that, in the arithmetic situations we consider, $\Reg_{\mathscr B}(\MM)\not=0$. Significant advances on (let alone complete proofs of) any of the conjectures above in a general setting would represent major breakthroughs in arithmetic geometry: in this paper we have nothing new to say about them and simply content ourselves with assuming their validity in specific instances whenever needed. However, it is worthwhile to remark that in the low rank contexts we are interested in (\emph{i.e.}, when  $r_\an(\MM)\in\{0,1\}$) we know that $\Reg_{\mathscr B}(\MM)\not=0$ either by definition (if $r_\an(\MM)=0$, in which case $\Reg_{\mathscr B}(\MM)\defeq1$) or as a consequence of S.-W. Zhang's formula of Gross--Zagier type for higher weight modular forms (\cite{Zhang-heights}).
Furthermore, assuming the injectivity of certain $p$-adic Abel--Jacobi maps, we can also prove that if $r_\an(\MM)\in\{0,1\}$, then the rationality conjecture of Beilinson and Deligne is true (Theorems \ref{ratconj0thm} and \ref{ratconj1thm}).

\subsubsection{A reformulation of $p$-TNC for $\MM$: statement}

In the following lines, for a finitely generated $\cO_p$-module $M$ we denote by $\mathcal{I}(M)$ the $\cO_p$-ideal such that
\[ \ord_\p\bigl(\mathcal{I}(M)\bigr)=\mathrm{length}_{\cO_\p}(M) \]
for each prime $\p$ of $F$ above $p$, where $\cO_\p$ is the completion of $\cO_F$ at $\p$ and $\ord_\p$ is the $\p$-adic valuation.

\begin{ThmA}
Under the assumptions in \S \ref{assumptions-intro-subsubsec}, the $p$-part of the TNC for $\MM$ is equivalent to the equality
\[ \biggl(\frac{\Lambda^*(\MM,0)}{\Omega_\MM\cdot\Reg_{\mathscr B}(\MM)}\biggr)=\frac{\mathcal{I}\bigl(\Sha_p^{\BKK}(\Q,\MM)\bigr)\cdot\mathcal{I}_p(\gamma_f)\cdot\prod_{v\in S}\mathrm{Tam}_v^{(p)}(\MM)}{\bigl(\det(\mathtt{A})\bigr)^2\cdot\Tors_p(\MM)} \]
of fractional $\cO_p$-ideals.
\end{ThmA}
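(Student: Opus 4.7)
The plan is to start from the Fontaine--Perrin-Riou formulation of the $p$-part of the TNC, which asserts that a conjectural zeta element generates a specific one-dimensional $\cO_p$-module built as the determinant of the Bloch--Kato Selmer complex $R\Gamma_f(\Q,T_p)$, twisted by the Hodge-filtration determinant and by the archimedean factor. My goal is then to unpack this identity of invertible $\cO_p$-lattices, using the three assumptions of \S\ref{assumptions-intro-subsubsec} to trivialize each abstract determinant factor, and to match the resulting generator with the explicit ratio appearing in Theorem~A.

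First, I would invoke assumption~(2) to pick a rational zeta element on the fundamental line: the Beilinson--Deligne rationality conjecture furnishes a canonical generator whose image under the Betti--de Rham comparison computes the complex period $\Omega_\MM$ and the leading term $\Lambda^*(\MM,0)$ up to the archimedean height regulator $\Reg_{\mathscr B}(\MM)$, which is non-zero thanks to assumption~(1); this step accounts for the left-hand side $\Lambda^*(\MM,0)/\bigl(\Omega_\MM\cdot\Reg_{\mathscr B}(\MM)\bigr)$. Second, assumption~(3), namely the bijectivity of $\reg_p$, allows me to identify, over $F_p$, the image of the $F$-basis $\mathscr B$ with a basis of the Bloch--Kato Selmer group; the change-of-lattice discrepancy between $\reg_p(\mathscr B)$ and an integral generator of $H^1_f(\Q,T_p)/\tor$ is encoded by $\det(\mathtt{A})$, and the square $\bigl(\det(\mathtt{A})\bigr)^2$ then arises from the self-duality of the Selmer complex under Poitou--Tate. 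Third, I would apply Poitou--Tate duality together with the Bloch--Kato local exact sequences to unwind $\Det_{\cO_p}\bigl(R\Gamma_f(\Q,T_p)\bigr)$: the local terms at finite primes $v\in S$ produce the Tamagawa ideals $\Tam_v^{(p)}(\MM)$; the contribution at $p$, computed via the comparison with the filtered Dieudonn\'e module of the ordinary representation $V_p$, gives the factor $\mathcal{I}_p(\gamma_f)$; the finite part of global $H^2$ contributes $\mathcal{I}\bigl(\Sha_p^{\BKK}(\Q,\MM)\bigr)$ in the numerator; and the torsion of global $H^1$ contributes $\Tors_p(\MM)$ in the denominator.

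The main obstacle is the careful determinant bookkeeping: the asserted equality lives in the group of fractional $\cO_p$-ideals, and one has to check that all the non-canonical choices --- the lattice $T_p$, the Hodge-filtration bases, the matrix $\mathtt{A}$, the element $\gamma_f$ --- cancel so that the two sides differ only by a unit. Tracking the direction of duality, the exponent $2$ on $\det(\mathtt{A})$, and the correct normalization at the ordinary prime $p$ (where one must commute the standard triangles defining $H^1_f$ with the Dieudonn\'e comparison) requires patience, but no fundamentally new input beyond the formalism of Fontaine--Perrin-Riou combined with the hypotheses of \S\ref{assumptions-intro-subsubsec}.
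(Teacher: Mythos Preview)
Your overall strategy matches the paper's: compute the determinant of the relevant complex via Poitou--Tate, express it in terms of the chosen bases and the matrix $\mathtt{A}$, and compare with the zeta element furnished by the rationality conjecture. However, a few of your attributions are off and would cause trouble in the bookkeeping.

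First, the paper's TNC (Conjecture~\ref{TNC}) is phrased using the \emph{compactly supported} complex $\RR\Gamma_c(G_S,T_p)$, not $R\Gamma_f(\Q,T_p)$; the two are related by the triangle~\eqref{triangle}, and the paper's Proposition~\ref{RGamma_c} computes $\Det_{\cO_p}\bigl(\RR\Gamma_c(G_S,T_p)\bigr)$ directly via Poitou--Tate. Second, your identification of $\mathcal{I}_p(\gamma_f)$ is wrong: it is \emph{not} the local contribution at $p$ coming from the Dieudonn\'e module. In the paper, $\mathcal{I}_p(\gamma_f)=\mathcal{I}\bigl(T_p^+/\Lambda_{\gamma_f}\bigr)$ measures the discrepancy between the chosen Betti element $\gamma_f\in T_\B^+$ and an actual $\cO_p$-generator of $T_p^+=H^0(\R,T_p)$ under $\Comp_{\B,\et}$; it arises from the archimedean term $\Det_{\cO_p}^{-1}(T_p^+)$ in Proposition~\ref{RGamma_c}. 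The local-at-$p$ contribution is already the Tamagawa ideal $\Tam_p^{(p)}(\MM)$, included in the product over $S$. Third, Theorem~A does not assume $V_p$ is ordinary; that hypothesis enters only later. Finally, $\Tors_p(\MM)$ packages both $\mathcal{I}^{-1}\bigl(H^0(G_S,A_p)^\vee\bigr)$ and $\mathcal{I}^{-1}\bigl(H^1(\Q,T_p)_{\mathrm{tors}}\bigr)$, not just the latter.
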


The reader is referred to \S \ref{reformulation-subsec} for the terms $\mathcal{I}_p(\gamma_f)$ and $\mathtt{A}\in\GL_{r_\alg(\MM)}(F_p)$, the latter being denoted by $\mathtt{A}_{\tilde{\mathscr{B}}}$ later in the text. To sketchily explain their roles, we observe that the definitions of some of the objects appearing in Theorem A, which were introduced in \S \ref{modular-arithmetic-subsubsec}, involve choices (not reflected in the notation above) of suitable bases; these are encoded in the terms $\mathcal{I}_p(\gamma_f)$ and $\mathtt{A}$, and then it can be checked that the validity of the resulting formula is independent of such choices. 

To the best of our knowledge, Theorem A, which corresponds to Theorem \ref{motivesthm}, offers the first reformulation of such an explicit kind of $p$-TNC for $\MM$ in arbitrary analytic rank; a similar interpretation when $r_\an(\MM)=0$ was proposed by Dummigan--Stein--Watkins (\cite{DSW}). 

\subsection{$p$-TNC for $\MM$ in analytic rank $1$}

We are now in a position to describe our main result on the $p$-TNC for the modular motive $\MM$.

\subsubsection{$p$-TNC for $\MM$ in analytic rank $1$: assumptions} \label{assumptions-intro-subsubsec2}

We prove our result under the following assumptions:
\begin{enumerate}
\item an integral variant of $\reg_p$ is an isomorphism (see \S \ref{integral-motivic-subsubsec2} and \S \ref{integral-conjecture-subsubsec});
\item certain $p$-adic Abel--Jacobi maps are injective.
\end{enumerate}
Moreover, we also assume all the conditions described in \S \ref{kolyvagin-assumptions-subsubsec} below (or, rather, minor variations thereof), so as to be able to apply Theorem C. Observe, in particular, that the square-freeness of $N$ forces $f$ not to be CM; in addition, $k$ and $p$ must satisfy the congruence $k\equiv2\pmod{2(p-1)}$. Here we are deliberately vague about hypothesis (2), as the actual injectivity properties of $p$-adic Abel--Jacobi maps that are needed are too technical to state in this introduction: we just remark that, while it seems to be a ``folklore'' conjecture that such maps are always injective, in this article we need to impose this injectivity condition only in very specific cases (\emph{cf.} Remark \ref{injectivity-AJ-rem} for further comments).

\subsubsection{$p$-TNC for $\MM$ in analytic rank $1$: statement}

As before, $r_\alg(\MM)$ (respectively, $r_\an(\MM)$) denotes the algebraic (respectively, analytic) rank of $\MM$.

\begin{ThmB}[$p$-TNC for $\MM$] \label{introTNC}
Suppose that $r_\an(\MM)=1$. Under the assumptions in \S \ref{assumptions-intro-subsubsec2}, the following results hold:
\begin{enumerate}
\item $r_\alg(\MM)=1$; 
\item $\Sha_p^{\BKK}(\Q,\MM)=\Sha_p^{\Nek}(\Q,\MM)$;  
\item the $p$-part of the TNC for $\MM$ is true.
\end{enumerate}
\end{ThmB}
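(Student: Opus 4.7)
The plan is to transpose W.~Zhang's weight-two strategy for the $p$-part of BSD to the higher-weight motivic setting, taking Theorem A as the target formulation and Theorem C (Kolyvagin's conjecture in higher weight) as the main input. Fix an imaginary quadratic field $K$ satisfying a suitable Heegner condition relative to $N$ and the sign of the functional equation, and let $z_f\in H^1_f(\Q,T_p)$ be the class obtained by applying the $p$-adic Abel--Jacobi map to a generalized Heegner cycle on the Kuga--Sato variety (descended from $K$ to $\Q$ by the appropriate trace). Under the Abel--Jacobi injectivity hypothesis of \S\ref{assumptions-intro-subsubsec2}(2) and S.-W.~Zhang's Gross--Zagier formula for higher-weight modular forms, the assumption $r_\an(\MM)=1$ forces $z_f$ to be non-torsion. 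In particular $r_\alg(\MM)\geq 1$, and the Gillet--Soul\'e height $\langle z_f,z_f\rangle_{\GS}$ is non-zero, so that $\Reg_{\mathscr B}(\MM)\neq 0$ once $\mathscr B$ is chosen to contain a motivic lift of $z_f$.

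For (1), I would invoke the higher-weight Kolyvagin machinery supplied by Theorem C: the derived classes built from Galois traces of CM cycles on twists of the Kuga--Sato variety form a non-trivial Euler-like system over $\Q$, and the residual irreducibility and ramification hypotheses of \S\ref{kolyvagin-assumptions-subsubsec} (in particular $k\equiv 2\pmod{2(p-1)}$) ensure that Kolyvagin's local-global argument goes through verbatim. This bounds $\dim_{F_p}H^1_f(\Q,V_p)\leq 1$, and combined with the integral isomorphism property of $\reg_p$ one obtains $r_\alg(\MM)=1$. For (2), the same structure theorem decomposes $H^1_f(\Q,V_p/T_p)$ as a single divisible $F_p/\cO_p$ (generated, up to a finite $p$-power, by the image of $z_f$) together with a finite torsion part equal to $\Sha_p^{\BKK}(\Q,\MM)$. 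Since $z_f$ lies in the image of the Abel--Jacobi map by construction, that image already contains the maximal divisible subgroup, and hence $\Sha_p^{\BKK}(\Q,\MM)=\Sha_p^{\Nek}(\Q,\MM)$.

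For (3), I would feed these outputs into Theorem A and verify the displayed equality of fractional $\cO_p$-ideals one prime $\p\mid p$ at a time. Kolyvagin's structure theorem expresses $\mathcal{I}(\Sha_p^{\BKK}(\Q,\MM))$ as the square of the $p$-divisibility index of $z_f$, corrected by the Tamagawa contributions $\Tam_v^{(p)}(\MM)$ at bad places. The higher-weight Gross--Zagier formula expresses $\Lambda^*(\MM,0)/\Omega_\MM$ in terms of $\langle z_f,z_f\rangle_{\GS}$, which equals $\Reg_{\mathscr B}(\MM)$ multiplied by the square of the same index. Cancelling index-squared on both sides reduces the identity to matching the auxiliary terms $\mathcal{I}_p(\gamma_f)$, $\det(\mathtt A)$ and $\Tors_p(\MM)$ against the change-of-basis relating the integral regulator basis to a basis adapted to $z_f$.

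The principal obstacle is precisely this last piece of integral bookkeeping. In weight two, Zhang handles the analogous factors via N\'eron models and component groups of the elliptic curve; in higher weight no such global integral model is available, and one is forced to work with the integral variant of $\reg_p$ from \S\ref{assumptions-intro-subsubsec2}(1) together with integral refinements of the Abel--Jacobi map on the Kuga--Sato variety. The compatibility of these integral structures with the $p$-adic variation of generalized Heegner cycles in Hida families, which is itself the engine behind the proof of Theorem C, is what ultimately makes the $\p$-local matching possible and closes the proof of the $p$-TNC in analytic rank one.
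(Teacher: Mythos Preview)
Your overall architecture is right, but there is a genuine gap in part (3): both the Gross--Zagier formula and the Kolyvagin structure theorem are statements over the imaginary quadratic field $K$, not over $\Q$, and you have suppressed the descent. Zhang's formula computes $L'(f/K,k/2)$ in terms of $\langle y_{K,\p},y_{K,\p}\rangle_{\GS}$, and Theorem~C feeds into Masoero's structure theorem to give $\length_{\cO_\p}\!\bigl(\Sha_\p^{\BKK}(K,\MM)\bigr)=2M_0$ with $M_0$ the index of $y_{K,\p}$ in $\Lambda_\p(K)$. Neither of these, by itself, controls $\Lambda^*(\MM,0)/\Omega_\MM$ or $\Sha_p^{\BKK}(\Q,\MM)$. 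What the paper does---and what your sketch omits---is to choose $K\in\mathscr I_1(f,p)$ so that $r_\an(f^K)=0$, use the factorizations
\[
L'(f/K,k/2)=L'(f,k/2)\cdot L(f^K,k/2),\qquad \Sha_p^{\BKK}(K,\MM)\simeq\Sha_p^{\BKK}(\Q,\MM)\oplus\Sha_p^{\BKK}\bigl(\Q,\MM(f^K)\bigr),
\]
and then \emph{invoke the rank-$0$ case of $p$-TNC for $f^K$} (proved via Kato's Euler system and the Skinner--Urban main conjecture) to compute $\bigl((k/2-1)!\,L^{\alg}(f^K,k/2)\bigr)=\mathcal I\bigl(\Sha_p^{\BKK}(\Q,\MM(f^K))\bigr)$. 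The Hida congruence-ideal identity $\bigl(\pi^2(f,f)_{\Gamma_0(N)}/\Omega_f\Omega_{f^K}\bigr)=\cO_p$ then lets you cancel the $f^K$-contributions and isolate the $\Q$-equality. Without this rank-$0$ input, ``cancelling index-squared on both sides'' only proves the $p$-TNC for $\MM/K$, not for $\MM/\Q$.

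Two smaller points. For (1) and (2) you do not need Theorem~C at all: once $y_{K,\p}$ is non-torsion, Nekov\'a\v{r}'s classical Kolyvagin argument already gives $\corank H^1_f(K,A_\p)=1$ and finiteness of $\Sha_\p^{\Nek}(K,\MM)$, hence (1) and (2) over $\Q$ by eigenspace decomposition. Theorem~C enters only in (3), precisely to upgrade ``$\length(\Sha_\p^{\BKK}(K,\MM))\le 2M_0$'' to an equality. Finally, the ``principal obstacle'' you identify---the integral bookkeeping for $\mathcal I_p(\gamma_f)$, $\det(\mathtt A)$, $\Tors_p(\MM)$, $\Tam_v^{(p)}(\MM)$---is in fact routine under the standing hypotheses: all of these are shown to equal $\cO_p$ (square-free level, big image, $p$-isolation, $p\nmid N$). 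The hard step is the $K$-to-$\Q$ descent described above.
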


This result corresponds to Theorem \ref{ThmTNC}. Albeit not available (as far as we know) in the literature, and never formulated for the motive $\MM$, parts (1) and (2) were essentially already known, thanks to a combination of work of Nekov\'a\v{r} on the arithmetic of Chow groups of Kuga--Sato varieties (\cite{Nek}) and analytic results by Bump--Friedberg--Hoffstein (\cite{BFH}), Murty--Murty (\cite{MM-derivatives}) and Waldspurger (\cite{Waldspurger}); thus, the novelty of Theorem B lies almost entirely in part (3). We prove the $p$-part of the TNC for $\MM$ by showing that, under the assumptions described above, the equality in Theorem A is satisfied. In doing so, a key role is played by our proof of a higher weight counterpart of a conjecture due to Kolyvagin about the non-triviality of his system of ``derived'' Galois cohomology classes built out of Heegner points on elliptic curves (\cite[Conjecture A]{kolyvagin-selmer}): in \S \ref{kolyvagin-intro-subsec}, we outline our arguments for proving this Kolyvagin-type conjecture. Among the several other ingredients that enter our proof of Theorem B, we would like to highlight fundamental results by Kato (\cite{Kato}) and by Skinner--Urban (\cite{SU}) on the Iwasawa theory of modular forms, which led us to a proof of an analogue of Theorem B (in particular, of the $p$-part of TNC for $\MM$) in analytic rank $0$ (Theorem \ref{skinner-urban-thm}).

\subsection{Kolyvagin's conjecture in higher weight} \label{kolyvagin-intro-subsec}

Inspired by work of W. Zhang in weight $2$ (\cite{zhang-selmer}), the key ingredient in our proof of Theorem B is an analogue for $p$-adic Galois representations attached to higher (even) weight newforms of Kolyvagin's conjecture on the $p$-indivisibility of derived Heegner points on rational elliptic curves, which we prove via a $p$-adic variation method exploiting the arithmetic of Hida families of modular forms.

\subsubsection{Kolyvagin's conjecture: assumptions} \label{kolyvagin-assumptions-subsubsec}

Let $\p$ be a prime of $F$ above the prime number $p$. Write $D_F$ for the discriminant of $F$ and $c_f$ for the index of the order $\Z\bigl[a_n(f)\mid n\geq1\bigr]$ in $\cO_F$. We prove Kolyvagin's conjecture under the following assumptions on the pair $(f,\p)$: 
\begin{enumerate}
\item $N\geq 3$ is square-free;
\item \label{p-f-ass} $p\nmid 6ND_Fc_f$;
\item $k\equiv 2\pmod{2(p-1)}$; 
\item $f$ is $p$-isolated, \emph{i.e.}, there are no non-trivial congruences modulo $p$ between $f$ and normalized eigenforms in $S_k(\Gamma_0(N))$;
\item \label{ordinary-ass} $a_p(f)\in\cO_\p^\times$; 
\item \label{non-cong-ass} $a_p(f)\not\equiv1\pmod{{\p}}$.
\end{enumerate}
We further require the $p$-adic Galois representation attached to $f$ to have big image and impose suitable irreducibility and ramification conditions on residual representations at primes dividing $N$ (\emph{cf.} \S \ref{kolyvagin-ass-subsubsec}). It turns out that condition (4) is satisfied for all but finitely many $p$. With the exception of (3), which we briefly comment upon in \S \ref{kolyvagin-strategy-intro-subsubsec}, these assumptions are analogous to those appearing in weight $2$ in \cite{zhang-selmer}: at least in principle, they could be relaxed (\emph{cf.} Remark \ref{non-ordinary-rem} for the ordinariness condition (5)), but doing so would add extra technicalities to the proofs, while bringing at the same time no significant novelty to the main arguments. Finally, observe that the other assumptions in \S \ref{assumptions-intro-subsubsec} and \S \ref{assumptions-intro-subsubsec2} play no role in the statement and proof of Kolyvagin's conjecture.

\subsubsection{Kolyvagin's conjecture: statement}

Let $p\nmid6ND_Fc_f$ be a prime number such that the $p$-adic representation attached to $f$ has big image and the residual representation at $\p$ is irreducible for each $\p\,|\,p$: this rules out only finitely many $p$. Choose an imaginary quadratic field $K$ where all the prime factors of $Np$ split. Fix a prime $\p$ of $F$ above $p$. Using Heegner cycles on Kuga--Sato varieties (\cite{Nek}) in place of Heegner points on elliptic curves, we mimic a recipe of Kolyvagin and define a set $\kappa_{f,\infty}$ of Kolyvagin-type ``derived'' Galois cohomology classes in $H^1(K,T_\p/p^MT_\p)$ for suitable integers $M$, where $T_\p\defeq T_p\otimes_{\cO_p}\!\cO_\p$. We call $\kappa_{f,\infty}$ the \emph{Kolyvagin set} associated with $f$, $K$, $\p$: see \S \ref{kolyvagin-integers-subsec} and \S \ref{kolyvagin-classes-subsec} for the detailed construction of $\kappa_{f,\infty}$. 

\begin{ThmC}[Kolyvagin's conjecture] \label{main-kolyvagin-thm}
Under the assumptions in \S \ref{kolyvagin-assumptions-subsubsec}, $\kappa_{f,\infty}\not=\{0\}$.
\end{ThmC}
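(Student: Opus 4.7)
The plan is to carry out a $p$-adic deformation argument along a Hida family, reducing Theorem C to its known weight $2$ counterpart due to W. Zhang \cite{zhang-selmer}. Under the hypotheses of \S \ref{kolyvagin-assumptions-subsubsec}, the ordinarity condition together with the $p$-isolatedness of $f$ ensures that $f$ lies on a unique (up to Galois conjugacy) Hida family $\f$ passing through $f$ at weight $k$. The congruence $k \equiv 2 \pmod{2(p-1)}$ guarantees that weight $2$ lies in the same arithmetic component of weight space as $k$, so that $\f$ specialises at weight $2$ to a $p$-ordinary $p$-stabilised newform $f_2$ whose residual $\p$-adic representation agrees, up to the cyclotomic twist accounting for the difference in Hodge--Tate weights, with that of $f$. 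Applying \cite{zhang-selmer} to $f_2$, $K$ and $\p$, one already knows $\kappa_{f_2,\infty}\neq\{0\}$: there exist a Kolyvagin index $n$ and an integer $M$ such that the derived class $\kappa_n(f_2) \in H^1(K,T_\p(f_2)/p^M T_\p(f_2))$ is non-zero.

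Next I would $p$-adically interpolate the construction of \S \ref{kolyvagin-classes-subsec} over $\f$. Building on the ``big Heegner class'' constructions available in the Hida-theoretic literature, I would attach to $\f$ a family of big Heegner classes in the Iwasawa cohomology of the critical self-dual twist of the big Galois representation $T_\f$, interpolating Nekov\'a\v{r}'s Heegner cycles on Kuga--Sato varieties at every arithmetic weight (up to explicit non-zero periods). Applying Kolyvagin's derivative operators to these big classes produces big derived classes $\kappa_n(\f)$ modulo $p^M$, whose specialisation at any arithmetic weight would recover the Kolyvagin class of that specialisation; in particular, the specialisation at weight $2$ returns $\kappa_n(f_2)$ and at weight $k$ returns $\kappa_n(f)$. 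The non-vanishing of $\kappa_n(f_2)$ modulo $\p$ would then force the big class $\kappa_n(\f)$ to be non-zero modulo the maximal ideal of the Hida--Hecke algebra cut out by $f$; specialising at weight $k$ would yield $\kappa_n(f) \not\equiv 0 \pmod{\p}$, and a fortiori $\kappa_n(f) \neq 0$, giving $\kappa_{f,\infty} \neq \{0\}$.

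The main technical obstacle will be the construction of the big derived classes $\kappa_n(\f)$ and the verification of the compatibility of their specialisations with the classes $\kappa_n(f')$ attached to each arithmetic specialisation $f'$ of $\f$ as defined in \S \ref{kolyvagin-classes-subsec}. Two kinds of difficulty must be overcome. First, the Heegner cycles used in higher weight live on different Kuga--Sato varieties than those used in weight $2$, so their $p$-adic \'etale images must be collected in a single Iwasawa-theoretic cohomology group by exploiting weight-independent Galois modules on the tower of modular (or Shimura) curves and Hida's ordinary \'etale sheaves, with a Hochschild--Serre-type argument to extract the right isotypic components. Second, Kolyvagin's derivative operators depend on auxiliary moduli $p^M$ that must be controlled uniformly along the family. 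The hypotheses $p \nmid 6ND_F c_f$, big image of the residual Galois representation, residual irreducibility and the ramification conditions at primes dividing $N$, together with $a_p(f) \not\equiv 1 \pmod{\p}$, are precisely what one needs to invoke Ihara-type freeness results, to ensure that the $f$-isotypic and $f_2$-isotypic components of the relevant cohomology modules are free of rank one over the appropriate Hecke algebra, and thereby to transfer non-triviality faithfully from $f_2$ to $f$. Once this compatibility is rigorously set up, the rest of the argument is essentially formal.
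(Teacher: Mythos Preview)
Your proposal is correct and follows essentially the same deformation-theoretic strategy as the paper: place $f$ in a Hida family, construct big Heegner classes and their Kolyvagin derivatives, specialize at weights $2$ and $k$, and transfer non-vanishing from Zhang's (and Skinner--Zhang's) weight $2$ result. The paper's implementation is somewhat cleaner than what you sketch: rather than working in Iwasawa cohomology or invoking Ihara-type freeness, the comparison is carried out entirely at the residual level $M=1$ via the common residual representation $\bar\T=\bar\T^\dagger$ (the critical character $\Theta$ being trivial modulo $\mathfrak m_{\mathcal R}$), using the specialization theorems of Castella, Ota and Howard to identify, up to $p$-adic units, the images of the big class $d(\f,n)$ at weights $2$ and $k$ with $d_1(g,n)$ and $d_1(f,n)$ respectively; the resulting isomorphism $H^1(K_n,\bar T_f^\dagger)\simeq H^1(K_n,A_g[\bp])$ then transports $d_1(g,n_0)\neq 0$ directly to $d_1(f,n_0)\neq 0$, proving in fact the stronger statement $\kappa^{\st}_{f,\infty}\neq\{0\}$.
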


Theorem C, which corresponds to Theorem \ref{kolyvagin-main-thm}, shows that the higher (even) weight counterpart of Kolyvagin's conjecture for elliptic curves that was first formulated in \cite[Conjecture A]{Masoero} holds true for ordinary primes $p$ satisfying the conditions described above. Actually, in Theorem \ref{kolyvagin-main-thm} we prove a stronger statement that implies Kolyvagin's conjecture. We remark that Kolyvagin's original conjecture was proved (at least in the ordinary case, under some technical assumptions) by W. Zhang for $p\nmid N$ and by Skinner--Zhang for $p\,\|N$. Other than a crucial role in our proof of Theorem B, Theorem C has also consequences on structure theorems for Selmer groups, $p$-parity results, converse theorems and higher rank results for modular forms and modular motives, some of which are outlined in \S \ref{consequences-intro-subsec}.

\subsubsection{Kolyvagin's conjecture: strategy of proof} \label{kolyvagin-strategy-intro-subsubsec}

Our strategy for proving Theorem C is based on a deformation-theoretic approach; in a nutshell, it goes as follows:
\begin{enumerate}
\item we take the $p$-adic Hida family $\f$ passing through our $p$-ordinary form $f$ (or, rather, through the $p$-stabilization of $f$);
\item we consider big Heegner points $\mathfrak X_n\in H^1(K_n,\T^\dagger)$ \emph{\`a la} Howard, where $K_n$ is the ring class field of $K$ of conductor $n$ and $\T^\dagger$ is the critical twist of Hida's ``big Galois representation'' attached to $\f$;
\item we define Kolyvagin-type classes $d(\f,n)\in H^1(K_n,\T^\dagger)$ built out of the $\mathfrak X_n$;
\item finally, we combine results of Zhang (\cite{zhang-selmer}) and Skinner--Zhang (\cite{SZ}) on Kolyvagin's conjecture for (modular) abelian varieties and specialization results of Howard (\cite{Howard-derivatives}), Castella (\cite{CasHeeg}) and Ota (\cite{Ota-JNT}) for big Heegner points to deduce, using the classes $d(\f,n)$, Kolyvagin's conjecture for $f$ from the corresponding statement in weight $2$.
\end{enumerate}
The need to exploit the specialization results alluded to in (4) is one of the reasons why we require the congruence $k\equiv2\pmod{2(p-1)}$ to hold. It would be interesting to give a direct proof of Theorem C by generalizing to higher weight the arguments in \cite{SZ} and \cite{zhang-selmer}: this would presumably allow one to drop the congruence condition above (see, \emph{e.g.}, \cite{wang} for partial results in this direction). Our motivations for this strategy towards Kolyvagin's conjecture in higher weight were at least two: first of all, we found it quite natural to use the results already available in weight $2$ as a ``bridge'' to the general case; on the other hand, in our main result on the $p$-TNC for $\MM$ we would need to impose a congruence assumption on $k$ and $p$ anyway, as such a congruence is required in the work of Skinner--Urban on the Iwasawa main conjecture for modular forms (\cite{SU}), which is of paramount importance for our arguments. 

To further elaborate on this point, for a given $f$ the congruence $k\equiv2\pmod{2(p-1)}$ is clearly satisfied only by finitely many primes $p$. However, by arguing as follows we can offer infinitely many examples of pairs $(f,p)$ fulfilling this condition. Let $f$ be a newform of weight $2$, level $\Gamma_0(N)$ and trivial character and let $p\nmid N$ be an ordinary prime for $f$, then take the Hida family passing through the $p$-stabilization of $f$. There are infinitely many cusp forms of weight $k$ such that $k\equiv 2\pmod{2(p-1)}$, level $\Gamma_0(Np)$ and trivial character appearing as specializations of the Hida family at $k$: these forms are ordinary $p$-stabilizations of newforms of weight $k$, level $\Gamma_0(N)$ and trivial character to which our results apply. 

\subsection{Other consequences of Theorem C} \label{consequences-intro-subsec}

As hinted at above, we deduce from Theorem C, in addition to the $p$-TNC for $\MM$, a structure theorem for Selmer groups (Theorem \ref{main-vanishing-thm}), a $p$-parity result (Theorem \ref{parity}), converse theorems (see, \emph{e.g.}, Theorem \ref{main-converse-Q-thm}) and higher rank results (Theorem \ref{main-higher-rank}). The next theorem is a sample of these results.
 
\begin{ThmD}
Under suitable assumptions on height pairings and $p$-adic Abel--Jacobi maps, the following statements are true: \begin{enumerate}
\item $r_\alg(\MM)\equiv r_\an(\MM)\pmod{2}$; 
\item if $r_\alg(\MM)=1$, then $r_\alg(\MM)=r_\an(\MM)$;
\item if $r_\an(\MM)>1$ is even, then $r_\alg(\MM)\geq2$;
\item if $r_\an(\MM)>1$ is odd, then $r_\alg(\MM)\geq3$.
\end{enumerate}
\end{ThmD}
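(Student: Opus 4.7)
The strategy is to bootstrap from Theorem~C, which produces a non-trivial Kolyvagin system $\kappa_{f,\infty}$ of derived Galois cohomology classes built from Heegner cycles on Kuga--Sato varieties over ring class fields of the auxiliary imaginary quadratic field $K$. Recall that $K$ is chosen with the Heegner property (every prime dividing $Np$ splits in $K$), which forces the sign of the functional equation of $L(\MM/K,s)$ to equal $-1$, so that $r_\an(\MM/K)$ is odd. I plan to combine the non-vanishing of $\kappa_{f,\infty}$ with Kolyvagin's derivative argument---transplanted to higher weight via the cohomological formalism of Nekov\'a\v{r} and the integral machinery of \S\ref{assumptions-intro-subsubsec2}---to control the Bloch--Kato Selmer corank of $V_p$ over $K$ in terms of the \emph{Kolyvagin rank}
\[
M_K(f,\p)\defeq\min\bigl\{\nu(n)\mid\kappa_n\neq 0\bigr\},
\]
where $\nu(n)$ is the number of Kolyvagin primes dividing $n$. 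A suitable choice of $K$, made possible by the non-vanishing results of Bump--Friedberg--Hoffstein, Murty--Murty and Waldspurger that allow us to prescribe $r_\an(\MM^K)\in\{0,1\}$, transfers the parity and rank statements from $K$ down to $\Q$ via the decomposition $V_p(\MM/K)\cong V_p(\MM)\oplus V_p(\MM^K)$ into $\Gal(K/\Q)$-eigenspaces, and Kato's Euler system bound applied to $\MM^K$ pins down the contribution of the twist. This yields part~(1).

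For part~(2), assume $r_\alg(\MM)=1$. If $r_\an(\MM)=0$, Kato's Euler system \cite{Kato}, valid under the assumptions of \S\ref{assumptions-intro-subsubsec2}, forces $r_\alg(\MM)=0$, contradicting $r_\alg(\MM)=1$. If $r_\an(\MM)\geq2$, then by~(1) we may assume $r_\an(\MM)\geq3$ is odd; pick $K$ with $r_\an(\MM^K)=0$, so $r_\an(\MM/K)=r_\an(\MM)\geq3$. Then S.-W.~Zhang's Gross--Zagier formula in higher weight \cite{Zhang-heights} makes the basic Heegner class in the Selmer group torsion, so Theorem~C forces $M_K(f,\p)\geq1$, and Kolyvagin's structure theorem yields $\corank H^1_f(K,V_p/T_p)\geq3$. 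Since Kato gives corank zero for the $\MM^K$-summand, the eigenspace decomposition forces $r_\alg(\MM)\geq3$, a contradiction. Hence $r_\an(\MM)=1$.

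Parts~(3) and~(4) are then formal. For~(3), if $r_\an(\MM)>1$ is even, then by~(1) $r_\alg(\MM)$ is even; if $r_\alg(\MM)=0$, the converse to Kato obtained from the Skinner--Urban main conjecture (Theorem~\ref{skinner-urban-thm}) gives $r_\an(\MM)=0$, a contradiction, so $r_\alg(\MM)\geq2$. For~(4), if $r_\an(\MM)>1$ is odd, then by~(1) $r_\alg(\MM)$ is odd; if $r_\alg(\MM)=1$, then~(2) forces $r_\an(\MM)=1$, a contradiction, so $r_\alg(\MM)\geq3$.

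The main technical obstacle is the higher-weight Kolyvagin structure theorem linking $M_K(f,\p)$ to the Selmer corank and pinning down its parity. In weight~$2$ this is the theorem of Kolyvagin refined by W.~Zhang; in higher weight, Heegner points are replaced by Heegner cycles in $\CH^{k/2}(X_{K_n})_0$ reduced modulo $p^M$, and one must re-run Kolyvagin's descent using Nekov\'a\v{r}'s Selmer complexes. The injectivity of the relevant $p$-adic Abel--Jacobi maps from \S\ref{assumptions-intro-subsubsec2} is precisely what guarantees that the derived classes faithfully detect the Bloch--Kato finite/singular decomposition, while the ordinariness condition $a_p(f)\in\cO_\p^\times$ from \S\ref{kolyvagin-assumptions-subsubsec} ensures that the local conditions at $p$ match those of the weight-$2$ argument. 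Once this structure theorem is in place, the four parts of Theorem~D fall out as sketched above.
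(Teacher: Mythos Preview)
Your proposal is correct and follows the paper's overall architecture: Theorem~C feeds a higher-weight Kolyvagin structure theorem (Theorem~\ref{main-vanishing-thm}, due to Masoero building on Nekov\'a\v{r}), which in turn yields $\p$-parity (Theorem~\ref{parity}) and converse/higher-rank results. Your sketch of part~(1) and your reduction of part~(4) to part~(2) match the paper exactly.

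Two points of divergence are worth noting. For part~(2), you argue by contradiction, essentially running the argument for~(4) inside~(2): assuming $r_\an(\MM)\geq3$, Zhang's formula plus GS non-degeneracy forces $y_{K,\p}$ torsion, hence $\nu_\infty\geq1$, hence the Selmer corank is too large. The paper instead proves the direct converse (Theorem~\ref{main-converse-Q-thm}): from $r_\p(f/K)=1$ one reads off $\nu_\infty=0$ via part~(2) of Theorem~\ref{main-vanishing-thm}, so $y_{K,\p}$ is non-torsion, and then GS non-degeneracy (part~(2) of Proposition~\ref{coro zhang}) gives $r_\an(f/K)=1$. The two arguments are contrapositives of one another.

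For part~(3) your route genuinely differs. You invoke a rank-$0$ converse (``$r_\alg(\MM)=0\Rightarrow r_\an(\MM)=0$'') extracted from the Iwasawa main conjecture; note that Theorem~\ref{skinner-urban-thm} as stated is the \emph{forward} direction, so what you actually need is the IMC equality $(\mathcal{L}_{f,\p})=(\mathcal{F}_{f,\p})$ together with a control theorem identifying $\mathcal{F}_{f,\p}(\mathbf{1})\neq0$ with finiteness of $H^1_f(\Q,A_\p)$. The paper avoids this and argues Heegner-theoretically: it picks $K\in\mathscr I_0(f,p)$ (so $r_\an(f^K)=1$), uses Zhang plus GS to force $\nu_\infty\geq1$ over $K$, hence $\max\{r_\p^+,r_\p^-\}\geq2$, and then applies Nekov\'a\v{r}'s theorem to $f^K$ over a \emph{second} auxiliary field $K'$ (this is where the injectivity hypothesis $(\mathtt{reg})$ on $\AJ_{f^K,K',\p}$ enters) to obtain $r_\p^-(f/K)=r_\p(f^K)=1$, whence $r_\p(f)=r_\p^+(f/K)\geq2$. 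Your approach is shorter and trades the second Abel--Jacobi injectivity assumption for the control-theorem input; the paper's approach keeps everything within the Kolyvagin/Heegner framework.
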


Each result in Theorem D is proved under its own set of specific assumptions on the non-degeneracy of Gillet--Soul\'e height pairings and the injectivity of $p$-adic Abel--Jacobi maps: since their formulations are rather intricate, we do not attempt to describe these hypotheses here and simply refer to Sections \ref{structure-sec}--\ref{higher-sec} for all details.

\subsection{Relation to the existing literature}

Prior to this work, the only known results on (questions related to) the $p$-TNC for modular motives were in weight $2$ and analytic rank at most $1$ and in even weight and analytic rank $0$. More precisely, the $p$-part of the Birch and Swinnerton-Dyer formula for elliptic curves over $\Q$ (\emph{i.e.}, for weight $2$ newforms with rational Fourier coefficients) of analytic rank at most $1$ has been the subject, under different arithmetic assumptions and in various degrees of generality, of intense study in recent years. Here we would like to mention, in (rough) chronological order, the papers by Kobayashi (\cite{Kob}), Skinner--Urban (\cite{SU}), W. Zhang (\cite{zhang-selmer}), Skinner--Zhang (\cite{SZ}), Berti--Bertolini--Venerucci (\cite{BBV}), Jetchev--Skinner--Wan (\cite{JSW}), Castella (\cite{castella-BSD}). None of these articles is written in a motivic language or refers to the Tamagawa number conjecture explicitly, but all of them prove \emph{de facto} results on the $p$-TNC for the motives of elliptic curves over $\Q$, as it is known that the (complete) Birch--Swinnerton-Dyer conjecture for an elliptic curve is equivalent to the Tamagawa number conjecture for the corresponding motive (see, \emph{e.g.}, \cite{Kings} for a detailed explanation of this equivalence, which is highly non-trivial). 

As for motives of higher weight modular forms, work of Dummigan--Stein--Watkins (\cite{DSW}) deals with the analytic rank $0$ case. Results in a rank $0$ setting have been obtained also by Fouquet--Wan (\cite{FW}). More recently, some of the results in weight $2$ in the above-mentioned papers were partially extended to higher weights by Thackeray (\cite{Thackeray-JNT}). In particular, a formula was proved that relates the orders of Shafarevich--Tate groups to logarithms of generalized Heegner cycles \emph{\`a la} Bertolini--Darmon--Prasanna (\cite{BDP}); this formula might be linked to ours and could perhaps be used to deduce, following our approach, the $p$-TNC for $\MM$. Finally, we point out that, along a different line of investigation, Diamond--Flach--Guo studied the Tamagawa number conjecture for adjoint motives of modular forms (\cite{DFG-MRL}, \cite{DFG}).

\subsection{Notation and conventions} \label{notation-subsec}

We denote by $\bar\Q$ the algebraic closure of $\Q$ inside $\C$ and write $\bar\Z$ for the ring of integers in $\bar\Q$ (\emph{i.e.}, the integral closure of $\Z$ in $\bar\Q$). For every prime number $\ell$ we fix an algebraic closure $\bar\Q_\ell$ of $\Q_\ell$. 

For any number field $K$ we denote by $G_K\defeq\Gal(\bar K/K)$ the absolute Galois group of $K$, where $\bar K$ is a fixed algebraic closure of $K$. For any continuous $G_K$-module $M$ we write $H^i(K,M)$ for the $i$-th continuous cohomology group of $G_K$ with coefficients in $M$ in the sense of Tate (\cite[\S 2]{Tate}). Finally, if $K/F$ is an extension of number fields, then  
\[ \res_{K/F}:H^i(F,M)\longrightarrow H^i(K,M),\quad\cores_{K/F}:H^i(K,M)\longrightarrow H^i(F,M) \] 
denote the restriction and corestriction maps in cohomology, respectively.

\subsection*{Acknowledgements} 

Various incarnations of this project have been the subject of several talks over the past five years; this gave us the opportunity to collect precious feedback and suggestions from our audiences. In particular, we would like to thank Kazim B\"uy\"ukboduk, Henri Darmon, Jeffrey Hatley, Ming-Lun Hsieh, Chan-Ho Kim, Antonio Lei, Daniel Macias Castillo, Daniele Masoero and Rodolfo Venerucci for their interest in our work and helpful discussions on some of the topics of this paper. Our special gratitude goes to Congling Qiu for computing for us the self-intersection of a certain Heegner-type cycle that plays a crucial role in our arguments.

\section{The TNC for motives of modular forms} \label{secmot} 

We describe the Tamagawa number conjecture (TNC, for brevity) of Bloch--Kato (\cite[Conjecture 5.15]{BK}) in the case of motives of modular forms. As will be clear, our exposition follows \cite{FPR} and \cite{Kings} quite closely. We remark that the TNC for modular forms in analytic rank $0$ was also considered by Dummigan--Stein--Watkins in \cite{DSW}, while Diamond--Flach--Guo studied in \cite{DFG} the TNC for adjoint motives of modular forms. Results in rank $0$ have also been obtained by Fouquet--Wan in \cite{FW}.

Although we are chiefly interested in the TNC over $\Q$, we introduce some of the relevant notions (\emph{e.g.}, motivic cohomology, $L$-functions, Selmer groups, Shafarevich--Tate groups) over arbitrary number fields; in particular, we will eventually need to work over certain imaginary quadratic fields. For details on the TNC for more general motives, the reader may consult, \emph{e.g.}, \cite{BF}, \cite{Flach-tamagawa}, \cite{Kings}.

\subsection{Review of motives} \label{motives-subsec}

We briefly review the basic definitions in the theory of motives; for details, the reader is referred, \emph{e.g.}, to \cite[Ch. 4]{Andre}, \cite{manin-motives}, \cite{scholl-motives}. 

\subsubsection{Pure motives} \label{pure-subsubsec}

Let $K$ be a field and write $\mathcal V_K$ for the category of smooth projective schemes over $K$. Given an object $X$ of $\mathcal V_K$ and $d\in\N$, denote by $\mathcal Z^d(X)$ the group of cycles of codimension $d$ on $X$, \emph{i.e.}, the free abelian group generated by the irreducible subschemes of $X$ of codimension $d$. Let $\sim$ be an adequate equivalence relation on cycles (see, \emph{e.g.}, \cite[D\'efinition 3.1.1.1]{Andre}) and let $R$ be a commutative ring. Set $\mathcal Z^d_\sim(X{)}_R\defeq\bigl(\mathcal Z^d(X)\otimes_\Z R\bigr)\big/\sim$; it will also be convenient to put $\mathcal Z^d_\sim(X{)}_R=0$ for $d\in\Z_{<0}$. Let $X,Y$ be objects of $\mathcal V_K$ and suppose that $X$ is of pure dimension $d$; the group of correspondences modulo $\sim$ of degree $r$ from $X$ to $Y$ with coefficients in $R$ is 
\[ \Corr^r_\sim(X,Y{)}_R\defeq\mathcal Z^{d+r}_\sim(X\times_K Y{)}_R. \]
If $X$ is not of pure dimension, then $\Corr^r_\sim(X,Y{)}_R$ can be defined in terms of the irreducible components of $X$ as in \cite[\S 1.3]{scholl-motives}. Composition of correspondences furnishes, via intersection theory, a product structure
\[ \Corr^r_\sim(X,Y{)}_R\times\Corr^s_\sim(Y,Z{)}_R\longrightarrow\Corr^{r+s}_\sim(X,Z{)}_R. \]
In particular, $\Corr^0_\sim(X,X{)}_R$ inherits a ring structure for every object $X$ of $\mathcal V_K$. Now let $\mathcal{V}^0_{K,R}$ be the category whose objects are those of $\mathcal{V}_K$ and whose morphisms are given by degree $0$ correspondences modulo $\sim$ with coefficients in $R$. By definition, the category $\mathscr M_\sim(K{)}_R$ of \emph{pure $\sim$-motives (defined) over $K$ with coefficients in $R$} is the pseudo-abelian completion of $\mathcal{V}^0_{K,R}$; see, \emph{e.g.}, \cite[\S 4.1]{Andre}, \cite[Definition 1.1]{Kings} for details. More explicitly, a (pure) $\sim$-motive over $K$ with coefficients in $R$ is a triple
\[ \MM=(X,q,r) \] 
where $X$ is an object of $\mathcal V_K$, $q\in\Corr^0_\sim(X,X{)}_R$ is an idempotent and $r\in\Z$. If $r=0$, then $\MM$ is said to be \emph{effective}. Furthermore, if $\MM_1=(X_1,q_1,r_1)$ and $\MM_2=(X_2,q_2,r_2)$ are motives, then
\[ \Hom_{\mathscr M_\sim(K{)}_R}(\MM_1,\MM_2)=q_2\cdot\Corr^{r-s}_\sim(X,Y{)}_R\cdot q_1\subset\Corr^{r-s}_\sim(X,Y{)}_R. \]
Given motives $\MM_i=(X_i,q_i,r_i)$ for $i=1,2$, the \emph{product} of $\MM_1$ and $\MM_2$ is  
\[ \MM_1\otimes_K\MM_2\defeq(X_1\times_KX_2,q_1\times_Kq_2,r_1+r_2), \]
One can also define the \emph{direct sum} of two motives (see, \emph{e.g.}, \cite[\S 1.14]{scholl-motives}), and it turns out that $\mathscr M_\sim(K{)}_R$ is an additive, $R$-linear, pseudo-abelian category (\cite[Theorem 1.6]{scholl-motives}).

The \emph{dual} of a motive $\MM=(X,q,r)$ is 
\[ \MM^\vee\defeq\bigl(X,q^t,\dim(X)-r\bigr), \]
where $q^t$ is the image of the idempotent $q$ under the map that interchanges the factors of $X\times_K X$ (\emph{cf.} \cite[Definition 1.2]{Kings}). A motive $\MM$ is \emph{self-dual} if $\MM\simeq\MM^\vee(1)$, where $\star(1)$ denotes Tate twist (see, \emph{e.g.}, \cite[\S 4.1.5]{Andre}). Finally, if $X$ is an object of $\mathcal V_K$ and $\Delta_X$ is the diagonal in $X\times_KX$, then the effective $\sim$-motive $(X,\Delta_X,0)$ is the \emph{$\sim$-motive of $X$}. 

\subsubsection{Chow motives} \label{chow-subsubsec}

Taking $\sim$ to be rational equivalence (see, \emph{e.g.}, \cite[\S 3.2.2]{Andre}), we obtain the category $\mathscr M_\rat(K{)}_R$ of \emph{rational} (or \emph{Chow}) \emph{motives over $K$ with coefficients in $R$}. As we will see in \S \ref{subsecmot}, the motive of modular forms of given weight and level is an object of $\mathscr M_\rat(\Q{)}_R$ for $R$ a suitable Hecke algebra. However, in order to define the motive of a single modular form one needs to pass to the category of Grothendieck (\emph{i.e.}, homological) motives, which we introduce below.

\subsubsection{Grothendieck motives} \label{grothendieck-subsubsec}

With notation as in \S \ref{pure-subsubsec}, the category of \emph{homological} \emph{motives over $K$ with coefficients in $R$} is $\mathscr M_\hhom(K{)}_R$, where ``hom'' indicates homological equivalence (see, \emph{e.g.}, \cite[\S 3.3.4]{Andre}). Following Scholl (\cite[\S1.2.3]{Scholl}), we shall call the objects of $\mathscr M_\hhom(K{)}_R$ \emph{Grothendieck motives}. Tensor products, duals and self-duality of Grothendieck motives are defined formally as for Chow motives. 

Rational equivalence is finer than homological equivalence (in fact, rational equivalence is the finest of all adequate equivalence relations; see, \emph{e.g.}, \cite[Lemme 3.2.2.1]{Andre}), so there is a natural functor
\begin{equation} \label{chow-functor-eq}
\mathscr F_K:\mathscr M_\rat(K{)}_R\longrightarrow\mathscr M_\hhom(K{)}_R
\end{equation}
that is the identity on objects and allows one to view a Chow motive as a Grothendieck motive.

\subsection{Motives of modular forms}\label{subsecmot} 

Let $N\geq3$ be an integer and $k\geq4$ be an even integer. We want to introduce, following Scholl (\cite{Scholl}), the (Grothendieck) motive of a fixed modular form of weight $k$ and level $N$, whose realizations will be carefully described in \S \ref{realizations-subsec}. In order to do this, we need to introduce first the (Chow) motive of all modular forms of weight $k$ and level $N$.

\subsubsection{Anaemic Hecke algebras} \label{anaemic-hecke-subsubsec}

Write $\mathfrak{H}_k(\Gamma(N))\subset\End_{\bar\Q}\bigl(S_k(\Gamma(N),\bar\Q)\bigr)$ for the $\Z$-algebra generated by the Hecke operators $T_n$ with $(n,N)=1$; it is often called the ``anaemic'' Hecke algebra of weight $k$ and level $\Gamma(N)$. Analogously, denote by $\mathfrak{H}_k(\Gamma_0(N))$ the anaemic Hecke algebra of weight $k$ and level $\Gamma_0(N)$. There is a natural surjection
\begin{equation} \label{hecke-map-eq}
\mathfrak H_k(\Gamma(N))\longepi\mathfrak{H}_k(\Gamma_0(N))
\end{equation}
of $\Z$-algebras that is induced by the inclusion $S_k(\Gamma_0(N))\subset S_k(\Gamma(N))$. Finally, for any $\Z$-algebra $A$, set $\mathfrak{H}_k(\Gamma_0(N))_A\defeq\mathfrak{H}_k(\Gamma_0(N))\otimes_\Z A$. 

\subsubsection{The motive of modular forms of weight $k$ and level $N$} \label{motive-modular-form-subsubsec}

Denote by $\tilde{\mathcal{E}}_N^{k-2}$ the \emph{Kuga--Sato variety} of level $N$ and weight $k$, \emph{i.e.}, the smooth projective $\Q$-scheme defined as the canonical desingularization of the $(k-2)$-fold product $\mathcal{E}_N^{k-2}$ of the universal generalized elliptic curve $\pi:\mathcal{E}_N\rightarrow X(N)$ over the compact modular curve $X(N)$ of level $\Gamma(N)$ (see, \emph{e.g.}, \cite[\S1.2.0]{Scholl}). An idempotent $\Pi_\epsilon$ in the ring of correspondences of degree $0$ of $\tilde{\mathcal{E}}_N^{k-2}$ can be constructed as follows. Recall that if we set 
\begin{equation} \label{Gamma-k-2-eq}
\Gamma_{k-2}\defeq \bigl((\Z/N\Z)^2\rtimes\{\pm1\}\bigr)^{k-2}\rtimes\mathfrak{S}_{k-2}, 
\end{equation}
where $\mathfrak{S}_{k-2}$ is the symmetric group on $k-2$ letters, then there is a canonical action of $\Gamma_{k-2}$ on $\tilde{\mathcal{E}}_N^{k-2}$ (\cite[\S1.1.0, \S1.1.1]{Scholl}); we define $\Pi_\epsilon$ to be the projector associated with the character $\epsilon:\Gamma_{k-2}\rightarrow\{\pm1\}$ that is the sign character on $\mathfrak{S}_{k-2}$, the trivial character on $(\Z/N\Z)^{2(k-2)}$ and the product character on $\{\pm1\}^{k-2}$; see \cite[\S1.1.2]{Scholl} for details (note, in particular, that in order to define $\Pi_\epsilon$ we need to invert $2N(k-2)!$). Moreover, write $\Pi_B$ for the idempotent attached to the quotient $\Gamma_0(N)/\Gamma(N)$, whose order $t_N$ we need to invert in order to define $\Pi_B$. Let 
\[ \MM_k(N)\defeq\bigl(\tilde{\mathcal{E}}_N^{k-2},\Pi_B\Pi_\epsilon,k/2\bigr) \] 
be the Chow motive of modular forms of weight $k$ and level $N$ (\cite[\S1.2.2]{Scholl}). The motive $\MM _k(N)$ is an object of $\mathscr M_\rat(\Q{)}_{\mathfrak H_k(\Gamma(N))}$, where $\mathfrak H_k(\Gamma(N))$ is the Hecke algebra acting on modular forms of weight $k$ and level $\Gamma(N)$ (\emph{cf.} \cite[Definition 1.4]{Kings} and \cite[Proposition 4.1.3]{Scholl}). Note that $\MM _k(N)$ is self-dual. 

\subsubsection{The motive of a modular form}

Let $f\in S_k(\Gamma_0(N))$ be a normalized newform of weight $k$ and level $\Gamma_0(N)$, whose $q$-expansion will be denoted by $f(q)=\sum_{n\geq 1}a_n(f)q^n$. Let $F\defeq \Q\bigl(a_n(f)\mid n\geq1\bigr)$ be the Hecke field of $f$, which is a totally real number field, and let $\cO_F$ be its ring of integers. There is set-theoretic inclusion $F\subset\R$. Let $\MM (f)$ be the Grothendieck motive over $\Q$ with coefficients in $F$ attached to $f$ by Scholl (\cite[Theorem 1.2.4]{Scholl}). To construct $\MM (f)$, consider the projector $\Psi_f$ associated with $f$ (\cite[\S4.2.0]{Scholl}) and define $\MM (f)$ to be the submotive of $\MM _k(N)$ that is the kernel of $\Psi_f$; in other words, $\MM(f)$ is the object of $\mathscr M_\hhom(\Q{)}_F$ given by 
\[ \MM(f)\defeq \Bigl(\tilde{\mathcal{E}}_N^{k-2},(1-\Psi_f)\circ(\Pi_B\Pi_\epsilon\otimes1),k/2\Bigr). \] 
Like $\MM _k(N)$, the motive $\MM(f)$ is self-dual.  

\begin{remark}
The crux in Scholl's construction of $\MM(f)$ is a decomposition of $\MM_k(N)$ under the action of the Hecke algebra. This decomposition takes place in $\mathscr M_\hhom(\Q{)}_F$; that is, one needs to replace $\MM_k(N)$ with its image via the functor $\mathscr F_K$ introduced in \eqref{chow-functor-eq}. As is pointed out in \cite[Remark 1.2.6]{Scholl}, it is reasonable to expect that the splitting of $\MM_k(N)$ can be performed already in the category $\mathscr M_\rat(\Q{)}_F$ (as in the $k=2$ case), but this seems very hard to achieve without assuming Grothendieck's standard conjectures.
\end{remark}

\begin{remark} \label{motive-f^K-rem}
In this article, we will need to consider also the motive $\MM(f^K)$ of the twist $f^K$ of $f$ by the Dirichlet character associated with a suitable imaginary quadratic field $K$.
\end{remark}

\subsection{Notation} \label{notation-subsec}

To simplify our notation, from now on we set $X\defeq\tilde{\mathcal{E}}_N^{k-2}$, $\bar{X}\defeq  X\times_\Q\bar\Q$, $\Pi\defeq(1-\Psi_f)\circ(\Pi_B\Pi_\epsilon\otimes1)$, $\MM\defeq\MM(f)$. Thus, $\MM$ is the Grothendieck motive 
\[ \MM=(X,\Pi,k/2) \] 
defined over $\Q$ with coefficients in $F$. For any number field $K$, we also write 
\[ \MM/K\defeq (X\times_\Q K,\Pi\times_\Q K,k/2) \] 
for the Grothendiek motive over $K$ with coefficients in $F$ obtained from $\MM$ by extension of scalars, \emph{i.e.}, base change (see, \emph{e.g.}, \cite[\S 4.2.3]{Andre}, \cite[Remark 1.7]{Kings}).

\subsection{Realizations of $\MM$} \label{realizations-subsec} 

Here we consider only the $(k-1)$-st realizations of the motive $\MM$, which correspond to the choice $i=2r-1$ in \cite[Ch. 1, \S 2]{Kings}. We also briefly describe the realizations of $\MM/K$ obtained from those of $\MM$ by base change. 

\subsubsection{$f$-isotypic submodules} \label{isotypic-subsubsec}

With notation as in \S \ref{anaemic-hecke-subsubsec}, let $\theta_f:\mathfrak{H}_k(\Gamma_0(N))\rightarrow\cO_F$ be the ring homomorphism associated with $f$; explicitly, $\theta_f(T_n)\defeq a_n(f)$. Composing with \eqref{hecke-map-eq}, $\theta_f$ yields a ring homomorphism (denoted by the same symbol) $\theta_f:\mathfrak H_k(\Gamma(N))\rightarrow\cO_F$.

For every $\mathfrak H_k(\Gamma(N))$-module $M$, let us set 
\begin{equation} \label{isotypic-eq}
M[\theta_f]\defeq \bigl\{m\in M\mid\text{$T\cdot m=0$ for all $T\in\ker(\theta_f)$}\bigr\}. 
\end{equation}
We call $M[\theta_f]$ the \emph{$f$-isotypic submodule} of $M$.

\subsubsection{Betti realization} \label{Betti-subsubsec}

Fix a subring $R\subset\C$ and define $R(k/2)\defeq (2\pi i)^{k/2}\cdot R$, which we view as an $R$-submodule of $\C$. In the following, we view $R(k/2)$ as a locally constant sheaf on $X(\C)$. 
Define 
\[ T_\B(R)\defeq\Pi\cdot H^{k-1}\bigl(X(\C),R(k/2)\bigr)=\Bigl(\Pi_B\Pi_\epsilon\cdot H^{k-1}\bigl(X(\C),R(k/2)\bigr)\!\Bigr)[\theta_f], \]
viewed as an $R$-submodule of $T_\B(\C)$. In particular, with $t_N$ as in \S \ref{motive-modular-form-subsubsec}, if $2Nt_N(k-2)!$ is not invertible in $R$, then $T_\B(R)$ may not be contained in $H^{k-1}\bigl(X(\C),R(k/2)\bigr)$. 

Recall that the field $F$ arises naturally as a subfield of $\C$.

\begin{definition}
The \emph{Betti} (or \emph{singular}) \emph{realization} of $\MM$ is the $F$-vector space 
$V_\B\defeq T_\B(F)$. 
\end{definition}

The $F$-vector space $V_\B$ is also the Betti realization of the motive $\MM/K$ for any number field $K$. Set 
\begin{equation} \label{T_B-eq}
T_\B\defeq T_\B(\cO_F). 
\end{equation}
Complex conjugation $\tau$ induces involutions $\iota_\infty:X(\C)\rightarrow X(\C)$ and $F(k/2)\rightarrow F(k/2)$, the latter being given by multiplication by $(-1)^{k/2}$. Denote by
$\phi_\infty:V_\B\rightarrow V_\B$ the composition of these two involutions and write
\begin{equation} \label{V-plus-eq}
V_\B^+\defeq  V_\B^{\phi_\infty=1} 
\end{equation}
for the $F$-subspace of $V_\B$ on which $\phi_\infty$ acts trivially. 

Now we describe the Betti realization more explicitely. Let $Y(N)$ be the open modular curve of level $\Gamma(N)$. We still denote by $\pi:\mathcal{E}_N\rightarrow Y(N)$ the universal elliptic curve over $Y(N)$ and, with $R$ as before, set 
\[ \mathcal{F}_\B(R)\defeq\RR^1\pi_*(R(k/2)),\quad\mathcal{F}_\B^{k-2}(R)\defeq\Sym^{k-2}\mathcal{F}_\B(R), \]
where $R$ stands for the corresponding constant sheaf on $\mathcal{E}_N$, which we regard as a sheaf on $X(N)$ by pushforward via the canonical embedding $Y(N)\hookrightarrow X(N)$. Write 
\[ \mathds{T}_\B(R)\defeq H^1_\mathrm{par}\Bigl(Y(N),\mathcal{F}_\B^{k-2}(R)\Bigr) \] 
for the parabolic cohomology group that is the image of the natural map
\[ H^1_\mathrm{cpt}\Bigl(Y(N),\mathcal{F}_\B^{k-2}(R)\Bigr)\longrightarrow H^1\Bigl(Y(N),\mathcal{F}_\B^{k-2}(R)\Bigr), \]
where $H^1_\mathrm{cpt}(\heartsuit,\diamondsuit)$ denotes compactly supported cohomology. Set 
\[ \mathds{V}_\B\defeq\T_\B (F),\quad\T_\B\defeq\T_\B (\cO_F). \]
By \cite[Theorem 1.2.1]{Scholl}, $V_\B$ is related to parabolic cohomology by a canonical isomorphism
\begin{equation}\label{Betti-iso-V}
V_\B\simeq\Pi\cdot\mathds{V}_\B=(\Pi_B\Pi_\epsilon\cdot\mathds{V}_\B)[\theta_f]
\end{equation} 
(note that $\Pi_B$ and $\Pi_\epsilon$ act canonically on $\mathds{V}_\B$). Since $f$ is a newform, Eichler--Shimura theory guarantees that $V_\B$ (respectively, $V_\B^+$) has dimension $2$ (respectively, $1$) over $F$. Isomorphism \eqref{Betti-iso-V} induces an isomorphism 
\[ T_\B\simeq\Pi\cdot\T_\B=(\Pi_B\Pi_\epsilon\cdot\T_\B)[\theta_f], \]
where the right-hand side is, in general, not contained in $\T_\B$, but only in $\mathds{V}_\B$. 

\subsubsection{\'Etale realization} \label{etale-subsubsec}

Let $p$ be a prime number and $R$ a $\Z_p$-algebra. For every $r\in\Z$, we denote by $R(r)$ the $r$-fold Tate twist of $R$ and define the $R$-module 
\begin{equation} \label{V_p-eq}
T_p(R)\defeq \Pi \cdot H^{k-1}_\et\bigl(\bar{X},R(k/2)\bigr)=\Bigl(\Pi_B\Pi_\epsilon\cdot H^{k-1}_\et\bigl(\bar{X},R(k/2)\bigr)\!\Bigr)[\theta_f], 
\end{equation}
which is equipped with a natural action of $G_\Q$. As before, we view $T_p(R)$ as an $R$-submodule of $ T_p(\bar\Q_p)$: if $2Nt_N(k-2)!$ is not invertible in $R$, then $T_p(R)$ is not necessarily contained in $H^{k-1}_\et\bigl(\bar X,R(k/2)\bigr)$.

From here on, set $F_p\defeq F\otimes_\Q\Q_p$.

\begin{definition} 
The \emph{\'etale realization} of $\MM$ at $p$ is the $F_p$-module $V_p\defeq T_p(F_p)$. 
\end{definition}

By \cite[Theorem 1.2.4]{Scholl}, $V_p$ is equivalent to the self-dual twist $V_{f,p}^\dagger\defeq V_{f,p}(k/2)$ of the representation $V_{f,p}$ attached by Deligne to $f$ and $p$ (\cite{Del-Bourbaki}); in particular, $V_p$ is free of rank $2$ over $F_p$. There is an identification $F_p=\prod_{\p\mid p}F_\p$, where the product ranges over all primes $\p$ of $F$ above $p$ and $F_\p$ is the completion of $F$ at $\p$. As a consequence, there is a splitting $V_p=\prod_{\p\mid p}V_\p$, where each $V_\p$ is a $2$-dimensional $F_\p$-vector space on which $G_\Q$ acts. As above, $V_\p$ is equivalent to the self-dual twist $V_{f,\p}^\dagger=V_{f,\p}(k/2)$ of the $\p$-adic representation $V_{f,\p}$ attached to $f$.
Similarly, set $\cO_p\defeq\cO_F\otimes_\Z\Z_p$ (notice that $F_p$ is the total ring of fractions of $\cO_p$) and define 
\begin{equation} \label{T_p-eq}
T_p\defeq T_p(\cO_p).
\end{equation} Denoting by $\cO_\p$ the completion of $\cO_F$ at a prime $\p$, there is a splitting $T_p=\prod_{\p\mid p}T_\p$, where each $T_\p=T_p(\cO_\p)$ is a $G_\Q$-stable $\cO_\p$-lattice inside $V_\p$, \emph{i.e.}, a free $\cO_\p$-submodule of rank $2$ of $V_\p$ on which $G_\Q$ acts. It follows that $T_p$ is a $G_\Q$-stable self-dual $\cO_p$-lattice inside $V_p$. Again, $T_p\simeq T_{f,p}^\dagger\defeq T_{f,p}(k/2)$ for the distinguished $G_\Q$-stable $\cO_p$-lattice $T_{f,p}\subset V_{f,p}$ that is obtained from \eqref{V_p-eq} by omitting the $k/2$-fold Tate twist; an analogous isomorphism holds for each $T_\p$. Finally, set 
\begin{equation} \label{A-eq}
A_p\defeq V_p/T_p 
\end{equation}
and notice that $A_p$ is (canonically isomorphic to) the Pontryagin dual of $T_p$ as a $\Z_p$-module.

One can also describe the \'etale realizations $V_p$ and $T_p$ as follows. Define the $p$-adic sheaves 
\[ \mathcal{F}_\et(R)\defeq\pi_*\bigl(R(k/2)\bigr),\quad \mathcal{F}_\et^{k-1}(R)\defeq\mathrm{Sym}^{k-2}\bigl(\mathcal{F}_\et(R)\bigr). \] 
on $Y(N)$; as before, we still denote by $\mathcal{F}^{k-2}_\et(R)$ the sheaf on $X(N)$ obtained by pushforward via $Y(N)\hookrightarrow X(N)$. Set 
\[ \T_\et(R)\defeq H^1_\et\bigl(X(N),\mathcal{F}^{k-2}_\et(R)\bigr),\quad\T_\et\defeq\T_\et(\cO_p),\quad\mathds{V}_\et\defeq\mathds{T}_\et(F_p). \] 
The projector $\Pi$ acts on $\mathds{V}_\et$ and there is a canonical isomorphism 
\begin{equation} \label{etale-iso-V}
V_p\simeq\Pi\cdot\mathds{V}_\et=(\Pi_B\Pi_\epsilon\cdot\mathds{V}_\et)[\theta_f]
\end{equation}
(see, again, \cite[Theorem 1.2.1]{Scholl}). Isomorphism \eqref{etale-iso-V} induces an isomorphism 
\[ T_p\simeq\Pi\cdot\T_\et=(\Pi_B\Pi_\epsilon\cdot\T_\et)[\theta_f]; \]
here, as above, the right-hand side is, in general, not contained in $\T_\et$, but only in $\mathds{V}_\et$. 

\subsubsection{de Rham realization} \label{de-Rham-subsec}

Let $R$ be a $\Z[1/N]$-algebra. We view $X(N)$ and $X$ as schemes over $\Z[1/N]$. Define the $R$-module 
\[ T_\dR(R)\defeq\Pi\cdot\Bigl(H^{k-1}_\dR(X)\otimes_{\Z[1/N]}R\Bigr)=\Bigl(\Pi_B\Pi_\epsilon\cdot \bigl(H^{k-1}_\dR(X)\otimes_{\Z[1/N]}R\bigr)\!\Bigr)[\theta_f]. \]
Here $H^{i}_\dR(X)\defeq\mathds H^i\bigl(\Omega_{X}^\bullet\bigr)$ is the $i$-th hypercohomology group of the de Rham complex $\Omega_{X}^\bullet$ of $X$; thus, $T_\dR(R)$ is a finitely generated $R$-module. Recall that $H^{i}_\mathrm{dR}(X)$ is equipped with the filtration 
\[ \Fil^n\bigl(H^{i}_\mathrm{dR}(X)\bigr)\defeq\im\Bigl(\mathds H^{i}\bigl(\Omega^{\geq n}_{X}\bigr)\longrightarrow\mathds H^{i}\bigl(\Omega^\bullet_{X}\bigr)=H^{i}_\mathrm{dR}(X )\Bigr). \] 
Define a filtration $\Fil^n(V_{\dR})$ on $V_{\dR}$ by setting 
\[ \Fil^n(T_{\dR}(R))\defeq\Bigl(\Pi\cdot\Fil^{n+k/2}\bigl(H^{k-1}_\mathrm{dR}(X )\bigr)\otimes_{\Z[1/N]} R\Bigr)[\theta_f]. \]
Finally, set $V_\dR\defeq T_\dR(F)$.

\begin{definition}
The \emph{de Rham realization} of $\MM$ is the filtered $F$-vector space given by the pair $\Bigl(V_\dR,\bigl(\Fil^n(V_{\dR})\bigr)_{n\in\N}\Bigr)$.
\end{definition}

The \emph{tangent space} of $\MM$ is the $F$-vector space   
\begin{equation} \label{tangent-eq}
t(\MM)\defeq  V_{\dR}\big/\Fil^0(V_{\dR}).
\end{equation}  
Following \cite[\S1.2.4]{DFG}, we offer an alternative description of de Rham cohomology, as we did for the \'etale and Betti realizations. We write $\bar{\pi}:\bar{\mathcal{E}}_N\rightarrow X(N)$ for the generalized universal elliptic curve over $X(N)$ and denote by $\omega^1_\star$ the sheaf of relative logarithmic differentials of $\star$ (see, \emph{e.g.}, \cite[\S 1.7]{Kato-logarithmic}). Put $T\defeq\Spec(R)$. By \cite[Theorem 3.5 and Proposition 3.12]{Kato-logarithmic}, there is a short exact sequence of coherent locally free $\cO_{\bar{\mathcal{E}}_N}$-modules 
\[ 0\longrightarrow \bar{\pi}^*\bigl(\omega^1_{X(N)/T}\bigr)\longrightarrow\omega^1_{\bar{\mathcal{E}}_N/T}\longrightarrow\omega^1_{\bar{\mathcal{E}}_N/X(N)}\longrightarrow0. \]
Let us consider the locally free sheaves 
\[ \mathcal{F}_\dR(R)\defeq\mathbf{R}^1\bar{\pi}_*\bigl(\omega^\bullet_{\bar{\mathcal{E}}_N/X(N)}\bigr),\quad\mathcal{F}^{k-2}_\dR(R)\defeq\mathrm{Sym}^{k-2}\bigl(\mathcal{F}_\dR(R)\bigr) \] 
of $\cO_{X(N)}$-modules on $X(N)$, where $\omega^\bullet_{\bar{\mathcal{E}}_N/X(N)}$ is the complex $d:\cO_{\bar{\mathcal{E}}_N}\rightarrow\omega^1_{\bar{\mathcal{E}}_N/X(N)}$. Set $\omega(R)\defeq\bar{\pi}_*\bigl(\omega^1_{\bar{\mathcal{E}}_N/X(N)}\bigr)$; the sheaf $\mathcal F_\dR(R)$ has a decreasing filtration with $\Fil^2\mathcal F_\dR(R)=0$, $\Fil^1\mathcal F_\dR(R)=\omega$, $\Fil^0\mathcal F_\dR(R)=\mathcal F_\dR(R)$. In turn, this filtration produces a filtration on $\mathcal F_\dR^{k-2}(R)$. 
Now define
\[ \T_\dR(R)\defeq\mathds H^1\bigl(X(N),\omega^\bullet(\mathcal F_\dR^{k-2}(R)\bigr),\quad\T_\dR\defeq\T_\dR\bigl(\cO_F[1/N]\bigr),\quad\mathds V_\dR\defeq\T_\dR(F), \]
where $\mathds H^i(\heartsuit,\diamondsuit)$ denotes hypercohomology and $\omega^\bullet\bigl(\mathcal F_\dR^{k-2}(R)\bigr)$ is the complex associated with $\mathcal F_\dR^{k-2}(R)$ equipped with the logarithmic connection that is induced by the (logarithmic) Gauss--Manin connection $\nabla:\mathcal F_\dR(R)\rightarrow\mathcal F_\dR(R)\otimes_{\cO_{X(N)}}\!\omega^1_{X(N)/T}$. The filtration on $\mathcal F_\dR^{k-2}(R)$ yields a filtration on $\T_\dR(R)$ whose graded pieces can be described (up to isomorphism) as
\[ \mathrm{gr}^{i}\bigl(\T_\dR(R)\bigr)\simeq\begin{cases} H^0\bigl(X(N),\omega^{k-2}(R)
\otimes\omega_{X(N)/T}^1\bigr) & \text{if $i=k-1$,}\\[2mm]
H^1\bigl(X(N),\omega^{2-k}(R)\bigr) & \text{if $i=k/2$,}\\[2mm]
0 & \text{otherwise}.\end{cases} \] 
Let $\mathcal H\defeq\bigl\{z\in\C\mid\Im(z)>0\bigr\}$ be the complex upper half-plane. Pulling back to $\coprod_{t\in(\Z/N\Z)^\times}\mathcal{H}$ and trivializing by the differential form $(2\pi i)^{k-1}dz\wedge dz_1\wedge\dots\wedge dz_{k-2}$, where $z$ is the coordinate on $\mathcal{H}$ and $z_i$ is the coordinate on the $i$-th copy of the universal elliptic curve, we obtain an isomorphism 
\[ \mathrm{Fil}^{k-1}\bigl(\T_\dR(\C)\bigr)\overset\simeq\longrightarrow\bigoplus_{t\in(\Z/N\Z)^\times}M_k(\Gamma(N)), \]
where $M_k(\Gamma(N))$ is the $\C$-vector space of modular forms of weight $k$ and level $\Gamma(N)$. By the $q$-expansion principle, the map $\bigoplus_{t\in(\Z/N\Z)^\times}M_k(\Gamma(N))\rightarrow \C[\![q^{1/N}]\!]$ sending $g(e^{2\pi i \tau/N})$ to $g(q^{1/N})$ identifies $\mathrm{Fil}^{k-1}(\T_\dR)$ with the subset of $\bigoplus_{t\in(\Z/N\Z)^\times}M_k(\Gamma(N))$ consisting of those modular forms whose $q$-expansion at $\infty$ has coefficients in $\cO_F[1/N,\Bmu_N]$, where $\Bmu_N\subset\bar\Q^\times$ is the group of $N$-th roots of unity.

By \cite[Theorem 1.2.1]{Scholl} (see also \cite[\S2.1]{BDP} and \cite{Scholl-deRham}), there is an isomorphism 
\begin{equation} \label{deRham-iso-V}
V_\dR\simeq\Pi\cdot\mathds{V}_\dR=(\Pi_B\Pi_\epsilon\cdot\mathds V_\dR)[\theta_f],\end{equation} 
with filtration on the left-hand side obtained by shifting by $k/2$ the obvious filtration on the right-hand side. Then $\Fil^{k/2-1}(V_\dR)$ is spanned by (the image of) the differential form on $X$ given by 
\begin{equation} \label{omega_f-eq}
\omega_f\defeq (2\pi i)^{k-1}\cdot f(z)dz\wedge dz_1\wedge\dots\wedge dz_{k-2}. 
\end{equation}
See, \emph{e.g.}, \cite[\S1.2.1]{DFG} and \cite[Appendix 1]{Katz} for details; \emph{cf.} also the explanation offered in \cite[Appendix]{BrHa} for the factor $(2\pi i)^{k-1}$ appearing in the right-hand term of \eqref{omega_f-eq}. Isomorphism \eqref{deRham-iso-V} induces an isomorphism 
\[ T_\dR\simeq\Pi\cdot\T_\dR=(\Pi_B\Pi_\epsilon\cdot\T_\dR)[\theta_f], \]
where, as before, the right-hand side is, in general, not contained in $\T_\dR$, but only in $\mathds{V}_\dR$.  

\subsection{The period map} \label{periodmap}

From the comparison isomorphism between singular and de Rham cohomology we obtain a comparison isomorphism of $(F\otimes_\Q\C)$-modules 
\[ \Comp_{\B,\dR}: V_\B\otimes_\Q\C\overset\simeq\longrightarrow 
V_{\dR}\otimes_\Q\C. \] 
If $\phi_\infty$ is the involution from \S \ref{Betti-subsubsec}, then this isomorphism is equivariant with respect to the actions of $\phi_\infty\otimes\tau$ on the left and of 
$1\otimes\tau$ on the right, so it induces an isomorphism of $\R$-vector spaces
\begin{equation} \label{comp-R-eq}
\Comp_{\B,\dR}:( V_\B\otimes_\Q\C)^{\phi_\infty\otimes\tau=1}\overset\simeq\longrightarrow V_\dR\otimes_\Q\R. 
\end{equation}

\subsubsection{Period map}

Let $F_\infty\defeq  F\otimes_\Q\R$ and set 
\[ V_{\B,\infty}^+\defeq  V_\B^+\otimes_\Q\R=V_\B^+\otimes_F F_\infty,\quad V_{\dR,\infty}\defeq  V_\dR\otimes_\Q\R=V_\dR\otimes_F F_\infty. \]
Moreover, write ${t(\MM)}_\infty$ for $t(\MM)\otimes_\Q\R=t(\MM)\otimes_F F_\infty$. The \emph{period map} is the isomorphism 
\begin{equation} \label{period-eq}
\alpha_\MM:V^+_{\B,\infty}\overset\simeq\longrightarrow{t(\MM)}_\infty 
\end{equation}
of free $F_\infty$-modules of rank $1$ that is obtained by composing the natural inclusion $V^+_{\B,\infty}\hookrightarrow(V_\B\otimes_\Q\C)^{\phi_\infty\otimes\tau=1}$ with isomorphism $\Comp_{\B,\dR}$ from \eqref{comp-R-eq} and the map $V_{\dR,\infty}\rightarrow{t(\MM)}_\infty$ defined by tensoring with $\R$ over $\Q$ the canonical projection $V_\dR\twoheadrightarrow t(\MM)$. 

\begin{remark}
In the language of motives, isomorphism \eqref{period-eq} says that $\MM$ is \emph{critical} (see, \emph{e.g.}, \cite[\S 2.5]{Venjakob}).
\end{remark}

\subsubsection{Embeddings and periods} \label{emb-subsubsec}

Let $\Sigma$ be the set of embeddings of $F$ into $\R$ (equivalently, since $F$ is totally real, into $\C$). For each $\sigma\in\Sigma$, let $f^\sigma(q)\defeq \sum_{n\geq 1}a_n(f^\sigma)q^n\in S_k(\Gamma_0(N))$ be the newform of weight $k$ and level $N$ such that $a_n(f^\sigma)=\sigma\bigl(a_n(f)\bigr)$ for all $n\geq1$ (the form $f^\sigma$ is the \emph{$\sigma$-conjugate} of $f$). Clearly, the Hecke field of $f^\sigma$ is $\sigma(F)$. We write $\iota_F:F\hookrightarrow\R$ for the distinguished embedding corresponding to the inclusion $F\subset\R$.

\begin{remark} \label{deligne-rem2}
There is a canonical isomorphism of $\R$-algebras $F_\infty\simeq\R^\Sigma=\prod_{\sigma\in\Sigma}(F\otimes_{F,\sigma}\R)$ that induces an isomorphism of $\C$-algebras $F\otimes_\Q\C\simeq\C^{\Sigma}$. From here on, we shall usually not distinguish between $F_\infty$ and $\R^\Sigma$; we consider the embedding
\[ \iota_\Sigma:F\longmono F_\infty=\R^\Sigma,\quad x\longmapsto\bigl(\sigma(x)\bigr)_{\sigma\in\Sigma} \]
and identify any $x\in F$ with $\iota_\Sigma(x)$. With this convention in mind, when we claim that an element $\alpha\in F_\infty$ belongs to $F$ we really mean that there exists $x\in F$ such that $\iota_\Sigma(x)=\alpha$.
\end{remark}

Observe that ${t(\MM)}_\infty$ is spanned by the image of $T_\dR$. Fix $\gamma\in V_\B^+\smallsetminus\{0\}$.

\begin{definition} \label{period-def}
The \emph{period of $f$ relative to $\gamma$} is the determinant $\Omega_{\infty}^{(\gamma)}$ of $\alpha_\MM$ computed with respect to the basis $\{\gamma\}$ of $V_\B^+$ and the image of $\omega_f$ in ${t(\MM)}_\infty$.
\end{definition}
%

\begin{remark} 
For our later purposes of investigating the $p$-part of the Tamagawa number conjecture for $\MM$, an integral choice $\gamma\in T_\B^+\smallsetminus\{0\}$, which is in line with an analogous choice in \cite[\S4]{DSW}, will be preferable. 
\end{remark}

\subsection{Motivic cohomology} \label{motiviccohom}

Kuga--Sato varieties do not possess, in general, proper, flat, regular models over $\Z$. Therefore, the definition of motivic cohomology in low degrees that is given in \cite[Definition 1.19]{Kings} does not apply in our case. When the relevant varieties do not admit such a ``nice'' integral model, (integral) motivic cohomology is defined in \cite{Scholl2} in terms of alterations in the sense of de Jong (\cite{deJong}). In order to better stress the relation with \cite{Zhang-heights}, here we introduce (the first two groups of) motivic cohomology in a concise, utilitarian way; it would be interesting to compare our definition with the one in \cite{Scholl2}, but in this paper we do not pursue this matter any further.

\subsubsection{Motivic cohomology of $\MM$} \label{motivic-subsubsec}

Let $K$ be a number field. Denote by $\CH^{k/2}_0(X/K)$ the abelian group of codimension $k/2$ homologically trivial cycles on $X$ defined over $K$. Let us write $\CH^{k/2}_\arith(X/K)$ for the subgroup of $\Pi_B\Pi_\epsilon\cdot\CH^{k/2}_0(X/K)$ consisting of (the images of) the classes of those cycles that admit an integral model having trivial intersection with all the cycles of dimension $k$ supported on special fibers; here $\Pi_B\Pi_\epsilon\cdot\CH^{k/2}_0(X/K)$, and hence $\CH^{k/2}_\arith(X/K)$ too, should be viewed as a subgroup of $\CH^{k/2}_0(X/K)\otimes_\Z\Q$ (or, rather, should be identified with its natural image in $\CH^{k/2}_0(X/K)\otimes_\Z\Q$).

\begin{remark}
Since we are considering null-homologous cycles only, every cycle (class) in $\CH^{k/2}_\arith(X/K)$ has trivial image in $H^k\bigl(X(\C),\C\bigr)$; in other words, the cycles in $\CH^{k/2}_\arith(X/K)$ satisfy conditions (a) and (b) in \cite[\S 1.3]{Zhang-heights}, so there is a Gillet--Soul\'e height on $\CH^{k/2}_\arith(X/K)$ (\emph{cf.} \S \ref{GS-subsec}). This is the main reason for considering, in this article, $\CH^{k/2}_\arith(X/K)$ instead of the larger group $\Pi_B\Pi_\epsilon\cdot\CH^{k/2}_0(X/K)$. 
\end{remark}

For any $\Z$-algebra $R$ and $\star\in\{0,\text{arith}\}$, set
\begin{equation} \label{R-chow-eq}
{\CH^{k/2}(X/K)}_R\defeq \CH^{k/2}(X/K)\otimes_\Z R,\quad{\CH^{k/2}_\star(X/K)}_R\defeq \CH^{k/2}_\star(X/K)\otimes_\Z R.
\end{equation}
With notation as in \eqref{isotypic-eq} and $i=0,1$, let us define the \emph{$i$-th motivic cohomology group of $\MM$ over $K$} as
\begin{equation} \label{motivic-def-eq}
H^i_\mot(K,\MM)\defeq \begin{cases} 0 & \text{if $i=0$},\\[3mm]{\CH^{k/2}_\arith(X/K)}_F[\theta_f] & \text{if $i=1$}. \end{cases} 
\end{equation}
As these are the only motivic cohomology groups that play a role in our paper, we do not introduce $H^i_\mot(K,\MM)$ for $i\geq2$. 

\subsubsection{A finiteness conjecture} \label{finiteness-subsubsec}

The finiteness conjecture we are about to state is a special case of a classical conjecture predicting that Chow groups of smooth projective varieties over number fields are finitely generated (\emph{cf.} Conjecture \ref{chow-conj}).  

\begin{conjecture}[Finiteness of $H^1_\mot$] \label{finitenessconj}
$H^1_\mot(K,\MM)$ has finite dimension over $F$. 
\end{conjecture}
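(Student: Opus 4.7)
The plan is to reduce finiteness of $H^1_\mot(K,\MM)$ to finiteness of a Bloch--Kato Selmer group via an appropriate $p$-adic regulator map. Concretely, one has (or expects) a map
\[ \reg_p:H^1_\mot(K,\MM)\otimes_F F_p\longrightarrow H^1_f(K,V_p), \]
whose target, being a Bloch--Kato subgroup of the continuous cohomology of a finite-dimensional $F_p$-representation, is itself finite-dimensional over $F_p$. Consequently, as already noted in the excerpt just after the hypotheses in \S\ref{assumptions-intro-subsubsec}, it would suffice to prove that $\reg_p$ is \emph{injective} for some single prime $p$: the finite-dimensionality of $H^1_\mot(K,\MM)\otimes_F F_p$ over $F_p$ then forces that of $H^1_\mot(K,\MM)$ over $F$. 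My first step would therefore be to set up $\reg_p$ cleanly on $\CH^{k/2}_\arith(X/K)_F[\theta_f]$ and to check that it actually lands in $H^1_f$.

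To produce the needed injectivity I would factor $\reg_p$ through the $p$-adic \'etale Abel--Jacobi map
\[ \AJ_p:\CH^{k/2}_0(X/K)\otimes_\Z\Q_p\longrightarrow H^1\bigl(K,H^{k-1}_\et(\bar X,\Q_p(k/2))\bigr), \]
apply the projector $\Pi$ and the $\theta_f$-isotypic cut, and then impose the Bloch--Kato local conditions at each place. This reduces the problem to injectivity of $\AJ_p$ restricted to $\Pi\cdot\CH^{k/2}_\arith(X/K)[\theta_f]$ after tensoring with $\Q_p$, which is a folklore conjecture that the paper is content to impose at specific primes. In the two analytic ranges relevant to the present work, $r_\an(\MM)\in\{0,1\}$, one could then combine this injectivity with either Kato's Euler system bound on $H^1_f(\Q,V_p)$ (in rank $0$, where the target Selmer group is shown to vanish, so that the source must vanish as well) or, in rank $1$, the Heegner-type cycles on Kuga--Sato varieties of Nekov\'a\v r combined with the Kolyvagin-style input provided by Theorem C, which realize a one-dimensional subspace whose image exhausts $H^1_f$ up to finite index; matching dimensions then yields the bound.

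The main obstacle is that, in the generality stated (arbitrary $K$ and no rank restriction), Conjecture \ref{finitenessconj} is a very special case of the wide-open prediction that Chow groups of smooth projective varieties over number fields are finitely generated. Any unconditional argument would require a genuinely new input, either on the injectivity of higher Abel--Jacobi maps at some prime, or an independent bound for the $F$-dimension of the $f$-isotypic part of $\CH^{k/2}_\arith(X/K)$. In keeping with the spirit of the paper, my proposal is therefore conditional: I would adopt the injectivity of $\AJ_p$ on the relevant isotypic component as a working hypothesis, as the authors do elsewhere, and deduce finiteness as an automatic corollary in every arithmetic situation where that hypothesis is available.
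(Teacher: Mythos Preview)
The statement in question is a \emph{conjecture}, not a theorem: the paper offers no proof of it and explicitly treats it as an open problem, remarking in \S\ref{chow-finiteness-subsubsec} that it is a special case of the (wide-open) finite-generation conjecture for Chow groups, ultimately subsumed by the motivic Bass conjecture. Your proposal correctly identifies this, and your conditional route---deduce finiteness from injectivity of the $p$-adic regulator into the finite-dimensional Bloch--Kato Selmer group $H^1_f(K,V_p)$---is exactly the mechanism the paper records in Remark~\ref{reg-MM-rem}: Conjecture~\ref{regpconj} for a single prime $p$ implies Conjecture~\ref{finitenessconj}. So there is nothing to compare against; your proposal and the paper's stance coincide.

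One minor sharpening: you only need injectivity of $\reg_p$, not the full isomorphism asserted in Conjecture~\ref{regpconj}, and you observe this. The paper states the stronger hypothesis because it needs surjectivity elsewhere (to identify ranks and to set up the fundamental-line isomorphism $\theta_{p,S}$), but for the bare finiteness of $H^1_\mot(K,\MM)$ your weaker input suffices. Your additional remarks about rank $0$ and rank $1$---where Kato's Euler system and Nekov\'a\v{r}'s Heegner-cycle results respectively control $\dim_{F_p}H^1_f(\Q,V_p)$---go beyond what the paper says about Conjecture~\ref{finitenessconj} itself, but they are consistent with how the paper ultimately verifies $r_\alg(\MM)=r_\an(\MM)$ in those cases (Theorem~\ref{nekovar-thm2}).
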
 

In our route to the Tamagawa number conjecture for $\MM$ we shall assume the validity of Conjecture \ref{finitenessconj}. Thus, we can give

\begin{definition} \label{algebraic-rank-def}
The \emph{algebraic rank of $\MM$ over $K$} is $r_\alg(\MM/K)\defeq\dim_F\bigl(H^1_\mot(K,\MM)\bigr)$.
\end{definition}

For simplicity, we also set $r_\alg(\MM)\defeq  r_\alg(\MM/\Q)$ and call it the \emph{algebraic rank of $\MM$}.

\subsubsection{Some remarks on Chow groups} \label{chow-finiteness-subsubsec}

We offer some motivation for Conjecture \ref{finitenessconj}, which, as we pointed out in \S \ref{finiteness-subsubsec}, is essentially a byproduct of a more general conjecture on Chow groups of projective varieties over number fields. 

Let $Y$ be a smooth projective variety defined over a number field $L$. As usual, write $\CH^n(Y)$ for the Chow group of codimension $n$ algebraic cycles on $Y$. The following well-known conjecture is wide open.

\begin{conjecture} \label{chow-conj}
The abelian groups $\CH^n(Y)$ are finitely generated for all $n\in\N$.
\end{conjecture}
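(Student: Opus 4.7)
This conjecture is one of the deepest open problems in arithmetic geometry, so any ``plan'' here must be understood as speculative: the best one can do is reduce the statement to a package of even harder conjectures and flag where the actual obstructions lie. A genuine unconditional proof beyond the case of divisors is not within reach of present techniques.

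First I would dispose of the trivialities. The group $\CH^0(Y)$ is free abelian of rank equal to the number of connected components of $Y$, and $\CH^n(Y)=0$ for $n>\dim Y$, so one may assume $1\leq n\leq \dim Y$. The case $n=1$ is classical and unconditional: the Picard scheme $\Pic_{Y/L}$ is representable, its identity component $\Pic^0_{Y/L}$ is an abelian variety over $L$, and $\CH^1(Y)=\Pic(Y)$ fits into an exact sequence relating it to $\Pic^0_{Y/L}(L)$ and to the N\'eron--Severi group. The former is finitely generated by the Mordell--Weil theorem and the latter by the theorem of the base, and combining these inputs yields finite generation in codimension one.

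For the remaining range $2\leq n\leq\dim Y$ the only known strategy is the Bloch--Beilinson filtration philosophy. One tries to equip $\CH^n(Y)\otimes_\Z\Q$ with a descending filtration $F^\bullet$ whose graded pieces are controlled by continuous Galois cohomology of the \'etale realizations of suitable submotives of $Y$. The cycle class map to $H^{2n}_\et(Y\times_L\bar L,\Q_\ell(n))$ should identify $\CH^n(Y)/F^1$ with a subgroup of a finitely generated $\Z_\ell$-module, and the first graded piece $F^1/F^2$ should be captured by an Abel--Jacobi map landing, conjecturally, in a Bloch--Kato Selmer group $H^1_f\bigl(L,H^{2n-1}_\et(Y\times_L\bar L,\Q_\ell(n))\bigr)$, which itself is conjecturally finite-dimensional. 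One would then iterate down the filtration and invoke analogous finiteness statements at each stage, together with a separate argument for the torsion of $\CH^n(Y)$ (\'a la Rojtman for $n=\dim Y$).

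The hardest part --- really, a cascade of hardest parts --- is that essentially none of the ingredients above is unconditionally available: one needs existence of the Bloch--Beilinson filtration, injectivity of higher Abel--Jacobi maps (a variant of which the present paper itself has to postulate), finiteness of Bloch--Kato Selmer groups (including Shafarevich--Tate-type finiteness), and control of torsion outside Rojtman's range. The very first nontrivial instance, finite generation of $\CH_0$ of a surface with $p_g>0$ over a number field, already subsumes Bloch's conjecture. Thus the realistic content of the plan is a reduction of Conjecture \ref{chow-conj} to the Beilinson--Bloch--Kato package; an honest proof is not expected from currently available methods, and I would not anticipate the authors to give one here.
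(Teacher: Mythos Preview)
You have correctly recognised that Conjecture \ref{chow-conj} is a wide-open conjecture and that the paper does not prove it; your discussion of the trivial ranges, the classical codimension-one case, and the speculative Bloch--Beilinson route is accurate and well calibrated.

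The paper's own treatment, however, takes a rather different tack. Instead of the Bloch--Beilinson filtration and the Beilinson--Bloch--Kato package, the paper observes that Conjecture \ref{chow-conj} is a special case of the \emph{motivic Bass conjecture}: spread $Y$ out to a smooth model $\mathscr Y$ over a ring of $S$-integers $\cO_{L,\Theta}$, note that $\mathscr Y$ is regular and of finite type over $\Z$, invoke the motivic Bass conjecture for $H^{2n}_{\mathscr M}\bigl(\mathscr Y,\Z(n)\bigr)$, identify this with $\CH^n(\mathscr Y)$ via Bloch's higher Chow groups and the Voevodsky comparison, and then use the surjectivity of $\CH^n(\mathscr Y)\to\CH^n(Y)$ coming from taking Zariski closures of cycles. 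Your reduction and the paper's are complementary rather than competing: yours stays on the generic fibre and appeals to the conjectural arithmetic of Galois cohomology, while the paper's passes to an integral model and appeals to a conjectural finiteness principle of $K$-theoretic flavour. Neither yields an unconditional proof, as you rightly anticipated.
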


As a consequence, the groups $\CH^n(Y/L)$ of cycles on $Y$ of codimension $n$ that are defined over $L$ are conjecturally finitely generated for all $n\in\N$. Moreover, the groups $\CH^n_0(Y)$ and $\CH^n_0(Y/L)$ of codimension $n$ homologically trivial cycles, the conjecture of whose finite generation is attributed to Swinnerton-Dyer (\cite[Conjecture 5.0]{Beilinson}), will be finitely generated as well. Clearly, Conjecture \ref{chow-conj} is stronger than Conjecture \ref{finitenessconj}.

Little is known, as far as we are aware of, about Conjecture \ref{chow-conj}. From a broader point of view, it is a special case of the generalized (\emph{i.e.}, motivic) Bass conjecture; for the convenience of the reader, we briefly recall why this is true. Let $Y$ and $L$ be as above. Let $\Theta$ be the finite set of primes of $L$ at which $Y$ has bad reduction (see, \emph{e.g.}, \cite[Proposition A.9.1.6, (i)]{HS}) and let $\cO_{L,\Theta}$ be the ring of $\Theta$-integers of $L$. By \cite[Proposition A.9.1.6, (ii)]{HS}), there is a smooth model $\mathscr Y$ of $Y$ over $\cO_{L,\Theta}$. The ring $\cO_{L,\Theta}$ is regular, so $\mathscr Y$ is regular. Since $\mathscr Y$ is a $\Z$-scheme of finite type, the motivic Bass conjecture (see, \emph{e.g.}, \cite[Conjecture 37, b)]{kahn}) predicts that the motivic cohomology groups $H^i_{\mathscr M}\bigl(\mathscr Y,\Z(n)\bigr)$ of Suslin--Voevodsky (\cite[Lecture 3]{MVW}) are finitely generated for all $i,n\in\N$. On the other hand, if $\CH^n(\mathscr Y,i)$ denote, for $n\in\N$ and $i\in\Z$, Bloch's higher Chow groups (\cite{Bloch}), then
\begin{itemize}
\item $\CH^n(\mathscr Y,0)=\CH^n(\mathscr Y)$ for all $n\in\N$ (\cite[p. 268]{Bloch});
\item $H^i_{\mathscr M}\bigl(\mathscr Y,\Z(n)\bigr)\simeq\CH^n(\mathscr Y,2n-i)$ for all $i,n\in\N$ (\cite[Theorem 19.1]{MVW}).
\end{itemize}
It follows that $H^{2n}_{\mathscr M}\bigl(\mathscr Y,\Z(n)\bigr)\simeq\CH^n(\mathscr Y)$, therefore the motivic Bass conjecture predicts, in particular, that the Chow groups $\CH^n(\mathscr Y)$ are finitely generated for all $n\in\N$. Finally, taking scheme-theoretic closures of codimension $n$ cycles on $Y$, one checks that for all $n\in\N$ the natural group homomorphism $\CH^n(\mathscr Y) \rightarrow\CH^n(Y)$ is surjective, which completes the argument.

\subsection{The Gillet--Soul\'e height pairing for $\MM$} \label{GS-subsec}

We introduce the height pairing on our motivic cohomology groups and then define a regulator in terms of this pairing.

\subsubsection{Gillet--Soul\'e height pairings}

Let $K$ be a number field. Denote by
\begin{equation} \label{GS-eq}
{\langle\cdot,\cdot\rangle}_{\GS}:\CH^{k/2}_\arith(X/K)\times\CH^{k/2}_\arith(X/K)\longrightarrow\R 
\end{equation}
the height pairing defined by S.-W. Zhang (\cite[\S 1.3]{Zhang-heights}) using arithmetic intersection theory \emph{\`a la} Gillet--Soul\'e (\cite{GS-1}). For each $\sigma\in\Sigma$, pairing \eqref{GS-eq} yields an $F$-bilinear pairing
\begin{equation} \label{GS-eq3}
{\langle\cdot,\cdot\rangle}_{\GS,\sigma}:H^1_\mot(K,\MM)\times H^1_\mot(K,\MM)\longrightarrow\R=F\otimes_{F,\sigma}\R 
\end{equation}
Let us consider the $F_\infty$-module 
\[ \begin{split}
   {H^1_\mot(K,\MM)}_\infty&\defeq  H^1_\mot(K,\MM)\otimes_\Q\R\\&=H^1_\mot(K,\MM)\otimes_F F_\infty
   =\prod_{\sigma\in\Sigma}H^1_\mot(K,\MM)\otimes_{F,\sigma}\R, 
   \end{split} \] 
which is free of rank $r_\alg(\MM/K)$. We can define an $F_\infty$-bilinear height pairing
\begin{equation} \label{GS-eq2}
{\langle\cdot,\cdot\rangle}_{\GS,\infty}:{H^1_\mot(K,\MM)}_\infty\times{H^1_\mot(K,\MM)}_\infty\longrightarrow F_\infty 
\end{equation}
by the rule
\[ \bigl({(x_\sigma)}_{\sigma\in\Sigma},{(y_\sigma)}_{\sigma\in\Sigma}\bigr)\longmapsto\bigl({\langle x_\sigma,y_\sigma\rangle}_{\GS,\sigma}\bigr)_{\sigma\in\Sigma}, \]
where ${(x_\sigma)}_{\sigma\in\Sigma},{(y_\sigma)}_{\sigma\in\Sigma}$ belong to $H^1_\mot(K,\MM)\otimes_{F,\sigma}\R$ and each pairing ${\langle\cdot,\cdot\rangle}_{\GS,\sigma}$ in \eqref{GS-eq3} has been extended $\R$-linearly over $F$ (with respect to $\sigma$). 

\begin{remark}
Given $x,y\in H^1_\mot(K,\MM)$ and $\sigma\in\Sigma$, it is easy to check that ${\langle x,y\rangle}_{\GS,\iota_F}=0$ if and only if ${\langle x,y\rangle}_{\GS,\sigma}=0$. It follows that ${\langle\cdot,\cdot\rangle}_{\GS,\iota_F}$ is non-degenerate if and only if ${\langle\cdot,\cdot\rangle}_{\GS,\sigma}$ is non-degenerate.
\end{remark}

\subsubsection{A non-degeneracy conjecture}

The validity of the next conjecture is predicted by the arithmetic analogues of the standard conjectures proposed by Gillet--Soul\'e (\cite{GS-2}); it is also a special case of general conjectures of Beilinson (\cite{Beilinson}) and Bloch (\cite{bloch-height}) on positive definiteness of height pairings.

\begin{conjecture}[Non-degeneracy of height] \label{nondegconj} 
The pairing ${\langle\cdot,\cdot\rangle}_{\GS,\infty}$ is non-degenerate.
\end{conjecture}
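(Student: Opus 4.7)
The conjecture as stated is a special case of the Beilinson--Bloch conjecture on non-degeneracy (indeed, positive definiteness) of height pairings on homologically trivial Chow groups, and is wide open in general. My plan is therefore not to attempt the conjecture in full generality, but rather to sketch how one might verify it in the low-rank arithmetic situations that are actually relevant to Theorem B, where the analytic rank of $\MM$ is at most $1$.

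First, in analytic rank $0$, one expects (and under the injectivity of the $p$-adic Abel--Jacobi maps mentioned in \S \ref{assumptions-intro-subsubsec2}, together with Kato's divisibility in the Iwasawa main conjecture that the authors invoke later) that $H^1_\mot(\Q,\MM)=0$. In that case the pairing is vacuously non-degenerate, consistent with the convention $\Reg_{\mathscr B}(\MM)\defeq 1$ used in the rank-$0$ setting; so there is nothing to verify.

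In analytic rank $1$, the plan is to exploit the Heegner-type cycles of Nekov\'a\v{r} on Kuga--Sato varieties together with the Gross--Zagier formula of S.-W. Zhang for higher weight modular forms. Concretely, choose an imaginary quadratic field $K$ of Heegner type for $(f,N)$ such that $f^K$ has analytic rank $0$ (its existence is guaranteed by the non-vanishing theorems of Bump--Friedberg--Hoffstein, Murty--Murty and Waldspurger), and let $y_f\in {\CH^{k/2}_\arith(X/K)}_F[\theta_f]$ denote the $f$-isotypic component of a Heegner cycle. Zhang's formula computes the Gillet--Soul\'e self-intersection $\langle y_f,y_f\rangle_{\GS,\iota_F}$ as an explicit non-zero multiple of the central derivative $L'(f/K,k/2)=L'(f,k/2)\cdot L(f^K,k/2)$, which is non-zero by the choice of $K$. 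Hence $\langle y_f,y_f\rangle_{\GS,\iota_F}\ne 0$. A standard descent argument, using the decomposition $H^1_\mot(K,\MM)\simeq H^1_\mot(\Q,\MM)\oplus H^1_\mot\bigl(\Q,\MM(f^K)\bigr)$ under the action of $\Gal(K/\Q)$ and combining with the rank-$0$ case applied to $\MM(f^K)$, then transports non-degeneracy from $\MM/K$ down to $\MM/\Q$; the non-triviality of the image of $y_f$ in the $\MM$-isotypic summand is exactly the part where one needs Nekov\'a\v{r}'s results on non-triviality of Heegner cycles in $H^1_\mot$.

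The main obstacle for general analytic rank is that, beyond what Heegner cycles provide in rank $1$, we have no systematic construction of elements of $H^1_\mot(K,\MM)$ and no non-trivial lower bound on the rank of the Gillet--Soul\'e pairing. Even the finite-dimensionality of $H^1_\mot$ (Conjecture \ref{finitenessconj}) is open in higher analytic rank, and the arithmetic standard conjectures of Gillet--Soul\'e, which would imply non-degeneracy, are of a different order of difficulty altogether. Thus for the purposes of the present paper the honest plan is to assume Conjecture \ref{nondegconj} outright, as the authors do, observing only that its conclusion is automatic when $r_\an(\MM)=0$ and can be derived from Zhang's Gross--Zagier formula together with the cited analytic non-vanishing theorems when $r_\an(\MM)=1$.
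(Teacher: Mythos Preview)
The statement is a \emph{conjecture}, not a theorem, and the paper does not prove it. Your recognition of this is correct: the paper explicitly states that Conjecture~\ref{nondegconj} is a special case of the arithmetic standard conjectures of Gillet--Soul\'e and of the height conjectures of Beilinson and Bloch, and that the authors simply assume it (or dispense with it) as needed.

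Your discussion of the low-rank cases matches the paper's remarks precisely. The paper notes (see Remark~\ref{non-degeneracy-rem} and the introduction) that the non-degeneracy assumption is only used to force $\Reg_{\mathscr B}(\MM)\neq 0$; that this is automatic when $r_\an(\MM)=0$ since $\Reg(\MM)=1$ by convention; and that when $r_\an(\MM)=1$ the non-vanishing of the regulator follows from Zhang's Gross--Zagier formula (Theorem~\ref{zhang-thm}), exactly as you outline. In rank~$1$ the motivic cohomology is one-dimensional, so non-vanishing of the self-pairing of a generator is equivalent to non-degeneracy. The paper also explicitly says that no non-degeneracy condition is needed for the main result (Theorem~\ref{ThmTNC}).

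One minor point: you speak of ``transporting non-degeneracy from $\MM/K$ down to $\MM/\Q$'' via the Galois decomposition. The paper's actual argument in the proof of Theorem~\ref{ThmTNC} is slightly more direct: it works with the $\Q$-rational cycle $\YY_\MM$ obtained from the Heegner cycle (Proposition~\ref{Gamma-nonzero-lemma}) and computes its self-pairing over $\Q$ as half the self-pairing over $K$, which Zhang's formula shows is non-zero. But this is a cosmetic difference from your sketch.
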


In this paper, we often assume this conjecture (or variants thereof) to hold true; notice, however, that no condition of this sort will be needed in our main result on the $p$-part of the Tamagawa number conjecture for $\MM$ (Theorem \ref{ThmTNC}). 

\begin{remark}
Of course, Conjecture \ref{nondegconj} is true if and only if the $\R$-linear extension (with respect to $\sigma$) of ${\langle\cdot,\cdot\rangle}_{\GS,\sigma}$ is non-degenerate for each $\sigma\in\Sigma$.
\end{remark}

\subsubsection{The $\mathscr B$-regulator of $\MM$ over $K$}

Set $r\defeq r_\alg(\MM/K)$. If $r>0$, then fix a basis $\mathscr B=\{t_1,\dots,t_r\}$ of $H^1_\mot(K,\MM)$ over $F$; clearly, $\mathscr B$ is also a basis of ${H^1_\mot(K,\MM)}_\infty$ over $F_\infty$. 

\begin{definition} \label{defreg}
If $r>0$, then the \emph{Gillet--Soul\'e $\mathscr B$-regulator of $\MM$ over $K$} is
\[ \Reg_\mathscr B(\MM/K)\defeq\det\bigl({\langle t_i,t_j\rangle}_{\GS,\infty}\bigr)_{1\leq i,j\leq r}\in F_\infty. \]
If $r=0$, then $\Reg(\MM/K)\defeq1$.
\end{definition}

For simplicity, we put $\Reg_\mathscr B(\MM)\defeq\Reg_\mathscr B(\MM/\Q)$ and call it the \emph{$\mathscr B$-regulator of $\MM$}.

\begin{remark}
If Conjecture \ref{nondegconj} holds true, then $\Reg_\mathscr B(\MM/K)\in F_\infty^\times$.
\end{remark}

\begin{remark} \label{bases-rem}
If $\mathscr B$ and $\mathscr B'$ are two bases of $H^1_\mot(K,\MM)$ over $F$, then $\Reg_\mathscr B(\MM/K)$ and $\Reg_{\mathscr B'}(\MM/K)$ differ by multiplication by the square of the determinant of the transition matrix from $\mathscr B$ to $\mathscr B'$. 
\end{remark}

Assume $r>0$. For each $\sigma\in\Sigma$, it is convenient to define
\begin{equation} \label{reg-sigma-eq}
\Reg_{\mathscr B}^\sigma(\MM)\defeq\det\bigl({\langle t_i,t_j\rangle}_{\GS,\sigma}\bigr)_{1\leq i,j\leq r}\in \R,
\end{equation}
so that we can write $\Reg_{\mathscr B}(\MM)=\bigl(\Reg_{\mathscr B}^\sigma(\MM)\bigr)_{\sigma\in\Sigma}$.

\subsection{$p$-adic Galois representations} \label{p-adic-subsec}

For every prime $v$ of $K$, let $K_v$ be the completion of $K$ at $v$, fix an algebraic closure $\bar K_v$ of $K_v$ and write $K_v^\mathrm{unr}\subset\bar K_v$ for the maximal unramified extension of $K_v$. Furthermore, set $G_{K_v}\defeq \Gal(\bar K_v/K_v)$, denote by $I_v\subset G_{K_v}$ the inertia subgroup and let $\Frob_v\in\Gal(K_v^\mathrm{unr}/K_v)\simeq G_{K_v}/I_v$ be the arithmetic Frobenius. Finally, for every prime number $p$, fix a field embedding $\bar\Q\hookrightarrow\bar\Q_p$.

\subsubsection{Dieudonn\'e modules}

Let $v$ be a prime of $K$ above $p$. If $V$ is a $p$-adic representation of $G_{K_v}$, by which we mean that $V$ is a finite-dimensional $\Q_p$-vector space that is equipped with a continuous action of $G_{K_v}$, then we denote by
\[ \mathbf{D}_\crys(V)\defeq  H^0\bigl(G_{K_v},V\otimes_{\Q_p}\mathbf{B}_\crys\bigr) \]
and 
\[ \mathbf{D}_\dR(V)\defeq  H^0\bigl(G_{K_v},V\otimes_{\Q_p}\mathbf{B}_\dR\bigr) \] 
the crystalline and de Rham Dieudonn\'e modules of $V$, where $\mathbf{B}_\crys$ (respectively, $\mathbf{B}_\dR$) is Fontaine's crystalline (respectively, de Rham) ring of periods ($\mathbf{B}_\dR$ is, in fact, a field). Recall that $\mathbf{D}_\dR(V)$ is equipped with a filtration $\Fil^i\bigl(\mathbf{D}_\dR(V)\bigr)$ and $\mathbf{D}_\crys(V)$ is endowed with a distinguished endomorphism $\phi$, the so-called Frobenius endomorphism. 

\subsubsection{Tangent space}

The \emph{tangent space of $V$} is 
\begin{equation} \label{V-tangent-eq}
t(V)\defeq\mathbf{D}_\mathrm{dR}(V)\big/\Fil^0\bigl(\mathbf{D}_\mathrm{dR}(V)\bigr), 
\end{equation} 
which is a finite-dimensional $\mathbf{B}_\dR$-vector space.

\begin{remark} \label{fontaine-rem}
Since $\mathbf{B}_\crys$ is a subring of $\mathbf{B}_\dR$, there is a natural injection $\mathbf{D}_\crys(V)\hookrightarrow\mathbf{D}_\dR(V)$.
\end{remark}

\subsection{$L$-functions of $\MM$} \label{Lfunctsec}

Let $p$ be a prime number and let $K$ be a number field. We can regard the $G_\Q$-representation $V_p$ from \eqref{V_p-eq} as a $G_K$-representation, and hence as a $G_{K_v}$-representation for every prime $v$ of $K$. 

\subsubsection{Euler factors and the $L$-function}

For every prime $v$ of $K$, we define the Euler factor
\[ L_v(V_p,x)\defeq \begin{cases} \det_{F_p}\bigl(\mathrm{id}-\Frob_v^{-1}x,V_p ^{I_v}\bigr)& \text{if $v\nmid p$},\\[3mm]
 \det_{F_p}\bigl(\mathrm{id}-\phi x,\mathbf{D}_\crys(V_p)\bigr)& \text{if $v\,|\,p$}.
\end{cases} \] 
The polynomial $L_v( V_p ,x)$ has coefficients in $F$ and is independent of $p$. Denote by $q_v$ the cardinality of the residue field of $K_v$ and set
\[ L_v(V_p,s)\defeq L_v(V_p,q_v^{-s}). \]

\begin{definition} \label{L-function-def}
The \emph{$L$-function of $\MM$ over $K$} is the formal Euler product 
\[ L(\MM/K ,s)\defeq \prod_v L_v(V_p,s)^{-1}. \]
\end{definition}

As a shorthand, we write $L(\MM,s)$ instead of $L(\MM/\Q,s)$. As in \cite[Remark 7]{BF}, for every $s\in\C$ we regard $L_v(V_p,s)$ as an element of $F\otimes_\Q\C$. The function $L(\MM/K ,s)$ of the complex variable $s$ admits a holomorphic continuation to $\C$, which takes values in $F\otimes_\Q\C$. Furthermore, by \cite[Lemma 8]{BF}, $L(\MM/K,s)\in F_\infty$ if $s\in\R$.

\begin{remark} \label{splitting-L-rem}
Let $\mathcal K$ be a quadratic field, let $\epsilon_{\mathcal K}$ be the Dirichlet character attached to $\mathcal K$ and, as in Remark \ref{motive-f^K-rem}, denote by $f^{\mathcal K}$ the twist of $f$ by $\epsilon_{\mathcal K}$, which is a newform satisfying $a_n(f^{\mathcal K})=\epsilon_{\mathcal K}(n)\cdot a_n(f)$ for all $n\geq1$. If $\MM\bigl(f^{\mathcal K}\bigr)$ is the motive of $f^{\mathcal K}$, then 
\[ L(\MM/\mathcal K,s)=L(\MM,s)\cdot L\bigl(\MM(f^{\mathcal K}),s\bigr). \]
This equality is a special case of a factorization that holds over any abelian extension of $\Q$.
\end{remark}

\subsubsection{The completed $L$-function} \label{completed-subsubsec} 

Let $\Gamma$ be the classical complex $\Gamma$-function. The \emph{completed $L$-function of $f$} is the complex-valued function 
\[ \Lambda(f,s)\defeq\biggl(\frac{\sqrt{N}}{2\pi}\biggr)^{s}\cdot\Gamma(s)\cdot L(f,s). \]
It satisfies a functional equation (usually referred to as the functional equation for $L(f,s)$) of the form
\[ \Lambda(f,s)=\varepsilon(f)\cdot\Lambda(f,k-s), \]
where $\varepsilon(f)\in\{\pm1\}$ is the \emph{root number} of $f$. In particular, $\Lambda(f,s)$ is holomorphic on (or, rather, can be holomorphically continued to) the whole complex plane, and the same is true of $L(f,s)$ (see, \emph{e.g.}, \cite[p. 141]{Koblitz}). As a shorthand, put $L_\infty(f,s)\defeq\bigl(\sqrt{N}/2\pi\bigr)^s\cdot\Gamma(s)$, so that $\Lambda(f,s)=L_\infty(f,s)\cdot L(f,s)$. We find it convenient to introduce the normalization of $\Lambda(f,s)$ given by 
\begin{equation} \label{tilde-Lambda-eq}
\tilde{\Lambda}(f,s)\defeq\frac{{\Lambda}(f,s)}{\bigl(i\sqrt{N}\bigr)^{k/2}}
.
\end{equation}
It is well known that $\Gamma(n)=(n-1)!$ for every integer $n\geq1$, so if $g^{(i)}$ denotes the $i$-th derivative of a complex function $g$, then there is an equality 
\[ \tilde{\Lambda}^{(r)}(f,k/2)=\frac{(k/2-1)!\cdot L^{(r)}(f,k/2)}{(2\pi i)^{k/2}}, \] 
where $r\in\N$ is the order of vanishing of $L(f,s)$ at $s=k/2$ (from \S \ref{leading-subsubsec} onwards, this integer will be called the analytic rank of $g$, \emph{cf.} Definition \ref{analytic-rank-def}). Our next goal is to define the completed $L$-function of $\MM$; to do this, we introduce the archimedean factor 
\[ L_\infty(\MM,s)\defeq\biggl(\frac{\sqrt{N}}{2\pi}\biggr)^{s+k/2}\cdot\frac{\Gamma(s+k/2)}{{\bigl(i\sqrt{N}\bigr)^{k/2}}}=\frac{\bigl(i\sqrt{N}\bigr)^{s}\cdot\Gamma(s+k/2)}{(2\pi i)^{s+k/2}}, \]
where the equality on the right follows from a trivial computation.

\begin{definition} \label{completed-L-function-def}
The \emph{completed $L$-function of $\MM$ over $\Q$} is 
\[ \Lambda(\MM,s)\defeq L_\infty(\MM,s)\cdot L(\MM,s). \]
\end{definition}

This is the $L$-function in terms of which we shall prove our main results. Like $L(\MM,s)$, the completed $L$-function $\Lambda(\MM ,s)$ has an $F\otimes_\Q\C$-valued holomorphic continuation to $\C$; moreover, $\Lambda(\MM,s)\in F_\infty$ for all $s\in\R$. In particular, there is an equality 
\[ \Lambda^{(r)}(\MM,0)=\frac{(k/2-1)!\cdot L^{(r)}(\MM,0)}{(2\pi i)^{k/2}}, \]
where now $r\in\N$ is the order of vanishing of $L(\MM,s)$ at $s=0$.

\begin{remark}
By carefully keeping track of $\Gamma$-factors, one can define $\Lambda(\MM/\mathcal K,s)$ for any number field $\mathcal K$. Since we will have no use for it, we refrain from introducing this more general notion. Rather, in the special case where $\mathcal K$ is a quadratic field we set 
\[ \Lambda(\MM/\mathcal K,s)\defeq\Lambda(\MM,s)\cdot\Lambda\bigl(\MM(f^{\mathcal K}),s\bigr). \]
This equality, which we take as the definition of the left-hand term, is in fact a special case of a factorization that holds over any abelian extension of $\Q$ (\emph{cf.} Remark \ref{splitting-L-rem}).
\end{remark}

\subsubsection{Analytic ranks and leading terms} \label{leading-subsubsec}

Recall the $L$-functions $L(\MM/K,s)$ and $\Lambda(\MM,s)$ from Definitions \ref{L-function-def} and \ref{completed-L-function-def}, respectively.

\begin{definition} \label{analytic-rank-def2}
\begin{enumerate}
\item The \emph{analytic rank $r_\an(\MM/K)$ of $\MM$ over $K$} is the order of vanishing of $L(\MM/K,s)$ at $s=0$, \emph{i.e.}, $r_\an(\MM/K)\defeq \ord_{s=0}L(\MM/K,s)$.
\item The \emph{leading term $L^*(\MM/K,0)$ of $L(\MM/K ,s)$ at $s=0$} is the leading term of the Taylor expansion of $L(\MM/K ,s)$ at $s=0$, \emph{i.e.}, $L^*(\MM/K,0)\defeq \lim\limits_{s\rightarrow0}s^{-r_\an(\MM/K)}L(\MM/K,s)$. 
\item The \emph{leading term $\Lambda^*(\MM,0)$ of $\Lambda(\MM ,s)$ at $s=0$} is the leading term of the Taylor expansion of $\Lambda(\MM ,s)$ at $s=0$, \emph{i.e.}, $\Lambda^*(\MM,0)\defeq\lim\limits_{s\rightarrow0}s^{-r_\an(\MM/\Q)}\Lambda(\MM,s)$. 
\end{enumerate}
\end{definition}

Observe that $L_\infty(\MM,0)\not=0$, so the orders of vanishing of $L(\MM,s)$ and $\Lambda(\MM,s)$ at $s=0$ are equal: this justifies part (3) of Definition \ref{analytic-rank-def2}. 

In line with notation that was introduced earlier, we set $L^*(\MM,0)\defeq L^*(\MM/\Q,0)$ and $r_\an(\MM)\defeq r_\an(\MM/\Q)$. Therefore, there is an equality 
\begin{equation} \label{completedL}
\Lambda^*(\MM,0)=\frac{(k/2-1)!\cdot L^*(\MM,0)}{(2\pi i)^{k/2}}
\end{equation} 
that establishes a relation between the leading terms of $L(\MM,s)$ and $\Lambda(\MM,s)$.

\begin{remark} \label{leading-term-rem}
For each $\sigma\in\Sigma$, write $L^*(f^\sigma,k/2)$ for the leading term of $L(f^\sigma,s)$ at $s=k/2$, where $L(f^\sigma,s)$ is the $L$-function of $f^\sigma$. Under the identification of Remark \ref{deligne-rem2}, the leading term $L^*(\MM,0)$ corresponds to $\bigl(L^*(f^\sigma,k/2)\bigr)_{\sigma\in\Sigma}$. It follows that $L^*(\MM/K,0)\in F^\times_\infty$. 

A similar remark applies to $\Lambda^*(\MM,0)$. More precisely, denote by $\tilde{\Lambda}^*(f^\sigma,k/2)$ the leading term of $\tilde{\Lambda}(f^\sigma,s)$ at $s=k/2$, where $\tilde\Lambda(f^\sigma,s)$ is the normalized completed $L$-function of $f^\sigma$ as in \eqref{tilde-Lambda-eq}. Then $\Lambda^*(\MM,0)$ corresponds to $\bigl(\tilde{\Lambda}^*(f^\sigma,k/2)\bigr)_{\sigma\in\Sigma}$ and $\Lambda^*(\MM,0)\in F^\times_\infty$. 
\end{remark}

Recall the algebraic rank $r_\alg(\MM/K)$ from Definition \ref{algebraic-rank-def}. The following conjecture can be seen as the rank part of the Beilinson--Bloch--Kato conjecture for the motive $\MM$ over $K$.

\begin{conjecture}[Equality of ranks] \label{orderconj}
$r_\mathrm{an}(\MM/K)=r_\alg(\MM/K)$. 
\end{conjecture}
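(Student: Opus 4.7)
This is a deep open statement --- essentially the rank part of the Beilinson--Bloch--Kato conjecture for $\MM$ --- and an unconditional proof is beyond current technology. The realistic plan is to establish Conjecture \ref{orderconj} under restrictive hypotheses in the low-rank regimes $r_\an(\MM/K)\in\{0,1\}$, exploiting the machinery developed in the body of the paper, and to reduce the general case to these via quadratic twists (using the factorization in Remark \ref{splitting-L-rem}).

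\emph{Rank $0$ strategy.} Fix a prime $p$ as in \S \ref{assumptions-intro-subsubsec}. By the conjectural bijectivity of the $p$-adic regulator $\reg_p$, it suffices to prove that $H^1_f(K,V_p)=0$ whenever $L(\MM/K,s)$ is non-zero at $s=0$. The natural tool over $\Q$ is Kato's Euler system for modular forms, which bounds $H^1_f(\Q,V_p)$ in terms of the $L$-value $L(f,k/2)$; the cyclotomic Iwasawa main conjecture proved by Skinner--Urban under their technical hypotheses then upgrades this divisibility into the exact vanishing of the Selmer group, and hence of $H^1_\mot(\Q,\MM)$. For a quadratic field $K$, one combines the analogous vanishing for the twisted motive $\MM(f^K)$ with the factorization $L(\MM/K,s)=L(\MM,s)\cdot L(\MM(f^K),s)$ to handle both factors simultaneously. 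This route realizes the rank $0$ half of the conjecture, and is exactly what Theorem \ref{skinner-urban-thm} of the paper accomplishes.

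\emph{Rank $1$ strategy.} Assume $K$ is an imaginary quadratic field satisfying a Heegner hypothesis relative to $N$ and chosen so that $L(\MM/K,s)$ has a simple zero at the centre. First produce a class in $H^1_\mot(K,\MM)$ by pushing forward a Heegner cycle on the Kuga--Sato variety $X$ in the sense of Nekov\'a\v{r}; non-triviality of its class modulo rational equivalence is detected by pairing against $L'(\MM/K,0)$ via S.-W. Zhang's Gross--Zagier formula for higher weight modular forms, so the simple zero forces the Heegner cycle to be non-torsion and yields $r_\alg(\MM/K)\geq1$. The reverse inequality $r_\alg(\MM/K)\leq1$ reduces, again through $\reg_p$, to bounding $H^1_f(K,V_p)$ above by a free rank $1$ module; this is precisely the output of Kolyvagin's Euler system of derived Heegner cycles, whose non-triviality is the content of Theorem C (proved in the paper by Hida-family deformation from weight $2$). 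Combining both inequalities yields the conjecture over $K$, and the factorization of $L(\MM/K,s)$ together with the parity result in Theorem D descends it to $\Q$.

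\emph{Main obstacles.} The genuine difficulties are (i) the higher-rank case $r_\an(\MM/K)\geq 2$, where no Euler system is presently known that could control $H^1_f(K,V_p)$ or produce more than one independent class in $H^1_\mot(K,\MM)$; and (ii) the non-degeneracy of the Gillet--Soul\'e height pairing (Conjecture \ref{nondegconj}), without which even formulating the rank $1$ comparison analytically is problematic. Even in rank $1$, the technical heart is the proof of Kolyvagin's conjecture in higher weight (Theorem C), which at present seems to require both the ordinariness hypothesis and the congruence $k\equiv 2\pmod{2(p-1)}$ coming from the passage through weight $2$ via Hida theory; removing either restriction would demand a direct higher-weight generalization of the arguments of Zhang and Skinner--Zhang, which remains an interesting open problem.
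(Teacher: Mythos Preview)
Your recognition that Conjecture \ref{orderconj} is a genuine open problem, with only the cases $r_\an(\MM)\in\{0,1\}$ over $\Q$ currently accessible, is correct and matches the paper. However, your low-rank strategies are over-equipped in ways worth flagging.

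In rank $1$, the upper bound $\corank_{\cO_\p}H^1_f(K,A_\p)\leq 1$ does \emph{not} require Kolyvagin's conjecture (Theorem C). It follows already from Nekov\'a\v{r}'s classical result (Theorem \ref{nekovar-thm}), which needs only the single Heegner cycle $y_{K,\p}$ to be non-torsion; that in turn comes from Zhang's formula together with the assumed injectivity of $\AJ_{K,\p}$ on the Heegner module. Theorem C enters the paper only for the exact $p$-TNC formula (via the structure of $\Sha$, Theorem \ref{teorema-masoero}), never for the rank equality itself.

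Similarly in rank $0$: the Skinner--Urban main conjecture is unnecessary for the rank statement. Kato's Theorem \ref{kato-thm} alone gives finiteness of $H^1_f(\Q,A_\p)$, hence corank $0$, and the $p$-adic regulator hypothesis then yields $r_\alg(\MM)=0$. In fact the paper's own Theorem \ref{nekovar-thm2} takes a uniform route through both cases that avoids Kato entirely: one chooses an auxiliary $K\in\mathscr I_{r_\an(\MM)}(f,p)$ so that $r_\an(f/K)=1$, invokes Zhang's formula plus the Abel--Jacobi injectivity to make $y_{K,\p}$ non-torsion, applies Theorem \ref{nekovar-thm}, and then decomposes under complex conjugation to descend to $\Q$. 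Your detour through Kato and Skinner--Urban is valid but belongs to the TNC proof, not to the rank equality.
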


Quite generally, let $g$ be an eigenform of weight $k$ and level $\Gamma_0(N)$; as usual, let $L(g,s)$ be the (complex) $L$-function of $g$.

\begin{definition} \label{analytic-rank-def}
The \emph{analytic rank of $g$} is $r_\an(g)\defeq\ord_{s=\frac{k}{2}}L(g,s)\in\N$.
\end{definition}

In an analogous way, given a number field $K$, one can define the \emph{analytic rank $r_\an(f/K)$ of $g$ over $K$}. As in \S \ref{completed-subsubsec}, if $g$ is a newform, then write $\varepsilon(g)\in\{\pm1\}$ for the root number of $g$, \emph{i.e.}, the sign of the functional equation for $L(g,s)$. The root number controls the parity of $r_\an(g)$, in the sense that $\varepsilon(g)={(-1)}^{r_\an(g)}$. Equivalently, there is a congruence
\begin{equation} \label{r-parity-eq}
r_\an(g)\equiv\frac{1-\varepsilon(g)}{2}\pmod{2}. 
\end{equation}
With notation as in \S \ref{periodmap}, recall the $\sigma$-conjugate $f^\sigma$ of $f$.

\begin{remark} \label{deligne-rem}
Under the isomorphism in Remark \ref{deligne-rem2}, the $F\otimes_\Q\C$-valued function $L(\MM,s)$ corresponds to the $\C^\Sigma$-valued function $\bigl(L(f^\sigma,s+k/2)\bigr)_{\sigma\in\Sigma}$ of the complex variable $s$. This is the point of view of Deligne in \cite[\S 2.2]{deligne-valeurs} (\emph{cf.} also \cite[\S 1]{Panchishkin}); it offers, in particular, a more explicit interpretation of the analytic rank $r_\an(\MM)$ from part (2) of Definition \ref{analytic-rank-def2}. Namely, there is an equality
\begin{equation} \label{r-min-eq}
r_\an(\MM)=\min\bigl\{r_\an(f^\sigma)\mid\sigma\in\Sigma\bigr\}. 
\end{equation}
It is conjectured that $r_\an(f^\sigma)$ is constant as $\sigma$ varies in $\Sigma$ (which, if true, would imply that $r_\an(\MM)=r_\an(f)$), but we will not need this property in the rest of the paper. Later on, it will sometimes be convenient to identify $L(\MM,s)$ with $\bigl(L(f^\sigma,s+k/2)\bigr)_{\sigma\in\Sigma}$ and view $L(\MM,s)$ as taking values in $\C^\Sigma$.
\end{remark}

\begin{remark}
With notation as in \S \ref{completed-subsubsec}, $L_\infty(f,k/2)\not=0$, so $r_\an(f)$ is equal to the order of vanishing of $\Lambda(f,s)$ at $s=k/2$.
\end{remark}

For later use, we record an auxiliary result. 

\begin{lemma} \label{low-rank-lemma}
Let $r\in\{0,1\}$. If $r_\an(\MM)=r$, then $r_\an(f^\sigma)=r$ for all $\sigma\in\Sigma$. Conversely, if $r_\an(f^\sigma)=r$ for some $\sigma\in\Sigma$, then $r_\an(\MM)=r$.
\end{lemma}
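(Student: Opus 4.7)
The plan is to combine \eqref{r-min-eq} with two classical inputs: Shimura's theorem on the algebraicity and Galois equivariance of critical $L$-values of modular forms, and, in the case $r=1$, Zhang's Gross--Zagier formula for higher weight newforms (\cite{Zhang-heights}) together with the remark following \eqref{GS-eq2} that the vanishing of the Gillet--Soul\'e pairing ${\langle\cdot,\cdot\rangle}_{\GS,\sigma}$ on a motivic class does not depend on $\sigma$.

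I would first handle the converse direction, which requires only \eqref{r-min-eq} and parity. Suppose $r_\an(f^{\sigma_1})=r$ for some $\sigma_1\in\Sigma$. Then \eqref{r-min-eq} gives $r_\an(\MM)\le r$. Since the root number $\varepsilon(f)\in\{\pm1\}$ is rational, hence Galois-invariant, $\varepsilon(f^\sigma)=\varepsilon(f)$ for every $\sigma$, and \eqref{r-parity-eq} forces all the $r_\an(f^\sigma)$ to share the parity of $r$. Thus $r_\an(\MM)=\min_\sigma r_\an(f^\sigma)$ has the parity of $r$ and lies in $[0,r]$, which for $r\in\{0,1\}$ forces $r_\an(\MM)=r$.

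For the forward direction, assume $r_\an(\MM)=r$. By \eqref{r-min-eq} some $\sigma_0$ satisfies $r_\an(f^{\sigma_0})=r$, and by the parity argument every $r_\an(f^\sigma)$ is $\ge r$ and congruent to $r$ mod $2$. The task is to exclude $r_\an(f^\sigma)\ge r+2$. For $r=0$, Shimura's theorem on critical values supplies periods $\Omega^{\pm}(f)\in\C^\times$ with $L(f,k/2)/\Omega^+(f)\in F$ satisfying $\sigma\bigl(L(f,k/2)/\Omega^+(f)\bigr)=L(f^\sigma,k/2)/\Omega^+(f^\sigma)$ for every $\sigma\in\Sigma$; since $L(f^{\sigma_0},k/2)\ne 0$ this at once yields $L(f^\sigma,k/2)\ne 0$ and hence $r_\an(f^\sigma)=0$ for every $\sigma$. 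For $r=1$ the common root number is $-1$, so each $L(f^\sigma,s)$ already vanishes at $s=k/2$, and I need to show that the first derivative is non-zero. The strategy is to twist: choose an imaginary quadratic field $K$ satisfying the Heegner hypothesis for $f$ (all primes dividing $N$ split in $K$) and such that $L(f,\chi_K,k/2)\ne 0$, where $\chi_K$ is the quadratic character of $K$; the existence of such $K$ is guaranteed by the non-vanishing results of Bump--Friedberg--Hoffstein \cite{BFH}, Murty--Murty \cite{MM-derivatives} and Waldspurger \cite{Waldspurger}. Since $\chi_K$ is rational, Shimura's theorem applied to the twisted critical value gives $L(f^\sigma,\chi_K,k/2)\ne 0$ for every $\sigma$, i.e.\ $r_\an((f^\sigma)^K)=0$; by the factorization in Remark~\ref{splitting-L-rem} this yields $r_\an(f^\sigma/K)=r_\an(f^\sigma)$ for every $\sigma$.

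I would finish by invoking Zhang's higher weight Gross--Zagier formula, which produces a Heegner cycle $z_K\in H^1_\mot(K,\MM)$ and, componentwise in $F_\infty\simeq\R^\Sigma$, an identity
\[ L'(f^\sigma/K,k/2)=c_\sigma\cdot\langle z_K,z_K\rangle_{\GS,\sigma} \]
with $c_\sigma\ne 0$ for every $\sigma$. From $r_\an(f^{\sigma_0}/K)=1$ the left-hand side is non-zero at $\sigma_0$, hence $\langle z_K,z_K\rangle_{\GS,\sigma_0}\ne 0$; the remark following \eqref{GS-eq2} then propagates this to $\langle z_K,z_K\rangle_{\GS,\sigma}\ne 0$ for every $\sigma$, and therefore $L'(f^\sigma/K,k/2)\ne 0$, $r_\an(f^\sigma/K)=1$, and $r_\an(f^\sigma)=1$, as desired. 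The main obstacle is precisely this $r=1$ step: unlike the critical value $L(f,k/2)$, the first derivative does not admit a direct algebraicity/Galois-equivariance statement that would transport non-vanishing to conjugates. The quadratic twist by a well-chosen $\chi_K$ reduces the problem to the rank-one situation governed by Zhang's formula, and the final key ingredient is the simple observation that the vanishing of Gillet--Soul\'e heights is intrinsic (independent of $\sigma$).
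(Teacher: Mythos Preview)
Your proof is correct and follows essentially the same route as the paper, which simply cites \cite[Corollary 0.3.5]{Zhang-heights}: that corollary is itself proved via Shimura's algebraicity theorem for the rank $0$ case and Zhang's Gross--Zagier formula (together with the $\sigma$-independence of the vanishing of the Heegner height) for the rank $1$ case. You have unpacked what the paper treats as a black box; the only minor caution is that the identity $L'(f^\sigma/K,k/2)=c_\sigma\langle z_K,z_K\rangle_{\GS,\sigma}$ for a \emph{single} $z_K\in H^1_\mot(K,\MM)$ requires exactly the $F_\infty$-packaging of Zhang's formula that his Corollary 0.3.5 encodes, so your argument and the paper's citation are really the same input viewed at different levels of detail.
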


\begin{proof} In light of Remark \ref{deligne-rem}, the lemma follows from \cite[Corollary 0.3.5]{Zhang-heights}. \end{proof}

\subsubsection{Leading term and periods}

In the statement below, which should be interpreted as explained in Remark \ref{deligne-rem2}, the period $\Omega^{(\star)}_\infty$ is the one defined in \S \ref{emb-subsubsec}.

\begin{lemma} \label{rationality-lemma}
Let $\gamma,\gamma\in V_\B^+\smallsetminus\{0\}$ and let $\mathscr B,\mathscr B'$ be bases of $H^1_\mot(\Q,\MM)$ over $F$. Then $L^*(\MM,0)\big/\bigl(\Omega^{(\gamma)}_\infty\cdot\Reg_\mathscr B(\MM)\bigr)\in F^\times$ if and only if $L^*(\MM,0)\big/\bigl(\Omega^{(\gamma')}_\infty\cdot\Reg_{\mathscr B'}(\MM)\bigr)\in F^\times$.
\end{lemma}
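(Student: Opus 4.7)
The plan is to reduce the lemma to a direct change-of-basis computation, using the one-dimensionality of $V_\B^+$ over $F$ and the transformation rule for $\Reg_{\mathscr B}(\MM)$ already recorded in Remark \ref{bases-rem}.

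First I would observe that, by Eichler--Shimura theory (as recalled after isomorphism \eqref{Betti-iso-V}), the $F$-vector space $V_\B^+$ has dimension $1$. Thus there exists a (unique) $\lambda\in F^\times$ with $\gamma'=\lambda\gamma$. Next I would unwind Definition \ref{period-def}: the period $\Omega^{(\gamma)}_\infty$ is characterized by the equality $\alpha_\MM(\gamma)=\Omega^{(\gamma)}_\infty\cdot\omega_f$ in ${t(\MM)}_\infty$, where $\alpha_\MM$ is the $F_\infty$-linear period map from \eqref{period-eq}. By $F_\infty$-linearity of $\alpha_\MM$, we get
\[
\alpha_\MM(\gamma')=\alpha_\MM(\lambda\gamma)=\lambda\cdot\Omega^{(\gamma)}_\infty\cdot\omega_f,
\]
whence $\Omega^{(\gamma')}_\infty=\lambda\cdot\Omega^{(\gamma)}_\infty$ under the identification $F\hookrightarrow F_\infty$ of Remark \ref{deligne-rem2}. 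In particular, the quotient $\Omega^{(\gamma')}_\infty\big/\Omega^{(\gamma)}_\infty$ lies in $\iota_\Sigma(F^\times)$.

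Then I would turn to the regulators. Write $r\defeq r_\alg(\MM)$ and let $P\in\GL_r(F)$ be the transition matrix from $\mathscr B$ to $\mathscr B'$. By Remark \ref{bases-rem},
\[
\Reg_{\mathscr B'}(\MM)=\det(P)^2\cdot\Reg_{\mathscr B}(\MM),
\]
and since $\det(P)^2\in F^\times\hookrightarrow F_\infty^\times$, the ratio $\Reg_{\mathscr B'}(\MM)\big/\Reg_{\mathscr B}(\MM)$ also lies in $\iota_\Sigma(F^\times)$. Combining the two displays yields a scalar $\mu\defeq\lambda\cdot\det(P)^2\in F^\times$ such that
\[
\Omega^{(\gamma')}_\infty\cdot\Reg_{\mathscr B'}(\MM)=\mu\cdot\Omega^{(\gamma)}_\infty\cdot\Reg_{\mathscr B}(\MM)
\]
inside $F_\infty$. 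Dividing $L^*(\MM,0)\in F_\infty^\times$ (Remark \ref{leading-term-rem}) by either side differs by multiplication by $\mu^{\pm1}\in F^\times$, so one ratio lies in $\iota_\Sigma(F^\times)$ precisely when the other does; this is the claimed equivalence.

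I do not anticipate a genuine obstacle: the argument is a bookkeeping exercise based on the linearity of $\alpha_\MM$, the $1$-dimensionality of $V_\B^+$ over $F$, and the already-stated transformation rule for $\Reg_{\mathscr B}$. The one subtlety worth double-checking is the identification $F\subset F_\infty$ via $\iota_\Sigma$ (Remark \ref{deligne-rem2}), so that assertions of the form ``lies in $F^\times$'' are interpreted consistently on both sides of the equivalence.
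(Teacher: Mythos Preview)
Your proof is correct and follows essentially the same approach as the paper: both use the $1$-dimensionality of $V_\B^+$ over $F$ to see that the periods differ by an element of $F^\times$, invoke Remark \ref{bases-rem} for the regulators, and conclude. Your version simply spells out the computation of $\Omega_\infty^{(\gamma')}=\lambda\cdot\Omega_\infty^{(\gamma)}$ and the combined scalar $\mu=\lambda\cdot\det(P)^2$ more explicitly than the paper does.
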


\begin{proof} Since the $F$-vector space $V_\B^+$ is $1$-dimensional, $\gamma$ and $\gamma'$ differ by multiplication by an element of $F^\times$, and then the same is true of the periods $\Omega^{(\gamma)}_\infty$ and $\Omega^{(\gamma')}_\infty$. On the other hand, as was pointed out in Remark \ref{bases-rem}, the regulators $\Reg_{\mathscr B}(\MM)$ and $\Reg_{\mathscr B'}(\MM)$ differ by multiplication by an element of $F^\times$ as well, and the lemma is proved. \end{proof}

\subsection{The fundamental line of $\MM$} \label{fundamental-subsec}

We introduce the ``fundamental line'' of $\MM$ in the formulation of Fontaine--Perrin-Riou (\cite{FPR}), using the theory of determinants described in Appendix \ref{determinants-subsec}, to which the reader is referred for details. In this case, the projective modules that play a role are vector spaces over $F$. 

In the definition that follows, $V^+_B$ is the $F$-subspace of $V_B$ from \eqref{V-plus-eq}, $t(\MM)$ is the tangent space of $\MM$ introduced in \eqref{tangent-eq} and $H^1_\mot(\Q,\MM)$ is the cohomology group from \eqref{motivic-def-eq}.

\begin{definition} \label{fundline}
The \emph{fundamental line of $\MM$} is 
\[ \Delta(\MM)\defeq \Det_{F}^{-1}\bigl(H^1_\mot(\Q,\MM)\bigr)\cdot\Det_{F}\bigl(H^1_\mot(\Q,\MM)^*\bigr)\cdot\Det_{F}\bigl(t(\MM)\bigr)\cdot\Det_{F}^{-1}\bigl(V_\B^+\bigr). \]
\end{definition} 

Note that, by construction, the $F$-vector space underlying $\Delta(\MM)$ is $1$-dimensional.

\begin{remark} 
In order to compare Definition \ref{fundline} with \cite[Definition 2.4]{Kings}, recall from \eqref{motivic-def-eq} that $H^0_\mot(\Q,\MM)=0$ and keep in mind that $\MM^\vee(1)\simeq\MM$.
\end{remark}

\subsection{Rationality conjecture} \label{rationality-subsec}

For now, let us assume that
\begin{itemize}
\item Conjecture \ref{nondegconj} holds true.
\end{itemize}
Define the $\R$-vector space
\[ {\Delta(\MM)}_\infty\defeq \Delta(\MM)\otimes_\Q\R; \] 
then ${\Delta(\MM)}_\infty\simeq\Delta(\MM)\otimes_FF_\infty$ is a free $F_\infty$-module of rank $1$. Conjecture \ref{nondegconj}, which we are assuming, ensures that the Gillet--Soul\'e height pairing ${\langle\cdot,\cdot\rangle}_{\GS,\infty}$ from \eqref{GS-eq2} induces an isomorphism of $F_\infty$-modules
\[ {H^1_\mot(\Q,\MM)}_\infty\overset\simeq\longrightarrow{H^1_\mot(\Q,\MM)}^*_\infty. \]
Combining the base change formula \eqref{det-basechange-eq} for determinants, the multiplicativity \eqref{det-multiplicativity-eq} of determinants in short exact sequences and isomorphism \eqref{period-eq}, we obtain an isomorphism 
\[ \theta_\infty:{\Delta(\MM)}_\infty\overset\simeq\longrightarrow(F_\infty,0) \]
of $F_\infty$-modules. 

\subsubsection{Rationality conjecture} \label{ratconj-subsubsec}

The following conjecture is essentially due to Beilinson (\cite{Beilinson-higher}) and Deligne (\cite[Conjecture 1.8]{deligne-valeurs}).

\begin{conjecture}[Rationality conjecture] \label{ratconj}
There exists $\zeta_f\in\Delta(\MM)$ such that the equality
\[ \theta_\infty(\zeta_f)=L^*(\MM,0)^{-1} \]
holds in $F_\infty^\times$.
\end{conjecture}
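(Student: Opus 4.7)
The plan is to prove Conjecture \ref{ratconj} only in the two cases $r_\an(\MM)\in\{0,1\}$ addressed by the introduction (the conjecture is out of reach in general). The first step, common to both, is to unwind $\theta_\infty$ explicitly. Fixing $\gamma\in V_\B^+\smallsetminus\{0\}$ and a basis $\mathscr B$ of $H^1_\mot(\Q,\MM)$ over $F$, multiplicativity of determinants together with the period isomorphism $\alpha_\MM$ of \eqref{period-eq} and the self-duality of ${H^1_\mot(\Q,\MM)}_\infty$ induced by $\langle\cdot,\cdot\rangle_{\GS,\infty}$ identifies $\theta_\infty$ with multiplication by $\bigl(\Omega^{(\gamma)}_\infty\cdot\Reg_{\mathscr B}(\MM)\bigr)^{-1}$ on the underlying one-dimensional $F_\infty$-line. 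By Lemma \ref{rationality-lemma}, the existence of the sought $\zeta_f$ is then equivalent to
\[
L^*(\MM,0)\,\Big/\,\bigl(\Omega^{(\gamma)}_\infty\cdot\Reg_{\mathscr B}(\MM)\bigr)\in F^\times,
\]
interpreted componentwise via the identification $F_\infty=\R^\Sigma$ of Remark \ref{deligne-rem2}.

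In analytic rank $0$, Lemma \ref{low-rank-lemma} yields $r_\an(f^\sigma)=0$ for every $\sigma\in\Sigma$, and under the paper's standing hypotheses one has $r_\alg(\MM)=0$, so $H^1_\mot(\Q,\MM)=0$ and $\Reg_{\mathscr B}(\MM)=1$. The requirement then collapses to the classical Shimura/Deligne rationality of the central critical value $L(f^\sigma,k/2)/\Omega_\infty^{(\gamma),\sigma}$ for each $\sigma$, which is known unconditionally for newforms.

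In analytic rank $1$ the strategy is to build the required basis of $H^1_\mot(\Q,\MM)$ from a Heegner-type cycle. Choose an imaginary quadratic field $K$ satisfying the Heegner hypothesis for $N$ and such that $L(f^K,k/2)\ne0$; such $K$ exist by Waldspurger and Bump--Friedberg--Hoffstein/Murty--Murty. Let $y_K$ denote the Heegner cycle of Nekov\'a\v{r} on the Kuga--Sato variety, cut out by the $f$-isotypic projector, and let $y\in H^1_\mot(\Q,\MM)$ be the projection of $y_K$ to the $\Gal(K/\Q)$-sign component matching the root number of $f$ (the orthogonal component contributes only to $\MM(f^K)$ and is irrelevant here). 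Combining the factorization
\[
\Lambda^*(\MM/K,0)=\Lambda^*(\MM,0)\cdot\Lambda\bigl(\MM(f^K),0\bigr)
\]
with the rank-$0$ rationality applied to the twist $\MM(f^K)$, and invoking S.-W.\ Zhang's Gross--Zagier formula \cite{Zhang-heights}, which expresses $\langle y_K,y_K\rangle_{\GS}$ as an $F$-rational multiple of the first derivative of $\Lambda(\MM/K,s)$ at $s=0$ divided by the appropriate product of periods, one obtains the desired rationality after taking $\mathscr B=\{y\}$, provided that $y$ is nonzero and generates $H^1_\mot(\Q,\MM)$.

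The hardest step, in both ranks, is matching the motivic side with the analytic side rather than producing an $L$-value identity: one must know that $\dim_F H^1_\mot(\Q,\MM)=r_\an(\MM)$ and that a basis is realised by the cycle classes entering the Shimura or Gross--Zagier formulas. The upper bound $\dim_F H^1_\mot(\Q,\MM)\le r_\an(\MM)$ is exactly the Euler-system output supplied by Theorem C of the introduction, while the non-vanishing of $y_K$ in $H^1_\mot(K,\MM)$ is precisely where the injectivity of the $p$-adic Abel--Jacobi map assumed in \S\ref{assumptions-intro-subsubsec2} enters. Once these two inputs are granted, what remains is a bookkeeping exercise with determinants of one-dimensional $F$-spaces, the comparison isomorphism between Betti and de Rham, and the Shimura/Zhang rationality identities for critical and near-critical $L$-values.
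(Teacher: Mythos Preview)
Your overall strategy matches the paper's: reduce the conjecture to the rationality of $L^*(\MM,0)\big/\bigl(\Omega_\infty^{(\gamma)}\cdot\Reg_{\mathscr B}(\MM)\bigr)$ via Proposition \ref{rationality-prop}, handle rank $0$ by Shimura-type rationality of algebraic parts (Proposition \ref{speciavalueslemma} and Theorem \ref{ratconj0thm}), and in rank $1$ bootstrap from the rank-$0$ case applied to the twist $f^K$ combined with Zhang's Gross--Zagier formula (Theorem \ref{ratconj1thm}).

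However, you misidentify one key input. The equality $r_\alg(\MM)=r_\an(\MM)$ in low rank is \emph{not} supplied by Theorem C. In the paper this is Theorem \ref{nekovar-thm2}, which rests on Nekov\'a\v{r}'s Euler-system bound (Theorem \ref{nekovar-thm}) together with the injectivity of $\AJ_{K,\p}$ on the Heegner module (part (1) of Proposition \ref{coro zhang}) and the $p$-adic regulator isomorphism (Conjecture \ref{regpconj}). Theorem C, \emph{i.e.}, Kolyvagin's conjecture, plays no role in the proofs of Theorems \ref{ratconj0thm} and \ref{ratconj1thm}; it enters only later, in the proof of the $p$-TNC (Theorem \ref{ThmTNC}), where it is needed to compute the length of $\Sha_p^{\BKK}(K,\MM)$ via Theorem \ref{teorema-masoero}.

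You also gloss over a step that in the paper requires genuine work: in rank $1$, after applying Zhang's formula and factoring, one must show that the quantity $\Omega_f\cdot\Omega_{f^K}\cdot\sqrt{|D_K|}\big/\pi^2{(f,f)}_{\Gamma_0(N)}$ lies in $F^\times$ and transforms correctly under $\sigma\in\Sigma$. The paper handles this via Lemma \ref{lemma4.4} (relating $\Omega_{f^K}^\mp$ to $\sqrt{D_K}\cdot\Omega_f^\pm$) together with the fact, drawn from \cite{Hsieh-triple}, that $i\Omega_f^+\Omega_f^-\big/\pi^2{(f,f)}_{\Gamma_0(N)}\in F^\times$; the analogous Galois-equivariance of $\Reg_{\mathscr B}^{\iota_F}(\MM)\big/{\langle s_f',s_f'\rangle}_{\GS}$ is extracted from Proposition \ref{prop Heegner}. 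This is more than bookkeeping with determinants.
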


The element $\zeta_f$ is called a \emph{zeta element} and $\{\zeta_f\}$ is, of course, a basis of $\Delta(\MM)$ over $F$. Moreover, since $\theta_\infty$ is an isomorphism, such a $\zeta_f$ is unique if it exists. Under some technical conditions, later in this paper we will prove Conjecture \ref{ratconj} when $r_\an(\MM)\in\{0,1\}$ (Theorems \ref{ratconj0thm} and \ref{ratconj1thm}).

\subsubsection{A variant of the rationality conjecture} \label{ratconj2-subsubsec}

Now we offer an alternative formulation of Conjecture \ref{ratconj} that involves the completed $L$-function $\Lambda(\MM,s)$. In light of equality \eqref{completedL}, a straightforward computation shows that Conjecture \ref{ratconj} is equivalent to

\begin{conjecture}[Rationality conjecture, second version] \label{ratconj2}
There exists $\zeta_f^*\in\Delta(\MM)$ such that the equality
\[ \theta_\infty(\zeta_f^*)=\Bigl((2\pi i)^{k/2}\Lambda^*(\MM,0)\Bigr)^{-1} \]
holds in $F_\infty^\times$.
\end{conjecture}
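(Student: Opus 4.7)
The plan is to show the equivalence is purely formal, using only the $F$-linearity of the isomorphism $\theta_\infty$ together with the comparison between $L^*(\MM,0)$ and $\Lambda^*(\MM,0)$ recorded in \eqref{completedL}. No new input beyond what the excerpt already supplies is needed; the statement is really just a rescaling of Conjecture~\ref{ratconj} by a non-zero rational factor.

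Concretely, I would first invoke the identity
\[
\Lambda^*(\MM,0)=\frac{(k/2-1)!\cdot L^*(\MM,0)}{(2\pi i)^{k/2}},
\]
which is \eqref{completedL} applied to the leading term at $s=0$. Rearranging gives
\[
\bigl((2\pi i)^{k/2}\Lambda^*(\MM,0)\bigr)^{-1}=\frac{1}{(k/2-1)!}\cdot L^*(\MM,0)^{-1}
\]
in $F_\infty^\times$, using that $(k/2-1)!\in\Z_{>0}\subset F^\times$ and that $L^*(\MM,0)\in F_\infty^\times$ by Remark~\ref{leading-term-rem}.

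Next, assuming Conjecture~\ref{ratconj} holds, pick the zeta element $\zeta_f\in\Delta(\MM)$ with $\theta_\infty(\zeta_f)=L^*(\MM,0)^{-1}$ and set
\[
\zeta_f^*\defeq\frac{1}{(k/2-1)!}\cdot\zeta_f\in\Delta(\MM),
\]
which makes sense because $(k/2-1)!\in F^\times$ acts on the one-dimensional $F$-vector space $\Delta(\MM)$. Since $\theta_\infty$ is an $F_\infty$-module isomorphism and $F\subset F_\infty$, we have
\[
\theta_\infty(\zeta_f^*)=\frac{1}{(k/2-1)!}\cdot\theta_\infty(\zeta_f)=\frac{L^*(\MM,0)^{-1}}{(k/2-1)!}=\bigl((2\pi i)^{k/2}\Lambda^*(\MM,0)\bigr)^{-1},
\]
which is Conjecture~\ref{ratconj2}. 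Conversely, given a $\zeta_f^*$ as in Conjecture~\ref{ratconj2}, the element $\zeta_f\defeq(k/2-1)!\cdot\zeta_f^*\in\Delta(\MM)$ satisfies the requirement of Conjecture~\ref{ratconj} by the same computation run backwards, and uniqueness of both zeta elements (since $\theta_\infty$ is an isomorphism) guarantees we are not making any ambiguous choice.

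There is no real obstacle here: the only things to be careful about are that the rescaling factor $(k/2-1)!$ indeed lies in $F^\times$ (automatic since $F\supset\Q$), that $\theta_\infty$ is $F$-linear (which is how it is constructed in \S\ref{rationality-subsec} from the $F_\infty$-linear base-change formalism for determinants), and that both sides of the desired equality genuinely live in $F_\infty^\times$ (which follows from Remark~\ref{leading-term-rem} for $\Lambda^*(\MM,0)$ and from Conjecture~\ref{nondegconj} for the regulator piece encoded in $\theta_\infty$). Thus the argument is a one-line rescaling, and the statement in the excerpt's prose (``a straightforward computation shows'') is literally accurate.
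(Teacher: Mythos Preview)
Your proposal is correct and follows precisely the paper's own approach: the paper states that one switches between the two conjectures via the relation $\zeta_f^*=\zeta_f\big/(k/2-1)!$, which is exactly the rescaling you carry out using \eqref{completedL}.
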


One can switch between Conjecture \ref{ratconj} and Conjecture \ref{ratconj2} by means of the relation $\zeta_f^*=\zeta_f\big/(k/2-1)!$.   

\subsubsection{A reformulation of the rationality conjecture}

The term ``rationality'' in Conjecture \ref{ratconj} is justified by the reformulation below, which involves the leading term $L^*(\MM,0)$ and the Gillet--Soul\'e $\mathscr B$-regulator $\Reg_{\mathscr B}(\MM)$.

\begin{remark} \label{det-rem}
Let $R$ be a ring, let $M$ be a free $R$-module of finite rank, say $r$, and write $M^*\defeq\Hom_R(M,R)$ for the $R$-linear dual of $M$. Let $\langle\cdot,\cdot\rangle:M\times M\rightarrow R$ be an $R$-bilinear pairing and let $f:M\rightarrow M^*$ be the $R$-linear map given by $t\mapsto\langle t,\cdot\rangle$. Choose a basis $\mathscr B=\{t_1,\dots,t_r\}$ of $M$ over $R$, let $\mathscr B^*$ be the dual basis of $M^*$, set $A\defeq\bigl(\langle t_i,t_j\rangle\bigr)_{1\leq i,j\leq r}$ and denote by $\det(f)^{\mathscr B}_{\mathscr B^*}$ the determinant of $f$ computed with respect to $\mathscr B$ and $\mathscr B^*$. A straightforward calculation shows that $\det(f)^{\mathscr B}_{\mathscr B^*}=\det(A)$.
\end{remark}

Like Lemma \ref{rationality-lemma}, the result we are about to state should be understood in terms of the embedding $\iota_\Sigma$, as explained in Remark \ref{deligne-rem2}.

\begin{proposition} \label{rationality-prop}
Conjecture $\ref{ratconj}$ is equivalent to $L^*(\MM,0)\big/\bigl(\Omega^{(\gamma)}_\infty\cdot\Reg_\mathscr B(\MM)\bigr)\in F^\times$ for all $\gamma\in V^+_\B\smallsetminus\{0\}$ and all bases $\mathscr B$ of $H^1_\mot(\Q,\MM)$ over $F$. 
\end{proposition}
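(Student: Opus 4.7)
The plan is to compute $\theta_\infty$ explicitly on an $F$-basis of the one-dimensional $F$-vector space $\Delta(\MM)$ built from $\gamma$, $\omega_f$ and $\mathscr B$, and then read off the equivalence with Conjecture \ref{ratconj}. By Lemma \ref{rationality-lemma}, the condition $L^*(\MM,0)/(\Omega^{(\gamma)}_\infty\cdot\Reg_\mathscr B(\MM))\in F^\times$ does not depend on the choice of the pair $(\gamma,\mathscr B)$, so it suffices to prove the desired equivalence for a single such choice, which I fix from here on.

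Let $\mathscr B=\{t_1,\dots,t_r\}$ and let $\mathscr B^*=\{t_1^*,\dots,t_r^*\}$ denote the dual basis of $H^1_\mot(\Q,\MM)^*$. By Definition \ref{fundline}, the element
\[ \xi_{\gamma,\mathscr B}\defeq (t_1\wedge\cdots\wedge t_r)^{-1}\otimes(t_1^*\wedge\cdots\wedge t_r^*)\otimes\omega_f\otimes\gamma^{-1} \]
is an $F$-basis of $\Delta(\MM)$, so every element of $\Delta(\MM)$ is uniquely of the form $c\cdot\xi_{\gamma,\mathscr B}$ for some $c\in F$. Consequently, the existence of $\zeta_f\in\Delta(\MM)$ satisfying $\theta_\infty(\zeta_f)=L^*(\MM,0)^{-1}$ is equivalent to the existence of $c\in F$ with $c\cdot\theta_\infty(\xi_{\gamma,\mathscr B})=L^*(\MM,0)^{-1}$, that is, to $L^*(\MM,0)^{-1}\cdot\theta_\infty(\xi_{\gamma,\mathscr B})^{-1}\in F^\times$.

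It remains to compute $\theta_\infty(\xi_{\gamma,\mathscr B})$ by unravelling the definition of $\theta_\infty$ in \S \ref{rationality-subsec}. The isomorphism $\theta_\infty$ is obtained as the product of two trivializations, compatibly with the factorization of $\Delta(\MM)$. On the factor $\Det_F^{-1}(H^1_\mot(\Q,\MM))\cdot\Det_F(H^1_\mot(\Q,\MM)^*)$, the height pairing ${\langle\cdot,\cdot\rangle}_{\GS,\infty}$ induces an adjoint isomorphism $H^1_\mot(\Q,\MM)_\infty\xrightarrow{\sim}H^1_\mot(\Q,\MM)^*_\infty$ whose determinant with respect to $\mathscr B$ and $\mathscr B^*$ equals $\Reg_\mathscr B(\MM)$ by Remark \ref{det-rem}; this yields a trivialization carrying $(t_1\wedge\cdots\wedge t_r)^{-1}\otimes(t_1^*\wedge\cdots\wedge t_r^*)$ to $\Reg_\mathscr B(\MM)^{-1}$. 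On the factor $\Det_F(t(\MM))\cdot\Det_F^{-1}(V_\B^+)$, the period map $\alpha_\MM$ has determinant $\Omega^{(\gamma)}_\infty$ with respect to $\{\gamma\}$ and (the image of) $\{\omega_f\}$ by Definition \ref{period-def}, so the corresponding trivialization sends $\omega_f\otimes\gamma^{-1}$ to $(\Omega^{(\gamma)}_\infty)^{-1}$. Multiplying the two contributions gives
\[ \theta_\infty(\xi_{\gamma,\mathscr B})=\bigl(\Omega^{(\gamma)}_\infty\cdot\Reg_\mathscr B(\MM)\bigr)^{-1}. \]

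Plugging this back into the reformulation above, Conjecture \ref{ratconj} is equivalent to $L^*(\MM,0)/(\Omega^{(\gamma)}_\infty\cdot\Reg_\mathscr B(\MM))\in F^\times$ for the fixed pair $(\gamma,\mathscr B)$, and hence, by Lemma \ref{rationality-lemma}, for every choice of such a pair. The main obstacle is the careful bookkeeping inside the determinant formalism of Appendix \ref{determinants-subsec}: one must track which factors in $\Delta(\MM)$ acquire inverses under $\theta_\infty$ and verify that the two trivializations coming from ${\langle\cdot,\cdot\rangle}_{\GS,\infty}$ and $\alpha_\MM$ combine to produce exactly the above product. Once these conventions are pinned down, the argument is a direct unfolding of Definitions \ref{fundline}, \ref{defreg}, \ref{period-def} and the construction of $\theta_\infty$.
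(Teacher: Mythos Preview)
Your proof is correct and follows essentially the same approach as the paper: reduce via Lemma \ref{rationality-lemma} to a single pair $(\gamma,\mathscr B)$, build the explicit $F$-basis $\xi_{\gamma,\mathscr B}$ of $\Delta(\MM)$ (the paper calls it $\zeta^\gamma_{\mathscr B}$), compute $\theta_\infty(\xi_{\gamma,\mathscr B})=(\Omega^{(\gamma)}_\infty\cdot\Reg_{\mathscr B}(\MM))^{-1}$ via Remark \ref{det-rem} and Definition \ref{period-def}, and read off the equivalence.
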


\begin{proof} Thanks to Lemma \ref{rationality-lemma}, it is enough to prove the claim for fixed $\gamma$ and $\mathscr B$ as above. Thus, let $\mathscr B=\{t_1,\dots,t_r\}$ be a basis of $H^1_\mathrm{mot}(\Q,\MM)$ over $F$, where $r=r_\alg(\MM)$, and let $\mathscr B^*=\{t_1^*,\dots,t_r^*\}$ be the dual basis of $H^1_\mathrm{mot}(\Q,\MM)^*$. Define
\[ \underline{t}_\mathscr B\defeq t_1\wedge\cdots\wedge t_r,\quad\underline{t}_\mathscr B^*\defeq t_1^*\wedge \cdots\wedge t_r^*, \] 
so that $\bigl\{\underline{t}_\mathscr B\bigr\}$ and $\bigl\{\underline{t}_\mathscr B^*\bigr\}$ are $F$-bases of $\bigwedge^r H^1_\mathrm{mot}(\Q,\MM)$ and $\bigwedge^r H^1_\mathrm{mot}(\Q,\MM)^*$, respectively. Pick $\gamma\in V_\B^+\smallsetminus\{0\}$ and set
\[ \zeta^\gamma_\mathscr B\defeq\underline{t}_\mathscr B^{-1}\otimes\underline{t}_\mathscr B^*\otimes\gamma^{-1}\otimes\omega_f, \] 
where $\omega_f\in {t(\MM)}_\infty$ is the differential form in \eqref{omega_f-eq}. Then $\bigl\{\zeta^\gamma_\mathscr B\bigr\}$ is a basis of $\Delta(\MM)$ over $F$ and, in light of Remark \ref{det-rem}, there is an equality 
\begin{equation} \label{beta-eq}
\theta_\infty\bigl(\zeta^\gamma_\mathscr B\bigr)=\bigl(\Omega^{(\gamma)}_\infty\cdot\Reg_\mathscr B(\MM)\bigr)^{-1}
\end{equation}
(see, \emph{e.g.}, \cite[Example 1.30]{Kings} for the computation of determinants). On the other hand, since any $\zeta_f$ as in Conjecture \ref{ratconj} differs from $\zeta^\gamma_\mathscr B$ by multiplication by an element of $F^\times$, Conjecture \ref{ratconj} is equivalent to the assertion that
\begin{equation} \label{beta-eq2}
a\theta_\infty\bigl(\zeta^\gamma_\mathscr B\bigr)=L^*(\MM,0)^{-1}
\end{equation}
for some $a\in F^\times$. The desired result follows by combining \eqref{beta-eq} and \eqref{beta-eq2}. \end{proof}

Since Conjectures \ref{ratconj} and \ref{ratconj2} are equivalent, Proposition \ref{rationality-prop} offers a reformulation of Conjecture \ref{ratconj2} as well.

\subsection{Local Galois cohomology} \label{local-subsec}

Let $K$ be a number field, $p$ a prime number, $v$ a place of $K$ and $V$ a $p$-adic representation of $G_{K_v}$. For a continuous $G_{K_v}$-module $M$ we write 
\[ \RR\Gamma(K_v,M)\defeq\mathcal{C}^\bullet(G_{K_v},M) \]
for the complex of continuous cochains of $G_{K_v}$ with values in $M$. Let $t(V)$ be as in \eqref{V-tangent-eq} and consider the complex 
\[ \RR\Gamma_f(K_v,V)\defeq\begin{cases}
\Bigl(\,\mathbf{D}_\crys(V)\xrightarrow{(1-\phi,\mathrm{pr})}\mathbf{D}_\crys(V)\oplus t(V)\Bigr)&\text{if $v\,|\,p$},\\[3mm]
\RR\Gamma(K_v,V) & \text{if $v\,|\,\infty$,} \\[3mm]
\Bigl(V^{I_v}\xrightarrow{1-\Frob_v}V^{I_v}\Bigr) & \text{if $v\nmid p\infty$},
\end{cases} \]
where $\phi$ is, as above, the Frobenius of $\mathbf{D}_\crys(V)$ and $\mathrm{pr}:\mathbf{D}_\crys(V)\rightarrow t(V)$ is the canonical map (\emph{cf.} Remark \ref{fontaine-rem}). Note that if $v\neq\infty$, then $\RR\Gamma_f(K_v,V)$ is concentrated in degrees $0$ and $1$. Denote by $H^\bullet_f(K_v,V)$ the cohomology of $\RR\Gamma_f(K_v, V)$. In particular, if $v\nmid p\infty$, then
\begin{equation} \label{h^0_f-eq}
H^0_f(K_v,V)=H^0(K_v,V)
\end{equation}
and
\begin{equation} \label{h^1_f-eq}
H^1_f(K_v,V)=H^1_\mathrm{unr}(K_v,V)\defeq H^1\bigl(\Gal(K_v^\mathrm{unr}/K_v),V^{I_v}\bigr).
\end{equation}
We also set  
\[ H^1_s(K_v,V)\defeq H^1(K_v,V)\big/H^1_f(K_v, V) \]
and call it the \emph{singular part of $V_p$ at $v$.} The complex $\RR\Gamma_f(K_v,V)$ is quasi-isomorphic to a subcomplex of the complex $\RR\Gamma(K_v,V)$, and we define $\RR\Gamma_s(K_v,V)$ to be the cokernel of the corresponding inclusion map. 

\begin{remark}
If $V$ is a $p$-adic representation of $G_K$, then we call $H^j_f(K_v,V)$ the \emph{$j$-th finite cohomology group of $V$ at $v$}.
\end{remark}

%
 
\subsection{Global Galois cohomology} \label{global-subsec} 

Let $K$ be a number field, write $\mathscr P_K$ for the set of (archimedean and non-archimedean) primes of $K$ and let $p$ be a prime number. The set
\begin{equation} \label{S-def-eq}
S\defeq\bigl\{v\in\mathscr P_K\mid\text{$v$ divides $p\infty$}\bigr\}\cup\bigl\{v\in\mathscr P_K\mid\text{$V_p$ is ramified at $v$}\bigr\}
\end{equation}
is clearly finite. Let us write $G_{K,S}$ for the Galois group over $K$ of the maximal extension of $K$ unramified outside $S$. Finally, for any continuous $G_{K,S}$-module $M$ denote by
\[ \RR\Gamma(G_{K,S},M)\defeq\mathcal{C}^\bullet(G_{K,S},M) \]
the complex of continuous cochains of $G_{K,S}$ with values in $M$.  

\begin{remark} \label{S-rem}
For our later arguments, it would be equally fine to fix, in place of the set $S$ defined in \eqref{S-def-eq}, any subset of $\mathscr P_K$ containing $S$.
\end{remark}

\subsubsection{Finite cohomology} \label{finite-subsubsec}

The \emph{finite complex of $V_p$} is
\[ \RR\Gamma_f(K,V_p)\defeq\mathrm{Cone}\biggl(\RR\Gamma(G_{K,S},V_p)\longrightarrow \bigoplus_{v\in S}\RR\Gamma_s(K_v,V_p)\biggr)[-1]. \] 
We denote by $H^\bullet_f(K,V_p)$ the cohomology of $\RR\Gamma_f(K,V_p)$ and call it the \emph{finite} (or \emph{unramified}) \emph{cohomology of $V_p$} over $K$. In particular, by \cite[Lemma 19]{BF}, we have $H^j_f(\Q, V_p)=0$ for $j\notin\{0,1,2,3\}$ and there are isomorphisms
\begin{equation} \label{Seldual}
H^j_f(\Q, V_p)\simeq H^{3-j}_f(\Q, V_p)^*,
\end{equation} 
where, as before, $(\cdot)^*$ denotes the $\Q_p$-linear dual (note that we are implicitly using the fact that $V_p\simeq V_p^*(1)$).

\subsubsection{Cohomology with compact support} \label{compactcone}

We introduce cohomology with compact support only for $K=\Q$; for simplicity, let us set $G_S\defeq G_{\Q,S}$. See, \emph{e.g.}, \cite[\S 5.3]{Nek-Selmer} for the case of a general global field.

Let $M$ be a continuous $G_S$-module. The \emph{compact complex of $M$} is 
\[ \RR\Gamma_c(G_S,M)=\mathrm{Cone}\biggl(\RR\Gamma(G_S,M)\longrightarrow \bigoplus_{v\in S}\RR\Gamma(\Q_v,M)\biggr)[-1] \]
We denote by $H^\bullet_c(\Q,M)$ the cohomology of $\RR\Gamma_c(G_S,M)$ and call it the \emph{cohomology with compact support of $M$}. Observe that there is a triangle
\begin{equation} \label{triangle}
\RR\Gamma_c(G_S,V_p)\longrightarrow\RR\Gamma_f(\Q,V_p)\longrightarrow\bigoplus_{v\in S}\RR\Gamma_f(\Q_v, V_p) 
\end{equation}
that can be made into a true triangle (see \cite[\S 3.2]{BF}).

\subsection{The $p$-adic \'etale regulator of $\MM$} \label{p-adic-reg-subsec}

We introduce the $p$-adic \'etale regulator of the modular motive $\MM$ over a number field. 

\subsubsection{Anaemic splittings in \'etale cohomology} \label{anaemic-splitting-subsubsec}

Let $p$ be a prime number, fix a prime $\p$ of $F$ above $p$ and let $\iota_\p:\bar\Q\hookrightarrow\bar\Q_p$ be an embedding that induces $\p$. With $\Pi$ as in \S \ref{notation-subsec}, define
\[ {W}_{\bar\Q_p}\defeq\Pi\cdot H^{k-1}_\et\bigl(\bar{X},\bar\Q_p(k/2)\bigr). \]
If we set $\theta_\p\defeq\iota_\p\circ\theta$ and let $\theta$ range over all homomorphisms $\theta:\mathfrak{H}_k(\Gamma_0(N))_{\bar\Q}\rightarrow\bar\Q$ of $\bar\Q$-algebras, then $\theta_\p$ varies over all homomorphisms $\mathfrak{H}_k(\Gamma_0(N))_{\bar\Q}\rightarrow\bar\Q_p$ of $\bar\Q$-algebras. It follows that there is an ``anaemic'' splitting
\begin{equation} \label{split-1}
{W}_{\bar\Q_p}=\bigoplus_\theta{W}_{\bar\Q_p}[\theta_\p],
\end{equation} 
where $W_{\bar\Q_p}[\theta_\p]$ is the $\theta_\p$-eigenspace of $W_{\bar\Q_p}$ under the action of $\mathfrak{H}_k(\Gamma_0(N))_{\bar\Q}$. Recall from \S \ref{isotypic-subsubsec} that $\theta_f$, which arises as a map $\mathfrak H_k(\Gamma_0(N))\rightarrow\cO_F$, can also be viewed as a map $\mathfrak H_k(\Gamma(N))\rightarrow\cO_F$ via the surjection $\mathfrak H_k(\Gamma(N))\twoheadrightarrow\mathfrak H_k(\Gamma_0(N))$. In particular, restriction gives a map $\theta_f:\mathfrak H_{k,\bar\Q}^{(N)}\rightarrow\bar\Q$. Set  
\[ {W}_\p\defeq\Pi\cdot H^{k-1}_\et\bigl(\bar{X},F_\p(k/2)\bigr), \]
so that $V_\p=W_\p[\theta_f]$. By a slight abuse of notation, we adopt the same symbol for $\theta_f$ and $\iota_\p\circ\theta_f$, which allows us to view ${W}_{\bar\Q_p}[\theta_f]$ as one of the direct summands appearing in \eqref{split-1}. There is a canonical injection ${W}_\p\hookrightarrow{W}_{\bar\Q_p}$ that gives rise to a commutative square 
\begin{equation} \label{commutative-regulator-eq}
\xymatrix{{W}_\p\ar@{^(->}[r]&{W}_{\bar\Q_p}\ar@{->>}[d]\\
V_\p\ar@{^(->}[r]\ar@{^(->}[u]& {W}_{\bar\Q_p}[\theta_f],} 
\end{equation}
where the right vertical arrow is the projection induced by \eqref{split-1} and the other maps are the obvious injections. Thus, we obtain from \eqref{commutative-regulator-eq} a canonical surjection $\pi_{f,\p}:W_\p\twoheadrightarrow V_\p$. Finally, set
\[ {W}_p\defeq\bigoplus_{\p\mid p}{W}_\p. \]
Taking sums over all $\p\,|\,p$, the maps $\pi_{f,\p}$ yield a canonical surjection $\pi_{f,p}:{W}_p\twoheadrightarrow V_p$.

\subsubsection{$p$-adic \'etale regulator} \label{regulator-subsubsec}

Let $K$ be a number field. For $\star\in\{p\}\cup\{\p\,|\,p\}$, set
\begin{equation} \label{H^1-mot-star-eq}
H^1_\mot(K,\MM{)}_\star\defeq H^1_\mot(K,\MM)\otimes_FF_\star.
\end{equation}
Denote by
\begin{equation} \label{AJ}
\AJ_{K,\Z_p}:\CH^{k/2}_0(X/K)\longrightarrow H^1\Bigl(K,H^{k-1}_\text{\'et}\bigl(\bar X,\Z_p(k/2)\bigr)\!\Bigr) 
\end{equation}
the (integral) $p$-adic Abel--Jacobi map induced by the $p$-adic cycle class map (see, \emph{e.g.}, \cite[\S 4]{Nek}, \cite[\S 1]{Nek2}, \cite[\S 1]{Nek3}). With notation as in \eqref{R-chow-eq}, for each prime $\p$ of $F$ above $p$ we obtain a map 
\[ \AJ_{K,F_\p}:{\CH^{k/2}_0(X/K)}_{F_\p}\longrightarrow H^1\Bigl(K,H^{k-1}_\text{\'et}\bigl(\bar X,F_\p(k/2)\bigr)\!\Bigr). \]
Therefore, taking sums over all $\p\,|\,p$, we get from $\AJ_{K,F_\p}$ a map 
\begin{equation} \label{AJ-2*}
\AJ_{K,F_p}:{\CH^{k/2}_0(X/K)}_{F_p}\longrightarrow H^1\Bigl(K,H^{k-1}_\text{\'et}\bigl(\bar X,F_p(k/2)\bigr)\!\Bigr). 
\end{equation}
Finally, applying the $F_p$-linear extension of $\Pi$ to \eqref{AJ-2*}, restricting the resulting map to $H^1_\mot(K,\MM)_p$ and applying the map induced by $\pi_{f,p}$ to its target, we get a map 
\begin{equation} \label{p-reg-eq}
\reg_{K,p}:H^1_\mot(K,\MM)_p\longrightarrow H^1(K,V_p)
\end{equation}
that is called the \emph{$p$-adic \'etale regulator} (or simply the \emph{$p$-adic regulator}) \emph{of $\MM$ over $K$}. We also set $\reg_p\defeq\reg_{\Q,p}$. By construction, there is a splitting $\reg_{K,p}=\bigoplus_{\p|p}\reg_{K,\p}$, where
\begin{equation} \label{pp-reg-eq}
\reg_{K,\p}:H^1_\mot(K,\MM)_\p\longrightarrow H^1(K,V_\p) 
\end{equation}
is the \emph{$\p$-adic regulator of $\MM$ over $K$}. Again, we may set $\reg_\p\defeq\reg_{\Q,\p}$.

\begin{remark} \label{saito-rem}
As a consequence of work of Saito on the weight-monodromy conjecture for compactified Kuga--Sato varieties (\cite{saito}, \cite{saito2}) and of results of Nekov\'a\v{r} (\cite{Nek3}) and Nizio\l\ (\cite{niziol}) on $p$-adic regulators, we know that $\im(\reg_{K,p})\subset H^1_f(K, V_p)$, where $H^1_f(K, V_p)$ is the finite cohomology group from \S \ref{finite-subsubsec}. See, \emph{e.g.}, \cite[Theorem 2.4]{LV} for details.
\end{remark}

The next conjecture predicts a deep relation between motivic cohomology and (global) unramified cohomology.

\begin{conjecture}[$p$-adic regulator] \label{regpconj}
For all primes $p$ and all number fields $K$, the $p$-adic regulator in \eqref{p-reg-eq} induces an isomorphism 
\begin{equation} \label{regpconj-eq}
\reg_{K,p}:H^1_\mot(K,\MM)_p\overset\simeq\longrightarrow H^1_f(K, V_p) 
\end{equation}
of $F_p$-modules. 
\end{conjecture}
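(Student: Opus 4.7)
The conjecture naturally splits into two largely independent statements --- injectivity and surjectivity of $\reg_{K,p}$ --- which I would attack by quite different methods. It is convenient to work one prime at a time via the splitting $\reg_{K,p} = \bigoplus_{\p\mid p}\reg_{K,\p}$ from \eqref{pp-reg-eq}, so fix a prime $\p$ of $F$ above $p$ throughout.

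For \textbf{injectivity}, the plan is to factor $\reg_{K,\p}$ through the $p$-adic étale Abel--Jacobi map of \eqref{AJ}. By construction, $H^1_\mot(K,\MM)_\p$ is an $F_\p$-linear quotient of $\CH^{k/2}_0(X/K)_{F_\p}[\theta_f]$, and $\reg_{K,\p}$ is obtained from $\AJ_{K,F_\p}$ after applying $\Pi$ and projecting to the $f$-isotypic component via $\pi_{f,\p}$. Hence injectivity on the $f$-isotypic piece is essentially the folklore Bloch--Beilinson-type conjecture that $p$-adic Abel--Jacobi maps are injective on null-homologous cycles (at least after isotypic projection). In the low-rank arithmetic settings relevant for applications, one may verify this by exhibiting explicit cycles --- typically Heegner cycles on Kuga--Sato varieties \emph{à la} Nekovář (\cite{Nek}) --- whose image in $H^1_f(K,V_\p)$ is non-zero; this non-vanishing is detected either by $p$-adic height computations or by a Gross--Zagier-type formula in the style of Zhang (\cite{Zhang-heights}).

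For \textbf{surjectivity}, the strategy is Euler-system-theoretic and splits according to the algebraic rank $r_\alg(\MM/K)$. In the analytic rank $0$ situation, one reduces (granting the equality of ranks predicted by Conjecture \ref{orderconj}) to showing $H^1_f(K,V_p) = 0$, which is accessible via the Iwasawa main conjecture of Kato (\cite{Kato}) and Skinner--Urban (\cite{SU}): these give sharp upper bounds on Bloch--Kato Selmer groups in terms of $p$-adic $L$-values, and non-vanishing of $L(\MM/K,0)$ then forces the Selmer group to be trivial. In analytic rank $1$, one would instead invoke Heegner cycles and a Kolyvagin-type argument --- precisely the technology that Theorem C of this paper develops in higher weight --- to show that $H^1_f(K,V_\p)$ is cyclic over $F_\p$ and generated by the image of a single Heegner cycle class under $\reg_{K,\p}$, which is the required surjectivity on each $\p$-component.

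The \textbf{main obstacle} to completing this plan unconditionally is the higher-rank case. For $r_\alg(\MM/K) \geq 2$ no Euler system of sufficient richness is currently known to bound $H^1_f(K,V_p)$ from above, and the a priori finite-dimensionality of $H^1_f(K,V_p)$ is itself not established without such a system. Combined with the open nature of the injectivity of $p$-adic Abel--Jacobi maps in full generality, this makes an unconditional proof of Conjecture \ref{regpconj} in the stated generality lie well beyond present technology. This explains the authors' choice to impose Conjecture \ref{regpconj} (together with its integral refinement) as a working hypothesis --- verified case-by-case in low rank via explicit Heegner-style constructions --- and to extract from it the arithmetic consequences recorded in Theorems A--D.
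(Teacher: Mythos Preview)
The statement you were asked to prove is a \emph{conjecture}, and the paper supplies no proof of it. Indeed, the authors state in the introduction that ``in this paper we have nothing new to say about them and simply content ourselves with assuming their validity in specific instances whenever needed,'' and throughout the body of the paper Conjecture~\ref{regpconj} (or its $p$-part over a given $K$) appears only as an explicit hypothesis in the statements of Theorems~\ref{motivesthm}, \ref{nekovar-thm2}, \ref{skinner-urban-thm}, \ref{ratconj0thm}, \ref{ratconj1thm}, \ref{ThmTNC}, and so on. There is therefore no ``paper's own proof'' to compare your proposal against.

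Your write-up is not a proof either, and you are transparent about this: you outline a plausible two-pronged strategy (injectivity via Bloch--Beilinson-type conjectures on Abel--Jacobi maps, surjectivity via Euler systems in low rank), correctly flag the higher-rank case as the main obstruction, and conclude by explaining why the authors take the conjecture as a hypothesis. That framing is accurate and well informed. One small caution: even in analytic rank $0$ or $1$ the paper does \emph{not} deduce Conjecture~\ref{regpconj} from Kato/Skinner--Urban or from Kolyvagin-type arguments; those inputs are used to bound $H^1_f(K,V_p)$ and $\Sha$, but the equality $r_\alg(\MM)=r_\an(\MM)$ and the resulting comparison with motivic cohomology still rest on assuming the regulator isomorphism (see the hypotheses of Theorem~\ref{nekovar-thm2}). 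So your low-rank sketch should be read as a heuristic for why the conjecture is believed, not as something the paper actually carries out.
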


We refer to the statement of Conjecture \ref{regpconj} for fixed $p$ and $K$ as the \emph{$p$-part of Conjecture \ref{regpconj} over $K$}. Later on, we shall need to assume that (the $p$-part of) Conjecture \ref{regpconj} holds true over certain number fields.

\begin{remark} \label{reg-MM-rem}
Let $K$ be a number field and suppose that the $p$-part of Conjecture \ref{regpconj} over $K$ holds true for a prime number $p$. It is well known (essentially a consequence of results of Tate, \emph{cf.} \cite[\S 2]{Tate}) that $H^1_f(K, V_p)$ is finitely generated over $F_p$, so isomorphism \eqref{regpconj-eq} implies that $H^1_\mot(K,\MM)_p$ is finitely generated over $F_p$ as well. It follows that $H^1_\mot(K,\MM)$ is finite-dimensional as an $F$-vector space: we conclude that Conjecture \ref{regpconj} for some prime $p$ implies Conjecture \ref{finitenessconj}. Note that isomorphism \eqref{regpconj-eq} ensures, in fact, that $H^1_f(K, V_p)$ is free (of finite rank) over $F_p$.
\end{remark}

\begin{remark}
For the counterpart of Conjecture \ref{regpconj} for motives of elliptic curves, the reader is referred to \cite[Example 2.16]{Kings}.
\end{remark}

\subsection{Projective $\cO$-structures in $\MM$} \label{sec1.15}

In the definition that follows, $\cO$ is an order of $F$. Furthermore, given a prime $p$, we consider the semilocal ring $\cO\otimes_\Z\Z_p$. Denote by 
\begin{equation} \label{betti-etale-eq}
\Comp_{\B,\et}:V_\B\otimes_FF_p\overset\simeq\longrightarrow V_p 
\end{equation}
the comparison isomorphism between Betti and \'etale cohomology.

The following notion was introduced in \cite[\S 3.3, Definition 1]{BF}.

\begin{definition} \label{projective-def}
A \emph{projective $\cO$-structure} in $\MM$ is a finitely generated projective $\cO$-module $T_\B\subset  V_\B$ such that
\begin{enumerate}
\item $T_\B\otimes_\cO F\simeq  V_\B$; 
\item $\Comp_{\B,\et}\bigl(T_\B\otimes_\cO(\cO\otimes_\Z\Z_p)\bigr)$ is a Galois-stable $\cO_p$-lattice in $V_p$ for all primes $p$. 
\end{enumerate}
\end{definition}

The $\cO_F$-module $T_\B$ that was defined in \S\ref{Betti-subsubsec} is a projective $\cO_F$-structure in $\MM$. Moreover, if $T_p\subset V_p$ is the $G_\Q$-stable $\cO_p$-lattice introduced in \eqref{T_p-eq}, then the integrality properties of $\Comp_{\B,\et}$ (see, \emph{e.g.}, \cite[Exp. XI, Th\'eor\`eme 4.4, (iii)]{SGA4}) ensure that
\begin{equation} \label{B-et-eq}
\Comp_{\B,\et}(T_\B\otimes_{\cO_F}\cO_p)=T_p. 
\end{equation}
We highlight this equality for future use.

\subsection{The Tamagawa number conjecture for $\MM$} \label{TNC-subsec}

We formulate the Tamagawa number conjecture of Bloch--Kato (\cite{BK}) and Fontaine--Perrin-Riou (\cite{FPR}) in the case of the motive $\MM$.

\subsubsection{The isomorphism $\theta_{p,S}$}

Define the $F_p$-module
\[ t(\MM)_p\defeq t(\MM)\otimes_\Q\Q_p=t(\MM)\otimes_FF_p. \] 
The comparison isomorphism between de Rham and \'etale cohomology induces an isomorphism 
\[ \Comp_{\dR,\et}: t(V_p)\overset\simeq\longrightarrow t(\MM)_p. \] 
Let $v$ be place of $\Q$. Note that 
\begin{equation} \label{Det}
\Det_{F_p}^{-1}\bigl(\RR\Gamma_f(\Q_v, V_p)\bigr)\simeq
\begin{cases}
(F_p,0) & \text{if $v\notin\{p,\infty\}$},\\[3mm]
\Det_{F_p}\bigl(t(\MM)_p\bigr) & \text{if $v=p$},\\[3mm]
\Det_{F_p}^{-1}\bigl(H^0(\R,V_p)\bigr) & \text{if $v=\infty$}.
\end{cases}
\end{equation}
Combining the comparison isomorphism from \eqref{betti-etale-eq} with the multiplicativity of $\Det_{F_p}$ applied to \eqref{triangle} and with \eqref{Det}, we obtain an isomorphism
\[ \Det_{F_p}\bigl(\RR\Gamma_c(G_S, V_p)\bigr)\simeq\Det_{F_p}\bigl(\RR\Gamma_f(\Q,V_p)\bigr)\cdot\Det_{F_p}\bigl(t(\MM)_p\bigr)\cdot\Det_{F_p}^{-1}(V_\B^+). \]
Define the $F_p$-module 
\[ \Delta(\MM)_p\defeq\Delta(\MM)\otimes_\Q\Q_p=\Delta(\MM)\otimes_FF_p. \]
Using Conjecture \ref{regpconj}, the definition of the fundamental line $\Delta(\MM)$ (Definition \ref{fundline}) and \eqref{Seldual}, we get a conjectural isomorphism of $F_p$-modules
\begin{equation} \label{theta-isom-eq}
\theta_{p,S}:\Delta(\MM)_p\overset\simeq\longrightarrow\Det_{F_p}\bigl(\RR\Gamma_c(G_S, V_p)\bigr).
\end{equation}

\subsubsection{TNC for $\MM$}

We formulate the Tamagawa number conjecture (TNC, for short) for the motive $\MM$ over $\Q$. Our previous notation is in force: $S$ is the set of primes that was fixed in \eqref{S-def-eq}, $\theta_{p,S}$ is the isomorphism in \eqref{theta-isom-eq} and $T_p$ is the $\cO_p$-lattice in \eqref{T_p-eq}. Recall the zeta elements $\zeta_f$ and $\zeta_f^*$ appearing in Conjectures \ref{ratconj} and \ref{ratconj2}.

\begin{conjecture}[TNC for $\MM$] \label{TNC} 
Assume Conjectures \ref{ratconj} and \ref{regpconj}. Let $T_\B$ be a projective $\cO$-structure in $\MM$ for some order $\cO$ of $F$. For every prime number $p$ there is an equality
\begin{equation} \label{TNC-eq}
\theta_{p,S}(\zeta_f^*)\cdot\mathcal{O}_p=\Det_{\cO_p}\bigl(\RR\Gamma_c(G_S,T_p)\bigr) 
\end{equation}
of $\cO_p$-submodules of $\Det_{F_p}\bigl(\RR\Gamma_c(G_S,V_p)\bigr)$.  
\end{conjecture}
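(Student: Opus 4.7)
The plan is to reduce the conjectural identity of lattices in Conjecture \ref{TNC} to the explicit formula for fractional ideals announced as Theorem A in the introduction, and then verify that formula in the low-analytic-rank situations where the needed inputs are available. Concretely, I would first unfold the isomorphism $\theta_{p,S}$ in \eqref{theta-isom-eq} by tracing through the definition of the fundamental line $\Delta(\MM)$ (Definition \ref{fundline}), the triangle \eqref{triangle} relating $\RR\Gamma_c(G_S,V_p)$ to $\RR\Gamma_f(\Q,V_p)$ and the local finite complexes, and the local computations summarised in \eqref{Det}. Once $\theta_{p,S}(\zeta_f^*)$ is expressed in terms of the comparison isomorphisms $\Comp_{\B,\dR}$, $\Comp_{\B,\et}$ and the $p$-adic regulator $\reg_p$ of \eqref{p-reg-eq}, Conjecture \ref{ratconj2} turns the archimedean normalisation of the left-hand side into $\Lambda^*(\MM,0)/(\Omega_\MM\cdot\Reg_{\mathscr B}(\MM))$, while comparing with the lattice $T_p$ from \eqref{T_p-eq} on the right-hand side produces the $\Sha_p^{\BKK}(\Q,\MM)$, torsion and $\Tam_v^{(p)}(\MM)$ factors, up to correction terms coming from the choices of bases. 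This first reduction is essentially formal and sets up the target equality.

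To attack the analytic rank one case, I would work over an auxiliary imaginary quadratic field $K$ in which every prime dividing $Np$ splits and build, out of generalised Heegner cycles on $X$, a Kolyvagin-type Euler system landing in $H^1(K_n,T_\p)$ via the integral Abel--Jacobi map \eqref{AJ}. Kolyvagin's derivation procedure then yields the set $\kappa_{f,\infty}\subset H^1(K,T_\p/p^M T_\p)$ of Theorem C, and the central step becomes proving $\kappa_{f,\infty}\neq\{0\}$. Here my strategy would mirror \S\ref{kolyvagin-strategy-intro-subsubsec}: pass to the Hida family $\f$ through the ordinary $p$-stabilisation of $f$, use Howard's big Heegner points $\mathfrak{X}_n\in H^1(K_n,\TT^\dagger)$ to interpolate derived classes $d(\f,n)$, and deduce non-triviality in weight $k$ from the known weight-two Kolyvagin conjecture of W. Zhang and Skinner--Zhang by means of the specialisation results of Howard, Castella and Ota. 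The hypotheses in \S\ref{kolyvagin-assumptions-subsubsec}---ordinariness $a_p(f)\in\cO_\p^\times$, the non-degeneracy condition $a_p(f)\not\equiv 1\pmod\p$, the $p$-isolatedness of $f$, the big-image and ramification assumptions, and above all the congruence $k\equiv 2\pmod{2(p-1)}$---are exactly what is needed to make these specialisation maps behave and to align the level structure of the Hida family at weight $2$.

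With Theorem C in place I would then run the Kolyvagin--Nekov\'a\v{r} argument: non-vanishing of $\kappa_{f,\infty}$ forces $H^1_f(\Q,V_\p)$ to be one-dimensional, yields a structure theorem for $\Sha_p^{\BKK}(\Q,\MM)$ in terms of the index of a distinguished Heegner class, and---under the injectivity hypothesis on the relevant $p$-adic Abel--Jacobi maps (Remark \ref{saito-rem} and the assumptions of \S\ref{assumptions-intro-subsubsec2})---identifies $\Sha_p^{\BKK}(\Q,\MM)$ with $\Sha_p^{\Nek}(\Q,\MM)$. S.-W. Zhang's Gross--Zagier formula then converts this index into the archimedean ratio $\Lambda^*(\MM,0)/(\Omega_\MM\cdot\Reg_{\mathscr B}(\MM))$, matching the left-hand side of Theorem A, while the Iwasawa-theoretic divisibilities of Kato and of Skinner--Urban, applied to $f$ and to the quadratic twist $f^K$ via the factorisation in Remark \ref{splitting-L-rem}, pin down the $p$-adic side and control the Tamagawa contribution (and in parallel furnish Theorems \ref{ratconj0thm} and the rank-zero analogue of Theorem B). The main obstacle---beyond the considerable bookkeeping needed to match the determinant-theoretic formulation with the Bloch--Kato-style explicit ratio---is the Hida-theoretic step: transporting the non-triviality of Kolyvagin's system through the family and down to weight $k$, which is both where the congruence $k\equiv 2\pmod{2(p-1)}$ becomes unavoidable and where the injectivity of the $p$-adic Abel--Jacobi maps on generalised Heegner cycles plays its most essential role.
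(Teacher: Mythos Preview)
The statement is a \emph{conjecture}, and the paper does not prove it in full generality; what the paper does prove is the $p$-part of \eqref{TNC-eq} under the hypotheses of Assumption~\ref{ass-0} (rank $0$, Theorem~\ref{skinner-urban-thm}) and Assumption~\ref{ass AJ} (rank $1$, Theorem~\ref{ThmTNC}). Your proposal correctly and explicitly restricts itself to these low-rank situations, and the strategy you outline---first reformulate \eqref{TNC-eq} as the explicit fractional-ideal identity of Theorem~\ref{motivesthm} by unwinding $\theta_{p,S}$ through the fundamental line, the triangle \eqref{triangle} and the local determinants \eqref{Det}; then prove Kolyvagin's conjecture (Theorem~\ref{kolyvagin-main-thm}) by interpolating big Heegner classes along the Hida family and specialising via Howard--Castella--Ota to reduce to the weight-$2$ results of W.~Zhang and Skinner--Zhang; finally combine the resulting structure theorem for $\Sha_p^{\BKK}$ with S.-W.~Zhang's Gross--Zagier formula over $K$ and the Kato/Skinner--Urban input for $f$ and $f^K$---is exactly the architecture of the paper's proof. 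One small inaccuracy: Remark~\ref{saito-rem} concerns the fact that $\im(\reg_{K,p})\subset H^1_f(K,V_p)$, not injectivity; the injectivity assumptions you need are those on $\AJ_{K,\p}$ restricted to Heegner modules, as in condition~(1) of Theorems~\ref{skinner-urban-thm} and~\ref{ThmTNC}.
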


Henceforth, equality \eqref{TNC-eq} for a given $p$ will be referred to as the \emph{$p$-part of the TNC} for $\MM$; we will sometimes indicate it as $p$-TNC.

\begin{remark}
As in \cite[Remark 2.21]{Kings}, one can show that Conjecture \ref{TNC} is independent of the choice of the $\cO$-projective structure $T_\B$; moreover, keeping Remark \ref{S-rem} in mind, it can also be checked that Conjecture \ref{TNC} does not depend on the choice of $S$, in the sense explained in \cite[Remark 2.22]{Kings}. 
\end{remark}

\subsection{Bloch--Kato Selmer groups} \label{secBK} 

Let $K$ be a number field and let $p$ be a prime number. Let $V$ be a $p$-adic representation of $G_K$ and let $T$ be a $\Z_p$-lattice in $V$. Set $A\defeq V/T$. If $T$ is endowed with a $\Z_p$-linear action of an order $\cO$ of $F$, then $V$ inherits a structure of an $F_p$-module, while both $T$ and $A$ are equipped with a structure of $\cO_p$-modules.

\subsubsection{Finite local conditions}

Let $v$ be a place of $K$. The \emph{finite local conditions} $H^\bullet_f(K_v,T)$ and $H^\bullet_f(K_v,A)$ 
at $v$ are defined by propagation from the cohomology groups $H^\bullet_f(K_v,V)$ in \S \ref{local-subsec} using the canonical maps $T\hookrightarrow V$ and $V\twoheadrightarrow A$ (see, \emph{e.g.}, \cite[\S 1.1]{MR}). In particular, it follows from \eqref{h^0_f-eq} that if $v\nmid p\infty$, then
\[ H^0_f(K_v,T_p)=H^0(K_v,T_p),\quad H^0_f(K_v,A_p)=H^0(K_v,A_p). \]
We denote by $H^\bullet_s(K_v,T)$ (respectively, $H^\bullet_s(K_v,A)$) the quotients of $H^\bullet(K_v,T)$ (respectively, $H^\bullet(K_v,A)$) by $H^\bullet_f(K_v,T)$ (respectively, $H^\bullet_f(K_v,A)$).

\subsubsection{Bloch--Kato Selmer groups} \label{p-modules-subsec}

In the following definition, let $M\in\{V,T,A\}$. 

\begin{definition} \label{Bloch-Kato-def} 
The \emph{Bloch--Kato Selmer group of $M$ over $K$} is 
\[ H^1_f(K,M)\defeq\ker\biggl(H^1(K,M)\longrightarrow\prod_{v}H^1_s(K_v,M)\biggr), \]
where the product is taken over all places $v$ of $K$. 
\end{definition}

One can check (see \cite[Lemma 2.15]{Kings} or \cite[Lemma 5.1]{LV-Iwasawa}) that 
\begin{equation} \label{selmer-alternative-eq}
H^1_f(K,M)=\ker\biggl(H^1(G_{K,S},M)\longrightarrow\bigoplus_{v\in S}H^1_s(K_v,M)\biggr),
\end{equation}
where $S$ is the finite set of places of $K$ that was fixed in \S \ref{global-subsec}.

As in \S \ref{etale-subsubsec}, for all primes $\p$ of $F$ above $p$ we set $T_\p\defeq T_p\otimes_{\cO_p}\!\cO_\p$, which is an $\cO_\p$-lattice inside $V_\p$; there is a splitting $T_p=\prod_{\p\mid p}T_\p$. We also put $A_\p\defeq V_\p/T_\p$; if $A_p$ is defined as in \eqref{A-eq}, then $A_p=\prod_{\p\mid p}A_\p$. There is a splitting
\begin{equation} \label{splittings-eq}
H^1_f(K,M_p)=\bigoplus_{\p\mid p}H^1_f(K,M_\p), 
\end{equation}
where the direct sum is taken over all primes $\p$ of $F$ above $p$.

\subsection{Shafarevich--Tate groups of $\MM$} \label{STsubsec} 

Let $p$ be a prime number. From now on, for a $p$-primary abelian group $G$ we denote by $G_\divv$ the maximal $p$-divisible subgroup of $G$. We introduce Shafarevich--Tate groups \emph{\`a la} Bloch--Kato.

\subsubsection{Shafarevich--Tate groups} \label{ST-subsubsec}

Let $K$ be a number field. For any prime $\p$ of $F$ above $p$, recall the Bloch--Kato Selmer group $H^1_f(K,A_\p)$ of $A_\p$ over $K$ from \S \ref{p-modules-subsec}. The following definition of Shafarevich--Tate group is due to Bloch--Kato (\cite[Remark 5.15.2]{BK}; \emph{cf.} also \cite{Flach}).
   
\begin{definition} \label{Sha-def}
\begin{enumerate}
\item The (\emph{Bloch--Kato}) \emph{Shafarevich--Tate group of $\MM$ over $K$ at $\p$} is
\[ \Sha_\p^{\BKK}(K,\MM)\defeq H^1_f(K,A_\p)\big/H^1_f(K,A_\p{)}_\divv. \]
\item The (\emph{Bloch--Kato}) \emph{Shafarevich--Tate group of $\MM$ over $K$ at $p$} is
\[ \Sha_p^{\BKK}(K,\MM)\defeq H^1_f(K, A_p)\big/H^1_f(K, A_p{)}_\divv. \]
\item The (\emph{Bloch--Kato}) \emph{Shafarevich--Tate group of $\MM$ over $K$} is
\[ \Sha^{\BKK}(K,\MM)\defeq\bigoplus_p\Sha_p^{\BKK}(K,\MM), \]
where $p$ varies over all prime numbers.
\end{enumerate}
\end{definition}

There is a splitting $\Sha_p^{\BKK}(K,\MM)=\bigoplus_{\p\mid p}\Sha_\p^{\BKK}(K,\MM)$, where the direct sum is taken over all primes $\p$ of $F$ above $p$. Therefore, we can write 
\[ \Sha^{\BKK}(K,\MM)=\bigoplus_\lambda\Sha_\lambda^{\BKK}(K,\MM)=\bigoplus_\ell\Sha_\ell^{\BKK}(K,\MM), \]
where $\lambda$ (respectively, $\ell$) varies over all primes of $F$ (respectively, all prime numbers). Notice that $\Sha_\lambda^{\BKK}(K,\MM)$ is finite for every $\lambda$, and then the same is true of $\Sha_\ell^{\BKK}(K,\MM)$ for every $\ell$. We remark that in \S \ref{nekovar-subsubsec} we will introduce also Shafarevich--Tate groups $\Sha_\p^{\Nek}(K,\MM)$ \emph{\`a la} Nekov\'a\v{r}: the interplay between $\Sha_\p^{\BKK}(K,\MM)$ and $\Sha_\p^{\Nek}(K,\MM)$ will be crucial for our arguments.

\subsubsection{A finiteness conjecture}

By analogy with a classical conjecture for Shafarevich--Tate groups of abelian varieties over global fields, it is natural to propose

\begin{conjecture}[Finiteness of $\Sha$] \label{Shaconj} 
For all number fields $K$, the group $\Sha^{\BKK}(K,\MM)$ is finite. 
\end{conjecture}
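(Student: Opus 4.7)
First I would reduce to the $p$-primary parts. By Definition \ref{Sha-def}, the group $\Sha^{\BKK}(K,\MM)=\bigoplus_p\Sha_p^{\BKK}(K,\MM)$ decomposes as a direct sum over primes $p$, so it suffices to show (a) each $\Sha_p^{\BKK}(K,\MM)$ is finite, and (b) $\Sha_p^{\BKK}(K,\MM)=0$ for all but finitely many $p$. Part (a) reduces to showing that $H^1_f(K,A_p)$ has finite $\Z_p$-corank, since $\Sha_p^{\BKK}(K,\MM)$ is by definition the torsion quotient by the maximal divisible subgroup. For (b), the plan is to identify a finite ``bad'' set of primes (containing divisors of the level $N$, primes where the residual representation is reducible, primes of congruence with other newforms of the same level, and primes where local Tamagawa-type obstructions appear) and argue that outside this set $H^1_f(K,A_p)$ is $p$-divisible, so that its maximal divisible subgroup is all of it.

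For the finite-corank statement in (a), I would split $H^1_f(K,A_p)$ via \eqref{splittings-eq} into its $\p$-components and bound each via the Euler system method. The two Euler systems naturally available here are the Beilinson--Kato classes of Kato and the Heegner cycles on Kuga--Sato varieties of Nekov\'a\v{r}: the former yield bounds in analytic rank zero over $\Q$ via the Kolyvagin derivative machine, while the latter, in combination with the higher-weight Kolyvagin conjecture established in Theorem C, bound the Selmer group over an auxiliary imaginary quadratic field $K$ in analytic rank one. Combining the two via the factorization $L(\MM/\mathcal K,s)=L(\MM,s)\cdot L(\MM(f^{\mathcal K}),s)$ of Remark \ref{splitting-L-rem}, and then propagating across abelian extensions of $\Q$, would yield finiteness of $\Sha_p^{\BKK}(\mathcal K,\MM)$ for every abelian $\mathcal K/\Q$, provided one is in analytic rank $\leq 1$ over $\mathcal K$.

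The main obstacle is the higher-rank case, and it is a serious one: no Euler system is currently known that controls $H^1_f(K,A_\p)$ when $r_\an(\MM/K)\geq2$, and even bounding the $F_p$-corank of $H^1_f(K,A_p)$ (let alone the order of its torsion quotient) is out of reach with present technology. An honest plan in this regime must defer to one of three hypothetical inputs: a new Euler system of higher rank, perhaps coming from cycles on higher-dimensional Shimura varieties; a proof of a several-variable main conjecture whose specialization captures the relevant characteristic ideal outside the cyclotomic direction; or an automorphic input producing non-trivial global classes whenever $L^{(r)}(\MM,0)\neq0$ with $r\geq2$. In summary, the approach above would deliver Conjecture \ref{Shaconj} in analytic rank $\leq 1$ under the standing hypotheses of the paper (and, over arbitrary abelian base fields, after combining with factorization of $L$-functions), but the general statement appears to require genuinely new ideas well beyond the Euler system technology exploited in the present work.
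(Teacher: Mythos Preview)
The statement you are addressing is a \emph{Conjecture}, not a theorem: the paper does not prove it, and in fact remarks immediately afterward that ``Conjecture \ref{Shaconj} (which will play no explicit role in the paper)'' is stated only as an analogue of the classical Shafarevich--Tate conjecture for abelian varieties. There is therefore no proof in the paper to compare your attempt against.

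A few comments on your sketch nonetheless. Your part (a) is easier than you suggest: the paper already observes, just before Conjecture \ref{Shaconj}, that each $\Sha_\p^{\BKK}(K,\MM)$ is finite. This is automatic once one knows that $H^1_f(K,A_\p)$ is cofinitely generated over $\cO_\p$ (it sits inside $H^1(G_{K,S},A_\p)$, which is cofinitely generated by standard results on Galois cohomology of $p$-adic representations over number fields); a cofinitely generated $p$-primary module modulo its maximal divisible submodule is finite. No Euler system is needed here.

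Your part (b), the vanishing for almost all $p$, is indeed the entire content of the conjecture, and you correctly identify that it is out of reach. Two further gaps in your partial plan: first, the conjecture is stated for \emph{all} number fields $K$, not just abelian extensions of $\Q$, so propagation via $L$-function factorizations cannot address the general case; second, even over abelian fields your argument requires $r_\an(\MM/K)\leq 1$, which there is no mechanism to arrange. Your final paragraph is an honest assessment: the conjecture is open and the paper does not claim otherwise.
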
 

Clearly, Conjecture \ref{Shaconj} (which will play no explicit role in the paper) is equivalent to the prediction that, for all number fields $K$, the group $\Sha_p^{\BKK}(K,\MM)$ is trivial for all but finitely many $p$. 

\begin{remark}
To be in line with terminology and notation introduced in Definition \ref{Sha-def} for Shafarevich--Tate groups, we could alternatively set $\Sel_p(K,\MM)\defeq H^1_f(K,A_p)$ and call it the \emph{Bloch--Kato Selmer group of $\MM$ over $K$ at $p$}. However, later on we shall reserve a symbol of this kind (at least when $K$ varies over the finite layers of the cyclotomic $\Z_p$-extension of $\Q$) for Selmer groups in the sense of Greenberg (\emph{cf.} \S \ref{SUsec}), so here we chose to adopt the notation that was originally used by Bloch and Kato in \cite{BK}.
\end{remark}

\subsection{Local finite cohomology groups} \label{local-finite-subsec}

We collect some basic facts on local cohomology groups of $p$-adic Galois representations. 

\subsubsection{Local Tate duality}

Let $V$, $T$, $A$ be as in \S\ref{secBK}. Define 
\[T^*\defeq\Hom_{\Z_p}(T,\Z_p) \] 
and recall that if $v$ is a place of $\Q$, then under the local Tate duality pairing
\[ {(\cdot,\cdot)}_v:H^1(\Q_v,T)\times H^1\bigl(\Q_v,T^*\otimes_{\Z_p}\!(\Q_p/\Z_p)(1)\bigr)\longrightarrow \Q_p/\Z_p \] 
the subgroups $H^1_f(\Q_v,T)$ and $H^1_f\bigl(\Q_v,T^*\otimes_{\Z_p}\!(\Q_p/\Z_p)(1)\bigr)$ are exact annihilators of each other (\cite[Proposition 3.8]{BK}). We remark that $H^1_f\bigl(\Q_v,T^*\otimes_{\Z_p}\!(\Q_p/\Z_p)(1)\bigr)$ is defined by propagation from the corresponding local conditions for the representation $V^*(1)$, where $V^*\defeq\Hom_{\Q_p}(V,\Q_p)$. Now we assume that there is an isomorphism $V^*(1)\simeq V$ under which $T^*(1)\simeq T$. It follows that $A\simeq T^*\otimes_{\Z_p}\!(\Q_p/\Z_p)(1)$, so $H^1_f\bigl(\Q_v,T^*\otimes_{\Z_p}\!(\Q_p/\Z_p)(1)\bigr)$ is isomorphic to $H^1_f(\Q_v,A)$. Then the local Tate pairing at $v$ yields a perfect pairing 
\begin{equation} \label{local-tate-eq}
{(\cdot,\cdot)}_v:H^1(\Q_v,T)\times H^1(\Q_v,A)\longrightarrow \Q_p/\Z_p
\end{equation}
under which the subgroups $H^1_f(\Q_v,T)$ and $H^1_f(\Q_v,A)$ are exact annihilators of each other. Since ${(\cdot,\cdot)}_v$ is perfect, this means that there are isomorphisms 
\begin{equation} \label{varphi}
\varphi_v:H^1_f(\Q_v,T)\overset\simeq\longrightarrow H^1_s(\Q_v,A)^\vee
\end{equation}
and
\begin{equation} \label{psi}
\psi_v:H^1_s(\Q_v,T)\overset\simeq\longrightarrow H^1_f(\Q_v,A)^\vee,
\end{equation}
where for a $\Z_p$-module $M$ we let 
\begin{equation} \label{pontryagin-def-eq}
M^\vee\defeq\Hom_\cont(M,\Q_p/\Z_p)
\end{equation}
be the Pontryagin dual of $M$.

\subsubsection{The case of modular motives}

In the case of motives of modular forms, there is an isomorphism $V_p^*(1)\simeq V_p$ under which $T_p^*(1)\simeq T_p$, so the results above apply with $V=V_p$, $T=T_p$, $A=A_p$.

\begin{lemma} \label{lemma1.15}
There is a commutative diagram
\[ \xymatrix{
0\ar[r]&
\bigoplus_{v\in S}H^1_f(\Q_v, T_p )\ar[r]\ar[d]_-\simeq^-{\varphi_S} &
\bigoplus_{v\in S}H^1(\Q_v, T_p )\ar[r]\ar[d] & 
\bigoplus_{v\in S}H^1_s(\Q_v, T_p )\ar[r] \ar[d]& 
0\\
& 
\bigoplus_{v\in S}H^1_s(\Q_v, A_p )^\vee\ar[r]& 
H^1(G_S, A_p )^\vee\ar[r] & 
H^1_f(\Q, A_p )^\vee\ar[r] & 
0
} \]
with exact rows.
\end{lemma}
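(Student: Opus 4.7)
The proof plan is to exhibit the two rows and the three vertical arrows separately, then verify commutativity and exactness by standard local/global duality arguments.

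First, for the top row: the sequence $0\to H^1_f(\Q_v,T_p)\to H^1(\Q_v,T_p)\to H^1_s(\Q_v,T_p)\to 0$ is exact for every $v\in S$ by the very definition of $H^1_s(\Q_v,T_p)$ as the quotient of $H^1(\Q_v,T_p)$ by $H^1_f(\Q_v,T_p)$; taking the finite direct sum over $v\in S$ preserves exactness. For the bottom row, I invoke the alternative description \eqref{selmer-alternative-eq} of the Bloch--Kato Selmer group, which yields an exact sequence
\[ 0\longrightarrow H^1_f(\Q,A_p)\longrightarrow H^1(G_S,A_p)\longrightarrow\bigoplus_{v\in S}H^1_s(\Q_v,A_p). \]
Applying the Pontryagin duality functor $(\cdot)^\vee$ of \eqref{pontryagin-def-eq}, which is exact on discrete torsion $\Z_p$-modules, reverses arrows and produces exactness of the bottom row at $H^1(G_S,A_p)^\vee$ and at $H^1_f(\Q,A_p)^\vee$ (exactness is not claimed at the leftmost term, which matches the statement).

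Next, I describe the vertical maps via local Tate duality. The left arrow is the direct sum $\varphi_S=\bigoplus_{v\in S}\varphi_v$ of the isomorphisms \eqref{varphi}, which is itself an isomorphism. The right arrow is obtained by composing the direct sum $\bigoplus_v\psi_v$ of the isomorphisms \eqref{psi} with the Pontryagin dual of the localization map $H^1_f(\Q,A_p)\to\bigoplus_{v\in S}H^1_f(\Q_v,A_p)$; note that for $y\in H^1_f(\Q,A_p)$ the component $\res_v(y)$ lands in $H^1_f(\Q_v,A_p)$ by definition, so this dual is well defined. The middle arrow sends $(x_v)_v$ to the functional $y\mapsto\sum_{v\in S}(x_v,\res_v(y))_v$ on $H^1(G_S,A_p)$, which is well defined because the local Tate pairing \eqref{local-tate-eq} is continuous and $S$ is finite.

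Finally, commutativity of the two squares reduces to naturality of local Tate duality with respect to the inclusions $H^1_f\hookrightarrow H^1$ (at each place) and with respect to restriction $\res_v\colon H^1(G_S,\cdot)\to H^1(\Q_v,\cdot)$. Concretely, for the left square, both paths send $x\in H^1_f(\Q_v,T_p)$ to the functional $y\mapsto(x,\res_v(y))_v$ on $H^1(G_S,A_p)$, the diagonal path factoring through $H^1_s(\Q_v,A_p)^\vee$ precisely because $H^1_f(\Q_v,T_p)$ and $H^1_f(\Q_v,A_p)$ are exact annihilators under \eqref{local-tate-eq}. For the right square, the fact that $\res_v(y)\in H^1_f(\Q_v,A_p)$ for $y\in H^1_f(\Q,A_p)$ ensures that the two compositions agree as functionals on $H^1_f(\Q,A_p)$. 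There is no genuine obstacle here: the entire argument is a formal diagram chase given local Tate duality; the only point requiring mild care is keeping track of which local condition (finite or singular) each term belongs to, so that the duality pairing lands where it should.
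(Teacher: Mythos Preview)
Your proof is correct and follows essentially the same approach as the paper: both construct the top row from the definition of $H^1_s$, obtain the bottom row by dualizing the exact sequence \eqref{selmer-alternative-eq}, define the three vertical maps via the local Tate duality isomorphisms $\varphi_v$, $\psi_v$ composed with (duals of) restriction, and observe that commutativity is immediate from the construction. Your write-up is slightly more explicit about why the two squares commute (tracking the exact-annihilator property), but there is no substantive difference in strategy.
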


\begin{proof} The top row is a direct consequence of the definitions of the groups involved, while the bottom row is obtained by taking Pontryagin duals of the exact sequence
\[ 0\longrightarrow H^1_f(\Q, A_p)\longrightarrow H^1(G_S,A_p)\longrightarrow\bigoplus_{v\in S} H^1_s(\Q_v,A_p) \] 
induced by \eqref{selmer-alternative-eq} with $M=A_p$. The vertical isomorphism on the left is defined by setting $\varphi_S\defeq\bigoplus_{v\in S}\varphi_v$, with $\varphi_v$ as in \eqref{varphi}. On the other hand, the middle vertical arrow is the composition of the map $\bigoplus_{v\in S}H^1(\Q_v,T_p)\rightarrow \bigoplus_{v\in S}H^1(\Q_v,A_p)^\vee$ induced by \eqref{local-tate-eq} and the dual of the map $H^1(G_S,A_p)\rightarrow\bigoplus_{s\in S}H^1(\Q_v,A_p)$ given by restriction in cohomology. Finally, the right vertical map is the composition of the map $\oplus_{v\in S}\psi_v$, where $\psi_v$ is as in \eqref{psi}, and the dual of the map $H^1_f(\Q,A_p)\rightarrow\bigoplus_{s\in S}H^1_f(\Q_v,A_p)$ defined by restriction in cohomology. The commutativity of the diagram is immediate by construction. \end{proof}

In the rest of this article, we let $p$ be a prime number and work under the following

\begin{assumption} \label{motass}
\begin{enumerate}
\item $p\nmid N$;
\item $V_p$ is ramified at the primes dividing $N$; 
\item $\ell$ prime, $\ell^2\,|\,N\Rightarrow V_p^{I_\ell}=0$. 
\end{enumerate}
\end{assumption}

In part (3) above, $I_\ell\subset G_{\Q_\ell}$ is the inertia subgroup at $\ell$. As a consequence of Assumption \ref{motass}, the (finite) set of places of $\Q$ from \S \ref{global-subsec} is explicitly given by
\begin{equation} \label{S-eq}
S\defeq\bigl\{\text{$\ell$ prime}\mid\text{$\ell$ divides $Np$}\bigr\}\cup\{\infty\}. 
\end{equation}

\begin{lemma} \label{lemmaram}
If $\ell\not=p$ is a prime number, then $H^0_f(\Q_\ell,V_p)=H^1_f(\Q_\ell,V_p)=0$.
\end{lemma}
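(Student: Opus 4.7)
The plan is to unwind the definitions from \S\ref{local-subsec} and then check, case by case on $\ell$, that $1-\Frob_\ell$ acts invertibly on $V_p^{I_\ell}$.

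By the definitions recalled in \S\ref{local-subsec}, for a prime $\ell\neq p$ the complex $\RR\Gamma_f(\Q_\ell,V_p)$ is the two-term complex $\bigl(V_p^{I_\ell}\xrightarrow{1-\Frob_\ell}V_p^{I_\ell}\bigr)$, so
\[
H^0_f(\Q_\ell,V_p)=\ker\bigl(1-\Frob_\ell\mid V_p^{I_\ell}\bigr),\qquad H^1_f(\Q_\ell,V_p)=\coker\bigl(1-\Frob_\ell\mid V_p^{I_\ell}\bigr).
\]
Hence it suffices to show that $1-\Frob_\ell$ is an automorphism of the finite-dimensional $F_p$-module $V_p^{I_\ell}$; equivalently, that $1$ is not an eigenvalue of $\Frob_\ell$ on $V_p^{I_\ell}$.

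I will split into three cases according to the behaviour of $\ell$ with respect to $N$, using Assumption \ref{motass}.

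\emph{Case 1: $\ell\nmid Np$.} Then $V_p$ is unramified at $\ell$, so $V_p^{I_\ell}=V_p$. Recall from \S\ref{etale-subsubsec} that $V_p\simeq V_{f,p}^\dagger=V_{f,p}(k/2)$; by the Ramanujan--Petersson bound (a theorem of Deligne), the eigenvalues of $\Frob_\ell$ on $V_{f,p}$ are Weil numbers of absolute value $\ell^{(k-1)/2}$, hence those on $V_p$ have absolute value $\ell^{-1/2}\neq 1$. In particular, $1$ is not an eigenvalue of $\Frob_\ell$ on $V_p^{I_\ell}$.

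\emph{Case 2: $\ell\,\|\,N$.} Since $f$ has trivial character and $\ell$ exactly divides $N$, the local Weil--Deligne representation attached to $V_{f,p}$ at $\ell$ is a twist of a special (Steinberg-type) representation; consequently $V_{f,p}^{I_\ell}$ is one-dimensional and $\Frob_\ell^{-1}$ acts on it by $a_\ell(f)$ (this is exactly what encodes the Euler factor $(1-a_\ell(f)\ell^{-s})^{-1}$ at $\ell$). By Atkin--Lehner theory, $a_\ell(f)^2=\ell^{k-2}$, i.e.\ $a_\ell(f)=\pm\ell^{(k-2)/2}$. Taking the Tate twist into account, $\Frob_\ell$ acts on the one-dimensional $F_p$-module $V_p^{I_\ell}=V_{f,p}^{I_\ell}(k/2)$ by $a_\ell(f)^{-1}\ell^{k/2}=\pm\ell$, which is not equal to $1$ since $\ell\geq 2$.

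\emph{Case 3: $\ell^2\mid N$.} This is immediate: by Assumption \ref{motass}(3), $V_p^{I_\ell}=0$, so both $H^0_f(\Q_\ell,V_p)$ and $H^1_f(\Q_\ell,V_p)$ vanish trivially.

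In all three cases $1-\Frob_\ell$ is invertible on $V_p^{I_\ell}$, which simultaneously kills kernel and cokernel and proves the lemma. The only mildly delicate point is keeping the central Tate twist $(k/2)$ straight when reading off the eigenvalue of $\Frob_\ell$ in Case 2; everything else is a direct appeal to the Weil bound and to Assumption \ref{motass}.
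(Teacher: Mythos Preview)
Your proof is correct and follows essentially the same strategy as the paper: reduce to showing that $1-\Frob_\ell$ is an automorphism of $V_p^{I_\ell}$, then split into the three cases $\ell\nmid N$, $\ell\|N$, $\ell^2\mid N$ and use, respectively, the Weil bound, the explicit Steinberg shape of the local representation, and Assumption \ref{motass}(3). The only cosmetic difference is that in the $\ell\|N$ case you compute the Frobenius eigenvalue on $V_p^{I_\ell}$ directly as $\pm\ell$ via $a_\ell(f)^2=\ell^{k-2}$, whereas the paper phrases it as $V_p^{I_\ell}\simeq F_p(1)$; these amount to the same thing.
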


\begin{proof} Let $S$ be as in \eqref{S-eq} and let $\ell$ be a prime number. First assume that $\ell\not\in S$. Since $ V_p$ is unramified at $\ell$, we have $ V_p^{I_\ell}= V_p$. It follows from \eqref{h^1_f-eq} and \cite[Lemma 1.3.2, (i)]{Rubin-ES} that there is an isomorphism
\begin{equation} \label{Frob-eq}
H^1_f(\Q_\ell,V_p)\simeq V_p\big/(\Frob_\ell-1)V_p.
\end{equation}
In this case, $\Frob_\ell$ acts with eigenvalues $\alpha$, $\beta$ such that $|\alpha|=|\beta|=\ell^{k/2+1}$;  we deduce that $\Frob_\ell-1: V_p\rightarrow V_p$ is an isomorphism, and the result follows from \eqref{Frob-eq}. 

Assume now that $\ell\,|\,N$. If $\ell^2\,|\,N$, then by part (3) of Assumption \ref{motass} we have $V_p^{I_\ell}=0$, so again the result follows from \eqref{h^1_f-eq}. Finally, assume that $\ell\|N$. The restriction of $V_p$ to $G_{\Q_\ell}$ is isomorphic to $\bigl(\begin{smallmatrix}\chi_\mathrm{cyc}&c\\0&1\end{smallmatrix}\bigr)$, where $\chi_\mathrm{cyc}$ is the $p$-adic cyclotomic character and $c:G_{\Q_\ell}\rightarrow V_p$ is a $1$-cocycle. Moreover, $I_\ell$ acts via the map $g\mapsto\bigl(\begin{smallmatrix}1&c(g)\\0&1\end{smallmatrix}\bigr)$, so, since $V_p$ is ramified at $\ell$, we have $c\neq0$ and $V_p^{I_\ell}\simeq F_p(1)$. In particular, $\Frob_\ell-1$ is an isomorphism of $V_p^{I_\ell}$, and by \eqref{h^1_f-eq} the lemma is proved. \end{proof}

\begin{lemma} \label{lemmap}
$H^0(\Q_p,V_p)=0$. 
\end{lemma}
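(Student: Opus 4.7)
The plan is to argue that any nonzero element of $H^0(\Q_p,V_p)=V_p^{G_{\Q_p}}$ would produce a trivial subrepresentation of $V_p$, which is incompatible with the weights of Frobenius eigenvalues appearing on $V_p$.

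More precisely, by part (1) of Assumption \ref{motass} we have $p\nmid N$, so by classical results of Scholl and Faltings the $G_{\Q_p}$-representation $V_{f,p}$ is crystalline at $p$. The characteristic polynomial of the crystalline Frobenius $\phi$ on $\mathbf{D}_\crys(V_{f,p})$ is $X^2-a_p(f)X+p^{k-1}$, whose roots $\alpha_p,\beta_p$ are Weil numbers of complex absolute value $p^{(k-1)/2}$ (under any embedding $\bar{\Q}\hookrightarrow\C$). Since $V_p\simeq V_{f,p}(k/2)$, the Frobenius eigenvalues on $\mathbf{D}_\crys(V_p)$ are $\alpha_p/p^{k/2}$ and $\beta_p/p^{k/2}$, both of absolute value $p^{-1/2}$; in particular, $1$ is not an eigenvalue of $\phi$ on $\mathbf{D}_\crys(V_p)$.

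Now I would conclude as follows: any nonzero $v\in V_p^{G_{\Q_p}}$ spans a $G_{\Q_p}$-stable line on which the action is trivial, hence crystalline. Via the crystalline comparison isomorphism, this line embeds into $\mathbf{D}_\crys(V_p)$ as a $\phi$-stable line on which $\phi$ acts as the identity; but this contradicts the fact that $1$ is not a Frobenius eigenvalue on $\mathbf{D}_\crys(V_p)$. Therefore $V_p^{G_{\Q_p}}=0$.

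The main (and essentially only) conceptual input is the purity/weight computation for the Frobenius eigenvalues after the self-dual twist by $k/2$, which forces them to lie on the circle $|\cdot|=p^{-1/2}$ and thus excludes the eigenvalue $1$. Everything else is a standard application of the crystalline comparison and is expected to be formal; no delicate estimate or deeper input beyond Deligne--Scholl is needed.
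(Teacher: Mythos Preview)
Your argument is correct. The key input---that the crystalline Frobenius eigenvalues on $\mathbf{D}_\crys(V_p)$ have complex absolute value $p^{-1/2}$ and hence avoid $1$---is exactly what is needed, and your deduction that a nonzero $G_{\Q_p}$-fixed vector would yield a $\phi$-fixed line in $\mathbf{D}_\crys(V_p)$ is a standard consequence of the full faithfulness of $\mathbf{D}_\crys$ on crystalline representations. (Strictly speaking one gets $V_p^{G_{\Q_p}}=\mathbf{D}_\crys(V_p)^{\phi=1}\cap\Fil^0\mathbf{D}_\dR(V_p)$, but this is contained in $\mathbf{D}_\crys(V_p)^{\phi=1}$, which suffices.)

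The paper takes a different route: rather than computing Frobenius eigenvalues directly, it invokes \cite[Theorem 4.1, (ii)]{BK} to assert that the Bloch--Kato exponential $\exp_{\mathrm{BK}}:t(V_p)\to H^1_f(\Q_p,V_p)$ is an isomorphism, and then appeals to \cite[Corollary 3.8.4]{BK} to conclude that $H^0(\Q_p,V_p)=0$. Your approach is more elementary and self-contained in that it makes the weight-theoretic input (Ramanujan--Petersson for the Hecke polynomial at $p$) explicit, whereas the paper's proof buries this inside the cited black boxes from \cite{BK}. On the other hand, the paper's route has the side benefit of establishing the isomorphism $t(V_p)\simeq H^1_f(\Q_p,V_p)$, which is used elsewhere (see \S\ref{prime-p-subsec}). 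Both arguments ultimately rest on the same fact that $\mathbf{D}_\crys(V_p)^{\phi=1}=0$.
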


\begin{proof} By part (1) of Assumption \ref{motass}, $V_p$ is a crystalline, hence de Rham, representation, so $\mathbf{D}_\dR(V_p)=\mathbf{D}_\crys(V_p)$. By \cite[Theorem 4.1, (ii)]{BK}, the Bloch--Kato exponential map gives an isomorphism 
\[ \exp_\mathrm{BK}:t(V_p)\overset{\simeq}\longrightarrow H^1_f(\Q_p,V_p), \]
and then \cite[Corollary 3.8.4]{BK} implies that $H^0(\Q_p, V_p)=0$. \end{proof}

\subsection{On the cohomology of $T_\star$, $V_\star$, $A_\star$} \label{cohomology-subsec}

Let $K$ be a number field. Recall that we assume throughout that Conjecture \ref{finitenessconj} is true, \emph{i.e.}, $H^1_\mot(K,\MM)$ has finite dimension, denoted by $r_\alg(\MM/K)$, over $F$. For notational convenience, set $r\defeq r_\alg(\MM/K)$. Furthermore, assume also that
\begin{itemize}
\item the $p$-part of Conjecture \ref{regpconj} over $K$ holds true.
\end{itemize}
This condition will essentially be in force until the end of the article. Therefore, the $p$-adic regulator map from \eqref{p-reg-eq} is an isomorphism
\[ \reg_{K,p}:H^1_\mot(K,\MM)_p\overset\simeq\longrightarrow H^1_f(K,V_p) \]
of $F_p$-modules. As a consequence, $H^1_f(K, V_p)$ is free of rank $r$ over $F_p$. Since $F_p=\prod_{\p|p}F_\p$, it follows from \eqref{splittings-eq} with $M=V$ that the $F_\p$-vector space $H^1_f(L,V_\p)$ has dimension $r$ for all primes $\p$ of $F$ above $p$. 

\subsubsection{Cohomology and divisible submodules}

In the following lines, let $\star\in\{p\}\cup\{\p\,|\,p\}$. Set
\[ \uH^1_f(K,T_\star)\defeq\im\Bigl(H^1_f(K,T_\star)\longrightarrow H^1_f(K,V_\star)\Bigr). \] 
Since $H^1_f(K,T_\p)$ is finitely generated over $\cO_\p$ and there is a canonical isomorphism
\[ H^1_f(L,T_\p)\otimes_{\cO_\p}\!F_\p\overset\simeq\longrightarrow H^1_f(L,V_\p) \]
of $F_\p$-vector spaces (\emph{cf.} \cite[Proposition B.2.4]{Rubin-ES} and  \cite[Proposition 2.3]{Tate}), we conclude that $\uH_f^1(K,T_\p)$ is a free $\cO_\p$-submodule of $H^1_f(K,V_\p)$ of rank $r$; in other words, $\uH_f^1(K,T_\p)$ is an $\cO_\p$-lattice inside $H^1_f(K,V_\p)$. Again by \eqref{splittings-eq}, $\uH^1_f(K,T_p)=\bigoplus_{\p|p}\uH^1_f(K,T_\p)$, so $\uH^1_f(K,T_p)$ is a free $\cO_p$-submodule of $H^1_f(K,V_p)$ of rank $r$.

It can be checked that there is an exact sequence
\begin{equation} \label{T-V-A-eq}
H^1_f(K,T_\star)\longrightarrow H^1_f(K,V_\star)\longrightarrow H^1_f(K,A_\star), 
\end{equation}
which induces an exact sequence
\begin{equation} \label{T-V-A-eq2}
0\longrightarrow\uH^1_f(K,T_\star)\longrightarrow H^1_f(K,V_\star)\longrightarrow H^1_f(K,A_\star). 
\end{equation}
The group $H^1_f(K,V_\star)$ is a vector space over a field of characteristic $0$, so it is divisible, and then the rightmost map in \eqref{T-V-A-eq} gives a map
\begin{equation} \label{V-A-div-eq}
\Upsilon_\star:H^1_f(K,V_\star)\longrightarrow H^1_f(K,A_\star{)}_\divv.
\end{equation}
Clearly, $\Upsilon_p=\bigoplus_{\p|p}\Upsilon_\p$.

\begin{proposition} \label{upsilon-onto-prop}
The map $\Upsilon_\star$ is surjective.
\end{proposition}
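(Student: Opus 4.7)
The plan is to deduce surjectivity from the long exact sequence in Galois cohomology associated with the short exact sequence $0\to T_\star\to V_\star\to A_\star\to0$, combined with a divisibility argument and standard finite-generation properties. First, I would record that since the finite local conditions on $T_\star$ and $A_\star$ are defined by propagation from those on $V_\star$, a diagram chase on local/global cohomology yields an exact sequence
\[
H^1_f(K,T_\star)\longrightarrow H^1_f(K,V_\star)\xrightarrow{\pi_\star}H^1_f(K,A_\star),
\]
so that $\Upsilon_\star$ is simply $\pi_\star$ with its codomain restricted to $D\defeq H^1_f(K,A_\star)_{\mathrm{div}}$; this restriction is legitimate because $H^1_f(K,V_\star)$ is an $F_\star$-vector space and hence the image $M\defeq\im(\pi_\star)$ is automatically divisible, so $M\subseteq D$.

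Next, I would show the reverse inclusion $D\subseteq M$ by examining the quotient $D/M$. Since both $D$ and $M$ are divisible $\mathcal O_\star$-modules, the quotient $D/M$ is again divisible. On the other hand, $D/M$ embeds into $\coker(\pi_\star)$, and I would argue that this cokernel injects into a subquotient of $H^2(G_{K,S},T_\star)$ controlled by the local conditions at places in $S$. A standard finite-generation result of Tate (see also Jannsen) ensures that such a subquotient is a finitely generated $\mathcal{O}_\star$-module, and a finitely generated $\mathcal O_\star$-module contains no nonzero divisible submodule; hence $D/M=0$, as desired.

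The main obstacle is the precise justification that $\coker(\pi_\star)$ embeds into a finitely generated $\mathcal O_\star$-module; that is, given $\alpha\in H^1_f(K,A_\star)_{\mathrm{div}}$, one must produce a lift in $H^1_f(K,V_\star)$, not merely in $H^1(K,V_\star)$. The global obstruction to a lift in unrestricted cohomology lies in $H^2(G_{K,S},T_\star)$ and vanishes by finite generation together with divisibility of $\alpha$ (an infinitely $p$-divisible element of a finitely generated $\mathcal O_\star$-module is zero). What remains is the local adjustment: for each $v\in S$, an unrestricted lift $\tilde\alpha$ of $\alpha$ may fail to land in $H^1_f(K_v,V_\star)$, and one has to correct it by a class in $H^1(G_{K,S},T_\star)$ with prescribed singular components. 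Invoking Lemmas \ref{lemmaram} and \ref{lemmap} to handle the local conditions at $v\nmid p\infty$ and at $v\mid p$ respectively, and then applying a Poitou--Tate (nine-term) duality argument to solve the global-to-local adjustment problem, one fits all pieces together. It is this orchestration of local and global obstructions that is the genuinely delicate part of the proof.
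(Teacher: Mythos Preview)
The paper's own proof is a one-line citation to Flach and Nekov\'a\v{r}--Plater, so there is no detailed argument to compare against. Your first paragraph captures exactly the standard argument in those references: $M=\im(\pi_\star)$ is divisible, $D/M$ is divisible and sits inside $\coker(\pi_\star)$, and the latter is finitely generated over $\mathcal O_\star$, forcing $D/M=0$.

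Your second paragraph, however, muddles the logic and over-engineers the justification. Showing that $\coker(\pi_\star)$ is finitely generated is \emph{not} equivalent to producing, for each divisible $\alpha$, a lift in $H^1_f(K,V_\star)$; the latter is the conclusion, and it follows from the former via your divisibility argument. The finite-generation step itself needs neither the Poitou--Tate nine-term sequence nor Lemmas~\ref{lemmaram} and~\ref{lemmap} (which are stated for completions of $\Q$, not of the general number field $K$ in play here, and which concern vanishing of local $H^0_f$ and $H^1_f$ rather than anything relevant to the present question). A plain diagram chase on the two left-exact rows
\[
0\longrightarrow H^1_f(K,\,\cdot\,)\longrightarrow H^1(G_{K,S},\,\cdot\,)\longrightarrow\bigoplus_{v\in S}H^1_s(K_v,\,\cdot\,)
\]
for $V_\star$ and $A_\star$ shows that $\coker(\pi_\star)$ sits in an extension of a submodule of $H^2(G_{K,S},T_\star)$ by a subquotient of $\bigoplus_{v\in S}H^1_s(K_v,T_\star)$; both are finitely generated by Tate's local and global finiteness theorems, and that is all one needs.
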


\begin{proof} The surjectivity of $\Upsilon_p$ is equivalent to that of $\Upsilon_\p$ for all $\p\,|\,p$, which is well known (\emph{cf.} \cite{Flach}, \cite[\S 2.1.3]{NP}). \end{proof}

\subsubsection{On Pontryagin duals}

Let $M\in\{T,V,A\}$. As is pointed out in Remark \ref{pontryagin-rem2}, there is an identification
\begin{equation} \label{pontryagin-eq}
\Hom_\cont\bigl(H^1_f(L,M_\p),\Q_p/\Z_p\bigr)=\Hom_\cont\bigl(H^1_f(L,M_\p),F_\p/\cO_\p\bigr),
\end{equation}
which provides an alternative description of the Pontryagin dual $H^1_f(L,M_\p)^\vee$ of $H^1_f(L,M_\p)$. Let us also define
\[ H^1_f(K,M_p)^\vee\defeq\Hom_\cont\bigl(H^1_f(K,M_p),F_p/\cO_p) \]
and call this $\cO_p$-module the \emph{Pontryagin dual} of $H^1_f(K,M_p)$. It follows from \eqref{pontryagin-eq} and the splitting $F_p/\cO_p=\prod_{\p|p}F_\p/\cO_\p$ that
\begin{equation} \label{M-dual-splitting-eq}
H^1_f(K,M_p)^\vee=\bigoplus_{\p|p}H^1_f(K,M_\p)^\vee. 
\end{equation}
Let us write $\corank_{\cO_\star}\!H^1_f(K,A_\star)$ for the corank of $H^1_f(K,A_\star)$ over $\cO_\star$, \emph{i.e.}, the rank as an $\cO_\star$-module of its Pontryagin dual $H^1_f(L,A_\star)^\vee$. 

\begin{corollary} \label{upsilon-onto-coro}
$\corank_{\cO_\star}\!H^1_f(K,A_\star)=r$.
\end{corollary}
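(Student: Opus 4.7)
My plan is to combine the exact sequence \eqref{T-V-A-eq2} with the surjectivity statement of Proposition \ref{upsilon-onto-prop} to deduce a short exact sequence of $\cO_\star$-modules
\[ 0\longrightarrow\uH^1_f(K,T_\star)\longrightarrow H^1_f(K,V_\star)\overset{\Upsilon_\star}\longrightarrow H^1_f(K,A_\star{)}_\divv\longrightarrow 0, \]
and then read off the $\cO_\star$-corank of the right-hand term by a rank/corank count.

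More precisely, the kernel of the map $H^1_f(K,V_\star)\to H^1_f(K,A_\star)$ appearing in \eqref{T-V-A-eq2} is exactly $\uH^1_f(K,T_\star)$, so its image coincides with the image of $\Upsilon_\star$. By Proposition \ref{upsilon-onto-prop}, this image is all of $H^1_f(K,A_\star{)}_\divv$, hence the displayed sequence is exact. Now, by the discussion immediately preceding the statement, the hypothesis that the $p$-part of Conjecture \ref{regpconj} over $K$ holds true implies that $H^1_f(K,V_\star)$ is a free $F_\star$-module of rank $r$, while $\uH^1_f(K,T_\star)$ is an $\cO_\star$-lattice of rank $r$ inside it. Choosing an $\cO_\star$-basis of $\uH^1_f(K,T_\star)$ and scaling produces an isomorphism
\[ H^1_f(K,V_\star)\big/\uH^1_f(K,T_\star)\simeq(F_\star/\cO_\star)^r \]
of $\cO_\star$-modules. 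Therefore $H^1_f(K,A_\star{)}_\divv\simeq(F_\star/\cO_\star)^r$, which is cofree of rank $r$ over $\cO_\star$.

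To conclude, I would pass from the corank of $H^1_f(K,A_\star{)}_\divv$ to that of $H^1_f(K,A_\star)$. For $\star=\p$, this is immediate from the finiteness of $\Sha_\p^\BKK(K,\MM)=H^1_f(K,A_\p)/H^1_f(K,A_\p{)}_\divv$ recalled in \S\ref{ST-subsubsec}: a finite quotient contributes $0$ to the $\cO_\p$-corank, so $\corank_{\cO_\p}\!H^1_f(K,A_\p)=r$. For $\star=p$, one either argues componentwise using the splittings \eqref{splittings-eq} and \eqref{M-dual-splitting-eq} together with $\cO_p=\prod_{\p\mid p}\cO_\p$, or notes that $\Sha_p^\BKK(K,\MM)=\bigoplus_{\p\mid p}\Sha_\p^\BKK(K,\MM)$ is finite for the same reason. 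The only point that deserves some attention is checking that the cokernel of $\uH^1_f(K,T_\star)\hookrightarrow H^1_f(K,V_\star)$ is really isomorphic to $(F_\star/\cO_\star)^r$ as an $\cO_\star$-module (and not merely of the right corank); this is a routine verification, exploiting the fact that $\uH^1_f(K,T_\star)$ is a projective $\cO_\star$-module of rank $r$ (hence, in the semilocal case $\star=p$, free) that spans $H^1_f(K,V_\star)$ over $F_\star$.
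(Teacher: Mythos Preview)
Your proof is correct and follows essentially the same approach as the paper: both combine the exact sequence \eqref{T-V-A-eq2} with Proposition \ref{upsilon-onto-prop} to identify $H^1_f(K,A_\star)_\divv$ with $H^1_f(K,V_\star)/\uH^1_f(K,T_\star)\simeq(F_\star/\cO_\star)^r$, and then pass to the corank of $H^1_f(K,A_\star)$ via the (general) equality $\corank_{\cO_\star}H^1_f(K,A_\star)=\corank_{\cO_\star}H^1_f(K,A_\star)_\divv$. Your invocation of the finiteness of $\Sha_\p^{\BKK}$ is fine but slightly more than needed: the equality of coranks holds for any cofinitely generated $\cO_\star$-module, since the quotient by its maximal divisible subgroup is automatically finite.
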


\begin{proof} To begin with, $\corank_{\cO_\star}\!H^1_f(K,A_\star)=\corank_{\cO_\star}\!H^1_f(K,A_\star{)}_\divv$. On the other hand, it follows from \eqref{T-V-A-eq2} and Proposition \ref{upsilon-onto-prop} that there is an isomorphism of $\cO_\star$-modules
\[ H^1_f(K,A_\star{)}_\divv\simeq H^1_f(K,V_\star)\big/\uH^1_f(K,T_\star)\simeq(F_\star/\cO_\star)^r, \]
whence the desired equality. \end{proof}

\subsubsection{Dual bases}

In light of Proposition \ref{upsilon-onto-prop}, taking Pontryagin duals of the map $\Upsilon_\star$ in \eqref{V-A-div-eq} gives an injection of $\cO_\star$-modules
\begin{equation} \label{Upsilon-dual-eq}
\Upsilon_\star^\vee:H^1_f(K,A_\star{)}^\vee_\divv\longmono H^1_f(K,V_\star)^\vee. 
\end{equation}
Again, $\Upsilon_p^\vee=\bigoplus_{\p|p}\Upsilon_\p^\vee$. 

Now pick an $F_p$-basis $\tilde{\mathscr{B}}=\{x_1,\dots,x_r\}$ of $H^1_f(K,V_p)$; it generates a rank $r$ free $\cO_p$-submodule $\Lambda_{\tilde{\mathscr{B}}}$ of $H^1_f(K,V_p)$. Fix an isomorphism
\[ \varphi_{\tilde{\mathscr{B}}}:\Lambda_{\tilde{\mathscr{B}}}\overset\simeq\longrightarrow\uH^1_f(K,T_p) \]
of $\cO_p$-modules and set $\xi_i\defeq\varphi_{\tilde{\mathscr{B}}}(x_i)$ for $i=1,\dots,r$. Thus, $\{\xi_1,\dots,\xi_r\}$ is an $\cO_p$-basis of $\uH^1_f(K,T_p)$; it is also an $F_p$-basis of $H^1_f(K,V_p)$. The elements $\xi_1,\dots,\xi_n$ give rise to the dual basis $\{\xi_1^*,\dots,\xi_n^*\}$ of the $\cO_p$-linear dual of $\uH^1_f(K,T_p)$ by the recipe $\xi_i^*(\xi_j)\defeq\delta_{ij}$, where $\delta_{ij}$ is the Kronecker delta; this gives also a basis, which will be denoted in the same way, of the $F_p$-linear dual of $H^1_f(K,V_p)$. Composing the $F_p$-linear maps $\xi_i^*$ with the canonical projection $F_p\twoheadrightarrow F_p/\cO_p$, we obtain elements $\xi_i^\vee$ of the Pontryagin dual $H^1_f(K,V_p)^\vee$.

\begin{lemma} \label{pontryagin-independent-lemma}
The elements $\xi_1^\vee,\dots,\xi_r^\vee$ are linearly independent over $\cO_p$.
\end{lemma}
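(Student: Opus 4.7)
The plan is to reduce to a single prime $\p\mid p$ and then use the $\pi$-adic separation of the complete discrete valuation ring $\cO_\p$ to force the coefficients of a vanishing relation to be zero. The key observation is that the functionals $\xi_i^\vee$, although \emph{a priori} defined only modulo $\cO_p$, can still detect $\pi$-adic scaling thanks to the fact that $\uH^1_f(K,T_p)$ is only a lattice inside the larger $F_p$-vector space $H^1_f(K,V_p)$: one therefore has room to divide basis elements by arbitrarily high powers of a uniformizer without leaving the ambient space on which $\xi_i^\vee$ is defined.

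First I would invoke the splitting $H^1_f(K,V_p)^\vee=\bigoplus_{\p\mid p}H^1_f(K,V_\p)^\vee$ from \eqref{M-dual-splitting-eq}, together with the corresponding splittings $\cO_p=\prod_\p \cO_\p$, $\uH^1_f(K,T_p)=\bigoplus_\p \uH^1_f(K,T_\p)$ and $H^1_f(K,V_p)=\bigoplus_\p H^1_f(K,V_\p)$, to decompose any relation $\sum_i a_i\xi_i^\vee=0$ with $a_i\in\cO_p$ into its $\p$-components. This reduces the problem to proving, for each $\p\mid p$ separately, that the corresponding functionals $\xi_{1,\p}^\vee,\dots,\xi_{r,\p}^\vee\in H^1_f(K,V_\p)^\vee$ are linearly independent over $\cO_\p$.

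Next, I would fix $\p\mid p$ and a uniformizer $\pi$ of the complete discrete valuation ring $\cO_\p$ and assume a relation $\sum_{i=1}^r a_i\,\xi_{i,\p}^\vee=0$ with $a_i\in\cO_\p$. For each index $j$ and each integer $n\geq 1$, the element $\xi_{j,\p}/\pi^n$ lies in $H^1_f(K,V_\p)$, so we may evaluate both sides of the relation on it. By the very definition of the dual basis, the $F_\p$-linear functional $\xi_{i,\p}^*$ satisfies $\xi_{i,\p}^*(\xi_{j,\p}/\pi^n)=\delta_{ij}/\pi^n\in F_\p$, whence
\[
\xi_{i,\p}^\vee(\xi_{j,\p}/\pi^n)=\delta_{ij}/\pi^n\pmod{\cO_\p}\in F_\p/\cO_\p.
\]
Applying the assumed relation at $\xi_{j,\p}/\pi^n$ therefore yields $a_j/\pi^n\in\cO_\p$, i.e., $\pi^n\mid a_j$ in $\cO_\p$.

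Finally, since $n\geq 1$ is arbitrary and $\cO_\p$ is $\pi$-adically separated (indeed complete), the congruence $a_j\in\bigcap_{n\geq 1}\pi^n\cO_\p=\{0\}$ forces $a_j=0$ for every $j=1,\dots,r$. Reassembling over all $\p\mid p$, we conclude that the original relation was trivial, proving the desired linear independence. There is no serious obstacle here: the argument is essentially a routine verification, and the only point requiring mild care is the legitimacy of evaluating $\xi_i^\vee$ on the divided vectors $\xi_j/\pi^n$, which is precisely what the chosen formulation (passing from the $\cO_p$-linear dual of the lattice to the $F_p$-linear dual of the ambient vector space before composing with $F_p\twoheadrightarrow F_p/\cO_p$) is designed to allow.
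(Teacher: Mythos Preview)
Your proof is correct and follows essentially the same route as the paper: both reduce via the splitting \eqref{M-dual-splitting-eq} to a single prime $\p$ and then evaluate the functionals on uniformizer-divided basis vectors $\xi_{j,\p}/\pi^n$ to force the coefficients to vanish. The only cosmetic difference is that the paper packages this last step as a general appendix lemma (Lemma~\ref{pontryagin-indep-lemma}), where one fixes a nonzero $a_j$ of valuation $\nu_j$ and evaluates once at $\pi^{-(\nu_j+1)}\xi_{j,\p}$ to reach an immediate contradiction, whereas you let $n$ range and invoke $\bigcap_n \pi^n\cO_\p=\{0\}$; these are equivalent formulations of the same idea.
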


\begin{proof} In light of splitting \eqref{M-dual-splitting-eq} for $M=V$, we may
\begin{itemize}
\item assume that $\xi_1^\vee,\dots,\xi_r^\vee$ belong to $H^1_f(K,V_\p)^\vee$ for a prime $\p$ of $F$ above $p$,
\item replace $\cO_p$ with $\cO_\p$.
\end{itemize}
Now the lemma follows from Lemma \ref{pontryagin-indep-lemma} with, in the notation of \S \ref{vector-duals-subsec}, $\KK=F_\p$, $\mathscr O=\cO_\p$, $v_i=\xi_i$, $V=H^1_f(K,V_\p)$, $T_{\mathscr B}=\uH^1_f(K,T_\p)$. \end{proof}

Denote by $\Xi_{\tilde{\mathscr{B}}}$ the $\cO_p$-submodule of $H^1_f(K,V_p)^\vee$ spanned by $\xi_1^\vee,\dots,\xi_r^\vee$; by Lemma \ref{pontryagin-independent-lemma}, the $\cO_p$-rank of $\Xi_{\tilde{\mathscr{B}}}$ is $r$.

\begin{lemma} \label{equality-lattices-dual-lemma}
If $\tilde{\mathscr B}$ and $\tilde{\mathscr B}'$ are $F_p$-bases of $H^1_f(K,V_p)$, then $\Xi_{\tilde{\mathscr{B}}}=\Xi_{\tilde{\mathscr{B}}'}$.
\end{lemma}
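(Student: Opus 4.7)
The plan is to exhibit an intrinsic description of $\Xi_{\tilde{\mathscr B}}$ that depends only on the lattice $\uH^1_f(K,T_p) \subset H^1_f(K,V_p)$, and not on the auxiliary choices (the basis $\tilde{\mathscr B}$ and the isomorphism $\varphi_{\tilde{\mathscr B}}$). Specifically, I would introduce the $\cO_p$-module
\[
L \defeq \bigl\{\phi \in \Hom_{F_p}\!\bigl(H^1_f(K,V_p),F_p\bigr) \,\bigm|\, \phi\bigl(\uH^1_f(K,T_p)\bigr) \subseteq \cO_p\bigr\},
\]
together with the $\cO_p$-linear map $\pi_{*}: L \to H^1_f(K,V_p)^\vee$ given by post-composition with the canonical projection $\pi: F_p \twoheadrightarrow F_p/\cO_p$, where we use the identification \eqref{pontryagin-eq}--\eqref{M-dual-splitting-eq} of $H^1_f(K,V_p)^\vee$ with $\Hom_{\cO_p}(H^1_f(K,V_p),F_p/\cO_p)$. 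I would then prove the stronger statement that $\Xi_{\tilde{\mathscr B}} = \pi_*(L)$ for every $\tilde{\mathscr B}$; since $L$ is manifestly intrinsic, this immediately yields the equality $\Xi_{\tilde{\mathscr B}} = \Xi_{\tilde{\mathscr B}'}$.

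The verification of $\Xi_{\tilde{\mathscr B}} = \pi_*(L)$ is purely linear-algebraic. By construction, $\{\xi_1,\dots,\xi_r\}$ is an $\cO_p$-basis of $\uH^1_f(K,T_p)$ whose $F_p$-span equals $H^1_f(K,V_p)$ (recall that $\uH^1_f(K,T_p)$ was shown to be a free $\cO_p$-lattice in $H^1_f(K,V_p)$ of rank $r$ earlier in \S \ref{cohomology-subsec}). Consequently, restriction to $\uH^1_f(K,T_p)$ yields an $\cO_p$-linear isomorphism $L \overset{\simeq}{\longrightarrow} \Hom_{\cO_p}\!\bigl(\uH^1_f(K,T_p), \cO_p\bigr)$, under which the $F_p$-linear functionals $\xi_1^*, \dots, \xi_r^*$ correspond to the $\cO_p$-linear dual basis of $\{\xi_1,\dots,\xi_r\}$. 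In particular, each $\xi_i^*$ belongs to $L$ and $\{\xi_1^*, \dots, \xi_r^*\}$ is an $\cO_p$-basis of $L$. Since by definition $\xi_i^\vee = \pi \circ \xi_i^*$, we conclude $\Xi_{\tilde{\mathscr B}} = \pi_*(L)$, as desired.

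No substantive obstacle is expected: once one has the freeness of $\uH^1_f(K,T_p)$ over $\cO_p$ (already available in \S \ref{cohomology-subsec}) and the explicit description of the Pontryagin dual coming from \eqref{pontryagin-eq}, the proof reduces to the elementary observation that the $\cO_p$-module of $F_p$-linear functionals sending a given $\cO_p$-lattice into $\cO_p$ is canonically the $\cO_p$-dual of that lattice. The only minor care needed is to make sure that the intermediate object $L$ is interpreted consistently both as a submodule of the $F_p$-linear dual and, via restriction, as the $\cO_p$-linear dual of $\uH^1_f(K,T_p)$; this bookkeeping is what ultimately makes the independence of $\Xi_{\tilde{\mathscr B}}$ from the choice of $\varphi_{\tilde{\mathscr B}}$ transparent.
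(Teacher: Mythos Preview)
Your proof is correct and follows essentially the same idea as the paper's: both arguments rest on the observation that $\{\xi_1^*,\dots,\xi_r^*\}$ and $\{(\xi_1')^*,\dots,(\xi_r')^*\}$ are $\cO_p$-bases of the same $\cO_p$-module (the $\cO_p$-linear dual of $\uH^1_f(K,T_p)$, which you name $L$), so their images under post-composition with $F_p\twoheadrightarrow F_p/\cO_p$ span the same submodule of $H^1_f(K,V_p)^\vee$. The paper carries this out by an explicit change-of-basis computation, whereas you package it as the intrinsic identification $\Xi_{\tilde{\mathscr B}}=\pi_*(L)$; the content is the same.
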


\begin{proof} With self-explaining notation, $\{\xi_1^*,\dots,\xi_r^*\}$ and $\bigl\{(\xi'_1)^*,\dots,(\xi'_r)^*\bigr\}$ are bases of the $\cO_p$-linear dual of $\uH^1_f(K,T_p)$. In particular, for every $i\in\{1,\dots,r\}$ there are $a_{i,1},\dots,a_{i,r}\in\cO_p$ such that $(\xi_i')^*=a_{i,1}\xi_1^*+\dots+a_{i,r}\xi_r^*$. On the other hand, the $\cO_p$-linearity of the projection $F_p\twoheadrightarrow F_p/\cO_p$ ensures that
\[ (\xi_i')^\vee=a_{i,1}\xi_1^\vee+\dots+a_{i,r}\xi_r^\vee, \]
so $\Xi_{\tilde{\mathscr{B}}'}\subset\Xi_{\tilde{\mathscr{B}}}$. Analogously, the inclusion $\Xi_{\tilde{\mathscr{B}}}\subset\Xi_{\tilde{\mathscr{B}}'}$ holds as well. \end{proof}

In light of Lemma \ref{equality-lattices-dual-lemma}, from here on we set
\[ \Xi_p\defeq\Xi_{\tilde{\mathscr{B}}}\subset H^1_f(K,V_p)^\vee \]
for any $F_p$-basis $\tilde{\mathscr{B}}$ of $H^1_f(K,V_p)$. Now recall the injection of $\cO_p$-modules $\Upsilon_p^\vee$ from \eqref{Upsilon-dual-eq}.

\begin{proposition} \label{image-upsilon-dual-prop}
The image of $\Upsilon_p^\vee$ is $\Xi_p$.
\end{proposition}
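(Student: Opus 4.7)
The strategy is to apply Pontryagin duality to the short exact sequence obtained by combining \eqref{T-V-A-eq2} with Proposition \ref{upsilon-onto-prop}, namely
\[ 0 \longrightarrow \uH^1_f(K, T_p) \longrightarrow H^1_f(K, V_p) \xrightarrow{\Upsilon_p} H^1_f(K, A_p{)}_\divv \longrightarrow 0, \]
and then to compute the resulting image in terms of the fixed $F_p$-basis $\tilde{\mathscr B} = \{\xi_1,\dots,\xi_r\}$. Applying $\Hom_\cont(-,F_p/\cO_p)$, the left-exactness gives injectivity of $\Upsilon_p^\vee$ and an inclusion $\im(\Upsilon_p^\vee) \subseteq \ker$ of the restriction map to $\uH^1_f(K, T_p)^\vee$; conversely, any continuous $\phi$ vanishing on $\uH^1_f(K, T_p)$ factors continuously through the quotient $H^1_f(K, V_p)/\uH^1_f(K, T_p) \simeq H^1_f(K, A_p{)}_\divv$, so
\[ \im(\Upsilon_p^\vee) = \bigl\{\phi \in H^1_f(K, V_p)^\vee \mid \phi\vert_{\uH^1_f(K, T_p)} = 0\bigr\}. \]

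The inclusion $\Xi_p \subseteq \im(\Upsilon_p^\vee)$ is then immediate: each $\xi_i^\vee$ kills every $\xi_j$ (since $\xi_i^*(\xi_j) = \delta_{ij} \in \cO_p$ projects to zero in $F_p/\cO_p$), and by $\cO_p$-linearity it kills the entire submodule $\uH^1_f(K, T_p) = \bigoplus_j \cO_p\xi_j$. For the reverse inclusion, I would fix $\phi \in H^1_f(K, V_p)^\vee$ with $\phi\vert_{\uH^1_f(K, T_p)} = 0$ and exploit the $F_p$-module decomposition $H^1_f(K, V_p) = \bigoplus_i F_p\xi_i$ to reduce, by additivity, to analyzing each restriction $\phi_i \colon F_p \to F_p/\cO_p$, $a \mapsto \phi(a\xi_i)$. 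Working componentwise along $F_p = \prod_{\p\mid p}F_\p$ and using the standard identification $\Hom_\cont(F_\p, F_\p/\cO_\p) \simeq F_\p$, under which $c \in F_\p$ corresponds to multiplication by $c$ composed with the projection, each $\phi_i$ is represented by a unique $c_i \in F_p$. The hypothesis that $\phi$ vanishes on $\cO_p\xi_i \subseteq \uH^1_f(K, T_p)$ translates into $c_i \cdot \cO_p \subseteq \cO_p$, that is $c_i \in \cO_p$. Hence $\phi = \sum_{i=1}^r c_i\xi_i^\vee \in \Xi_p$, which completes the proof.

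The main obstacle is the elementary but slightly fiddly identification $\Hom_\cont(F_\p, F_\p/\cO_\p) \simeq F_\p$: continuity forces $\phi(\cO_\p)$ to be bounded in $F_\p/\cO_\p$ (so $\phi(1)$ lifts to some element of $F_\p$), and then $\cO_\p$-linearity together with the compatibility $\pi_\p \cdot \phi(\pi_\p^{-n-1}) = \phi(\pi_\p^{-n})$ allows the values $\phi(\pi_\p^{-n})$ to be assembled into a single $c \in F_\p = \varinjlim_n \pi_\p^{-n}\cO_\p$ inducing $\phi$. An alternative, less explicit route would be to observe that $\im(\Upsilon_p^\vee)$ is $\cO_p$-free of rank $r$ (being the dual of the divisible module $H^1_f(K, A_p{)}_\divv$ of $\cO_p$-corank $r$, by Corollary \ref{upsilon-onto-coro}) and $\Xi_p$ is $\cO_p$-free of rank $r$ (by Lemma \ref{pontryagin-independent-lemma}), so one only needs to check saturation of $\Xi_p$ inside $\im(\Upsilon_p^\vee)$; but this saturation is essentially equivalent to the direct computation above, which I therefore find more transparent.
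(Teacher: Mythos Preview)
Your proof is correct and follows essentially the same approach as the paper: dualize the short exact sequence to identify $\im(\Upsilon_p^\vee)$ with the functionals vanishing on $\uH^1_f(K,T_p)$, then show this coincides with $\Xi_p$. The paper makes the reduction to a single prime $\p\mid p$ explicit and packages your ``fiddly identification'' $\Hom_\cont(F_\p,F_\p/\cO_\p)\simeq F_\p$ and the resulting characterization of $\Xi_\p$ into appendix results (Proposition \ref{pontryagin10}, resting on Lemma \ref{pontryagin9} and isomorphism \eqref{pontryagin1}); you unwind the same content inline, but the substance is identical.
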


\begin{proof} Since $\Upsilon_p^\vee=\bigoplus_{\p|p}\Upsilon_\p^\vee$, as in the proof of Lemma \ref{pontryagin-independent-lemma} we may work with a fixed prime $\p\,|\,p$ in place of $p$. Taking Pontryagin duals in the short exact sequence 
\[ 0\longrightarrow\uH^1_f(K,T_\p)\overset{\iota_\p}\longrightarrow H^1_f(K,V_\p)\overset{\Upsilon_\p}\longrightarrow H^1_f(K,A_\p{)}_\divv\longrightarrow0, \]
where $\iota_\p$ is simply inclusion, gives a short exact sequence of $\cO_\p$-modules
\[ 0\longrightarrow H^1_f(K,A_\p{)}^\vee_\divv\xrightarrow{\Upsilon_\p^\vee} H^1_f(K,V_\p)^\vee\overset{\iota_\p^\vee}\longrightarrow\uH^1_f(K,T_\p)^\vee\longrightarrow0. \]
Therefore, if $\Xi_\p$ denotes the analogue of $\Xi_p$ at $\p$, then we need to show that $\ker(\iota_\p^\vee)=\Xi_\p$. Let $\varphi\in\ker(\iota_\p^\vee)$. This means that $\varphi\in H^1_f(K,V_\p)^\vee$ and ${\varphi|}_{\uH^1_f(K,T_\p)}=0$, which is equivalent to $\varphi\in\Xi_\p$ by Proposition \ref{pontryagin10} with, in the notation of \S \ref{vector-duals-subsec}, $\KK=F_\p$, $\mathscr O=\cO_\p$, $V=H^1_f(K,V_\p)$, $T_{\mathscr B}=\uH^1_f(K,T_\p)$, $\Xi_{\mathscr B}=\Xi_\p$. \end{proof}

\begin{corollary} \label{image-upsilon-dual-coro}
The set $\{\xi_1^\vee,\dots,\xi_r^\vee\}$ is an $\cO_p$-basis of the image of $\Upsilon_p^\vee$.
\end{corollary}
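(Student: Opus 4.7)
The proof is essentially immediate from what has just been established, so the plan is short. First, I would invoke Proposition \ref{image-upsilon-dual-prop}, which identifies the image of $\Upsilon_p^\vee$ with the submodule $\Xi_p$ of $H^1_f(K,V_p)^\vee$. By the very definition given right before Proposition \ref{image-upsilon-dual-prop}, $\Xi_p$ is the $\cO_p$-submodule generated by $\xi_1^\vee,\dots,\xi_r^\vee$, so these elements certainly span $\im(\Upsilon_p^\vee)$.

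Next, I would apply Lemma \ref{pontryagin-independent-lemma}, which asserts that $\xi_1^\vee,\dots,\xi_r^\vee$ are linearly independent over $\cO_p$. Combining the two facts, $\{\xi_1^\vee,\dots,\xi_r^\vee\}$ is a generating set of $\im(\Upsilon_p^\vee)$ whose elements are $\cO_p$-linearly independent, hence an $\cO_p$-basis. There is no real obstacle here: the work has already been done in Lemma \ref{pontryagin-independent-lemma} (independence) and Proposition \ref{image-upsilon-dual-prop} (identification of the image), and the corollary simply packages their conclusions. The proof can be written in one or two sentences.
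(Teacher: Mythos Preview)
Your proposal is correct and follows the same approach as the paper, which simply notes that the result follows from Proposition \ref{image-upsilon-dual-prop} by the definition of $\Xi_p$. You are slightly more explicit in citing Lemma \ref{pontryagin-independent-lemma} for the independence, but this is implicit in the paper's treatment since that lemma was already used to establish the $\cO_p$-rank of $\Xi_p$.
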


\begin{proof} By definition of $\Xi_p$, this follows from Proposition \ref{image-upsilon-dual-prop}. \end{proof}

In light of the fact that $\Upsilon_p^\vee$ is the dual of the map induced by the canonical projection $V_p\twoheadrightarrow A_p$, from now on we shall usually identify $H^1_f(K,A_p{)}^\vee_\divv$ with its image under $\Upsilon_p^\vee$; in particular, we shall regard $\{\xi_1^\vee,\dots,\xi_r^\vee\}$ as an $\cO_p$-basis of $H^1_f(K,A_p{)}^\vee_\divv$.
 
\subsection{Tamagawa ideals of $\MM$} \label{sectam}

For each prime $\p$ of $F$ above $p$, the completion $\cO_\p$ of $\cO_F$ at $\p$ is a PID (actually, a DVR), so we can apply the definition of determinants in \S \ref{pid-prod-subsubsec} to the case where $R=\cO_p=\prod_{\p\mid p}\cO_\p$. Therefore, if $T$ is a finite $\cO_p$-module and $T=\bigoplus_{\p\mid p}T_\p$ is its splitting as a product of $\cO_\p$-modules, then 
\[ \mathcal{I}_{\cO_p}(T)=\prod_{\p\mid p}\mathcal I_{\cO_\p}(T_\p)\subset F_p. \] 
Since we shall essentially work with $\cO_p$-modules only, from now on we simply set 
\begin{equation} \label{notational-convention-eq}
\mathcal{I}(T)\defeq\mathcal{I}_{\cO_p}(T),\quad\mathcal{I}^{-1}(T)\defeq\mathcal{I}^{-1}_{\cO_p}(T) 
\end{equation}
for every finite $\cO_p$-modules $T$, unless confusion may arise.

\subsubsection{Finite primes $\ell\neq p$} 

Let $\ell\neq p$ be a prime number and for any $G_{\Q_\ell}$-module $M$ set
\[ H^1_\mathrm{unr}(\Q_\ell,M)\defeq H^1\bigl(\Gal(\Q_\ell^\mathrm{unr}/\Q_\ell),M^{I_\ell}\bigr)=\ker\bigl(H^1(\Q_\ell,M)\longrightarrow H^1(I_\ell,M)\bigr), \]
where the equality on the right is a consequence of the inflation-restriction exact sequence (see, \emph{e.g.}, \cite[Proposition B.2.5]{Rubin-ES}). In particular, observe that $H^1_f(\Q_\ell, A_p )\subset H^1_\mathrm{unr}(\Q_\ell, A_p )$ and the inclusion has finite index.  By definition, $H^1_f(\Q_\ell, V_p)=H^1_\mathrm{unr}(\Q_\ell, V_p)$ and there is an exact sequence 
\[ 0\longrightarrow H^0_f(\Q_\ell, V_p)\longrightarrow V_p^{I_\ell}\xrightarrow{\Frob_\ell-1}V_p^{I_\ell}
\longrightarrow H^1_f(\Q_\ell, V_p)\longrightarrow0. \]
Thus, we obtain a chain of two isomorphisms 
\[ \vartheta_\ell:\Det_{F_p}\bigl(H^0_f(\Q_\ell,V_p)\bigr)\cdot\Det_{F_p}^{-1}\bigl(H^1_f(\Q_\ell,V_p)\bigr)\overset\simeq\longrightarrow\Det_{F_p}^{-1}\bigl(H^1_f(\Q_\ell,V_p)\bigr)\overset\simeq\longrightarrow(F_p,0), \]
the former being a consequence of Lemma \ref{lemmaram}.

\begin{definition} \label{deftamell}
The \emph{$p$-part of the Tamagawa ideal of $\MM$ at $\ell$} is 
\begin{equation} \label{tam-def-eq}
\Tam_\ell^{(p)}(\MM)\defeq\vartheta_\ell\Bigl(\Det_{\cO_p}^{-1}\bigl(H^1_f(\Q_\ell,T_p)\bigr)\!\Bigr). 
\end{equation}
\end{definition}

Notice that, in light of the definition given in \eqref{det-splitting-eq}, there is a splitting
\begin{equation} \label{tam-splitting-eq}
\Tam_\ell^{(p)}(\MM)=\prod_{\p|p}\Tam_\ell^{(\p)}(\MM),
\end{equation}
where the $\p$ varies over all primes of $F$ above $p$ and $\Tam_\ell^{(\p)}(\MM)$, the \emph{$\p$-part of the Tamagawa ideal of $\MM$ at $\ell$}, is defined as in \eqref{tam-def-eq} by replacing $p$ with $\p$.

To lighten our notation, set $\mathscr G_\ell\defeq\Gal(\Q_\ell^\mathrm{unr}/\Q_\ell)\simeq\hat\Z$. The auxiliary result below will be used in the proof of Proposition \ref{Tamv}.

\begin{lemma} \label{H^2-lemma}
$H^2\bigl(\mathscr G_\ell,T_p^{I_\ell}\bigr)=H^2\bigl(\mathscr G_\ell,V_p^{I_\ell}\bigr)=0$.
\end{lemma}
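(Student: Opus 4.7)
The plan is to exploit the fact that $\mathscr G_\ell \simeq \hat\Z$ is a procyclic profinite group, topologically generated by $\Frob_\ell$. For any continuous $\mathscr G_\ell$-module $M$ of the type that concerns us here (a finitely generated $\cO_p$-module, or a finite-dimensional $F_p$-vector space, equipped with its natural $p$-adic topology and continuous Galois action), Tate's continuous cohomology of $\mathscr G_\ell$ with coefficients in $M$ is computed by the two-term complex
\[ M \xrightarrow{\;\Frob_\ell - 1\;} M \]
placed in degrees $0$ and $1$. This is the standard manifestation of the procyclicity of $\hat\Z$ (the trivial module is resolved by $\Z_p[\![\hat\Z]\!] \xrightarrow{\sigma-1} \Z_p[\![\hat\Z]\!]$), and it yields $H^i(\mathscr G_\ell, M) = 0$ for every $i \geq 2$; in particular for $i=2$.

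I would then apply this observation to the two modules of interest. Since $I_\ell$ is a closed normal subgroup of $G_{\Q_\ell}$, both $T_p^{I_\ell}$ and $V_p^{I_\ell}$ are closed, $\mathscr G_\ell$-stable submodules of $T_p$ and $V_p$ respectively, and the induced $\mathscr G_\ell$-actions are continuous. Moreover $T_p^{I_\ell}$ is a finitely generated $\cO_p$-module and $V_p^{I_\ell}$ is a finite-dimensional $F_p$-vector space, so the two-term-complex description above applies verbatim to each of them and produces the claimed vanishings in one stroke.

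There is essentially no obstacle here: the lemma is really just a restatement of the fact that $\hat\Z$ has cohomological dimension $1$ for continuous coefficients of the sort considered. The only pedantic point worth double-checking is that Tate's continuous cochain formalism truly collapses to the two-term complex in the non-discrete $p$-adic setting (as opposed to the classical discrete-torsion setting), but this is well documented for procyclic profinite groups acting continuously on complete topological modules, so it poses no real difficulty.
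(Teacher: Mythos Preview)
Your proposal is correct and the argument is sound: the procyclicity of $\mathscr G_\ell\simeq\hat\Z$ indeed forces continuous cohomology (in Tate's sense) with coefficients in a finitely generated $\Z_p$-module or a $\Q_p$-vector space to be concentrated in degrees $0$ and $1$, via the two-term complex $M\xrightarrow{\Frob_\ell-1}M$. Your caveat about the non-discrete setting is well placed but, as you say, not an obstacle.

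The paper's proof takes a different and more hands-on route, treating the two modules separately. For $V_p^{I_\ell}$ it observes that a $\Q_p$-vector space is divisible and invokes Serre's result (\cite[Ch.~XIII, Proposition~2]{serre-local}) that $H^2(\hat\Z,M)=0$ for divisible discrete $M$. For $T_p^{I_\ell}$ it does not appeal directly to cohomological dimension but instead writes $T_p^{I_\ell}=\varprojlim_n(T_p/p^nT_p)^{I_\ell}$, checks that the $H^1$ groups at finite level are finite (so Mittag--Leffler holds), applies Tate's \cite[Corollary~2.2]{Tate} to commute $H^2$ with the inverse limit, and then kills each finite-level $H^2$ again via Serre (now using that finite modules are torsion). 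Your approach is more uniform and conceptually cleaner; the paper's is more explicit about reducing the continuous-cohomology statement to classical discrete-coefficient facts, which has the virtue of citing only standard textbook results rather than the folklore description of $H^\bullet_{\mathrm{cont}}(\hat\Z,-)$.
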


\begin{proof} The $\mathscr G_\ell$-module $V_p^{I_\ell}$, which is a vector space over a field of characteristic $0$, is divisible, so $H^2\bigl(\mathscr G_\ell,V_p^{I_\ell}\bigr)=0$ by \cite[Ch. XIII, Proposition 2]{serre-local}. As for the other vanishing, note that $T_p^{I_\ell}=\varprojlim_n(T_p/p^nT_p)^{I_\ell}$ and $T_p/p^nT_p$ is finite for all $n\in\N$. On the other hand, \cite[Lemma 1.3.2, (i)]{Rubin-ES} gives
\[ H^1\bigl(\mathscr G_\ell,(T_p/p^nT_p)^{I_\ell}\bigr)\simeq(T_p/p^nT_p)^{I_\ell}\big/(\Frob_\ell-1)(T_p/p^nT_p)^{I_\ell}, \]
so $H^1\bigl(\mathscr G_\ell,(T_p/p^nT_p)^{I_\ell}\bigr)$ is finite for all $n\in\N$. By \cite[Corollary 2.2]{Tate}, it follows that there is an isomorphism
\begin{equation} \label{H^2-eq}
H^2\bigl(\mathscr G_\ell,T_p^{I_\ell}\bigr)\simeq\varprojlim_nH^2\bigl(\mathscr G_\ell,(T_p/p^nT_p)^{I_\ell}\bigr).
\end{equation}
Finally, since $(T_p/p^nT_p)^{I_\ell}$ is finite, hence torsion, \cite[Ch. XIII, Proposition 2]{serre-local} ensures that
\begin{equation} \label{H^2-eq2}
H^2\bigl(\mathscr G_\ell,(T_p/p^nT_p)^{I_\ell}\bigr)=0
\end{equation} 
for all $n\in\N$. Combining \eqref{H^2-eq} and \eqref{H^2-eq2}, we conclude that $H^2\bigl(\mathscr G_\ell,T_p^{I_\ell}\bigr)=0$. \end{proof}

Recall that $H^1_\mathrm{unr}(\Q_\ell,T_p)=H^1\bigl(\mathscr G_\ell,T_p ^{I_\ell}\bigr)$ and that Assumption \ref{motass} is in force. 

\begin{proposition} \label{unr-finite-prop}
$\#H^1_\mathrm{unr}(\Q_\ell,T_p)<\infty$.
\end{proposition}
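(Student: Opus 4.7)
The plan is to reduce the statement to a cokernel computation and then invoke the key input from Lemma \ref{lemmaram}. Recall that $H^1_\mathrm{unr}(\Q_\ell,T_p)=H^1\bigl(\mathscr G_\ell,T_p^{I_\ell}\bigr)$, where $\mathscr G_\ell\simeq\hat\Z$ is topologically generated by $\Frob_\ell$. The starting point is therefore the identification
\[ H^1\bigl(\mathscr G_\ell,T_p^{I_\ell}\bigr)\simeq T_p^{I_\ell}\big/(\Frob_\ell-1)T_p^{I_\ell}, \]
which I would obtain exactly as in the proof of Lemma \ref{H^2-lemma}: write $T_p^{I_\ell}=\varprojlim_n(T_p/p^nT_p)^{I_\ell}$, apply \cite[Lemma 1.3.2, (i)]{Rubin-ES} to each finite quotient, and pass to the inverse limit using \cite[Corollary 2.2]{Tate} (the Mittag--Leffler condition holds because every $(T_p/p^nT_p)^{I_\ell}$ is finite).

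Having done so, the task reduces to showing that the cokernel of $\Frob_\ell-1$ on $T_p^{I_\ell}$ is finite. First I would note that $T_p^{I_\ell}$ is a finitely generated $\cO_p$-submodule of the free $\cO_p$-module $T_p$, hence an $\cO_p$-lattice inside $V_p^{I_\ell}=T_p^{I_\ell}\otimes_{\Z_p}\Q_p$. Next I would invoke the case analysis already carried out in the proof of Lemma \ref{lemmaram}, under Assumption \ref{motass}: if $\ell\nmid N$, then $\Frob_\ell$ acts on $V_p^{I_\ell}=V_p$ with eigenvalues of absolute value $\ell^{k/2}\neq1$; if $\ell^2\,|\,N$, then $V_p^{I_\ell}=0$ and the claim is trivial; if $\ell\,\|\,N$, then $V_p^{I_\ell}\simeq F_p(1)$ and $\Frob_\ell-1$ acts as multiplication by $\ell-1\in F_p^\times$. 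In every case $\Frob_\ell-1$ is an $F_p$-linear automorphism of $V_p^{I_\ell}$.

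Finally I would conclude by the standard lattice argument: an $\cO_p$-linear endomorphism of a finitely generated $\cO_p$-lattice that becomes an automorphism after tensoring with $F_p$ is injective with finite cokernel (the cokernel is a finitely generated torsion $\cO_p$-module, and $\cO_p$ is a product of DVRs, so such a module has finite length, hence finite cardinality). There is no real obstacle here; the only point that requires care is the passage to the inverse limit in the cohomology computation, but this is essentially the same maneuver as in Lemma \ref{H^2-lemma} and poses no new difficulty.
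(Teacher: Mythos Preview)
Your argument is correct and follows a slightly more hands-on route than the paper. Both proofs rest on Lemma \ref{lemmaram}, i.e., on the fact that $\Frob_\ell-1$ is an automorphism of $V_p^{I_\ell}$. The paper argues cohomologically: the natural map $H^1(\mathscr G_\ell,T_p^{I_\ell})\to H^1(\mathscr G_\ell,V_p^{I_\ell})$ has $\cO_p$-torsion kernel and trivial target (by Lemma \ref{lemmaram}), so $H^1_\mathrm{unr}(\Q_\ell,T_p)$ is $\cO_p$-torsion; finite generation is then imported from \cite[Proposition B.2.7]{Rubin-ES} applied to $H^1(G_{\Q_\ell},T_\p)$. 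You instead identify $H^1_\mathrm{unr}(\Q_\ell,T_p)$ explicitly as the cokernel of $\Frob_\ell-1$ on the lattice $T_p^{I_\ell}$ and finish by elementary linear algebra over the product of DVRs $\cO_p$. Your approach is more concrete and self-contained (no external finite-generation input), while the paper's is more modular in separating the torsion and finite-generation steps. One cosmetic slip: the absolute value $\ell^{k/2}$ you quote for the Frobenius eigenvalues does not match the $\ell^{k/2+1}$ in the paper's proof of Lemma \ref{lemmaram}, but this is immaterial since all that is used is that $1$ is not an eigenvalue.
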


\begin{proof} The kernel of the natural map $H^1\bigl(\mathscr G_\ell,T_p^{I_\ell}\bigr)\rightarrow H^1\bigl(\mathscr G_\ell,V_p^{I_\ell}\bigr)$ is isomorphic to a quotient of $H^0\bigl(\mathscr G_\ell,V_p^{I_\ell}/T_p^{I_\ell}\bigr)$, so it is torsion over $\cO_p$. By Lemma \ref{lemmaram}, $H^1\bigl(\mathscr G_\ell,V_p^{I_\ell}\bigr)=0$, and then $H^1\bigl(\mathscr G_\ell,T_p^{I_\ell}\bigr)$ is $\cO_p$-torsion. Since $H^1\bigl(\mathscr G_\ell,T_p^{I_\ell}\bigr)=\prod_{\p\mid p}H^1\bigl(\mathscr G_\ell,T_\p^{I_\ell}\bigr)$ as $\cO_p$-modules, we deduce that $H^1\bigl(\mathscr G_\ell,T_\p^{I_\ell}\bigr)$ is torsion over $\cO_\p$ for all $\p\,|\,p$. Furthermore, since $T_\p$ is finitely generated over $\cO_\p$, it follows from \cite[Proposition B.2.7]{Rubin-ES} that $H^1(G_{\Q_\ell},T_\p)$ is finitely generated over $\cO_\p$ for all $\p\,|\,p$, and then the same is true of its $\cO_\p$-submodule $H^1\bigl(\mathscr G_\ell,T_\p ^{I_\ell}\bigr)$. We conclude that $H^1\bigl(\mathscr G_\ell,T_\p ^{I_\ell}\bigr)$ is finite for all $\p\,|\,p$, which implies that $H^1_\mathrm{unr}(\Q_\ell,T_p)=H^1\bigl(\mathscr G_\ell,T_p ^{I_\ell}\bigr)$ is finite. \end{proof}

Let $S$ be the finite set of places of $\Q$ that was fixed in \eqref{S-eq} and recall the notational convention from \eqref{notational-convention-eq}. The next result is a slight refinement, in our modular context, of \cite[Ch. I, Proposition 4.2.2]{FPR}. 

\begin{proposition} \label{Tamv}
\begin{enumerate}
\item $\Tam_\ell^{(p)}(\MM)\simeq\mathcal{I}\bigl(H^1_\mathrm{unr}(\Q_\ell,A_p)\bigr)$.
\item If $\ell\not\in S$, then $\Tam_\ell^{(p)}(\MM)\simeq\cO_p$. 
\end{enumerate}  
\end{proposition}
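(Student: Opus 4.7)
The plan is to unwind the definition $\Tam_\ell^{(p)}(\MM)=\vartheta_\ell\bigl(\Det^{-1}_{\cO_p}(H^1_f(\Q_\ell,T_p))\bigr)$ and then compare the outcome with $\mathcal{I}\bigl(H^1_\mathrm{unr}(\Q_\ell,A_p)\bigr)$. The starting point is Lemma~\ref{lemmaram}, which ensures that $H^0(\Q_\ell,V_p)=0$ and that $1-\Frob_\ell$ is bijective on $V_p^{I_\ell}$; in particular, the long exact $G_{\Q_\ell}$-cohomology sequence of $0\to T_p\to V_p\to A_p\to 0$ collapses to an isomorphism $A_p^{G_{\Q_\ell}}\simeq H^1_f(\Q_\ell,T_p)$ (the latter being the torsion part of $H^1(\Q_\ell,T_p)$, obtained by propagation from $H^1_f(\Q_\ell,V_p)=0$). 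Via the chain of isomorphisms defining $\vartheta_\ell$ together with the integral Euler characteristic of the complex $\RR\Gamma_f(\Q_\ell,T_p)=[T_p^{I_\ell}\xrightarrow{1-\Frob_\ell}T_p^{I_\ell}]$, this identifies $\Tam_\ell^{(p)}(\MM)$ with a specific fractional $\cO_p$-ideal inside $F_p$ built out of $T_p^{I_\ell}$ and $1-\Frob_\ell$.

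For part~(1), since $\mathscr G_\ell\simeq\hat\Z$ one has $H^1_\mathrm{unr}(\Q_\ell,A_p)=H^1(\mathscr G_\ell,A_p^{I_\ell})=A_p^{I_\ell}/(1-\Frob_\ell)A_p^{I_\ell}$. The comparison then follows from two snake-lemma computations. First, applied to $0\to T_p^{I_\ell}\to V_p^{I_\ell}\to V_p^{I_\ell}/T_p^{I_\ell}\to 0$ with vertical endomorphism $1-\Frob_\ell$ and using its bijectivity on $V_p^{I_\ell}$, one obtains both the isomorphism $(V_p^{I_\ell}/T_p^{I_\ell})^{\Frob_\ell=1}\simeq T_p^{I_\ell}/(1-\Frob_\ell)T_p^{I_\ell}$ and the vanishing $(V_p^{I_\ell}/T_p^{I_\ell})/(1-\Frob_\ell)=0$. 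Second, taking $I_\ell$-invariants of $0\to T_p\to V_p\to A_p\to 0$ produces an injection $V_p^{I_\ell}/T_p^{I_\ell}\hookrightarrow A_p^{I_\ell}$ with cokernel $Q\subseteq H^1(I_\ell,T_p)$; a further snake-lemma calculation on $0\to V_p^{I_\ell}/T_p^{I_\ell}\to A_p^{I_\ell}\to Q\to 0$, together with the standard equality of fitting ideals of kernel and cokernel for any endomorphism of a finite $\cO_p$-module, yields the identity $\mathcal{I}\bigl(T_p^{I_\ell}/(1-\Frob_\ell)T_p^{I_\ell}\bigr)=\mathcal{I}\bigl(A_p^{I_\ell}/(1-\Frob_\ell)A_p^{I_\ell}\bigr)$; combined with the identification of $\Tam_\ell^{(p)}(\MM)$ from the previous paragraph, this gives part~(1).

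Part~(2) then follows quickly: when $\ell\not\in S$ the representation $V_p$ is unramified at $\ell$, so $A_p^{I_\ell}=A_p$, and the snake lemma applied directly to $0\to T_p\to V_p\to A_p\to 0$ with $1-\Frob_\ell$ (bijective on $V_p$) forces $A_p/(1-\Frob_\ell)A_p=0$; hence $H^1_\mathrm{unr}(\Q_\ell,A_p)=0$ and $\mathcal{I}(0)=\cO_p$. The main technical obstacle is the accurate bookkeeping of $\vartheta_\ell$ at the integral level: the two natural trivializations of the determinant of $\RR\Gamma_f(\Q_\ell,T_p)$---namely the ``complex'' trivialization (which is what $\vartheta_\ell$ implements, inherited from the acyclicity of $\RR\Gamma_f(\Q_\ell,V_p)$ via the homotopy $(1-\Frob_\ell)^{-1}$) and the ``cohomology'' trivialization---differ by the determinant of $1-\Frob_\ell$ on $V_p^{I_\ell}$, and one must track this discrepancy carefully in order to match $\Tam_\ell^{(p)}(\MM)$ with $\mathcal{I}(H^1_\mathrm{unr}(\Q_\ell,A_p))$ through the snake-lemma identifications above.
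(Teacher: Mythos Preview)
Your snake-lemma computations are correct and in fact reproduce the paper's key relations (what the paper extracts from \cite[Lemma~1.3.5,(iii)]{Rubin-ES}). However, the conclusion you draw from them---the identity $\mathcal{I}\bigl(T_p^{I_\ell}/(1-\Frob_\ell)T_p^{I_\ell}\bigr)=\mathcal{I}\bigl(A_p^{I_\ell}/(1-\Frob_\ell)A_p^{I_\ell}\bigr)$---is false. Take $\ell\notin S$: then $I_\ell$ acts trivially, your own argument in the last paragraph gives $A_p/(1-\Frob_\ell)A_p=0$ so the right side is $\cO_p$, while $T_p/(1-\Frob_\ell)T_p$ has $\mathcal{I}$ equal to $\det(1-\Frob_\ell\mid T_p)^{-1}\cO_p$, which is generically not $\cO_p$.

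The source of the error is the equation $\RR\Gamma_f(\Q_\ell,T_p)=[T_p^{I_\ell}\xrightarrow{1-\Frob_\ell}T_p^{I_\ell}]$. The paper never defines an integral $\RR\Gamma_f$; it defines $H^1_f(\Q_\ell,T_p)$ by propagation from $V_p$, and the $H^1$ of the complex you wrote is $H^1_\mathrm{unr}(\Q_\ell,T_p)$, which is a \emph{proper} submodule of $H^1_f(\Q_\ell,T_p)$ in general. Your second snake lemma actually exhibits this: the exact sequence $0\to(V_p^{I_\ell}/T_p^{I_\ell})^{\Frob_\ell=1}\to(A_p^{I_\ell})^{\Frob_\ell=1}\to Q^{\Frob_\ell=1}\to 0$ is exactly the paper's sequence $0\to H^1_\mathrm{unr}(\Q_\ell,T_p)\to H^1_f(\Q_\ell,T_p)\to H^1(I_\ell,T_p)^{G_{\Q_\ell}}_\mathrm{tors}\to 0$, so the two groups differ by $Q^{\Frob_\ell=1}$. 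What actually happens is that $\vartheta_\ell$, being built from the four-term sequence for the complex $[V_p^{I_\ell}\to V_p^{I_\ell}]$, sends the sublattice $\Det^{-1}_{\cO_p}\bigl(H^1_\mathrm{unr}(\Q_\ell,T_p)\bigr)$ to $\cO_p$ (this is the content of the paper's step ``$\Det_{\cO_p}(H^1_\mathrm{unr}(\Q_\ell,T_p))\simeq\cO_p$'', using $H^0(\mathscr G_\ell,T_p^{I_\ell})=0$). The residual factor is then $\mathcal{I}^{\pm1}(Q^{\Frob_\ell=1})$, and your own computation $A_p^{I_\ell}/(1-\Frob_\ell)\simeq Q/(1-\Frob_\ell)$ together with the kernel--cokernel equality on the finite module $Q$ gives $\mathcal{I}(Q^{\Frob_\ell=1})=\mathcal{I}\bigl(H^1_\mathrm{unr}(\Q_\ell,A_p)\bigr)$. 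So the ingredients are all there; you just need to separate the $H^1_\mathrm{unr}$ and $H^1_f/H^1_\mathrm{unr}$ contributions rather than collapse them, and verify that $Q$ is finite (equivalently that $(A_p^{I_\ell})_\divv=V_p^{I_\ell}/T_p^{I_\ell}$), which the paper handles by working with $\mathcal{W}=A_p^{I_\ell}/(A_p^{I_\ell})_\divv$.
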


\begin{proof} The inflation-restriction exact sequence yields a commutative diagram with exact rows
\begin{equation} \label{tamv-diagram-eq}
\xymatrix{
0\ar[r]& H^1\bigl(\mathscr G_\ell,T_p ^{I_\ell}\bigr)\ar[r]\ar[d]&H^1(G_{\Q_\ell},T_p)\ar[r]\ar[d]&H^1(I_\ell,T_p)^{G_{\Q_\ell}}\ar[r]\ar[d]&0\\
0\ar[r]& H^1\bigl(\mathscr G_\ell, V_p^{I_\ell}\bigr)\ar[r]&H^1(G_{\Q_\ell},V_p)\ar[r]&H^1(I_\ell,V_p)^{G_{\Q_\ell}}\ar[r]&0
}
\end{equation} 
in which the surjectivity of the right non-trivial arrows on both rows follows from Lemma \ref{H^2-lemma}. As a consequence of Lemma \ref{lemmaram}, $H^1_f(\Q_\ell,T_p)$ is the kernel of the middle vertical map in diagram \eqref{tamv-diagram-eq}. By \cite[Proposition 2.3]{Tate}, the kernels of the middle and the right vertical arrows in \eqref{tamv-diagram-eq} are $H^1(G_{\Q_\ell},T_p{)}_\mathrm{tors}$ and $H^1(I_\ell,T_p)^{\!G_{\Q_\ell}}_\mathrm{tors}$, respectively. By applying the snake lemma to \eqref{tamv-diagram-eq}, we get a short exact sequence 
\begin{equation} \label{exseq}
0\longrightarrow H^1_\mathrm{unr}(\Q_\ell,T_p)\longrightarrow H^1_f(\Q_\ell,T_p)\longrightarrow H^1(I_\ell,T_p)^{\!G_{\Q_\ell}}_\mathrm{tors}\longrightarrow0.
\end{equation}
The exact sequence 
\[ 0\longrightarrow H^0\bigl(\mathscr G_\ell,T_p ^{I_\ell}\bigr)\longrightarrow T_p ^{I_\ell}\xrightarrow{\Frob_\ell-1}T_p ^{I_\ell}\longrightarrow H^1_\mathrm{unr}(\Q_\ell,T_p)\longrightarrow 0 \]
shows that $\Det_{\cO_p}\bigl(H^1_\mathrm{unr}(\Q_\ell,T_p)\bigr)\simeq\Det_{\cO_p}^{-1}\bigl(H^0\bigl(\mathscr G_\ell,T_p ^{I_\ell}\bigr)\bigr)$. Now $H^0\bigl(\mathscr G_\ell,T_p ^{I_\ell}\bigr)\subset H^0(\Q_\ell,V_p)$, and $H^0(\Q_\ell,V_p)=0$ by Lemma \ref{lemmaram}, so $\Det_{\cO_p}\bigl(H^1_\mathrm{unr}(\Q_\ell,T_p)\bigr)\simeq\cO_p$ and exact sequence \eqref{exseq} yields an isomorphism
\begin{equation}\label{torseq}
\Det_{\cO_p}^{-1}\bigl(H^1_f(\Q_\ell,T_p)\bigr)\simeq\Det_{\cO_p}\Bigl(H^1(I_\ell,T_p{)}_\mathrm{tors}^{\!G_{\Q_\ell}}\Bigr).
\end{equation}
Now set $\mathcal{W}\defeq A_p ^{I_\ell}\big/(A_p ^{I_\ell}{)}_\divv$. There is an exact sequence of $\cO_p$-modules 
\[ 0\longrightarrow\mathcal{W}^{\Frob_\ell=1}\longrightarrow\mathcal{W}\xrightarrow{\Frob_\ell-1}\mathcal{W}\longrightarrow\mathcal{W}\big/(\Frob_\ell-1)\mathcal{W}\longrightarrow0 \]
showing that 
\begin{equation} \label{eqW}
\Det_{\cO_p}\bigl(\mathcal{W}^{\Frob_\ell=1}\bigr)\simeq\Det_{\cO_p}\bigl(\mathcal{W}/(\Frob_\ell-1)\mathcal{W}\bigr).
\end{equation} 
Furthermore, by \cite[Lemma 1.3.5, (iii)]{Rubin-ES}, there are isomorphisms of $\cO_p$-modules 
\[ H^1_\mathrm{unr}(\Q_\ell,A_p)\big/H^1_f(\Q_\ell,A_p)\simeq \mathcal{W}\big/(\Frob_\ell-1)\mathcal{W} \]
and
\[ H^1_f(\Q_\ell,T_p)\big/H^1_\mathrm{unr}(\Q_\ell,T_p)\simeq \mathcal{W}^{\Frob_\ell=1}. \] 
By Lemma \ref{lemmaram}, $H^1_f(\Q_\ell,V_p)=0$, so $H^1_f(\Q_\ell,A_p)=0$ and $H^1_\mathrm{unr}(\Q_\ell,A_p)\simeq\mathcal{W}/(\Frob_\ell-1)\mathcal{W}$. Combining Definition \ref{deftamell} with \eqref{exseq}, \eqref{torseq} and 
\eqref{eqW}, we obtain an isomorphism
\[ \Tam_\ell^{(p)}(\MM)\simeq\Det_{\cO_p}\bigl(H^1_\mathrm{unr}(\Q_\ell,A_p)\bigr), \]
which, in light of the definition of $\mathcal{I}\bigl(H^1_\mathrm{unr}(\Q_\ell,A_p)\bigr)$, concludes the proof. 
\end{proof}

\subsubsection{The prime $p$}  \label{prime-p-subsec}

Now we consider the case $\ell=p$. There is an exact sequence of $F_p$-modules
\begin{equation} \label{l=p-eq}
0\longrightarrow H^0_f(\Q_p,V_p)\longrightarrow \mathbf{D}_\crys(V_p)\xrightarrow{(\varphi-1,\mathrm{pr})}\mathbf{D}_\crys(V_p)\oplus t(\MM)_p\longrightarrow H^1_f(\Q_p,V_p)\longrightarrow0. 
\end{equation}
By \cite[Corollary 3.8.4]{BK}, $H^0_f(\Q_p,V_p)=H^0(\Q_p,V_p)$, so $H^0_f(\Q_p,V_p)=0$ by Lemma \ref{lemmap}. Taking determinants in \eqref{l=p-eq}, we obtain an isomorphism 
\[\vartheta_p:\Det^{-1}_{F_p}\bigl(H^1_f(\Q_p, V_p)\bigr)\overset\simeq\longrightarrow\Det_{F_p}^{-1}\bigl(t_p( V_p)\bigr). \]
Define the $\cO_p$-submodule 
\[ \Lambda_p\defeq\vartheta_p\Bigl(\Det^{-1}_{\cO_p}\bigl(H^1_f(\Q_p,T_p)\bigr)\!\Bigr) \]
of $\Det_{F_p}^{-1}\bigl(t_p( V_p)\bigr)$. Recall that $t_p( V_p)$ is a free $F_p$-module of rank $1$ and fix an $F_p$-generator $\omega$ of $t(\MM)_p$. Then $\omega$ is a generator of the free $F_p$-module $\Det_{F_p}\bigl(t(\MM)_p\bigr)$ of rank $1$; moreover, $\Det_{F_p}^{-1}\bigl(t(\MM)_p\bigr)$ is free of rank $1$ over $F_p$, and we let $\omega^{-1}$ denote the generator of $\Det_{F_p}^{-1}\bigl(t(\MM)_p\bigr)$ corresponding to $\omega$, which is characterized by $\omega^{-1}(\omega)=1$. Now $\Lambda_p$ is an $\cO_p$-submodule of free $F_p$-module $\Det_{F_p}^{-1}\bigl(t(\MM)_p\bigr)=F_p\cdot\omega^{-1}$, so there exists an $\cO_p$-ideal $\Tam_{p,\omega}(A_p)$ such that 
\begin{equation} \label{Tam-A-eq}
\Lambda_p=\Tam_{p,\omega}(A_p)\cdot\omega^{-1}.
\end{equation}
With $T_\B$ as in \eqref{T_B-eq} and $\phi_\infty$ the involution from \S \ref{Betti-subsubsec}, we define $T_\B ^+\defeq T_\B^{\phi_\infty=1}$ and $T_p^+\defeq H^0(\R,T_p)$. There are comparison isomorphisms 
\begin{equation} \label{comparison-isomorphisms-eq}
\Comp_{\B,\et}:T_\B^+\otimes_\Q\Q_p\overset\simeq\longrightarrow T_p ^+\otimes_{\Z_p}\Q_p,\quad\Comp_{\B,\dR}:V_\B^+\otimes_\Q\Q_p\overset\simeq\longrightarrow t(V_p). 
\end{equation}
We deduce from \eqref{B-et-eq} that there is an induced isomorphism of $\cO_p$-modules
\[ \Comp_{\B,\et}:T_\B^+\otimes_{\cO_F}\cO_p\overset\simeq\longrightarrow T_p^+. \]
Choose $\delta_f\in T_\B^+\smallsetminus\{0\}$ and set $\omega_{\delta_f}\defeq\Comp_{\B,\dR}(\delta_f)\in t(V_p)$. 

\begin{definition} \label{Tamp}
The \emph{$p$-part of the Tamagawa ideal of $\MM$ at $p$} is 
\[ \Tam_{p}^{(p)}(\MM)\defeq\Tam_{p,\omega_{\delta_f}}(A_p), \]
where $\Tam_{p,\omega_{\delta_f}}(A_p)$ is defined as in \eqref{Tam-A-eq}.
\end{definition}

Now assume that $\Comp_{\B,\et}(\delta_f)$ is an $\cO_p$-generator of $T_p^+$ (later on, this condition will be satisfied for a suitable choice of $p$). Then, since $V_p$ is crystalline, we actually have
\begin{equation} \label{Tam-p-eq}
\Tam_{p}^{(p)}(\MM)=\cO_p. 
\end{equation}
This follows from the proof in \cite[Theorem 4.2.2]{BB} of Conjecture $C_{EP,\Q_p}(V)$ from \cite[\S4.5.4]{FPR} (\emph{cf.} also \cite[Proposition II.2]{Berger-preprint}, and \cite[Proposition 4.2.5]{PR} in the ordinary case, which will be the setting of interest for us in subsequent sections).

\subsubsection{The archimedean prime} 

In the archimedean case, we introduce (the $p$-part of) the Tamagawa ideal of $\MM$ as follows. 

\begin{definition} \label{deftaminfty}
The \emph{$p$-part of the Tamagawa ideal of $\MM$ at $\infty$} is 
\[ \Tam_\infty^{(p)}(\MM)\defeq\Det_{\cO_p}^{-1}\bigl(H^1_f(\R,T_p)\bigr). \]
\end{definition}

This definition completes the list of the $p$-parts of Tamagawa ideals of $\MM$. When $p$ is odd, it can be checked that
\begin{equation} \label{Tam-infty-eq}
\Tam_\infty^{(p)}(\MM)=\cO_p
\end{equation}
(see, \emph{e.g.}, \cite[p. 708]{DFG}).

\subsection{Compact cohomology} 

Our present goal is to calculate $\Det_{\cO_p}\bigl(\RR\Gamma_c(G_S,T_p)\bigr)$ in the sense of \eqref{complex-determinant-eq}, where $\RR\Gamma_c(G_S,T_p)$ is the compact complex from \S \ref{compactcone}. 

\subsubsection{A vanishing lemma}

We begin with a basic vanishing result for cohomology with compact support.

\begin{lemma} \label{H0c} 
$H^0_c(G_S,T_p)=0$.
\end{lemma}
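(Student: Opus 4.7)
The plan is to extract the initial segment of the long exact sequence of cohomology associated with the distinguished triangle
\[
\RR\Gamma_c(G_S, T_p) \longrightarrow \RR\Gamma(G_S, T_p) \longrightarrow \bigoplus_{v \in S}\RR\Gamma(\Q_v, T_p) \longrightarrow \RR\Gamma_c(G_S, T_p)[1]
\]
built into the definition of $\RR\Gamma_c(G_S, T_p)$ recalled in \S\ref{compactcone}. Since each term is represented by a continuous cochain complex concentrated in non-negative degrees, this long exact sequence begins
\[
0 \longrightarrow H^0_c(G_S, T_p) \longrightarrow H^0(G_S, T_p) \xrightarrow{\oplus_v \res_v} \bigoplus_{v \in S} H^0(\Q_v, T_p),
\]
so $H^0_c(G_S, T_p)$ embeds into $H^0(G_S, T_p) = T_p^{G_S}$, and for each fixed $v \in S$ the restriction factors as the natural inclusion $T_p^{G_S} \hookrightarrow T_p^{G_{\Q_v}}$.

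Next, I would single out the prime $v = p$, which belongs to $S$ by \eqref{S-eq}. Since $T_p$ is an $\cO_p$-submodule of $V_p$, the chain of inclusions
\[
H^0_c(G_S, T_p) \hookrightarrow T_p^{G_S} \hookrightarrow T_p^{G_{\Q_p}} \hookrightarrow V_p^{G_{\Q_p}} = H^0(\Q_p, V_p)
\]
is injective. Lemma \ref{lemmap} provides $H^0(\Q_p, V_p) = 0$, so the vanishing $H^0_c(G_S, T_p) = 0$ follows at once.

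The argument is essentially formal once Lemma \ref{lemmap} is available, so no serious obstacle should arise. The only point demanding a touch of care is matching conventions: to see that the mapping-cone shift in the definition of $\RR\Gamma_c$ really produces a long exact sequence whose first nontrivial term is $H^0_c(G_S, T_p)$ injecting into $H^0(G_S, T_p)$. This, however, can equivalently be read off directly from the cochain-level description of the cone, whose terms in negative degrees vanish tautologically.
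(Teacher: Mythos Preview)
Your proof is correct and takes essentially the same approach as the paper: both extract the degree-zero part of the long exact sequence to embed $H^0_c(G_S,T_p)$ into $H^0(\Q_p,V_p)$ and then invoke Lemma~\ref{lemmap}. The only cosmetic difference is that the paper phrases the embedding via the triangle~\eqref{triangle} (and hence through $H^0_f$-groups), whereas you work directly with the defining cone; in degree~$0$ these coincide.
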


\begin{proof} By \eqref{triangle}, the group $H^0_c(G_S,T_p)$ consists of the elements of $H^0_f(\Q,T_p)$ whose image in $H^0_f(\Q_v,T_p)$ is zero for all $v\in S$. Now $H^0_f(\Q, T_p )$ injects into 
$H^0_f(\Q_p, T_p)$ and $H^0_f(\Q_p,T_p)$ injects into $H^0_f(\Q_p, V_p)$; since $p\in S$ and 
$H^0_f(\Q_p, V_p)=0$ by Lemma \ref{lemmap}, we conclude that $H^0_f(\Q, T_p )=0$, as was 
completing the proof.\end{proof}

\subsubsection{Computing $\Det_{\cO_p}\bigl(\RR\Gamma_c(G_S, T_p)\bigr)$}

The next result computes $\Det_{\cO_p}\bigl(\RR\Gamma_c(G_S, T_p)\bigr)$. 

\begin{proposition} \label{RGamma_c} 
There is a canonical isomorphism 
\[ \begin{split}
   \Det_{\cO_p}\bigl(\RR\Gamma_c(G_S,T_p)\bigr)\simeq&
\Det_{\cO_p}^{-1}\bigl(H^1_f(\Q, T_p)\bigr)\cdot\Det_{\cO_p}\bigl(H^1_f(\Q, A_p)^\vee\bigr)\cdot\Det^{-1}_{\cO_p}\bigl(H^0(G_S,A_p)^\vee\bigr)\\
&\cdot\prod_{v\in S}\Det_{\cO_p}\bigl(H^1_f(\Q_v, T_p)\bigr)\cdot\Det_{\cO_p}^{-1}(T_p ^+).
   \end{split} \]
\end{proposition}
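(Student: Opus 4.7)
The plan is to deduce the formula from the defining distinguished triangle \eqref{triangle} of compactly supported cohomology, by taking determinants and computing the global and local contributions separately. Applying $\Det_{\cO_p}$ to the triangle
$$\RR\Gamma_c(G_S,T_p)\longrightarrow\RR\Gamma_f(\Q,T_p)\longrightarrow\bigoplus_{v\in S}\RR\Gamma_f(\Q_v,T_p)$$
and using multiplicativity of the determinant along distinguished triangles yields
$$\Det_{\cO_p}\bigl(\RR\Gamma_c(G_S,T_p)\bigr)\simeq\Det_{\cO_p}\bigl(\RR\Gamma_f(\Q,T_p)\bigr)\cdot\prod_{v\in S}\Det_{\cO_p}^{-1}\bigl(\RR\Gamma_f(\Q_v,T_p)\bigr),$$
so the task reduces to identifying each of the two factors on the right-hand side.

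For the local factors, recall from \S \ref{local-subsec} that for $v\nmid p\infty$ the complex $\RR\Gamma_f(\Q_v,T_p)$ is the two-term complex $[T_p^{I_v}\xrightarrow{1-\Frob_v}T_p^{I_v}]$, and for $v=p$ it is the two-term crystalline complex; in both cases Lemmas \ref{lemmaram} and \ref{lemmap} give $H^0_f(\Q_v,T_p)\subset H^0(\Q_v,V_p)=0$, so the associated determinant reduces to $\Det_{\cO_p}^{-1}\bigl(\RR\Gamma_f(\Q_v,T_p)\bigr)\simeq\Det_{\cO_p}\bigl(H^1_f(\Q_v,T_p)\bigr)$. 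For $v=\infty$ and odd $p$, the complex $\RR\Gamma_f(\R,T_p)=\RR\Gamma(\R,T_p)$ is concentrated in degree $0$ with value $T_p^+$ and $H^1_f(\R,T_p)=0$, producing the factor $\Det_{\cO_p}^{-1}(T_p^+)$ and allowing the product over $v\in S$ in the final formula to be written uniformly across all places.

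For the global factor, the cohomology of $\RR\Gamma_f(\Q,T_p)$ is supported in degrees $0,1,2,3$: the vanishing $H^0_f(\Q,T_p)=0$ follows from the argument in Lemma \ref{H0c}, while $H^1_f(\Q,T_p)$ is by construction the Bloch--Kato Selmer group. The key input is then the identification, via Poitou--Tate duality for Bloch--Kato Selmer complexes together with the self-duality $V_p\simeq V_p^*(1)$ (so $A_p\simeq T_p^*\otimes_{\Z_p}(\Q_p/\Z_p)(1)$), of
$$H^2_f(\Q,T_p)\simeq H^1_f(\Q,A_p)^\vee,\qquad H^3_f(\Q,T_p)\simeq H^0(G_S,A_p)^\vee,$$
the second isomorphism using additionally that $H^0_f(\Q_v,A_p)=H^0(\Q_v,A_p)$ at every $v\in S$ and hence $H^0_f(\Q,A_p)=H^0(G_S,A_p)$. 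Substituting these into
$$\Det_{\cO_p}\bigl(\RR\Gamma_f(\Q,T_p)\bigr)\simeq\Det_{\cO_p}^{-1}\bigl(H^1_f(\Q,T_p)\bigr)\cdot\Det_{\cO_p}\bigl(H^2_f(\Q,T_p)\bigr)\cdot\Det_{\cO_p}^{-1}\bigl(H^3_f(\Q,T_p)\bigr)$$
and combining with the local calculation yields precisely the announced isomorphism.

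The main technical obstacle is the Poitou--Tate step producing the canonical isomorphisms for $H^2_f$ and $H^3_f$ in a way compatible with the formalism of determinants; one must verify that the Bloch--Kato local condition on $V_p$ at each $v\in S$ and its counterpart for the dual representation are exact annihilators under local Tate duality (\emph{cf.}\ \eqref{local-tate-eq} and the pattern of Lemma \ref{lemma1.15}). The subtle case is $v=p$, where this compatibility ultimately rests on the fact that the Bloch--Kato exponential intertwines the crystalline local condition with the dual one.
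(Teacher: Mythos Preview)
Your route is genuinely different from the paper's. The paper never invokes an integral Selmer complex $\RR\Gamma_f(\Q,T_p)$ or an integral version of the triangle \eqref{triangle}; instead it writes down two explicit long exact sequences---the classical Poitou--Tate sequence for $T_p$ (obtained by passing to the limit over $A_p[p^n]$) and the long exact sequence coming from the mapping-cone definition of $\RR\Gamma_c(G_S,T_p)$---and simply takes $\Det_{\cO_p}$ of each. The Bloch--Kato local conditions enter only through Lemma~\ref{lemma1.15}, which converts the Poitou--Tate sequence into one featuring $H^1_f(\Q,T_p)$, $\bigoplus_{v\in S}H^1_s(\Q_v,T_p)$ and $H^1_f(\Q,A_p)^\vee$. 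This avoids having to define or manipulate any integral finite complex.

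Your argument, by contrast, relies on machinery the paper does not set up: an integral complex $\RR\Gamma_f(\Q_v,T_p)$, the triangle \eqref{triangle} at the level of $T_p$, and Poitou--Tate duality for the resulting Selmer complex. These exist in the literature, but there is a concrete error in how you use them. For $v\nmid p\infty$ you take $\RR\Gamma_f(\Q_v,T_p)$ to be the two-term complex $[T_p^{I_v}\xrightarrow{1-\Frob_v}T_p^{I_v}]$ and then assert that its $H^1$ is $H^1_f(\Q_v,T_p)$. It is not: that complex computes $H^1_{\mathrm{unr}}(\Q_v,T_p)$, and the two differ by the finite group $H^1(I_v,T_p)^{G_{\Q_v}}_{\mathrm{tors}}$ appearing in \eqref{exseq}---precisely the Tamagawa content of Proposition~\ref{Tamv}. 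With the unramified local condition, the dual local condition on $A_p$ under Poitou--Tate is also unramified rather than Bloch--Kato, so your identifications $H^2_f\simeq H^1_f(\Q,A_p)^\vee$ would not hold as stated either; the mismatches may well cancel in the determinant, but that cancellation is exactly what has to be proved and is what the paper's two-exact-sequence argument makes explicit.
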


\begin{proof} Fix an integer $n\geq1$ and write $A_p [p^n]$ for the $p^n$-torsion subgroup of $A_p$. Since $\MM$ is self-dual, there is a canonical isomorphism $A_p[p^n]\simeq\Hom_{\Z_p}\bigl(A_p [p^n],(\Q_p/\Z_p)(1)\bigr)$, so the Poitou--Tate exact sequence (see, \emph{e.g.}, \cite[\S 5.1.6]{Nek-Selmer}) gives a long exact sequence 
\[ \begin{split}
   H^1\bigl(G_S,A_p[p^n]\bigr)&\longrightarrow\bigoplus_{v\in S}H^1\bigl(\Q_v,A_p[p^n]\bigr)\longrightarrow H^1\bigl(G_S, A_p [p^n]\bigr)^\vee\\
   & \longrightarrow H^2\bigl(G_S, A_p [p^n]\bigr)\longrightarrow\bigoplus_{v\in S}H^2\bigl(\Q_v, A_p [p^n]\bigr)\longrightarrow H^0\bigl(G_S, A_p [p^n]\bigr)^\vee\longrightarrow 0.
   \end{split} \]
Since these groups are finite, passing to inverse limits on $n$ produces an exact sequence 
\[ \begin{split}
   H^1(G_S, T_p )&\longrightarrow\bigoplus_{v\in S}H^1(\Q_v,T_p)\longrightarrow 
H^1(G_S, A_p )^\vee\\
&\longrightarrow H^2(G_S, T_p )\longrightarrow
\bigoplus_{v\in S}H^2(\Q_v, T_p )\longrightarrow H^0(G_S, A_p )^\vee\longrightarrow 0.
   \end{split} \]
Using Lemma \ref{lemma1.15}, we get an exact sequence
\begin{equation} \label{pt-eq1}
 \begin{split}
    0\longrightarrow H^1_f(\Q,T_p)&\longrightarrow H^1(G_S,T_p)\longrightarrow 
\bigoplus_{v\in S}H^1_s(\Q_v, T_p )\longrightarrow H^1_f(\Q,A_p)^\vee\\
& \longrightarrow H^2(G_S, T_p )\longrightarrow
\bigoplus_{v\in S}H^2(\Q_v, T_p )\longrightarrow H^0(G_S, A_p )^\vee\longrightarrow 0.
   \end{split}
\end{equation}
On the other hand, using Lemmas \ref{lemmaram}, \ref{lemmap} and \ref{H0c}, the definition of cohomology with compact support (\emph{cf.} \S \ref{compactcone}) yields an exact sequence
\begin{equation} \label{pt-eq2}
\begin{split}
   0&\longrightarrow H^0(\R,T_p)\longrightarrow H^1_c(G_S,T_p)\longrightarrow H^1(G_S,T_p)\longrightarrow\bigoplus_{s\in S}H^1(\Q_v,T_p)\\
  &\longrightarrow H^2_c(G_S, T_p )\longrightarrow H^2(G_S, T_p )\longrightarrow
\bigoplus_{s\in S}H^2(\Q_v, T_p )\longrightarrow H^3_c(G_S, T_p )\longrightarrow0.
\end{split} 
\end{equation}
Taking $\Det_{\cO_p}$ of \eqref{pt-eq1} and \eqref{pt-eq2}, and using the multiplicativity of determinants, gives the desired result. \end{proof}

\subsection{A reformulation of $p$-TNC} \label{reformulation-subsec}

We want to reformulate the $p$-part of Conjecture \ref{TNC} in a convenient way. 

\subsubsection{$p$-torsion of $\MM$} \label{p-torsion-M-subsubsec}

Let $H^1(\Q, T_p{)}_\mathrm{tors}$ be the torsion submodule of $H^1(\Q, T_p)$. 

\begin{definition} \label{deftors}
The \emph{$p$-torsion part of $\MM$} is 
\[ \Tors_p(\MM)\defeq\mathcal{I}^{-1}\bigl(H^0(G_S,A_p)^\vee\bigr)\cdot\mathcal{I}^{-1}\bigl(H^1(\Q, T_p{)}_\mathrm{tors}\bigr). \]
\end{definition}

The $\cO_p$-module $\Tors_p(\MM)$ will play a role in Theorem \ref{motivesthm}.

\subsubsection{Some linear algebra of lattices} \label{linear-algebra-lattices-subsubsec}

Let $\mathscr B=\{t_1,\dots,t_r\}$ be a basis of $H^1_\mot(\Q,\MM)$ as an $F$-vector space; this is also a basis of $H^1_\mot(\Q,\MM)_p$ over $F_p$. Recall that $\reg_p$ is a shorthand for $\reg_{\Q,p}$ and for each $i\in\{1,\dots,r\}$ put $x_i\defeq\reg_p(t_i)$. We are assuming the $p$-part of Conjecture \ref{regpconj} over $\Q$, so $\tilde{\mathscr B}\defeq\{x_1,\dots,x_r\}$ is a basis of $H^1_f(\Q,V_p)$ as an $F_p$-module. As in \S \ref{cohomology-subsec}, write $\Lambda_{\tilde{\mathscr{B}}}$ for the free $\cO_p$-submodule of $H^1_f(\Q,V_p)$ of rank $r$ generated by $\tilde{\mathscr B}$. Let $\{\xi_1,\dots,\xi_r\}$ be an $\cO_p$-basis of $\uH^1_f(\Q,T_p)$ and let $\mathtt{A}_{\tilde{\mathscr B}}\in\GL_r(F_p)$ be the transition matrix from the $F_p$-basis $\tilde{\mathscr B}$ to the $F_p$-basis $\{\xi_1,\dots,\xi_r\}$ of $H^1_f(\Q,V_p)$. Let $\xi_1^\vee,\dots,\xi_r^\vee\in H^1_f(\Q,V_p)^\vee$ be the dual elements from \S \ref{cohomology-subsec} (\emph{cf.} also \S \ref{vector-duals-subsec}) and define $x_1^\vee,\dots,x_r^\vee$ in an analogous way. 

\begin{remark} \label{B-rem}
If $\Lambda_{\tilde{\mathscr B}}=\uH^1_f(\Q,T_p)$, then $\mathtt{A}_{\tilde{\mathscr B}}\in\GL_r(\cO_p)$. In general, $\det(\mathtt{A}_{\tilde{\mathscr B}})$ depends on the choice of an $\cO_p$-basis of $\uH^1_f(\Q,T_p)$ only up to multiplication by elements of $\cO_p^\times$, which shows that the principal fractional $\cO_p$-ideal $\bigl(\det(\mathtt{A}_{\tilde{\mathscr B}})\bigr)$ is independent of the choice of a basis of $\uH^1_f(\Q,T_p)$ over $\cO_p$. On the other hand, keeping $\{\xi_1,\dots,\xi_r\}$ fixed, if $\mathscr B'$ is another $F$-basis of $H^1_\mot(\Q,\MM)$, then $\det(\mathtt{A}_{\tilde{\mathscr B}})$ and $\det(\mathtt{A}_{\tilde{\mathscr B}'})$ differ by multiplication by the determinant of the transition matrix from $\mathscr B$ to $\mathscr B'$.
\end{remark}

\subsubsection{Reformulation of $p$-TNC} \label{reformulation-subsubsec}

Recall the comparison isomorphism $\Comp_{\B,\et}$ from \eqref{comparison-isomorphisms-eq}. In particular, $\Comp_{\B,\et}^{-1}(T_p ^+)$ is an $\cO_p$-submodule of the free $F_p$-module 
$V_\B^+\otimes_\Q\Q_p$ of rank $1$. As in \S \ref{prime-p-subsec}, choose $\gamma_f\in T_\B^+\smallsetminus\{0\}$ and let $\Omega_\infty\defeq\Omega_\infty^{(\gamma_f)}\in F_\infty^\times$ be the period from \S \ref{emb-subsubsec}. Let us consider the $\cO_p$-submodule $\Lambda_{\gamma_f}$ of $T_p^+$ generated by $\Comp_{\B,\et}(\gamma_f)$ and set $\mathcal{I}_p(\gamma_f)\defeq\mathcal{I}\bigl(T_p^+/\Lambda_{\gamma_f}\bigr)$. Let us define the period
\begin{equation} \label{Omega-eq}
\Omega_\MM\defeq\frac{\Omega_\infty}{(2\pi i)^{k/2}}\in(F\otimes_\Q\C)^\times.
\end{equation}
From here on, in order to simplify some formulas, for two invertible $\cO_p$-ideals $\mathfrak{a}$ and $\mathfrak{b}$ we shall write $\frac{\mathfrak{a}}{\mathfrak{b}}$ in place of $\mathfrak{a}\cdot\mathfrak{b}^{-1}$. With all our current notation in force (in particular, recall the Shafarevich--Tate group $\Sha_p^{\BKK}(\Q,\MM)$ from part (1) of Definition \ref{Sha-def} and the matrix $\mathtt{A}_{\tilde{\mathscr B}}\in\GL_r(F_p)$ in \S \ref{linear-algebra-lattices-subsubsec}), we can state the reformulation of the $p$-part of Conjecture \ref{TNC} we are interested in.

\begin{theorem} \label{motivesthm}
Assume that 
\begin{enumerate}
\item Conjecture \ref{nondegconj} holds true;
\item Conjecture \ref{ratconj} (or, equivalently, Conjecture \ref{ratconj2}) holds true;
\item the $p$-part of Conjecture \ref{regpconj} for $K=\Q$ holds true;
\end{enumerate}
The $p$-part of Conjecture \ref{TNC} is equivalent to the equality 
\begin{equation} \label{eqBK}
\biggl(\frac{\Lambda^*(\MM,0)}{\Omega_\MM\cdot\Reg_{\mathscr B}(\MM)}\biggr)=\frac{\mathcal{I}\bigl(\Sha_p^{\BKK}(\Q,\MM)\bigr)\cdot\mathcal{I}_p(\gamma_f)\cdot\prod_{v\in S}\mathrm{Tam}_v^{(p)}(\MM)}{\bigl(\det(\mathtt{A}_{\tilde{\mathscr{B}}})\bigr)^2\cdot\Tors_p(\MM)} \tag{$p$-$\mathrm{TNC}_{\mathscr B}$}
\end{equation}
of fractional $\cO_p$-ideals.  
\end{theorem}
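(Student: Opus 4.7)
The plan is to rewrite both sides of the conjectural equality
$\theta_{p,S}(\zeta_f^*)\cdot\cO_p=\Det_{\cO_p}(\RR\Gamma_c(G_S,T_p))$
in terms of explicit bases and then extract the arithmetic invariants. The basis element $\zeta^{\gamma_f}_{\mathscr B}\in\Delta(\MM)$ introduced in the proof of Proposition \ref{rationality-prop} satisfies $\theta_\infty(\zeta^{\gamma_f}_{\mathscr B})=\bigl(\Omega_\infty\cdot\Reg_{\mathscr B}(\MM)\bigr)^{-1}$ by \eqref{beta-eq}, whereas Conjecture \ref{ratconj2} yields $\theta_\infty(\zeta_f^*)=\bigl((2\pi i)^{k/2}\Lambda^*(\MM,0)\bigr)^{-1}$. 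Since both are $F$-bases of the one-dimensional $F$-vector space $\Delta(\MM)$ and $\theta_\infty$ is an isomorphism, we obtain
\[
\zeta_f^*=\frac{\Omega_\infty\cdot\Reg_{\mathscr B}(\MM)}{(2\pi i)^{k/2}\cdot\Lambda^*(\MM,0)}\cdot\zeta^{\gamma_f}_{\mathscr B}
\]
in $\Delta(\MM)$, with the scalar interpreted via the embedding $\iota_\Sigma$ of Remark \ref{deligne-rem2}. Using the identity $\Omega_\MM=\Omega_\infty/(2\pi i)^{k/2}$ from \eqref{Omega-eq}, the $p$-TNC is thereby equivalent to the equality of fractional $\cO_p$-ideals
\[
\biggl(\frac{\Lambda^*(\MM,0)}{\Omega_\MM\cdot\Reg_{\mathscr B}(\MM)}\biggr)=\frac{\theta_{p,S}(\zeta^{\gamma_f}_{\mathscr B})\cdot\cO_p}{\Det_{\cO_p}(\RR\Gamma_c(G_S,T_p))},
\]
and it remains to identify the right-hand side with the right-hand side of \eqref{eqBK}.

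The next step is to decompose the denominator via Proposition \ref{RGamma_c}, which expresses $\Det_{\cO_p}(\RR\Gamma_c(G_S,T_p))$ as a product of five factors, namely $\Det_{\cO_p}^{-1}(H^1_f(\Q,T_p))$, $\Det_{\cO_p}(H^1_f(\Q,A_p)^\vee)$, $\Det_{\cO_p}^{-1}(H^0(G_S,A_p)^\vee)$, $\prod_{v\in S}\Det_{\cO_p}(H^1_f(\Q_v,T_p))$ and $\Det_{\cO_p}^{-1}(T_p^+)$, and to trace through $\theta_{p,S}$ applied to
\[
\zeta^{\gamma_f}_{\mathscr B}=\underline{t}_{\mathscr B}^{-1}\otimes\underline{t}_{\mathscr B}^*\otimes\gamma_f^{-1}\otimes\omega_f.
\]
The factors $\underline{t}_{\mathscr B}^{-1}$ and $\underline{t}_{\mathscr B}^*$ are transported via the $p$-adic regulator $\reg_p$ (which is an isomorphism by hypothesis) to $\underline{x}_{\tilde{\mathscr B}}^{-1}\in\Det_{F_p}^{-1}(H^1_f(\Q,V_p))$ and $\underline{x}_{\tilde{\mathscr B}}^\vee\in H^1_f(\Q,V_p)^\vee$ respectively, the latter via the self-duality $V_p\simeq V_p^*(1)$ together with the identification from Corollary \ref{image-upsilon-dual-coro} of $H^1_f(\Q,A_p)^\vee_\divv$ with its image inside $H^1_f(\Q,V_p)^\vee$; the factor $\gamma_f^{-1}$ is mapped via $\Comp_{\B,\et}$ to an element of $\Det_{F_p}^{-1}(T_p^+\otimes_{\cO_p}F_p)$; and $\omega_f$ contributes to the $v=p$ local component through $\Comp_{\B,\dR}$ and the trivialization $\vartheta_p$ of \S \ref{prime-p-subsec}. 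The local factors of $\theta_{p,S}(\zeta^{\gamma_f}_{\mathscr B})$ at $v\in S\smallsetminus\{p,\infty\}$ are trivial because $H^1_f(\Q_\ell,V_p)=0$ by Lemma \ref{lemmaram}, and analogously at $v=\infty$ since $H^1_f(\R,V_p)=0$ for odd $p$.

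Matching numerator with denominator factor by factor produces the arithmetic invariants as follows. Comparing $\underline{x}_{\tilde{\mathscr B}}^{-1}$ with the $\cO_p$-lattice $\uH^1_f(\Q,T_p)=\cO_p\langle\xi_1,\dots,\xi_r\rangle$ contributes a factor of $\det(\mathtt{A}_{\tilde{\mathscr B}})^{-1}$ together with the torsion of $H^1_f(\Q,T_p)$; comparing $\underline{x}_{\tilde{\mathscr B}}^\vee$ with the $\cO_p$-basis $\{\xi_1^\vee,\dots,\xi_r^\vee\}$ of $H^1_f(\Q,A_p)^\vee_\divv$ produces a second factor of $\det(\mathtt{A}_{\tilde{\mathscr B}})^{-1}$ together with $\mathcal{I}(\Sha_p^{\BKK}(\Q,\MM))$, via the exact sequence
\[
0\longrightarrow\Sha_p^{\BKK}(\Q,\MM)^\vee\longrightarrow H^1_f(\Q,A_p)^\vee\longrightarrow H^1_f(\Q,A_p)^\vee_\divv\longrightarrow0.
\]
The third factor contributes $\mathcal{I}(H^0(G_S,A_p)^\vee)$, which combines with the $H^1(\Q,T_p)_\tors$ contribution to give $\Tors_p(\MM)^{-1}$ by Definition \ref{deftors}. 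The local factors produce $\prod_{v\in S}\Tam_v^{(p)}(\MM)$ via the isomorphisms $\vartheta_\ell$ and $\vartheta_p$ of \S \ref{sectam}, together with \eqref{Tam-infty-eq} at $v=\infty$. Finally, the comparison of $T_p^+$ with its sublattice $\Lambda_{\gamma_f}=\cO_p\cdot\Comp_{\B,\et}(\gamma_f)$ yields the factor $\mathcal{I}_p(\gamma_f)$. Assembling all contributions gives \eqref{eqBK}.

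The principal technical difficulty is the meticulous tracking of inversions and duality conventions: one must correctly interlace the $p$-adic regulator $\reg_p$, the self-duality $V_p\simeq V_p^*(1)$, the Pontryagin--Tate duality identifications underlying the appearance of $\Det_{\cO_p}(H^1_f(\Q,A_p)^\vee)$ via \eqref{Upsilon-dual-eq}, and the Betti-to-\'etale and Betti-to-de Rham comparison isomorphisms $\Comp_{\B,\et}$ and $\Comp_{\B,\dR}$. The exponent $2$ on $\det(\mathtt{A}_{\tilde{\mathscr B}})$ in the denominator of \eqref{eqBK} reflects the dual appearance of the transition matrix, once through $H^1_f(\Q,T_p)$ and once through its Pontryagin dual; verifying this exponent and the absence of unexpected cancellations among the torsion contributions is the most delicate calculational step of the proof.
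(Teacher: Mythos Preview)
Your proposal is correct and follows essentially the same approach as the paper's proof. Both arguments decompose $\Det_{\cO_p}(\RR\Gamma_c(G_S,T_p))$ via Proposition \ref{RGamma_c}, compute each factor against the basis element $\zeta^{\gamma_f}_{\mathscr B}$ (called $\beta_f$ in the paper after transport by $\reg_p$), and extract the arithmetic invariants factor by factor; the only cosmetic difference is that you work directly with $\zeta_f^*$ and Conjecture \ref{ratconj2}, whereas the paper uses $\zeta_f$ and Conjecture \ref{ratconj} and converts at the end via $\zeta_f^*=\zeta_f/(k/2-1)!$ and \eqref{completedL}.
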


Observe that assumption (3) in the statement of the theorem ensures that Conjecture \ref{finitenessconj} for $K=\Q$ holds true (\emph{cf.} Remark \ref{reg-MM-rem}). In order to make sense of equality \eqref{eqBK}, in particular of how the left-hand side is seen as a principal fractional $\cO_p$-ideal, the reader is referred to Remark \ref{deligne-rem2}. With notation as in \eqref{reg-sigma-eq}, it is also useful to bear in mind that $\Reg_{\mathscr B}(\MM)=\bigl(\Reg_{\mathscr B}^\sigma(\MM)\bigr)_{\sigma\in\Sigma}$. 

\begin{remark} \label{non-degeneracy-rem}
Assumption (1) in Theorem \ref{motivesthm} is imposed only to force $\Reg_{\mathscr B}(\MM)$ to be non-zero: if we know that $\Reg_{\mathscr B}(\MM)\not=0$, we can remove this non-degeneracy requirement.
\end{remark}

\begin{remark} \label{BK-invariance-rem}
Suppose that $\mathscr B$ and $\mathscr B'$ are bases of $H^1_\mot(\Q,\MM)$ over $F$. Combining Remarks \ref{bases-rem} and \ref{B-rem}, one sees that \eqref{eqBK} holds if and only if ($p$-$\mathrm{TNC}_{\mathscr B'}$) holds.
\end{remark}

\begin{remark} \label{B-rem2}
By Remark \ref{B-rem}, the $\cO_p$-ideal $\bigl(\det(\mathtt{A}_{\tilde{\mathscr B}})\bigr)$ on the right-hand side of \eqref{eqBK} is independent of the choice of an $\cO_p$-basis of $\uH^1_f(\Q,T_p)$. Moreover, if $\Lambda_{\tilde{\mathscr B}}=\uH^1_f(\Q,T_p)$, then $\bigl(\det(\mathtt{A}_{\tilde{\mathscr B}})\bigr)=\cO_p$; in accord with this fact, one can check that the left-hand side of \eqref{eqBK} does not depend on $\mathscr B$ if $\mathscr B$ varies over all $F$-bases of $H^1_\mot(\Q,\MM)$ such that $\Lambda_{\tilde{\mathscr B}}=\uH^1_f(\Q,T_p)$.
\end{remark}

\begin{proof}[Proof of Theorem \ref{motivesthm}] If $\gamma_p$ is a generator of $T_p^+$, then $\Det_{\cO_p}(T_p^+)=\mathcal{I}_p(\gamma_f)\cdot\gamma_p$. As before, given a ring $R$, an $R$-module $M$ and $m_1,\dots,m_r\in M$, set $\underline{m}\defeq m_1\wedge\dots\wedge m_r$. Keeping Remark/Notation \ref{rem-not} in mind, there is an isomorphism of $\cO_p$-modules
\[ \Det_{\cO_p}^{-1}\bigl(H^1_f(\Q,T_p)\bigr)\simeq\mathcal{I}^{-1}\bigl(H^1_f(\Q,T_p{)}_\mathrm{tors}\bigr)\cdot\underline{\xi}^{-1}=\det(\mathtt{A}_{\tilde{\mathscr{B}}})\cdot\mathcal{I}^{-1}\bigl(H^1_f(\Q,T_p{)}_\mathrm{tors}\bigr)\cdot\underline{x}^{-1}. \] 
By Corollary \ref{image-upsilon-dual-coro} and the convention introduced at the end of \S \ref{cohomology-subsec}, $\{\xi_1^\vee,\dots,\xi_r^\vee\}$ is an $\cO_p$-basis of $H^1_f(\Q,A_p)_\divv^\vee$. Therefore, setting $\underline\xi^\vee\defeq\xi_1^\vee\wedge\dots\wedge\xi_r^\vee$ and $\underline{x}^\vee\defeq x_1^\vee\wedge\dots\wedge x_r^\vee$, there is an isomorphism of $\cO_p$-modules
\[ \Det_{\cO_p}\bigl(H^1_f(\Q,A_p)^\vee\bigr)\simeq\mathcal{I}^{-1}\bigl(\Sha_p^{\BKK}(\Q,\MM)\bigr)\cdot\underline{\xi}^\vee=\det(\mathtt{A}_{\tilde{\mathscr{B}}})\cdot\mathcal{I}^{-1}\bigl(\Sha_p^{\BKK}(\Q,\MM)\bigr)\cdot\underline{x}^{\vee}. \] 
Recall the isomorphism of $F_p$-modules $\theta_{p,S}$ from \eqref{theta-isom-eq}. Combining Proposition \ref{Tamv} with Proposition \ref{RGamma_c} and Definitions \ref{Tamp}, \ref{deftaminfty} and \ref{deftors}, we get an isomorphism of $\cO_p$-modules
\begin{equation} \label{eqdet}
\Det_{\cO_p}\bigl(\RR\Gamma_c(G_S,T_p)\bigr)\simeq\frac{\det(\mathtt{A}_{\tilde{\mathscr{B}}})^2\cdot\Tors_p(\MM)}{\mathcal{I}_p(\gamma_f)\cdot\mathcal{I}\bigl(\Sha_p^{\BKK}(\Q,\MM)\bigr)\cdot\prod_{v\in S}\mathrm{Tam}^{(p)}_v(\MM)}\cdot\theta_{p,S}(\beta_f),
\end{equation}
where, as above, $\beta_f\defeq\underline{x}^{-1}\otimes\underline{x}^\vee\otimes\gamma_f^{-1}\otimes\omega_f^\vee\in\Delta(\MM)$. Let $\zeta_f\in\Delta(\MM)$ be as in Conjecture \ref{ratconj} and write $\zeta_f=a\beta_f$ for some $a\in F^\times$. 
By Proposition \ref{rationality-prop} and its proof, there is an equality
\begin{equation} \label{equiv-eq1}
\frac{1}{a}=\frac{L^*(\MM,0)}{\Omega_\infty\cdot\Reg_{\mathscr B}(\MM)},
\end{equation}
which should be understood as in Remark \ref{deligne-rem2}. By \eqref{eqdet} there is an isomorphism
\begin{equation} \label{equiv-eq2}
\begin{split}
   \frac{\theta_{p,S}(\zeta_f)}{a}\cdot\cO_p\simeq&\Det_{\cO_p}\bigl(\RR\Gamma_c(G_S,T_p)\bigr)\cdot\det(\mathtt{A}_{\tilde{\mathscr{B}}})^{-2}\cdot\mathcal{I}_p(\gamma_f)\\
   &\cdot\Tors_p(\MM)^{-1}\cdot\mathcal{I}\bigl(\Sha_p^{\BKK}(\Q,\MM)\bigr)\cdot\prod_{v\in S}\mathrm{Tam}_v^{(p)}(\MM).
\end{split} 
\end{equation}
Combining \eqref{equiv-eq1}, \eqref{equiv-eq2}, the relation $\zeta_f^*=\zeta_f/(k/2-1)!$, formula \eqref{completedL} and the definition of $\Omega_\MM$ given in \eqref{Omega-eq} shows that the equality $\theta_{p,S}(\zeta_f^*)\cdot\cO_p=\Det_{\cO_p}\bigl(\RR\Gamma_c(G_S,T_p)\bigr)$ predicted by the $p$-part of Conjecture \ref{TNC} is equivalent to equality \eqref{eqBK}, as claimed. \end{proof}

\begin{remark} \label{BK-bis-rem}
Equality \eqref{eqBK} is equivalent to the equality
\begin{equation} \label{eqBKbis}
\biggl(\frac{(k/2-1)!\cdot L^*(\MM,0)}{\Omega_\infty\cdot\Reg_{\mathscr B}(\MM)}\biggr)=\frac{\mathcal{I}\bigl(\Sha_p^{\BKK}(\Q,\MM)\bigr)\cdot\mathcal{I}_p(\gamma_f)\cdot\prod_{v\in S}\mathrm{Tam}_v^{(p)}(\MM)}{\bigl(\det(\mathtt{A}_{\tilde{\mathscr{B}}})\bigr)^2\cdot\Tors_p(\MM)} \tag{$p$-$\mathrm{TNC}_{\mathscr B}$-$\mathrm{bis}$}
\end{equation} 
of fractional $\cO_p$-ideals: this follows immediately from \eqref{completedL}.
\end{remark}

To the best of our knowledge, Theorem \ref{motivesthm} offers the first reformulation of this form of (the $p$-part of) the TNC for $\MM$ in arbitrary analytic rank; the reader is referred to \cite{DSW} for a similar interpretation in analytic rank $0$.

\section{Kolyvagin's conjecture for modular forms} \label{kolyvagin-sec}

Our goal in this section is to state and prove Kolyvagin's conjecture for a large class of higher (even) weight modular forms. 

Let $\bar\Q$ denote the algebraic closure of $\Q$ in $\C$. As in Section \ref{secmot}, let $f\in S_k(\Gamma_0(N))$ be a newform of weight $k\geq4$ and level $N$, with $q$-expansion $f(q)=\sum_{n\geq1}a_n(f)q^n$. From here on we assume, as in the introduction, that 
\begin{itemize}
\item $f$ has no complex multiplication in the sense of \cite[p. 34, Definition]{ribet}.
\end{itemize}
As before, let $F$ be the Hecke field of $f$; by construction, it is naturally a subfield of $\bar\Q$. We write $\cO_F$ (respectively, $D_F$) for the ring of integers (respectively, the discriminant) of $F$. Finally, let $\cO_f\defeq\Z\bigl[a_n(f)\mid n\geq1\bigr]$ be the order of $\cO_F$ generated over $\Z$ by the Fourier coefficients $a_n(f)$ and let $c_f\defeq[\cO_F:\cO_f]$ be the index of $\cO_f$ in $\cO_F$.

\begin{remark}
A sufficient condition for $f$ not to have complex multiplication is that $N$ be square-free (\emph{cf.} \cite[p. 34]{ribet}), which will be assumed in due course.
\end{remark}

\subsection{Big image and irreducibility assumptions}

We collect two results on the Galois representations attached to $f$.

\subsubsection{Big image}

Let $p$ be a prime number. Denote by
\[ \rho_p:G_\Q\longrightarrow\Aut_{\cO_p}(T_p)\simeq\GL_2(\cO_p) \] 
the $p$-adic Galois representation attached to $f$ and $p$. We say that $\rho_p$ has \emph{big image} if there is an inclusion
\[ \bigl\{g\in\GL_2(\cO_p)\mid\det(g)\in(\Z_p^\times)^{k-1}\bigr\}\subset\im(\rho_p). \]

\begin{lemma} \label{big-lemma}
The representation $\rho_p$ has big image for all but finitely many $p$.
\end{lemma}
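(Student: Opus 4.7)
The plan is to deduce the claim from the work of Ribet (and its extension by Momose when the Hecke field of $f$ is strictly larger than $\Q$) on the images of the Galois representations attached to non-CM newforms.

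First, since $f$ has trivial nebentype, one has $\det\rho_p=\chi_{\mathrm{cyc}}^{k-1}$, where $\chi_{\mathrm{cyc}}:G_\Q\to\Z_p^\times$ is the $p$-adic cyclotomic character. Because $\chi_{\mathrm{cyc}}$ is surjective, this gives $\det(\im\rho_p)=(\Z_p^\times)^{k-1}$, and hence the inclusion
\[ \im\rho_p\subseteq\bigl\{g\in\GL_2(\cO_p)\mid\det(g)\in(\Z_p^\times)^{k-1}\bigr\} \]
holds automatically for every prime $p$. The content of the lemma is thus the reverse inclusion for almost all $p$.

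Next, I would invoke the adelic open image theorem: since $f$ is non-CM, the product representation
\[ \rho=\prod_{p}\rho_p:G_\Q\longrightarrow\GL_2\bigl(\cO_F\otimes_\Z\hat\Z\bigr) \]
has image that is \emph{open} inside the closed subgroup $H\defeq\bigl\{g\in\GL_2(\cO_F\otimes_\Z\hat\Z)\mid\det(g)\in(\hat\Z^\times)^{k-1}\bigr\}$. The substantive content here is Ribet's theorem (extended by Momose), which relies on the classification of closed subgroups of products of $\GL_2$'s together with the hypothesis that $f$ has no complex multiplication, the latter ruling out the abelian/dihedral possibility for the image modulo $p$ at all but finitely many $p$.

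Finally, from openness of $\im\rho$ in $H$ one passes to the $p$-adic statement by a standard argument: an open subgroup of the profinite group $H$ necessarily contains a product $\prod_p U_p$ with $U_p$ equal to the $p$-component $H_p=\bigl\{g\in\GL_2(\cO_p)\mid\det(g)\in(\Z_p^\times)^{k-1}\bigr\}$ for all but finitely many $p$. Equivalently, there is a finite set $S_f$ of primes, depending only on $f$, such that $\im\rho_p\supseteq H_p$ for every $p\notin S_f$. Combined with the automatic upper bound noted above, this gives the lemma.

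The only genuinely hard step in this plan is the openness of the adelic image; everything else is either elementary (the determinant computation) or formal (the extraction of the $p$-adic statement from adelic openness). Consequently, the plan reduces to citing the appropriate form of Ribet--Momose; potential subtleties coming from inner twists of $f$ produce at worst finitely many additional exceptional primes, which can be absorbed into $S_f$ without affecting the conclusion.
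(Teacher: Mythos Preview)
Your approach is essentially the same as the paper's: the paper simply cites Ribet \cite[Theorem 3.1]{ribet2}, and your proposal is an outline of what that theorem asserts and how its proof goes (adelic openness, then extraction of the $p$-adic statement). The determinant computation in your first paragraph is correct but not needed, since the definition of ``big image'' is a one-sided inclusion.

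There is, however, a genuine error in your final remark. Your claim that inner twists produce at worst finitely many additional exceptional primes is false. If $f$ admits a non-trivial inner twist $(\sigma,\chi)$, then the adelic image is open not in $H$ but in the strictly smaller subgroup of $H$ cut out by the relation $g^\sigma=\chi\cdot g$, and consequently the inclusion $H_p\subset\im\rho_p$ fails for \emph{infinitely many} $p$ (indeed for every $p$ at which that relation imposes a non-trivial constraint on $\GL_2(\cO_p)$, e.g.\ those $p$ split in $F$). The lemma as stated therefore requires that $f$ have no non-trivial inner twists. This is automatic, for instance, once $N$ is square-free (as the paper imposes later), because a non-trivial quadratic twist of a newform of square-free level and trivial nebentype has level divisible by a square and so cannot coincide with any conjugate $f^\sigma$. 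You should either record this hypothesis explicitly or note that it follows from assumptions made elsewhere; absorbing inner twists into a finite exceptional set is not a valid move.
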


\begin{proof} Since $f$ is not CM, the lemma follows from \cite[Theorem 3.1]{ribet2}. \end{proof}

\subsubsection{Residual irreducibility}

With notation as above, if $\p$ is a prime of $F$ above $p$, then we denote by
\[ \rho_{\p}:G_\Q\longrightarrow\Aut_{\cO_\p}(T_\p)\simeq\GL_2(\cO_\p) \]
the Galois representation associated with $T_\p$. Reducing modulo the maximal ideal of $\cO_{\p}$, we obtain a residual representation 
\[ \bar\rho_{\p}:G_\Q\longrightarrow\Aut_{\F_\p}(T_\p/\p T_\p)\simeq\GL_2(\F_\p), \] 
where $\F_\p\defeq\cO_\p/\p\cO_\p$ is the residue field of $F_\p$. 

\begin{lemma} \label{residual-irreducibility-lemma}
For all but finitely many prime numbers $p$, the representation $\bar\rho_\p$ is irreducible for every prime $\p$ of $F$ above $p$.
\end{lemma}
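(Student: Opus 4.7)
The plan is to deduce residual irreducibility directly from the big image statement of Lemma~\ref{big-lemma}, using nothing more than the standard fact that $\SL_2$ acts absolutely irreducibly on its defining two-dimensional representation.

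First, by Lemma~\ref{big-lemma}, the set $\mathcal{S}$ of prime numbers $p$ for which $\rho_p$ fails to have big image is finite. It will suffice to show that for every $p \notin \mathcal{S}$ and every prime $\p$ of $F$ above $p$, the representation $\bar\rho_\p$ is irreducible. So fix such a $p$ and $\p$. Recall that $\cO_p = \prod_{\p\mid p}\cO_\p$, so the natural projection yields a surjection $\GL_2(\cO_p)\twoheadrightarrow \GL_2(\cO_\p)$. By definition of big image, $\im(\rho_p)$ contains $\bigl\{g \in \GL_2(\cO_p) \mid \det(g) \in (\Z_p^\times)^{k-1}\bigr\}$; projecting to the $\p$-component and then reducing modulo $\p$, we obtain the inclusion
\[
\bigl\{\bar{g} \in \GL_2(\F_\p) \mid \det(\bar g) \in (\F_p^\times)^{k-1}\bigr\} \subset \im(\bar\rho_\p),
\]
where we use that the reduction map $\Z_p^\times \to \F_p^\times$ is surjective.

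Since $1 = 1^{k-1}$ lies in $(\F_p^\times)^{k-1}$, the subgroup on the left contains $\SL_2(\F_\p)$. Therefore $\SL_2(\F_\p) \subset \im(\bar\rho_\p)$, and it remains only to observe that $\SL_2(\F_\p)$ acts absolutely irreducibly on $\F_\p^2$. This is classical: the stabilizer in $\SL_2(\F_\p)$ of an $\F_\p$-line in $\F_\p^2$ is a Borel subgroup, which is a proper subgroup of $\SL_2(\F_\p)$ by the Bruhat decomposition, so no line can be $\SL_2(\F_\p)$-stable. Consequently $\bar\rho_\p$ is irreducible, and the lemma follows.

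There is essentially no obstacle here beyond invoking Lemma~\ref{big-lemma}; the crux of the matter (the non-CM hypothesis and Ribet's theorem) has already been absorbed into that statement. Note that the argument actually gives the stronger conclusion that $\bar\rho_\p$ is absolutely irreducible and has image containing $\SL_2(\F_\p)$, which will likely be useful in subsequent arguments (for instance, in verifying the additional hypotheses on $\bar\rho_\p$ needed to apply Kolyvagin-type results, see \S\ref{kolyvagin-ass-subsubsec}).
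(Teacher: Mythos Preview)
Your proof is correct. The argument that big image forces $\SL_2(\cO_\p)\subset\im(\rho_\p)$ (hence $\SL_2(\F_\p)\subset\im(\bar\rho_\p)$ after reduction) is sound: given $h\in\SL_2(\cO_\p)$, one extends to $g\in\GL_2(\cO_p)$ by setting $g_{\p'}=I_2$ for $\p'\neq\p$, and then $\det(g)=1\in(\Z_p^\times)^{k-1}$. The irreducibility of the standard representation of $\SL_2(\F_\p)$ is, as you note, immediate.

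The paper takes a different route: it simply cites \cite[Theorem 2.1, (a)]{ribet2} directly, which already gives residual irreducibility for all but finitely many $p$ under the non-CM hypothesis. So the paper invokes two separate results of Ribet (Theorem 3.1 for Lemma~\ref{big-lemma}, Theorem 2.1(a) for the present lemma), whereas you deduce the second from the first via an elementary group-theoretic step. Your approach is more self-contained relative to what has already been established in the paper, and it yields the bonus that $\im(\bar\rho_\p)\supset\SL_2(\F_\p)$; the paper's citation is shorter but relies on an additional external input. Either way, the substance lies in Ribet's theorems.
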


\begin{proof} Since $f$ is not CM, this is \cite[Theorem 2.1, (a)]{ribet2}. \end{proof}

To state Kolyvagin's conjecture, we work under

\begin{assumption} \label{p-ass}
The prime number $p$ satisfies the following conditions:
\begin{enumerate}
\item  $p\nmid 6ND_Fc_f$; 
\item $\rho_p$ has big image;
\item $\bar\rho_\p$ is irreducible for each $\p\,|\,p$.
\end{enumerate}
\end{assumption}

By Lemmas \ref{big-lemma} and \ref{residual-irreducibility-lemma}, all but finitely many primes $p$ satisfy Assumption \ref{p-ass}.

\subsection{$p$-isolation} \label{sec3.2} \label{isolated-subsec}

As in \S \ref{anaemic-hecke-subsubsec}, let us write $\mathfrak{H}_k(\Gamma_0(N))$ for the anaemic Hecke algebra of weight $k$ and level $\Gamma_0(N)$. 

\subsubsection{$p$-isolation of $f$}

Let $g(q)=\sum_{n\geq1}a_n(g)q^n\in S_k(\Gamma_0(N))$ be a normalized eigenform for $\mathfrak{H}_k(\Gamma_0(N))$ and let $L\defeq F\bigl(a_n(g)\mid n\geq1\bigr)$ be the composite of $F$ and the Hecke field of $g$. Let $p$ be a prime number and pick a prime $\p$ of $F$ above $p$. The form $f$ is said to be \emph{congruent to $g$ modulo $\p$} if 
\[ a_n(f)\equiv a_n(g)\pmod{\mathfrak P} \]
for some prime $\mathfrak P$ of $L$ above $\p$ and for all $n\geq1$. In this case, we write $f\equiv g\pmod\p$.
  
\begin{definition} \label{p-isolated-def}
The form $f$ is \emph{$p$-isolated} if there is no normalized eigenform $g\in S_k(\Gamma_0(N))$ other than $f$ such that $f\equiv g\pmod\p$ for some prime $\p$ of $F$ above $p$. 
\end{definition}

The next result tells us that, for a given $f$, the existence of congruences modulo $p$ is an exception.

\begin{theorem}[Ribet] \label{isolated-ribet-thm}
The modular form $f$ is $p$-isolated for all but finitely many $p$.
\end{theorem}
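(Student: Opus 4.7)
The plan is to prove Theorem \ref{isolated-ribet-thm} via the classical congruence-module argument. Let $\mathbb{T} \defeq \mathfrak{H}_k(\Gamma_0(N))$. First I would recall that, by the Atkin--Lehner theory of newforms and oldforms, the finite-dimensional semisimple $\Q$-algebra $\mathbb{T} \otimes_\Z \Q$ decomposes as a finite product
\[
\mathbb{T} \otimes_\Z \Q \;\simeq\; \prod_{i\in I} K_i,
\]
where the factors $K_i$ are number fields in bijection with the Galois orbits of normalized eigenforms in $S_k(\Gamma_0(N))$ (more precisely, over the system of Hecke eigenvalues away from $N$). Write $i_f\in I$ for the index attached to the orbit of $f$, so that $K_{i_f}=F$ and the homomorphism $\theta_f:\mathbb{T}\to\cO_F$ is (a model of) the projection to the $i_f$-th factor.

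Next, I would introduce the congruence module. Let $\mathbb{T}_f$ be the image of $\mathbb{T}$ in $F$ under $\theta_f$ (so $\mathbb{T}_f = \cO_f$) and let $\mathbb{T}^f$ be the image of $\mathbb{T}$ in the complementary product $\prod_{i \neq i_f} K_i$. The natural inclusion
\[
\mathbb{T} \;\longmono\; \mathbb{T}_f \oplus \mathbb{T}^f
\]
is an injection of finitely generated $\Z$-algebras of the same $\Z$-rank, so its cokernel
\[
C_f \defeq (\mathbb{T}_f \oplus \mathbb{T}^f)\big/\mathbb{T}
\]
is a finite abelian group. The standard observation (see, \emph{e.g.}, Ribet's treatment in his work on congruences) is that a prime $\p$ of $F$ above a rational prime $p$ occurs in the support of $C_f$ if and only if there exists a normalized eigenform $g\in S_k(\Gamma_0(N))$, distinct from $f$, with $f\equiv g\pmod{\p}$: indeed, a non-trivial congruence class in $C_f$ at $\p$ provides an element of $\mathbb{T}_f\oplus\mathbb{T}^f$ not lifting to $\mathbb{T}$ modulo $\p$, which is precisely the content of a mod-$\p$ identification of the Hecke eigensystems of $f$ and some other $g$.

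Since $C_f$ is a finite abelian group, its support consists of only finitely many rational primes. Consequently, for every prime $p$ outside this finite set, no prime $\p\mid p$ lies in $\Supp(C_f)$, and hence there is no normalized eigenform $g\neq f$ in $S_k(\Gamma_0(N))$ with $f\equiv g\pmod{\p}$ for any $\p\mid p$. By Definition \ref{p-isolated-def}, $f$ is $p$-isolated for all such $p$, proving the theorem. The main (and only) non-formal ingredient is the identification of the support of $C_f$ with the set of congruence primes; once one has the decomposition $\mathbb{T}\otimes\Q\simeq\prod K_i$ at one's disposal, this identification is essentially a translation between ``congruences of eigensystems'' and ``failure of the integral Hecke algebra to split as a product''.
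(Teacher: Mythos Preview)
Your argument is correct: the congruence-module approach you outline is precisely the standard proof of this finiteness result. The paper itself does not give an argument at all---its proof consists solely of the citation ``This follows from \cite[Theorem 1.4]{ribet-congruences}''---so you have in effect supplied a sketch of Ribet's proof rather than reproduced the paper's.

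One small point worth flagging: Definition~\ref{p-isolated-def} requires $a_n(f)\equiv a_n(g)\pmod{\mathfrak P}$ for \emph{all} $n\geq 1$, whereas the anaemic Hecke algebra $\mathbb{T}=\mathfrak{H}_k(\Gamma_0(N))$ you work with only records the $T_n$ with $(n,N)=1$, so your congruence module $C_f$ a priori detects only congruences of anaemic eigensystems. This is harmless for the theorem, since the implication you need runs in the direction ``no anaemic congruence $\Rightarrow$ no full congruence''. You should also note (as you implicitly do) that strong multiplicity one for the newform $f$ ensures that no other normalized eigenform in $S_k(\Gamma_0(N))$ shares its anaemic eigensystem, so the case of $g\neq f$ with the \emph{same} anaemic eigensystem does not arise.
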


\begin{proof} This follows from \cite[Theorem 1.4]{ribet-congruences}. \end{proof}

\subsubsection{Congruence ideal of $f$ at $\p$}

With notation as in \S \ref{isotypic-subsubsec}, let $\theta_{f,\p}:\mathfrak{H}_k(\Gamma_0(N))_{\cO_\p}\twoheadrightarrow\cO_\p$ be the (surjective) $\cO_\p$-linear extension of $\theta_f$ and let $\mathrm{Ann}_{\mathfrak{H}_k(\Gamma_0(N))_{\cO_\p}}\!\bigl(\ker(\theta_{f,\p})\bigr)$ be the annihilator ideal of $\ker(\theta_{f,\p})$ in $\mathfrak{H}_k(\Gamma_0(N))_{\cO_\p}$. The \emph{congruence ideal of $f$ at $\p$} is the ideal of $\cO_\p$ given by 
\[ \eta_{f,\p}\defeq\theta_{f,\p}\Bigl(\mathrm{Ann}_{\mathfrak{H}_k(\Gamma_0(N))_{\cO_\p}}\!\bigl(\ker(\theta_{f,\p})\bigr)\!\Bigr). \]
Let us set $\eta_{f,p}\defeq\prod_{\p\mid p}\eta_{f,\p}\subset\cO_p$. It is well known that, under Assumption \ref{ass-p-iso}, $\eta_{f,p}=\cO_p$. Furthermore, since the $\cO_\p$-algebra $\mathfrak{H}_k(\Gamma_0(N))_{\cO_\p}$ is flat, for each $\p\,|\,p$ the (tautological) short exact sequence of $\mathfrak{H}_k(\Gamma_0(N))_{\cO_\p}$-modules
\[ 0\longrightarrow\ker(\theta_{f,\p})\longrightarrow\mathfrak{H}_k(\Gamma_0(N))_{\cO_\p}\xrightarrow{\theta_{f,\p}}\cO_\p\longrightarrow 0 \] 
splits canonically. Thus, there is a canonical isomorphism, which we regard as an equality, of $\mathfrak{H}_k(\Gamma_0(N))_{\cO_\p}$-modules
\begin{equation} \label{lenstra}
\mathfrak{H}_k(\Gamma_0(N))_{\cO_\p}=\ker(\theta_{f,\p})\oplus\cO_\p.
\end{equation}
See, \emph{e.g.}, \cite{lenstra} for details.

\subsection{On $p$-adic Abel--Jacobi maps} \label{AJ-subsec}

Following \cite[Ch. II, (6.5)]{Nek2}, with $T_p$ and $T_\p$ as in \S \ref{etale-subsubsec}, let us define 
\[ J_p\defeq\Pi_B\Pi_\epsilon\cdot H^1_\et\bigl(\bar{X},\cO_p(k/2)\bigr), \]
which we view as a subgroup of $H^1_\et\bigl(\bar{X},F_p(k/2)\bigr)$; then $T_p=J_p[\theta_f]$. There is a splitting $J_p=\prod_{\p\mid p}J_\p$, where
\begin{equation} \label{J-split-eq}
J_\p\defeq\Pi_B\Pi_\epsilon\cdot H^1_\et\bigl(\bar{X},\cO_\p(k/2)\bigr)
\end{equation}
is regarded as a subgroup of $H^1_\et\bigl(\bar{X},F_\p(k/2)\bigr)$. Notice that 
\[ T_\p=J_\p[\theta_{f,\p}]=J_\p\otimes_{\mathfrak{H}_k(\Gamma_0(N))_{\cO_\p}}\!\cO_\p. \]
As is pointed out in \cite[\S 3]{Nek}, there is a surjection $\varpi_p:J_p\twoheadrightarrow T_p$ whose restriction to $T_p$ is the multiplication-by-$p^m$ map for some $m\in\N$. Notation being as in \eqref{R-chow-eq}, for any number field $L$ the Abel--Jacobi map in \eqref{AJ} yields a map  
\begin{equation} \label{AJ-map-p-2} 
\AJ_{L,p}:{\CH^{k/2}_0(X/L)}_{\cO_p}\longrightarrow H^1_f(L,J_p)\xrightarrow{\varpi_{p,*}}H^1_f(L,T_p), 
\end{equation}
where $\varpi_{p,*}$ is induced by $\varpi_p$ functorially. Furthermore, both the source and the target of \eqref{AJ-map-p-2} split over the primes $\p$ of $F$ above $p$, and we let  
\begin{equation} \label{AJ-map*}
\AJ_{L,\p}:{\CH^{k/2}_0(X/L)}_{\cO_\p}\longrightarrow H^1_f(L,T_\p)
\end{equation}
be the $\p$-component of \eqref{AJ-map-p-2}. Finally, set  
\begin{equation} \label{Lambda-eq}
\Lambda_\p(L)\defeq\im(\AJ_{L,\p})\subset H^1_f(L,T_\p). 
\end{equation}
The $\cO_\p$-module $H^1_f(L,T_\p)$ is finitely generated, so $\Lambda_\p(L)$ is finitely generated over $\cO_\p$.

\begin{remark} \label{AJ-factorization-rem}
By construction, $\AJ_{L,p}$ and $\AJ_{L,\p}$ factor through $\Pi_B\Pi_\epsilon\cdot{\CH^{k/2}_0(X/L)}_{\cO_p}$ and $\Pi_B\Pi_\epsilon\cdot{\CH^{k/2}_0(X/L)}_{\cO_\p}$, respectively (\emph{cf.} \cite[p. 105]{Nek}). In particular, $\AJ_{L,p}$ and $\AJ_{L,\p}$ induce Abel--Jacobi maps on ${\CH^{k/2}_\arith(X/L)}_{\cO_p}$ and ${\CH^{k/2}_\arith(X/L)}_{\cO_p}$, respectively, to be denoted by the same symbols.
\end{remark}


\subsection{$p$-integral motivic cohomology} 

From now on, we work under

\begin{assumption} \label{ass-p-iso}
The eigenform $f$ is $p$-isolated.
\end{assumption} 

By Theorem \ref{isolated-ribet-thm}, Assumption \ref{ass-p-iso} rules out only finitely many primes $p$. This condition will be used to split $J_\p$ over the Hecke algebra.

\subsubsection{Splitting $J_\p$}

In the present situation, one can take $\varpi_p$ so that its restriction to $T_p$ is the identity (in other words, one can take $m=0$ in \S \ref{AJ-subsec}). More precisely, if $J_\p$ is the $\mathfrak{H}_k(\Gamma_0(N))_{\cO_\p}$-module from \eqref{J-split-eq}, then the splitting in \eqref{lenstra} produces a splitting
\begin{equation} \label{J-split-eq2}
J_\p=\Bigl(J_\p\otimes_{\mathfrak{H}_k(\Gamma_0(N))_{\cO_\p}}\!\cO_\p\Bigr)\textstyle{\bigoplus}\Bigl(J_\p\otimes_{\mathfrak{H}_k(\Gamma_0(N))_{\cO_\p}}\!\ker(\theta_{f,\p})\Bigr). 
\end{equation}
On the other hand, we already observed that $T_\p=J_\p\otimes_{\mathfrak{H}_k(\Gamma_0(N))_{\cO_\p}}\!\cO_\p$, so from \eqref{J-split-eq2} we get a (projection) map $\varpi_\p:J_\p\twoheadrightarrow T_\p$ for each $\p\,|\,p$. Taking sums over all $\p\,|\,p$ gives the desired surjection $\varpi_p:J_p\twoheadrightarrow T_p$. 

\subsubsection{$\p$-integral motivic cohomology of $\MM$} \label{integral-motivic-subsubsec}

Let $L$ be a number field. With notation as in \eqref{R-chow-eq} and using again \eqref{lenstra}, we can consider the splitting 
\begin{equation} \label{CH-split-eq}
\resizebox{.9\hsize}{!}{${\CH^{k/2}_{\arith}(X/L)}_{\cO_\p}=\Bigl({\CH^{k/2}_{\arith}(X/L)}_{\cO_\p}\otimes_{\mathfrak{H}_k(\Gamma_0(N))_{\cO_\p}}\!\cO_\p\Bigr)\textstyle{\bigoplus}\Bigl({\CH^{k/2}_{\arith}(X/L)}_{\cO_\p}\otimes_{\mathfrak{H}_k(\Gamma_0(N))_{\cO_\p}}\!\ker(\theta_{f,\p})\Bigr).$} 
\end{equation}
For each $\p\,|\,p$, we define the \emph{first $\p$-integral motivic cohomology group of $\MM$ over $L$} to be 
\begin{equation} \label{p-integral-motivic-eq}
\begin{split}
H^1_\mot(L,\MM{)}_\text{$\p$-int}&\defeq\CH^{k/2}_{\arith}(X/L)\otimes_{\mathfrak{H}_k(\Gamma_0(N))}\cO_\p\\
&\;={\CH^{k/2}_{\arith}(X/L)}_{\cO_\p}\otimes_{\mathfrak{H}_k(\Gamma_0(N))_{\cO_\p}}\!\cO_\p,
\end{split}
\end{equation}
where the $\mathfrak{H}_k(\Gamma_0(N))$-algebra structure on $\cO_\p$ is induced by composing $\theta_f$ with the natural injection $\cO_F\hookrightarrow\cO_\p$ and the bottom identification is a standard canonical isomorphism.

\subsubsection{$p$-integral motivic cohomology of $\MM$} \label{integral-motivic-subsubsec2}

We define the \emph{first $p$-integral motivic cohomology group of $\MM$ over $L$} as 
\begin{equation} \label{p-int-mot-eq}
H^1_\mot(L,\MM{)}_\text{$p$-int}\defeq\bigoplus_{\p\mid p}H^1_\mot(L,\MM{)}_\text{$\p$-int}.
\end{equation}
It follows that the $p$-adic \'etale regulator map from \eqref{p-reg-eq} yields maps
\begin{equation} \label{AJ-map}
\reg_{L,\p}:H^1_\mot(L,\MM{)}_\text{$\p$-int}\longrightarrow H^1_f(L,T_\p) 
\end{equation}
for each $\p\,|\,p$ and
\begin{equation} \label{AJ-map-p}
\reg_{L,p}:H^1_\mot(L,\MM{)}_\text{$p$-int}\longrightarrow H^1_f(L,T_p) 
\end{equation}
that satisfy $\reg_{L,p}=\bigoplus_{\p\mid p}\reg_{L,\p}$. Note that, since ${\CH^{k/2}_{\arith}(X/L)}_{\cO_p}=\bigoplus_{\p|p}{\CH^{k/2}_{\arith}(X/L)}_{\cO_\p}$, for $\star\in\{p\}\cup\{\p\,|\,p\}$ there is a commutative triangle 
\begin{equation} \label{CH-p-triangle-eq}
\xymatrix@R=30pt@C=5pt{{\CH^{k/2}_{\arith}(X/L)}_{\cO_\star}\ar[rr]^-{\AJ_{L,\star}}\ar@{->>}[rd]^-{\Pi_{\MM,L,\star}}&& H^1_f(L,T_\star)\\
&H^1_\mot(L,\MM{)}_{\text{$\star$-int}}\ar[ru]^-{\reg_{L,\star}}} 
\end{equation}
in which, bearing \eqref{p-integral-motivic-eq} in mind, $\Pi_{\MM,L,\star}$ is the projection induced by \eqref{CH-split-eq} if $\star=\p$ or the direct sum of such projections over all $\p\,|\,p$ if $\star=p$, whereas $\AJ_{L,\star}$ is (the restriction of) the map in \eqref{AJ-map-p-2} if $\star=p$ or in \eqref{AJ-map*} if $\star=\p$.

\begin{remark} \label{linear-extension-rem}
Let $\star\in\{p\}\cup\{\p\,|\,p\}$. Of course, extending $\reg_{L,\star}$ in \eqref{AJ-map} and \eqref{AJ-map-p} $F_\star$-linearly we recover the $p$-adic regulator map in \eqref{p-reg-eq} if $\star=p$ or the $\p$-adic regulator map in \eqref{pp-reg-eq} if $\star=\p$, which justifies the slight abuse of notation. 
\end{remark}

\subsubsection{A conjecture on $\reg_{L,p}$} \label{integral-conjecture-subsubsec}

The following conjecture on the $p$-adic regulator map $\reg_{L,p}$ in \eqref{AJ-map-p} is a stronger, integral version of Conjecture \ref{regpconj}. It predicts that if $f$ is $p$-isolated, then $\reg_{L,p}$ is not only injective (as implied by Conjecture \ref{regpconj}), but also surjective. 

\begin{conjecture} \label{cong-int} 
Let $L$ be a number field. The map $\reg_{L,p}$ in \eqref{AJ-map-p} is an isomorphism of $\cO_p$-modules for all but finitely many primes $p$ at which $f$ is $p$-isolated. 
\end{conjecture}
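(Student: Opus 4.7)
The plan is to work one prime $\p$ of $F$ above $p$ at a time, using the decomposition \eqref{p-int-mot-eq} together with the analogous splitting $H^1_f(L,T_p)=\bigoplus_{\p\mid p}H^1_f(L,T_\p)$, so it suffices to prove, for each $\p\,|\,p$ with $p$ outside some finite exceptional set, that $\reg_{L,\p}$ in \eqref{AJ-map} is an isomorphism of $\cO_\p$-modules.

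First I would exploit $p$-isolation fully: by \eqref{lenstra} the algebra $\mathfrak{H}_k(\Gamma_0(N))_{\cO_\p}$ splits canonically as $\ker(\theta_{f,\p})\oplus\cO_\p$, yielding an explicit idempotent $e_{f,\p}\in\mathfrak{H}_k(\Gamma_0(N))_{\cO_\p}$ that realises $T_\p$ as a $G_L$-stable $\cO_\p$-direct summand of $J_\p$ (this is the content of \eqref{J-split-eq2}). Consequently the formation of the $\theta_{f,\p}$-isotypic component commutes with reduction modulo any power of $\p$, and $H^1_\mot(L,\MM)_{\p\text{-int}}$ is canonically identified with $e_{f,\p}\cdot\bigl(\Pi_B\Pi_\epsilon\cdot\CH^{k/2}_\arith(X/L)\otimes_\Z\cO_\p\bigr)$. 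In particular, extending $\reg_{L,\p}$ scalar-wise to $F_\p$ recovers, via Remark \ref{linear-extension-rem}, the $\p$-component of the map in Conjecture \ref{regpconj}, which I would invoke as a working hypothesis; this forces $\reg_{L,\p}$ to become a quasi-isomorphism, so both sides have the same $\cO_\p$-rank and its kernel and cokernel are finite and $\p$-primary.

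For integral injectivity, the irreducibility of $\bar\rho_\p$ (Assumption \ref{p-ass}(3)) together with standard inflation-restriction arguments shows $H^0(L,T_\p/\p T_\p)=0$ for $p$ outside a further finite set, hence $H^1(L,T_\p)$, and so $H^1_f(L,T_\p)$, is $\cO_\p$-torsion-free. Combined with the rational injectivity from Conjecture \ref{regpconj}, this reduces the question to ruling out $\p$-power torsion in $H^1_\mot(L,\MM)_{\p\text{-int}}$ mapping to zero, \emph{i.e.}, to the ``folklore'' integral injectivity of the $p$-adic Abel--Jacobi map $\AJ_{L,\p}$ on the $\theta_{f,\p}$-isotypic part of ${\CH^{k/2}_0(X/L)}_{\cO_\p}$; here the $p$-isolation hypothesis is essential, since the direct summand \eqref{J-split-eq2} removes the need to divide by a congruence index and keeps everything flat over $\cO_\p$. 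For surjectivity, one knows from the rational isomorphism plus torsion-freeness that $\im(\reg_{L,\p})\subset H^1_f(L,T_\p)$ is an $\cO_\p$-submodule of full rank with finite $\p$-primary cokernel, so the real task is to kill this cokernel. In analytic rank $\leq 1$ over imaginary quadratic fields the strategy would combine Heegner cycles on Kuga--Sato varieties and their Hida-family variants with the Iwasawa Main Conjecture of Skinner--Urban and Kato, and with the Kolyvagin-type Euler system built in Theorem C of the present paper, to control the characteristic $\cO_\p$-ideal of the cokernel and ultimately force it to be trivial via a Mazur-type control argument.

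The principal obstacle is integral surjectivity of $\reg_{L,\p}$ in higher analytic rank and over a general number field $L$: outside the rank $\leq 1$ regime over suitable imaginary quadratic fields, there is at present no known geometric or $K$-theoretic mechanism producing a supply of algebraic cycles rich enough to generate $H^1_f(L,T_\p)$. This is why the statement is left as Conjecture \ref{cong-int} rather than proved here, and why it is only imposed as a targeted working hypothesis in our main $p$-TNC results.
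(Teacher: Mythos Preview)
The statement is labelled a \emph{Conjecture} in the paper, and the paper provides no proof: it is explicitly imposed as a working hypothesis where needed (see condition \eqref{reg-condition} in Assumption~\ref{ass AJ}), and the remark immediately following it admits that the authors ``are at a loss to provide a more convincing and less vague motivation for this conjecture.'' There is therefore no proof in the paper against which to compare your proposal.

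You correctly recognise this in your final paragraph, so your write-up is a heuristic discussion rather than a proof. On the substance of that discussion: the reduction to individual primes $\p$ via \eqref{p-int-mot-eq} and the exploitation of the idempotent coming from \eqref{lenstra} are accurate and match how the paper sets up the integral theory. However, even your injectivity argument is conditional, since it invokes Conjecture~\ref{regpconj} and the ``folklore'' integral injectivity of $\AJ_{L,\p}$, both of which are open; so neither half of the statement is actually within reach by the outlined methods. Your surjectivity sketch in rank $\leq 1$ via Heegner cycles and Iwasawa-theoretic control is plausible in spirit, but note that the Euler-system and Main-Conjecture inputs bound the \emph{size} of Selmer groups, whereas translating such bounds into the assertion that the Abel--Jacobi image is \emph{exactly} $H^1_f(L,T_\p)$, rather than of finite and possibly nontrivial index, is a separate and delicate step that your outline does not address.
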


In particular, we conjecture $\reg_{L,p}$ to be an isomorphism of $\cO_p$-modules for all but finitely many $p$ (\emph{cf.} Theorem \ref{isolated-ribet-thm}). At a certain point, we will assume that Conjecture \ref{cong-int} is true for a specific choice of $L$ (namely, $L=\Q$ or $L$ a suitable imaginary quadratic field).

\begin{remark}
Conjecture \ref{cong-int} implies Conjecture \ref{regpconj} for all primes $p$ at which $f$ is $p$-isolated (\emph{cf.} Remark \ref{linear-extension-rem}).
\end{remark}

\begin{remark}
Our main motivation for proposing Conjecture \ref{cong-int} is the following. Let $\mathcal{T}$ be an abelian tensor category and let $\mathcal H=\bigl(H^\bullet(\cdot,\star),H_\bullet(\cdot,\star)\bigr)$ be a $\mathcal{T}$-valued twisted Poincar\'e duality theory with weights (see, \emph{e.g.}, \cite[\S6]{Jannsen}). Furthermore, let $X$ be a smooth proper variety of dimension $d$ over a field. As explained, \emph{e.g.}, in \cite[\S9.1]{Jannsen}, there is an Abel--Jacobi map 
\[ r_\mathcal{H}:{Z^j(X)}_0\longrightarrow \mathrm{R}^1\Gamma H^{2j-1}(X,j) \]
for all integers $0\leq j\leq d$, where ${Z^j(X)}_0$ is the group of cycles of codimension $j$ on $X$ that are homologically equivalent to $0$ and 
\[ \mathrm{R}^1\Gamma H^{2j-1}(X,j)=\Ext\bigl(1,H^{2j-1}(X,j)\bigr). \]
Now let $\mathcal{H}_\B$ be the Betti cohomology theory with coefficients in $\bar\Q$. The image of $r_{\mathcal H_\B}$ and integral Betti cohomology induce integral structures on $\Ext\bigl(1,H^{2j-1}(X,j)\bigr)$ that we expect to coincide after localization at a prime $p$ for all but finitely many $p$. Therefore, the comparison isomorphism between Betti and \'etale cohomology suggests that the $p$-adic Abel--Jacobi map is surjective when regulator maps can be defined (using an assumption of $p$-isolation on $f$), and this led us to Conjecture \ref{cong-int}. Admittedly, at present we are at a loss to provide a more convincing and less vague motivation for this conjecture.
\end{remark}
  
\subsection{Heegner cycles} \label{cycles-subsec}

We recall the definition of (classical) Heegner cycles in the sense of Nekov\'a\v{r} (\cite{Nek}, \cite{Nek2}).

\subsubsection{Heegner hypothesis}

Let $K$ be an imaginary quadratic field of discriminant $D_K$ such that 
\begin{itemize}
\item all the prime factors of $N$ split in $K$.
\end{itemize}
In other words, $K$ satisfies the \emph{Heegner hypothesis} relative to $N$. By virtue of this condition, if $\cO_K$ is the ring of integers of $K$, then we can fix an $N$-cyclic ideal of $\cO_K$, \emph{i.e.}, an ideal $\mathcal N\subset\cO_K$ such that $\mathcal O_K/\mathcal N\simeq \Z/N\Z$. Let us choose once and for all an embedding $K\hookrightarrow\C$. For every integer $n\geq1$ prime to $NpD_K$ let $\mathcal O_n\defeq \Z+n\mathcal O_K$ be the order of $K$ of conductor $n$. The isogeny $\C/\cO_n\rightarrow \C/(\cO_n\cap\mathcal N)^{-1}$ of complex tori defines a Heegner point $x_n\in X_0(N)$ that, by the theory of complex multiplication, is rational over the ring class field $K_n$ of $K$ of conductor $n$ (in particular, $K_1$ is the Hilbert class field of $K$). 

\subsubsection{Heegner cycles} \label{heegner-cycles-subsubsec}

Write $\pi_N:X(N)\rightarrow X_0(N)$ for the map induced by the inclusion $\Gamma(N)\subset\Gamma_0(N)$ and choose $\tilde x_n\in\pi_N^{-1}(x_n)$. The elliptic curve $E_n$ corresponding to $\tilde x_n$ has complex multiplication by $\cO_n$. Fix the unique square root $\xi_n=\sqrt{-n^2D_K}$ of the discriminant of $\cO_n$ with positive imaginary part under the chosen embedding $K\hookrightarrow\C$. For any $a\in\cO_n$ let $\Gamma_{n,a}\subset E_n\times E_n$ denote the graph of $a$ and let $i_{\tilde x_n}:\tilde\pi_{k-2}^{-1}(\tilde x_n)=E_n^{k-2}\hookrightarrow X$ be the canonical inclusion (recall that $X=\tilde{\mathcal E}_N^{k-2}$). We will frequently write the same symbol $Z$ for a cycle $Z$ and the class $[Z]$ of $Z$ in the Chow group. Put 
\begin{equation}\label{cycle-eq0}
Z_k(\tilde{x}_n)\defeq\Gamma_{n,\xi_n}\smallsetminus\bigl[(E_n\times\{0\})\cup(\{0\}\times E_n)\bigr] \end{equation}
and, with notation as in \eqref{R-chow-eq}, define  
\begin{equation} \label{cycle-eq1}
\tilde\Gamma_n\defeq\Pi_B\Pi_\epsilon\cdot{(i_{\tilde x_n})}_*\bigl(Z_k(\tilde{x}_n)^{(k-2)/2}\bigr)\in\Pi_B\Pi_\epsilon\cdot{\CH}^{k/2}(X/K_n),
\end{equation}
where $\Pi_B\Pi_\epsilon\cdot{\CH}^{k/2}(X/K_n)$ is to be viewed as a subgroup of ${{\CH}^{k/2}(X/K_n)}_\Q$. As explained in \cite[p. 105]{Nek}, there is an equality
\[ \Pi_\epsilon\cdot{{\CH}^{k/2}(X/K_n)}_{\Z_p}=\Pi_\epsilon\cdot{{\CH}^{k/2}_0(X/K_n)}_{\Z_p} \] 
inside ${{\CH}^{k/2}_0(X/K_n)}_{\Q_p}$. We call the image
\begin{equation} \label{cycle-eq2}
\Gamma_{n,p}\in\Pi_B\Pi_\epsilon\cdot{{\CH}^{k/2}_0(X/K_n)}_{\cO_p}\subset{{\CH}^{k/2}_0(X/K_n)}_{F_p}
\end{equation} 
of $\tilde\Gamma_{n}$ 
the \emph{geometric Heegner cycle} (\emph{at $p$}) \emph{of conductor $n$}. Moreover, for each $\p\,|\,p$ we write 
\begin{equation} \label{cycle-eq3}
\Gamma_{n,\p}\in\Pi_B\Pi_\epsilon\cdot{{\CH}^{k/2}_0(X/K_n)}_{\cO_\p}\subset{{\CH}^{k/2}_0(X/K_n)}_{F_\p}
\end{equation}
for the image of $\Gamma_{n,p}$ (here we are implicitly using the splitting $\Pi_B\Pi_\epsilon\cdot{{\CH}^{k/2}_0(X/K_n)}_{\cO_p}=\bigoplus_{\p\mid p}\Pi_B\Pi_\epsilon\cdot{{\CH}^{k/2}_0(X/K_n)}_{\cO_\p}$). The \emph{arithmetic Heegner cycle} (\emph{at $p$}) \emph{of conductor $n$} is then the image
\[ y_{n,p}\defeq\AJ_{K_n,p}(\Gamma_{n,p})\in H^1_f(K_n,T_p) \]
of the cycle in \eqref{cycle-eq2} via the $p$-adic Abel--Jacobi map from \eqref{AJ-map-p-2}, which factors through $\Pi_B\Pi_\epsilon\cdot{{\CH}^{k/2}_0(X/K_n)}_{\cO_p}$ (\emph{cf.} Remark \ref{AJ-factorization-rem}). With notation as in \eqref{Lambda-eq}, for each $\p\,|\,p$ we also set
\[ y_{n,\p}\defeq\AJ_{K_n,\p}(\Gamma_{n,\p})\in\Lambda_\p(K_n), \]
where $\AJ_{K_n,\p}$ is the $\p$-adic Abel--Jacobi map in \eqref{AJ-map*} and, as in \eqref{Lambda-eq}, $\Lambda_\p(K_n)$ is its image. In other words, $y_{n,\p}$ is the natural image of $y_{n,p}$ in $H^1_f(K_n,T_\p)$. It turns out that $y_{n,p}$ and $y_{n,\p}$ are independent of the choice of $\tilde x_n$ (\cite[p. 107]{Nek}). In the rest of this paper, the expression ``Heegner cycle of conductor $n$'' will always refer to $y_{n,\p}$ for a fixed $\p$ as above.

Finally, a crucial role in our arguments will be played by the cycle
\begin{equation} \label{y_K-eq}
y_{K,\p}\defeq\cores_{K_1/K}(y_{1,\p})\in\Lambda_\p(K);
\end{equation}
here we exploit the Galois-equivariance of the maps $\AJ_{\star,\p}$, which implies that the square
\begin{equation} \label{AJ-cores-eq}
\xymatrix@C=50pt@R=30pt{\Pi_B\Pi_\epsilon\cdot{\CH^{k/2}_0(X/K_1)}_{\cO_\p}\ar[d]^-{\tr_{K_1/K}}\ar[r]^-{\AJ_{K_1,\p}}&\Lambda_\p(K_1)\ar[d]^-{\cores_{K_1/K}}\\\Pi_B\Pi_\epsilon\cdot{\CH^{k/2}_0(X/K)}_{\cO_\p}\ar[r]^-{\AJ_{K,\p}}&\Lambda_\p(K)}
\end{equation}
commutes (as the notation suggests, $\tr_{K_1/K}$ is the Galois trace map on Chow groups).

\subsection{Kolyvagin integers} \label{kolyvagin-integers-subsec} 

Recall that, by Assumption \ref{p-ass}, the prime $p$ is unramified in $F$, hence $p$ is a local uniformizer for $F$ at $\p$. Let $v_\p$ be the valuation of $F_\p$ normalized so that $v_\p(p)=1$. 

\subsubsection{Kolyvagin primes}

A prime number $\ell$ is a \emph{Kolyvagin prime} for the data $(f,{\p},K)$ if
\begin{enumerate}
\item $\ell\nmid Np$;
\item $\ell$ is inert in $K$; 
\item $M(\ell)\defeq \min\bigl\{v_{\p}(\ell+1),v_{\p}\bigl(a_\ell(f)\bigr)\bigr\}>0$. 
\end{enumerate} 
Denote by $\mathcal{P}_\mathrm{Kol}(f)$ the set of Kolyvagin primes for $(f,{\p},K)$. 

\subsubsection{Kolyvagin integers}

Let us write 
\[ \Lambda_{\Kol}(f)\defeq\bigl\{\text{square-free products of primes in $\mathcal{P}_{\Kol}(f)$}\bigr\} \] 
for the set of \emph{Kolyvagin integers} for $(f,{\p},K)$. If we need to specify the data $(\p,K)$ we also write $\mathcal{P}_{\Kol}(f,{\p},K)$ and  $\Lambda_{\Kol}(f,{\p},K)$ for $\mathcal{P}_{\Kol}(f)$ and  $\Lambda_{\Kol}(f)$, respectively. 
Notice that $1\in\Lambda_{\Kol}(f)$. Finally, for every $n\in\Lambda_{\Kol}(f)$ define
\begin{equation} \label{M(n)-eq}
M(n)\defeq \begin{cases} \min\bigl\{M(\ell)\mid\ell\,|\,n\bigr\}&\text{if $n\geq2$},\\[3mm]\infty&\text{if $n=1$}. \end{cases}
\end{equation}
The integer $M(n)$ is the \emph{Kolyvagin index} of $n$. 

\subsection{Kolyvagin classes and Kolyvagin's conjecture} \label{kolyvagin-classes-subsec}

We attach to our newform $f$ a systematic supply of Galois cohomology classes, which we call \emph{Kolyvagin classes}, that are indexed by Kolyvagin integers and take values in quotients of $T_\p$ (or, equivalently, in torsion submodules of $A_\p$, \emph{cf.} \S \ref{p-modules-subsec}). Our strategy for producing these classes, which are defined in terms of the Heegner cycles of \S \ref{cycles-subsec}, follows the recipe proposed by Kolyvagin for modular abelian varieties (see, \emph{e.g.}, \cite[\S 4]{Gross} and \cite[\S 3.7]{zhang-selmer}). In order to fix notation that we will use in the rest of the paper, and for the convenience of the reader, we describe the construction of Kolyvagin classes in our higher weight setting.

\subsubsection{Kolyvagin derivatives} \label{derivatives-subsubsec}

For all $n\in\Lambda_{\Kol}(f)$, let us set $G_n\defeq \Gal(K_n/K_1)$ and $\mathcal G_n\defeq \Gal(K_n/K)\simeq\Pic(\mathcal O_n)$, so that, by class field theory, $G_n=\prod_{\ell|n}G_\ell$ with $G_\ell$ cyclic of order $\ell+1$. For all $\ell\in\mathcal P_{\Kol}$ choose a generator $\sigma_\ell$ of $G_\ell$; define Kolyvagin derivative operators as
\[ D_\ell\defeq \sum_{i=1}^\ell i\sigma_\ell^i\in\Z[G_\ell],\quad D_n\defeq \prod_{\ell|n}D_\ell\in\Z[G_n]. \]
In particular, $D_1$ is the identity operator. Fix $n\in\Lambda_{\Kol}(f)$, let $\mathcal G$ be a system of representatives for $\mathcal G_n/G_n$ and set
\[ z_{n,\p}\defeq \sum_{\sigma\in\mathcal G}\sigma\bigl(D_n(y_{n,\p})\bigr)\in\Lambda_\p(K_n). \]

\begin{remark} \label{z_1-rem}
Since $G_1$ is trivial and $D_1$ is the identity operator, $z_{1,\p}=\sum_{\sigma\in\mathcal G_1}\sigma(y_{1,\p})\in\Lambda_\p(K_1)$. A direct computation shows that
\begin{equation} \label{y_K-z_1-eq}
\res_{K_1/K}(y_{K,\p})=z_{1,\p},
\end{equation}
where $y_{K,\p}\in\Lambda_\p(K)$ is the cycle defined in \eqref{y_K-eq}.
\end{remark}

\subsubsection{Kolyvagin classes}

As a consequence of \cite[Corollary 2.7, (3)]{LV} and \cite[Proposition 2.8]{LV}, for any number field $L$ and every integer $M\geq1$ there is a natural Galois-equivariant injection
\begin{equation} \label{iota_L-eq}
\iota_{L,M}:\Lambda_\p(L)\big/p^M\Lambda_\p(L)\longmono H^1\bigl(L,A_\p[p^M]\bigr). 
\end{equation}
This map should the thought of as a higher weight avatar of the usual Kummer map in the Galois cohomology of abelian varieties over number fields.
The extension $K_n/\Q$, which is generalized dihedral, is solvable, so \cite[Lemma 3.10, (2)]{LV} ensures that $H^0\bigl(K_n,A_\p[p^M]\bigr)=0$. It follows that restriction induces an isomorphism
\begin{equation} \label{res-iso-eq} 
\res_{K_n/K}:H^1\bigl(K,A_\p[p^M]\bigr)\overset\simeq\longrightarrow H^1\bigl(K_n,A_\p[p^M]\bigr)^{\mathcal G_n}. 
\end{equation}
Moreover, one can easily check that if ${[z_{n,\p}]}_M$ denotes the class of $z_{n,\p}$ modulo $p^M$ (analogous notation will be used, below, for $y_{K,\p}$) and $M(n)$ is the Kolyvagin index of $n$ from \eqref{M(n)-eq}, then
\[ M\leq M(n)\;\Longrightarrow\;{[z_{n,\p}]}_M\in\bigl(\Lambda_\p(K_n)\big/p^M\Lambda_\p(K_n)\bigr)^{\mathcal G_n}, \]
whence
\[ M\leq M(n)\;\Longrightarrow\;\iota_{K_n,M}\bigl({[z_{n,\p}]}_M\bigr)\in H^1\bigl(K_n,A_\p[p^M]\bigr)^{\mathcal G_n}. \]
For notational convenience, and also for later reference, let us define
\[ d_M(f,n)\defeq\iota_{K_n,M}\bigl({[z_{n,\p}]}_M\bigr)\in H^1\bigl(K_n,A_\p[p^M]\bigr)^{\mathcal G_n}. \] 
Keeping isomorphism \eqref{res-iso-eq} in mind, for all $M\leq M(n)$ we can define the \emph{Kolyvagin class} $c_M(f,n)$ as
\[ c_M(f,n)\defeq \res_{K_n/K}^{-1}\bigl(d_M(f,n)\bigr)\in H^1\bigl(K,A_\p[p^M]\bigr). \]
In particular, one has 
\begin{equation} \label{c-d-eq}    
c_M(f,n)=0\;\Longleftrightarrow\;d_M(f,n)=0.
\end{equation}
The \emph{Kolyvagin set} associated with $(f,{\p},K)$ is 
\begin{equation} \label{Kolsys}
\kappa_{f,{\p},\infty}^{(K)}\defeq\bigl\{c_M(f,n)\mid n\in\Lambda_{\Kol}(f),\;1\leq M\leq M(n)\bigr\}. 
\end{equation}
If $\p$ and $K$ are clear from the context (which will usually be the case), then we shall write $\kappa_{f,\infty}$ in place of $\kappa^{(K)}_{f,\p,\infty}$

\subsubsection{Kolyvagin's conjecture in higher weight}

The following conjecture was first proposed, with a slightly different formalism, in \cite[Conjecture A]{Masoero}.

\begin{conjecture}[Kolyvagin's conjecture, higher weight] \label{kolyvagin-conj}
$\kappa_{f,\infty}\neq\{0\}$.
\end{conjecture}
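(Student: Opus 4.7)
The plan is to prove Kolyvagin's conjecture for the higher weight form $f$ by $p$-adically deforming the Kolyvagin derived classes along a Hida family and reducing to the weight $2$ case, which is known by W.~Zhang \cite{zhang-selmer} and Skinner--Zhang \cite{SZ}. The first step is to pass from $f$ to its $p$-ordinary $p$-stabilization $f_\alpha$, consider the Hida family $\f$ through $f_\alpha$, and work with the critical twist $\TT^\dagger$ of Hida's big Galois representation. Attached to the Heegner data $(K,\mathcal N)$ one has Howard's big Heegner classes $\mathfrak X_n \in H^1(K_n,\TT^\dagger)$ for $n \in \Lambda_{\Kol}(f)$, whose arithmetic specializations are controlled at every classical weight: by Howard \cite{Howard-derivatives} at weight $2$, and by Castella \cite{CasHeeg} and Ota \cite{Ota-JNT} at weights $k$ satisfying $k \equiv 2 \pmod{2(p-1)}$; in the latter case, the specialization is identified (up to an explicit factor) with the $p$-adic Abel--Jacobi image of the arithmetic Heegner cycle $\Gamma_{n,\p}$.

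The second step is to apply the Kolyvagin derivative recipe of \S\ref{derivatives-subsubsec} to the classes $\mathfrak X_n$, producing ``big'' Kolyvagin classes $d(\f,n)$ in the cohomology of $\TT^\dagger$ modulo a suitable power of $p$. By construction and the specialization results just cited, the weight-$k$ specialization of $d(\f,n)$ recovers (up to a unit) the class $d_M(f,n) \in H^1(K_n,A_\p[p^M])^{\mathcal G_n}$ from \S\ref{kolyvagin-classes-subsec}, while the weight-$2$ specialization produces the analogous Kolyvagin class $d_M(g,n)$ for the weight-$2$ newform $g$ attached to the weight-$2$ specialization of $\f$. The third step is then to invoke the weight-$2$ Kolyvagin conjecture for $g$: by \cite{zhang-selmer} (when $p \nmid N$) or \cite{SZ} (when $p \,\|\, N$), there exist $n_0 \in \Lambda_{\Kol}(f)$ and $M_0 \leq M(n_0)$ with $c_{M_0}(g,n_0)\neq 0$; by \eqref{c-d-eq} this forces $d_{M_0}(g,n_0)\neq 0$, so the big class $d(\f,n_0)$ is itself nonzero, and the weight-$k$ specialization then transfers this non-vanishing to $d_{M_0}(f,n_0)\neq 0$, hence $c_{M_0}(f,n_0)\neq 0$ via \eqref{c-d-eq}, proving $\kappa_{f,\infty}\neq\{0\}$.

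The hard part will be to keep precise track of denominators and comparison constants at each specialization step, since a class in a $p$-adic family can specialize to zero owing to torsion in the cohomology of the deformation, and the explicit formulas of Howard, Castella and Ota relating big Heegner classes to arithmetic Heegner cycles involve normalizing factors that are \emph{a priori} only $\p$-adic integers. The congruence hypothesis $k \equiv 2 \pmod{2(p-1)}$, together with the ordinariness condition $a_p(f) \in \cO_\p^\times$ and the non-triviality $a_p(f)\not\equiv 1 \pmod{\p}$, is precisely what makes these comparison factors land in $\cO_\p^\times$, so that non-vanishing propagates faithfully between weights $2$ and $k$. A secondary technical issue is to promote the non-vanishing of a single Kolyvagin class at weight $2$ to the existence of a class $d_{M_0}(g,n_0)$ whose mod-$p^{M_0}$ reduction remains nonzero after the specialization; this is handled by choosing $M_0$ small enough with respect to the denominator appearing in the specialization comparison. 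Without the above congruence, the argument would instead require a direct Euler-system construction in higher weight in the spirit of \cite{wang}.
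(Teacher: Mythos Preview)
Your overall strategy matches the paper's: deform along the Hida family through $f^\sharp$, use Howard's big Heegner points $\mathfrak X_n$ in $H^1(K_n,\T^\dagger)$, form big Kolyvagin classes $d(\f,n)$, and compare their weight-$2$ and weight-$k$ specializations via the results of Howard, Castella, and Ota. But the transfer step as you have written it has a real gap.

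Your chain of implications is
\[
d_{M_0}(g,n_0)\neq 0 \;\Longrightarrow\; d(\f,n_0)\neq 0 \;\Longrightarrow\; d_{M_0}(f,n_0)\neq 0.
\]
The first arrow is fine (a nonzero image forces a nonzero source). The second arrow is not: a nonzero class in $H^1(K_{n_0},\T^\dagger)$ can very well specialize to zero at the weight-$k$ arithmetic prime, and even if it survives in $H^1(K_{n_0},T^\dagger_f)$ it may die upon reduction mod $p^{M_0}$. Your proposed fix, ``choose $M_0$ small enough with respect to the denominators,'' does not close this gap, since $M_0\geq 1$ and there is no control in that direction.

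The paper's key mechanism, which you are missing, is to work exclusively at $M=1$ and exploit the \emph{residual} representation $\bar\T = \T^\dagger/\mathfrak m_{\mathcal R}\T^\dagger$. The point is that the reductions $\bsspp^f_0:\bar\T\overset\simeq\to\bar T_f^\dagger$ and $\bsspp^g_0:\bar\T\overset\simeq\to\bar T_g^\dagger$ are \emph{isomorphisms} (over $\bar\F_p$), not mere specialization maps. Both $d_1(f,n)$ and $d_1(g,n)$ arise from the single class $\pi_{\mathcal R,K_n}\bigl(d(\f,n)\bigr)\in H^1(K_n,\bar\T)$ via these isomorphisms (times units coming from Theorem~\ref{c-o-thm}), so one obtains directly
\[
\psi_n\bigl(d_1(f,n)\bigr)=u\cdot d_1(g,n),\qquad u\in\bar\F_p^\times,
\]
where $\psi_n$ is the composite isomorphism $H^1(K_n,\bar T_f^\dagger)\simeq H^1(K_n,A_g[\bp])$. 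Non-vanishing of $d_1(g,n_0)$ (which is what Zhang and Skinner--Zhang actually prove---the strong form $c_1(g,n_0)\neq 0$) then gives $d_1(f,n_0)\neq 0$ immediately, with no passage through the big class needed. This proves the strong Conjecture~\ref{strong-kolyvagin-conj}, hence Conjecture~\ref{kolyvagin-conj}.

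One minor correction: the case split between \cite{zhang-selmer} and \cite{SZ} is not ``$p\nmid N$ versus $p\,\|\,N$'' (we always have $p\nmid N$ here); it is whether the weight-$2$ specialization $g$ has conductor $N$ or $Np$, i.e., whether $g$ is the $p$-stabilization of a level-$N$ newform or is itself new at $p$.
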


This is a higher (even) weight counterpart of a conjecture for rational elliptic curves due to Kolyvagin (\cite[Conjecture A]{kolyvagin-selmer}).

It is convenient to introduce some more terminology. The \emph{strict Kolyvagin set} attached to $(f,{\p},K)$ is
\begin{equation} \label{Kolsys-strict}
\kappa^{(K),\st}_{f,\p,\infty}\defeq\bigl\{c_1(f,n)\mid n\in\Lambda_{\Kol}(f)\bigr\}.
\end{equation} 
As above, we shall write $\kappa^{\st}_{f,\infty}$ in place of $\kappa^{(K),\st}_{f,\p,\infty}$ if no confusion is likely to arise. 

\begin{conjecture}[Kolyvagin's conjecture, higher weight, strong form] \label{strong-kolyvagin-conj}
$\kappa^{\st}_{f,\infty}\neq\{0\}$.
\end{conjecture}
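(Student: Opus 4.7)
The plan is to prove Conjecture \ref{strong-kolyvagin-conj} via a deformation-theoretic argument that uses Hida families as a bridge from the corresponding weight $2$ result of W.~Zhang to the higher-weight setting of interest here. Since $a_p(f)\in\cO_\p^\times$ by hypothesis, $f$ is $\p$-ordinary and admits an ordinary $p$-stabilization $f_\alpha\in S_k(\Gamma_0(Np))$; by Hida theory $f_\alpha$ sits in a unique Hida family $\f$. A key role is played by the specializations of $\f$ at weights $k'\equiv 2\pmod{2(p-1)}$: at $k'=k$ one recovers $f_\alpha$, while at $k'=2$ one obtains the ordinary $p$-stabilization of a weight $2$ newform $f_0$ of level $\Gamma_0(N)$, to which the classical Kolyvagin theory applies. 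Let $\T$ denote Hida's big Galois representation attached to $\f$ and let $\T^\dagger$ be its critical (self-dual) twist.

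Next, I would introduce Howard's big Heegner classes $\mathfrak X_n\in H^1(K_n,\T^\dagger)$, built from compatible systems of CM points on Shimura curves of level $\Gamma_0(Np^\infty)$, and use them as input for the Kolyvagin derivative recipe of \S \ref{derivatives-subsubsec} to produce derived classes
\begin{equation*}
d(\f,n)\defeq\sum_{\sigma\in\mathcal G}\sigma\bigl(D_n(\mathfrak X_n)\bigr)\in H^1(K_n,\T^\dagger),\qquad n\in\Lambda_{\Kol}(f).
\end{equation*}
The specialization theorems of Howard \cite{Howard-derivatives}, Castella \cite{CasHeeg} and Ota \cite{Ota-JNT} relate the image of $d(\f,n)$ under the weight $k$ specialization map (composed with an appropriate twist) to a non-zero $\cO_\p$-multiple of the Heegner-cycle-derived class $z_{n,\p}$, and hence, after applying the higher-weight Kummer map $\iota_{K_n,1}$ of \eqref{iota_L-eq} and inverting the restriction isomorphism \eqref{res-iso-eq}, to the Kolyvagin class $c_1(f,n)$. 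The analogous weight $2$ specialization produces the classical Kolyvagin class $\kappa_n(f_0)$ attached to $f_0$ through Heegner points on the abelian variety $A_{f_0}$.

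The final input is Zhang's theorem on Kolyvagin's conjecture for weight $2$ newforms \cite{zhang-selmer} (respectively its extension by Skinner--Zhang \cite{SZ} for primes $p\,\|\,N$). Under the assumptions of \S \ref{kolyvagin-assumptions-subsubsec}, which include all the conditions needed to ensure that $f_0$ satisfies the hypotheses of \cite{zhang-selmer} (big image, residual irreducibility, ramification at primes dividing $N$, etc., each of which depends only on the residual representation $\bar\rho_\p$ and is therefore preserved along $\f$), there exists $n\in\Lambda_{\Kol}(f)$ such that $\kappa_n(f_0)\neq 0$ in $H^1(K,A_{f_0}[p])$. Lifting this non-vanishing to a non-vanishing of $d(\f,n)$ modulo the maximal ideal of the Hida--Hecke algebra, and then specializing at weight $k$, yields $c_1(f,n)\neq 0$, which proves the strong form of Kolyvagin's conjecture.

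The hard part will be step two: verifying that the specialization maps are genuinely compatible with the Kolyvagin derivative construction on the nose, in the sense that specialization of $d(\f,n)$ at weight $k$ (respectively $2$) equals the image of $z_{n,\p}$ (respectively of the weight $2$ Kolyvagin derived class) up to an explicit unit factor modulo $p$. This should follow formally from Galois-equivariance of the specialization morphism combined with the Howard--Castella--Ota specialization formulas for the classes $\mathfrak X_n$ themselves, but one has to keep careful track of the periods and of the twist $\T\rightsquigarrow\T^\dagger$, and to exploit the congruence $k\equiv 2\pmod{2(p-1)}$ precisely at the point where the cyclotomic (Teichm\"uller) character becomes trivial on the relevant piece, so that the comparison modulo $p$ between the weight $k$ and weight $2$ specializations is unambiguous. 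A secondary, more technical check is that the sign/root-number conditions implicit in the construction of $\mathfrak X_n$ match those imposed on $f$ through the Heegner hypothesis, which should be immediate in the generic dihedral situation considered here.
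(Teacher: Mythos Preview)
Your proposal is correct and follows essentially the same route as the paper's proof of Theorem \ref{kolyvagin-main-thm}: big Heegner points in the Hida family, big Kolyvagin classes $d(\f,n)$, the Howard--Castella--Ota specialization formulas at weights $2$ and $k$, and Zhang/Skinner--Zhang in weight $2$ to produce a non-trivial $c_1(f,n_0)$. One small correction: the weight $2$ specialization $f_2$ of $\f$ has level $\Gamma_0(Np)$ and may be new at $p$ (conductor $Np$) rather than the $p$-stabilization of a level $N$ newform; the dichotomy is between $N_g=N$ (handled by \cite{zhang-selmer}) and $N_g=Np$ (handled by \cite{SZ}), not between $p\nmid N$ and $p\,\|\,N$, and the paper treats the $N_g=Np$ case separately via Howard's original weight $2$ specialization argument.
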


Clearly, there is an inclusion $\kappa^{\st}_{f,\infty}\subset\kappa_{f,\infty}$, so Conjecture \ref{strong-kolyvagin-conj} is stronger than Conjecture \ref{kolyvagin-conj}. 

\subsection{The Kolyvagin classes $c_M(f,1)$}

Of special interest will be the classes $c_M(f,1)$ for $M\geq1$; in the next result, we describe them more explicitly. 

\begin{proposition} \label{c_M(1)-prop}
$c_M(f,1)=\iota_{K,M}\bigl({[y_{K,\p}]}_M\bigr)$.
\end{proposition}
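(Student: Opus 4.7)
The plan is to unpack the definition of $c_M(f,1)$ and reduce the claim to the compatibility of the Kummer-type injection $\iota_{\star,M}$ of \eqref{iota_L-eq} with restriction, together with the identity from Remark \ref{z_1-rem}.

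By construction, $c_M(f,1)$ is the unique class in $H^1(K,A_\p[p^M])$ such that
\[ \res_{K_1/K}\bigl(c_M(f,1)\bigr)=d_M(f,1)=\iota_{K_1,M}\bigl({[z_{1,\p}]}_M\bigr), \]
the restriction map being an isomorphism onto $\mathcal G_1$-invariants thanks to \cite[Lemma 3.10, (2)]{LV}. Therefore it suffices to verify the equality
\[ \res_{K_1/K}\Bigl(\iota_{K,M}\bigl({[y_{K,\p}]}_M\bigr)\!\Bigr)=\iota_{K_1,M}\bigl({[z_{1,\p}]}_M\bigr) \]
in $H^1(K_1,A_\p[p^M])$.

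First, I would invoke the functoriality of the maps $\iota_{\star,M}$ with respect to the extension $K_1/K$. This naturality is a formal consequence of how $\iota_{\star,M}$ is constructed in \cite[Corollary 2.7, (3)]{LV} and \cite[Proposition 2.8]{LV} (as a higher weight Kummer-type map built out of the short exact sequence $0\to A_\p[p^M]\to A_\p\to A_\p\to 0$ composed with the Abel--Jacobi map), and it yields a commutative square
\[ \xymatrix@C=40pt@R=28pt{\Lambda_\p(K)\big/p^M\Lambda_\p(K)\ar[r]^-{\iota_{K,M}}\ar[d]_-{\res_{K_1/K}}&H^1\bigl(K,A_\p[p^M]\bigr)\ar[d]^-{\res_{K_1/K}}\\ \Lambda_\p(K_1)\big/p^M\Lambda_\p(K_1)\ar[r]^-{\iota_{K_1,M}}&H^1\bigl(K_1,A_\p[p^M]\bigr),} \]
where the left vertical arrow is simply the map induced by viewing a $K$-rational class as a $K_1$-rational one.

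Plugging ${[y_{K,\p}]}_M$ into this square and applying Remark \ref{z_1-rem}, namely $\res_{K_1/K}(y_{K,\p})=z_{1,\p}$, gives
\[ \res_{K_1/K}\Bigl(\iota_{K,M}\bigl({[y_{K,\p}]}_M\bigr)\!\Bigr)=\iota_{K_1,M}\bigl({[z_{1,\p}]}_M\bigr)=d_M(f,1), \]
and the desired equality $c_M(f,1)=\iota_{K,M}\bigl({[y_{K,\p}]}_M\bigr)$ follows by applying $\res_{K_1/K}^{-1}$. The only potential obstacle is the naturality of $\iota_{\star,M}$, but since it is (essentially by definition) the connecting map in the long exact sequence attached to a short exact sequence of Galois modules, this is automatic and does not require a separate argument. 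Thus the proposition is essentially a direct consequence of the definitions of $c_M(f,1)$ and $y_{K,\p}$ combined with the corestriction/restriction identity \eqref{y_K-z_1-eq}.
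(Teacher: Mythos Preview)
Your proposal is correct and follows essentially the same route as the paper: both reduce to the commutative square linking $\iota_{K,M}$ and $\iota_{K_1,M}$ via restriction, then invoke the identity $\res_{K_1/K}(y_{K,\p})=z_{1,\p}$ from \eqref{y_K-z_1-eq}. The only difference is one of detail: the paper spends more effort justifying the left vertical arrow of the square, arguing (via $H^0(K_1,T_\p)=0$ and a citation to \cite{Charles}) that the Abel--Jacobi map over $K$ is compatible with base change to $K_1$, so that restriction really does carry $\Lambda_\p(K)$ into $\Lambda_\p(K_1)$ in the expected way; you subsume this under ``functoriality of $\iota_{\star,M}$'', which is acceptable but slightly less explicit.
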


\begin{proof} As is explained, \emph{e.g.}, in \cite[\S A.9.17]{BG}, there is a natural base change map
\begin{equation} \label{base-change-eq}
\CH^{k/2}_0(X/K)\longrightarrow\CH^{k/2}_0(X/K_1)^{\mathcal G_1}. 
\end{equation}
By composing \eqref{base-change-eq} with the (restriction of the) Abel--Jacobi map $\Phi_{K_1}$, we get a map
\[ \psi_{K,K_1}:\CH^{k/2}_0(X/K)\longrightarrow{H^1(K_1,T_\p)}^{\mathcal G_1}. \]
On the other hand, since the extension $K_1/\Q$ is solvable, by \cite[Lemma 3.10, (2)]{LV} one has $H^0(K_1,A_\p[p^m])=0$ for all $m\geq1$, so $H^0(K_1,T_\p)=\varprojlim_mH^0\bigl(K_1,A_\p[p^m]\bigr)=0$. 
It follows that restriction induces an isomorphism
\[ \res_{K_1/K}:H^1(K,T_\p)\overset\simeq\longrightarrow{H^1(K_1,T_\p)}^{\mathcal G_1}. \] 
Now it turns out (see, \emph{e.g.}, the proof of \cite[Proposition 6]{Charles}) that the composition
\[ \res_{K_1/K}^{-1}\circ\psi_{K,K_1}: \CH^{k/2}_0(X/K)\longrightarrow H^1(K,T_\p) \]
coincides with the Abel--Jacobi map. Since $\Lambda_\p(K)=\mathrm{im}(r_{K,\p})$ and $\Lambda_\p(K_1)=\mathrm{im}(r_{K_1,\p})$, we obtain a natural injection
$\Lambda_\p(K)\hookrightarrow{\Lambda_\p(K_1)}^{\mathcal G_1}$ given by $x\mapsto\res_{K_1/K}(x)$. 
This map, in turn, induces a map $\eta_{K,K_1}$ that is given by the composition
\[ \Lambda_{\p}(K)\big/p^M\Lambda_{\p}(K)\longrightarrow{\Lambda_{\p}(K_1)}^{\mathcal G_1}\big/p^M{\Lambda_{\p}(K_1)}^{\mathcal G_1}\longmono\bigl(\Lambda_{\p}(K_1)\big/p^M\Lambda_{\p}(K_1)\bigr)^{\mathcal G_1} \]
and fits into the commutative square
\[ \xymatrix@R=32pt@C=38pt{\Lambda_{\p}(K)\big/p^M\Lambda_{\p}(K)\ar@{^(->}[r]^-{\iota_{K,M}}\ar[d]^-{\eta_{K,K_1}}&H^1\bigl(K,A_\p[p^M]\bigr)\ar[d]^-{\res_{K_1/K}}_-\simeq\\\bigl(\Lambda_{\p}(K_1)\big/p^M\Lambda_{\p}(K_1)\bigr)^{\mathcal G_1}\ar@{^(->}[r]^-{\iota_{K_1,M}}&H^1\bigl(K_1,A_\p[p^M]\bigr)^{\mathcal G_1}.} \]
Finally, one has $\eta_{K,K_1}\bigl({[y_{K,\p}]}_M\bigr)=\bigl[\res_{K_1/K}(y_{K,\p})\bigr]={[z_{1,\p}]}_M$, where the second equality is a consequence of formula \eqref{y_K-z_1-eq}, so
\[ \iota_{K,M}\bigl({[y_{K,\p}]}_M\bigr)=\res_{K_1/K}^{-1}\Bigl(\iota_{K_1,M}\bigl(\eta_{K_1/K}({[y_{K,\p}]}_M)\bigr)\!\Bigr)=\res_{K_1/K}^{-1}\bigl(\iota_{K_1,M}({[z_{1,\p}]}_M)\bigr)=c_M(f,1), \]
as desired. \end{proof}

\subsection{Towards a proof of Kolyvagin's conjecture: assumptions} \label{kol-proof-I-subsec}

Our goal in the next sections is to prove Conjecture \ref{kolyvagin-conj} for a large class of modular forms. As hinted at in the introduction, our strategy is based on a deformation-theoretic approach via Hida theory. 

\subsubsection{$\p$-ordinariness of $f$}

Recall that $f$ is \emph{${\p}$-ordinary} if $a_p(f)\in\cO_\p^\times$. Furthermore, the semisimplification $\bar\rho_{{\p}}^{ss}$ of $\bar\rho_{{\p}}$ is \emph{$p$-distinguished} if its restriction to $G_{\Q_p}$ can be put in the shape ${\bar\rho_{{\p}}^{ss}|}_{G_{\Q_p}}=\bigl(\begin{smallmatrix}\varepsilon_1&*\\0&\varepsilon_2\end{smallmatrix}\bigr)$ for characters $\varepsilon_1\not=\varepsilon_2$ (see, \emph{e.g.}, \cite[\S 2]{ghate}). Moreover, by \cite[Theorem 2.1, (a)]{ribet2}, if $p$ is sufficiently large (\emph{i.e.}, if $p$ lies outside a suitable finite set of prime numbers), then $\bar\rho_{f,{\p}}$ is irreducible for all ${\p}$ as above. 

\begin{proposition} \label{p-distinguished-prop}
If $f$ is ${\p}$-ordinary, then $\bar\rho_{{\p}}^{ss}$ is $p$-distinguished.
\end{proposition}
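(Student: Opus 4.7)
The plan is to combine the classical reducibility theorem for $p$-ordinary Galois representations with a short ramification argument that exploits the self-duality of $V_\p$.

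First I would invoke the theorem of Wiles (in weight $2$), extended to arbitrary weight by Mazur--Wiles and Hida: the hypothesis $a_p(f)\in\cO_\p^\times$ forces the restriction $\rho_\p|_{G_{\Q_p}}$ to be reducible. Concretely, one obtains a short exact sequence of $G_{\Q_p}$-modules
\[ 0\longrightarrow V_\p^+\longrightarrow V_\p\longrightarrow V_\p^-\longrightarrow 0, \]
with each $V_\p^\pm$ free of rank one over $F_\p$, and with the $G_{\Q_p}$-action on $V_\p^-$ given by an unramified character $\alpha_\p$ sending $\Frob_p$ to the unique $\p$-adic unit root of the Hecke polynomial of $f$ at $p$ (suitably normalized to account for the $(k/2)$-fold Tate twist built into the definition of $V_\p$). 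In a basis adapted to this filtration one therefore has
\[ \rho_\p|_{G_{\Q_p}}\sim\begin{pmatrix}\delta_\p & \ast\\ 0 & \alpha_\p\end{pmatrix}, \]
and reducing modulo $\p$ shows that the semisimplification $\bar\rho_\p^{ss}|_{G_{\Q_p}}$ is the diagonal sum $\bar\delta_\p\oplus\bar\alpha_\p$, with $\bar\alpha_\p$ unramified.

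Next, to compare $\bar\delta_\p$ and $\bar\alpha_\p$, I would exploit the self-duality $V_\p\simeq V_\p^{\ast}(1)$ recorded in \S\ref{etale-subsubsec}. Taking determinants of this isomorphism, together with the fact that $f$ has trivial nebentype, pins down $\det(\rho_\p)=\chi_{\mathrm{cyc}}^{\pm 1}$ (the sign depending on conventions for the Tate twist). Reduction modulo $\p$ then yields
\[ \bar\delta_\p\cdot\bar\alpha_\p=\bar\chi_{\mathrm{cyc}}^{\pm 1}\big|_{G_{\Q_p}}. \]

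The main obstacle, such as it is, boils down to a residual ramification check. If one had $\bar\delta_\p=\bar\alpha_\p$, then both characters would be unramified, and so would their product. But by Assumption~\ref{p-ass} we have $p\nmid 6$, hence $p\geq 5$, and the restriction $\bar\chi_{\mathrm{cyc}}|_{I_p}$ is the mod-$p$ tame character surjecting onto $\F_p^\times$, which has order $p-1\geq 4$ and is in particular nontrivial. This contradicts the previous display restricted to $I_p$, and therefore $\bar\delta_\p\neq\bar\alpha_\p$; equivalently, $\bar\rho_\p^{ss}$ is $p$-distinguished, as required.
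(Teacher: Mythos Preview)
Your overall strategy---invoke the ordinary filtration from Wiles and then distinguish the two diagonal characters by a ramification argument---is the right one and matches what the paper's cited reference does. However, there is a mix-up between the self-dual twist $V_\p=V_{f,\p}(k/2)$ and the untwisted Deligne representation $V_{f,\p}$ that leads you to check the wrong ramification condition.

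Wiles' theorem gives an unramified quotient for the \emph{untwisted} $V_{f,\p}|_{G_{\Q_p}}$; after twisting by $\chi_{\cyc}^{k/2}$, neither diagonal character of $V_\p$ is unramified (see the explicit description in \S\ref{SUsec}, where the characters on $V_\p^{\pm}$ are $\eta_p^{-1}\chi_{\cyc}^{k/2}$ and $\eta_p\chi_{\cyc}^{1-k/2}$). On the other hand, your determinant computation $\det(\rho_\p)=\chi_{\cyc}^{\pm1}$ comes from the self-duality of the \emph{twisted} $V_\p$. You cannot use both at once: if you work with $V_{f,\p}$ so that the quotient is genuinely unramified, then the determinant is $\chi_{\cyc}^{1-k}$, and the ramification obstruction becomes whether $\bar\chi_{\cyc}^{\,k-1}|_{I_p}$ is nontrivial, i.e.\ whether $k\not\equiv1\pmod{p-1}$---not merely whether $\bar\chi_{\cyc}|_{I_p}$ is nontrivial.

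The missing step is precisely this parity check: since $k$ is even and (by Assumption~\ref{p-ass}) $p$ is odd, $k-1$ is odd while $p-1$ is even, hence $(p-1)\nmid(k-1)$. This forces $\bar\chi_{\cyc}^{\,k-1}|_{I_p}\neq1$ and completes the argument. The paper alludes to the same mechanism in the remark following Assumption~\ref{ass}, where the stronger congruence $k\equiv2\pmod{2(p-1)}$ is noted as an alternative route to $k\not\equiv1\pmod{p-1}$.
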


\begin{proof} This is a consequence of \cite[Theorem 2.1.4]{Wiles}: see, \emph{e.g.}, \cite[Lemma 4.12]{Vigni} for details (\emph{cf.} also \cite[\S 2.3]{LV-Iwasawa} for related computations). \end{proof}

\subsubsection{Assumptions} \label{kolyvagin-ass-subsubsec}

We work under the following assumption, which includes Assumptions \ref{p-ass} and \ref{ass-p-iso} stated before.

\begin{assumption} \label{main-assumption} \label{ass}
The pair $(f,\p)$ satisfies the following conditions: 
\begin{enumerate}
\item $N\geq 3$ is square-free;
\item \label{p-f-ass} $p\nmid 6ND_Fc_f$;
\item \label{weight-cong-ass} $k\equiv 2\pmod{2(p-1)}$; 
\item $f$ is $p$-isolated;
\item \label{ordinary-ass} $a_p(f)\in\cO_\p^\times$; 
\item \label{non-cong-ass} $a_p(f)\not\equiv1\pmod{{\p}}$;
\item $\rho_p$ has big image;  
\item $\bar\rho_{\mathfrak q}$ is irreducible and ramified for each prime $\mathfrak q$ of $F$ dividing $N$.
\end{enumerate}
\end{assumption}

See Definition \ref{p-isolated-def} for the notion of $p$-isolated form. By Proposition \ref{p-distinguished-prop}, if $f$ and ${\p}$ satisfy Assumption \ref{ass}, then $\bar\rho_{\p}^{ss}$ is $p$-distinguished.

\begin{remark}
In light of results of Serre on eigenvalues of Hecke operators (\cite[\S 7.2]{Serre-Cheb}), it seems reasonable to expect that condition \eqref{ordinary-ass}, which is an ordinariness property for $f$ at $\p$, holds for infinitely many $p$. In fact, questions of this sort appear to lie in the circle of ideas of the Lang--Trotter conjectures on the distribution of traces of Frobenius acting on elliptic curves (\cite{LT}) and of their extensions to higher weight modular forms (see, \emph{e.g.}, \cite{MM}, \cite{MMS}).
\end{remark}

\begin{remark}
It can be checked that $f$ is $p$-distinguished also as a consequence of the fact that, by condition \eqref{weight-cong-ass} in Assumption \ref{ass}, $k$ is not congruent to $1$ modulo $p-1$: see, \emph{e.g.}, \cite[Remark 7.2.7]{KLZ} and the reference therein.
\end{remark}

We record the following consequence of Assumption \ref{ass}. 

\begin{proposition} \label{no torsion lemma}
Let $L$ be a number field such that the extension $L/\Q$ is solvable.
\begin{enumerate}
\item The $\cO_{\p}$-module $H^1(L,T_{{\p}})$ is torsion-free.
\item The $\cO_{\p}$-modules $H^1_f(L,T_{{\p}})$ and $\Lambda_{{\p}}(L)$ are free of finite rank.
\end{enumerate} 
\end{proposition}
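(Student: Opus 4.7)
The plan is to derive everything from the single vanishing $H^0(L,A_\p)=0$, which will follow from Assumption~\ref{ass} and solvability of $L/\Q$. First I would show that the restriction $\bar\rho_\p|_{G_L}$ remains irreducible: the big image hypothesis on $\rho_p$ forces the image of $\bar\rho_\p$ to contain $\SL_2(\F_\p)$ (recall $p\geq 5$ by condition \eqref{p-f-ass}), and since $\SL_2(\F_\p)$ has no non-trivial solvable quotients in this range of $p$, any solvable $L/\Q$ gives $\bar\rho_\p(G_L)\supset\SL_2(\F_\p)$ as well. In particular $A_\p[\p]^{G_L}=\bar\rho_\p^{G_L}=0$. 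An induction on $n$ along the short exact sequences
\[ 0\longrightarrow A_\p[\p^n]\longrightarrow A_\p[\p^{n+1}]\xrightarrow{\p^n}A_\p[\p]\longrightarrow 0 \]
then gives $H^0(L,A_\p[\p^n])=0$ for every $n\geq 1$, and passing to the direct limit yields $H^0(L,A_\p)=0$.

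For part (1), applying the Galois cohomology long exact sequence to $0\to T_\p\to V_\p\to A_\p\to 0$ produces
\[ H^0(L,A_\p)\longrightarrow H^1(L,T_\p)\longrightarrow H^1(L,V_\p), \]
so the vanishing above gives an injection $H^1(L,T_\p)\hookrightarrow H^1(L,V_\p)$. Since the target is an $F_\p$-vector space, hence torsion-free as an $\cO_\p$-module, $H^1(L,T_\p)$ is torsion-free.

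For part (2), write $H^1_f(L,T_\p)$ as the kernel in \eqref{selmer-alternative-eq}; it is an $\cO_\p$-submodule of the finitely generated $\cO_\p$-module $H^1(G_{L,S},T_\p)$ (finite generation of the latter being a standard consequence of Tate's theorems on Galois cohomology of number fields), so $H^1_f(L,T_\p)$ is itself finitely generated over the Noetherian ring $\cO_\p$. Combined with part (1), it is a finitely generated torsion-free module over the DVR $\cO_\p$, hence free of finite rank. Finally, $\Lambda_\p(L)$ is by \eqref{Lambda-eq} an $\cO_\p$-submodule of $H^1_f(L,T_\p)$, so it too is finitely generated and torsion-free, therefore free of finite rank.

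The only delicate step is the verification that $\bar\rho_\p|_{G_L}$ retains its irreducibility in solvable extensions; once this is in hand the rest is purely formal manipulation of long exact sequences and elementary module theory over a DVR.
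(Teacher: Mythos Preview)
Your proof is correct and follows essentially the same approach as the paper's. The only differences are cosmetic: for part~(1) the paper uses the multiplication-by-$p$ sequence $0\to T_\p\xrightarrow{p}T_\p\to A_\p[p]\to 0$ (so that the $p$-torsion of $H^1(L,T_\p)$ is a quotient of $H^0(L,A_\p[p])$) rather than your sequence $0\to T_\p\to V_\p\to A_\p\to 0$, and it simply cites \cite[Lemma 3.10, (2)]{LV} for the vanishing $H^0(L,A_\p[p])=0$ where you sketch the argument from big image plus solvability; part~(2) is handled identically in both.
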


\begin{proof} As $p$ is a uniformizer for $\cO_{\p}$, in order to show that $H^1(L,T_\p)$ is torsion-free over $\cO_{\p}$ it is enough to check that the $p$-torsion of $H^1(L,T_{\p})$ is trivial. Since $T_{\p}$ is free (hence torsion-free) over $\cO_{\p}$, we can consider the short exact sequence of Galois modules  
\[ 0\longrightarrow T_{\p}\overset{p\cdot}\longrightarrow T_{\p}\longrightarrow T_{{\p}}\big/pT_{{\p}}=A_{{\p}}[p]\longrightarrow0, \]
where the first non-trivial arrow is the multiplication-by-$p$ map and the equality denotes a canonical identification. Passing to cohomology, we see that the $p$-torsion subgroup of $H^1(L,T_{{\p}})$ is a quotient of $H^0\bigl(L,A_{{\p}}[p]\bigr)$. On the other hand, $H^0\bigl(L,A_{{\p}}[p]\bigr)$ is trivial by \cite[Lemma 3.10, (2)]{LV}, and part (1) is proved. Finally, the $\cO_{\p}$-submodules $H^1_f(L,T_{{\p}})$ and $\Lambda_{{\p}}(L)$ of $H^1(L,T_{{\p}})$ are finitely generated, so part (2) follows from part (1). \end{proof}

\subsection{Hida families of modular forms} \label{hida-subsec}

We sketch the basics of Hida's theory of families of modular forms; see, \emph{e.g.}, \cite{hida86b}, \cite{hida86a}, \cite[Ch. 7]{hida-elementary} for details and proofs.

\subsubsection{$p$-stabilization of $f$}

Let $f\in S_k(\Gamma_0(N))$ be the newform fixed above. Let us write $f^\sharp(q)=\sum_{n\geq 1}a_n(f^\sharp)q^n\in S_k(\Gamma_0(Np))$ for the ordinary $p$-stabilization of $f$ (see, \emph{e.g.}, \cite[p. 410]{GS} or \cite[\S 2.4]{Vigni}). The cusp form $f^\sharp$ can be characterized as the unique (normalized) $p$-ordinary eigenform of weight $k$ and level divisible by $p$ with the property that $a_n(f^\sharp)=a_n(f)$ except for those $n$ divisible by $p$ (\cite[Lemma 3.3]{hida-measure}).

\subsubsection{Arithmetic primes}

Set $\Gamma\defeq 1+p\Z_p$, choose a finite extension $L$ of $\Q_p$ with valuation ring $\cO_L$ and form the Iwasawa algebra $\Lambda_L\defeq\cO_L[\![\Gamma]\!]$ of $\Gamma$ with coefficients in $\cO_L$; in the following, we will take $L=F_{\p}$. Let $A$ be a finitely generated commutative $\Lambda$-algebra. As in \cite[Definition 2.1]{Howard-Inv}, an $\cO_L$-algebra homomorphism $\kappa :A\rightarrow\bar\Q_p$ is said to be \emph{arithmetic} if the composition 
\[ \Gamma\longrightarrow A^\times\overset\kappa\longrightarrow\bar\Q_p^\times \]
with the canonical map $\Gamma\rightarrow A^\times$ has the form $\gamma\mapsto\psi(\gamma)\gamma^{k-2}$ for some integer $k\geq2$ and some finite order character $\psi$ of $\Gamma$. A prime ideal of $A$ that is the kernel of an arithmetic homomorphism is an \emph{arithmetic prime} of $A$; we write $\mathcal X^\arith(A)$ for the set of such primes. If $\wp$ is an arithmetic prime of $A$ and $A_\wp$ is the localization of $A$ at $\wp$, then the residue field $L_\wp\defeq A_\wp/\wp A_\wp$ is a finite extension of $L$. The composition $\Gamma\rightarrow A^\times\rightarrow L_\wp^\times$ has the form $\gamma\mapsto\psi_\wp(\gamma)\gamma^{r_\wp-2}$ for a finite order character $\psi_\wp:\Gamma\rightarrow L_\wp^\times$ and an integer $r_\wp\geq2$; we call $\psi_\wp$ and $r_\wp$ the \emph{wild character} and the \emph{weight} of $\wp$, respectively. 

\subsubsection{$p$-adic Hida family through $f$}

Let $\mathcal{R}$ denote the branch of the $p$-adic Hida family $\f$ passing through $f$ (or, rather, through $f^\sharp$; \emph{cf.} \cite[\S 2.1]{Howard-Inv}); we briefly explain the terminology, referring to, \emph{e.g.}, \cite{Howard-Inv} for details. The ring $\mathcal R$ is a complete local noetherian domain that is a finite flat $\Lambda_L$-algebra; we write $\m_{\mathcal{R}}$ for the maximal ideal of $\mathcal R$, $\F_{\mathcal R}\defeq\mathcal{R}/\m_\mathcal{R}$ for its residue field, which is finite of characteristic $p$, and $\mathcal{F}\defeq\mathrm{frac}(\mathcal{R})$ for its quotient field. Without loss of generality, we may assume that $\F_{\mathcal{R}}$ is equal to the residue field $\F_L\defeq\cO_L/\pi_L\cO_L$ of $L$, where $\cO_L$ is the valuation ring of $L$ and $\pi_L\in\cO_L$ is a uniformizer. As above, for every $\wp\in\mathcal X^\arith(\mathcal R)$ let $L_\wp\defeq\mathcal R_\wp/\wp\mathcal R_\wp$; moreover, set $s_\wp\defeq\max\bigl\{1,\mathrm{cond}(\psi_\wp)\bigr\}$, where $\mathrm{cond}(\psi_\wp)$ is the conductor of $\psi_\wp$. There exists a formal power series 
\[ \f=\sum_{n\geq1}a_n(\f)q^n\in\mathcal R[\![q]\!] \] 
such that
\begin{itemize}
\item for every $\wp=\ker(\kappa)\in\mathcal X^\arith(\mathcal R)$, the power series 
\[ f_\wp(q)\defeq\sum_{n\geq1}\kappa\bigl(a_n(\f)\bigr)q^n\in L_\wp[\![q]\!] \]
is the $q$-expansion of a cusp form of weight $r_\wp$, level $\Gamma_1(Np^{s_\wp})$ and character $\psi_\wp\omega^{2-r_\wp}$, where $\omega$ is the Teichm\"uller character;
\item there is $\wp _{f^\sharp}\in\mathcal X^\arith(\mathcal R)$ such that $f_{\wp_{f^\sharp}}=f^\sharp$.
\end{itemize}
Finally, denote by $\mathfrak{h}_N^\ord$ Hida's $p$-ordinary Hecke algebra of tame level $N$. The homomorphism $\mathfrak{h}^\ord_N\rightarrow\mathcal{R}$ associated with $\f$ corresponds to a minimal prime ideal $\mathfrak{a}$ of $\mathfrak{h}^\ord_N$, and then $\mathcal{R}$ is the integral closure of $\mathfrak{h}^\ord_N/\mathfrak{a}$ in its quotient field. In particular, $\mathcal R$ is a module over $\mathfrak h_N^\ord$.

\subsection{Big Galois representations} \label{big-galois-subsec}

As in \S \ref{notation-subsec}, let $\bar\Z$ be the ring of integers of $\bar\Q$ and choose a prime ideal $\mathfrak P$ of $\bar\Z$ such that $\mathfrak P\cap\cO_F=\p$. Let us denote by $F_g$ the Hecke field of a given eigenform $g$. Furthermore, notation being as in \S \ref{notation-subsec}, we write $V_g$ for the representation of $G_\Q$ attached to $g$ and the prime $\fP\cap F_g$, then let $V_g^\dagger=V_g(k/2)$ be the self-dual twist of $V_g$, where $k$ is the weight of $g$.

\subsubsection{The representation $\T$}

Let $\T$ denote the representation of $G_\Q$ attached to the Hida family $\f$ from \S \ref{hida-subsec}; this ``big'' Galois representation was constructed by Hida in \cite{hida86b} (\emph{cf.} also \cite[Proposition 2.1.2]{Howard-Inv}). Namely, for all $s\geq1$ let $J_1(Np^s)$ be the Jacobian variety of the (compact) modular curve $X_1(Np^s)$. By Albanese (\emph{i.e.}, covariant) functoriality, the degeneracy maps $X_1(Np^{s+1})\rightarrow X_1(Np^s)$ yield maps $J_1(Np^{s+1})\rightarrow J_1(Np^s)$, which in turn give maps $\Ta_p\bigl(J_1(Np^{s+1})\bigr)\rightarrow\Ta_p\bigl(J_1(Np^s)\bigr)$ between $p$-adic Tate modules. As in \ref{hida-subsec}, let $\mathfrak{h}_N^\ord$ be Hida's $p$-ordinary Hecke algebra of tame level $N$, then define
\begin{equation} \label{big-T-eq}
\T\defeq\biggl(\varprojlim_s\Bigl(\Ta_p\bigl(J_1(Np^s)\bigr)\otimes_{\Z_p}\cO_L\Bigr)^{\!\ord}\biggr)\otimes_{\mathfrak h_N^\ord}\mathcal R, 
\end{equation}
where the superscript ``ord'' indicates ordinary parts, which are cut out by Hida's ordinary projector (see, \emph{e.g.}, \cite[p. 551]{hida86b} and \cite[\S 7.2, Lemma 1]{hida-elementary}). The $\mathcal R$-module $\T$ is equipped with a natural action of $G_\Q$.

\subsubsection{Basic properties of $\T$}

Under standard assumptions on residual representations (\emph{cf.} \S \ref{residual-subsec}), $\T$ is a free $\mathcal R$-module of rank $2$. It satisfies the following crucial property: for every arithmetic prime $\wp$ of $\mathcal R$, the \emph{specialization} $\T_\wp/\wp\T_\wp$ \emph{of $\T$ at $\wp$} is equivalent over $\bar\Q_p$ (\emph{i.e.}, after a finite base change) to the dual (\emph{i.e.}, contragredient) representation $V_{f_\wp}^*$ of the $p$-adic representation $V_{f_\wp}$ of $G_\Q$ attached to $f_\wp$ (see, \emph{e.g.}, \cite[(1.5.5)]{NP}). The representation
\[ \rho_{\f}:G_\Q\longrightarrow\GL(\T)\simeq\GL_2(\mathcal R) \]
is unramified outside $Np$ and 
\[ \tr\bigl(\rho_{\f}(\Frob_\ell)\bigr)=a_\ell(\f) \]
for all prime numbers $\ell\nmid Np$, where $\Frob_\ell$ denotes the conjugacy class in $G_\Q$ of an arithmetic Frobenius at $\ell$ (see, \emph{e.g.}, \cite[Theorem 4.3]{CasHeeg}).

Now let $v$ be a place of $\bar\Q$ above $p$, let $G_v\subset G_\Q$ be the decomposition group at $v$ and let $I_v\subset G_v$ be the inertia subgroup; by \cite[Proposition 2.4.1]{Howard-Inv}, there is a short exact sequence of (left) $\mathcal{R}[G_v]$-modules 
\[ 0\longrightarrow F^+_v(\T)\longrightarrow\T\longrightarrow F^-_v(\T)\longrightarrow0 \] 
in which both $F^+_v(\T)$ and $F^-_v(\T)$ are free of rank $1$ over $\mathcal R$. The group $G_v$ acts on $F^-_v(\T)$ via the unamified character $\eta_v:G_{F_v}/I_v\rightarrow \mathcal{R}^\times$ taking the arithmetic Frobenius to $U_p$, while it acts on $F_v^+(\T)$ via $\eta_v^{-1}\varepsilon_\mathrm{cyc}[\varepsilon_\mathrm{cyc}]$, where $\varepsilon_\mathrm{cyc}:G_\Q\rightarrow\Z_p^\times$ is the $p$-adic cyclotomic character and $z\mapsto[z]$ denotes the inclusion $\Z_p^\times\hookrightarrow\Z_p[\![\Z_p^\times]{\!]}^\times$ of group-like elements (here recall that $\Z_p^\times\simeq\Bmu_{p-1}\times\Gamma$, where $\Bmu_{p-1}\subset\bar\Q_p^\times$ is the group of $(p-1)$-st roots of unity).

\subsection{Critical twist and residual representations} \label{residual-subsec}

Rather than in $\T$ itself, we will be interested in a suitable twist $\T^\dagger$ of $\T$. 

\subsubsection{Critical character and critical twist}

Let us fix a \emph{critical character} $\Theta:G_\Q\rightarrow\Lambda^\times$ as in \cite[Definition 2.1.3]{Howard-Inv}; with notation and terminology as in \cite{Howard-Inv}, in our case $k\equiv2\pmod{2(p-1)}$ and $j=0$, so only the ``wild'' part of $\Theta$ plays a role. We remark that the choice of $\Theta$ amounts to the choice of a square root of $\omega^{k-2}$, \emph{i.e.}, an integer $a$ modulo $p-1$ such that $2a=k-2$; let us fix such a choice once and for all (\emph{cf.} \cite[Remark 2.1.4]{Howard-Inv}). Let $\mathcal R^\dagger$ denote $\mathcal R$ viewed as a module over itself but with $G_\Q$ acting via $\Theta^{-1}$, then define th \emph{critical twist} of $\T$ to be 
\[ \T^\dagger\defeq\T\otimes_{\mathcal R}\mathcal R^\dagger. \]
The twist $\T^\dagger$ has the property that for every arithmetic prime $\wp$ of $\mathcal R$ of weight $k_\wp\equiv2\pmod{2(p-1)}$ and trivial character the specialization $\T^\dagger_\wp\big/\wp\T^\dagger_\wp$ of $\T^\dagger$ at $\wp$ is equivalent to $V^\dagger_{f_\wp}$ after a finite base change (see, \emph{e.g.}, \cite[(3.2.4)]{NP}). As a consequence, there is a specialization map
\[ \T^\dagger\longrightarrow\T^\dagger_\wp\big/\wp\T^\dagger_\wp\simeq V^\dagger_{f_\wp}, \]
which in turn induces specialization maps in cohomology. Summing up, $\T$ and $\T^\dagger$ enjoy the following interpolation properties, up to a finite base change:
\begin{itemize}
\item the specialization of $\T$ at an arithmetic prime $\wp$ of $\mathcal R$ is equivalent to $V_{f_\wp}^*$;
\item if $k_\wp\equiv2\pmod{2(p-1)}$, then the specialization of $\T^\dagger$ at $\wp$ is equivalent to $V_{f_\wp}^\dagger$.
\end{itemize}

\subsubsection{Residual representations}

Define
\[ \bar\T\defeq \T/\mathfrak m_\mathcal R\T=\T\otimes_{\mathcal R}\F_{\mathcal R}, \]
which is a two-dimensional representation of $G_\Q$ over $\F_{\mathcal R}$. As above, let $\wp$ be an arithmetic prime of weight $k_\wp\equiv2\pmod{2(p-1)}$ and trivial character; as in \S \ref{realizations-subsec}, let $T_{f_\wp}$ be the lattice realizing the $\fP$-adic representation attached to $f_\wp$ and let $T^\dagger_{f_\wp}$ be the self-dual twist of $T_{f_\wp}$. As explained, \emph{e.g.}, in \cite[\S 2.2]{Vigni}, we have reduced representations
\[ \bar\rho_{f_\wp}:G_\Q\longrightarrow\GL(\bar T_{f_\wp}),\quad\bar\rho^\dagger_{f_\wp}:G_\Q\longrightarrow\GL\bigl(\bar T^\dagger_{f_\wp}\bigr) \]
and their semi-simplifications
\[ \bar\rho^{\,ss}_{f_\wp}:G_\Q\longrightarrow\GL(\bar T^{\,ss}_{f_\wp}),\quad\bar\rho^{\dagger,ss}_{f_\wp}:G_\Q\longrightarrow\GL\bigl(\bar T^{\dagger,ss}_{f_\wp}\bigr). \]
It turns out that if $\wp$ and $\wp'$ are two arithmetic primes, then 
\begin{equation} \label{semi-simple-eq}
\bar\rho^{\,ss}_{f_\wp}\simeq\bar\rho^{\,ss}_{f_{\wp'}} 
\end{equation}
after a finite base change (\cite[p. 251]{hida86a}). If $\bar\rho_{f_\wp}$ (equivalently, $\bar\rho^{\,ss}_{f_\wp}$) is irreducible and $p$-distinguished for one (hence for every) arithmetic prime $\wp$, then 
\begin{itemize}
\item $\T$ is free of rank $2$ over $\mathcal R$ (\cite[Th\'eor\`eme 7]{mazur-tilouine});
\item $\bar\T\simeq\bar\rho_{f_\wp}$ after a finite base change for all such $\wp$ (see, \emph{e.g.}, \cite[Proposition 5.4]{LV-MM}).
\end{itemize}
Notice that property \eqref{semi-simple-eq} no longer holds unconditionally once $\bar\rho^{\,ss}_{f_\wp}$ and $\bar\rho^{\,ss}_{f_{\wp'}}$ have been replaced by $\bar\rho^{\dagger,ss}_{f_\wp}$ and $\bar\rho^{\dagger,ss}_{f_{\wp'}}$, respectively. However, it is true that if $\wp$ and $\wp'$ are arithmetic primes such that $k_\wp\equiv k_{\wp'}\equiv2\pmod{2(p-1)}$, then $\bar\rho^{\dagger,ss}_{f_\wp}\simeq\bar\rho^{\dagger,ss}_{f_{\wp'}}$ after a finite base change (\emph{cf.} \cite[Remark 2.7]{Vigni}).

Finally, set $\bar\T^\dagger\defeq \T^\dagger/\m_\mathcal R\T^\dagger$. An easy computation shows that $\Theta$ is trivial modulo $\m_\mathcal R$, so there is a canonical identification $\bar\T=\bar\T^\dagger$
of representations of $G_\Q$ over $\F_\mathcal R$. We write $\pi_\mathcal R:\T^\dagger\twoheadrightarrow\bar\T$ for the surjection determined by the identification between $\bar\T$ and $\bar\T^\dagger$, and
\begin{equation} \label{pi-R-L-eq}
\pi_{\mathcal R,L}:H^1\bigl(L,\T^\dagger\bigr)\longrightarrow H^1(L,\bar\T)
\end{equation} 
for the map in cohomology induced functorially by $\pi_{\mathcal R}$, where $L$ is a given number field.

\subsection{Abelian varieties and Kummer maps in weight $2$} \label{abelian-subsec}

Let $f_2$ be the specialization of $\f$ of weight $2$ and trivial character; the cusp form $f_2$ is a $p$-ordinary, $p$-stabilized newform (in the sense of \cite[Definition 2.5]{GS}) of level $Np$ and conductor either $N$ or $Np$. If $f_2$ has conductor $N$, then $f_2$ is the $p$-stabilization of a newform $g$ of weight $2$ and level $N$, otherwise we set $g\defeq f_2$. In both cases, denote by $N_g$ the level (\emph{i.e.}, conductor) of the newform $g$ and write $\sum_{n\geq1}a_n(g)q^n$ for the $q$-expansion of $g\in S_2(\Gamma_0(N_g))$.

\subsubsection{The abelian variety $A_g$}

As in \S \ref{big-galois-subsec}, let $F_g=\Q\bigl(a_n(g)\mid n\geq1\bigr)$ be the Hecke field of $g$; denote by $A_g$ the abelian variety over $\Q$ of conductor $N_g$ attached to $g$ via the Eichler--Shimura construction. Then $A_g$, which arises as a quotient of the Jacobian $J_0(N_g)$ of the modular curve $X_0(N_g)$, has dimension equal to the degree of $F_g$ and is of $\GL_2$-type. Furthermore, the endomorphism ring of $A_g$ is (isomorphic to) the ring of integers $\cO_g$ of $F_g$ and all endomorphisms of $A_g$ are defined over $\Q$.

\subsubsection{Galois representations attached to $g$}

Let $\mathfrak P$ be the prime ideal of $\bar\Z$ from \S \ref{big-galois-subsec}. Set $\bp\defeq \mathfrak P\cap\cO_g$, which is a prime of $F_g$ above $p$, and let $F_{g,\bp}$ be the completion of $F_g$ at $\bp$, whose valuation ring will be denoted by $\cO_{g,\bp}$. Let $\Ta_{\bp}(A_g)$ be the $\bp$-adic Tate module of $A_g$ and let $V_{\bp}(A_g)\defeq \Ta_{\bp}(A_g)\otimes\Q$ be the associated $F_{g,\bp}$-linear representation of $G_\Q$. If $V_{g,\bp}$ denotes the $F_{g,\bp}$-linear Galois representation attached to $g$ then there is an identification
\begin{equation} \label{weight-2-dual-eq}
V_{g,\bp}=H^1_{\text{\'et}}(A_g,F_{g,\bp})\simeq{V_{\bp}(A_g)}^*, 
\end{equation}
where ${V_{\bp}(A_g)}^*\defeq \Hom_{F_{g,\bp}}\bigl(V_{\bp}(A_g),F_{g,\bp}\bigr)$ is the $F_{g,\bp}$-linear dual of $V_{\bp}(A_g)$ equipped with its contragredient $G_\Q$-action (the isomorphism in \eqref{weight-2-dual-eq} follows by combining \cite[Theorem 15.1, (a)]{Milne-AV} with the $G_\Q$-equivariant splitting $\Ta_p(A_g)=\bigoplus_{\pi\mid p}\Ta_\pi(A_g)$, with $\pi$ varying over all the primes of $F_g$ above $p$). By taking the self-dual twist, we obtain
\begin{equation} \label{V-2-eq}
V_{g,\bp}^\dagger=V_{g,\bp}(1)\simeq V_{\bp}(A_g)^*(1)\simeq V_{\bp}(A_g), 
\end{equation}
where the rightmost isomorphism, which we fix once and for all, is a consequence of the Weil pairing. Let us choose a $G_\Q$-stable $\cO_{g,\bp}$-lattice $T_{g,\bp}\subset V_{g,\bp}$ whose Tate twist
\[ T^\dagger_{g,\bp}\defeq T_{g,\bp}\otimes_{\cO_{g,\bp}}\cO_{g,\bp}(1)\subset V^\dagger_{g,\bp} \] 
corresponds to $\Ta_{\bp}(A_g)$ under isomorphism \eqref{V-2-eq}. For any number field $L$, let 
\begin{equation} \label{xi-eq}
\xi_{g,L}:H^1\bigl(L,T^\dagger_{g,\bp}\bigr)\overset\simeq\longrightarrow H^1\bigl(L,\Ta_{\bp}(A_g)\bigr)
\end{equation}
be the isomorphism induced by \eqref{V-2-eq} functorially.

\begin{notation}
From here on, we drop dependence on $\bp$ from our notation and simply write $T^\dagger_g$ (respectively, $V^\dagger_g$) in place of $T^\dagger_{g,\bp}$ (respectively, $V^\dagger_{g,\bp}$).
\end{notation}

\subsubsection{Kummer maps}

Now let $A_g[\bp]$ be the $\bp$-torsion $\cO_g$-submodule of $A_g(\bar\Q)$. For any number field $L$ and every integer $M\geq1$ let 
\begin{equation} \label{pi-g-L-eq}
\pi_{g,L,M}:H^1\bigl(L,\Ta_{\bp}(A_g)\bigr)\longrightarrow H^1\bigl(L,A_g[\bp^M]\bigr) 
\end{equation}
be the map induced by the surjection $\Ta_{\bp}(A_g)\twoheadrightarrow A_g\bigl[\bp^M\bigr]$ and let 
\begin{equation} \label{pi-g-L-eq2}
\delta_{g,L}:A_g(L)\longrightarrow H^1\bigl(L,\Ta_{\bp}(A_g)\bigr),\quad\pi_{g,L,M}\circ\delta_{g,L}: A_g(K)\longrightarrow H^1\bigl(L,A_g[\bp^M]\bigr) 
\end{equation}
be the Kummer maps in Galois cohomology (see, \emph{e.g.}, \cite[Appendix A.1]{GP}). In turn, the second map gives an injection
\begin{equation} \label{delta-bar-eq}
\bar\delta_{g,L,M}:A_g(L)\big/\bp^M A_g(L)\longmono H^1\bigl(L,A_g[\bp^M]\bigr)
\end{equation}
that, in our context, is the weight $2$ counterpart of the map $\iota_{L,M}$ introduced in \eqref{iota_L-eq}. From now on, we shall simply set $\pi_{g,L}\defeq \pi_{g,L,1}$ and $\bar\delta_{g,L}\defeq \bar\delta_{g,L,1}$.

\subsection{Kolyvagin classes in weight $2$} \label{kolyvagin-class-2-subsec}

Let $g=\sum_{n\geq1}a_n(g)q^n\in S_2(\Gamma_0(N_g))$ be the newform from \S \ref{abelian-subsec}. Assume that $p$ splits in $K$, so that the imaginary quadratic field $K$ satisfies the Heegner hypothesis with respect to both $N$ and $Np$.

\subsubsection{Kolyvagin integers}

Let $v_{\bp}$ be the valuation of $F_{g,\bp}$ normalized by declaring that $v_{\bp}$ takes the value $1$ at a uniformizer of $\cO_{g,\bp}$. By analogy with the definition given in \S \ref{kolyvagin-integers-subsec}, using $v_{\bp}$ in place of $v_\p$, one can introduce the sets $\mathcal P_{\Kol}(g)$ of Kolyvagin primes and $\Lambda_{\Kol}(g)$ of Kolyvagin integers for the data $(g,\bp,K)$. Moreover, complex multiplication allows one to define Heegner points $\alpha_c\in A_g(K_c)$ indexed by integers $c\geq1$ coprime to $N$. These points arise by modularity from the Heegner points $x_c\in X_0(N)(K_c)$ appearing in \S \ref{cycles-subsec}. For details, the reader is referred to \cite{Gross}, \cite{Kol-Euler}, \cite{Kol-Log}.

\subsubsection{Weight $2$ Kolyvagin classes}

Using the points $\alpha_n$ instead of the cycles $y_{n,\p}$, and the maps $\bar\delta_{g,K_n,M}$ from \eqref{delta-bar-eq} in place of the maps $\iota_{K_n,M}$, the recipe in \S \ref{kolyvagin-classes-subsec} yields Kolyvagin cohomology classes 
\[ c_M(g,n)\in H^1\bigl(K,A_g[\bp^M]\bigr),\quad d_M(g,n)\in H^1\bigl(K_n,A_g[\bp^M]\bigr)^{\mathcal G_n} \]
for $n\in\Lambda_{\Kol}(g)$ and $M\leq M(g,n)$, and then a Kolyvagin set
\[ \kappa_{g,\infty}\defeq \bigl\{c_M(g,n)\mid n\in\Lambda_{\Kol}(g),\;1\leq M\leq M(g,n)\bigr\} \]
attached to $(g,\bp,K)$. This is the supply of Galois cohomology classes whose non-triviality was predicted (at least when $A_g$ is an elliptic curve) by Kolyvagin in \cite{kolyvagin-selmer} and then confirmed (in a stronger form, under some assumptions) by Zhang in \cite{zhang-selmer} and by Skinner--Zhang in \cite{SZ}. For more details on the construction of $\kappa_{g,\infty}$, see, \emph{e.g.}, \cite[\S 3.7]{zhang-selmer}.

\begin{remark}
By analogy with what was done in \S \ref{kolyvagin-classes-subsec}, we should write $\kappa^{(K)}_{g,\bp,\infty}$ instead of $\kappa_{g,\infty}$. However, since this family of Kolyvagin classes will play only an auxiliary role, we prefer to keep our notation as light as possible.
\end{remark}

\begin{remark}
The results of Zhang on Kolyvagin's conjecture for weight $2$ newforms have recently been extended by Sweeting in \cite{sweeting}.
\end{remark}

In particular, one has 
\begin{equation} \label{c-d-eq2}
c_M(g,n)=0\;\Longleftrightarrow\;d_M(g,n)=0.
\end{equation}
For later use, we prove

\begin{lemma} \label{integers-lemma}
$\Lambda_{\Kol}(g)=\Lambda_{\Kol}(f)$.
\end{lemma}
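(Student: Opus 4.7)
The strategy is to show that the three defining conditions for Kolyvagin primes are equivalent for the pairs $(f,\p,K)$ and $(g,\bp,K)$; since $\Lambda_{\Kol}(\cdot)$ consists of square-free products of such primes, equality of the sets of Kolyvagin primes immediately yields the claim.

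First I would handle the easy conditions. Condition (2) (inertness in $K$) is identical in both cases. Condition (1) asks $\ell\nmid Np$ for $f$ and $\ell\nmid N_g p$ for $g$; since $N_g\in\{N,Np\}$ and $p\nmid N$ by Assumption~\ref{ass}\eqref{p-f-ass}, in either case the set of excluded primes is exactly $\{q \text{ prime}: q\mid Np\}$, so the two conditions coincide.

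The crux is condition (3), the positivity of the Kolyvagin index. The key input is that the mod-$\mathfrak P$ Hecke eigensystems of $f$ and $g$ agree. More precisely, both $f$ and $g$ arise as (twists of) specializations of the Hida family $\f$ at arithmetic primes whose weights are congruent to $2$ modulo $2(p-1)$ (using Assumption~\ref{ass}\eqref{weight-cong-ass}), so by the compatibility of residual representations recalled in \eqref{semi-simple-eq} (together with the observation, made in \S\ref{residual-subsec}, that the critical twist $\Theta$ is trivial modulo $\m_{\mathcal R}$, so the residual representations of $\T$ and of $\T^\dagger$ coincide), one has
\[ \bar\rho_{f,\p}^{\,ss}\simeq\bar\T\simeq\bar\rho_{g,\bp}^{\,ss} \]
after a finite base change. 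Comparing traces of Frobenius at primes $\ell\nmid Np$ gives the congruence $a_\ell(f)\equiv a_\ell(g)\pmod{\mathfrak P}$ in $\bar\Z$. Since $\mathfrak P\cap\cO_F=\p$ and $\mathfrak P\cap\cO_g=\bp$, this translates to
\[ a_\ell(f)\equiv 0\!\!\pmod{\p}\quad\Longleftrightarrow\quad a_\ell(g)\equiv 0\!\!\pmod{\bp}. \]
For the other half of condition (3), $\ell+1$ is a rational integer, so $v_\p(\ell+1)>0$, $v_{\bp}(\ell+1)>0$ and $v_p(\ell+1)>0$ are mutually equivalent. Combining these two equivalences, $M(\ell)>0$ relative to $(f,\p)$ if and only if $M(\ell)>0$ relative to $(g,\bp)$.

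Hence $\mathcal P_{\Kol}(f)=\mathcal P_{\Kol}(g)$, and taking square-free products of the same set of primes gives $\Lambda_{\Kol}(f)=\Lambda_{\Kol}(g)$. There is no real obstacle here: the only subtlety worth flagging is the bookkeeping that ensures the prime $\mathfrak P$ chosen in \S\ref{big-galois-subsec} lies simultaneously above $\p$ and $\bp$, so that reductions of $a_\ell(f)$ and $a_\ell(g)$ can legitimately be compared in the same residue field.
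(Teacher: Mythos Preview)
Your proof is correct and follows essentially the same approach as the paper: both reduce to showing $\mathcal P_{\Kol}(f)=\mathcal P_{\Kol}(g)$ by using the congruence $a_\ell(f)\equiv a_\ell(g)\pmod{\mathfrak P}$ together with the trivial equivalence $v_\p(\ell+1)>0\Leftrightarrow v_{\bp}(\ell+1)>0$. The paper simply cites this congruence from Hida's work, whereas you derive it by comparing residual representations along the Hida family; you also make condition~(1) explicit, which the paper leaves implicit.
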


\begin{proof} Let $\ell$ be a prime number. Clearly, $v_\p(\ell+1)>0$ if and only if $v_{\bp}(\ell+1)>0$. On the other hand, there is a congruence
\begin{equation} \label{cong-fourier-eq}
a_\ell(f)\equiv a_\ell(g)\pmod{\mathfrak P} 
\end{equation}
(\emph{cf.} \cite[p. 251]{hida86a}), which immediately implies that $v_\p\bigl(a_\ell(f)\bigr)>0$ if and only if $v_{\bp}\bigl(a_\ell(g)\bigr)>0$. This shows that $\mathcal P_{\Kol}(f)=\mathcal P_{\Kol}(g)$, and the lemma is proved. \end{proof}

\subsection{Distinguished specialization maps} \label{distinguished-subsec}

Let $\wp_f$ be the arithmetic prime of $\mathcal R$ such that $f_{\wp_f}=f^\sharp$. Similarly, with notation as in \S \ref{abelian-subsec}, let $\wp_g$ be the arithmetic prime of $\mathcal R$ such that $f_{\wp_g}=g$. Let $\star\in\{f,g\}$. 

\subsubsection{Specializations and reductions}

Following Ota, we fix once and for all the $G_\Q$-equivariant specialization map
\begin{equation} \label{specialization-eq2}
\sspp^\star_0:\T^\dagger\longrightarrow T^\dagger_\star
\end{equation}
that is described in \cite[\S 2.6]{Ota-JNT}. This map factors through the surjection $\T^\dagger\twoheadrightarrow\T^\dagger/\wp_\star\T^\dagger$ and induces, possibly after a finite base change, an isomorphism
\begin{equation} \label{specialization-V-eq}
\sspp^\star_0:\T^\dagger_{\wp_\star}\big/\wp_\star\T^\dagger_{\wp_\star}\overset\simeq\longrightarrow V^\dagger_\star 
\end{equation}
of representations of $G_\Q$. Note that $\T^\dagger/\wp_\star\T^\dagger$ sits as an $\mathcal R/\wp_\star$-lattice inside $\T^\dagger_{\wp_\star}\big/\wp_\star\T^\dagger_{\wp_\star}$. As explained, \emph{e.g.}, in \cite[\S 5.1]{Vigni}, it is not restrictive to assume that isomorphism \eqref{specialization-V-eq} determines, up to finite base change (\emph{i.e.}, over $\bar\F_p$), an isomorphism
\begin{equation} \label{specialization-eq3}
\bsspp^\star_0:\bar\T\overset\simeq\longrightarrow\bar T_\star^\dagger
\end{equation}
of representations of $G_\Q$ that makes the square
\begin{equation} \label{specialization-square}
\xymatrix@C=35pt{\T^\dagger\ar[r]^-{\sspp^\star_0}\ar@{->>}[d]^-{\pi_{\mathcal R}}&T^\dagger_\star\ar@{->>}[d]\\
                   \bar\T\ar[r]^-{\bsspp^\star_0}_-\simeq&\bar T_\star^\dagger}
\end{equation}
commute (\emph{cf.} also \cite[\S 7.3]{KLZ}); here $\pi_{\mathcal R}$ is, as before, given by reduction modulo $\mathfrak m_\mathcal R$ and the right vertical arrow is the canonical surjection. 

\subsubsection{Specializations and cohomology}

By functoriality, for any number field $L$ the maps in \eqref{specialization-eq2} and \eqref{specialization-eq3} determine maps
\begin{equation} \label{specialization-eq5}
\sspp^\star_{0,L}:H^1(L,\T^\dagger)\longrightarrow H^1\bigl(L,T^\dagger_\star\bigr),\quad\bsspp^\star_{0,L}:H^1(L,\bar\T)\overset\simeq\longrightarrow H^1\bigl(L,\bar T^\dagger_\star\bigr),
\end{equation}
and then the square in \eqref{specialization-square} gives a commutative square
\begin{equation} \label{specialization-square2}
\xymatrix@R=32pt@C=37pt{H^1(L,\T^\dagger)\ar[r]^-{\sspp^\star_{0,L}}\ar[d]^-{\pi_{\mathcal R,L}}&H^1\bigl(L,T^\dagger_\star\bigr)\ar[d]^-{\varpi_{\star,L}}\\
                   H^1(L,\bar\T)\ar[r]^-{\bsspp^\star_{0,L}}_-\simeq&H^1\bigl(L,\bar T_\star^\dagger\bigr)}
\end{equation}
in Galois cohomology, where $\pi_{\mathcal R,L}$ is the map in \eqref{pi-R-L-eq} and $\varpi_{\star,L}$ is induced functorially by the surjection $T^\dagger_\star\twoheadrightarrow\bar T_\star^\dagger$.

\subsection{Big Heegner points in Hida families} \label{big-heegner-subsec}

Let $K$ be the imaginary quadratic field from \S \ref{kolyvagin-class-2-subsec}; the ring class fields of $K$ will be denoted as in \S \ref{cycles-subsec}. 

\subsubsection{Heegner points in towers of modular curves}

Following a recipe described by Castella in \cite{CasHeeg}, which refines a construction originally proposed by Howard in \cite{Howard-Inv}, we consider the tower of modular curves $\tilde X_s\defeq X_1(Np^s)$ for integers $s\geq1$. Let $n\geq1$ be an integer coprime to $Np$. As in \cite[\S 4.2]{CasHeeg}, one can define Heegner points $P_{n,s}\in\tilde X_s\bigl(\tilde{L}_{np^s}(\Bmu_{p^s})\bigr)$, where $\Bmu_{p^s}$ is the set of $p^s$-th roots of unity (in a fixed algebraic closure of $K$) and $\tilde{L}_{np^s}$ is the compositum of $K_{np^s}$ and the ray class field of $K$ of conductor $\mathcal{N}$; here $\mathcal{N}$ is the ideal of $\cO_K$ with $\cO_K/\mathcal{N}\simeq\Z/N\Z$ appearing in the construction of those Heegner points (\emph{cf.} \cite[\S 2.5]{CasHeeg}): we fix it as in \S \ref{cycles-subsec}. 

\subsubsection{Big Heegner points}

As in \S \ref{hida-subsec}, let $\mathfrak{h}_N^\ord$ be Hida's ordinary Hecke algebra of tame level $N$. Let $\Ta_p^\ord\bigl(\tilde J_s\bigr)$ be the ordinary part of the $p$-adic Tate module of the Jacobian variety 
$\tilde J_s$ of $\tilde X_s$ and define $\T_s^\dagger\defeq \Ta_p^\ord(\tilde J_s)\otimes_{\mathfrak{h}_N^\ord}\mathcal{R}^\dagger$. For any number field $L$ denote by $\mathfrak G_L$ the Galois group over $L$ of the maximal extension of $L$ unramified outside the primes above $Np$. Applying the twisted Kummer map defined in \cite[p. 101]{Howard-Inv} to $P_{n,s}$ gives rise to a cohomology class in $H^1\bigl(\mathfrak G_{\tilde{L}_{np^s}(\Bmu_{p^s})},\T^\dagger_s\bigr)$; inflating and then corestricting from $\tilde{L}_{np^s}(\Bmu_{p^s})$ to $K_n$, we get a class $\mathcal{P}_{n,s}\in H^1\bigl({K_{n}},\T^\dagger_s\bigr)$. These classes satisfy the compatibility relation 
\[ \alpha_{s,*}(\mathcal{P}_{n,s})=U_p\cdot\mathcal{P}_{n,s-1} \]
for all $s\geq2$, where $\alpha_s:\tilde X_s\rightarrow\tilde X_{s-1}$ is the natural covering map and the map $\alpha_{s,*}$ is induced functorially by $\alpha_s$ in cohomology. The \emph{big Heegner point of conductor $n$} is then
\begin{equation} \label{big-heegner-eq}
\mathfrak{X}_n\defeq\varprojlim_{s}U_p^{-s}\cdot\mathcal{P}_{n,s}\in H^1(K_n,\T ^\dagger); 
\end{equation}
observe that this expression does indeed make sense because the Hecke operator $U_p$ acts invertibly on ordinary submodules.

\subsection{Specializations of big Heegner points} \label{big-specialization-subsec}

Let $c\geq1$ be an integer coprime to $ND_Kp$. Let $\alpha_c$ and $y_c$ be the Heegner point and the Heegner cycle of conductor $c$ introduced in \S \ref{kolyvagin-class-2-subsec} and \S \ref{cycles-subsec}, respectively. Finally, let $\mathfrak X_c$ be the big Heegner point of conductor $c$ from \eqref{big-heegner-eq}. We identify (big) Heegner points and Heegner cycles with their images via the natural group homomorphisms
\[ H^1(L,M)\longrightarrow H^1\bigl(L,M\otimes\bar\Z_p\bigr),\quad H^1(L,M')\longrightarrow H^1\bigl(L,M'\otimes\bar\F_p\bigr) \]
where $M\in\bigl\{\Ta_{\bp}(A_g),T^\dagger_g,\T^\dagger,T^\dagger_f\bigr\}$, $M'\in\bigl\{A_g[\bp],\bar T^\dagger_g,\bar\T,\bar T^\dagger_f\bigr\}$ and $L$ is either $K$ or a ring class field of $K$. Recall the maps
\[ \sspp^f_{0,K_c}:H^1(K_c,\T^\dagger)\rightarrow H^1\bigl(K_c,T^\dagger_\star\bigr),\quad\xi_{g,K_c}\circ\sspp^g_{0,K_c}:H^1(K_c,\T^\dagger)\rightarrow H^1\bigl(K_c,\Ta_{\bp}(A_g)\bigr) \]
from \S \ref{abelian-subsec} and \S \ref{distinguished-subsec}. It is natural to consider $\sspp^f_{0,K_c}(\mathfrak X_c)$ and $\xi_{g,K_c}\bigl(\sspp^g_{0,K_c}(\mathfrak X_c)\bigr)$ and compare them to Heegner cycles and to Heegner points, respectively. The results we are interested in are due to Howard (in weight $2$, \cite{Howard-derivatives}), Castella (\cite{CasHeeg}) and Ota (\cite{Ota-JNT}). 

\subsubsection{Weight $2$ specialization: the case $N_g=N$} \label{weight-2-subsubsec1}

This is the case covered by the results of Castella (\cite{CasHeeg}) and of Ota (\cite{Ota-JNT}): see part (2) of Theorem \ref{c-o-thm}.

\subsubsection{Weight $2$ specialization: the case $N_g=Np$} \label{weight-2-subsubsec2}

In this case, the comparison between $\xi_{g,K_c}\bigl(\sspp^g_{0,K_c}(\mathfrak X_c)\bigr)$ and 
Heegner points is easier; it is carried out, albeit somewhat in disguise, by Howard in \cite[Section 3]{Howard-derivatives}. To explain this result, including the fact that the big Heegner points considered in \cite{CasHeeg} differ slightly from those originally defined in \cite{Howard-Inv}, we proceed as follows (the actual relation between Howard's classes and Castella's is immaterial for our goals, so we shall be quite brief and simply outline the arguments). 

Notation from \S \ref{abelian-subsec} is in force. For all $s\geq1$ set $X_{0,1}(N,p^s)\defeq X\bigl(\Gamma_0(N)\cap\Gamma_1(p^s)\bigr)$ and write $J_{0,1}(N,p^s)$ for the Jacobian variety of $X_{0,1}(N,p^s)$; there are obvious degeneracy maps $X_{0,1}(N,p^{s+1})\rightarrow X_{0,1}(N,p^s)$ that yield maps between $p$-adic Tate modules of Jacobians. Set
\[ \widetilde\T\defeq\biggl(\varprojlim_s\Bigl(\Ta_p\bigl(J_{0,1}(N,p^s)\bigr)\otimes_{\Z_p}\cO_L\Bigr)^{\!\ord}\biggr)\otimes_{\mathfrak h_N^\ord}\mathcal R. \]
This is the big Galois representation considered by Howard in \cite{Howard-Inv}; its critical twist $\widetilde\T^\dagger$ is defined as in \S \ref{residual-subsec}. The canonical degeneracy maps $X_1(Np^s)\rightarrow X_{0,1}(N,p^s)$ induce maps between $p$-adic Tate modules of Jacobians and a natural map $\T^\dagger\rightarrow\widetilde\T^\dagger$ of representations of $G_\Q$, where $\T$ is defined in \eqref{big-T-eq}. By functoriality, for any number field $\mathcal K$ we get a map
\begin{equation} \label{big-T-comparison-eq}
\Xi_{\mathcal K}:H^1(\mathcal K,\T^\dagger)\longrightarrow H^1\bigl(\mathcal K,\widetilde\T^\dagger\bigr). 
\end{equation}
Let
\[ \tilde\xi_{g,\mathcal K}\circ\widetilde\sspp^g_{0,\mathcal K}:H^1\bigl(\mathcal K,\widetilde\T^\dagger\bigr)\longrightarrow H^1\bigl(\mathcal K,\Ta_{\bp}(A_g)\bigr) \]
be the analogue of $\xi_{g,\mathcal K}\circ\sspp^g_{0,\mathcal K}$ for the representation $\widetilde\T^\dagger$; then
\begin{equation} \label{kummer-comp-eq}
\xi_{g,\mathcal K}\circ\sspp^g_{0,\mathcal K}=\tilde\xi_{g,\mathcal K}\circ\widetilde\sspp^g_{0,\mathcal K}\circ\Xi_{\mathcal K}. 
\end{equation}
Given $c\geq1$ coprime to $Np$, denote by $\mathfrak{X}^\mathrm{How}_c\in H^1\bigl(K_c,\widetilde\T^\dagger\bigr)$ the image under inflation of Howard's original big Heegner point of conductor $c$ (\cite[Definition 2.2.3]{Howard-Inv}). As remarked in the proof of \cite[Proposition 4.4]{CasHeeg}, the Heegner points used in \cite{CasHeeg}, which live on the tower of modular curves $X_1(Np^s)$, project to those from \cite{Howard-Inv}, which live on the tower of modular curves $X_{0,1}(N,p^s)$. On the other hand, the constructions in \cite{CasHeeg} and \cite{Howard-Inv} are compatible with projections, and then one can check that 
\begin{equation} \label{big-heegner-comp-eq}
\Xi_{K_c}(\mathfrak X_c)=\mathfrak{X}^\mathrm{How}_c, 
\end{equation}
where $\Xi_{K_c}$ is the map in \eqref{big-T-comparison-eq} with $\mathcal K=K_c$.

Now, for all $s\geq1$ embed $X_0(N_g)$ into its Jacobian $J_0(N_g)$ by sending the cusp $\infty$ to $0$. Since $g$ has trivial character, the degeneracy maps $X_1(N_g)\rightarrow X_{0,1}(N_g)\overset\vartheta\longrightarrow X_0(N_g)$ induce by covariant functoriality a commutative diagram
\[
\xymatrix@C=50pt@R=22pt{J_1(N_g)\ar[rd]\ar[d]&\\
J_{0,1}(N,p)\ar[r]\ar[d]^-{\vartheta_*}& A_g,\\
J_0(Ng)\ar[ru]}
\]
where the horizontal arrow is defined as the composition of the maps in the lower triangle. Denote by $\tilde{x}_{cp}\in X_{0,1}(N,p)\bigl(K_{cp}(\Bmu_p)\bigr)$ the Heegner point that appears in \cite[p. 98]{Howard-Inv} in the construction of $\mathfrak{X}^\mathrm{How}_c$. If $x_{cp}\in X_0(N_g)(K_{cp})$ is the Heegner point of conductor $cp$ from \S \ref{cycles-subsec} and \S \ref{kolyvagin-class-2-subsec}, then it is not restrictive to assume that $x_{cp}=\vartheta(\tilde x_{cp})$. It turns out that, since $g$ has trivial character, $\tilde\xi_{g,K_c}\bigl(\widetilde\sspp^g_{0,K_c}(\mathfrak{X}^\mathrm{How}_c)\bigr)$ is the image under the Kummer map of the trace 
\begin{equation} \label{big-trace-eq}
\sum_{\sigma\in\Gal(K_{cp}(\Bmu_p)/K_c)}\bigl([\tilde{x}_{cp}]-[\infty]\bigr)^\sigma 
\end{equation}
(\emph{cf.} \cite[eq. (7)]{Howard-Inv}). On the other hand, $x_{cp}$ is rational over $K_{cp}$, so the image in $J_0(N_g)$ via $\vartheta_\ast$ of the divisor class in \eqref{big-trace-eq} is 
\begin{equation} \label{big-trace-eq2}
(p-1)\cdot\sum_{\sigma\in\Gal(K_{cp}/K_c)}\bigl([x_{cp}]-[\infty]\bigr)^\sigma. 
\end{equation}
As explained, \emph{e.g.}, in \cite[\S2.4]{BD}, there is an equality
\[ \sum_{\sigma\in\Gal(K_{cp}/K_c)}\bigl([x_{cp}]-[\infty]\bigr)^\sigma=(U_p-1)\bigl([x_c]-[\infty]\bigr), \]
where $x_c\in X_0(N_g)(K_c)$ is the Heegner point of conductor $c$ introduced in \S \ref{cycles-subsec} and $U_p$ is the usual Hecke operator. It follows that the image of \eqref{big-trace-eq2} in $A_g$ is 
\[ \bigl(a_p(g)-1\bigr)\cdot(p-1)\cdot\alpha_c. \]
By part (\ref{non-cong-ass}) of Assumption \ref{ass}, $a_p(f)\not\equiv1\pmod{\p}$, so $a_p(g)\not\equiv1\pmod{\bp}$, by congruence \eqref{cong-fourier-eq}. Finally, combining all these facts with \eqref{kummer-comp-eq} for $\mathcal K=K_c$ and \eqref{big-heegner-comp-eq}, a computation shows that
\begin{equation} \label{weight-2-specialization-eq}
\xi_{g,K_c}\bigl(\sspp^g_{0,K_c}(\mathfrak X_c)\bigr)=d\cdot\delta_{g,K_c}(\alpha_c) 
\end{equation}
for a suitable $d=d(c)\in\bar\Z_p^\times$, where $\delta_{g,K_c}$ is the Kummer map from \eqref{pi-g-L-eq2}.

\subsubsection{The specialization theorem}

We keep notation from \eqref{pi-g-L-eq2} in force. The next result will play a crucial role in our proof of Kolyvagin's conjecture for $f$.

\begin{theorem}[Castella, Howard, Ota] \label{c-o-thm}
There exist $d=d(c)\in\bar\Z_p^\times$ and $e=e(c)\in\bar\Z_p^\times$ such that 
\begin{enumerate}
\item $\xi_{g,K_c}\bigl(\sspp^g_{0,K_c}(\mathfrak X_c)\bigr)=d\cdot\delta_{g,K_c}(\alpha_c)$;
\item $\sspp^f_{0,K_c}(\mathfrak X_c)=e\cdot y_{c,\p}$.
\end{enumerate}   
\end{theorem}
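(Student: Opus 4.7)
The plan is to treat the two parts separately, handling part (1) by splitting along the two cases distinguished in \S \ref{weight-2-subsubsec1} and \S \ref{weight-2-subsubsec2}, then deducing part (2) from Ota's higher weight specialization theorem.

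For part (1), the key dichotomy is whether $N_g = N$ or $N_g = Np$. When $N_g = N$, meaning $f_2$ is the $p$-stabilization of a newform $g$ of level $N$, the statement is essentially the content of the main comparison theorem of Castella in \cite{CasHeeg} combined with Ota's reformulation in \cite{Ota-JNT}: both papers show that the weight two specialization of $\mathfrak X_c$ via $\xi_{g,K_c}\circ\sspp^g_{0,K_c}$ equals, up to a unit in $\bar\Z_p^\times$, the Kummer image $\delta_{g,K_c}(\alpha_c)$ of the classical Heegner point $\alpha_c$. The unit $d = d(c)$ here collects the explicit periods and Euler factors appearing in those works (in particular the factor involving $a_p(g)$ arising from the passage from $\Gamma_1(Np^s)$-level Heegner points to $\Gamma_0(N)$-level ones). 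One simply has to verify that the choice of specialization map \eqref{specialization-eq2} fixed above agrees, up to a unit, with the one implicit in Castella--Ota; this is a standard comparison using that both realizations of $V_g^\dagger$ differ by a finite base change (\emph{cf.} \cite[\S 5.1]{Vigni}).

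When $N_g = Np$, the required identity is precisely formula \eqref{weight-2-specialization-eq}, which was already derived in \S \ref{weight-2-subsubsec2}. To make this rigorous, I would simply cite the computation carried out there: it factors $\xi_{g,K_c}\circ\sspp^g_{0,K_c}$ through the map $\Xi_{K_c}:H^1(K_c,\T^\dagger)\to H^1(K_c,\widetilde\T^\dagger)$ of \eqref{big-T-comparison-eq}, invokes Howard's original computation in \cite[\S 3]{Howard-derivatives} of the specialization of $\mathfrak X^{\mathrm{How}}_c$, and uses the equality \eqref{big-heegner-comp-eq} between Castella's and Howard's big Heegner points. The explicit trace computation yielding \eqref{big-trace-eq2} produces the unit $d = d(c) \in \bar\Z_p^\times$, with the crucial input that $a_p(g)-1$ is a unit in $\bar\Z_p$; this uses congruence \eqref{cong-fourier-eq} together with part \eqref{non-cong-ass} of Assumption \ref{ass}, which ensures $a_p(f)\not\equiv 1 \pmod{\p}$.

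For part (2), the higher weight statement, I would invoke Ota's specialization theorem \cite{Ota-JNT}, which is the direct higher-weight counterpart of the above. Concretely, Ota shows that under the specialization map $\sspp^f_{0,K_c}$, the big Heegner point $\mathfrak X_c$ maps, up to a $\bar\Z_p^\times$-multiple, to the arithmetic Heegner cycle $y_{c,\p}$ defined via the $p$-adic Abel--Jacobi map on the Kuga--Sato variety $X$. The unit $e = e(c)$ arises from the comparison between the Tate module of $J_1(Np^s)$ appearing in the definition of $\T$ and the \'etale realization of the motive $\MM$ (which differs from Scholl's projector by an invertible factor due to the conventions \eqref{J-split-eq2} on splitting $J_\p$), together with the normalization of the Kummer-type map $\iota_{K_c,M}$ in \eqref{iota_L-eq}. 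Again, one must check compatibility of the specialization map fixed in \eqref{specialization-eq2} with Ota's conventions; the fact that this agreement holds up to a $\bar\Z_p^\times$-scalar is again standard.

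The main technical obstacle in turning this plan into a fully written proof is the careful bookkeeping of units: the various unit factors arising from (i) the two possible splittings $J_1(Np^s)$ vs.~$J_{0,1}(N,p^s)$, (ii) the comparison of specialization maps up to finite base change, (iii) Euler factors at $p$ produced by trace and degeneracy relations, and (iv) conventions on Kummer maps and Abel--Jacobi maps. None of these steps is conceptually hard given the cited theorems of Castella, Howard, and Ota, but verifying that everything patches together so that $d(c)$ and $e(c)$ really lie in $\bar\Z_p^\times$ (rather than only in $\bar\Q_p^\times$) is where the invertibility hypotheses of Assumption \ref{ass}---especially conditions \eqref{p-f-ass}, \eqref{ordinary-ass}, and \eqref{non-cong-ass}---are essential.
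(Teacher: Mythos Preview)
Your proposal is correct and follows essentially the same approach as the paper: both split part (1) according to whether $N_g=N$ or $N_g=Np$, invoking Castella's specialization result (with Ota's refinement) in the first case and the explicit computation leading to \eqref{weight-2-specialization-eq} in the second, and both derive part (2) from the Castella--Ota higher weight specialization theorem. The only minor difference is bibliographic: the paper cites \cite[Theorem 6.5]{CasHeeg} as the primary source for part (2), with \cite[Theorem 1.2]{Ota-JNT} mentioned as a refinement that works under the slightly weaker hypotheses in force here, whereas you attribute part (2) primarily to Ota.
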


\begin{proof} Part (2) is \cite[Theorem 6.5]{CasHeeg}, while part (1) follows by the same arguments if $N_g=N$ (\emph{cf.} \cite[Remark 6.6]{CasHeeg}) and by the arguments sketched in    \S \ref{weight-2-subsubsec2} if $N_g=Np$ (\emph{cf.} equality \eqref{weight-2-specialization-eq}). Note that, in our setting, the constant appearing in \cite[Theorem 6.5]{CasHeeg} is a $p$-adic unit. See also \cite[Theorem 1.2]{Ota-JNT} for a refinement of the main result of \cite{CasHeeg} that works under our assumptions, which are slightly weaker than those in \cite{CasHeeg}. \end{proof}
 
\subsection{Big Kolyvagin classes} \label{big-kolyvagin-subsec}

For every integer $c\geq1$ coprime to $N$ let $\mathfrak X_c\in H^1(K_c,\T^\dagger)$ be the big Heegner point of conductor $c$ from \eqref{big-heegner-eq}. With notation as in \S \ref{kolyvagin-classes-subsec}, for every $n\in\Lambda_{\Kol}(f)$ define the \emph{big Kolyvagin class} of conductor $n$ as 
\begin{equation} \label{big-class-eq}
d(\f,n)\defeq \sum_{\sigma\in\mathcal G}\sigma\bigl(D_n(\mathfrak X_n)\bigr)\in H^1(K_n,\T^\dagger). 
\end{equation}
As we shall see in \S \ref{kolyvagin-proof-subsec}, the classes $d(\f,n)$ will be crucially used to prove, under suitable assumptions, Conjecture \ref{strong-kolyvagin-conj}. 

\subsection{Proof of Kolyvagin's conjecture} \label{kolyvagin-proof-subsec}

Let $\kappa^\st_{f,\infty}$ be the strict Kolyvagin set that was attached to $f$, $\p$, $K$ in \eqref{Kolsys-strict}. We are in a position to establish (under our running assumptions) Conjecture \ref{strong-kolyvagin-conj} for $f$; as a consequence, we deduce Conjecture \ref{kolyvagin-conj} for $f$, thus proving Theorem C.

\begin{theorem} \label{kolyvagin-main-thm}
There exists $n_0\in\Lambda_{\Kol}(f)$ such that $c_1(f,n_0)\neq0$. In particular, $\kappa^\st_{f,\infty}\not=\{0\}$.
\end{theorem}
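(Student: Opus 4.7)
The plan is to use the big Kolyvagin classes $d(\f, n) \in H^1(K_n, \T^\dagger)$ defined in \eqref{big-class-eq} as a bridge between Kolyvagin's conjecture in weight $2$ (known by work of Zhang and Skinner--Zhang) and the desired statement in weight $k$. By Lemma \ref{integers-lemma}, $\Lambda_{\Kol}(f) = \Lambda_{\Kol}(g)$, so the indexing sets coincide. Thus I would first invoke the weight $2$ result for $g$ (and the prime $\bp$) to produce some $n_0 \in \Lambda_{\Kol}(g) = \Lambda_{\Kol}(f)$ such that $c_1(g, n_0) \neq 0$ in $H^1(K, A_g[\bp])$. By \eqref{c-d-eq2}, equivalently $d_1(g, n_0) \neq 0$ in $H^1(K_{n_0}, A_g[\bp])^{\mathcal G_{n_0}}$.

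Next, I would study the specialization of $d(\f, n_0)$ at both arithmetic primes $\wp_g$ and $\wp_f$. Since the Kolyvagin derivative operators $D_n$ and the trace $\sum_{\sigma \in \mathcal G} \sigma$ are $G_\Q$-equivariant and functorial in the coefficient module, applying $\sspp^g_{0,K_{n_0}}$ to $d(\f, n_0)$ gives precisely the corresponding derived class built out of $\sspp^g_{0,K_{n_0}}(\mathfrak{X}_{n_0})$. By part (1) of Theorem \ref{c-o-thm}, $\xi_{g,K_{n_0}}\bigl(\sspp^g_{0,K_{n_0}}(\mathfrak{X}_{n_0})\bigr) = d \cdot \delta_{g,K_{n_0}}(\alpha_{n_0})$ for some $d \in \bar\Z_p^\times$. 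Since $d$ is a unit and the derivative construction commutes with the Kummer map $\delta_{g,K_{n_0}}$, reducing modulo $\bp$ and using \eqref{delta-bar-eq} identifies the image of $\sspp^g_{0,K_{n_0}}(d(\f, n_0))$ in $H^1(K_{n_0}, \bar T_g^\dagger)$ with a $\bar\Z_p^\times$-multiple of $d_1(g, n_0)$ (transported via $\xi_{g,K_{n_0}}$). In particular, by the commutative square \eqref{specialization-square2} applied with $\star = g$, the class $\pi_{\mathcal R, K_{n_0}}(d(\f, n_0)) \in H^1(K_{n_0}, \bar\T)$ is non-zero, because its image under the isomorphism $\bsspp^g_{0, K_{n_0}}$ is (a unit multiple of) the non-zero class $d_1(g, n_0)$ up to the identification $\bar\T \simeq \bar T_g^\dagger$ over $\bar\F_p$.

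Now I would run the same commutative square \eqref{specialization-square2} with $\star = f$. The non-vanishing of $\pi_{\mathcal R, K_{n_0}}(d(\f, n_0))$ in $H^1(K_{n_0}, \bar\T)$, combined with the isomorphism $\bsspp^f_{0,K_{n_0}}$, forces $\varpi_{f, K_{n_0}}\bigl(\sspp^f_{0,K_{n_0}}(d(\f, n_0))\bigr) \neq 0$ in $H^1(K_{n_0}, \bar T_f^\dagger)$. By part (2) of Theorem \ref{c-o-thm} and the functoriality of the Kolyvagin derivative construction, $\sspp^f_{0, K_{n_0}}(d(\f, n_0)) = e \cdot z_{n_0, \p}$ (up to identification between the target of $\sspp^f_0$ and $T_\p^\dagger$), where $e \in \bar\Z_p^\times$. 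Reducing modulo $\p$ and passing through the injection $\iota_{K_{n_0}, 1}$ of \eqref{iota_L-eq} (which is compatible with the reduction $T_\p^\dagger \twoheadrightarrow \bar T_\p^\dagger$ by construction of Kummer-type maps), this yields $d_1(f, n_0) \neq 0$, and hence $c_1(f, n_0) \neq 0$ by \eqref{c-d-eq}.

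The main obstacle is verifying the last compatibility rigorously: one must check that the specialization maps $\sspp^\star_{0,L}$ together with the subsequent reductions modulo $\p$ and $\bp$ intertwine the two different constructions of mod-$p$ Kolyvagin classes (the one in weight $k$ going through the higher Abel--Jacobi map $\iota_{L,M}$, and the one in weight $2$ going through the usual Kummer map $\bar\delta_{g,L,M}$) in a way that corresponds precisely, up to $\bar\Z_p^\times$-multiples, under the fixed residual identifications $\bar\T \simeq \bar T_f^\dagger$ and $\bar\T \simeq \bar T_g^\dagger$. This requires careful bookkeeping of how the derivative operators $D_n$ commute with $\sspp^\star_{0,\bullet}$ and with the various Kummer/Abel--Jacobi maps, and it is here that the $p$-distinguishedness of $\bar\rho_\p^{ss}$ (Proposition \ref{p-distinguished-prop}), the residual irreducibility, and the hypothesis $k \equiv 2 \pmod{2(p-1)}$ (which ensures the correct form of $\Theta$ and the identification $\bar\T = \bar\T^\dagger$) all enter decisively.
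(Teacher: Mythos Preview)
Your proposal is correct and follows essentially the same approach as the paper. The paper packages the two applications of square \eqref{specialization-square2} (for $\star=g$ and $\star=f$) into a single large commutative diagram \eqref{commutative-eq}, and then carries out explicitly the compatibility checks you flag as the ``main obstacle'': the verification that $\varpi_{f,K_n}\bigl(\sspp^f_{0,K_n}(d(\f,n))\bigr)=\bar e\cdot d_1(f,n)$ and $\pi_{g,K_n}\bigl(\xi_{g,K_n}(\sspp^g_{0,K_n}(d(\f,n)))\bigr)=\bar d\cdot d_1(g,n)$ are done directly (equations \eqref{reduction-eq1} and \eqref{reduction-eq3}), using Galois-equivariance of the specialization maps and compatibility of $\varpi_{f,K_n}$ with $\iota_{K_n,1}$, exactly as you anticipate.
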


\begin{proof} For any number field $L$, square \eqref{specialization-square2} gives a commutative diagram
\begin{equation} \label{commutative-eq}
\xymatrix@R=32pt@C=42pt{H^1\bigl(L,\Ta_{\bp}(A_g)\bigr)\ar[d]^-{\pi_{g,L}}&H^1\bigl(L,T^\dagger_g\bigr)\ar[d]^-{\varpi_{g,L}}\ar[l]^-\simeq_-{\xi_{g,L}}&H^1(L,\T^\dagger)\ar[r]^-{\sspp^f_{0,L}}\ar[l]_-{\sspp^g_{0,L}}\ar[d]^-{\pi_{\mathcal R,L}}&H^1\bigl(L,T^\dagger_f\bigr)\ar[d]^-{\varpi_{f,L}}\\H^1\bigl(L,A_g[\bp]\bigr)&H^1\bigl(L,\bar T^\dagger_g\bigr)\ar[l]^-\simeq_-{\bar\xi_{g,L}}&
                   H^1(L,\bar\T)\ar[l]_-{\bsspp^g_{0,L}}^-\simeq&H^1\bigl(L,\bar T_f^\dagger\bigr)\ar[l]_-{\bigl(\bsspp^f_{0,L}\bigr)^{-1}}^-\simeq} 
\end{equation}
in which $\xi_{g,L}$ is as in \eqref{xi-eq}, while the central and right horizontal maps are those appearing in \eqref{specialization-eq5} and the vertical ones are as in \eqref{pi-g-L-eq}, \eqref{pi-R-L-eq} and \S \ref{distinguished-subsec}. We remark that, although our notation does not reflect this, all characteristic $0$ (respectively, residual) representations in \eqref{commutative-eq} are base changed to $\bar\Z_p$  (respectively, $\bar\F_p$).

Recall from Lemma \ref{integers-lemma} that $\Lambda_{\Kol}(f)=\Lambda_{\Kol}(g)$. Let $n\in\Lambda_{\Kol}(f)$ and let $d(\f,n)\in H^1(K_n,\T^\dagger)$ be the big Kolyvagin class of conductor $n$ introduced in \eqref{big-class-eq}. By part (2) of Theorem \ref{c-o-thm}, $\sspp^f_{0,K_n}(\mathfrak X_n)=e\cdot y_{n,\p}$ for some $e\in\bar\Z_p^\times$. Since 
\begin{itemize}
\item there is a canonical identification $\bar T^\dagger_f=A^\dagger_f[p]$, 
\item the square
\[ \xymatrix@C=36pt@R=30pt{\Lambda_\p(K_n)\ar@{^{(}->}[r]\ar@{->>}[d]&H^1\bigl(K_n,T^\dagger_f\bigr)\ar^-{\varpi_{f,K_n}}[d]\\ \Lambda_\p(K_n)\big/p\Lambda_\p(K_n)\ar@{^{(}->}^-{\iota_{K_n,1}}[r]&H^1\bigl(K_n,A^\dagger_f[p]\bigr),} \]
where the top horizontal arrow is the set-theoretic inclusion, is commutative, 
\item all the maps in \eqref{commutative-eq} are Galois-equivariant,
\end{itemize}
it follows that
\[ \sspp^f_{0,K_n}\bigl(d(\f,n)\bigr)=\sum_{\sigma\in\mathcal G}\sigma\Bigl(D_n\bigl(\sspp^f_{0,K_n}(\mathfrak X_n)\bigr)\!\Bigr)=e\cdot\sum_{\sigma\in\mathcal G}\sigma\bigl(D_n(y_{n,\p})\bigr)=e\cdot z_{n,\p}, \]
whence
\begin{equation} \label{reduction-eq1}
\varpi_{f,K_n}\Bigl(\sspp^f_{0,K_n}\bigl(d(\f,n)\bigr)\!\Bigr)=\bar e\cdot\iota_{K_n,1}\bigl({[z_{n,\p}]}_1\bigr)=\bar e\cdot d_1(f,n)
\end{equation}
with $\bar e\in\bar\F_p^\times$. On the other hand, $\xi_{g,K_n}\bigl(\sspp^g_{0,K_n}(\mathfrak X_n)\bigr)=d\cdot\delta_{g,K_n}(\alpha_n)$ for some $d\in\bar\Z_p^\times$, by part (1) of Theorem \ref{c-o-thm}. Thus, if ${[\alpha_n]}_1$ denotes the image of $\alpha_n$ in $A_g(K_n)/\bp A_g(K_n)$, then
\[ \begin{split}
 \pi_{g,K_n}\Bigl(\xi_{g,K_n}\bigl(\sspp^g_{0,K_n}(\mathfrak X_n)\bigr)\!\Bigr)&=\bar d\cdot\pi_{g,K_n}\bigl(\delta_{g,K_n}(\alpha_n)\bigr)\\
 &=\bar d\cdot\bar\delta_{g,K_n}\bigl({[\alpha_n]}_1\bigr)
\end{split} \] 
with $\bar d\in\bar\F^\times_p$. By definition of $d_1(g,n)$, this immediately implies that
\begin{equation} \label{reduction-eq3}
\pi_{g,K_n}\bigg(\xi_{g,K_n}\Bigl(\sspp^g_{0,K_n}\bigl(d(\f,n)\bigr)\!\Bigr)\!\bigg)=\bar d\cdot d_1(g,n).
\end{equation}
For simplicity, set
\[ \psi_n\defeq \bar\xi_{g,K_n}\circ\bsspp^g_{0,K_n}\circ\,\bigl(\bsspp^f_{0,K_n}\bigr)^{-1}:H^1\bigl(K_n,\bar T_f^\dagger\bigr)\overset\simeq\longrightarrow H^1\bigl(K_n,A_g[\bp]\bigr). \]
In light of \eqref{reduction-eq1} and \eqref{reduction-eq3}, the commutativity of \eqref{commutative-eq} with $L=K_n$ ensures that
\begin{equation} \label{reduction-eq4}
   \psi_n\bigl(d_1(f,n)\bigr)=\bar e^{-1}\cdot\psi_n\bigg(\varpi_{f,K_n}\Bigl(\sspp^f_{0,K_n}\bigl(d(\f,n)\bigr)\!\Bigr)\!\bigg)=\bar e^{-1}\bar d\cdot d_1(g,n), 
\end{equation} 
with $\bar e^{-1}\bar d\in\bar\F_p^\times$. Now observe that, by \cite[Theorem 1.1]{zhang-selmer} (respectively, \cite[Theorem 1.3]{SZ}) if $N_g=N$ (respectively, $N_g=Np$), there is $n_0\in\Lambda_{\Kol}(g)=\Lambda_{\Kol}(f)$ such that $c_1(g,n_0)\neq0$. By \eqref{c-d-eq2}, it follows that $d_1(g,n_0)\not=0$, and then $d_1(f,n_0)\neq0$ by \eqref{reduction-eq4}. Finally, by \eqref{c-d-eq} we conclude that $c_1(f,n_0)\neq0$, so $\kappa^\st_{f,\infty}\not=\{0\}$. \end{proof}

\begin{remark} \label{non-ordinary-rem}
We expect that a similar deformation-theoretic strategy can be adopted to prove an analogue of Theorem \ref{kolyvagin-main-thm} for newforms of finite slope at $p$ (\emph{i.e.}, for newforms $g$ such that $a_p(g)\not=0$), replacing the results due to Castella and Ota on specializations of big Heegner points in Hida families with those by B\"uy\"ukboduk--Lei (\cite{BL}) and Jetchev--Loeffler--Zerbes (\cite{JLZ}) on the interpolation of Heegner points and Heegner cycles in Coleman families (\emph{cf.} \cite{PPV} for applications of the main results of \cite{BL} to the study of algebraic ranks, analytic ranks and Shafarevich--Tate groups when the modular forms they are attached to vary in a Coleman family).
\end{remark}

\begin{remark}
It is likely that Conjecture \ref{strong-kolyvagin-conj} can also be proved by directly mimicking the strategy proposed by Zhang in weight $2$ (\cite{zhang-selmer}), which would presumably allow one to get rid of the condition $k\equiv2\pmod{2(p-1)}$: see, \emph{e.g.}, \cite{wang} for partial results in this direction. Note, however, that this assumption on $k$ and $p$ would still appear in our main result on the $p$-part of the TNC for $\MM$ (Theorem B), as such a congruence is needed in the work by Skinner--Urban (\cite{SU}) that is crucial for our arguments. 
\end{remark}

\section{The $p$-part of the TNC for $\MM$}

Our goal is to prove, under suitable assumptions on $f$ and $p$, the $p$-part of the TNC for the motive $\MM$ when the analytic rank of $\MM$ is $1$; this will be done in \S \ref{main-proof-subsubsec}.

\subsection{Heegner modules} \label{heegner-modules-subsec}

We review the construction of Heegner cycles made in \cite{Zhang-heights} and compare it with that from \cite{Nek}. Recall that we use the same symbol for an algebraic cycle and for its class in the corresponding Chow group. We point out that the assumption that the level $N$ of $f$ be square-free is not needed in \S \ref{heegner-modules-subsec}, \S \ref{zhang-formula-subsec}, \S \ref{p-primary-Sha-subsec}. The freedom to work with newforms whose level is not necessarily square-free will be important in \S \ref{higher-subsec}, when collecting some of the arithmetic consequences of our main results.

\subsubsection{Zhang's cycles}

Fix an imaginary quadratic field $K$ in which all the primes dividing $Np$ split. Let $x_n\in X_0(N)$ be a Heegner point of conductor $n$ and let $\tilde{x}_n\in X(N)$ be the lift of $x_n$ that was chosen in \S \ref{cycles-subsec}. Recall the cycle $Z_k(\tilde{x}_n)$ from \eqref{cycle-eq0}. To begin with, Zhang considers in \cite[\S 2.4]{Zhang-heights} the $k/2$-codimensional cycle 
\begin{equation} \label{W-tilde-eq}
W_k(\tilde{x}_n)\defeq \sum_{g\in S_{k-2}} \mathrm{sgn}(g)\cdot g^*\Bigl(Z_k(\tilde{x}_n)^{(k-2)/2}\Bigr)\in\CH^{k/2}(X/K_n)
\end{equation} 
(recall that $X=\tilde\E_N^{k-2}$), where $\mathrm{sgn}(g)$ is the sign of the permutation $g$. Recall that the geometric Heegner cycle $\tilde{\Gamma}_n$ defined in \eqref{cycle-eq1} belongs to $\Pi_B\Pi_\epsilon\cdot{\CH^{k/2}(X/K_n)}$. 

\begin{lemma} \label{lemma Heegner 1} 
The equalities 
\begin{enumerate}
\item $\displaystyle{\Pi_\epsilon\cdot\Bigl(Z_k(\tilde{x}_n)^{(k-2)/2}\Bigr)=\frac{W_k(\tilde{x}_n)}{(k-2)!}}$,
\vskip 2mm
\item $\displaystyle{\tilde{\Gamma}_n=\frac{\Pi_B\cdot W_k(\tilde{x}_n)}{(k-2)!}}$
\end{enumerate}
hold in $\Pi_B\Pi_\epsilon\cdot{\CH^{k/2}(X/K_n)}$.
\end{lemma}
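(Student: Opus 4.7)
My plan is to derive both equalities from a single observation about how the projector $\Pi_\epsilon$ interacts with the $\mathfrak{S}_{k-2}$-action. Since $\epsilon|_{\mathfrak{S}_{k-2}}$ is the sign character, the standard identity for group-algebra projectors yields
\[
\Pi_\epsilon \cdot g^{*} \;=\; \mathrm{sgn}(g) \cdot \Pi_\epsilon, \qquad g \in \mathfrak{S}_{k-2} \subset \Gamma_{k-2},
\]
as one verifies by re-indexing $h \mapsto gh$ in $\Pi_\epsilon = \tfrac{1}{|\Gamma_{k-2}|}\sum_{h} \epsilon(h) h^{*}$. Moreover, $\Pi_B$ is a projector for the action of $\Gamma_0(N)/\Gamma(N)$, which acts only on the base $X(N)$ and therefore commutes with the permutation action of $\mathfrak{S}_{k-2}$ on the fibers of $\widetilde{\mathcal{E}}_N^{k-2}$ (and with $\Pi_\epsilon$).

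With this in hand, the key computation is to apply $\Pi_B\Pi_\epsilon$ to the definition of $W_k(\tilde{x}_n) = \sum_{g \in \mathfrak{S}_{k-2}} \mathrm{sgn}(g) g^{*}(Z_k(\tilde{x}_n)^{(k-2)/2})$:
\[
\Pi_B\Pi_\epsilon \cdot W_k(\tilde{x}_n)
= \sum_{g} \mathrm{sgn}(g)\,\Pi_B\Pi_\epsilon\, g^{*}\bigl(Z_k(\tilde{x}_n)^{(k-2)/2}\bigr)
= \sum_{g} \mathrm{sgn}(g)^{2}\,\Pi_B\Pi_\epsilon \cdot Z_k(\tilde{x}_n)^{(k-2)/2}
= (k-2)!\cdot \Pi_B\Pi_\epsilon \cdot Z_k(\tilde{x}_n)^{(k-2)/2}.
\]
For part (1), since the equality is asserted in $\Pi_B\Pi_\epsilon \cdot \CH^{k/2}(X/K_n)$, I apply $\Pi_B\Pi_\epsilon$ to both sides of the proposed identity: using $\Pi_\epsilon^{2} = \Pi_\epsilon$, the left-hand side projects to $\Pi_B\Pi_\epsilon \cdot Z_k(\tilde{x}_n)^{(k-2)/2}$, while the above computation shows that $W_k(\tilde{x}_n)/(k-2)!$ projects to the same element, giving the equality. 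For part (2), the same identity applied after pushforward via $(i_{\tilde{x}_n})_{*}$ yields
\[
\Pi_B\Pi_\epsilon \cdot \frac{W_k(\tilde{x}_n)}{(k-2)!} = \Pi_B\Pi_\epsilon \cdot (i_{\tilde{x}_n})_{*}\bigl(Z_k(\tilde{x}_n)^{(k-2)/2}\bigr) = \widetilde{\Gamma}_{n},
\]
and since both $\Pi_B \cdot W_k(\tilde{x}_n)/(k-2)!$ and $\Pi_B\Pi_\epsilon \cdot W_k(\tilde{x}_n)/(k-2)!$ have the same image in $\Pi_B\Pi_\epsilon \cdot \CH^{k/2}(X/K_n)$ (the latter projector being the identity on this subgroup), this gives the second equality.

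The main subtlety, and the place requiring care, is not any deep intersection-theoretic input but rather the precise interpretation of ``equality in $\Pi_B\Pi_\epsilon\cdot \CH^{k/2}(X/K_n)$'', since a priori neither $\Pi_\epsilon \cdot Z_k(\tilde{x}_n)^{(k-2)/2}$ (lacking $\Pi_B$) nor $\Pi_B \cdot W_k(\tilde{x}_n)$ (lacking $\Pi_\epsilon$) is manifestly in the image of $\Pi_B\Pi_\epsilon$. The resolution — which is the real content of the lemma — is that the commutation $\Pi_\epsilon g^{*} = \mathrm{sgn}(g)\Pi_\epsilon$ makes the missing projector automatically present once one passes to the $\Pi_B\Pi_\epsilon$-isotypic quotient, so that the two normalizations (Zhang's via $W_k$, Nekov\'a\v{r}'s via $Z_k^{(k-2)/2}$) agree as Heegner cycles on the motive $\MM$.
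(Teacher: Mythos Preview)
Your argument is correct, but it takes a different route from the paper's. You use only the elementary group-algebra identity $\Pi_\epsilon\,g^*=\mathrm{sgn}(g)\,\Pi_\epsilon$ for $g\in\mathfrak{S}_{k-2}$, which immediately gives $\Pi_B\Pi_\epsilon\cdot W_k(\tilde x_n)=(k-2)!\cdot\Pi_B\Pi_\epsilon\cdot Z_k(\tilde x_n)^{(k-2)/2}$, and then read the lemma as an equality \emph{after} applying $\Pi_B\Pi_\epsilon$. The paper instead factors $\Pi_\epsilon=\Pi_\epsilon'\cdot\Pi_\epsilon''$, where $\Pi_\epsilon'$ is the sign-projector for $\mathfrak{S}_{k-2}$ and $\Pi_\epsilon''$ is the projector for the normal subgroup $\bigl((\Z/N\Z)^2\rtimes\{\pm1\}\bigr)^{k-2}$, and invokes (from \cite[Lemma 2.4.3]{Zhang-heights}) the geometric fact that this normal subgroup already acts on $Z_k(\tilde x_n)^{(k-2)/2}$ via $\epsilon$, so that $\Pi_\epsilon''$ fixes it and hence $\Pi_\epsilon\cdot Z_k(\tilde x_n)^{(k-2)/2}=\Pi_\epsilon'\cdot Z_k(\tilde x_n)^{(k-2)/2}=W_k(\tilde x_n)/(k-2)!$ holds \emph{literally} in $\CH^{k/2}(X/K_n)_\Q$. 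Thus the paper proves something slightly stronger---in particular, that $W_k(\tilde x_n)$ itself lies in the image of $\Pi_\epsilon$---while your approach is more self-contained (no citation to Zhang's lemma needed) and yields exactly the statement as phrased, which is all that is used downstream.
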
 

\begin{proof} Part (2) is an immediate consequence of part (1) and the definitions, so we only need to check part (1). Recall from \S \ref{motive-modular-form-subsubsec} that the restriction of $\epsilon$ to the subgroup $\mathfrak{S}_{k-2}$ of the group $\Gamma_{k-2}$ in \eqref{Gamma-k-2-eq} is the sign character, so the projector associated with this restriction is $\Pi_\epsilon'\defeq \frac{1}{(k-2)!}\sum_{g\in S_{k-2}} \mathrm{sgn}(g)g^*$. On the other hand, as in the proof of \cite[Lemma 2.4.3]{Zhang-heights}, $\bigl((\Z/N\Z)^2\rtimes\Z/2\Z\bigr)^{k-2}$ acts on $Z_k(\tilde{x}_n)^{(k-2)/2}$ via the restriction of $\epsilon$ to $\bigl((\Z/N\Z)^2\rtimes\Z/2\Z\bigr)^{k-2}$. Now $\epsilon$ is trivial on $(\Z/N\Z)^{2(k-2)}$ and the product map on $(\Z/2\Z)^{k-2}$. It follows that the projector $\Pi_\epsilon''$ associated with the restriction of $\epsilon$ to $\bigl((\Z/N\Z)^2\rtimes\Z/2\Z\bigr)^{k-2}$ acts trivially on $W_k(\tilde{x}_n)$. Finally, $\Pi_\epsilon=\Pi_\epsilon'\cdot\Pi_\epsilon''$, so there are equalities
\[ \Pi_\epsilon\cdot Z_k(\tilde{x}_n)^{(k-2)/2}=\Pi_\epsilon'\cdot Z_k(\tilde{x}_n)^{(k-2)/2}=\frac{W_k(\tilde{x}_n)}{(k-2)!}, \]  
as desired. \end{proof}

\subsubsection{Base change maps and trace maps} \label{base-change-trace-subsubsec}

Let $L_2/L_1$ be a Galois extension of number fields and set $\mathscr G\defeq\Gal(L_2/L_1)$. From here up to \S \ref{heegner-module-subsubsec}, let us adopt the convention that $\CH^n_\emptyset$ stands for $\CH^n$. Let $\star\in\{\emptyset,0,\arith\}$. Denote by 
\begin{equation} \label{base-change-eq2}
\iota_{L_1\to L_2}:\CH^{k/2}_\star(X/L_1)\longrightarrow\CH^{k/2}_\star(X/L_2)^{\mathscr G} 
\end{equation}
the base change map from $L_1$ to $L_2$, a special case of which was introduced in the proof of Proposition \ref{c_M(1)-prop}. Galois trace induces a map
\begin{equation} \label{base-change-trace-eq}
\tr_{L_2/L_1}:\CH^{k/2}_\star(X/L_2)\longrightarrow\CH^{k/2}_\star(X/L_1);
\end{equation}
the composition $\tr_{L_2/L_1}\circ\iota_{L_1\to L_2}$ is equal to the multiplication-by-$[L_2:L_1]$ map, so the kernel of $\iota_{L_1\to L_2}$ is torsion, annihilated by $[L_2:L_1]$. Extending scalars in \eqref{base-change-eq2} and \eqref{base-change-trace-eq} to any ring $R$ in which $[L_2:L_1]$ is invertible, we get mutually inverse maps
\begin{equation} \label{base-change-eq3}
\xymatrix@C=40pt{{\CH^{k/2}_\star(X/L_1)}_R\ar@<-0.7ex>[r]_-{\iota_{L_1\to L_2}}&\CH^{k/2}_\star(X/L_2)_R^{\mathscr G}\ar@<-0.7ex>[l]_-{\tr_{L_2/L_1}}}.
\end{equation}
Therefore, the extension of scalars to such an $R$ of the map in \eqref{base-change-eq2} is an isomorphism. In particular, when $R=F$ we obtain a map
\begin{equation} \label{base-change-eq4} 
\CH^{k/2}_\star(X/L_1)\longrightarrow\!\!\xymatrix@C=40pt{{\CH^{k/2}_\star(X/L_1)}_F\ar[r]^-{\iota_{L_1\to L_2}}_-\simeq&\CH^{k/2}_\star(X/L_2)_F^{\mathscr G}} 
\end{equation}
whose kernel is the torsion subgroup of $\CH^{k/2}_\star(X/L_1)$.

\begin{notation/convention} \label{notation-convention}
We identify any non-torsion element of $\CH^{k/2}_\star(X/L_1)$ with its image under the map in \eqref{base-change-eq4} and use the same symbol for both cycle classes. Conversely, we identify any element of $\CH^{k/2}_\star(X/L_2)_F^{\mathscr G}$ with its image in ${\CH^{k/2}_\star(X/L_1)}_F$ under the trace map $\tr_{L_2/L_1}$ in \eqref{base-change-eq3}. In particular, since the kernel of the obvious map
\[ \CH^{k/2}_\star(X/L_2)^{\mathscr G}\longrightarrow\CH^{k/2}_\star(X/L_2)_F^{\mathscr G} \] 
is the torsion subgroup of the left-hand side term, we shall not distinguish between a non-torsion element of $\CH^{k/2}_\star(X/L_2)^{\mathscr G}$ and its image in ${\CH^{k/2}_\star(X/L_1)}_F$ via $\tr_{L_2/L_1}$. In a similar fashion, with our usual notation in force, a non-torsion subgroup of $\CH^{k/2}_\star(X/L_2)$ injects into ${\CH^{k/2}_\star(X/L_2)}_{\cO_\p}$, so that we shall freely identify it with its image in this $\cO_\p$-module.
\end{notation/convention}

\begin{remark}
The fact that $\ker(\iota_{L_1\to L_2})$ is torsion is a special case of an analogous result for smooth projective varieties and arbitrary field extensions: see, \emph{e.g.}, \cite[Lemma (1A.3)]{bloch-lectures}, \cite[p. 238]{Ramakrishnan} for details (the statement in \cite{bloch-lectures} is given only for the Chow group $\CH^2$ of a surface, but the arguments in the proof work for all groups $\CH^i$ of a variety).
\end{remark}

\subsubsection{The Heegner module of level $N$} \label{heegner-module-subsubsec}
 
As in \S \ref{heegner-cycles-subsubsec}, let $\pi_N:X(N)\rightarrow X_0(N)$ be the canonical degeneracy map; as in \S \ref{motive-modular-form-subsubsec}, put $t_N\defeq\#\bigl(\Gamma_0(N)/\Gamma(N)\bigr)$, then write $\pi_N^*(x_n)=\sum_{i=1}^{t_N}\tilde{x}_{n,i}$. Set
\begin{equation} \label{w-eq}
W_k(x_n)\defeq \Pi_B\cdot W_k(\tilde{x}_n)=\frac{1}{t_N}\cdot\sum_{i=1}^{t_N}W_k(\tilde{x}_{n,i}). 
\end{equation} 
As in \S \ref{derivatives-subsubsec}, let $\mathcal G_1\defeq \Gal(K_1/K)$. The \emph{Heegner module $\Heeg_{K,N}$ of level $N$} is the subgroup of ${\CH^{k/2}_\arith(X/K_1)}$ generated by $T_m\cdot W_k({x}_1^\sigma)$ for all $\sigma\in\mathcal G_1$ and all Hecke operators $T_m$ with $(m,N)=1$ (the cycles $T_m\cdot W_k({x}_1^\sigma)$ do indeed lie in ${\CH^{k/2}_\arith(X/K_1)}$, \emph{cf.} \cite[\S 3.1]{Zhang-heights}): we view $\Heeg_{K,N}$ as a subgroup of $\Pi_B\Pi_\epsilon\cdot\CH^{k/2}_0(X/K_1)$, which in turn should be thought of as a subgroup of ${\CH^{k/2}_0(X/K_1)}_\Q$ (\emph{cf.} \S \ref{motivic-subsubsec}).

Fix a prime number $p$ such that $p\nmid2N$. As was summarized in \S \ref{base-change-trace-subsubsec}, if $\p$ is a prime of $F$ above $p$, then there are natural maps
\begin{equation} \label{Heeg-composition-eq}
\Heeg_{K,N}^{\mathcal G_1}\longmono\CH^{k/2}_\arith(X/K_1)^{\mathcal G_1}\longrightarrow\!\xymatrix@C=37pt{\CH^{k/2}_\arith(X/K_1)_{\cO_\p}^{\mathcal G_1}\ar[r]^-{\tr_{K_1/K}}&{\CH^{k/2}_\arith(X/K)}_{\cO_\p},} 
\end{equation}
where the leftmost arrow is the set-theoretic inclusion and the middle one is extension of scalars. Thus, by composition with \eqref{Heeg-composition-eq}, the $\p$-adic Abel--Jacobi map $\AJ_{K,\p}$ yields a map 
\begin{equation} \label{AJ Heegner}
\AJ_{K,\p}:\Heeg_{K,N}^{\mathcal G_1}\longrightarrow\Lambda_{\p}(K),
\end{equation} 
which will be denoted by the same symbol (\emph{cf.} Remark \ref{AJ-factorization-rem}). Let us define the cycle 
\begin{equation} \label{xi K}
\XX_K\defeq\sum_{\sigma\in\mathcal{G}_1}W_k(x_1^\sigma)=\tr_{K_1/K}\bigl(W_k(x_1)\bigr)\in\CH^{k/2}_\arith(X/K), 
\end{equation}
where the second equality can be checked by unfolding the definition of the Galois action on Heegner cycles. 

\begin{remark} \label{AJ-twist-rem}
At some point in this article, we will need to consider also the counterpart of $\AJ_{K,\p}$ with $f$ replaced by the quadratic twist $f^K$. We will denote this map by $\AJ_{f^K,K',\p}$, where $K'$ is a suitable imaginary quadratic field. In this case, we will use the symbol $\mathcal G'_1$ for the analogue of the Galois group $\mathcal G_1$.
\end{remark}

In the statement below, $y_{K,\p}\in\Lambda_{\p}(K)$ is the cycle introduced in \eqref{y_K-eq}. 
 
\begin{lemma} \label{lemma Heegner 3}
$\AJ_{K,\p}(\XX_K)=(k-2)!\cdot y_{K,\p}$. 
\end{lemma}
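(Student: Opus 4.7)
The plan is to reduce the statement to the definition of $y_{K,\p}$ as $\cores_{K_1/K}(y_{1,\p})$ by combining part (2) of Lemma \ref{lemma Heegner 1} with the Galois-equivariance of the $\p$-adic Abel--Jacobi map encoded in diagram \eqref{AJ-cores-eq}. First I would rewrite $\XX_K$ using the second equality in \eqref{xi K}, namely
\[ \XX_K=\tr_{K_1/K}\bigl(W_k(x_1)\bigr), \]
so that applying $\AJ_{K,\p}$ and invoking the commutative square \eqref{AJ-cores-eq} (together with Remark \ref{AJ-factorization-rem}, which guarantees that $\AJ_{K,\p}$ factors through $\Pi_B\Pi_\epsilon\cdot{\CH^{k/2}_0(X/K)}_{\cO_\p}$, and the identifications of Notation/Convention \ref{notation-convention}) would yield
\[ \AJ_{K,\p}(\XX_K)=\cores_{K_1/K}\!\Bigl(\AJ_{K_1,\p}\bigl(W_k(x_1)\bigr)\!\Bigr). \]

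Next I would compute $\AJ_{K_1,\p}\bigl(W_k(x_1)\bigr)$. By \eqref{w-eq} one has $W_k(x_1)=\Pi_B\cdot W_k(\tilde{x}_1)$; since $\AJ_{K_1,\p}$ factors through $\Pi_B\Pi_\epsilon\cdot{\CH^{k/2}_0(X/K_1)}_{\cO_\p}$, applying $\Pi_\epsilon$ first and invoking part (2) of Lemma \ref{lemma Heegner 1} would give the equality
\[ \Pi_B\Pi_\epsilon\cdot W_k(x_1)=\Pi_\epsilon\cdot\bigl(\Pi_B\cdot W_k(\tilde{x}_1)\bigr)=(k-2)!\cdot\tilde{\Gamma}_1 \]
in $\Pi_B\Pi_\epsilon\cdot\CH^{k/2}(X/K_1)$; passing to ${\CH^{k/2}_0(X/K_1)}_{\cO_\p}$ and recalling that $\Gamma_{1,\p}$ is by definition the image of $\tilde{\Gamma}_1$ (see \eqref{cycle-eq2}--\eqref{cycle-eq3}), this would translate into
\[ \AJ_{K_1,\p}\bigl(W_k(x_1)\bigr)=(k-2)!\cdot\AJ_{K_1,\p}(\Gamma_{1,\p})=(k-2)!\cdot y_{1,\p}. \]

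Finally, plugging this into the previous display, using the $\cO_\p$-linearity of $\cores_{K_1/K}$ and the definition $y_{K,\p}\defeq\cores_{K_1/K}(y_{1,\p})$ from \eqref{y_K-eq} would yield
\[ \AJ_{K,\p}(\XX_K)=(k-2)!\cdot\cores_{K_1/K}(y_{1,\p})=(k-2)!\cdot y_{K,\p}, \]
as desired. No step of this plan looks genuinely hard: part (2) of Lemma \ref{lemma Heegner 1} and the commutative square \eqref{AJ-cores-eq} are already available, and the factor $(k-2)!$ tracks cleanly. The only point requiring a bit of care is the bookkeeping between the integral Chow groups, their projector-cut subgroups and the $\cO_\p$-modules obtained by extension of scalars; this is handled by the identifications in Notation/Convention \ref{notation-convention} and by the fact that $\AJ_{K_1,\p}$ and $\AJ_{K,\p}$ factor through $\Pi_B\Pi_\epsilon$, so that inserting the projector in front of $W_k(x_1)$ is harmless.
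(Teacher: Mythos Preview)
Your proposal is correct and follows essentially the same approach as the paper: both arguments combine part (2) of Lemma \ref{lemma Heegner 1} with the commutative square \eqref{AJ-cores-eq} and the definition \eqref{y_K-eq} of $y_{K,\p}$. The only cosmetic difference is that the paper first packages the cycle identity as $\XX_K=(k-2)!\cdot\Gamma_{K,\p}$ (where $\Gamma_{K,\p}\defeq\tr_{K_1/K}(\Gamma_{1,\p})$) and then applies $\AJ_{K,\p}$, whereas you apply $\AJ_{K,\p}$ first and then invoke the cycle identity inside $\cores_{K_1/K}$; the content is the same.
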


\begin{proof} Recall the cycle $\Gamma_{1,\p}$ in \eqref{cycle-eq3} and set
\[ \Gamma_{K,\p}\defeq\tr_{K_1/K}(\Gamma_{1,\p})\in\Pi_B\Pi_\epsilon\cdot{\CH^{k/2}_0(X/K)}_{\cO_\p}. \]
By part (2) of Lemma \ref{lemma Heegner 1}, there is an equality $W_k(x_1)=(k-2)!\cdot\Gamma_{1,\p}$, which implies that
\begin{equation} \label{xi-K-Gamma-eq}
\XX_K=(k-2)!\cdot\Gamma_{K,\p} 
\end{equation}
in ${\CH^{k/2}_\arith(X/K)}_{\cO_\p}$. Combining equality \eqref{xi-K-Gamma-eq}, the fact that $y_{1,\p}=\AJ_{K_1,\p}(\Gamma_{1,\p})$ and the commutativity of \eqref{AJ-cores-eq}, we obtain 
\[ \AJ_{K,\p}(\XX_K)=(k-2)!\cdot\cores_{K_1/K}(y_{1,\p})=(k-2)!\cdot y_{K,\p}, \] 
as claimed. \end{proof}

\subsection{Zhang's formula of Gross--Zagier type} \label{zhang-formula-subsec}

We review the main result of \cite{Zhang-heights}, which is a counterpart of the Gross--Zagier formula (\cite[Theorem 6.3]{GZ}) for higher (even) weight modular forms. 

\subsubsection{Zhang's cycles with coefficients in $\R$}

With $W_k(\tilde x_n)$ as in \eqref{W-tilde-eq}, let us consider the $k/2$-codimensional cycle with real coefficients 
\begin{equation} \label{s-eq}
S_k(\tilde{x}_n)\defeq c\cdot W_k(\tilde{x}_n) 
\end{equation}
on $X$, where $c\in\R$ is a positive constant such that the self-intersection of $S_k(\tilde{x}_n)$ on each fiber is equal to $(-1)^{(k-2)/2}$ (\emph{cf.} \cite[\S 2.4]{Zhang-heights}). Recall that we write $D_K$ for the discrminant of $K$. Since the self-intersection of $Z_k(\tilde{x}_n)$ is $-2D_K$ (\emph{cf.} the proof of \cite[Proposition 5.1]{Nek}), a direct computation shows that the self-intersection of $W_k(\tilde{x}_n)$ is 
$(k/2-1)!^{\,2}\cdot(k-2)!\cdot(-2D_K)^{k/2-1}$ (we warmly thank Congling Qiu for calculating this value for us); it follows that 
\begin{equation}\label{c}
c=\frac{1}{(k/2-1)!\cdot\sqrt{(k-2)!}\cdot\bigl(\sqrt{-2D_K}\bigr)^{k/2-1}}.
\end{equation}
With notation from \S \ref{heegner-modules-subsec}, as in \cite[\S 4.1]{Zhang-heights} we define 
\begin{equation} \label{S_k-eq}
S_k(x_n)\defeq\frac{1}{\sqrt{\deg(\pi_N)}}\cdot\sum_{i=1}^{t_N}S_k(\tilde{x}_{n,i})\in{\CH^{k/2}_\arith(X/K_n)}_\R,
\end{equation}
where the cycles $S_k(\tilde{x}_{n,i})$ are defined as in \eqref{s-eq}. As above, the fact that $S_k(x_n)$ belongs to ${\CH^{k/2}_\arith(X/K_n)}_\R$ follows from \cite[\S 3.1]{Zhang-heights}.

\begin{lemma} \label{lemma Heegner 2}
The equality
\[ W_k(x_n)=\frac{S_k({x}_n)}{c\cdot\sqrt{\deg(\pi_N)}} \]
holds in $\Pi_B\Pi_\epsilon\cdot{\CH^{k/2}(X/K_n)}_\R$. 
\end{lemma}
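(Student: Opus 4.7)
My plan is to unwind the definitions \eqref{w-eq}, \eqref{s-eq} and \eqref{S_k-eq} and observe that everything reduces to a direct identity, once we reconcile the two counting constants $t_N$ and $\deg(\pi_N)$. The only substantive point to verify is the compatibility $t_N=\deg(\pi_N)$, which is already visible in the paper's own normalization of the pullback formula $\pi_N^*(x_n)=\sum_{i=1}^{t_N}\tilde x_{n,i}$: the divisor $\pi_N^*(x_n)$ is a fiber of $\pi_N$ above $x_n$, whose multiplicity (away from the elliptic/ramification locus where our Heegner points sit) is exactly $\deg(\pi_N)$. Since the cycles $\tilde x_{n,i}$ are all distinct and the point $x_n$ is non-elliptic, this identifies $t_N$ with $\deg(\pi_N)$.

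Once this identification is granted, the calculation is immediate. By \eqref{s-eq}, for each $i\in\{1,\dots,t_N\}$ one has $S_k(\tilde x_{n,i})=c\cdot W_k(\tilde x_{n,i})$. Plugging this into the definition \eqref{S_k-eq} of $S_k(x_n)$, and using $t_N=\deg(\pi_N)$, one finds
\[
S_k(x_n)=\frac{1}{\sqrt{\deg(\pi_N)}}\sum_{i=1}^{t_N}S_k(\tilde x_{n,i})
=\frac{c}{\sqrt{\deg(\pi_N)}}\sum_{i=1}^{t_N} W_k(\tilde x_{n,i})
=\frac{c\cdot t_N}{\sqrt{\deg(\pi_N)}}\cdot W_k(x_n)
=c\cdot\sqrt{\deg(\pi_N)}\cdot W_k(x_n),
\]
where the penultimate equality uses \eqref{w-eq} (so that $\sum_i W_k(\tilde x_{n,i})=t_N\cdot W_k(x_n)$ in $\Pi_B\Pi_\epsilon\cdot{\CH^{k/2}(X/K_n)}_\R$) and the last one uses $t_N=\deg(\pi_N)$. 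Dividing through by $c\cdot\sqrt{\deg(\pi_N)}$ (which is a non-zero real number by \eqref{c}) yields the claimed equality.

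I expect no serious obstacle: the only delicate aspect is the bookkeeping for $t_N$ versus $\deg(\pi_N)$, and given the way $\pi_N^*(x_n)$ is normalized in the statement introducing $\tilde x_{n,i}$, the two agree tautologically. All cycles involved are well defined in $\Pi_B\Pi_\epsilon\cdot{\CH^{k/2}(X/K_n)}_\R$ by the discussion in \S\ref{heegner-module-subsubsec} (where $\Pi_B$ is already built into $W_k(x_n)$), so there is no ambiguity in where the identity is taking place.
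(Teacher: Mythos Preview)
Your proposal is correct and follows essentially the same route as the paper's own proof: both unwind \eqref{w-eq}, \eqref{s-eq}, \eqref{S_k-eq} and use the identification $t_N=\deg(\pi_N)$, reducing the lemma to a one-line algebraic manipulation. The paper makes the identity $W_k(x_n)=\frac{1}{\deg(\pi_N)}\sum_i W_k(\tilde x_{n,i})$ explicit via the action of representatives of $\Gamma_0(N)/\Gamma(N)$, but this is exactly what \eqref{w-eq} already encodes, so there is no substantive difference.
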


\begin{proof} Let $\{\gamma_1,\dots,\gamma_{t_N}\}$ be a set of representatives of $\Gamma_0(N)/\Gamma(N)$ and for all $i=1,\dots,t_N$ set $\tilde{x}_{n,i}\defeq \gamma^*_i(x_n)$. By definition, there are equalities 
\[ W_k(x_n)=\Pi_BW_k(\tilde{x}_n)=\frac{1}{\deg(\pi_N)}\cdot\sum_{i=1}^{t_N}\gamma_i^*W_k(\tilde{x}_n)=\frac{1}{\deg(\pi_N)}\cdot\sum_{i=1}^{t_N}W_k(\tilde{x}_{n,i}). \] 
Since each $W_k(\tilde{x}_{n,i})$ has self-intersection $c$, the result follows. \end{proof}

Now we can prove

\begin{proposition} \label{prop Heegner}
The equality
\[ S_k(\tilde{x}_n)=\sqrt{\frac{\deg(\pi_N)\cdot\binom{k-2}{k/2-1}}{{(-2D_K})^{k/2-1}}}\cdot\tilde{\Gamma}_n \]
holds in $\Pi_B\Pi_\epsilon\cdot{\CH^{k/2}(X/K_n)}_\R$. 
\end{proposition}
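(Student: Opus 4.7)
The strategy is combinatorial and essentially reduces to chaining the two preparatory results (Lemmas~\ref{lemma Heegner 1} and~\ref{lemma Heegner 2}) proved just above, together with a single elementary binomial identity. My plan is to express both sides of the claimed equality as explicit scalar multiples of $\tilde\Gamma_n$ inside $\Pi_B\Pi_\epsilon\cdot{\CH^{k/2}(X/K_n)}_\R$, and then to check that the two scalars coincide by a direct manipulation of $c$.

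First, I would use the definition \eqref{s-eq} of $S_k(\tilde x_n)$ together with Lemma~\ref{lemma Heegner 2} (the latter pinning down the relation between $W_k(x_n)$ and $S_k(x_n)$ through the factor $1/(c\sqrt{\deg(\pi_N)})$) to rewrite the left-hand side as a real multiple of $W_k(\tilde x_n)$, or equivalently of $W_k(x_n)$ via the averaging identity $\Pi_B\cdot W_k(\tilde x_n)=W_k(x_n)$ of \eqref{w-eq}. Next, I would invoke Lemma~\ref{lemma Heegner 1}\,(2), which provides the relation $\Pi_B\cdot W_k(\tilde x_n)=(k-2)!\cdot\tilde\Gamma_n$; passing to the $\Pi_B\Pi_\epsilon$-projection is harmless, because the $\Pi_\epsilon$-action on $W_k(\tilde x_n)$ is trivial. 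This last point is exactly the observation already recorded in the proof of Lemma~\ref{lemma Heegner 1}: the $\mathfrak{S}_{k-2}$-part of $\Pi_\epsilon$ acts on an already sign-symmetrized cycle, while the $\bigl((\Z/N\Z)^2\rtimes\{\pm1\}\bigr)^{k-2}$-part acts trivially on the fibrewise cycle underlying $W_k$.

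What remains is then a purely algebraic verification: substitute the explicit value of $c$ given in \eqref{c} and apply the identity
\[
\binom{k-2}{k/2-1}=\frac{(k-2)!}{\bigl((k/2-1)!\bigr)^2}
\]
to convert the resulting scalar into the square root appearing on the right-hand side of the proposition. None of the steps are individually difficult; the main care needed lies in keeping track of the correct $\sqrt{\deg(\pi_N)}$ factor (coming from the normalization in \eqref{S_k-eq}) and in handling the interplay between pushforward from the fibre $E_n^{k-2}$ along $i_{\tilde x_n}$ and the action of the correspondence projectors $\Pi_B$, $\Pi_\epsilon$ on $X$. Since the only genuinely substantive input, namely the self-intersection computation for $W_k(\tilde x_n)$ used to pin down the constant $c$, is already in hand, the proof of the proposition itself is essentially a bookkeeping exercise, and I do not anticipate any serious obstacle.
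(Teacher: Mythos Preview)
Your proposal is correct and follows exactly the same approach as the paper, whose proof reads ``Immediate by combining Lemmas \ref{lemma Heegner 1} and \ref{lemma Heegner 2} with the expression for $c$ in \eqref{c}.'' Your identification of the binomial identity $\binom{k-2}{k/2-1}=(k-2)!/\bigl((k/2-1)!\bigr)^2$ as the final algebraic step and your attention to the $\sqrt{\deg(\pi_N)}$ normalization are precisely the bookkeeping points needed.
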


\begin{proof} Immediate by combining Lemmas \ref{lemma Heegner 1} and \ref{lemma Heegner 2} with the expression for $c$ in \eqref{c}. \end{proof}

Let us consider the $\R$-linear extension 
\[ {\langle\cdot,\cdot\rangle}_{\GS}:{\CH^{k/2}_\arith(X/K_1)}_\R\times{\CH^{k/2}_\arith(X/K_1)}_\R\longrightarrow\R \]
of the Gillet--Soul\'e height pairing from \S \ref{GS-subsec}. Moreover, let $S_k(x_n)$ be the cycle from \eqref{S_k-eq}. Following \cite[\S 0.1]{Zhang-heights}, define $V$ to be the $\R$-subspace of ${\CH^{k/2}_\arith(X/K_1)}_\R$ that is generated by $T_m\cdot S_k({x}_1^\sigma)$ for all $\sigma\in\mathcal G_1$ and all Hecke operators $T_m$ with $(m,N)=1$. It follows that $V=\Heeg_{K,N}\otimes_\Z\,\R$ and $S_k(\tilde{x}_1)\in V$. Let $V'$ be the quotient of $V$ by the null subspace for ${\langle\cdot,\cdot\rangle}_\mathrm{GS}$. Then $V'$ is a subquotient of $S_k(\Gamma_0(N))^{h_K}$, where $h_K\defeq \#\mathcal G_1$ is the class number of $K$ (\cite[Theorem 0.3.1]{Zhang-heights}). Clearly, ${\langle\cdot,\cdot\rangle}_\mathrm{GS}$ yields a height pairing, to be denoted in the same way, on $V'$. Choose an orthonormal basis $\{f=f_1,\dots,f_t\}$ of $V'$ with respect to the Petersson inner product ${(\cdot,\cdot)}_{\Gamma_0(N)}$, so that $V'$ splits into $f_j$-eigencomponents $V'_{f_j}$. Now define $s'_{k,f}(x_1^\sigma)$ to be the image of $S_k(x_1^\sigma)$ in $V'_f$ (where, as above, $\sigma\in\mathcal G_1$) and put 
\[ s_f'\defeq\sum_{\sigma\in\mathcal G_1}s_{k,f}'(x_1^\sigma)\in V'_f, \] 
so that $s'_f$ is the image of $\sum_{\sigma\in\mathcal G_1}S_k(x_1^\sigma)$ in $V'_f$. Notice that, in fact, $s'_f\in (V'_f)^{\mathcal G_1}$.

\begin{remark} \label{s_f-rem}
If ${\langle\cdot,\cdot\rangle}_\mathrm{GS}$ is non-degenerate on $V$, then $V'=V$ and 
\[ s'_f\in\bigl(\Heeg_{K,N}^{\mathcal G_1}\otimes_\Z\,\R\bigr)[\theta_f]\subset{\CH^{k/2}_\arith(X/K)}_\R[\theta_f]. \]
This fact will play a role in the proof of Proposition \ref{coro zhang}.
\end{remark}

\subsubsection{Zhang's formula}

In the statement below, $u_K\defeq \#\cO_K^\times/2$. There is a splitting
\begin{equation} \label{splitting-L-eq2}
L(f/K,s)=L(f,s)\cdot L(f^K,s),
\end{equation}
which implies that $r_\an(f/K)\geq1$ (see, \emph{e.g.}, \cite[p. 543]{BFH}).

\begin{theorem}[S.-W. Zhang] \label{zhang-thm}
$L'(f/K,k/2)=\begin{displaystyle}\frac{2^{2k-1}\pi^k{(f,f)}_{\Gamma_0(N)}}{(k-2)!u_K^2\sqrt{|D_K|}}\end{displaystyle}\cdot{\langle s_f',s_f'\rangle}_{\GS}$.
\end{theorem}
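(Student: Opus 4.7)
The plan is to appeal directly to the main theorem of \cite{Zhang-heights}, where this formula is established as a Gross--Zagier type identity for higher even weight newforms. Rather than reprove it, I would unpack Zhang's strategy in enough detail to check that all normalizations (particularly the constant $c$ from \eqref{c} used to define $S_k(\tilde x_n)$, and the factor $1/\sqrt{\deg(\pi_N)}$ appearing in \eqref{S_k-eq}) match those of the present paper, so that the formula can be quoted in the precise form stated.

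The proof follows the classical Gross--Zagier paradigm in four conceptual steps. First, one uses the Rankin--Selberg method to represent $L(f/K,s)$ as the Petersson inner product of $f$ against the diagonal restriction of an Eisenstein series on $\mathrm{GL}_2\times\mathrm{GL}_2$ (or, equivalently, as an integral of $f\cdot\bar f$ against an Eisenstein series on the upper half-plane), using the factorization $L(f/K,s)=L(f,s)\cdot L(f^K,s)$ recalled in \eqref{splitting-L-eq2}. Differentiating at $s=k/2$ produces a concrete analytic expression for $L'(f/K,k/2)$ as the Petersson pairing of $f$ with a non-holomorphic modular form whose Fourier coefficients can be computed explicitly.

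Second, on the geometric side, one expands ${\langle s'_f,s'_f\rangle}_{\GS}$ as a sum of local height contributions $\sum_v{\langle\,\cdot,\cdot\rangle}_{\GS,v}$, using the decomposition of the Gillet--Soul\'e pairing into archimedean and non-archimedean parts. The archimedean contribution is computed via explicit Green currents on $\tilde{\mathcal E}_N^{k-2}(\C)$ for the cycles $S_k(\tilde x_1^\sigma)$, while the non-archimedean local heights are computed via quaternionic uniformization of supersingular loci on the fibers of the Kuga--Sato family at primes of good reduction; at ramified primes one must use Drinfeld-type moduli interpretations. The matching of these local terms is carried out prime by prime on both sides, in the spirit of Gross--Zagier and its refinements by S.\ Zhang in \cite{Zhang-heights}.

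The main technical obstacle — and what Zhang's paper resolves — is the arithmetic intersection theory on the Kuga--Sato variety, which is not proper over $\Z$; this is why the cycles must be taken in $\CH^{k/2}_\arith(X/K_1)$ (as recalled in \S\ref{motivic-subsubsec}) and why the Gillet--Soul\'e pairing is only a priori well-defined for cycles meeting the fibers over primes of bad reduction trivially. Granting all of this, the only remaining check is that our conventions agree with Zhang's: our cycle $s'_f$ is built from the $S_k(x_1^\sigma)$ of \eqref{S_k-eq}, which (by Lemma \ref{lemma Heegner 2} together with \eqref{c} and \eqref{w-eq}) is exactly the class denoted the same way in \cite{Zhang-heights}. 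The stated formula is then \cite[Theorem 0.3.1]{Zhang-heights} verbatim, after identifying $(f,f)_{\Gamma_0(N)}$ with the Petersson norm and $u_K$, $D_K$ with the standard invariants of $K$.
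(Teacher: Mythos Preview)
Your approach is essentially the same as the paper's: this theorem is quoted from Zhang's work rather than reproved. The paper's proof is even terser than yours---it simply cites \cite[Corollary 0.3.2]{Zhang-heights} with $\chi$ taken to be the trivial character, without rehearsing the Gross--Zagier strategy. One minor correction: the precise reference is Corollary 0.3.2 (specialized to trivial $\chi$), not Theorem 0.3.1, which in \cite{Zhang-heights} is the structural statement about $V'$ already cited earlier in the paper.
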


\begin{proof} With notation as in \cite{Zhang-heights}, this follows immediately from \cite[Corollary 0.3.2]{Zhang-heights} upon taking $\chi$ to be the trivial character. \end{proof}

Recall the map $\AJ_{K,\p}$ from \eqref{AJ Heegner}; by a slight abuse of terminology, it will be interpreted as the restriction to $\Heeg_{K,N}^{\mathcal G_1}$ of the map in \eqref{AJ-map*}. The following result is implicit in \cite{Zhang-heights}; since we shall use it later, we provide a complete proof of it. 

\begin{proposition} \label{coro zhang}
\begin{enumerate}
\item Assume that $\AJ_{K,\p}$ is injective on $\Heeg_{K,N}^{\mathcal G_1}$. If $r_\an(f/K)=1$, then $y_{K,\p}$ is not $\cO_\p$-torsion.  
\item Assume that ${\langle\cdot,\cdot\rangle}_{\GS}$ is non-degenerate on $\Heeg_{K,N}\otimes_\Z\,\R$. If $r_\an(f/K)>1$, then $y_{K,\p}$ is $\cO_\p$-torsion. 
\end{enumerate}
\end{proposition}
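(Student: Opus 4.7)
The plan is to deduce both statements from Zhang's formula (Theorem \ref{zhang-thm}), combined with the compatibility between the real-analytic cycle $S_k(x_n)$ and the arithmetic cycle $W_k(x_n)$ coming from Lemma \ref{lemma Heegner 2}, and Lemma \ref{lemma Heegner 3}, which relates $\AJ_{K,\p}(\XX_K)$ to $y_{K,\p}$. The starting observation common to both parts is that $\XX_K\in\Heeg_{K,N}^{\mathcal G_1}$ (since $\XX_K=\tr_{K_1/K}(W_k(x_1))$ and $W_k(x_1)$ is a generator of the Heegner module), and that $\sum_{\sigma\in\mathcal G_1}S_k(x_1^\sigma)=c\sqrt{\deg(\pi_N)}\cdot\XX_K$ in $V=\Heeg_{K,N}\otimes_\Z\R$ with $c>0$, so $s_f'$ is a non-zero real multiple of the image of $\XX_K$ in $V_f'$. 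I will also use the splitting $L(f/K,s)=L(f,s)L(f^K,s)$ to translate the hypothesis $r_\an(f/K)=1$ (respectively $>1$) into the (non-)vanishing of $L'(f/K,k/2)$, and Proposition \ref{no torsion lemma}, which ensures $\Lambda_\p(K)$ is torsion-free over $\cO_\p$, so that ``$\cO_\p$-torsion'' for $y_{K,\p}$ is equivalent to vanishing.

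For part (1), I argue by contradiction: if $y_{K,\p}$ is torsion, then $y_{K,\p}=0$, and Lemma \ref{lemma Heegner 3} gives $\AJ_{K,\p}(\XX_K)=(k-2)!\cdot y_{K,\p}=0$. The injectivity of $\AJ_{K,\p}$ on $\Heeg_{K,N}^{\mathcal G_1}$ then forces $\XX_K=0$ in $\Heeg_{K,N}^{\mathcal G_1}$, hence also in $V$. But $r_\an(f/K)=1$ means $L'(f/K,k/2)\neq0$, so by Zhang's formula $\langle s_f',s_f'\rangle_\GS\neq0$ and $s_f'\neq0$ in $V_f'$; since $s_f'$ is a non-zero multiple of the image of $\XX_K$ in $V_f'$, the image of $\XX_K$ in $V=\Heeg_{K,N}\otimes_\Z\R$ is non-zero, contradicting $\XX_K=0$ in $\Heeg_{K,N}^{\mathcal G_1}$.

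For part (2), the vanishing $L'(f/K,k/2)=0$ combined with Zhang's formula gives $\langle s_f',s_f'\rangle_\GS=0$. Under the non-degeneracy hypothesis, $V'=V$, and the identification provided by Zhang of $V'$ as a subspace of $S_k(\Gamma_0(N))^{h_K}$ with the pairing proportional to the Petersson inner product shows that $\langle\cdot,\cdot\rangle_\GS$ is in fact positive-definite on $V_f'$; hence $s_f'=0$. Consequently, the $\theta_f$-isotypic projection $\XX_K^{(f)}$ of $\XX_K$ vanishes in $V_f$, and because $\theta_f$ has coefficients in $F\subset\R$ and $\R$ is faithfully flat over $\Q$, it also vanishes in $(\Heeg_{K,N})_{F_\p}[\theta_f]$. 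Since $\AJ_{K,\p}$ is Hecke-equivariant and $T_\p$ lies in the $\theta_{f,\p}$-isotypic part of $J_\p$ (so $\AJ_{K,\p}$ factors through $\theta_f$-projection), we obtain $\AJ_{K,\p}(\XX_K)=\AJ_{K,\p}(\XX_K^{(f)})=0$ in $\Lambda_\p(K)\otimes_{\cO_\p}\!F_\p$, and then in $\Lambda_\p(K)$ by torsion-freeness. Lemma \ref{lemma Heegner 3} and the fact that $(k-2)!$ is a non-zero-divisor in $\cO_\p$ give $y_{K,\p}=0$.

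The main obstacle I expect is in part (2), where one needs to upgrade the hypothesis of non-degeneracy of the height pairing on $V$ to \emph{positive} definiteness on each eigencomponent $V_f'$, in order to pass from $\langle s_f',s_f'\rangle_\GS=0$ to $s_f'=0$. This requires invoking Zhang's identification of $V'$ (as a space with pairing) with a quotient of $S_k(\Gamma_0(N))^{h_K}$ equipped with a positive multiple of the Petersson product—positivity is essential and is not an automatic consequence of non-degeneracy alone.
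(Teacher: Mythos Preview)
Your approach for both parts is the same as the paper's, and part (1) is correct as written (it is simply the paper's argument phrased contrapositively). In part (2) you have the right skeleton but two steps are not properly justified.

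\medskip

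\textbf{Passing from $\langle s_f',s_f'\rangle_{\GS}=0$ to $s_f'=0$.} Your proposed resolution---positive-definiteness of the height pairing on $V_f'$ via Zhang's identification of $V'$ with a subquotient of $S_k(\Gamma_0(N))^{h_K}$---is not supported: Zhang's Theorem~0.3.1 identifies $V'$ only as a Hecke module, and nowhere claims that the height pairing becomes the Petersson product. The actual reason (which the paper, too, leaves implicit) is orthogonality plus one-dimensionality. The Hecke operators are self-adjoint for $\langle\cdot,\cdot\rangle_{\GS}$ and the $\mathcal G_1$-action is by isometries, so under the non-degeneracy hypothesis the decompositions $V=\bigoplus_j V'_{f_j}$ and $V'_f=\bigoplus_\chi (V'_f)^\chi$ are orthogonal. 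In particular non-degeneracy on $V$ descends to non-degeneracy on the one-dimensional space $(V'_f)^{\mathcal G_1}=\R\cdot s_f'$, whence $\langle s_f',s_f'\rangle_{\GS}=0$ forces $s_f'=0$. No positivity is needed.

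\medskip

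\textbf{Transferring the vanishing from $\R$-coefficients to $F_\p$-coefficients.} Your appeal to ``$\R$ faithfully flat over $\Q$'' does not do what you need: you must move the vanishing of the $\theta_f$-projection of $\XX_K$ from real coefficients to $\bar\Q_p$-coefficients, and there is no direct map from one to the other. The paper handles this by a diagram chase through the intermediate field $\R\cap\bar\Q$: since $\XX_K$ has rational coefficients and the Hecke eigenvalues of all newforms on $\Gamma_0(N)$ are totally real algebraic, the eigenspace decomposition and the projection $\pi_f$ already exist over $\R\cap\bar\Q$. One then checks that $\pi_f(\XX_K)\in\Pi_B\Pi_\epsilon\cdot(\CH\otimes_\Z(\R\cap\bar\Q))[\theta_f]$ maps injectively to the $\R$-coefficients (where you know it is zero because $s_f'=0$) and compatibly to the $\bar\Q_p$-coefficients. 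The Abel--Jacobi map tensored with $\bar\Q_p$ factors through $\pi_f$, yielding $\AJ_{K,\p}(\XX_K)$ torsion, hence $y_{K,\p}$ torsion by Lemma~\ref{lemma Heegner 3}.
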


\begin{proof} For simplicity, set $\CH\defeq\CH^{k/2}_\arith(X/K)$. We first show (1). By Theorem \ref{zhang-thm}, $s_f'$ is non-zero because $r_\an(f/K)=1$, and so $\sum_{\sigma\in\mathcal{G}_1}S_n(x_1^\sigma)$ is non-zero as well. Thanks to Proposition \ref{prop Heegner}, $\sum_{\sigma\in\mathcal{G}_1}\tilde{\Gamma}_1^\sigma$ is non-zero in $\Pi_B\Pi_\epsilon\cdot(\CH\otimes_\Z\,\R)$, hence in $\Pi_B\Pi_\epsilon\cdot(\CH\otimes_\Z\,R)$. Let $\XX_K$ be as in \eqref{xi K}. Part (2) of Lemma \ref{lemma Heegner 1} implies that $\XX_K=(k-2)!\sum_{\sigma\in\mathcal{G}_1}\tilde{\Gamma}_1^\sigma$, so $\XX_K$ is non-torsion in $\Heeg_{K,N}^{\mathcal G_1}$. Finally, part (1) follows from Lemma \ref{lemma Heegner 3} and the injectivity of $\AJ_{K,\p}$ on $\Heeg_{K,N}^{\mathcal G_1}$, which we are assuming. 

Now we prove (2), which is more delicate. In light of Lemma \ref{lemma Heegner 3}, we need equivalently to show that $\AJ_{K,\p}(\XX_K)$ is $\cO_\p$-torsion in $\Lambda_{\p}(K)$. If $\mathcal F\in\bigl\{\R,\R\cap\bar\Q,\bar\Q_p\bigr\}$, then there is a decomposition
\begin{equation} \label{isotypic-splitting-eq}
\Pi_B\Pi_\epsilon\cdot(\CH\otimes_\Z\,\mathcal F)=\bigoplus_g\Pi_B\Pi_\epsilon\cdot(\CH\otimes_\Z\,\mathcal F)[\theta_g], 
\end{equation}
where $g$ varies over all normalized newforms in $S_k(\Gamma_0(N))$ and, with notation as in \eqref{isotypic-eq}, $\Pi_B\Pi_\epsilon\cdot\bigl(\CH\otimes_\Z\,\mathcal F\bigr)[\theta_g]$ denotes the $g$-isotypic submodule of $\Pi_B\Pi_\epsilon\cdot\bigl(\CH\otimes_\Z\,\mathcal F\bigr)$; see, \emph{e.g.}, \cite[pp. 656--657]{Nek2} for details (strictly speaking, \cite{Nek2} deals solely with $\mathcal F=\bar\Q_p$, but what one only needs is that the field $\mathcal F$ contains all Hecke eigenvalues of all normalized newforms; when $\mathcal F=\bar\Q_p$, we are implicitly using an embedding $\bar\Q\hookrightarrow\bar\Q_p$, which allows us to view complex algebraic numbers as elements of $\bar\Q_p$). It is straightforward to check that there are equalities
\begin{equation} \label{chow-eq1}
\Pi_B\Pi_\epsilon\cdot\bigl(\CH\otimes_\Z\,\bar\Q_p\bigr) =\Bigl(\Pi_B\Pi_\epsilon\cdot(\CH\otimes_\Z\,\cO_\p)\Bigr)\otimes_{\cO_\p}\!\bar\Q_p 
\end{equation}
and
\begin{equation} \label{chow-eq2}
\Pi_B\Pi_\epsilon\cdot\bigl(\CH\otimes_\Z\,\bar\Q_p\bigr)[\theta_g] =\Bigl(\Pi_B\Pi_\epsilon\cdot(\CH\otimes_\Z\,\cO_\p)[\theta_g]\Bigr)\otimes_{\cO_\p}\!\bar\Q_p 
\end{equation}
for all $g$ as above. Combining \eqref{chow-eq1} and \eqref{chow-eq2}, and using the fact that the Abel--Jacobi map $\AJ_{K,\p}$ is Hecke-equivariant and $\Lambda_{\p}(K)$ is $f$-isotypic, it follows that $\AJ_{K,\p}\otimes\id_{\bar\Q_p}$ factors through the $f$-isotypic component as 
\[ \AJ_{K,\p}\otimes\id_{\bar\Q_p}:\Pi_B\Pi_\epsilon\cdot\bigl(\CH\otimes_\Z\,\bar\Q_p\bigr)\longepi\Pi_B\Pi_\epsilon\cdot\bigl(\CH\otimes_\Z\,\bar\Q_p\bigr)[\theta_f]\longrightarrow\Lambda_{\p}(K)\otimes_{\cO_\p}\!\bar\Q_p, \]
where the first map is the projection induced by \eqref{isotypic-splitting-eq} with $\mathcal F=\bar\Q_p$. There is a commutative diagram
\begin{equation} \label{chow-commutative-eq}
\xymatrix@C=32pt@R=25pt{
\Pi_B\Pi_\epsilon\cdot\bigl(\CH\otimes_\Z\,\bar\Q_p\bigr)\ar@{->>}[r]^-{\pi_{1,f}}\ar@/^1.9pc/[rr]^-{\AJ_{K,\p}\otimes\,\id_{\bar\Q_p}}&
\Pi_B\Pi_\epsilon\cdot\bigl(\CH\otimes_\Z\,\bar\Q_p\bigr)[\theta_f]\ar[r] &\Lambda_{\p}(K)\otimes_{\cO_\p}\!\bar\Q_p\\
\Pi_B\Pi_\epsilon\cdot\bigl(\CH\otimes_\Z\,(\R\cap\bar\Q)\bigr)\ar@{->>}[r]^-{\pi_{2,f}}\ar@{^{(}->}[d]^-{\iota_2}\ar@{^{(}->}[u]^-{\iota_1}&\Pi_B\Pi_\epsilon\cdot\bigl(\CH\otimes_\Z\,(\R\cap\bar\Q)\bigr)[\theta_f]\ar@{^{(}->}[d]^-{\iota_3}\ar@{^{(}->}[u]\\
\Pi_B\Pi_\epsilon\cdot(\CH\otimes_\Z\,\R)\ar@{->>}[r]^-{\pi_{3,f}}&\Pi_B\Pi_\epsilon\cdot(\CH\otimes_\Z\,\R)[\theta_f]
}
\end{equation} 
in which the horizontal surjections are the projections induced by \eqref{isotypic-splitting-eq} and the vertical injections are given by extension of scalars. Adopting the same symbol for $\XX_K$ and for its natural image in $\Pi_B\Pi_\epsilon\cdot\bigl(\CH\otimes_\Z\,(\R\cap\bar\Q)\bigr)$, we want to show that 
\begin{equation} \label{xi-K-trivial-eq}
\bigl((\AJ_{K,\p}\otimes\id_{\bar\Q_p})\circ\iota_1\bigr)(\XX_K)=0.
\end{equation}
Since $r_\an(f/K)>1$, Theorem \ref{zhang-thm} and the non-degeneracy of ${\langle\cdot,\cdot\rangle}_\mathrm{GS}$ imply that $s_f'=0$ in $\Pi_B\Pi_\epsilon\cdot(\CH\otimes_\Z\,\R)[\theta_f]$ (\emph{cf.} Remark \ref{s_f-rem}). Comparing \eqref{w-eq}, \eqref{xi K} and \eqref{s-eq} gives the equality
\[ \iota_2(\XX_K)=\frac{\Pi_B}{c}\cdot\sum_{\sigma\in\mathcal G_1}S_k(x_1^\sigma) \]
in $\Pi_B\Pi_\epsilon\cdot(\CH\otimes_\Z\,\R)$, and then $\pi_{3,f}\bigl(\iota_2(\XX_K)\bigr)=(\Pi_B/c)\cdot s_f'=0$. Now the commutativity of the lower square in \eqref{chow-commutative-eq} and the injectivity of $\iota_3$ yield $\pi_{2,f}(\XX_K)=0$, while the commutativity of the upper square in \eqref{chow-commutative-eq} shows that $\pi_{1,f}\bigl(\iota_1(\XX_K)\bigr)=0$. This clearly implies \eqref{xi-K-trivial-eq}.

Keeping \eqref{chow-eq1} in mind, \eqref{xi-K-trivial-eq} shows that the natural image of $\AJ_{K,\p}(\XX_K)\in\Lambda_{\p}(K)$ in $\Lambda_{\p}(K)\otimes_{\cO_\p}\!\bar\Q_p$ is trivial. Finally, the map $\Lambda_{\p}(K)\otimes_{\cO_\p}\!F_\p\rightarrow\Lambda_{\p}(K)\otimes_{\cO_\p}\!\bar\Q_p$ is injective, so the image of $\AJ_{K,\p}(\XX_K)$ in $\Lambda_{\p}(K)\otimes_{\cO_\p}\!F_\p$ is trivial, which means that $\AJ_{K,\p}(\XX_K)$ is $\cO_\p$-torsion in $\Lambda_{\p}(K)$, as desired. \end{proof}

\begin{remark}
Unfortunately, while it is natural to impose a non-degeneracy condition like that in part (2) of Proposition \ref{coro zhang} when studying the arithmetic of Heegner cycles (see, \emph{e.g.}, \cite[Assumption 4.1]{Xue}), we are not aware of any result in this direction (except for weight $2$ modular forms, which are not considered in this paper).
\end{remark}

\subsection{Periods of modular forms} \label{section-PMF} 

We begin by connecting the periods $\Omega_\infty^{(\gamma)}$ from \S \ref{periodmap} to those appearing in the work of Vatsal (\cite{Vatsal}) and of Skinner--Urban (\cite{SU}). To do this, we clarify the relation between these periods and modular symbols. In what follows, we set
\begin{equation} \label{epsilon-eq}
\epsilon\defeq(-1)^{\frac{k-2}{2}}.
\end{equation}

\subsubsection{Modular symbols} \label{modular-symbols-subsubsec}

Let $R$ be a commutative ring, let $k\geq2$ be an integer, set $n\defeq k-2$ and let $L_n(R)\defeq \Sym^n(R)$ be the $R$-module of homogeneous polynomials of degree $n$ in the variables $X$ ad $Y$ with coefficients in $R$. We write $L_n(R)$ also for the corresponding locally constant sheaf on the open  modular curve $Y_\Gamma$ of level $\Gamma$, where $\Gamma\in\bigl\{\Gamma(N),\Gamma_1(N),\Gamma_0(N)\bigr\}$. 

Let 
\[ \Symb_{\Gamma}\bigl(L_n(R)\bigr)\defeq \Hom_{\Gamma}\bigl(\mathcal{D}_0,L_n(R)\bigr) \]
be the group of $\Gamma$-invariant $L_n(R)$-valued \emph{modular symbols}, where $\mathcal{D}_0$ is the group of degree $0$ divisors on $\PP^1(\Q)$ equipped with its left action of $R[\Sigma]$  
(\cite[Definition 4.6]{GS}). We also let 
\[ \Bound_{\Gamma}\bigl(L_n(R)\bigr)\defeq \Hom_{\Gamma}\bigl(\mathcal{D},L_n(R)\bigr) \] 
be the group of $\Gamma$-invariant $L_n(R)$-valued \emph{boundary symbols}, where $\mathcal{D}$ is the group of divisors on $\PP^1(\Q)$ equipped with the natural left action of $R[\Sigma]$. There is an exact sequence 
\[ 0\longrightarrow H^0\bigl(\Gamma,L_n(R)\bigr)\longrightarrow\Bound_{\Gamma}\bigl(L_n(R)\bigr)\longrightarrow\Symb_{\Gamma}\bigl(L_n(R)\bigr)\longrightarrow H^1_\mathrm{par}\bigl(\Gamma,L_n(R)\bigr)\longrightarrow 0. \]
As before, $H_\mathrm{par}^1\bigl(\Gamma,L_n(R)\bigr)$ denotes parabolic cohomology of the open modular curve $Y(\Gamma)$ of level $\Gamma$ with coefficients in the locally constant sheaf associated with $L_n(R)$ (\emph{i.e.}, the image of the compact cohomology group $H^1_\mathrm{cpt}\bigl(\Gamma,L_n(R)\bigr)=H^1_\mathrm{cpt}\bigl(Y(\Gamma),L_n(R)\bigr)$ in $H^1\bigl(\Gamma,L_n(R)\bigr)=H^1\bigl(Y(\Gamma),L_n(R)\bigr)$). Using notation from \cite[\S 4]{GS}, the map $\Symb_{\Gamma}\bigl(L_n(R)\bigr)\rightarrow H^1_\mathrm{par} \bigl(\Gamma,L_n(R)\bigr)$ sends $\Phi$ to the cohomology class represented by the $1$-cocycle $\gamma\mapsto\Phi\bigl(\{\gamma(c)\}-\{c\}\bigr)$. 

Write $\mathfrak H_k(\Gamma)$ for the Hecke algebra acting on modular forms of level $\Gamma$ and weight $k$ (in particular, for $\Gamma=\Gamma(N)$ we recover the algebra $\mathfrak H_k(\Gamma(N))$ from \S \ref{subsecmot}). By \cite[Theorem 4.2]{GS}, there is an $\mathfrak H_k(\Gamma)$-equivariant isomorphism
\begin{equation} \label{symb-isom-eq}
\Symb_{\Gamma}\bigl(L_n(R)\bigr)\simeq H^1_\mathrm{cpt}\bigl(\Gamma,L_n(R)\bigr) 
\end{equation}
such that the action of complex conjugation on $H^1_\mathrm{cpt}\bigl(\Gamma,L_n(R)\bigr)$ corresponds to the action of the matrix $\iota\defeq \smallmat{-1}{0}{0}{1}$ on $\Symb_{\Gamma}\bigl(L_n(R)\bigr)$. The map 
\begin{equation} \label{Theta-eq}
\Theta:H^1 \bigl(\Gamma,L_n(R)\bigr)\longrightarrow\Symb_{\Gamma}\bigl(L_n(R)\bigr) 
\end{equation}
defined by $\omega\mapsto \eta_q\cdot\Phi$, where $\Phi$ is any lift of $\omega$, $T_q$ is the Hecke operator at a prime number $q\equiv1\pmod{Np}$ and 
\begin{equation} \label{eta-q-eq}
\eta_q\defeq T_q-(q+1),
\end{equation}
is independent of the choice of $\Phi$ because $\eta_q$ kills $\Bound_{\Gamma}\bigl(L_n(R)\bigr)$, \emph{i.e.}, $\eta_q\cdot x=0$ for all 
$x\in\Bound_{\Gamma}\bigl(L_n(R)\bigr)$ (\cite[\S 1.6]{Vatsal}). Furthermore, the map $\Theta$ is $\mathfrak H_k(\Gamma)$-equivariant and also equivariant for the action of complex conjugation on $H^1 \bigl(\Gamma,L_n(R)\bigr)$ and for the action of $\iota$ on $\Symb_{\Gamma}\bigl(L_n(R)\bigr)$. We also write
\begin{equation} \label{Theta-eq2}
\Theta:H^1 \bigl(\Gamma,L_n(R)\bigr)\longrightarrow H^1_\mathrm{cpt}\bigl(\Gamma,L_n(R)\bigr)
\end{equation}
for the composition of \eqref{symb-isom-eq} and \eqref{Theta-eq}.

If $R=\C$, then the modular symbol $\Phi_{f,\Gamma}$ associated with $f$ is given by 
\[ \Phi_{f,\Gamma}\bigl(\{a\}-\{b\}\bigr)\defeq2\pi i\cdot\int_b^af(z)\cdot(zX+Y)^{k/2-1}dz, \]
where the integral is computed along a geodesic path (with respect to the Poincar\'e metric) from $b$ to $a$ in the complex upper half-plane. Split the $\C$-vector space $\Symb_{\Gamma}\bigl(L_n(\C)\bigr)$ into $\pm$-eigenspaces $\Symb_{\Gamma}^\pm\bigl(L_n(\C)\bigr)$ for complex conjugation, then denote by $\Phi_{f,\Gamma}^\pm$ the projections of $\Phi_{f,\Gamma}$ to the respective eigenspaces. 
Observe, in particular, that there is an isomorphism 
\begin{equation} \label{modular-isom-eq}
V_\B(-k/2)\otimes_F\C\simeq H^1 \bigl(\Gamma,L_n(\C)\bigr)[\theta_f]=\C\cdot\Phi_{f,\Gamma}^+\oplus\C\cdot\Phi_{f,\Gamma}^- 
\end{equation}
satisfying the following property: an element of $V_\B(-k/2)\otimes_F\C$ lies in the $+$-eigenspace for complex conjugation if and only if its image under \eqref{modular-isom-eq} lies in $\C\cdot\Phi_{f,\Gamma}^\epsilon$, with $\epsilon\in\{\pm1\}$ as in \eqref{epsilon-eq}. The comparison isomorphism between Betti and de Rham realizations gives rise to an isomorphism 
\[ \Comp_{\B,\dR}:\Symb_\Gamma\bigl(L_n(\C)\bigr)\overset\simeq\longrightarrow V_\dR\otimes_F\C \]
that is equivariant with respect to the action of complex conjugation. Define 
\begin{equation} \label{varphi-Gamma-eq}
\varphi_{f,\Gamma}\defeq\Comp_{\B,\dR}(\Phi_{f,\Gamma}). 
\end{equation}
Note that the generator $\omega_{f}$ of $\mathrm{Fil}^{k/2-1}(V_\dR)$ from \eqref{omega_f-eq} is sent to $(2\pi i)^{-1}\cdot\varphi_{f,\Gamma(N)}$ by the comparison isomorphism.

\subsubsection{The periods $\Omega_{f,\Gamma}^\pm$} \label{periods-subsubsec}

We need to extend the definition of periods given in \S \ref{periodmap}; we find it more convenient to work integrally throughout. For this, define 
\[ \T_{\mathrm{par},\Gamma}\defeq\im\Bigl(H^1 \bigl(\Gamma,L_n(\cO_F)\bigr) \overset{j}\longrightarrow H^1 \bigl(\Gamma,L_n(F)\bigr) \Bigr)\]
%
where $j$ is the natural map. With the usual $\pm$-notation, pick $\delta_{f,\Gamma}^\pm\in\T_{\mathrm{par},\Gamma}^\pm[\theta_f]\smallsetminus\{0\}$ and set
\[ \eta_{f,\Gamma}^\pm\defeq(\Comp_{\B,\dR}\circ\,\Theta)\bigl(\delta^{\pm}_{f,\Gamma}\bigr); \]
here $\Theta$ is the map in \eqref{Theta-eq} for $R=F$, which we view $\Theta$ as taking values in $\Symb_\Gamma\bigl(L_n(\C)\bigr)$ by means of the distinguished embedding $\iota_F:F\hookrightarrow\R$. Define periods $\Omega_{f,\Gamma}^\pm\in\C$ via the equality
\[ \varphi_{f,\Gamma}=\Omega_{f,\Gamma}^+\cdot\eta_{f,\Gamma}^++\Omega_{f,\Gamma}^-\cdot\eta_{f,\Gamma}^-, \]
where $\varphi_{f,\Gamma}$ was introduced in \eqref{varphi-Gamma-eq}.
Consider the module index
\[ \mathfrak{a}^\pm_{f,\Gamma}\defeq\bigl[\T_{\mathrm{par},\Gamma}^\pm:\delta_{f,\Gamma}^\pm\cdot\cO_F\bigr] \]
as defined, \emph{e.g.}, in \cite[p. 10]{frohlich}; by \cite[\S 3, Proposition 1, (ii)]{frohlich}, $\mathfrak{a}_{f,\Gamma}^\pm$ is a non-zero (integral) ideal of $\cO_F$. 

\begin{remark}
The periods $\Omega_{f,\Gamma}^\pm$ depend on our choice of $\delta^\pm_{f,\Gamma}$, but the products $\Omega_{f,\Gamma}^\pm\cdot\mathfrak{a}_{f,\Gamma}^\pm$, where we see $\mathfrak{a}_{f,\Gamma}^\pm$ as an $\cO_F$-submodule of $\R$ via the (set-theoretic) inclusion $\iota_F$, do not. 
\end{remark}

%

\subsubsection{Comparison of periods} \label{comparison-subsubsec}

Now we relate the period $\Omega_\infty^{(\gamma_f)}$ to the periods introduced in \S\ref{periods-subsubsec}. For any $\sigma\in\Sigma$, replacing $f$ with $f^\sigma$ and $\iota_F$ with $\sigma$ in \S\ref{periods-subsubsec}, we obtain periods $\Omega_{f^\sigma,\Gamma}^\pm$ and $\cO_F$-submodules $\mathfrak{a}_{f^\sigma,\Gamma}^\pm$ of $\R$ ($\mathfrak{a}_{f^\sigma,\Gamma}^\pm$ is endowed with an $\cO_F$-module structure via $\sigma$). Briefly,  choose $\delta_{f,\Gamma}^\pm$ as in \S\ref{periods-subsubsec}, fix $\sigma\in\Sigma$ and define $\eta_{f^\sigma,\Gamma}^\pm\defeq(\Comp_{\B,\dR}\circ\,\Theta_\sigma)\bigl(\delta^{\pm}_{f,\Gamma}\bigr)$, where $\Theta_\sigma$ is the map obtained by composing $\Theta$ with $\sigma$. Define periods $\Omega_{f^\sigma,\Gamma}^\pm\in\C^\times$ via the equality $\varphi_{f^\sigma,\Gamma}=\Omega_{f^\sigma,\Gamma}^+\eta_{f^\sigma,\Gamma}^++\Omega_{f^\sigma,\Gamma}^-\eta_{f^\sigma,\Gamma}^-$. For $\epsilon=(-1)^{k/2-1}$ as in \eqref{epsilon-eq}, set 
\begin{itemize}
\item $\delta_{f,\Gamma}\defeq\delta_{f,\Gamma}^\epsilon$; 
\item $\Omega_{f^\sigma,\Gamma}\defeq\Omega_{f^\sigma,\Gamma}^\epsilon$ for each $\sigma\in\Sigma$;
\item $\mathfrak{a}_{f,\Gamma}\defeq\mathfrak{a}_{f,\Gamma}^\epsilon$.
\end{itemize}
The proposition below provides the comparison result we need.

\begin{proposition} \label{comparison-periods-prop}
Set $\gamma_f\defeq(2\pi i)^{k/2}\cdot(\Pi_B\Pi_\epsilon)\cdot\delta_{f,\Gamma(N)}\in T_\B^+\smallsetminus\{0\}$. The equality 
\begin{equation} \label{eqper}
\Omega_\infty^{(\gamma_f)}={C}\cdot\left((2\pi i)^{\frac{k-2}{2}}\cdot\Omega_{f^\sigma,\Gamma(N)}\right)_{\sigma\in \Sigma}
\end{equation} 
holds in $F_\infty^\times$ for some $C\in F^\times$ satisfying $\ord_\lambda(C)=0$ for all primes $\lambda$ of $F$ such that $\lambda\nmid N\cdot\mathfrak{a}_{f,\Gamma(N)}$.
\end{proposition}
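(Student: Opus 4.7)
The plan is to reinterpret $\Omega_\infty^{(\gamma_f)}$ through the comparison between Betti and de Rham realizations of $\MM$ on one side and modular symbols/modular forms on the other, where the periods $\Omega_{f^\sigma,\Gamma(N)}$ live. First, I would use the canonical identifications $V_\B \simeq \Pi\cdot\mathds{V}_\B$ of \eqref{Betti-iso-V} and $V_\dR \simeq \Pi\cdot\mathds V_\dR$ of \eqref{deRham-iso-V}, together with the Eichler--Shimura isomorphism and the map $\Theta$ of \eqref{Theta-eq2}, to translate $\gamma_f$ and $\omega_f$ into the modular symbol/modular form setting. The factor $(2\pi i)^{k/2}$ appearing in the definition of $\gamma_f$ is exactly the one needed to move between the $F$-linear structure on parabolic cohomology with $L_{k-2}(F)$-coefficients, in which $\delta_{f,\Gamma(N)}$ naturally lives, and the Tate-twisted structure $F(k/2)=(2\pi i)^{k/2}F$ that enters the definition $V_\B = \Pi\cdot H^{k-1}\bigl(X(\C),F(k/2)\bigr)$. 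On the de Rham side, \eqref{deRham-iso-V} and the recipe \eqref{omega_f-eq} for $\omega_f$ show, as recalled in \S\ref{modular-symbols-subsubsec}, that $\omega_f$ corresponds to $(2\pi i)^{-1}\cdot\varphi_{f,\Gamma(N)}$ inside $\mathds{V}_\dR\otimes\C$.

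Next I would compute the determinant of $\alpha_\MM$ with respect to $\{\gamma_f\}$ and $\omega_f$. Under $\Comp_{\B,\dR}$ the modular symbol $\delta_{f,\Gamma(N)}^\epsilon$ is, by construction, sent to $\eta_{f,\Gamma(N)}^\epsilon$, and in the decomposition $\varphi_{f,\Gamma(N)}=\Omega_{f,\Gamma(N)}^+\eta_{f,\Gamma(N)}^+ + \Omega_{f,\Gamma(N)}^-\eta_{f,\Gamma(N)}^-$ the projection $\Pi_B\Pi_\epsilon$ combined with the $\theta_f$-isotypic projection isolates the $\epsilon$-component, with $\epsilon=(-1)^{(k-2)/2}$ as in \eqref{epsilon-eq}. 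Accumulating the $(2\pi i)^{k/2}$ coming from $\gamma_f$ with the $(2\pi i)^{-1}$ coming from the correspondence $\omega_f \leftrightarrow (2\pi i)^{-1}\varphi_{f,\Gamma(N)}$ yields a factor $(2\pi i)^{(k-2)/2}\cdot\Omega_{f,\Gamma(N)}$, up to an explicit $F$-rational constant. For the $\sigma$-components I would invoke Remark \ref{deligne-rem2}: the isomorphism $F_\infty\simeq\R^\Sigma$ decomposes $\alpha_\MM$ as $\prod_\sigma\alpha_{\MM,\sigma}$, and $\alpha_{\MM,\sigma}$ is obtained by substituting $\sigma$ for the distinguished embedding $\iota_F$ throughout, which replaces $f$ by its conjugate $f^\sigma$ and therefore returns $(2\pi i)^{(k-2)/2}\cdot\Omega_{f^\sigma,\Gamma(N)}$ in the $\sigma$-slot; assembling these over $\sigma\in\Sigma$ yields the right-hand side of \eqref{eqper}.

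The hard part will be pinning down the integrality of the error constant $C\in F^\times$. The deviation $C$ has two sources: (i)~the element $\delta_{f,\Gamma(N)}$ is merely a non-zero element of $\T_{\mathrm{par},\Gamma(N)}^\epsilon[\theta_f]$, rather than an $\cO_F$-generator of that $\cO_F$-module, and the module index $\mathfrak{a}_{f,\Gamma(N)}$ measures precisely that gap; and (ii)~the identifications $V_\B\simeq\Pi\cdot\mathds{V}_\B$ and $V_\dR\simeq\Pi\cdot\mathds V_\dR$ need not preserve $\cO_F$-integral structures at primes dividing $N$, since the projectors $\Pi_B$ and $\Pi_\epsilon$ involve inverting $2Nt_N(k-2)!$, and the Hecke operator $\eta_q=T_q-(q+1)$ from \eqref{eta-q-eq} used to define $\Theta$ acts invertibly on the $\theta_f$-isotypic part only up to factors supported at primes dividing $N$. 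A local analysis at each prime $\lambda$ of $\cO_F$ outside the support of $N\cdot\mathfrak{a}_{f,\Gamma(N)}$ would show that both contributions are $\lambda$-adic units there, giving $\ord_\lambda(C)=0$, which is the content of the proposition.
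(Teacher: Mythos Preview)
Your approach follows the right thread—the identifications \eqref{Betti-iso-V} and \eqref{deRham-iso-V}, the relation $\omega_f\leftrightarrow(2\pi i)^{-1}\varphi_{f,\Gamma(N)}$, the bookkeeping of powers of $2\pi i$, and the two sources of the rational constant $C$—and it differs from the paper mainly in that the paper does not unwind $\alpha_\MM$ by hand but simply quotes \cite[Lemma~4.1]{DSW}, whose proof in turn rests on the duality argument of \cite[\S1.7]{deligne-valeurs}.

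There is, however, one step you gloss over that is exactly the content of that duality argument. The period $\Omega_\infty^{(\gamma_f)}$ is the determinant of $\alpha_\MM:V_{\B,\infty}^+\to t(\MM)_\infty$ computed against $\gamma_f$ on the source and an integral generator of $t(\MM)=V_\dR/\Fil^0(V_\dR)$ on the target. But $\omega_f$ spans $\Fil^{k/2-1}(V_\dR)=\Fil^0(V_\dR)$, so its \emph{literal} image in $t(\MM)$ is zero; the decomposition $\varphi_{f,\Gamma(N)}=\Omega^+\eta^++\Omega^-\eta^-$ you invoke lives inside $V_\dR\otimes\C$, and projecting it to $t(\MM)$ kills $\varphi_f$ rather than isolating $\Omega^\epsilon$. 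What makes the comparison go through is the self-duality of $\MM$: Poincar\'e duality identifies $t(\MM)$ with $\bigl(\Fil^0 V_\dR\bigr)^*$, so that $\omega_f$ becomes a linear functional on $t(\MM)$ and $\Omega_\infty^{(\gamma_f)}$ is, up to normalisation, the pairing $\langle\gamma_f,\omega_f\rangle$. Under this pairing the decomposition of $\varphi_f$ does extract $\Omega^\epsilon_{f,\Gamma(N)}$ with the factor $(2\pi i)^{(k-2)/2}$ you predicted. This is precisely Deligne's trick, packaged by \cite{DSW}; your direct computation would need to incorporate it explicitly to be complete. Your analysis of the integrality of $C$ (the module index $\mathfrak a_{f,\Gamma(N)}$ together with primes dividing $N$ coming from the projectors $\Pi_B\Pi_\epsilon$ and from $\eta_q$) is correct and agrees with what the paper absorbs into the citation.
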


\begin{proof} We use the argument in \cite[Lemma 4.1]{DSW}, which follows a duality argument from \cite[\S1.7]{deligne-valeurs}. Fix $\sigma\in\Sigma$ and denote by $\Omega_{\infty,\sigma}^{(\gamma_f)}$ the determinant of the comparison isomorphism 
\[ V_\B^\epsilon\otimes_{F,\sigma}\C\overset\simeq\longrightarrow\bigl(V_\dR/\Fil^0(V_\dR)\bigr)\otimes_{F,\sigma}\C \] 
computed with respect to the lattice $\cO_F\cdot\gamma_f$ and the image of $T_\dR$, which is generated by $\omega_{f^\sigma}$; thus, by definition, $\Omega_\infty^{(\gamma_f)}=\bigl(\Omega_{\infty,\sigma}^{(\gamma_f)}\bigr)_{\sigma\in\Sigma}$. In the notation of \cite{DSW}, we have $\Omega_{\infty,\sigma}^{(\gamma_f)}=\vol_\infty$ (observe that in \cite{DSW} the term $\vol_\infty$ is the determinant of the isomorphism 
\[ V_\B^\epsilon(-k/2)\otimes_{F,\sigma}\C\overset\simeq\longrightarrow\bigl(V_\dR/\Fil^0(V_\dR)\bigr)\otimes_{F,\sigma}\C \]
computed with respect to the lattice $\cO_F\cdot\delta_{f,\Gamma(N)}$ and the image of $T_\dR$ multiplied by $(2\pi i)^{k/2}$, and the last factor $(2\pi i)^{k/2}$ is taken into account directly by the twist isomorphism between $V_\B^\pm$ and $V_\B^\pm(-k/2)$). We view $\mathfrak{a}_{f^\sigma,\Gamma(N)}=\sigma(\mathfrak{a}_{f,\Gamma(N)})$ as an $\cO_F$-submodule of $\R$ via $\sigma$. By \cite[Lemma 4.1]{DSW}, there is an equality of sets 
\[ \Omega_{\infty,\sigma}^{(\gamma_f)}\cdot\mathfrak{a}_{f^\sigma,\Gamma(N)}^{-1}=\sigma(C_0)\cdot(2\pi i)^{k/2-1}\cdot\Omega_{f^\sigma,\Gamma(N)}\cdot\mathfrak{a}_{f^\sigma,\Gamma(N)} \] 
for a suitable $C_0\in F^\times$, independent of $\sigma$, with $\ord_\lambda(C_0)=0$ for all primes $\lambda$ of $F$ such that $\lambda\nmid N$ (note that the term $\Omega_\pm$ in \cite{DSW} is defined by comparison with $(2\pi i)^{-1}\varphi_{f^\sigma}^\pm$, which explains the power of $2\pi i$ appearing in \eqref{eqper}). Therefore, we get an equality 
\[ \Omega_{\infty,\sigma}^{(\gamma_f)}=\sigma(C)\cdot(2\pi i)^{k/2-1}\cdot\Omega_{f^\sigma,\Gamma(N)} \]
for some $C\in F^\times$ with the property that $\ord_\lambda(C)=0$ for all primes $\lambda$ of $F$ such that $\lambda\nmid N\cdot\mathfrak{a}_{f,\Gamma(N)}$, as desired. \end{proof}

\subsubsection{Special values and their algebraic parts} 
For each $\sigma$ set $\Omega_{f^\sigma}=\Omega_{f,\Gamma_0(N)}^\epsilon$ and denote $\Omega_f=\Omega_{f^{\iota_F}}$ (as before, $\epsilon$ is as in \eqref{epsilon-eq}). 
The special value of $L(f,s)$ at $s=k/2$ is described (see, \emph{e.g.}, \cite[Ch. I, \S7]{MTT}; \emph{cf.} also \cite[\S5]{DSW}) by the formula 
\begin{equation} \label{MTT}
L(f,k/2)=\frac{(2\pi)^{k/2}}{(k/2-1)!}\cdot\int_{0}^{\infty}f(it)t^{\frac{k-2}{2}}dt.
\end{equation}
Let us define the \emph{algebraic part} of $L(f,k/2)$ as
\begin{equation} \label{algebraic-part-eq}
L^\alg(f,k/2)\defeq\frac{L(f,k/2)}{(2\pi i)^{k/2-1}\cdot\Omega_{f}}; 
\end{equation}
it is well known that $L^\alg(f,k/2)$ belongs to $F$. The period $\Omega_f$ depends on $\delta_{f,\Gamma_0(N)}^\epsilon$, so the same is true of $L^\alg(f,k/2)$.

\subsubsection{Algebraic parts of special values and real embeddings}

The algebraic part $L^\alg(f,k/2)$ in \eqref{algebraic-part-eq} belongs to $F$, so it makes sense to consider $\sigma\bigl(L^\alg(f,k/2)\bigr)$ for $\sigma\in\Sigma$.

\begin{proposition} \label{speciavalueslemma} 
For each $\sigma\in\Sigma$, there is an equality 
\[ \sigma\bigl(L^\alg(f,k/2)\bigr)=L^\alg(f^\sigma,k/2). \]
\end{proposition}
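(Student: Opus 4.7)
The plan is to rewrite $L(f,k/2)$ as a coefficient of the $\epsilon$-part of the modular symbol $\Phi_{f,\Gamma_0(N)}$ evaluated on $\{0\}-\{i\infty\}$, and then to exploit the fact that the ingredients in the definition of $\Omega_{f^\sigma}$ in \S \ref{comparison-subsubsec} all arise from the same $F$-rational element $\delta_{f,\Gamma_0(N)}^\epsilon$, with $\sigma$ applied only at the very last step. This is a classical computation, essentially due to Shimura, which I would adapt to the integral normalizations used in \S \ref{periods-subsubsec}.

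First I would start from formula \eqref{MTT} and the change of variable $z=it$ to express $L(f,k/2)/(2\pi i)^{k/2-1}$ as a non-zero rational multiple (depending only on $k$) of the coefficient of $X^{k/2-1}Y^{k/2-1}$ in the polynomial $\Phi_{f,\Gamma_0(N)}\bigl(\{0\}-\{i\infty\}\bigr)$. Since the matrix $\iota=\smallmat{-1}{0}{0}{1}$ acts on the divisor $\{0\}-\{i\infty\}$ and on $X^{k/2-1}Y^{k/2-1}$ yielding an overall sign $\epsilon=(-1)^{k/2-1}$ as in \eqref{epsilon-eq}, only the $\epsilon$-component $\Phi_{f,\Gamma_0(N)}^\epsilon$ contributes to this coefficient. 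Next, by the definition of $\Omega_f=\Omega_{f,\Gamma_0(N)}^\epsilon$ in \S \ref{periods-subsubsec}, dividing $\Phi_{f,\Gamma_0(N)}^\epsilon$ by $\Omega_f$ produces $\eta_{f,\Gamma_0(N)}^\epsilon=(\Comp_{\B,\dR}\circ\Theta)(\delta_{f,\Gamma_0(N)}^\epsilon)$; because the Hecke operator $\eta_q=T_q-(q+1)$ of \eqref{eta-q-eq} acts on the $\theta_f$-eigenspace as the non-zero scalar $a_q(f)-q-1\in F^\times$, the extracted coefficient is a non-zero $F$-rational multiple of the coefficient of $X^{k/2-1}Y^{k/2-1}$ in $\delta_{f,\Gamma_0(N)}^\epsilon\bigl(\{0\}-\{i\infty\}\bigr)\in L_{k-2}(F)$. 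This would confirm in particular that $L^\alg(f,k/2)\in F$.

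The $\sigma$-equivariance would then follow directly from the way $\Omega_{f^\sigma}$ is defined in \S \ref{comparison-subsubsec}: the symbol $\eta_{f^\sigma,\Gamma_0(N)}^\epsilon$ is built from the \emph{same} $\delta_{f,\Gamma_0(N)}^\epsilon$ through $\Theta_\sigma=\sigma\circ\Theta$, which transports Hecke eigenvalues by $\sigma$, while $L(f^\sigma,s)$ has Dirichlet series with coefficients $\sigma\bigl(a_n(f)\bigr)$. Running the computation above for $f^\sigma$ replaces $L(f,k/2)$ by $L(f^\sigma,k/2)$, the scalar $a_q(f)-q-1$ by its $\sigma$-image, and the coefficient of $\delta_{f,\Gamma_0(N)}^\epsilon\bigl(\{0\}-\{i\infty\}\bigr)$ by its $\sigma$-image; all three changes correspond precisely to applying $\sigma$ to the $F$-rational expression for $L^\alg(f,k/2)$ produced in the previous step, yielding the desired equality $L^\alg(f^\sigma,k/2)=\sigma\bigl(L^\alg(f,k/2)\bigr)$.

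The hard part will be the careful bookkeeping of the various $F^\times$-ambiguities coming from the choice of $\delta_{f,\Gamma_0(N)}^\epsilon$, the auxiliary prime $q\equiv1\pmod{Np}$, and the explicit rational constants extracted from the integral formulas relating $\Phi_{f,\Gamma_0(N)}\bigl(\{0\}-\{i\infty\}\bigr)$ to $L(f,k/2)$. Everything should cancel cleanly once the definition of $\Omega_{f^\sigma}$ in \S \ref{comparison-subsubsec} is fully unpacked, precisely because the same $\delta_{f,\Gamma_0(N)}^\epsilon$ is used for every $\sigma\in\Sigma$, with only the map $\Theta$ being replaced by $\Theta_\sigma$ to absorb the change of Hecke eigenvalues.
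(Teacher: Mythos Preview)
Your proposal is correct and follows essentially the same route as the paper's proof: both express $L^\alg(f^\sigma,k/2)$ as a specific coefficient of the modular symbol $\Phi_{f^\sigma,\Gamma_0(N)}$ evaluated at $\{i\infty\}-\{0\}$, use that only the $\epsilon$-eigenspace contributes, and then reduce the $\sigma$-equivariance to the fact that $\eta_{f^\sigma}^\epsilon$ is built from the \emph{same} $\delta_{f,\Gamma_0(N)}^\epsilon$ via $\Theta_\sigma=\sigma\circ\Theta$, with the Hecke scalar $\theta_f(\eta_q)$ transported by $\sigma$ to $\theta_{f^\sigma}(\eta_q)$. The paper packages the last step as the commutativity of a small square of cohomology groups, which is exactly the content of your observation that $\Theta_\sigma$ absorbs the change of Hecke eigenvalues.
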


\begin{proof} For $P\in L_n(R)$ we write $P(X,Y)=\sum_{j=0}^{k-2}r_j(P)X^jY^{k-2-j}$. 
From \eqref{MTT} we get
\[L^\alg(f^\sigma,k/2)=\frac{(-i)^{k/2}}{(2\pi i)\cdot (k/2-1)!\cdot\Omega_{f^\sigma}}\cdot\int_{0}^{\infty}f^\sigma(it)t^{\frac{k-2}{2}}dt.\]
Since $f^\sigma$ has real Fourier coefficients, the integral belongs to the $\epsilon$-eigenspace for complex conjugation. Thus, taking the definition of modular symbols into account, we obtain an equality
\[ L^\alg(f^\sigma,k/2)=\frac{r_{\frac{k-2}{2}}\Bigl(\varphi_{f^\sigma}\bigl(\{i\infty\}-\{0\}\bigr)\!\Bigr)}{(k/2-1)!\cdot\Omega_{f^\sigma}}. \]
Let $\eta_q$ be the Hecke element introduced in \eqref{eta-q-eq}. Since $\sigma\bigl(\theta_f(\eta_q)\bigr)=\theta_{f^\sigma}(\eta_q)$ by definition, it suffices to show that 
$\sigma\bigl(\Theta(\delta_f)\bigr)=\Theta(\delta_{f^\sigma})$. Thus, we are reduced to showing that the square
\[
\xymatrix{
H^1_\mathrm{cpt}\bigl(\Gamma_0(N),L_n(F)\bigr) \ar[d]^-\sigma & H^1_\B \bigl(\Gamma_0(N),L_n(F)\bigr) \ar[d]^-\sigma\ar[l]_-\Theta\\
H^1_\mathrm{cpt}\bigl(\Gamma_0(N),L_n(\C)\bigr) & H^1_\B \bigl(\Gamma_0(N),L_n(\C)\bigr) \ar[l]_-\Theta
}
\]
is commutative (here $\Theta$ stands for the map in \eqref{Theta-eq2}). This is an immediate consequence of the definition of $\Theta$ and the fact that $\sigma$ commutes with Hecke operators. \end{proof}

\subsubsection{A comparison of periods} \label{sec1.4.4}

Now we make our choice of $\delta_{f,\Gamma}^\pm$ more precise. Specifically, we choose elements
$\delta_{f,\Gamma}^\pm\in T_{\B,\Gamma}^\pm\smallsetminus\{0\}$ such that their natural images in 
$T_{\B,\Gamma}^\pm\otimes_\Z\cO_p$ generate this free $\cO_p$-module of rank $1$. At the cost of discarding finitely many primes $p$, one can proceed as follows. Take $\delta_{f,\Gamma}^\pm\in T_{\B,\Gamma}^\pm\smallsetminus\{0\}$ and recall the module index $\mathfrak{a}^\pm_{f,\Gamma}$ from \S \ref{periods-subsubsec}, which is an ideal of $\cO_F$ defined in terms of $\delta_{f,\Gamma}^\pm$. Let $\mathrm{N}\bigl(\mathfrak{a}_{f,\Gamma}^\pm\bigr)\defeq\#\bigl(\cO_F/\mathfrak{a}_{f,\Gamma}^\pm\bigr)$ be the norm of $\mathfrak{a}_{f,\Gamma}^\pm$. Basic properties of the module index (see, \emph{e.g.}, \cite[\S 3]{frohlich}) allow one to check that if $p\nmid\mathrm{N}\bigl(\mathfrak{a}_{f,\Gamma}^\pm\bigr)$, then the image of $\delta_{f,\Gamma}^\pm$ in $T_{\B,\Gamma}^\pm\otimes_\Z\cO_p$ generates $T_{\B,\Gamma}^\pm\otimes_\Z\cO_p$.

Thus, we assume that $p\nmid\mathrm{N}\bigl(\mathfrak{a}_{f,\Gamma(N)}^\pm\bigr)$; we want to compare the periods $\Omega_{f,\Gamma(N)}$, $\Omega_{f,\Gamma_1(N)}$, $\Omega_{f,\Gamma_0(N)}$ (here we are especially interested in comparing $\Omega_{f,\Gamma_1(N)}$ and $\Omega_{f,\Gamma_0(N)}$). Before doing this, we need to fix some more notation: for each prime $\p$ of $F$ above $p$, denote by $\cO_{(\p)}$ the localization of $\cO_F$ at $\p$, then set $\cO_{(p)}\defeq\prod_{\p|p}\cO_{(\p)}$.

\begin{proposition} \label{comparison}
The periods $\Omega_{f,\Gamma(N)}$, $\Omega_{f,\Gamma_1(N)}$, $\Omega_{f,\Gamma_0(N)}$ differ pairwise by multiplication by elements of $\cO_{({p})}^\times$. 
\end{proposition}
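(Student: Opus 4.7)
The plan is to exploit the natural maps between parabolic cohomology groups at the three levels $\Gamma(N) \subset \Gamma_1(N) \subset \Gamma_0(N)$ and show that these induce isomorphisms on the $f$-isotypic components after localization at $p$. The indices of these inclusions divide a power of $N \cdot \phi(N)$, and since $p \nmid N$ by part \eqref{p-f-ass} of Assumption \ref{ass} (and after possibly discarding finitely many primes dividing $\phi(N)$, which is allowed by the implicit convention of the statement), these indices are units in $\cO_{(p)}$. In particular, the projector $\Pi_B$ used to define $\gamma_f = (2\pi i)^{k/2}(\Pi_B\Pi_\epsilon)\cdot\delta_{f,\Gamma(N)}$ at the start of \S \ref{comparison-subsubsec} makes sense integrally at $p$.

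First, for $\Gamma \subset \Gamma'$ chosen from $\{\Gamma(N),\Gamma_1(N),\Gamma_0(N)\}$, restriction and corestriction yield Hecke- and $\tau$-equivariant maps
\[ H^1\bigl(\Gamma',L_n(R)\bigr) \longrightarrow H^1\bigl(\Gamma,L_n(R)\bigr)^{\Gamma'/\Gamma} \longrightarrow H^1\bigl(\Gamma',L_n(R)\bigr) \]
whose composition is multiplication by $[\Gamma':\Gamma]$. Since $f$ is a newform for $\Gamma_0(N)$, by Eichler--Shimura theory the $f$-isotypic and sign components $T_{\B,\Gamma}^\pm[\theta_f]$ are lattices in $1$-dimensional $F$-vector spaces on each side. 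Inverting $[\Gamma':\Gamma]$ (hence, in particular, after tensoring with $\cO_{(p)}$), the displayed maps become mutually inverse isomorphisms of $\cO_{(p)}$-modules
\[ T_{\B,\Gamma'}^\pm[\theta_f] \otimes_\Z \cO_{(p)} \overset\simeq\longleftrightarrow \bigl(T_{\B,\Gamma}^\pm[\theta_f]\bigr)^{\Gamma'/\Gamma} \otimes_\Z \cO_{(p)}. \]
Therefore the generators $\delta_{f,\Gamma}^\pm$ (which, by the choice made in \S \ref{sec1.4.4}, generate these lattices after $\otimes\cO_{(p)}$) are identified up to multiplication by elements of $\cO_{(p)}^\times$.

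Next, I would track the modular symbol $\Phi_{f,\Gamma}$ under these maps: by its explicit definition via integration along geodesics, one has $\Phi_{f,\Gamma} = \Phi_{f,\Gamma'}|_{\mathcal{D}_0}$ whenever $\Gamma \subset \Gamma'$, and the corresponding $\varphi_{f,\Gamma}$ corresponds to the same intrinsic differential form $\omega_f$ (up to the same type of index factor, which is a unit in $\cO_{(p)}$). Applying $\Theta$ (the Hecke operator $\eta_q$ for a common auxiliary prime $q\equiv 1\pmod{Np}$) commutes with restriction/corestriction, so the image of $\delta_{f,\Gamma'}^\pm$ under the comparison isomorphism composed with $\Theta$ equals, up to $\cO_{(p)}^\times$, the corresponding image of $\delta_{f,\Gamma}^\pm$. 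Equating the two expressions $\varphi_{f,\Gamma} = \Omega_{f,\Gamma}^+\eta_{f,\Gamma}^+ + \Omega_{f,\Gamma}^-\eta_{f,\Gamma}^-$ and $\varphi_{f,\Gamma'} = \Omega_{f,\Gamma'}^+\eta_{f,\Gamma'}^+ + \Omega_{f,\Gamma'}^-\eta_{f,\Gamma'}^-$ under this identification yields the desired pairwise equalities $\Omega_{f,\Gamma}^\pm = u \cdot \Omega_{f,\Gamma'}^\pm$ for some $u \in \cO_{(p)}^\times$.

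The main technical obstacle is the careful verification that the natural restriction/corestriction morphisms between parabolic cohomology groups respect integral structures at $p$ (equivalently, that the relevant indices $[\Gamma':\Gamma]$ lie in $\cO_{(p)}^\times$), together with the normalization checks needed to pass from multiplicities for $\Phi_{f,\Gamma}$ at different levels to actual equalities up to $p$-adic units. These computations are of the same nature as those carried out in Proposition \ref{comparison-periods-prop} to pass between $\Omega_\infty^{(\gamma_f)}$ and $\Omega_{f,\Gamma(N)}$, and reduce to the basic Eichler--Shimura identification of $\Pi_B\Pi_\epsilon$ acting on $H^1_\mathrm{par}\bigl(Y(N),\mathcal F_\B^{k-2}(F)\bigr)[\theta_f]$ with a suitable summand at level $\Gamma_0(N)$.
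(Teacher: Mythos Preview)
Your restriction--corestriction approach is genuinely different from the paper's, and more elementary in spirit. The paper does not invert the index $[\Gamma':\Gamma]$; instead it localizes modular symbols at the maximal ideal $\mathfrak{m}_\Gamma$ of the Hecke algebra lying over $(\theta_f,\p)$, notes that boundary symbols vanish after this localization (the Hecke action on them is Eisenstein), so that $\Symb_{\Gamma}\bigl(L_n(\cO_{(\p)})\bigr)_{\mathfrak{m}_\Gamma}\simeq H^1_\B\bigl(\Gamma,L_n(\cO_{(\p)})\bigr)_{\mathfrak{m}_\Gamma}$, and then proves that the inclusion $\Symb_{\Gamma_2}\bigl(L_n(\cO_{(\p)})\bigr)_{\mathfrak{m}_{\Gamma_2}}\hookrightarrow\Symb_{\Gamma_1}\bigl(L_n(\cO_{(\p)})\bigr)_{\mathfrak{m}_{\Gamma_1}}$ is an isomorphism by a Nakayama reduction to $\F_\p$-coefficients together with a newform contradiction: an element of the target not fixed by $\Gamma_2/\Gamma_1$ would produce a second eigen-symbol with the same Hecke eigenvalues as $f$ but non-trivial nebentype, which is impossible since $f$ is new.

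The substantive difference in scope is your need for $p\nmid\phi(N)=[\Gamma_0(N):\Gamma_1(N)]$. This is \emph{not} among the paper's standing hypotheses (Assumption~\ref{ass} only asks $p\nmid 6ND_Fc_f$, and \S\ref{sec1.4.4} adds $p\nmid\mathrm{N}(\mathfrak a_{f,\Gamma(N)})$), and the proposition is invoked downstream (e.g.\ in Theorem~\ref{skinner-urban-thm}) without any such restriction; your parenthetical that discarding primes dividing $\phi(N)$ is ``allowed by the implicit convention'' overstates what the paper permits. Under the extra hypothesis $p\nmid\phi(N)$ your argument is correct and short---the key observation that the $f$-isotypic lattice at level $\Gamma$ already lies in the $\Gamma'/\Gamma$-invariants (trivial nebentype) is exactly what makes $\mathrm{res}$ and $[\Gamma':\Gamma]^{-1}\,\mathrm{cor}$ mutually inverse on these rank-one $\cO_{(p)}$-modules, and the tracking of $\Phi_{f,\Gamma}$ and $\Theta(\delta_{f,\Gamma}^\pm)$ through restriction is fine. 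But as written you establish a slightly weaker statement than the paper's: the Hecke-theoretic route trades elementariness for the ability to handle $p\mid\phi(N)$ via the newform and Eisenstein structure rather than crude index divisibility.
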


\begin{proof} Let $(\Gamma_1,\Gamma_2)\in\bigl\{(\Gamma(N),\Gamma_1(N)),(\Gamma(N),\Gamma_0(N)),(\Gamma_1(N),\Gamma_0(N))\bigr\}$. Recall that the $\C$-vector space $S_k(\Gamma_2)$ is isomorphic to the $\mathfrak H_k(\Gamma_1)$-submodule of $S_k(\Gamma_1)$ consisting of those forms on which $\Gamma_2/\Gamma_1$ acts via the trivial character. Therefore, there is a canonical map $\mathfrak H_k(\Gamma_1)\rightarrow\mathfrak H_k(\Gamma_2)$, so any $\mathfrak H_k(\Gamma_2)$-module is also equipped with a structure of $\mathfrak H_k(\Gamma_1)$-module by means of this map. There is a commutative diagram of $\mathfrak H_k(\Gamma_1)$-modules with exact rows
\[
\xymatrix{
\Bound_{\Gamma_2}\bigl(L_n(\cO_{({ p})})\bigr)\ar[r]\ar[d] & 
\Symb_{\Gamma_2}\bigl(L_n(\cO_{({ p})})\bigr)\ar[r] \ar[d]&
H^1_\B \bigl(\Gamma_2,L_n(\mathcal{O}_{({ p})})\bigr) \ar[d]\ar[r]&0\\
\Bound_{\Gamma_1}\bigl(L_n(\cO_{({ p})})\bigr)\ar[r] & 
\Symb_{\Gamma_1}\bigl(L_n(\cO_{({ p})})\bigr)\ar[r] &
H^1_\B \bigl(\Gamma_1,L_n(\cO_{({ p})})\bigr)\ar[r]&0
}
\] 
in which the vertical arrows are induced by restriction in cohomology. 

Let $\p$ be a prime of $F$ above $p$, denote by $\F_{\p}$ the residue field of $F$ at ${\p}$ and let $\Gamma\in\bigl\{\Gamma(N),\Gamma_1(N),\Gamma_0(N)\bigr\}$; there is a canonical map $\mathfrak H_k(\Gamma)\rightarrow\F_{\p}$, whose kernel will be denoted by $\mathfrak{m}_\Gamma$. If $M$ is an $\mathfrak H_k(\Gamma)$-module, then we write $M_{\mathfrak{m}_\Gamma}$ for the localization of $M$ at $\mathfrak{m}_\Gamma$. Then 
\[ \Bound_\Gamma\bigl(L_n(\cO_{({\p})})\bigr)_{\mathfrak{m}_\Gamma}=0 \]
because the action of $\mathfrak H_k(\Gamma)$ on boundary symbols is Eisenstein, so we get a commutative square of $\mathfrak H_k(\Gamma)$-modules
\begin{equation}\label{diagram-symb}
\xymatrix{
\Symb_{\Gamma_2}\bigl(L_n(\cO_{({\p})})\bigr)_{\mathfrak{m}_{\Gamma_2}}\ar[r]^-\simeq \ar[d]&
H^1_\B \bigl(\Gamma_2,L_n(\cO_{({\p})})\bigr)_{\mathfrak{m}_{\Gamma_2}} \ar[d]\\
\Symb_{\Gamma_1}\bigl(L_n(\cO_{({\p})})\bigr)_{\mathfrak{m}_{\Gamma_1}}\ar[r]^-\simeq &
H^1_\B \bigl(\Gamma_1,L_n(\cO_{({\p})})\bigr)_{\mathfrak{m}_{\Gamma_1}}
}
\end{equation} 
in which the horizontal maps are isomorphisms. Now we prove that the right vertical arrow is an isomorphism; to do this, we show that the map of free $\cO_{({\p})}$-modules 
\begin{equation} \label{star}
\Hom_{\Gamma_2}\bigl(\mathcal{D}_0,L_n(\cO_{({\p})})\bigr)_{\mathfrak{m}_{\Gamma_2}}\longrightarrow\Hom_{\Gamma_1}\bigl(\mathcal{D}_0,L_n(\cO_{({\p})})\bigr)_{\mathfrak{m}_{\Gamma_1}}\end{equation} 
is an isomorphism
(note that $\Hom_{\Gamma}\bigl(\mathcal{D}_0,L_n(\cO_{({\p})})\bigr)_{\mathfrak{m}_\Gamma}$ is a finitely generated torsion-free, and hence free, $\cO_{({\p})}$-module). The map $\Hom_{\Gamma_2}\bigl(\mathcal{D}_0,L_n(\cO_{({\p})})\bigr)\rightarrow\Hom_{\Gamma_1}\bigl(\mathcal{D}_0,L_n(\cO_{({\p})})\bigr)$ is injective, so \eqref{star} is injective, as localization is a flat operation. 
By Nakayama's lemma, it suffices to show that the map 
\begin{equation} \label{starFp}
\Hom_{\Gamma_2}\bigl(\mathcal{D}_0,L_n(\F_{\p})\bigr)[{\mathfrak{m}_{\Gamma_2}}]\longrightarrow \Hom_{\Gamma_1}\bigl(\mathcal{D}_0,L_n(\F_{\p})\bigr)[{\mathfrak{m}_{\Gamma_1}}]
\end{equation} 
is surjective. If this map is not surjective, then there is $\phi\in\Hom_{\Gamma_1}\bigl(\mathcal{D}_0,L_n(\F_{\p})\bigr)[{\mathfrak{m}_{\Gamma_1}}]$ on which $\Gamma_2/\Gamma_1$ acts via a non-trivial character $\varepsilon$; by Nakayama's lemma, there is a non-zero
$\Phi\in\Hom_{\Gamma_1}\bigl(\mathcal{D}_0,L_n(\cO_{({\p})})\bigr)_{\mathfrak{m}_{\Gamma_1}}[\varepsilon]$ mapping to $\phi$ under the canonical surjection $\cO_{({\p})}\twoheadrightarrow \F_{\p}$. Therefore, we obtain two modular symbols $\Phi_{f,\Gamma_1}$ and $\Phi$ in $\Symb_{\Gamma_1}\bigl(L_n(\cO_{({\p})})\bigr)_{\mathfrak{m}_{\Gamma_1}}$ that are distinct (because $\varepsilon$ is non-trivial) and share the eigenvalues for the action of $\mathfrak H_k(\Gamma_1)$. The images of $\Phi_{f,\Gamma_1}$ and $\Phi$ in $H^1_\B \bigl(\Gamma_1,L_n(\cO_{({\p})})\bigr)_{\mathfrak{m}_{\Gamma_1}}$ coincide because $f$ is a newform and then, in light of \eqref{diagram-symb}, we deduce that $\Phi=\Phi_{f,\Gamma_1}$: this contradiction proves the surjectivity of \eqref{starFp}, whence the surjectivity of \eqref{star}. Using \eqref{diagram-symb}, we conclude that there is an isomorphism of $\cO_{({\p})}$-modules 
\[ H^1_\B \bigl(\Gamma_2,L_n(\cO_{({\p})})\bigr)_{\mathfrak{m}_{\Gamma_2}}\simeq H^1_\B \bigl(\Gamma_1,L_n(\cO_{({\p})})\bigr)_{\mathfrak{m}_{\Gamma_1}}. \] 
Upon taking $\pm$-eigenspaces for complex conjugation and $\mathfrak{m}_{\Gamma_2}$- and
$\mathfrak{m}_{\Gamma_1}$-torsion submodules, respectively, we get an isomorphism 
\[ H^1_\B \bigl(\Gamma_2,L_n(\cO_{({\p})})\bigr)_{\mathfrak{m}_{\Gamma_2}}\!\bigl[\theta_{f,\Gamma_2},\pm\bigr]\simeq H^1_\B \bigl(\Gamma_1,L_n(\cO_{({\p})})\bigr)_{\mathfrak{m}_{\Gamma_1}}\!\bigl[\theta_{f,\Gamma_1},\pm\bigr] \] 
of free $\cO_{({\p})}$-modules of rank $1$, where $\theta_{f,\Gamma_2}$ and $\theta_{f,\Gamma_1}$ denote the two ring homomorphisms associated with $f$. Therefore, $\gamma_{f,\Gamma_1}^\pm$ and the image of $\gamma_{f,\Gamma_2}^\pm$ in $H^1_\B \bigl(\Gamma_1,L_n(\cO_{({\p})})\bigr)$ are both generators of these free $\cO_{({\p})}$-modules, which implies that the periods $\Omega_{f,\Gamma_2}$ and $\Omega_{f,\Gamma_1}$ differ by a unit of $\cO_{({\p})}$, as was to be shown. \end{proof}

\begin{remark}
When the modular form $f$ has weight $2$ (a case that we have excluded from the outset), Proposition \ref{comparison} is proved in \cite[Lemma 9.4]{SZ} by different arguments. More precisely, the proof of \cite[Lemma 9.4]{SZ} uses in a crucial way Eisenstein properties of the Shimura subgroup, which in our higher weight context are replaced by Eisenstein properties of modular symbols.  
\end{remark}

From here on, as in \S \ref{comparison-subsubsec}, we set $\delta_{f,\Gamma(N)}\defeq\delta_{f,\Gamma(N)}^\epsilon$; moreover, put $\mathfrak{a}_{f,\Gamma(N)}\defeq\mathfrak{a}_{f,\Gamma(N)}^\epsilon$ and, as in Proposition \ref{comparison-periods-prop}, define 
\[ \gamma_f\defeq(2\pi i)^{k/2}\cdot(\Pi_B\Pi_\epsilon)\cdot\delta_{f,\Gamma(N)}\in T_\B^+\smallsetminus\{0\}. \]
By what we noticed previously, if $p\nmid\mathrm{N}(\mathfrak a_{f,\Gamma(N)})$, then $\Comp_{\B,\et}(\gamma_f)$ generates $T_p^+$. In other words, notation being as in \S \ref{reformulation-subsubsec}, we know that
\begin{equation} \label{I-implication-eq}
p\nmid\mathrm{N}(\mathfrak a_{f,\Gamma(N)})\;\Longrightarrow\;\mathcal I_p(\gamma_f)=\cO_p.
\end{equation}
This implication will be used in the proof of our main results.

\begin{remark} \label{period-rem}
Although $T_\B$ and $\gamma_f\in T_\B^+$ are defined in terms of the congruence subgroup $\Gamma(N)$, Proposition \ref{comparison} ensures that, for our goals and arguments, we can equivalently work with the period $\Omega_{f,\Gamma_0(N)}$.
\end{remark}

\subsection{Choice of auxiliary imaginary quadratic fields} \label{imaginary-subsec}

In the remainder of the paper, we will need to fix auxiliary imaginary quadratic fields in a judicious way. For our purposes, we may restrict ourselves to $r_\an(\MM)\in\{0,1\}$.

\subsubsection{The $r_\an(\MM)=0$ case} \label{imaginary-0-subsubsec}

Assume that $r_\an(\MM)=0$. Let us consider the imaginary quadratic fields $K$ satisfying the following two conditions:
\begin{itemize}
\item the primes dividing $Np$ split in $K$;
\item $r_\an(f^K)=1$.
\end{itemize} 
Denote by $\mathscr I_0(f,p)$ the set of all such fields. By Lemma \ref{low-rank-lemma}, $r_\an(f)=0$, so $\varepsilon(f)=+1$, and then it follows from \cite[p. 543, Theorem, (i)]{BFH} (\emph{cf.} also \cite{MM-derivatives}) that $\mathscr I_0(f,p)\not=\emptyset$.

\subsubsection{The $r_\an(\MM)=1$ case} \label{imaginary-1-subsubsec}

Assume that $r_\an(\MM)=1$. Let us consider the imaginary quadratic fields $K$ satisfying the following two conditions:
\begin{itemize}
\item the primes dividing $Np$ split in $K$;
\item $r_\an(f^K)=0$.
\end{itemize} 
Denote by $\mathscr I_1(f,p)$ the set of all such fields. By Lemma \ref{low-rank-lemma}, $r_\an(f)=1$, so $\varepsilon(f)=-1$, and then $\mathscr I_1(f,p)\not=\emptyset$ by \cite[p. 543, Theorem, (ii)]{BFH} (\emph{cf.} also \cite{Waldspurger}).

\subsection{Rationality conjecture for $\MM$ in analytic rank $0$}  \label{ratconj-0-subsec} 

Here we prove (under suitable assumptions) the rationality conjecture (Conjecture \ref{ratconj}) when $r_\an(\MM)=0$. 

\subsubsection{Nekov\'a\v{r}'s theorem} \label{nekovar-subsubsec}

Let $L$ be a number field. Following \cite{Nek}, we define the Shafarevich--Tate group of $\MM$ over $L$ at $\p$ \emph{\`a la} Nekov\'a\v{r} via the short exact sequence of $\cO_\p$-modules
\begin{equation} \label{selmer-nek-eq}
0\longrightarrow\Lambda_\p(L)\otimes_{\cO_\p}\!(F_\p/\cO_\p)\longrightarrow H^1_f(L,A_\p)\longrightarrow \Sha_\p^{\Nek}(L,\MM)\longrightarrow0 
\end{equation}
(see, \emph{e.g.}, \cite[\S 2.4]{LV} for details on the leftmost non-trivial map, which will be tacitly regarded as a set-theoretic inclusion). Let us also define the Shafarevich--Tate group of $\MM$ over $L$ at $p$ \emph{\`a la} Nekov\'a\v{r} by setting
\begin{equation} \label{sha-nek-eq}
\Sha_p^{\Nek}(L,\MM)\defeq\bigoplus_{\p\mid p}\Sha_\p^{\Nek}(L,\MM), 
\end{equation}
where the direct sum ranges over all primes of $F$ above $p$.

\begin{remark}
The group $\Sha_\p^{\Nek}(L,\MM)$ should be thought of as a higher weight counterpart of the classical Shafarevich--Tate group of an abelian variety, which is given by the recipe ``Selmer group modulo rational points''.
\end{remark}

As before, let $K$ be an imaginary quadratic field in which all the prime divisors of $Np$ split and let $\p$ be a prime of $F$ above $p$. The theorem below is a higher weight analogue of a well-known result of Kolyvagin for Mordell--Weil and Shafarevich--Tate groups of elliptic curves (see, \emph{e.g.}, \cite[Theorem 1.3]{Gross}).

\begin{theorem}[Nekov\'a\v{r}] \label{nekovar-thm}
If $y_{K,\p}$ is not torsion, then
\begin{enumerate}
\item $\Lambda_\p(K)\otimes_\Z\Q=F_\p\cdot y_{K,\p}$;
\item $\Sha_\p^{\Nek}(K,\MM)$ is finite;
\item $\corank_{\cO_\p}\!H^1_f(K,A_\p)=1$.
\end{enumerate}
\end{theorem}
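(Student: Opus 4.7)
The plan is to apply the Euler system method of Kolyvagin, as extended to higher (even) weight modular forms by Nekov\'a\v{r} in \cite{Nek}; the non-torsion hypothesis on $y_{K,\p}$ is precisely the input required to run this argument.

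First I would observe that $\Lambda_\p(K)$ is a finitely generated $\cO_\p$-module (Proposition \ref{no torsion lemma}), and since $y_{K,\p}$ is non-torsion there exists an integer $M_0$ such that, for every sufficiently large $M$, the reduction ${[y_{K,\p}]}_M \in \Lambda_\p(K)/p^M\Lambda_\p(K)$ has order at least $p^{M-M_0}$. Via the injection $\iota_{K,M}$ from \eqref{iota_L-eq} and Proposition \ref{c_M(1)-prop}, this gives $c_M(f,1) \ne 0$ in $H^1\bigl(K,A_\p[p^M]\bigr)$ for every sufficiently large $M$, providing the base case for the Euler system argument (so the hypothesis is indeed strictly stronger than Kolyvagin's conjecture, Theorem \ref{kolyvagin-main-thm}, in that it pins down the $n=1$ component).

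The heart of the proof is then the full Kolyvagin derivative system $\{c_M(f,n)\}_{n \in \Lambda_{\Kol}(f)}$. By the standard analysis, now in higher weight via Heegner cycles on Kuga--Sato varieties rather than Heegner points on modular curves, each $c_M(f,n)$ lies in the Bloch--Kato finite local condition away from the primes dividing $n$, and has an explicit, Frobenius-diagonalizable singular component at each Kolyvagin prime $\ell\,|\,n$. Using the big-image condition on $\rho_p$ from Assumption \ref{p-ass} together with the Chebotarev density theorem, one produces, for any fixed non-zero class $s \in H^1_f\bigl(K,A_\p[p^M]\bigr)$, infinitely many Kolyvagin primes $\ell$ that ``detect'' $s$ via local Tate duality \eqref{local-tate-eq}. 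Applying the global reciprocity law to the pairing of $s$ against derived classes $c_{M'}(f,n\ell)$ and inducting on the number of prime factors of $n$ yields the desired upper bound: the divisible part of $H^1_f(K,A_\p)$ has $\cO_\p$-corank exactly $1$, with a generator coming from $y_{K,\p}$, and the quotient is finite.

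The three conclusions then follow in order. Part (1) combines the lower bound on $\mathrm{rk}_{\cO_\p}\!\Lambda_\p(K)$ coming from the non-torsion of $y_{K,\p}$ with the Kolyvagin upper bound, forcing the free rank to be $1$ with $y_{K,\p}$ spanning a finite-index $\cO_\p$-submodule. Part (2) then follows because the divisible part of $H^1_f(K,A_\p)$ is exhausted by the image of $\Lambda_\p(K) \otimes_{\cO_\p}(F_\p/\cO_\p)$, making the quotient $\Sha_\p^{\Nek}(K,\MM)$ finite in view of its defining exact sequence \eqref{selmer-nek-eq}. Part (3) is then immediate from (1) and (2) via \eqref{selmer-nek-eq}. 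The main obstacle is the careful local analysis at Kolyvagin primes together with the compatibility of the singular parts of derived classes with the global duality pairing in the higher weight crystalline setting; checking that the inductive step of the Chebotarev argument propagates through the semisimplified residual representation without collapsing requires the combined force of big image and residual irreducibility conditions in Assumption \ref{p-ass}.
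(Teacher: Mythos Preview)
Your sketch is a faithful outline of Nekov\'a\v{r}'s Euler system argument in \cite{Nek}, which is exactly what the paper invokes: its entire proof is the citation ``This is \cite[Theorem 13.1]{Nek} (\emph{cf.} also \cite[Ch. II, (6.5)]{Nek2}).'' So you are not taking a different route, but rather unpacking the content of the reference the paper defers to; the approach is the same.
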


\begin{proof} This is \cite[Theorem 13.1]{Nek} (\emph{cf.} also \cite[Ch. II, (6.5)]{Nek2}). \end{proof}

As an immediate consequence of Theorem \ref{nekovar-thm} and \eqref{sha-nek-eq}, if $y_{K,\p}$ is not torsion for each $\p\,|\,p$, then $\Sha_p^{\Nek}(K,\MM)$ is finite.

\subsubsection{Comparing Shafarevich--Tate groups}

Given a number field $L$, there is an inclusion $\Lambda_\p(L)\otimes_{\cO_\p}\!(F_\p/\cO_\p)\subset H^1_f(L,A_\p{)}_\divv$, which induces a surjection $\Sha_\p^{\Nek}(L,\MM)\twoheadrightarrow\Sha_\p^{\BKK}(L,\MM)$ of $\cO_\p$-modules. To our knowledge, no finer, general comparison between $\Sha_\p^{\BKK}(L,\MM)$ and $\Sha_\p^{\Nek}(L,\MM)$ is available in the literature.

Now take an imaginary quadratic field $K$ as above. The next result offers an alternative description of $\Sha_\p^{\BKK}(K,\MM)$ in an important special case.

\begin{proposition} \label{selmer-alternative-prop}
If $y_{K,\p}$ is not torsion, then $\Sha_\p^{\BKK}(K,\MM)=\Sha_\p^{\Nek}(K,\MM)$.
\end{proposition}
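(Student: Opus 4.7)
The plan is to unwind the two definitions of the Shafarevich--Tate group at $\p$ and show that the two ``quotienting'' submodules of $H^1_f(K,A_\p)$ in fact coincide. Recall from \eqref{selmer-nek-eq} that
\[ \Sha_\p^{\Nek}(K,\MM)=H^1_f(K,A_\p)\big/\bigl(\Lambda_\p(K)\otimes_{\cO_\p}\!(F_\p/\cO_\p)\bigr), \]
while by part (1) of Definition \ref{Sha-def} we have $\Sha_\p^{\BKK}(K,\MM)=H^1_f(K,A_\p)\big/H^1_f(K,A_\p)_\divv$. Since the image of the left-hand map in \eqref{selmer-nek-eq} is divisible (it is a quotient of a divisible module), it is contained in $H^1_f(K,A_\p)_\divv$. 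Thus the reduction is to show the reverse inclusion, namely
\[ H^1_f(K,A_\p)_\divv\subset\Lambda_\p(K)\otimes_{\cO_\p}\!(F_\p/\cO_\p). \]

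First I would use the assumption that $y_{K,\p}$ is non-torsion to invoke Theorem \ref{nekovar-thm}: part (3) gives $\corank_{\cO_\p}\!H^1_f(K,A_\p)=1$, so the maximal divisible submodule $H^1_f(K,A_\p)_\divv$ is isomorphic, as an $\cO_\p$-module, to $F_\p/\cO_\p$. On the other hand, by Proposition \ref{no torsion lemma}(2) the module $\Lambda_\p(K)$ is free of finite rank over $\cO_\p$ (because $K/\Q$ is solvable), and part (1) of Theorem \ref{nekovar-thm} forces this rank to be $1$, so $\Lambda_\p(K)\simeq\cO_\p$ and therefore
\[ \Lambda_\p(K)\otimes_{\cO_\p}\!(F_\p/\cO_\p)\simeq F_\p/\cO_\p. \]
In particular, this submodule is divisible and non-zero.

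The conclusion will then follow from a standard fact about the $\cO_\p$-module $F_\p/\cO_\p$ (which, since $\cO_\p$ is a complete discrete valuation ring, is the Pr\"ufer-type module): the only divisible $\cO_\p$-submodules of $F_\p/\cO_\p$ are $0$ and $F_\p/\cO_\p$ itself. Applied to the non-zero divisible submodule $\Lambda_\p(K)\otimes_{\cO_\p}\!(F_\p/\cO_\p)$ inside $H^1_f(K,A_\p)_\divv\simeq F_\p/\cO_\p$, this forces equality.

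There is no serious obstacle here once Theorem \ref{nekovar-thm} has been set up: the argument is essentially a corank count. The only point that requires a small sanity check is that the map $\Lambda_\p(K)\otimes_{\cO_\p}\!(F_\p/\cO_\p)\hookrightarrow H^1_f(K,A_\p)$ in \eqref{selmer-nek-eq} does indeed land in the divisible part (immediate since its source is divisible) and that its image is non-zero (which holds because $\Lambda_\p(K)\simeq\cO_\p$ is non-trivial and the map is injective, as recorded right after \eqref{selmer-nek-eq}).
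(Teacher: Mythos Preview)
Your proof is correct and follows essentially the same approach as the paper: both arguments amount to a corank comparison, using Theorem \ref{nekovar-thm} to see that $\Lambda_\p(K)\otimes_{\cO_\p}(F_\p/\cO_\p)$ and $H^1_f(K,A_\p)_\divv$ each have $\cO_\p$-corank $1$, hence coincide. The paper's proof is just a terser version of what you wrote; your explicit appeal to Proposition \ref{no torsion lemma}(2) to pin down $\Lambda_\p(K)\simeq\cO_\p$ is a nice clarification but not a different idea.
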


\begin{proof} If $y_{K,\p}$ is not torsion, then, by Theorem \ref{nekovar-thm}, both $\Lambda_\p(K)\otimes_{\cO_\p}\!(F_\p/\cO_\p)$ and $H^1_f(K,A_\p)$ have corank $1$ over $\cO_\p$, so $\Lambda_\p(K)\otimes_{\cO_\p}\!(F_\p/\cO_\p)$ is the maximal $p$-divisible submodule of $H^1_f(K,A_\p)$, whence the claim of the proposition. \end{proof}

\subsubsection{On Conjecture \ref{orderconj}, $r_\an(\MM)=0,1$}

The next result, which is basically a consequence of Theorems \ref{zhang-thm} and \ref{nekovar-thm}, establishes (conditionally on certain assumptions on $p$-adic Abel--Jacobi maps and regulators) the ``algebraic rank = analytic rank'' conjecture (Conjecture \ref{orderconj}) over $\Q$ when $r_\an(\MM)\in\{0,1\}$. We use notation from \S \ref{imaginary-subsec}.

\begin{theorem}[Nekov\'a\v{r}, S.-W. Zhang] \label{nekovar-thm2}
Assume that $r_\an(\MM)\in\{0,1\}$ and that 
\begin{enumerate}
\item there exists $K\in\mathscr I_{r_\an(\MM)}(f,p)$ such that $\AJ_{K,\p}$  is injective on $\Heeg_{K,N}^{\mathcal G_1}$ for some $\p\,|\,p$;
\item the $p$-part of Conjecture \ref{regpconj} over $\Q$ holds true. 
\end{enumerate}
Then $r_\alg(\MM)=r_\an(\MM)$.
\end{theorem}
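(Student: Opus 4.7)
Assumption (2) combined with the identification $F_p=\prod_{\p\mid p}F_\p$ yields that the $F_\p$-vector space $H^1_f(\Q,V_\p)$ has dimension $r_\alg(\MM)$ for every prime $\p$ of $F$ above $p$; in particular, it suffices to compute $\dim_{F_\p}H^1_f(\Q,V_\p)$ for the one distinguished prime $\p$ for which the Abel--Jacobi injectivity hypothesis in (1) holds. Let $K\in\mathscr I_{r_\an(\MM)}(f,p)$ be the imaginary quadratic field supplied by (1). By the very definition of $\mathscr I_{r_\an(\MM)}(f,p)$ together with the factorization $L(f/K,s)=L(f,s)\cdot L(f^K,s)$ and Lemma \ref{low-rank-lemma}, we have $r_\an(f/K)=r_\an(f)+r_\an(f^K)=1$ in both cases $r_\an(\MM)=0$ and $r_\an(\MM)=1$.

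The first key step is to feed this information into Zhang's Gross--Zagier formula: since $L'(f/K,k/2)\neq0$, part (1) of Proposition \ref{coro zhang}, whose applicability is precisely guaranteed by the injectivity of $\AJ_{K,\p}$ on $\Heeg_{K,N}^{\mathcal G_1}$ assumed in (1), ensures that the Heegner class $y_{K,\p}\in\Lambda_\p(K)$ is not $\cO_\p$-torsion. Nekov\'a\v{r}'s Theorem \ref{nekovar-thm} then gives $\dim_{F_\p}H^1_f(K,V_\p)=1$.

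To descend from $K$ to $\Q$, I will use that, since $p$ is odd (guaranteed by our standing hypotheses), restriction and the action of $\tau\in\Gal(K/\Q)$ produce a canonical decomposition
\[
H^1_f(K,V_\p)\;\simeq\;H^1_f(\Q,V_\p)\,\oplus\,H^1_f(\Q,V_\p\otimes\chi_K),
\]
where $\chi_K$ is the quadratic character associated with $K$ and $V_\p\otimes\chi_K$ is the $\p$-adic realization of $\MM(f^K)$. The equation
\[
\dim_{F_\p}H^1_f(\Q,V_\p)+\dim_{F_\p}H^1_f(\Q,V_\p\otimes\chi_K)=1
\]
thus forces exactly one of the two summands to have dimension $1$ and the other to vanish. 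Which summand is non-trivial is determined by the sign with which $\tau$ acts on the class $y_{K,\p}$ modulo torsion: the higher weight analogue of the classical Heegner point sign formula (due to Nekov\'a\v{r}; see \cite{Nek}, \cite{Nek2}) yields
$\tau\cdot y_{K,\p}\equiv -\varepsilon(f)\cdot y_{K,\p}$ modulo $\cO_\p$-torsion, so $y_{K,\p}$ lies in the $(-\varepsilon(f))$-eigenspace. When $r_\an(\MM)=0$ we have $\varepsilon(f)=+1$, hence $y_{K,\p}$ contributes to $H^1_f(\Q,V_\p\otimes\chi_K)$, forcing $\dim_{F_\p}H^1_f(\Q,V_\p)=0=r_\an(\MM)$; when $r_\an(\MM)=1$ we have $\varepsilon(f)=-1$, hence $y_{K,\p}$ contributes to $H^1_f(\Q,V_\p)$, forcing $\dim_{F_\p}H^1_f(\Q,V_\p)=1=r_\an(\MM)$.

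The main technical obstacle is the unconditional verification of the sign formula $\tau\cdot y_{K,\p}\equiv-\varepsilon(f)\cdot y_{K,\p}$ in the higher weight setting: while the calculation of how complex conjugation interacts with the Atkin--Lehner involution and the trace from $K_1$ to $K$ parallels Gross's classical argument in weight $2$, one must track carefully the action on the Kuga--Sato cycle $\tilde\Gamma_1$ and verify that the discrepancy from the weight $2$ picture is torsion modulo $\p$; this is precisely the content of Nekov\'a\v{r}'s analysis in \cite{Nek}, and so the cleanest presentation will simply cite it and combine the three steps above. Everything else is a formal consequence of the regulator hypothesis and the dimension count, so once the sign is in hand the theorem follows at once.
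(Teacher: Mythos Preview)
Your proposal is correct and follows essentially the same route as the paper: both arguments use Proposition~\ref{coro zhang} to get $y_{K,\p}$ non-torsion, invoke Nekov\'a\v{r}'s Theorem~\ref{nekovar-thm}, and then descend to $\Q$ via the action of complex conjugation on $y_{K,\p}$. The only cosmetic difference is that the paper phrases the endgame in terms of the corank of $H^1_f(\Q,A_\p)$ and appeals to Corollary~\ref{upsilon-onto-coro} (citing \cite{Vigni} for the eigenspace analysis), whereas you work directly with $\dim_{F_\p}H^1_f(\Q,V_\p)$ and make the decomposition $H^1_f(K,V_\p)\simeq H^1_f(\Q,V_\p)\oplus H^1_f(\Q,V_\p\otimes\chi_K)$ and the sign formula $\tau\cdot y_{K,\p}\equiv-\varepsilon(f)\cdot y_{K,\p}$ explicit; these are equivalent bookkeeping choices.
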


\begin{proof} Assume that $r_\an(\MM)=1$. Fix $K\in\mathscr I_1(f,p)$ and a prime $\p$ of $F$ above $p$ for which condition (1) holds. It follows from splitting \eqref{splitting-L-eq2} that $r_\an(f/K)=1$, hence, by part (1) of Proposition \ref{coro zhang}, $y_{K,\p}$ is not $\cO_\p$-torsion. As explained in the proof of \cite[Theorem 5.26]{Vigni}, an analysis of the action of complex conjugation on $y_{K,\p}$ combined with Theorem \ref{nekovar-thm} shows that $\Lambda_\p(K)\otimes_{\cO_\p}F_\p$ and $\Lambda_\p(\Q)\otimes_{\cO_\p}F_\p$ are both $1$-dimensional over $F_\p$ and that $\Sha_\p^{\Nek}(K,\MM)$ and $\Sha_\p^{\Nek}(\Q,\MM)$ are both finite (\emph{cf.} also \cite[Proposition 5.25]{Vigni}). It follows from \eqref{selmer-nek-eq} that $\corank_{\cO_\p}\!H^1_f(\Q,A_\p)=1$, and then, since we are assuming condition (2), $r_\alg(\MM)=1$ by Corollary \ref{upsilon-onto-coro}. 

Now assume that $r_\an(\MM)=0$. Fix $K\in\mathscr I_0(f,p)$ and a prime $\p$ of $F$ above $p$ for which condition (1) holds. As in the previous case, it follows from \eqref{splitting-L-eq2} that $r_\an(f/K)=1$, hence, by part (1) of Proposition \ref{coro zhang}, $y_{K,\p}$ is not $\cO_\p$-torsion. As in the proof of \cite[Theorem 7.4]{Vigni}, an analysis of complex conjugation acting on $y_{K,\p}$ allows one to show that $\Lambda_{\p}(\Q)\otimes_{\cO_\p}F_\p$ is trivial. On the other hand, it follows from Theorem \ref{nekovar-thm} and \cite[Proposition 5.25]{Vigni} that $\Sha_\p^{\Nek}(\Q,\MM)$ is finite. Therefore, $\corank_{\cO_\p}\!H^1_f(\Q,A_\p)=0$, and then, since we are assuming condition (2), $r_\alg(\MM)=0$ by Corollary \ref{upsilon-onto-coro}. \end{proof}

\begin{remark} \label{injectivity-AJ-rem}
In the statement of Theorem \ref{nekovar-thm2} and elsewhere in this paper, we need to impose injectivity assumptions on $p$-adic Abel--Jacobi maps, which are natural to ask for if one wants to pass from information on analytic ranks to results on algebraic ranks by combining Theorems \ref{zhang-thm} and \ref{nekovar-thm}. While it is a ``folklore'' conjecture that such maps are always injective, it is worth emphasizing that, in the present article, conditions of this kind are only exploited, in the guise of part (1) of Proposition \ref{coro zhang}, to use Theorem \ref{nekovar-thm} and prove Conjecture \ref{orderconj} and the finiteness of $\Sha_p^{\Nek}(\Q,\MM)$ in the special cases we are interested in. In particular, if one is willing to assume the validity of Conjecture \ref{orderconj} and the finiteness of $\Sha_p^{\Nek}(\Q,\MM)$ in low rank situations, then the aforementioned conditions can be dispensed with.
\end{remark}


\subsubsection{Proof of Conjecture \ref{ratconj}, $r_\an(\MM)=0$}

Now we can prove the main result of this subsection. 

\begin{theorem} \label{ratconj0thm}
Assume that $r_\an(\MM)=0$ and that 
\begin{enumerate}
\item there exists $K\in\mathscr I_0(f,p)$ such that $\AJ_{K,\p}$  is injective on $\Heeg_{K,N}^{\mathcal G_1}$ for some $\p\,|\,p$;
\item the $p$-part of Conjecture \ref{regpconj} over $\Q$ holds true. 
\end{enumerate}
Then Conjecture \ref{ratconj} is true. 
\end{theorem}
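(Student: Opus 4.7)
The plan is to reduce Conjecture \ref{ratconj} via Proposition \ref{rationality-prop} to the assertion that $L^*(\MM,0)/\bigl(\Omega^{(\gamma_f)}_\infty\cdot\Reg_\mathscr B(\MM)\bigr)\in F^\times$ for one (hence any) choice of $\gamma_f$ and $\mathscr B$, and then to prove this rationality statement by combining classical special value results for the $L$-function of $f$ with the period comparison already established in this section.

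First, I would apply Theorem \ref{nekovar-thm2} (with $r_\an(\MM)=0$), whose hypotheses are exactly (1) and (2) of Theorem \ref{ratconj0thm}, to deduce that $r_\alg(\MM)=0$. Thus $H^1_\mot(\Q,\MM)=0$, the only basis $\mathscr B$ is empty, and $\Reg_\mathscr B(\MM)=1$ by Definition \ref{defreg}. In particular, Proposition \ref{rationality-prop} reduces the conjecture to showing that $L^*(\MM,0)/\Omega_\infty^{(\gamma_f)}\in F^\times$, where $\gamma_f$ is the element of $T_\B^+\smallsetminus\{0\}$ constructed in \S\ref{sec1.4.4} and Proposition \ref{comparison-periods-prop}.

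Next, I would use the Deligne identification of Remark \ref{deligne-rem} to rewrite $L^*(\MM,0)$: since $r_\an(\MM)=0$, Lemma \ref{low-rank-lemma} gives $r_\an(f^\sigma)=0$ for every $\sigma\in\Sigma$, so $L^*(\MM,0)$ corresponds under $\iota_\Sigma$ to the tuple $\bigl(L(f^\sigma,k/2)\bigr)_{\sigma\in\Sigma}\in(F\otimes_\Q\C)^\times$ with each entry nonzero. On the period side, Proposition \ref{comparison-periods-prop} yields
\[ \Omega_\infty^{(\gamma_f)}=C\cdot\bigl((2\pi i)^{k/2-1}\cdot\Omega_{f^\sigma,\Gamma(N)}\bigr)_{\sigma\in\Sigma} \]
for some $C\in F^\times$, while Proposition \ref{comparison} shows that each $\Omega_{f^\sigma,\Gamma(N)}$ differs from $\Omega_{f^\sigma}=\Omega_{f^\sigma,\Gamma_0(N)}$ by a unit in $\cO_{(p)}^\times\subset F^\times$ (possibly after excluding finitely many $p$, as is implicit in the set-up of \S\ref{sec1.4.4}). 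Combining these, the ratio $L^*(\MM,0)/\Omega_\infty^{(\gamma_f)}$ corresponds, up to multiplication by an element of $F^\times$, to
\[ \bigl(L^\alg(f^\sigma,k/2)\bigr)_{\sigma\in\Sigma}=\bigl(\sigma(L^\alg(f,k/2))\bigr)_{\sigma\in\Sigma}=\iota_\Sigma\bigl(L^\alg(f,k/2)\bigr), \]
where the first equality uses Proposition \ref{speciavalueslemma} and the last identification is the one from Remark \ref{deligne-rem2}. Since $L^\alg(f,k/2)\in F$ and $L(f,k/2)\neq 0$ (because $r_\an(f)=0$), this element lies in $\iota_\Sigma(F^\times)$, which completes the proof.

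The main obstacle, in the sense of being the non-trivial input, is the classical algebraicity result for $L(f,k/2)/\bigl((2\pi i)^{k/2-1}\Omega_f\bigr)$ encoded in the definition \eqref{algebraic-part-eq} and its Galois-equivariance (Proposition \ref{speciavalueslemma}), together with the careful identification of $\Omega_\infty^{(\gamma_f)}$ with the classical modular periods $\Omega_{f^\sigma}$ up to an $F^\times$-scalar; all these tools have already been assembled in \S\ref{section-PMF}, so once Theorem \ref{nekovar-thm2} provides the vanishing of the algebraic rank, the conjecture follows from a direct bookkeeping of periods and Galois conjugates.
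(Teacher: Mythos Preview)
Your proposal is correct and follows essentially the same approach as the paper: reduce via Proposition \ref{rationality-prop} using Theorem \ref{nekovar-thm2} to show $L^*(\MM,0)/\Omega_\infty\in F^\times$, then identify this ratio with $\iota_\Sigma\bigl(L^\alg(f,k/2)\bigr)$ via the period formula \eqref{eqper} and Proposition \ref{speciavalueslemma}. The only minor difference is that you explicitly invoke Proposition \ref{comparison} to pass between $\Omega_{f^\sigma,\Gamma(N)}$ and $\Omega_{f^\sigma}=\Omega_{f^\sigma,\Gamma_0(N)}$, whereas the paper absorbs this into the constant $C\in F^\times$ from \eqref{eqper} (for rationality purposes one only needs the periods to differ by an element of $F^\times$, not a $p$-adic unit, so the parenthetical about excluding finitely many $p$ is unnecessary here).
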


\begin{proof} Assume that $r_\an(\MM)=0$; this implies, by Lemma \ref{low-rank-lemma}, that $r_\an(f^\sigma)=0$ for all $\sigma\in\Sigma$. Thanks to Theorem \ref{nekovar-thm2}, $r_\alg(\MM)=0$, so $\Reg(\MM)=1$. Let $\Omega_\infty$ be the period in \S \ref{reformulation-subsubsec}; by Proposition \ref{rationality-prop}, to prove the theorem we can equivalently show that $L^*(\MM,0)\big/\Omega_\infty\in F^\times$. Namely (\emph{cf.} Remark \ref{deligne-rem2}), we need to show that there exists $\mathscr{L}_\MM\in F^\times$ such that 
\begin{equation} \label{iota-sigma-eq}
\iota_\Sigma(\mathscr{L}_\MM)=\frac{L^*(\MM,0)}{\Omega_\infty}.
\end{equation}
By definition of $\iota_\Sigma$, and in light of \eqref{eqper}, equality \eqref{iota-sigma-eq} means that $\sigma(\mathscr{L}_\MM)=L^\mathrm{alg}(f^\sigma,k/2)$ for all $\sigma\in \Sigma$. By Proposition \ref{speciavalueslemma}, the element $\mathscr{L}_\MM\defeq L^\mathrm{alg}(f,k/2)\in F^\times$ does the job, and we are done. \end{proof}

As explained in \S \ref{ratconj2-subsubsec}, Theorem \ref{ratconj0thm} shows that, under the specified assumptions, the analytic rank $0$ case of Conjecture \ref{ratconj2} holds true as well.

\subsection{$p$-TNC for $\MM$ in analytic rank $0$} \label{SUsec} \label{p-adic-subsec}

Recall that $D_F$ is the discriminant of $F$, the ideal $\mathfrak a_{f,\Gamma(N)}$ of $\cO_F$ was introduced at the end of \S \ref{sec1.4.4} and $c_f=[\cO_F:\cO_f]$. We work under the following list of conditions on the pair $(f,p)$.

\begin{assumption} \label{ass-0}
\begin{enumerate}
\item $p\nmid 6ND_F\mathrm{N}(\mathfrak a_{f,\Gamma(N)})c_f$;
\item $k\equiv 2\pmod{2(p-1)}$;
\item $a_p(f)\in\cO_\p^\times$;
\item $a_p(f)\not\equiv1\pmod{\p}$ for each prime $\p\,|\,p$; 
\item $\rho_{p}$ has big image;  
\item $\bar{\rho}_{p}$ is irreducible;
\item $N\geq 3$ and there exists a prime $\ell$ dividing $N$ exactly such that $\rho_{\p}$ is ramified at $\ell$ for each prime $\p\,|\,p$.
\end{enumerate}
\end{assumption}

Our purpose is to prove the $p$-part of the TNC for $\MM$ when $r_\an(\MM)=0$. Unless otherwise stated, from here on we make our choice of periods as in \S \ref{sec1.4.4}. 

\subsubsection{$p$-adic $L$-functions}

Let $\Q_\infty$ be the cyclotomic $\Z_p$-extension of $\Q$, define 
\[ \Gamma\defeq\Gal(\Q_\infty/\Q)\simeq\Z_p \] 
and write $\Lambda_\p\defeq\cO_\p[\![{\Gamma}]\!]$ for the Iwasawa algebra of $\Gamma$ with coefficients in $\cO_\p$. For every $n\in\N$ let $\Q_n$ be the subfield of $\Q_\infty$ such that $\Gal(\Q_n/\Q)\simeq\Z/p^n\Z$; in particular, $\Q_0=\Q$. For any prime $v$ of $\Q_n$ denote by $\Q_{n,v}$ the completion of $\Q_n$ at $v$ and write $I_{n,v}=I_{\Q_{n,v}}$ for the corresponding inertia group. Finally, let $\chi_\cyc:\Gamma\rightarrow\Z_p^\times$ be the $p$-adic cyclotomic character.

Let $\Omega_{f}^\pm\defeq\Omega^{\pm}_{f,\Gamma_0(N)}\in\C^\times$ be the periods from \S \ref{periods-subsubsec}. In \cite{SU}, the periods $\Omega_{f,\Gamma_1(N)}^\pm$ are considered instead, but the choice of $\Omega_{f,\Gamma_0(N)}^\pm$ is equivalent for us since, by Proposition \ref{comparison}, these complex numbers differ by a $p$-adic unit. Let $\bar\Q\hookrightarrow\bar\Q_p$ be an embedding corresponding to $\p$, which allows us to consider $L^\alg(f,k/2)\in F$ as a $p$-adic number in $\bar\Q_p$; here recall that $L^\mathrm{alg}(f,k/2)=L(f,k/2)\big/(2\pi i)^{k/2-1}\cdot\Omega_f$, where $\Omega_f=\Omega_f^\epsilon$ and $\epsilon$ is the sign of $(-1)^{k/2-1}$. 

Let $\mathcal{L}_{f,\p}\in\Lambda$ be the cyclotomic $p$-adic $L$-function of $f$ and $\p$ constructed in \cite[Theorem 16.2]{Kato} and \cite[\S3.4.4]{SU} (\emph{cf.} also \cite{MTT}). Adopting the conventions in \cite[Theorem 16.2]{Kato}, there is an interpolation formula
\begin{equation} \label{p-adic L-function}
\mathcal{L}_{f,\p}(\chi_\cyc^{k/2})=\Bigg(1-\frac{p^{\frac{k-2}{2}}}{\alpha}\Bigg)^{\!\!2}\cdot\biggl(\frac{k-2}{2}\biggr)!\cdot L^\mathrm{alg}(f,k/2). 
\end{equation}
Since $k>2$, the multiplicative factor $\Bigl(1-\frac{p^{\frac{k-2}{2}}}{\alpha}\Bigr)^2$ is a unit of $\cO_{(\p)}$. Therefore, there is an equality of $\cO_\p$-ideals
\begin{equation}\label{L}
\Bigl(\mathcal{L}_{f,\p}(\chi_\cyc^{k/2})\Bigr)=\Bigl((k/2-1)!\cdot L^\mathrm{alg}(f,k/2)\Bigr).
\end{equation}

\subsubsection{Comparing the periods of $f$ and $f^K$}

As in Remark \ref{motive-f^K-rem}, let $f^K$ be the twist of $f$ by the Dirichlet character $\epsilon_K$ attached to $K$. Of course, the Hecke field of $f^K$ is $F$.

\begin{lemma} \label{lemma4.4}
\begin{enumerate}
\item For any choice of $\delta_{f}^\pm$ as in \S \ref{periods-subsubsec}, the equality $\Omega_{f^K}^\mp=\sqrt{D_K}\cdot\Omega_f^\pm$ holds up to elements of $F^\times$. 
\item For any choice of $\delta_{f}^\pm$ as in \S \ref{sec1.4.4}, the equality $\Omega_{f}^\pm=\Omega^\mp_{f^K}$ holds up to $p$-adic units. 
\end{enumerate}
\end{lemma}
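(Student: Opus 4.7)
The plan is to derive both statements from the classical Birch--Stevens twisting formula for modular symbols, together with two elementary facts about the quadratic character $\epsilon_K$: (i) since $K$ is imaginary quadratic, $\epsilon_K$ is odd, so its Gauss sum $\tau(\epsilon_K)$ satisfies $\tau(\epsilon_K)^2=\epsilon_K(-1)\cdot|D_K|=-|D_K|$, hence $\tau(\epsilon_K)$ and $\sqrt{D_K}$ agree up to an element of $\Q^\times$; and (ii) the twisting operation $\Phi\mapsto\Phi\otimes\epsilon_K$ intertwines the action of the involution $\iota=\smallmat{-1}{0}{0}{1}$ with a sign change, because $\epsilon_K(-1)=-1$, and therefore interchanges the $\pm$-eigenspaces for complex conjugation.

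For part (1), I would start from the Birch--Stevens identity
\[
\Phi_{f^K}\bigl(\{a\}-\{b\}\bigr)=\tau(\epsilon_K)^{-1}\sum_{u\bmod |D_K|}\epsilon_K(u)\cdot\Phi_{f}\!\Bigl(\bigl\{a+\tfrac{u}{|D_K|}\bigr\}-\bigl\{b+\tfrac{u}{|D_K|}\bigr\}\!\Bigr),
\]
valid on a suitable auxiliary level $\Gamma_0(N|D_K|^2)$ up to an $F^\times$-scalar. Applying $\Comp_{\B,\dR}$ and invoking (ii), the resulting relation between the de Rham classes $\varphi_{f^K}$ and $\varphi_f$ swaps $\pm$-eigenspaces. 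Substituting into the defining equations $\varphi_{f^K}=\Omega_{f^K}^+\eta_{f^K}^++\Omega_{f^K}^-\eta_{f^K}^-$ and its analogue for $f$, and matching eigencomponents, yields $\Omega_{f^K}^\mp\equiv\tau(\epsilon_K)\cdot\Omega_f^\pm\pmod{F^\times}$, which by (i) is the statement of (1). The change of level from $\Gamma_0(N|D_K|^2)$ back to $\Gamma_0(N)$ costs only an element of $F^\times$ (by an $F$-coefficient analogue of Proposition \ref{comparison}), and so is harmless.

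For part (2), the task is to upgrade the $F^\times$-ambiguity in (1) to a $p$-adic unit, once $\delta_f^\pm$ and $\delta_{f^K}^\mp$ are both chosen to generate the respective $\p$-integral parabolic cohomology lattices as in \S \ref{sec1.4.4}. First, $\sqrt{D_K}$ is a $p$-adic unit under our fixed embedding $\bar\Q\hookrightarrow\bar\Q_p$, since $p\nmid D_K$---a consequence of our standing assumption that $p$ splits in $K$. Next, the Birch--Stevens twisting operator is defined over $\cO_F[1/|D_K|][\epsilon_K]$ and therefore acts integrally on $\p$-adic cohomology. The main obstacle will be to verify that this operator carries a $\p$-optimal generator of $\T_{\mathrm{par}}^\pm[\theta_f]$ to a $\p$-optimal generator of $\T_{\mathrm{par}}^\mp[\theta_{f^K}]$ up to $\cO_\p^\times$. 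Here I would exploit Proposition \ref{comparison} (and its analogue for $f^K$) to move freely between the congruence subgroups $\Gamma(N')$, $\Gamma_1(N')$, $\Gamma_0(N')$ for the relevant levels $N'$ without disturbing $p$-integrality, together with the multiplicity-one property forced by the $p$-isolation of $f$ (part of Assumption \ref{ass-0}), which guarantees that any two generators of a $\p$-adic Hecke-isotypic parabolic cohomology line differ by an element of $\cO_\p^\times$. Combined with the triviality $\tau(\epsilon_K)/\sqrt{D_K}\in\Q^\times$ being itself a $p$-adic unit (once again because $p\nmid|D_K|$), these ingredients close the argument and yield $\Omega_f^\pm\equiv\Omega_{f^K}^\mp$ up to $p$-adic units.
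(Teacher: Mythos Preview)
Your argument for part (1) is essentially the paper's: the twisting homomorphism you invoke via the Birch--Stevens formula is precisely the cohomological map
\[
\varphi\longmapsto\Bigl(\gamma\mapsto\sum_{a\in(\Z/D_K\Z)^\times}\epsilon_K(a)\cdot\varphi\bigl(\smallmat{1}{-a/D_K}{0}{1}\gamma\smallmat{1}{a/D_K}{0}{1}\bigr)\Bigr)
\]
that the paper writes down; over $\C$ it sends $\omega_f^\pm$ to $\tau(\epsilon_K)\cdot\omega_{f^K}^\mp$, over $F$ it sends $\delta_f^\pm$ to an $F^\times$-multiple of $\delta_{f^K}^\mp$, and the conclusion follows from $\tau(\epsilon_K)^2=D_K$.

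For part (2), however, the paper takes a genuinely different route. Rather than tracking $\p$-integral lattices through the twisting operator, it compares the $p$-adic $L$-functions $\mathcal L_{f,\psi\epsilon_K}$ and $\mathcal L_{f^K,\mathbf 1}$: these differ by a $p$-adic unit, and inspecting the signs of the periods in their interpolation formulas (which differ exactly by $\epsilon_K(-1)=-1$) forces $\Omega_f^\pm$ and $\Omega_{f^K}^\mp$ to agree up to a $p$-adic unit. This sidesteps any level-raising bookkeeping entirely.

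Your cohomological strategy is the one the paper explicitly flags (in the remark following the proof) as the \emph{other} method, used in \cite[Lemma 9.6]{SZ} for weight $2$. But your sketch has a real gap at the point you yourself label the ``main obstacle'': $p$-isolation and multiplicity one only tell you that the $\p$-localized $f^K$-isotypic line is free of rank one over $\cO_\p$; they do \emph{not} by themselves show that the image of a $\p$-optimal generator under the twisting operator is again a generator rather than $\varpi^n$ times one. To close this you would need an additional argument---for instance, constructing an inverse twisting map and checking that the round-trip is multiplication by a $p$-adic unit (essentially $D_K$), or adapting the level-change isomorphisms of Proposition \ref{comparison} to the passage $N\leadsto ND_K^2$. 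This is doable, but you have not done it, and the $p$-adic $L$-function argument avoids the issue altogether. (Also, a minor point: $p$-isolation is Assumption \ref{ass-p-iso}, not part of Assumption \ref{ass-0}.)
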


\begin{proof} For part (1), we adapt (with minor changes) the proof of \cite[Lemma 9.6]{SZ}. For any ring $R$ as before, let us define a homomorphism $H^1\bigl(\Gamma_0(N),L_n(R)\bigr)\rightarrow H^1\bigl(\Gamma_0(ND^2_K),L_n(R)\bigr)$ of $R$-modules by 
\[ \varphi\longmapsto\Biggl(\gamma\mapsto\sum_{a\in(\Z/D_K\Z)^\times}\epsilon_K(a)\cdot\varphi\Bigl(\!\smallmat{1}{-a/D_K}{0}{1}\cdot\gamma\cdot\smallmat{1}{a/D_K}{0}{1}\!\Bigr)\!\Biggr). \]  
For $R=\C$, $\omega_f^\pm$ is mapped to $\tau(\epsilon_K)\cdot\omega_{f^K}^\mp$, where $\tau(\epsilon_K)$ is the Gauss sum of $\epsilon_K$. For $R=F$, $\delta_{f}^\pm$ is mapped to an $F^\times$-multiple of $\delta_{f^K}^\mp$. Therefore, 
$\Omega_{f^K}^\pm$ is an $F^\times$-multiple of $\Omega_f^\mp/\tau(\epsilon_K)$, and the conclusion follows because $\tau(\epsilon_K)^2=D_K$. 

Part (2) is a consequence of the interpolation formulas satisfied by the $p$-adic $L$-functions associated with $f$ and $f^K$. With notation as in \cite[\S 3.4.4]{SU}, the sign of the period in the interpolation formula for $\mathcal{L}_{f,\psi\epsilon_K}(\phi)$ at an integer $0\leq m\leq k-2$ is $\mathrm{sgn}\bigl((-1)^m\psi(-1)\epsilon_K(-1)\bigr)$, while if $\mathbf{1}$ is the trivial character, then the sign of the period in the interpolation formula for $\mathcal{L}_{f^K,\mathbf{1}}(\phi)$ at $m$ is $\mathrm{sgn}\bigl((-1)^m\psi(-1)\bigr)$. On the other hand, $\mathcal{L}_{f,\psi\epsilon_K}(\phi)$ and $\mathcal{L}_{f^K,\mathbf{1}}(\phi)$ differ by a $p$-adic unit; since $\epsilon_K(-1)=-1$ because $K$ is imaginary, it follows that the two periods have opposite signs. \end{proof}

\begin{remark}
In the case of elliptic curves, the analogue of part (2) of Lemma \ref{lemma4.4} is proved, \emph{e.g.}, in 
\cite[Lemma 9.6]{SZ} with a different method. 
\end{remark}

\subsubsection{Greenberg's Selmer group}

Let $\eta_p:G_{\Q_p}=\Gal(\bar\Q_p/\Q_p)\rightarrow\bar\Q_p^\times$ denote the unramified character of $G_{\Q_p}$ taking arithmetic Frobenius to $a_p(f)$. Since $f$ is ordinary at $\p$ and $V_\p$ is (equivalent to) the self-dual twist of the $\p$-adic representation attached to $f$, there is a short exact sequence of $G_{\Q_p}$-modules 
\[ 0\longrightarrow V_\p^+\longrightarrow V_\p\longrightarrow V_\p^-\longrightarrow0 \] 
such that $V^+_\p$ and $V^-_\p$ are $1$-dimensional $F_\p$-vector spaces, and $G_{\Q_p}$ acts on $V^+_\p$ and $V^-_\p$ through $\eta_p^{-1}\chi_\cyc^{k/2}$ and $\eta_p\chi_\cyc^{-k/2+1}$, respectively. Define $T^+_\p\defeq T_\p\cap V^+_\p$, $A^+_\p\defeq V^+_\p/T^+_\p$, $A^-_\p\defeq A_\p/A^+_\p$ and set
\[ H^1_\ord(\Q_{n,v},A_\p)\defeq\ker\Bigl(H^1(\Q_{n,v},A_\p)\longrightarrow H^1(I_{\Q_{n,v}},A^-_\p)\Bigr). \]
For every $n\in\N$, define the \emph{$\p$-primary Greenberg Selmer group of $f$ over $\Q_n$} to be
\begin{equation} \label{selmer-f-eq}
\Sel_\p(f/\Q_n)\defeq\ker\biggl(H^1(\Q_n,A_\p)\longrightarrow \prod_{v\nmid p}\frac{H^1(\Q_{n,v},A_\p)}{H^1_\mathrm{ur}(\Q_{n,v},A_\p)}\times\prod_{v\mid p}\frac{H^1(\Q_{n,v},A_\p)}{H^1_\mathrm{ord}(\Q_{n,v},A_\p)}
\biggr), 
\end{equation}
where $v$ denotes a prime of $\Q_n$. By \cite[Proposition 4.2]{Ochiai}, for all $n$ there is an injection $H^1_f(\Q_n,A_\p)\hookrightarrow \Sel_\p(f/\Q_n)$ with finite cokernel whose order is bounded independently of $n$. As in \cite{SU}, let us consider the \emph{$\p$-primary Greenberg Selmer group of $f$ over $\Q_\infty$} given by
\[ S_\p=\Sel_\p(f/\Q_\infty)\defeq\varinjlim_n\Sel_\p(f/\Q_n), \]
where the direct limit is taken with respect to restriction maps. By \cite[Theorem 17.4, (1)]{Kato} (\emph{cf.} also \cite[Theorem 3.15]{SU}), the $\Lambda_\p$-module $S_\p$ is cotorsion, \emph{i.e.}, the Pontryagin dual $X_\p\defeq\Hom(S_\p,F_\p/\cO_\p)$ of $S_\p$ is torsion over $\Lambda_\p$; we write $(\mathcal{F}_{f,\p})\subset\Lambda_\p$ for the characteristic ideal of $X_\p$.

\subsubsection{Proof of $p$-TNC, $r_\an(\MM)=0$}

The following theorem is a special case of a result for Selmer groups over finite abelian extensions of $\Q$. 

\begin{theorem}[Kato] \label{kato-thm}
If $r_\an(f)=0$, then $H^1_f(\Q,A_\p)$ is finite.
\end{theorem}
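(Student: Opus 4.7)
The plan is to deduce the finiteness of $H^1_f(\Q,A_\p)$ from the vanishing of $H^1_f(\Q,V_\p)$ via Kato's Euler system bound, combined with the basic divisibility/Selmer-group interplay already developed in \S \ref{cohomology-subsec} and \S \ref{STsubsec}.

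First, I would observe that $r_\an(f)=0$ is simply the statement $L(f,k/2)\neq0$. Combining this with the interpolation formula \eqref{p-adic L-function}, and noting that under Assumption \ref{ass-0} the modifying Euler factor $\bigl(1-p^{(k-2)/2}/\alpha\bigr)^{2}$ is a unit in $\cO_\p$ (because $k>2$ and $\alpha=a_p(f)\in\cO_\p^\times$), one concludes that $\mathcal{L}_{f,\p}(\chi_\cyc^{k/2})\neq0$; equivalently, by \eqref{L}, $(k/2-1)!\cdot L^\alg(f,k/2)$ is a non-zero element of $\cO_\p$.

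Second, I would invoke Kato's Euler system theorem (\cite[Theorem 14.2]{Kato}, specialized at the central critical twist $j=k/2$): under the big-image and ordinariness hypotheses of Assumption \ref{ass-0}, the non-vanishing of $L(f,k/2)$ forces
\[
H^1_f(\Q,V_\p)=H^1_f\bigl(\Q,V_{f,\p}(k/2)\bigr)=0.
\]
Here I need to check that the residual irreducibility, big image, and ordinariness assumptions in Assumption \ref{ass-0} subsume the hypotheses needed for Kato's theorem; this is standard and follows since $\rho_p$ has big image and $\bar\rho_p$ is irreducible.

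Finally, I would translate the vanishing of $H^1_f(\Q,V_\p)$ into the desired finiteness statement. By Corollary \ref{upsilon-onto-coro} (or directly by the exact sequence \eqref{T-V-A-eq2} and the surjectivity of $\Upsilon_\p$ in Proposition \ref{upsilon-onto-prop}), one has
\[
\corank_{\cO_\p}\!H^1_f(\Q,A_\p)=\dim_{F_\p}\!H^1_f(\Q,V_\p)=0,
\]
so the maximal $p$-divisible submodule $H^1_f(\Q,A_\p)_\divv$ is trivial. By part (1) of Definition \ref{Sha-def}, this gives
\[
H^1_f(\Q,A_\p)=H^1_f(\Q,A_\p)\big/H^1_f(\Q,A_\p)_\divv=\Sha_\p^{\BKK}(\Q,\MM),
\]
which is finite by construction. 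The main obstacle, and really the only non-formal step in the argument, is the use of Kato's Euler system bound on $H^1_f(\Q,V_\p)$; everything else is a direct unwinding of the definitions from Section \ref{secmot}.
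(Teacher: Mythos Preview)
Your argument is correct, but it is considerably more elaborate than the paper's, which consists of a single sentence: the finiteness of $H^1_f(\Q,A_\p)$ when $L(f,k/2)\neq0$ is exactly the content of \cite[Theorem 14.2, (2)]{Kato} specialized to $K=\Q$ and $r=k/2$. Kato's statement is already phrased in terms of the Selmer group with divisible coefficients, so no passage through $H^1_f(\Q,V_\p)$ is required.

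Two remarks on your detours. First, the opening paragraph on the $p$-adic $L$-function is entirely superfluous: Kato's bound uses only the non-vanishing of the complex value $L(f,k/2)$, not of $\mathcal{L}_{f,\p}(\chi_\cyc^{k/2})$. Second, your appeal to Corollary \ref{upsilon-onto-coro} is formally problematic, since \S\ref{cohomology-subsec} is written under the standing hypothesis that the $p$-adic regulator conjecture holds, which you do not want to assume here. Your parenthetical alternative via Proposition \ref{upsilon-onto-prop} is the right fix: the surjectivity of $\Upsilon_\p$ is a general fact (it is what is actually proved in the references cited there) and does not depend on the regulator conjecture, and combined with the cofiniteness of $H^1_f(\Q,A_\p)$ it gives $\corank_{\cO_\p}H^1_f(\Q,A_\p)=\dim_{F_\p}H^1_f(\Q,V_\p)$. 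So your route works, but the paper's direct citation avoids all of this.
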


\begin{proof} This is \cite[Theorem 14.2, (2)]{Kato} for $K=\Q$. \end{proof}

Now we prove the $p$-part of the Tamagawa number conjecture for $\MM$ when $r_\an(\MM)=0$; in fact, we prove, more generally, a rank $0$ counterpart of Theorem B. As will be clear, our proof builds crucially on work of Kato and of Skinner--Urban. Recall that Assumption \ref{ass-0} is in force.

\begin{theorem} \label{skinner-urban-thm}
Assume that $r_\an(\MM)=0$ and that 
\begin{enumerate}
\item there exists $K\in\mathscr I_0(f,p)$ such that $\AJ_{K,\p}$  is injective on $\Heeg_{K,N}^{\mathcal G_1}$ for some $\p\,|\,p$;
\item the $p$-part of Conjecture \ref{regpconj} over $\Q$ holds true. 
\end{enumerate}
Then the following results hold:
\begin{enumerate}
\item[(a)] $r_\alg(\MM)=0$; 
\item[(b)] $\Sha_\p^{\BKK}(\Q,\MM)=\Sha_\p^{\Nek}(\Q,\MM)$ for each $\p\,|\,p$ satisfying (1);  
\item[(c)] the $p$-part of Conjecture \ref{TNC} is true. 
\end{enumerate}
Furthermore, if condition (1) holds for each $\p\,|\,p$, then $\Sha_p^{\BKK}(\Q,\MM)=\Sha_p^{\Nek}(\Q,\MM)$.
\end{theorem}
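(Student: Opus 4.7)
The strategy is to assemble parts (a) and (b) from machinery already constructed in the paper and then reduce part (c) to a classical Iwasawa-theoretic calculation via Theorem~\ref{motivesthm}. Part~(a) is immediate from Theorem~\ref{nekovar-thm2} applied with $r_\an(\MM)=0$, since hypotheses (1) and (2) here are precisely those of that theorem. For part~(b), fix a prime $\p\mid p$ satisfying (1). The factorization $L(f/K,s)=L(f,s)L(f^K,s)$ combined with the definition of $\mathscr I_0(f,p)$ gives $r_\an(f/K)=1$, so Proposition~\ref{coro zhang}(1) guarantees that $y_{K,\p}\in\Lambda_\p(K)$ is not $\cO_\p$-torsion. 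Rerunning the complex-conjugation analysis carried out in the rank~$0$ case of the proof of Theorem~\ref{nekovar-thm2}, I would conclude $\Lambda_\p(\Q)\otimes_{\cO_\p}F_\p=0$; since $H^1_f(\Q,T_\p)$ (and hence its subgroup $\Lambda_\p(\Q)$) is torsion-free by Proposition~\ref{no torsion lemma}, this forces $\Lambda_\p(\Q)=0$. The defining sequence~\eqref{selmer-nek-eq} then collapses to $\Sha_\p^{\Nek}(\Q,\MM)=H^1_f(\Q,A_\p)$. Kato's Theorem~\ref{kato-thm} makes this group finite, so its maximal divisible submodule vanishes, yielding $\Sha_\p^{\BKK}(\Q,\MM)=H^1_f(\Q,A_\p)=\Sha_\p^{\Nek}(\Q,\MM)$. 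Summing over $\p\mid p$ via \eqref{sha-nek-eq} yields the ``furthermore'' claim.

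For part~(c), the plan is to apply Theorem~\ref{motivesthm} and verify \eqref{eqBK}. All three of its hypotheses are met: Conjecture~\ref{nondegconj} is unnecessary because $r_\alg(\MM)=0$ forces $\Reg_{\mathscr B}(\MM)=1$ (Remark~\ref{non-degeneracy-rem}); Conjecture~\ref{ratconj} holds by Theorem~\ref{ratconj0thm}; Conjecture~\ref{regpconj} is assumed. With $r_\alg(\MM)=0$ the basis $\mathscr B$ is empty and $\det(\mathtt A_{\tilde{\mathscr B}})=1$. Moreover, Assumption~\ref{ass-0}(1) combined with implication~\eqref{I-implication-eq} gives $\mathcal I_p(\gamma_f)=\cO_p$; $p$-ordinariness of $f$ together with \eqref{Tam-p-eq} gives $\mathrm{Tam}_p^{(p)}(\MM)=\cO_p$; \eqref{Tam-infty-eq} gives $\mathrm{Tam}_\infty^{(p)}(\MM)=\cO_p$; and the irreducibility of $\bar\rho_p$ (Assumption~\ref{ass-0}(6)) forces $\Tors_p(\MM)=\cO_p$. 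Hence \eqref{eqBK} reduces, one prime $\p\mid p$ at a time, to
\[
\biggl(\frac{\Lambda^*(\MM,0)}{\Omega_\MM}\biggr)_{\!\p}=\mathcal I\bigl(\Sha_\p^{\BKK}(\Q,\MM)\bigr)\cdot\prod_{\ell\mid N}\mathrm{Tam}_\ell^{(\p)}(\MM).
\]

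The left-hand side is evaluated by unwinding the definitions: \eqref{completedL}, the definition of $\Omega_\MM$, the period comparison of Proposition~\ref{comparison-periods-prop} (whose correction constant $C$ is a $p$-adic unit thanks to Assumption~\ref{ass-0}(1)), and Proposition~\ref{comparison} (relating $\Omega_{f,\Gamma(N)}$ to $\Omega_{f,\Gamma_0(N)}$ up to $p$-adic units) combine to give
\[
\biggl(\frac{\Lambda^*(\MM,0)}{\Omega_\MM}\biggr)_{\!\p}=\bigl((k/2-1)!\cdot L^{\alg}(f,k/2)\bigr)\cdot\cO_\p,
\]
which by the interpolation formula \eqref{L} equals $\bigl(\mathcal L_{f,\p}(\chi_\cyc^{k/2})\bigr)$. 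The cornerstone is now the Iwasawa main conjecture of Skinner--Urban (\cite{SU}), whose hypotheses follow from Assumption~\ref{ass-0}: it asserts the equality $(\mathcal L_{f,\p})=(\mathcal F_{f,\p})$ of ideals in $\Lambda_\p$. Specializing at the character $\chi_\cyc^{k/2}$ and invoking Greenberg's control theorem for the cotorsion Selmer module $X_\p$ (whose $\Gamma$-invariants are controlled by the finiteness built into Theorem~\ref{kato-thm}) converts this into $\bigl(\mathcal L_{f,\p}(\chi_\cyc^{k/2})\bigr)=\mathcal I\bigl(\Sel_\p(f/\Q)\bigr)$, where $\Sel_\p(f/\Q)$ is the Greenberg Selmer group of \eqref{selmer-f-eq}.

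The final step is to bridge from the Greenberg Selmer group $\Sel_\p(f/\Q)$ to the Bloch--Kato group $H^1_f(\Q,A_\p)=\Sha_\p^{\BKK}(\Q,\MM)$. Since $H^1_f(\Q_\ell,V_p)=0$ for $\ell\nmid p\infty$ by Lemma~\ref{lemmaram}, one has $H^1_f(\Q_\ell,A_\p)=0$, whereas the Greenberg condition at such $\ell$ is $H^1_\mathrm{unr}(\Q_\ell,A_\p)$; the two local conditions at $p$ agree up to $p$-adic units under ordinariness, with condition~(\ref{non-cong-ass}) of Assumption~\ref{ass-0} removing the potential discrepancy coming from $H^0$-obstructions, while at infinity both vanish for odd $p$. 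Proposition~\ref{Tamv}(1) then identifies the ratio of the two Selmer groups at each $\ell\mid N$ with $\mathrm{Tam}_\ell^{(\p)}(\MM)$; this bookkeeping is the step I expect to be the main obstacle, since one must carefully match the $\cO_\p$-ideal $\mathcal I(H^1_\mathrm{unr}(\Q_\ell,A_\p))$ produced by Proposition~\ref{Tamv} with the local index obtained by comparing the Greenberg and Bloch--Kato local conditions via Pontryagin duality at each bad prime. Once this matching is completed, assembling the equalities yields the $\p$-part of \eqref{eqBK}, hence \eqref{eqBK} itself, proving~(c).
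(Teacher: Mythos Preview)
Your proposal is correct and follows the same approach as the paper's proof. The step you flag as the main obstacle---descending from the specialized characteristic ideal to $\mathcal I\bigl(H^1_f(\Q,A_\p)\bigr)\cdot\prod_{\ell\mid N}\Tam_\ell^{(\p)}(\MM)$---is precisely where the paper invokes formula \eqref{greenberg-eq2.2}, which is established in the companion work \cite{LV-Iwasawa} (so the local Greenberg-versus-Bloch--Kato bookkeeping is carried out there rather than in the paper itself); one small point to watch is that the Iwasawa main conjecture is applied to the $k/2$-twisted Selmer module $X_\p(k/2)$, so that after the Tate-twist adjustment one matches $\mathcal L_{f,\p}(\chi_\cyc^{k/2})$ with $\mathcal F_{f,\p}(\mathbf 1)$ rather than with $\mathcal F_{f,\p}(\chi_\cyc^{k/2})$.
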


\begin{proof} Part (a) was already proved in Theorem \ref{nekovar-thm2}. Let $\p\,|\,p$ be a prime of $F$ satisfying condition (1). As explained in the proof of Theorem \ref{nekovar-thm2}, $\Lambda_\p(\Q)\otimes_{\cO_\p}F_\p$ is trivial, and then $\Lambda_\p(\Q)\otimes_{\cO_\p}(F_\p/\cO_\p)$ is trivial as well. Thus, $\Sha_\p^{\Nek}(\Q,\MM)=H^1_f(\Q,A_\p)$. On the other hand, $H^1_f(\Q,A_\p)=\Sha_\p^{\BKK}(\Q,\MM)$ because, by Theorem \ref{kato-thm}, $H^1_f(\Q,A_\p)$ is finite: this proves part (b) and (by taking direct sums over all $\p\,|\,p$) the last statement.

We prove part (c) by proving equality \eqref{eqBK} in Theorem \ref{motivesthm}. To start with, observe that Conjecture \ref{ratconj} is true in this case, by Theorem \ref{ratconj0thm}. Thus, all the assumptions in Theorem \ref{motivesthm} are verified (\emph{cf.} Remark \ref{non-degeneracy-rem}). 

By Lemma \ref{low-rank-lemma}, $r_\an(f)=0$. Let $\mathbf{1}$ denote the trivial character. We first show that 
\begin{equation} \label{IMC}
\bigl(\mathcal{F}_{f,\p}(\mathbf{1})\bigr)=\bigl(\mathcal{L}_{f,\p}(\chi_\cyc^{k/2})\bigr)
\end{equation}
as ideals of $\cO_\p$. Define the $\Lambda_\p$-torsion module
\[ X_\p(k/2)\defeq\Hom\bigl(S_\p(-k/2),F_\p/\cO_\p\bigr) \]
and write $\mathcal{C}=\mathrm{Char}\bigl(X_\p(k/2)\bigr)$ for its characteristic power series, which does not depend on $k$ (\emph{cf.} \cite[Proposition 17.2]{Kato}). By \cite[Theorem 17.4]{Kato} and \cite[Theorem 3.29]{SU}, there is an equality $(\mathcal{C})=(\mathcal{L}_{f,\p})$, so $\mathcal{C}\bigl(\chi_\cyc^{k/2}\bigr)$ and $\mathcal{L}_{f,\p}\bigl(\chi_\cyc^{k/2}\bigr)$ generate the same ideal of $\cO_\p$ (here, for an element $x\in\Lambda$ and a character $\chi:\Gamma\rightarrow\bar\Q_p^\times$, we set $x(\chi)\defeq\chi(x)$). In order to show \eqref{IMC}, it therefore remains to note that $\mathcal{C}\bigl(\chi_\cyc^{k/2}\bigr)$ and $\mathcal{F}_{f,\p}(\mathbf{1})$ generate the same ideal of $\cO_\p$, which follows easily by taking into consideration the Tate twist in the relevant definitions (see, \emph{e.g.}, \cite[Lemma 1.2]{Rubin-ES}). 

Combining \eqref{p-adic L-function} and \eqref{IMC}, we obtain the equivalences
\[ L(f,k/2)\neq0\Longleftrightarrow\mathcal{L}_{f,\p}\bigl(\chi_\cyc^{k/2}\bigr)\neq0\Longleftrightarrow \mathcal{F}_{f,\p}(\mathbf{1})\neq0. \]
Since $r_\an(f)=0$, we deduce from Theorem \ref{kato-thm} that $H^1_f(\Q,A_\p)$ is finite. A generalization to our setting of the arguments in \cite[Section 4]{Greenberg} (see \cite{LV-Iwasawa}) shows that 
\begin{equation} \label{greenberg-eq2.2}
\mathcal{I}_{\cO_\p}\bigl(\cO_\p/\mathcal{F}_{f,\p}(\mathbf{1})\cdot\cO_\p\bigr)=\mathcal{I}_{\cO_\p}\bigl(S_\p^\Gamma/(\#S_\p)_\Gamma\bigr)=\mathcal{I}_{\cO_\p}\bigl(H^1_f(\Q,A_\p)\bigr)\cdot\prod_{\ell\mid N}\mathrm{Tam}_\ell^{(\p)}(\MM).
\end{equation}
Combining \eqref{L}, \eqref{IMC} and \eqref{greenberg-eq2.2}, we see that for each $\p\,|\,p$ there is an equality 
\begin{equation} \label{L-eq}
\Bigl((k/2-1)!\cdot L^\mathrm{alg}(f,k/2)\Bigr)=\mathcal{I}_{\cO_\p}\bigl(H^1_f(\Q,A_\p)\bigr)\cdot\prod_{\ell\mid N}\mathrm{Tam}_\ell^{(\p)}(\MM)
\end{equation}
of (fractional) $\cO_\p$-ideals. On the other hand, by Proposition \ref{speciavalueslemma} (\emph{cf.} the proof of Theorem \ref{ratconj0thm}), $\iota_\Sigma\bigl((k/2-1)!\cdot L^\mathrm{alg}(f,k/2)\bigr)=(k/2-1)!\cdot L^*(\MM,0)\big/\Omega^{(\gamma_f)}_\infty$. Thus, with the convention from Remark \ref{deligne-rem2}, one can replace $(k/2-1)!\cdot L^\mathrm{alg}(f,k/2)$ with $(k/2-1)!\cdot L^*(\MM,0)\big/\Omega^{(\gamma_f)}_\infty$ in \eqref{L-eq}. Since $H^1_f(\Q,A_p)=\Sha_p^{\BKK}(\Q,\MM)$ and \eqref{L-eq} holds for each $\p\,|\,p$, we obtain an equality
\begin{equation} \label{L-eq2}
\biggl(\frac{(k/2-1)!\cdot L^*(\MM,0)}{\Omega_\infty}\biggr)=\mathcal{I}_{\cO_p}\bigl(\Sha_p^{\BKK}(\Q,\MM)\bigr)\cdot\prod_{\ell\mid N}\mathrm{Tam}_\ell^{(p)}(\MM)
\end{equation}
of (fractional) $\cO_p$-ideals. Now notice that, by part (1) of Proposition \ref{no torsion lemma}, $H^1(\Q,T_\p{)}_\mathrm{tors}$ is trivial for each $\p\,|\,p$, so $H^1(\Q,T_p{)}_\mathrm{tors}$ is trivial. Furthermore, $H^0(G_S,A_p)$ is trivial by \cite[Lemma 3.10, (2)]{LV} (\emph{cf.} also the proof of \cite[Lemma 2.4]{LV-kyoto}). This shows that $\Tors_p(\MM)=\cO_p$ (\emph{cf.} \S \ref{pid-prod-subsubsec}). By \eqref{Tam-p-eq}, $\Tam_{p}^{(p)}(\MM)=\cO_p$, while $\Tam_\infty^{(p)}(\MM)=\cO_p$ by \eqref{Tam-infty-eq} and $\mathcal I_p(\gamma_f)=\cO_p$ by \eqref{I-implication-eq}. Finally, by Theorem \ref{nekovar-thm2}, $r_\alg(\MM)=0$, so $\Reg(\MM)=1$. Therefore, equality \eqref{L-eq2} coincides with equality \eqref{eqBK} (or, better, with its equivalent form \eqref{eqBKbis} in Remark \ref{BK-bis-rem}) in our setting, which completes the proof. \end{proof}

\begin{remark} 
We want to explain how to relax an assumption in \cite{LV-Iwasawa} so that the results in \cite{LV-Iwasawa} can be safely applied in our current setting. In \cite{LV-Iwasawa}, one requires (\cite[Assumption 6(b)]{LV-Iwasawa}) that $H^0(I_v,A^-)=0$, which is too restrictive for the applications described above. This condition is used in the proof of \cite[Lemma 5.4]{LV-Iwasawa} to show that $H^1\bigl(G_v^\mathrm{un},H^1(I_v,T^-{)}_\mathrm{tors}\bigr)=0$. In that argument, we identify $H^1(I_v,T^-{)}_\mathrm{tors}$ with the largest cotorsion quotient of $H^0(I_v,A^-)$ and then conclude using $H^0(I_v,A^-)=0$. It turns out that we can relax the assumption $H^0(I_v,A^-)=0$ and still show that $H^1\bigl(G_v^\mathrm{un},H^1(I_v,T^-{)}_\mathrm{tors}\bigr)$ is trivial, arguing as follows. As observed above, the group $G_v^\mathrm{un}$ acts on $H^0(I_v,A^-)$ via the unramified character taking the geometric Frobenius to the unit root $\alpha$ of the Hecke polynomial. In the case of weight $2$, we have $H^0(I_v,A^-)=A^-$, hence the  largest cotorsion quotient of $H^0(I_v,A^-)$ is trivial (as $A^-$ is divisible) and we conclude that $H^1\bigl(G_v^\mathrm{un},H^1(I_v,T^-{)}_\mathrm{tors}\bigr)$ is trivial using the argument in \cite{LV-Iwasawa}. If the weight is bigger than $2$, then $H^0(I_v,A^-)=A^-[p^n]$ for some integer $n$, due to the fact that inertia acts on $A^-$ via the $(1-k/2)$-th power of the cyclotomic character. Since $a_p(f)\not\equiv 1\pmod{\mathfrak{p}},$ we also have $\alpha\not\equiv1\pmod{\mathfrak{p}}$. Thus, 
$H^1\bigl(G_v^\mathrm{un},H^1(I_v,T^-{)}_\mathrm{tors}\bigr)$ is isomorphic to $A^-[p^n]\big/(\alpha-1)\cdot A^-[p^n]$. Now $\alpha-1$ is invertible modulo $\mathfrak{p}$, so $(\alpha-1)\cdot A^-[p^n]=A^-[p^n]$ and $H^1\bigl(G_v^\mathrm{un},H^1(I_v,T^-{)}_\mathrm{tors}\bigr)$ is trivial.
\end{remark}

\subsection{Kolyvagin's conjecture and Shafarevich--Tate groups} \label{p-primary-Sha-subsec} 

As usual, let $\p$ be a prime of $F$ above $p$. We gather some results on $\Sha_\p^{\BKK}(K,\MM)$, where $K$ is, as usual, an imaginary quadratic field in which all the prime factors of $Np$ split. 

\subsubsection{A structure theorem for $\Sha_\p^{\BKK}(K,\MM)$}

Following \cite[\S 4.2]{Masoero}, for every integer $M\geq1$ we define $\tilde{S}_1(M)$ to be the set of prime numbers $\ell$ such that
\begin{itemize}
\item $\ell\nmid Np$;
\item $\ell$ is inert in $K$;
\item $\p^M\,|\,\ell+1$.
\end{itemize}
Denote by $\tilde{S}_n(M)$ the set of square-free products of $n$ primes in $\tilde{S}_1(M)$ (here $\tilde{S}_0(M)\defeq \{1\}$) and define 
\[ \tilde{S}(M)\defeq \bigcup_{n\in\N}\tilde{S}_n(M). \]
For every integer $m\geq1$, let $\mathscr M(m)$ be the set of integers $M\geq1$ such that $m\in\tilde{S}(M)$. 

As in \cite[\S7.1]{Masoero}, we also consider the set $S_1(M)$ of prime numbers $\ell$ such that
\begin{itemize}
\item $\ell\nmid Np$;
\item $\ell$ is inert in $K$;
\item $\p^M\,|\,a_\ell(f)$, $\p^M\,|\,\ell+1$;
\item $\p^{M+1}\nmid \ell+1\pm a_\ell(f)$.
\end{itemize}
Write ${S}_n(M)$ for the set of square-free products of $n$ primes in ${S}_1(M)$ (with ${S}_0(M)\defeq \{1\}$) and define 
\[ {S}(M)\defeq \bigcup_{n\in\N}{S}_n(M). \] 
With notation as in \S \ref{kolyvagin-classes-subsec}, for every integer $n\geq1$ set 
\[ \omega(n)\defeq \max\bigl\{M\in\mathscr M(n)\mid {[z_{n,\p}]}_M=0\bigr\}, \]
where we put $\omega(n)\defeq \infty$ if this maximum does not exist. Finally, for every $r\in\N$ set
\[ M_r\defeq \min\bigl\{\omega(n)\mid n\in S_r(\omega(n)+1)\bigr\}, \]
with $M_r\defeq \infty$ if $\omega(n)=\infty$. 

\begin{lemma} \label{lemma-masoero2} 
$M_0$ is finite and $M_r\geq M_{r+1}\geq 0$ for all $r\in\N$. 
\end{lemma}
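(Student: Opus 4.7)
My plan is to split the lemma into three assertions: (a) $M_0$ is finite, (b) $M_{r+1} \geq 0$, and (c) $M_r \geq M_{r+1}$. Assertion (b) is immediate, since each $\omega(n)$ is a maximum over a subset of $\N$, hence a non-negative integer or $\infty$.

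For (a), I would first observe that $S_0(M) = \{1\}$ for every $M \geq 1$, so the set defining $M_0$ collapses to $\{\omega(1)\}$ and $M_0 = \omega(1)$. Because $\Lambda_\p(K_1)$ is $\cO_\p$-torsion-free (Proposition \ref{no torsion lemma}), $\omega(1) < \infty$ is equivalent to $z_{1,\p} \neq 0$, which by Remark \ref{z_1-rem} amounts to $y_{K,\p} \neq 0$ (the restriction $\res_{K_1/K}$ being injective on the torsion-free module $\Lambda_\p(K)$). Under the running hypothesis $r_\an(\MM) = 1$, for any $K \in \mathscr I_1(f,p)$ one has $r_\an(f/K) = 1$, and Proposition \ref{coro zhang}(1) — which invokes the injectivity of $\AJ_{K,\p}$ on $\Heeg_{K,N}^{\mathcal G_1}$ — then delivers $y_{K,\p} \neq 0$, giving $M_0 < \infty$.

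For (c), I would assume $M_r < \infty$ (the other case being trivial) and pick $n \in S_r(\omega(n)+1)$ realising $\omega(n) = M_r$; the goal is to produce $n' \defeq n\ell \in S_{r+1}(M_r+1)$ with $\omega(n') \leq M_r$. Since $\omega(n) = M_r$, the Kolyvagin class $c_{M_r+1}(f,n) \in H^1(K, A_\p[p^{M_r+1}])$ is nonzero. Applying the Chebotarev density theorem over the compositum of the Kolyvagin field $K_n$, the division field of $A_\p[p^{M_r+2}]$, and the fixed field cut out by $c_{M_r+1}(f,n)$, I would find a Kolyvagin prime $\ell$ coprime to $n$ lying in $S_1(M_r+1)$ whose Frobenius renders the localisation $\loc_\ell\bigl(c_{M_r+1}(f,n)\bigr)$ non-trivial in the finite part of $H^1(K_\ell, A_\p[p^{M_r+1}])$. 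The big image property of $\rho_p$ (Assumption \ref{ass}) ensures that the Galois group acts transitively enough for the relevant Chebotarev conditions to be simultaneously satisfiable on a set of primes of positive density. The standard Kolyvagin local congruence — equating the singular localisation of $c_{M_r+1}(f,n\ell)$ with a non-zero multiple of the finite localisation of $c_{M_r+1}(f,n)$, as in \cite[Proposition 3.7]{Nek} and its motivic reformulation in \cite{Masoero} — then forces $c_{M_r+1}(f,n\ell) \neq 0$, hence $[z_{n\ell,\p}]_{M_r+1} \neq 0$, whence $\omega(n') \leq M_r$ and therefore $M_{r+1} \leq M_r$.

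The main obstacle is the Chebotarev step in (c): one must simultaneously satisfy the strict $\p$-adic valuation constraints placing $\ell$ in $S_1(M_r+1)$ (rather than only in the larger set $\tilde S_1(M_r+1)$) and the Galois-theoretic condition ensuring non-triviality of the localisation of $c_{M_r+1}(f,n)$ at $\ell$. The big image and $p$-distinguishedness properties encoded in Assumption \ref{ass} are precisely what decouple these two constraints, guaranteeing an ample supply of admissible primes.
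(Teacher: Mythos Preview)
The paper does not prove this lemma; it simply cites \cite[Lemma 7.4]{Masoero}. Your proposal is an attempt to reconstruct that argument, and the outline for part (c) is the right Kolyvagin--Chebotarev mechanism. However, there is a genuine gap in your treatment of part (a).

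You invoke ``the running hypothesis $r_\an(\MM)=1$'', take $K\in\mathscr I_1(f,p)$, and appeal to Proposition \ref{coro zhang}(1), which in turn requires the injectivity of $\AJ_{K,\p}$ on $\Heeg_{K,N}^{\mathcal G_1}$. None of these three hypotheses is in force in \S\ref{p-primary-Sha-subsec}: at this point $K$ is merely an imaginary quadratic field in which the primes dividing $Np$ split, the analytic rank is unconstrained, and no injectivity of Abel--Jacobi is assumed. Your own analysis shows that $M_0<\infty$ is \emph{equivalent} to $y_{K,\p}$ being non-torsion; this is precisely the hypothesis that appears explicitly in the surrounding results (Theorems \ref{structure sha}, \ref{teorema-masoero}, Proposition \ref{lemma-masoero1}) and is ambient in the relevant section of \cite{Masoero}. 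So the correct fix is not to import analytic-rank and Abel--Jacobi hypotheses from \S\ref{main-assumptions-subsubsec}, but simply to record that the finiteness of $M_0$ holds under the standing assumption $y_{K,\p}\notin\Lambda_\p(K)_{\mathrm{tors}}$.

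A smaller point on (c): the sets $S_1(M)$ carry both a divisibility condition $\p^M\mid\ell+1,\,a_\ell(f)$ and a \emph{non}-divisibility condition $\p^{M+1}\nmid\ell+1\pm a_\ell(f)$, so they are not nested in $M$. You construct $\ell\in S_1(M_r+1)$ and hence $n\ell\in S_{r+1}(M_r+1)$, but the index set for $M_{r+1}$ requires $n\ell\in S_{r+1}(\omega(n\ell)+1)$. If $\omega(n\ell)<M_r$ this is a different set, and one must arrange the Chebotarev choice so that the exact valuation constraints match up (this is handled in \cite{Masoero}, but your sketch does not address it).
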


\begin{proof} This is \cite[Lemma 7.4]{Masoero}. \end{proof}
 
Set $M_\infty\defeq \inf\bigl\{M_r\mid r\in\N\bigr\}$ and note that Conjecture \ref{kolyvagin-conj} is equivalent to the statement that $M_\infty< \infty$. Let $\Sha_\p^{\BKK,\pm}(f/K)$ be the $\pm$-eigenspaces of $\Gal(K/\Q)$ acting on $\Sha_\p^{\BKK}(K,\MM)$ and let $\varepsilon\in\{\pm\}$ be the sign of the root number of $f$ (\emph{cf.} \S \ref{Lfunctsec}). 

In the next theorem, which is a higher weight analogue of the main result of \cite[\S 5]{McCallum}, we let $N_i\defeq M_{i-1}-M_i$ for all $i\geq1$. Observe that, by Lemma \ref{lemma-masoero2}, $N_i\geq0$ for all $i$.  

\begin{theorem}[Nekov\'a\v{r}, Masoero] \label{structure sha}
If $y_{K,\p}$ is not torsion, then there are isomorphisms
\begin{align}
\Sha_\p^{\BKK,-\varepsilon}(f/K)&\simeq\bigl(\cO_\p/p^{N_1}\cO_\p\bigr)^2\oplus
\bigl(\cO_\p/p^{N_3}\cO_\p\bigr)^2\oplus\dots\\
\intertext{and}
\Sha_\p^{\BKK,\varepsilon}(f/K)&\simeq\bigl(\cO_\p/p^{N_2}\cO_\p\bigr)^2\oplus
\bigl(\cO_\p/p^{N_4}\cO_\p\bigr)^2\oplus\dots
\end{align}
of $\cO_\p$-modules. 
\end{theorem}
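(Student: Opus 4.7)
The plan is to reduce the statement to a result about Nekov\'a\v{r}'s Shafarevich--Tate group and then run the Kolyvagin descent argument in the higher weight setting, following the template of McCallum in the elliptic curve case and its adaptation by Masoero. Since $y_{K,\p}$ is assumed to be non-torsion, Proposition \ref{selmer-alternative-prop} yields the equality $\Sha_\p^{\BKK}(K,\MM)=\Sha_\p^{\Nek}(K,\MM)$, so we may work with the Nekov\'a\v{r} version throughout. Moreover, Theorem \ref{nekovar-thm} guarantees that this group is finite and that $H^1_f(K,A_\p{)}_\divv$ is the image of $\Lambda_\p(K)\otimes_{\cO_\p}(F_\p/\cO_\p)$, which is a cofree $\cO_\p$-module of corank one, so the quotient $\Sha_\p^{\Nek}(K,\MM)$ carries a well-defined complex conjugation action with eigenspaces $\Sha_\p^{\BKK,\pm}(f/K)$.

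The main input is the Kolyvagin system $\kappa_{f,\infty}$ constructed in \S\ref{kolyvagin-classes-subsec}. Two standard features of this system are key. First, at each Kolyvagin prime $\ell\in S_1(M)$ that is inert in $K$, both local factors $H^1_f(K_\lambda,A_\p[p^M])$ and $H^1_s(K_\lambda,A_\p[p^M])$ are free $\cO_\p/p^M$-modules of rank one, and local Tate duality places them in perfect pairing; moreover, the finite/singular part of the localization of $c_M(f,n\ell)$ is controlled (up to units) by the singular/finite part of the localization of $c_M(f,n)$, via Kolyvagin's derivative relations. Second, a direct inspection of the action of complex conjugation on Heegner cycles and on the derivative operators $D_n$ shows that $c_M(f,n)$ lies in the $(-1)^{\omega(n)+1}\varepsilon$-eigenspace (where $\omega(n)$ is the number of prime divisors of $n$), so classes indexed by integers with an even (respectively odd) number of prime factors contribute to different $\Gal(K/\Q)$-eigencomponents of the Selmer group.

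Combining these inputs, one constructs, for each $r\geq 0$, a finite filtration on the appropriate eigenspace of $\Sha_\p^{\Nek}(K,\MM)$ whose successive graded pieces are cut out by the classes $c_{M_r}(f,n)$ with $n\in S_r(M_r+1)$; the minimality in the definition of $M_r$ ensures that each graded piece is $\cO_\p$-free of length $M_{r-1}-M_r=N_r$ and rank $2$ (the rank $2$ coming from the equal contributions of finite and singular parts at a Kolyvagin prime, together with the self-duality of $V_\p$). The parity pattern then forces the odd-index $N_{2i+1}$ summands to collect in the $-\varepsilon$-eigenspace and the even-index $N_{2i}$ summands in the $\varepsilon$-eigenspace, producing the announced decompositions.

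The hardest step, and the technical heart of the argument, is proving the ``bounding lemma'' that turns a Kolyvagin class of exact order $p^{M_r}$ into a uniform bound on the annihilator of a graded piece of the Selmer filtration: one must show that if $c_{M_r}(f,n)$ has order exactly $p^{M_r}$ for some $n\in S_r(M_r+1)$, then the corresponding quotient of the Selmer module is annihilated by $p^{M_r}$, while the previous step forces it to contain a free rank $2$ summand of exponent $p^{N_r}$. This relies on a careful analysis of the global duality sequence for $A_\p[p^M]$ with modified local conditions at Kolyvagin primes (so that only the finite, respectively singular, condition is relaxed at one prime), combined with part (1) of Proposition \ref{no torsion lemma} to avoid $\p$-torsion obstructions, and on the residual irreducibility and big-image hypotheses of Assumption \ref{main-assumption} to guarantee the Chebotarev existence of Kolyvagin primes that realize prescribed local behaviour; these ingredients are exactly those developed by Nekov\'a\v{r} in \cite{Nek} and adapted to this exact form by Masoero, and combining them in the above framework yields the stated isomorphisms.
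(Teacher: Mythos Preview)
Your approach is essentially the same as the paper's: reduce from $\Sha_\p^{\BKK}$ to $\Sha_\p^{\Nek}$ via Proposition \ref{selmer-alternative-prop}, and then invoke the Kolyvagin--McCallum--Masoero structure theorem. The paper simply cites \cite[Theorem 7.3]{Masoero} (noting only that one must read the abelian-group statement there as an $\cO_\p$-module statement), whereas you sketch the content of that reference; your outline of the descent argument is broadly accurate.

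Two small points. First, you use $\omega(n)$ for the number of prime divisors of $n$, but in this paper that symbol is already taken (it denotes $\max\{M\in\mathscr M(n)\mid[z_{n,\p}]_M=0\}$); the paper's notation for the number of prime factors is $\nu(n)$. Second, your phrase ``$\cO_\p$-free of length $N_r$ and rank $2$'' is a slip: the graded pieces are isomorphic to $(\cO_\p/p^{N_r}\cO_\p)^2$, which are not free.
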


\begin{proof} By Proposition \ref{selmer-alternative-prop}, $\Sha_\p^{\BKK}(K,\MM)=\Sha_\p^{\Nek}(K,\MM)$. Thus, the theorem is essentially \cite[Theorem 7.3]{Masoero}, the only difference being that in \cite{Masoero} one is interested in the structure of $\Sha_\p^{\Nek}(K,\MM)$ as an abelian group, whereas here we are looking at it as an $\cO_\p$-module. \end{proof}

\subsubsection{Some consequences on $\Sha_\p^{\BKK}(K,\MM)$ and $y_{K,\p}$}

The next theorem is a consequence of Theorem \ref{kolyvagin-main-thm}, which asserts the validity of Conjecture \ref{strong-kolyvagin-conj}, and Theorem \ref{structure sha}; it will play a key role in the proof of our results on the $p$-part of the TNC for $\MM$ in analytic rank $1$.

\begin{theorem} \label{teorema-masoero}
If $y_{K,\p}$ is not $\cO_\p$-torsion, then $\length_{\cO_\p}\bigl(\Sha_\p^{\BKK}(K,\MM)\bigr)=2M_0$.
\end{theorem}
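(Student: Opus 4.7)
The plan is to combine the structure theorem for $\Sha_\p^{\BKK}(K,\MM)$ (Theorem \ref{structure sha}) with our proof of Kolyvagin's conjecture in its strong form (Theorem \ref{kolyvagin-main-thm}) in a direct, essentially bookkeeping, way. Since $y_{K,\p}$ is assumed not $\cO_\p$-torsion, Proposition \ref{selmer-alternative-prop} allows us to identify $\Sha_\p^{\BKK}(K,\MM)=\Sha_\p^{\Nek}(K,\MM)$ from the outset, so Theorem \ref{structure sha} applies and yields
\[ \Sha_\p^{\BKK}(K,\MM)\simeq \bigoplus_{i\geq 1}\bigl(\cO_\p/p^{N_i}\cO_\p\bigr)^{2} \]
as $\cO_\p$-modules, with $N_i=M_{i-1}-M_i\geq 0$.

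First I would read off the length: using additivity of $\length_{\cO_\p}$ on direct sums and the fact that $\length_{\cO_\p}(\cO_\p/p^{N}\cO_\p)=N$ (recall $p$ is a uniformizer at $\p$ by part (\ref{p-f-ass}) of Assumption \ref{main-assumption}), one gets
\[ \length_{\cO_\p}\bigl(\Sha_\p^{\BKK}(K,\MM)\bigr) \;=\; \sum_{i\geq 1} 2 N_i \;=\; 2\sum_{i\geq 1}(M_{i-1}-M_i). \]
By Lemma \ref{lemma-masoero2} the sequence $(M_i)_i$ is non-negative and non-increasing, hence it stabilizes at $M_\infty=\inf_r M_r$, and the telescoping sum collapses to $2(M_0-M_\infty)$.

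Next, I would show that $M_\infty=0$ by invoking Theorem \ref{kolyvagin-main-thm}, which furnishes an $n_0\in\Lambda_{\Kol}(f)$ with $c_1(f,n_0)\neq 0$. By \eqref{c-d-eq} this is equivalent to $d_1(f,n_0)\neq 0$, and since $\iota_{K_{n_0},1}$ is injective, also to ${[z_{n_0,\p}]}_1\neq 0$; unravelling the definition of $\omega$, this forces $\omega(n_0)=0$. Writing $n_0$ as a product of $r_0$ Kolyvagin primes and matching the conditions defining $S_1(\omega(n_0)+1)=S_1(1)$ with those defining $\mathcal{P}_{\Kol}(f)$ (using the ordinariness assumption (\ref{ordinary-ass}) and the non-congruence (\ref{non-cong-ass}) in Assumption \ref{main-assumption} to rule out the degenerate case $\p^{2}\,|\,\ell+1\pm a_\ell(f)$), one sees that $n_0\in S_{r_0}(1)$. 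By the very definition of $M_{r_0}$ this gives $M_{r_0}\leq\omega(n_0)=0$, hence $M_{r_0}=0$ and $M_\infty=0$. Combining with the length computation above yields $\length_{\cO_\p}\bigl(\Sha_\p^{\BKK}(K,\MM)\bigr)=2M_0$.

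The main obstacle I expect is the last step: translating the non-vanishing of $c_1(f,n_0)$ for some Kolyvagin integer $n_0\in\Lambda_{\Kol}(f)$ into the sharper assertion that $n_0$ can be realized (or replaced, by a cofinality/averaging argument over Chebotarev sets of admissible primes) as an element of $S_{r_0}(1)$, so as to make $M_{r_0}$ actually attain the value $0$ rather than merely a finite bound. This is the point where one has to be careful with the slight discrepancy between the sets $\tilde S$ and $S$ appearing in Masoero's definition of the $M_r$, and where the argument leans on the strong form of Kolyvagin's conjecture (Conjecture \ref{strong-kolyvagin-conj}) rather than merely on Conjecture \ref{kolyvagin-conj}, exactly as provided by Theorem \ref{kolyvagin-main-thm}.
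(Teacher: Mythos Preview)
Your overall strategy is exactly the paper's: use Theorem \ref{structure sha} to compute the length as the telescoping sum $2(M_0-M_\infty)$, then invoke Theorem \ref{kolyvagin-main-thm} to conclude $M_\infty=0$. The paper's proof is just as terse on both points; it cites \cite[Corollary 7.11]{Masoero} for the telescoping identity and then simply asserts that $c_1(f,n)\neq 0$ for some $n\in\Lambda_{\Kol}(f)$ ``implies that $M_\infty=0$''.

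There is, however, a genuine error in your proposed justification of $M_\infty=0$. You appeal to conditions (\ref{ordinary-ass}) and (\ref{non-cong-ass}) of Assumption \ref{main-assumption} to rule out $\p^2\mid \ell+1\pm a_\ell(f)$ for the prime factors $\ell$ of $n_0$. But those conditions concern $a_p(f)$, not $a_\ell(f)$ for Kolyvagin primes $\ell\neq p$; they say nothing whatsoever about the $\p$-adic valuations of $\ell+1\pm a_\ell(f)$. So this part of the argument does not work, and the inclusion $n_0\in S_{r_0}(1)$ is not established.

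You are right that this is precisely the delicate point, and your alternative suggestion (a Chebotarev/cofinality argument replacing $n_0$ by an integer built from primes in $S_1(1)$ rather than merely in $\mathcal P_{\Kol}(f)$, without destroying non-vanishing of the class modulo $\p$) is the correct direction. This is what is carried out in \cite{Masoero}: the equivalence between non-triviality of the strict Kolyvagin set and $M_\infty=0$ is part of Masoero's machinery, and the paper defers to it rather than reproving it. So your plan is sound provided you replace the incorrect appeal to (\ref{ordinary-ass})--(\ref{non-cong-ass}) with a reference to that equivalence.
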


\begin{proof} As in the proof of \cite[Corollary 7.11]{Masoero}, it follows from Theorem \ref{structure sha} that 
\begin{equation} \label{sha-M-eq}
\length_{\cO_\p}\bigl(\Sha_\p^{\BKK}(K,\MM)\bigr)={2(M_0-M_\infty)}. 
\end{equation}
On the other hand, Theorem \ref{kolyvagin-main-thm} says that $c_1(f,n)\neq0$ for a suitable $n\in\Lambda_{\Kol}(f)$, which implies that $M_\infty=0$. The theorem is then a consequence of equality \eqref{sha-M-eq}. \end{proof}

For later reference, we prove

\begin{proposition} \label{lemma-masoero1} 
If $y_{K,\p}$ is not $\cO_\p$-torsion, then $M_0=\length_{\cO_\p}\bigl(\Lambda_{\p}(K)/y_{K,\p}\cdot\cO_\p\bigr)$. 
\end{proposition}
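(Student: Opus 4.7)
The strategy is to unpack the definition of $M_0$ to identify it as the maximal $p$-divisibility of $z_{1,\p}$ in $\Lambda_\p(K_1)$, transfer this divisibility to $y_{K,\p}$ in $\Lambda_\p(K)$ via Kolyvagin's cohomology classes and the injectivity of the generalized Kummer maps $\iota_{L,M}$, and finally read off the length. Because $S_0(M)=\{1\}$ for all $M$, we have $M_0=\omega(1)$; and because $1\in\tilde{S}_0(M)\subset\tilde{S}(M)$ for all $M\geq 1$, we have $\mathscr{M}(1)=\{M\in\N\mid M\geq 1\}$, so
\[ M_0 \;=\; \omega(1) \;=\; \max\bigl\{M\geq 1 \mid z_{1,\p}\in p^M\Lambda_\p(K_1)\bigr\} \]
(with the convention $M_0=0$ when this set is empty).

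I would next bridge $z_{1,\p}$ and $y_{K,\p}$ through the Kolyvagin classes. By construction $d_M(f,1)=\iota_{K_1,M}\bigl([z_{1,\p}]_M\bigr)$, so the injectivity of $\iota_{K_1,M}$ coming from \eqref{iota_L-eq} gives $d_M(f,1)=0\Leftrightarrow[z_{1,\p}]_M=0$; by \eqref{c-d-eq} this is equivalent to $c_M(f,1)=0$; and Proposition \ref{c_M(1)-prop} together with the injectivity of $\iota_{K,M}$ yields $c_M(f,1)=0\Leftrightarrow[y_{K,\p}]_M=0\Leftrightarrow y_{K,\p}\in p^M\Lambda_\p(K)$. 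Chaining these equivalences gives
\[ M_0 \;=\; \max\bigl\{M\geq 1 \mid y_{K,\p}\in p^M\Lambda_\p(K)\bigr\}, \]
which is finite because $y_{K,\p}$ is non-torsion in the finitely generated $\cO_\p$-module $\Lambda_\p(K)$.

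Finally, the hypothesis that $y_{K,\p}$ is non-torsion combined with Theorem \ref{nekovar-thm}(1) yields $\Lambda_\p(K)\otimes_\Z\Q=F_\p\cdot y_{K,\p}$, so $\Lambda_\p(K)$ has $\cO_\p$-rank $1$; together with part (2) of Proposition \ref{no torsion lemma} applied to the solvable extension $K/\Q$, this shows that $\Lambda_\p(K)$ is free of rank $1$ over $\cO_\p$. Fixing an $\cO_\p$-generator $\eta$, one writes $y_{K,\p}=p^tu\eta$ with $u\in\cO_\p^\times$ and $t\in\N$, and then both $\length_{\cO_\p}\bigl(\Lambda_\p(K)/y_{K,\p}\cO_\p\bigr)$ and $\max\{M\mid y_{K,\p}\in p^M\Lambda_\p(K)\}$ are equal to $t$, which gives the claim. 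No step in the plan is genuinely hard: the content is merely transporting divisibility through Proposition \ref{c_M(1)-prop} and the injectivity of the Kummer-type maps $\iota_{K,M}$ and $\iota_{K_1,M}$; the only mildly delicate point is the degenerate case $t=0$ (when $y_{K,\p}$ already generates $\Lambda_\p(K)$), handled by the convention $\omega(1)=0$ above.
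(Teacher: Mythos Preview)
Your argument is correct and reaches the same conclusion by essentially the same mechanism as the paper: both reduce to the equivalence $y_{K,\p}\in p^M\Lambda_\p(K)\Leftrightarrow z_{1,\p}\in p^M\Lambda_\p(K_1)$, and both then read off the length from this maximal divisibility. The only difference is in packaging: the paper establishes this equivalence directly from the restriction relation $\res_{K_1/K}(y_{K,\p})=z_{1,\p}$ of \eqref{y_K-z_1-eq} together with the torsion-freeness of $\Lambda_\p(K_1)$ (Proposition~\ref{no torsion lemma}) to get injectivity of $\Lambda_\p(K)/\p^M\hookrightarrow\Lambda_\p(K_1)/\p^M$, whereas you route the same information through the cohomology classes $d_M(f,1)$, $c_M(f,1)$ and Proposition~\ref{c_M(1)-prop}. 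Your detour is harmless but unnecessary---Proposition~\ref{c_M(1)-prop} is itself proved via the restriction relation, so you are effectively unpacking and repacking the paper's argument. On the other hand, you make explicit the rank-$1$ freeness of $\Lambda_\p(K)$ (via Theorem~\ref{nekovar-thm} and Proposition~\ref{no torsion lemma}) needed to identify the maximal divisibility with the length, a step the paper leaves implicit.
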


\begin{proof} By definition, $M_0$ is the largest integer $M\geq1$ such that ${[z_{1,\p}]}_M=0$. Equivalently, $M_0$ is the largest integer $M\geq1$ such that $z_{1,\p}\in\p^M\Lambda_{\p}(K_1)$. Now denote by $M'_0$ the length of $\Lambda_{\p}(K)/y_{K,\p}\cdot\cO_\p$ as an $\cO_\p$-module, which can be described as the largest integer $M\geq1$ such that  $y_{K,\p}\in\p^M\Lambda_{\p}(K)$. We want to show that $M_0=M_0'$. 

As explained in the proof of Proposition \ref{c_M(1)-prop}, restriction gives an $\cO_\p$-linear injection
\begin{equation} \label{res-length-eq}
\res_{K_1/K}:\Lambda_{\p}(K)\longmono{\Lambda_{\p}(K_1)}^{\mathcal G_1}\subset\Lambda_{\p}(K_1) 
\end{equation}
such that, by \eqref{y_K-z_1-eq}, $\res_{K_1/K}(y_{K,\p})=z_{1,\p}$. It follows immediately that $M_0'\leq M_0$. On the other hand, part (1) of Proposition \ref{no torsion lemma} ensures that $\Lambda_{\p}(K_1)$ is torsion-free, so \eqref{res-length-eq} induces for every integer $M\geq1$ an $\cO_\p$-linear injection
\[ \Lambda_{\p}(K)\big/\p^M\Lambda_{\p}(K)\longmono\Lambda_{\p}(K_1)\big/\p^M\Lambda_{\p}(K_1). \]
This shows that $M_0\leq M'_0$, and the proof is complete. \end{proof}
%
%
%

\subsection{Rationality conjecture for $\MM$ in analytic rank $1$} \label{ratconj-1-subsec}

Our goal here is to prove the rationality conjecture (Conjecture \ref{ratconj}) when $r_\an(\MM)=1$. Notation from Remark \ref{AJ-twist-rem} and \S \ref{imaginary-subsec} is in force.

\begin{theorem}\label{ratconj1thm}
Assume that $r_\an(\MM)=1$ and that there exists $K\in\mathscr I_1(f,p)$ such that
\begin{enumerate}
\item there is $K'\in\mathscr I_0(f^K,p)$ with $\AJ_{f^K,K',\p}$ injective on $\Heeg_{K',N}^{\mathcal G'_1}$ for some $\p\,|\,p$;
\item the $p$-part of Conjecture \ref{regpconj} for $\MM(f^K)$ over $\Q$ holds true. 
\end{enumerate}
Then Conjecture \ref{ratconj} is true. 
\end{theorem}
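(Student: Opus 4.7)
The plan is to derive the rationality conjecture for $\MM$ from its analogue for the twisted motive $\MM(f^K)$, combined with Zhang's Gross--Zagier-type formula. First, since $K\in\mathscr I_1(f,p)$ we have $r_\an(f^K)=0$, and hence $r_\an(\MM(f^K))=0$ by Lemma~\ref{low-rank-lemma}. Hypotheses (1) and (2) of the present theorem are precisely the hypotheses of Theorem~\ref{ratconj0thm} applied to $f^K$, so Conjecture~\ref{ratconj} is true for $\MM(f^K)$. Since $r_\alg(\MM(f^K))=0$ by Theorem~\ref{nekovar-thm2}, the regulator contribution is trivial and Proposition~\ref{rationality-prop} yields $L^*(\MM(f^K),0)/\Omega_\infty^{(f^K)}\in F^\times$.

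Next I would exploit the factorisation $L(f/K,s)=L(f,s)\cdot L(f^K,s)$ from Remark~\ref{splitting-L-rem}. Since $L(f,k/2)=0$ and $L(f^K,k/2)\neq 0$, differentiating at $s=k/2$ gives $L'(f/K,k/2)=L'(f,k/2)\cdot L(f^K,k/2)$. Combining with Theorem~\ref{zhang-thm} produces the key relation
\[
L'(f,k/2)\cdot L(f^K,k/2)\;=\;C_{k,K}\cdot(f,f)_{\Gamma_0(N)}\cdot{\langle s_f',s_f'\rangle}_{\GS},
\]
where $C_{k,K}\in\Q^\times$ absorbs the explicit constants $\pi^k,(k-2)!,u_K^{-2},1/\sqrt{|D_K|}$. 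Substituting the rational description of $L(f^K,k/2)$ in terms of $(2\pi i)^{k/2-1}\Omega_{f^K}$, using Lemma~\ref{lemma4.4}(1) to replace $\Omega_{f^K}$ by $\sqrt{D_K}\cdot\Omega_f^{-\epsilon}$ modulo $F^\times$ (with $\epsilon$ as in \eqref{epsilon-eq}), and invoking the classical relation expressing $(f,f)_{\Gamma_0(N)}$ as an $F^\times$-multiple of $\Omega_f^+\Omega_f^-$ times an explicit power of $2\pi i$, one checks that the irrational factor $\sqrt{|D_K|}$ coming from Zhang's formula cancels against the $\sqrt{D_K}$ produced by the period comparison. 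The outcome is an equality
\[
\frac{L'(f,k/2)}{(2\pi i)^{k/2-1}\cdot\Omega_f}\;=\;a\cdot{\langle s_f',s_f'\rangle}_{\GS}
\]
for some $a\in F^\times$.

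The final step is to identify ${\langle s_f',s_f'\rangle}_{\GS}$ with $\Reg_{\mathscr B}(\MM)$ modulo $F^\times$ for an appropriate basis $\mathscr B$ of $H^1_\mot(\Q,\MM)$. Because $r_\alg(\MM(f^K))=0$, the decomposition $\MM/K\simeq\MM\oplus\MM(f^K)$ over $\Q$ forces $H^1_\mot(\Q,\MM(f^K))=0$, and hence $H^1_\mot(K,\MM)\simeq H^1_\mot(\Q,\MM)$; in particular any $\Gal(K/\Q)$-invariant element of $H^1_\mot(K,\MM)$ automatically lies in $H^1_\mot(\Q,\MM)$. The cycle $s_f'$ is such an element (it is built as a trace from $K_1$ down to $K$, followed by the idempotent for $f$) and is non-zero, because $L'(f/K,k/2)\neq 0$. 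Choosing $\mathscr B$ so that its generator is a rational multiple of $s_f'$, the pairing ${\langle s_f',s_f'\rangle}_{\GS}$ coincides with $\Reg_{\mathscr B}(\MM)$ up to $F^\times$. Combining with the previous paragraph gives $L^*(\MM,0)/(\Omega_\infty\cdot\Reg_{\mathscr B}(\MM))\in F^\times$ at the distinguished embedding $\iota_F$; extending to all $\sigma\in\Sigma$ via Proposition~\ref{speciavalueslemma}, exactly as in the proof of Theorem~\ref{ratconj0thm}, and appealing to Proposition~\ref{rationality-prop} and Remark~\ref{deligne-rem2}, we conclude Conjecture~\ref{ratconj}.

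The principal obstacle is the last identification. It requires (a) the one-dimensionality of $H^1_\mot(\Q,\MM)$, which is ensured by the non-triviality of $s_f'$ together with the collapse of $H^1_\mot(K,\MM)$ onto $H^1_\mot(\Q,\MM)$ noted above; (b) scrupulous bookkeeping of all numerical factors --- the normalising constant $c$ entering the definition of $S_k(\tilde x_n)$ in \eqref{s-eq}, the constants in the Petersson-to-period relation, the various powers of $\pi$ and $i$, and most delicately the cancellation of $\sqrt{|D_K|}$ against the $\sqrt{D_K}$ produced by Lemma~\ref{lemma4.4}(1). This bookkeeping is entirely mechanical but technically involved.
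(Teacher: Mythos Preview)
Your overall strategy coincides with the paper's: reduce to the rank-$0$ rationality conjecture for $\MM(f^K)$, feed this into Zhang's Gross--Zagier formula via the factorisation $L'(f/K,k/2)=L'(f,k/2)L(f^K,k/2)$, and absorb all irrational constants using Lemma~\ref{lemma4.4}(1) together with the Petersson--period relation. The paper follows exactly this route.

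There are, however, two places where your write-up glosses over non-trivial points that the paper treats explicitly. First, your final appeal to Proposition~\ref{speciavalueslemma} to ``extend to all $\sigma\in\Sigma$'' does not work as stated: that proposition concerns the algebraic part of the central \emph{value} $L(f,k/2)$ and says nothing about derivatives $L'(f^\sigma,k/2)$ or about the regulators $\Reg_{\mathscr B}^\sigma(\MM)$. What is needed is not a single Galois-equivariance statement but the separate verification that each of the two building blocks $\Omega_{f}\Omega_{f^K}\sqrt{|D_K|}\big/\pi^2(f,f)_{\Gamma_0(N)}$ and $\Reg_{\mathscr B}^{\iota_F}(\MM)\big/\langle s_f',s_f'\rangle_{\GS}$ lies in $F^\times$ and satisfies $\sigma(\,\cdot\,)=(\text{same expression for }f^\sigma)$. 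The paper handles the first via the identity $i\Omega_f^+\Omega_f^-\big/\pi^2(f,f)_{\Gamma_0(N)}\in F^\times$ together with its known $\sigma$-compatibility (citing \cite{Hsieh-triple}), and the second via Proposition~\ref{prop Heegner}.

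Second, your claim that one may ``choose $\mathscr B$ so that its generator is a rational multiple of $s_f'$'' is not literally correct: the normalising constant $c$ in~\eqref{c} involves $\sqrt{(k-2)!}$ and $(\sqrt{-2D_K})^{k/2-1}$, so $s_f'$ is a \emph{real} but generally irrational multiple of the $f$-isotypic component of the Heegner cycle. What saves the day is that only the \emph{square} of this constant enters $\langle s_f',s_f'\rangle_{\GS}$, and Proposition~\ref{prop Heegner} shows that this square is rational. Thus $\langle s_f',s_f'\rangle_{\GS}$ and $\Reg_{\mathscr B}^{\iota_F}(\MM)$ differ by an element of $F^\times$, which is the statement you need; your shortcut of putting $s_f'$ itself into $H^1_\mot(\Q,\MM)$ does not go through.
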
 

\begin{proof} Assume that $r_\an(\MM)=1$; this implies, by Lemma \ref{low-rank-lemma}, that $r_\an(f^\sigma)=1$ for all $\sigma\in\Sigma$. Let $\mathscr B$ be a basis of $H^1_\mot(\Q,\MM)$ over $F$. By definition of the embedding $\iota_\Sigma$ in Remark \ref{deligne-rem2}, and bearing \eqref{reg-sigma-eq} and \eqref{eqper} in mind, we need to show that there exists $\mathscr{L}'\in F^\times$ such that 
\begin{equation} \label{ratconj1}
\sigma(\mathscr{L}')=\frac{L'(f^\sigma,k/2)}{(2\pi i)^{k/2-1}\cdot\Omega_{f^\sigma}\cdot\Reg_\mathscr{B}^\sigma(\MM)}\end{equation}
for all $\sigma\in\Sigma$. 

Choose an imaginary quadatic field $K\in\mathscr I_1(f,p)$ satisfying conditions (1) and (2); then $r_\an(f/K)=1$. Set $g\defeq f^K$; again by Lemma \ref{low-rank-lemma}, $r_\an(g^\sigma)=0$ for all $\sigma\in\Sigma$. Theorem \ref{zhang-thm} ensures that for each $\sigma\in\Sigma$ there is an equality 
\[ \frac{L'(f^\sigma/K,k/2)\cdot(k-2)!\cdot u_K^2\cdot\sqrt{|D_K|}}{2^{2k-1}\cdot\pi^k\cdot{(f^\sigma,f^\sigma)}_{\Gamma_0(N)}\cdot{\langle s_{f^\sigma}',s_{f^\sigma}'\rangle}_\mathrm{GS}}=1. \] 
Thanks to the factorization in \eqref{splitting-L-eq2} for $L(f^\sigma/K,s)$ and the fact that $g^\sigma=(f^\sigma)^K$, we can write 
\[ \frac{L'(f^\sigma,k/2)\cdot(k-2)!\cdot u_K^2\cdot\sqrt{|D_K|}\cdot i^{k/2-1}\cdot\Omega_{g^\sigma}}{2^{3k/2}\cdot\pi^{k/2+1}\cdot(f^\sigma,f^\sigma)_{\Gamma_0(N)}\cdot\langle s_{f^\sigma}',s_{f^\sigma}'\rangle_\mathrm{GS}}=\Bigl(L^\mathrm{alg}(g^\sigma,k/2)\Bigr)^{-1}. \] 
Properties (1) and (2) ensure that we can apply Theorem \ref{ratconj0thm} to $g$. Therefore, there exists $\widetilde{\mathscr{L}}'\in F^\times$ such that 
\begin{equation} \label{rat-conj1}
\sigma\bigl(\widetilde{\mathscr{L}}'\bigr)=\frac{L'(f^\sigma,k/2)\cdot\sqrt{|D_K|}\cdot i^{k/2-1}\cdot\Omega_{g^\sigma}}{\pi^{k/2+1}\cdot{(f^\sigma,f^\sigma)}_{\Gamma_0(N)}\cdot{\langle s_{f^\sigma}',s_{f^\sigma}'\rangle}_\mathrm{GS}}\end{equation}
for all $\sigma\in\Sigma$. Comparing \eqref{ratconj1} and \eqref{rat-conj1}, we see that it suffices to show that there exists $\mathscr A\in F^\times$ such that
\begin{equation} \label{rat-conj2}
\sigma(\mathscr A)=\frac{\Omega_{f^\sigma}\cdot\Omega_{g^\sigma}\cdot\sqrt{|D_K|}}{\pi^2\cdot{(f^\sigma,f^\sigma)}_{\Gamma_0(N)}}\cdot\frac{\Reg_\mathscr{B}^\sigma(\MM)}{{\langle s_{f^\sigma}',s_{f^\sigma}'\rangle}_\mathrm{GS}} 
\end{equation}
for all $\sigma\in\Sigma$, for then \eqref{ratconj1} follows with $\mathscr{L}'\defeq\widetilde{\mathscr L}'\big/\mathscr A$. To prove \eqref{rat-conj2}, we deal with the two factors on the right-hand side separately. By Lemma \ref{lemma4.4}, $\Omega_f\Omega_{g}\sqrt{|D_K|}=i\Omega_f^+\Omega_f^-$ up to elements of $F^\times$; furthermore, $i\Omega_f^+\Omega_f^-\big/\pi^2{(f,f)}_{\Gamma_0(N)}$ belongs to $F^\times$ and satisfies the equality $\sigma\bigl(i\Omega_f^+\Omega_f^-\big/\pi^2{(f,f)}_{\Gamma_0(N)}\bigr)=i\Omega_{f^\sigma}^+\Omega_{f^\sigma}^-\big/\pi^2{(f^\sigma,f^\sigma)}_{\Gamma_0(N)}$ for all $\sigma\in\Sigma$ (see, \emph{e.g.}, \cite[\S1.4]{Hsieh-triple}). This ensures that $\Omega_f\Omega_g\sqrt{|D_K|}\big/\pi^2{(f,f)}_{\Gamma_0(N)}$ belongs to $F^\times$ and $\sigma\bigl(\Omega_f\Omega_g\sqrt{|D_K|}\big/\pi^2{(f,f)}_{\Gamma_0(N)}\bigr)=\Omega_{f^\sigma}\Omega_{g^\sigma}\sqrt{|D_K|}\big/\pi^2{(f^\sigma,f^\sigma)}_{\Gamma_0(N)}$ for all $\sigma\in\Sigma$.

As for the other term, Proposition \ref{prop Heegner} implies that $\Reg_\mathscr{B}^{\iota_F}(\MM)\big/{\langle s_f',s_f'\rangle}_\mathrm{GS}$ also belongs to $F^\times$ and satisfies $\sigma\bigl(\Reg_\mathscr{B}^{\iota_F}(\MM)\big/{\langle s_f',s_f'\rangle}_\mathrm{GS}\bigr)=\Reg_\mathscr{B}^\sigma(\MM)\big/{\langle s_{f^\sigma}',s_{f^\sigma}'\rangle}_\mathrm{GS}$ for all $\sigma\in\Sigma$. Summing up, the element
\[ \mathscr A\defeq\frac{\Omega_f\cdot\Omega_g\cdot\sqrt{|D_K|}}{\pi^2\cdot{(f,f)}_{\Gamma_0(N)}}\cdot\frac{\Reg_\mathscr{B}^{\iota_F}(\MM)}{{\langle s_f',s_f'\rangle}_\mathrm{GS}}\in F^\times \]
satisfies \eqref{rat-conj2}, which concludes the proof. \end{proof}

As observed in \S \ref{ratconj2-subsubsec}, Theorem \ref{ratconj0thm} also shows that, under the assumptions appearing in its statement, the analytic rank $1$ case of Conjecture \ref{ratconj2} holds true.

\subsection{$p$-TNC for $\MM$ in analytic rank $1$}

We prove the main result of this paper, which says that, under the assumptions in \S \ref{main-assumptions-subsubsec} and those directly described in the statement of Theorem \ref{ThmTNC}, the $p$-part of the TNC for $\MM$ is true when $r_\an(\MM)=1$.

\subsubsection{Splitting ${\CH^{k/2}_{\arith}(X/L)}_F$ over the Hecke algebra}

With notation as in \S \ref{isotypic-subsubsec}, let us write $\theta_{f,F}:\mathfrak{H}_k(\Gamma_0(N))_F\twoheadrightarrow F$ for the (surjective) $F$-linear extension of $\theta_f$ and denote by $\mathrm{Ann}_{\mathfrak{H}_k(\Gamma_0(N))_F}\!\bigl(\ker(\theta_{f,F})\bigr)$ the annihilator ideal of $\ker(\theta_{f,F})$ in $\mathfrak{H}_k(\Gamma_0(N))_F$. The \emph{congruence ideal of $\theta_{f,F}$} is the ideal of $F$ given by 
\[ \eta_{\theta_{f,F}}\defeq\theta_{f,F}\Bigl(\mathrm{Ann}_{\mathfrak{H}_k(\Gamma_0(N))_F}\!\bigl(\ker(\theta_{f,F})\bigr)\!\Bigr). \]
As explained, \emph{e.g.}, in \cite[p. 250]{lenstra}, it turns out that $\ker(\theta_{f,F})\cap\mathrm{Ann}_{\mathfrak{H}_k(\Gamma_0(N))_F}\!\bigl(\ker(\theta_{f,F})\bigr)=\{0\}$: this follows from the fact that $\mathfrak{H}_k(\Gamma_0(N))_F$ is (trivially) flat over $F$. In particular, $\eta_{\theta_{f,F}}=F$ and the (tautological) short exact sequence of $\mathfrak{H}_k(\Gamma_0(N))_F$-modules
\[ 0\longrightarrow\ker(\theta_{f,F})\longrightarrow\mathfrak{H}_k(\Gamma_0(N))_F\xrightarrow{\theta_{f,F}}F\longrightarrow 0 \] 
splits canonically (actually, in \cite{lenstra} it is assumed that the the counterpart of $\theta_{f,F}$ takes values in a complete DVR, but this property is not necessary for the conclusion above to hold). Thus, there is an identification (or, rather, a canonical isomorphism) of $\mathfrak{H}_k(\Gamma_0(N))_F$-modules
\begin{equation} \label{lenstra-bis}
\mathfrak{H}_k(\Gamma_0(N))_F=\ker(\theta_{f,F})\oplus F.
\end{equation}
Now let $L$ be a number field. The splitting in \eqref{lenstra-bis} yields a splitting 
\begin{equation} \label{CH-split-bis-eq}
\resizebox{.9\hsize}{!}{${\CH^{k/2}_{\arith}(X/L)}_F=\Bigl({\CH^{k/2}_{\arith}(X/L)}_F\otimes_{\mathfrak{H}_k(\Gamma_0(N))_F}\!F\Bigr)\textstyle{\bigoplus}\Bigl({\CH^{k/2}_{\arith}(X/L)}_F\otimes_{\mathfrak{H}_k(\Gamma_0(N))_F}\!\ker(\theta_{f,F})\Bigr).$}
\end{equation}
On the other hand, there is an identification
\begin{equation} \label{mot-splitting-eq}
H^1_\mot(L,\MM)={\CH^{k/2}_{\arith}(X/L)}_F\otimes_{\mathfrak{H}_k(\Gamma_0(N))_F}\!F.
\end{equation}
Combining \eqref{CH-split-bis-eq} and \eqref{mot-splitting-eq}, we get a canonical surjection
\begin{equation} \label{CH-mot-proj-eq}
\Pi_{\MM,L}:{\CH^{k/2}_{\arith}(X/L)}_F\longepi H^1_\mot(L,\MM)
\end{equation}
of $\mathfrak{H}_k(\Gamma_0(N))_F$-modules. In order not to make our notation heavier than necessary, if $\mathcal R$ is any subring of $F$, then we shall use the same symbol for the obvious natural map induced on ${\CH^{k/2}_{\arith}(X/L)}_{\mathcal R}$ by $\Pi_{\MM,L}$.

\subsubsection{Assumptions} \label{main-assumptions-subsubsec}

We work under the following assumption on the form $f$ and the prime $p$; we freely use notation from previous sections.

\begin{assumption} \label{ass AJ} 
\begin{enumerate}
\item Assumptions \ref{ass} and \ref{ass-0} are satisfied by $(f,\p)$ for all $\p\,|\,p$. 
\item \label{reg-condition} Conjecture \ref{cong-int} holds true for $L=\Q$, \emph{i.e.}, the $p$-adic regulator 
\[ \reg_p:H^1_\mot(\Q,\MM{)}_\text{$p$-int}\longrightarrow H^1_f(\Q,T_p) \] 
from \eqref{AJ-map-p} is an isomorphism of $\cO_p$-modules.
\end{enumerate}
\end{assumption}

These hypotheses could certainly be relaxed, but we prefer to state here assumptions that sit well in the general framework of Section \ref{secmot}. Notice that condition \eqref{reg-condition} in Assumption \ref{ass AJ} implies the validity of Conjecture \ref{regpconj} for the number fields specified above.

\begin{remark}
Since $N$ is assumed to be square-free, $f$ has no complex multiplication.
\end{remark}


\subsubsection{Tamagawa ideals} 

Recall that $\MM(f^K)$ denotes the motive of the twist $f^K$ of $f$, which is a newform of level $ND_K^2$, as $(N,D_K)=1$. In the statement below, $\ell$ is a prime number.

\begin{proposition} \label{lemmatam}
\begin{enumerate}
\item If $\ell\,|\,N$, then $\Tam_\ell^{(p)}(\MM)=\cO_p$. 
\item If $\ell\,|\,ND_K$, then $\Tam_\ell^{(p)}\bigl(\MM(f^K)\bigr)=\cO_p$. 
\end{enumerate}
\end{proposition}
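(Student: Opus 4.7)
The plan is to apply Proposition \ref{Tamv}, which identifies the $p$-part of the Tamagawa ideal at a prime $\ell\neq p$ with $\mathcal I\bigl(H^1_\mathrm{unr}(\Q_\ell,A_p)\bigr)$, and to show that this group vanishes under our hypotheses. By the $\p$-decomposition \eqref{tam-splitting-eq}, it suffices to prove $H^1_\mathrm{unr}(\Q_\ell,A_\p)=0$ for every prime $\p$ of $F$ above $p$, and by the calculation in the proof of Proposition \ref{Tamv} this reduces in turn to showing that $\mathcal W_\p\defeq A_\p^{I_\ell}/(A_\p^{I_\ell})_\divv$ vanishes.

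For part (1), since $N$ is square-free by Assumption \ref{ass}(1) we have $\ell\|N$, so the proof of Lemma \ref{lemmaram} gives a short exact sequence $0\to F_\p(1)\to V_\p\to F_\p\to 0$ of $G_{\Q_\ell}$-modules, realised integrally as $0\to\cO_\p(1)\to T_\p\to\cO_\p\to 0$ with extension class $c\in H^1(G_{\Q_\ell},\cO_\p(1))$. Since $\ell\neq p$, inertia acts trivially on $\cO_\p(1)$, so $c|_{I_\ell}(g)=\alpha\cdot t(g)$ for some $\alpha\in\cO_\p$, where $t\colon I_\ell\to\Z_p$ denotes the pro-$p$ tame character. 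The hypothesis that $\bar\rho_\p$ is ramified at $\ell$ (Assumption \ref{ass}(8)) is equivalent to the condition $\alpha\in\cO_\p^\times$. Viewing $A_\p\simeq (F_\p/\cO_\p)(1)\oplus(F_\p/\cO_\p)$ as an $\cO_\p$-module with Galois action $g\cdot(x,y)=(x+c(g)\cdot y,y)$, a pair $(x,y)$ is $I_\ell$-fixed iff $\alpha\cdot y=0$ in $F_\p/\cO_\p$, which forces $y=0$ because $\alpha$ is a unit. Hence $A_\p^{I_\ell}=(F_\p/\cO_\p)(1)$ is $\p$-divisible, so $\mathcal W_\p=0$.

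For part (2), I split into disjoint cases using that $\gcd(N,D_K)=1$ (as the primes dividing $N$ split in $K$). If $\ell\mid N$, then $\chi_K$ is unramified at $\ell$, so $\rho_\p^K|_{G_{\Q_\ell}}$ differs from $\rho_\p|_{G_{\Q_\ell}}$ only by an unramified twist; the residual ramification at $\ell$ is therefore preserved, and the argument of part (1) carries over verbatim to $\MM(f^K)$. If $\ell\mid D_K$ (so $\ell\nmid N$), then $V_\p$ is unramified at $\ell$ and $V_\p^K=V_\p\otimes\chi_K$ has inertia acting via the nontrivial quadratic character $\chi_K|_{I_\ell}$, so $(V_\p^K)^{I_\ell}=0$. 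The long exact sequence of $I_\ell$-cohomology attached to $0\to T_\p^K\to V_\p^K\to A_\p^K\to 0$ then yields an injection $A_\p^{K,I_\ell}\hookrightarrow H^1(I_\ell,T_\p^K)$, and it suffices to show that the target vanishes. Writing $T_\p^K\simeq\cO_\p(\chi_K)^{\oplus 2}$ as $I_\ell$-modules and applying inflation--restriction along the index-$2$ subgroup $H=\ker(\chi_K|_{I_\ell})\subset I_\ell$, both the inflation term $H^1(\Z/2\Z,\cO_\p(\chi_K))\simeq\cO_\p/2\cO_\p$ and the $(I_\ell/H)$-invariants of the restriction term $H^1(H,\cO_\p)$ (on which $I_\ell/H$ acts by $-1$ through $\chi_K$) are killed by $2$, which is a unit in $\cO_\p$ by Assumption \ref{ass-0}(1).

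The main obstacle will be the computation of $A_\p^{I_\ell}$ in part (1): one must carefully track how the cocycle $c$ induces the Galois action on the quotient $A_\p$ and, in particular, verify that the residual ramification hypothesis on $\bar\rho_\p$ is exactly what forces the scalar $\alpha$ to be a unit, thereby trivialising the non-divisible part of $A_\p^{I_\ell}$.
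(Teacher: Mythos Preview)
Your proof is correct. For part (1) and the case $\ell\mid N$ of part (2) you follow the same route as the paper (show that $A_\p^{I_\ell}$ is divisible), and you are in fact more explicit than the paper in identifying Assumption~\ref{ass}(8) on residual ramification as precisely the condition forcing the tame scalar $\alpha$ to be a unit; the paper obtains the same conclusion by citing \cite[\S12.4.4.2 and Lemma~12.4.5(ii)]{Nek-Selmer}.

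For the case $\ell\mid D_K$ of part (2) you take a genuinely different path. The paper works over $K_\lambda$ (where $\lambda$ is the unique prime of $K$ above the ramified prime $\ell$): using that $A_p^K\simeq A_p$ as $G_{K_\lambda}$-modules and that $I_\lambda=I_\ell$, it splits $H^1_{\mathrm{unr}}(K_\lambda,A_p)$ into $\pm$-eigenspaces for $\Gal(K_\lambda/\Q_\ell)$, obtaining an isomorphism $H^1_{\mathrm{unr}}(\Q_\ell,A_p)\oplus H^1_{\mathrm{unr}}(\Q_\ell,A_p^K)\simeq H^1_{\mathrm{unr}}(K_\lambda,A_p)$, and concludes since both $H^1_{\mathrm{unr}}(K_\lambda,A_p)$ and $H^1_{\mathrm{unr}}(\Q_\ell,A_p)$ vanish ($\ell\nmid N$, so these fall under Proposition~\ref{Tamv}(2)). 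Your argument instead stays over $\Q_\ell$ and shows directly that $(A_\p^K)^{I_\ell}=0$ via an inflation--restriction computation of $H^1(I_\ell,\cO_\p(\chi_K))$; this is more self-contained and avoids passing to $K$, while the paper's version has the merit of exhibiting the natural compatibility between the Tamagawa ideals of $f$, $f^K$, and $f/K$ at ramified primes.
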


\begin{proof} Let us prove part (1). Take a prime number $\ell\,|\,N$. Since $\ell$ divides $N$ exactly (which is true because $N$ is assumed to be square-free), the action of the inertia subgroup $I_\ell$ of $G_{\Q_\ell}$ acting on $A_p$ is given, up to isomorphism, by a matrix $\bigl(\begin{smallmatrix}0&c\\0&0\end{smallmatrix}\bigr)$ for a suitable $c\neq0$ (see, \emph{e.g.}, \cite[\S 12.4.4.2 and Lemma 12.4.5, (ii)]{Nek-Selmer}). Thus, $A_p^{I_\ell}$ is isomorphic to $F_p/\cO_p$, hence divisible, so $H^1_\mathrm{unr}(\Q_\ell,A_p)=0$ and part (1) follows from part (1) of Proposition \ref{Tamv}. 

Now we turn to part (2) (\emph{cf.} also \cite[Corollary 9.2]{SZ}). Let us write $A_p^K$ for the analogue of $A_p$ relative to $f^K$; there is an isomorphism $A^K_p\simeq A_p$ of $G_K$-modules. Take a prime number $\ell\,|\,ND_K$, let $\lambda$ be the unique prime of $K$ above $\ell$ and let $I_\ell$ (respectively, $I_\lambda$) be the inertia subgroup of $G_{\Q_\ell}$ (respectively, $G_{K_\lambda}$); note that the residue field of $K$ at $\lambda$ is $\F_\ell$. If $\ell\,|\,N$, then one can argue as in the proof of part (1). Suppose that $\ell\,|\,D_K$. The prime $\ell$ ramifies in $K$, so $I_\ell=I_\lambda$. Since $p\neq2$, we may split $H^1_\mathrm{unr}(K_\lambda,A_p)=H^1(\F_\ell,A_p^{I_\ell})$ into the direct sum of its eigenspaces for the action of $\mathcal{G}\defeq\Gal(K_\lambda/\Q_\ell)$; the eigenspace on which $\mathcal{G}$ acts trivially (respectively, as $-1$) is $H^1_\mathrm{unr}(K_\lambda,A_p)^{\mathcal{G}}$ (respectively, is isomorphic to 
$H^1_\mathrm{unr}(K_\lambda,A_p^K)^ {\mathcal{G}}$). Therefore, the canonical map 
\[ H^1_\mathrm{unr}(\Q_\ell,A_p)\oplus H^1_\mathrm{unr}\bigl(\Q_\ell,A^K_p\bigr)\longrightarrow H^1_\mathrm{unr}(K_\lambda,A_p) \] 
is an isomorphism and, since $\mathcal{I}\bigl(H^1_\mathrm{unr}(K_\lambda,A_p)\bigr)=\mathcal{I}\bigl(H^1_\mathrm{unr}(\Q_\ell,A^K_p)\bigr)=\cO_p$ by part (1), we deduce that 
$\mathcal{I}\bigl(H^1_\mathrm{unr}(\Q_\ell,A^K_p)\bigr)=\mathrm{Tam}_\ell^{(p)}\bigl(\MM(f^K)\bigr)=\cO_p$ as well, concluding the proof. \end{proof}

\subsubsection{Comparison of periods} 

Since $f$ is $p$-isolated (\emph{cf.} Assumption \ref{ass AJ}), by \cite[Theorem 0.1]{Hida-Modules} there is an equality 
\begin{equation} \label{comp-per}
\Biggl(\frac{\pi^2\cdot{(f,f)}_{\Gamma_0(N)}}{\Omega_f\cdot\Omega_{f^K}}\Biggr)=\cO_p
\end{equation}
of (fractional) $\cO_p$-ideals. See also \cite[\S1.4]{Hsieh-triple} for details.

\subsubsection{A distinguished $\Q$-rational cycle} \label{distinguished-subsubsec}

Recall the cycle $\XX_K\in\Heeg_{K,N}^{\mathcal G_1}$ introduced in \eqref{xi K} and the map
\[
\Pi_{\MM,\Q}:{\CH^{k/2}_\arith(X/\Q)}_F\longrightarrow H^1_\mot(\Q,\MM) 
\]
from \eqref{CH-mot-proj-eq} with $L=\Q$. It turns out that, in the setting we are concerned with, $\XX_K$ is $\Q$-rational, as we prove in the following proposition. We use notation from \S \ref{imaginary-1-subsubsec}.

\begin{proposition} \label{Gamma-nonzero-lemma}
Assume that $r_\an(\MM)=1$ and that $K\in\mathscr I_1(f,p)$ has the property that $\AJ_{K,\p}$ is injective on $\Heeg_{K,N}^{\mathcal G_1}$ for some $\p\,|\,p$. Then 
\begin{enumerate}
\item $\XX_K\in{\CH^{k/2}_\arith(X/\Q)}_F$;
\item $\Pi_{\MM,\Q}(\XX_K)\not=0$.
\end{enumerate}
\end{proposition}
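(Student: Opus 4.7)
I would tackle part (2) first, as it only requires the main computation of Lemma \ref{lemma Heegner 3} together with the low rank analysis of Heegner cycles already set up in the paper.

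\emph{Part (2).} Lemma \ref{lemma Heegner 3} gives $\AJ_{K,\p}(\XX_K) = (k-2)! \cdot y_{K,\p}$. Since $K \in \mathscr I_1(f,p)$, one has $r_\an(f^K) = 0$, and the factorization $L(f/K,s) = L(f,s) \cdot L(f^K,s)$ yields $r_\an(f/K) = 1$. Under the assumed injectivity of $\AJ_{K,\p}$ on $\Heeg_{K,N}^{\mathcal G_1}$, Proposition \ref{coro zhang}(1) ensures that $y_{K,\p}$ is not $\cO_\p$-torsion, whence $\AJ_{K,\p}(\XX_K) \ne 0$. Since diagram \eqref{CH-p-triangle-eq} factors $\AJ_{K,\p}$ as $\reg_{K,\p} \circ \Pi_{\MM,K}$, we conclude $\Pi_{\MM,K}(\XX_K) \ne 0$.

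\emph{Part (1).} To show $\XX_K \in {\CH^{k/2}_\arith(X/\Q)}_F$, it suffices, by Notation/Convention \ref{notation-convention} and the identification \eqref{base-change-eq3}, to prove that $\XX_K$ is invariant under the non-trivial automorphism $\tau \in \Gal(K/\Q)$ when viewed in ${\CH^{k/2}_\arith(X/K)}_F$. Galois-equivariance of the Abel--Jacobi map yields
\[ \AJ_{K,\p}\bigl(\tau(\XX_K) - \XX_K\bigr) = (k-2)! \cdot \bigl(\tau(y_{K,\p}) - y_{K,\p}\bigr). \]
The key identity $\tau(y_{K,\p}) = y_{K,\p}$ can be extracted from the proof of Theorem \ref{nekovar-thm2} in the $r_\an(\MM) = 1$ case: $\Lambda_\p(K) \otimes_{\cO_\p}\! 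F_\p$ is one-dimensional, and the hypothesis $r_\an(f^K) = 0$ forces the $(-1)$-eigenspace of $\tau$ in this module (which corresponds to the Heegner contribution for the twist $\MM(f^K)$) to vanish. Hence $y_{K,\p}$ sits inside the $(+1)$-eigenspace for $\tau$, so $\tau(y_{K,\p}) - y_{K,\p}$ is torsion in $\Lambda_\p(K)$; torsion-freeness of $\Lambda_\p(K)$ (Proposition \ref{no torsion lemma}(2)) then upgrades this to the integral equality $\tau(y_{K,\p}) = y_{K,\p}$. Consequently $\AJ_{K,\p}(\tau(\XX_K) - \XX_K) = 0$. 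Once it is known that $\tau(\XX_K)$ belongs to $\Heeg_{K,N}^{\mathcal G_1}$, the injectivity of $\AJ_{K,\p}$ on this module forces $\tau(\XX_K) = \XX_K$ in $\CH^{k/2}_\arith(X/K)$, hence in ${\CH^{k/2}_\arith(X/K)}_F$. Combined with part (2), this yields $\Pi_{\MM,\Q}(\XX_K) = \Pi_{\MM,K}(\XX_K) \ne 0$ under the natural inclusion $H^1_\mot(\Q,\MM) \hookrightarrow H^1_\mot(K,\MM)$.

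\textbf{Main obstacle.} The delicate point is the verification that $\tau(\XX_K)$ lies in $\Heeg_{K,N}^{\mathcal G_1}$, so that the injectivity hypothesis on $\AJ_{K,\p}$ can be applied. Classically, complex conjugation acts on CM points of conductor $1$ on $X_0(N)$ by $\tau(x_1^\sigma) = w_N(x_1^{\sigma_0 \sigma^{-1}})$ for a fixed $\sigma_0 \in \mathcal G_1$ depending on the choice of $\mathcal N$; summing over $\sigma \in \mathcal G_1$ (and using that inversion is a bijection of $\mathcal G_1$) produces $\tau(\XX_K) = w_N(\XX_K)$. Since $N$ is square-free and $(N, D_K) = 1$, the Atkin--Lehner involution $w_N$ sends the $\mathcal G_1$-orbit of Heegner points of conductor $1$ to itself, whence $w_N(\XX_K)$ again belongs to $\Heeg_{K,N}^{\mathcal G_1}$.
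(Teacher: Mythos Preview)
Your proof is correct and follows essentially the same strategy as the paper: both reduce part (1) to showing $\tau(y_{K,\p}) = y_{K,\p}$ and then invoke the Galois-equivariance and injectivity of $\AJ_{K,\p}$ on $\Heeg_{K,N}^{\mathcal G_1}$, while the non-vanishing in part (2) comes from Lemma \ref{lemma Heegner 3} and the factorization of the Abel--Jacobi map through $\Pi_{\MM,\star}$ (the paper builds diagram \eqref{CH-p-triangle-eq2} over $\Q$, whereas you work over $K$ first and descend---these are equivalent). Your treatment of the ``main obstacle'', namely that $\tau(\XX_K)$ still lies in $\Heeg_{K,N}^{\mathcal G_1}$ via the Atkin--Lehner relation $\tau(x_1^\sigma)=w_N(x_1^{\sigma_0\sigma^{-1}})$, actually spells out a step the paper uses implicitly when it asserts Galois-equivariance. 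One caution: your claim that ``$r_\an(f^K)=0$ forces the $(-1)$-eigenspace of $\tau$ in $\Lambda_\p(K)\otimes_{\cO_\p}F_\p$ to vanish'' is not quite self-contained---it tacitly relies on Theorem \ref{kato-thm} applied to $f^K$ (to get $H^1_f(\Q,V_\p^K)=0$ and hence the vanishing of $\Lambda_\p(K)^-\otimes F_\p$). The paper sidesteps this by citing directly the explicit sign relation $\tau(y_{K,\p})=-\epsilon(f)\cdot y_{K,\p}$ from \cite{Vigni}, which together with $\epsilon(f)=-1$ yields the desired identity without any analytic input on the twist.
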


\begin{proof} Let $\p$ be a prime of $F$ above $p$ such that $\AJ_{K,\p}$ is injective on $\Heeg_{K,N}^{\mathcal G_1}$. Since $K\in\mathscr I_1(f,p)$, we know that $r_\an(f/K)=1$. Part (1) of Proposition \ref{coro zhang} tells us that $y_{K,\p}$ is not $\cO_\p$-torsion. Furthermore, an analysis of the action of complex conjugation on $y_{K,\p}$ shows that $y_{K,\p}\in\Lambda_\p(K)^{\Gal(K/\Q)}$ (see, \emph{e.g.}, the proof of \cite[Theorem 5.27]{Vigni}). On the other hand, $\AJ_{K,\p}(\XX_K)=(k-2)!\cdot y_{K,\p}$ by Lemma \ref{lemma Heegner 3}, so $\XX_K$ is not torsion. In addition, the map $\AJ_{K,\p}$ is $\Gal(K/\Q)$-equivariant, so the injectivity of $\AJ_{K,\p}$ on $\Heeg_{K,N}^{\mathcal G_1}$ (which we are assuming) allows us to conclude that 
\[ \begin{split}
   \XX_K\in\Bigl(\Heeg_{K,N}^{\mathcal G_1}\Bigr)^{\!\Gal(K/\Q)}&\subset\Bigl(\CH^{k/2}_\arith(X/K_1)^{\mathcal G_1}\Bigr)^{\!\Gal(K/\Q)}\\&=\CH^{k/2}_\arith(X/K_1)^{\Gal(K_1/\Q)}.
   \end{split} \]
Let $\mathcal K\in\{F,F_\p\}$. By what we explained in \S \ref{base-change-trace-subsubsec} (\emph{cf.} Notation/Convention \ref{notation-convention}), the cyclic subgroup of $\CH^{k/2}_\arith(X/K_1)^{\Gal(K_1/\Q)}$ generated by $\XX_K$ injects into $\CH^{k/2}_\arith(X/K_1)^{\Gal(K_1/\Q)}_{\mathcal K}$: we will not distinguish between $\XX_K$ and its image in this vector space; moreover, we identify this element (hence $\XX_K$ as well) with its image in ${\CH^{k/2}_\arith(X/\Q)}_{\mathcal K}$. With this convention in force, this shows, in particular, that $\XX_K\in{\CH^{k/2}_\arith(X/\Q)}_F$. There is a commutative diagram
\begin{equation} \label{AJ-res-diagram-eq}
\xymatrix@C=40pt@R=30pt{{\CH^{k/2}_\arith(X/\Q)}_F\ar[r]\ar@{^{(}->}[d]^-{\iota_{\Q\to K}}&{\CH^{k/2}_\arith(X/\Q)}_{F_\p}\ar@{^{(}->}[d]^-{\iota_{\Q\to K}}\ar[r]^-{\AJ_{\Q,\p}}&\Lambda_\p(\Q)\otimes_{\cO_\p}F_\p\ar@{^{(}->}[d]^-{\res_{K/\Q}}\\ {\CH^{k/2}_\arith(X/K)}_F\ar[r]&{\CH^{k/2}_\arith(X/K)}_{F_\p}\ar[r]^-{\AJ_{K,\p}}&\Lambda_\p(K)\otimes_{\cO_\p}F_\p}
\end{equation}
in which, as before, the left and middle vertical arrows are the base change maps from \eqref{base-change-eq2} and the unlabelled maps are extensions of scalars. By a slight abuse of notation, we write $\AJ_{\Q,\p}$ (respectively, $\AJ_{K,\p}$) also for the composition of the two upper (respectively, lower) horizontal maps. Since $y_{K,\p}$ is not $\cO_\p$-torsion, diagram \eqref{AJ-res-diagram-eq} shows that $\AJ_{\Q,\p}(\XX_K)\not=0$ in $H^1_f(\Q,V_\p)$. Finally, triangle \eqref{CH-p-triangle-eq} with $L=\Q$ and $\star=\p$ induces a commutative diagram
\begin{equation} \label{CH-p-triangle-eq2}
\xymatrix@R=30pt@C=25pt{{\CH^{k/2}_{\arith}(X/\Q)}_F\ar[r]\ar[rd]^-{\Pi_{\MM,\Q}}&{\CH^{k/2}_{\arith}(X/\Q)}_{F_\p}\ar[rr]^-{\AJ_{\Q,\p}}\ar@{->>}[rd]^-{\Pi_{\MM,\Q,\p}}&& H^1_f(\Q,V_\p)\\
&H^1_\mot(\Q,\MM)\ar[r]&H^1_\mot(\Q,\MM)_\p\ar[ru]^-{\reg_\p}} 
\end{equation}
in which $H^1_\mot(\Q,\MM)_\p$ is the $F_\p$-vector space from \eqref{H^1-mot-star-eq} and the unlabelled horizontal maps are extensions of scalars. Since $\AJ_{\Q,\p}(\XX_K)\not=0$ in $H^1_f(\Q,V_\p)$, diagram \eqref{CH-p-triangle-eq2} gives $\Pi_{\MM,\Q}(\XX_K)\not=0$. \end{proof}

From here on, set 
\begin{equation} \label{Gamma-MM-eq}
\YY_\MM\defeq\frac{1}{(k-2)!}\cdot\Pi_{\MM,\Q}(\XX_K)\in H^1_\mot(\Q,\MM)\smallsetminus\{0\}.
\end{equation}
Set $\YY_{\MM,K}\defeq\iota_{\Q\to K}(\YY_\MM)\in H^1_\mot(K,\MM)\smallsetminus\{0\}$. As a consequence of triangle \eqref{CH-p-triangle-eq} with $L=K$ and Lemma \ref{lemma Heegner 3}, there is an equality
\begin{equation} \label{Gamma-MM-eq2}
\reg_{K,\p}(\YY_{\MM,K})=y_{K,\p}
\end{equation}
for each prime $\p$ of $F$ above $p$. 

The next result establishes, for each $\p\,|\,p$, an isomorphism between $\Lambda_\p(\Q)$ and $\Lambda_\p(K)^{\Gal(K/\Q)}$.

\begin{proposition} \label{res-iso-prop}
For each prime $\p$ of $F$ above $p$, restriction induces an isomorphism
\[ \res_{K/\Q}:\Lambda_\p(\Q)\overset\simeq\longrightarrow\Lambda_\p(K)^{\Gal(K/\Q)} \]
of $\cO_\p$-modules.
\end{proposition}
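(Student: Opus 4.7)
The plan is to apply the inflation--restriction sequence for the extension $K/\Q$ and then use the compatibility of Abel--Jacobi with trace and restriction. The first step is to verify that $T_\p^{G_K}=0$. Since $T_\p$ is $\cO_\p$-torsion-free, it suffices to check that $V_\p^{G_K}=0$, and this is immediate from the big image assumption on $\rho_p$ (condition (2) of Assumption \ref{p-ass}): the restriction of $\rho_p$ to the index-two subgroup $G_K$ still has image containing elements with no common nonzero fixed vector in the $2$-dimensional representation $V_\p$. (Alternatively, one may invoke \cite[Lemma 3.10, (2)]{LV} to get $H^0(K,A_\p[p])=0$ and lift this to $T_\p^{G_K}=0$ using that $T_\p$ is $\cO_\p$-free.) Consequently, the five-term inflation--restriction sequence collapses to give an isomorphism
\[
\res_{K/\Q}:H^1(\Q,T_\p)\overset\simeq\longrightarrow H^1(K,T_\p)^{\Gal(K/\Q)}.
\]

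Next I would check that this isomorphism carries $\Lambda_\p(\Q)$ into $\Lambda_\p(K)^{\Gal(K/\Q)}$. This follows from the Galois-equivariance of the $\p$-adic Abel--Jacobi map, \emph{i.e.}\ from the commutativity of diagram \eqref{AJ-res-diagram-eq}: for every $c\in{\CH^{k/2}_0(X/\Q)}_{\cO_\p}$ one has $\res_{K/\Q}(\AJ_{\Q,\p}(c))=\AJ_{K,\p}(\iota_{\Q\to K}(c))\in\Lambda_\p(K)$, and this element is tautologically $\Gal(K/\Q)$-fixed. Injectivity of the induced map $\Lambda_\p(\Q)\to\Lambda_\p(K)^{\Gal(K/\Q)}$ is then inherited from the injectivity of $\res_{K/\Q}$ on $H^1(\Q,T_\p)$.

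The main (and essentially only) nontrivial step is surjectivity. Given $y\in\Lambda_\p(K)^{\Gal(K/\Q)}$, write $y=\AJ_{K,\p}(z)$ for some $z\in{\CH^{k/2}_0(X/K)}_{\cO_\p}$. Using the corestriction analogue of diagram \eqref{AJ-cores-eq} (valid for any Galois extension of number fields by Galois-equivariance of $\AJ$), I obtain
\[
\cores_{K/\Q}(y)=\AJ_{\Q,\p}\bigl(\tr_{K/\Q}(z)\bigr)\in\Lambda_\p(\Q).
\]
On the other hand, if $x\in H^1(\Q,T_\p)$ denotes the unique preimage of $y$ under the isomorphism $\res_{K/\Q}$ established above, then the standard identity $\cores_{K/\Q}\circ\res_{K/\Q}=[K:\Q]=2$ yields $\cores_{K/\Q}(y)=2x$, so $2x\in\Lambda_\p(\Q)$.

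The conclusion then uses that $p$ is odd (Assumption \ref{p-ass}(1)), so $2\in\cO_\p^\times$: since $\Lambda_\p(\Q)$ is an $\cO_\p$-submodule of $H^1(\Q,T_\p)$, which is torsion-free by Proposition \ref{no torsion lemma}, we may multiply $2x$ by $1/2\in\cO_\p^\times$ to conclude $x=(1/2)\cdot(2x)\in\Lambda_\p(\Q)$. This establishes surjectivity and hence the proposition. The expected obstacle is precisely this last surjectivity argument: everything hinges on the happy coincidence that $\cores\circ\res$ equals $[K:\Q]=2$, a unit in $\cO_\p$, which allows one to invert the trace relation and stay inside the Abel--Jacobi image.
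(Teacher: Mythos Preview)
Your proof is correct. The paper's argument is organized slightly differently: it works at the level of Chow groups, using that the base change map $\iota_{\Q\to K}:{\CH^{k/2}_0(X/\Q)}_{\cO_\p}\to\CH^{k/2}_0(X/K)_{\cO_\p}^{\Gal(K/\Q)}$ is an isomorphism (again because $[K:\Q]=2$ is a $p$-adic unit), together with the surjectivity of $\AJ_{K,\p}$ from the $\Gal(K/\Q)$-invariant Chow group onto $\Lambda_\p(K)^{\Gal(K/\Q)}$, which is imported from \cite[Lemma 5.8]{Vigni}. A diagram chase then forces $\res_{K/\Q}$ to be surjective. Your route via $\cores_{K/\Q}\circ\res_{K/\Q}=2$ on cohomology accomplishes the same thing without invoking that external lemma, and is arguably more self-contained; the paper's version, on the other hand, makes the role of the Chow-group base change more visible. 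Both hinge on the same key point: $[K:\Q]$ is invertible in $\cO_\p$.
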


\begin{proof} Let $\p$ be a prime of $F$ above $p$. Since $[K:\Q]=2$ and $p$ is odd, base change gives an isomorphism
\[ \iota_{\Q\to K}:{\CH^{k/2}_0(X/\Q)}_{\cO_\p}\overset\simeq\longrightarrow\CH^{k/2}_0(X/K)_{\cO_\p}^{\Gal(K/\Q)} \]
of $\cO_\p$-modules (\emph{cf.} \S \ref{base-change-trace-subsubsec}). On the other hand, there is a commutative square
\[
\xymatrix@C=40pt@R=32pt{{\CH^{k/2}_0(X/\Q)}_{\cO_\p}\ar@{->>}[r]^-{\AJ_{\Q,\p}}\ar[d]^-{\iota_{\Q\to K}}_-\simeq&\Lambda_\p(\Q)\ar@{^{(}->}[d]^-{\res_{K/\Q}}\\ \CH^{k/2}_0(X/K)_{\cO_\p}^{\Gal(K/\Q)}\ar@{->>}[r]^-{\AJ_{K,\p}}&\Lambda_\p(K)^{\Gal(K/\Q)}}
\]
in which the bottom horizontal arrow is surjective by \cite[Lemma 5.8]{Vigni} and the injectivity of $\res_{K/\Q}$ follows as in the proof of Proposition \ref{c_M(1)-prop}. We conclude that $\res_{K/\Q}$ is necessarily surjective, hence an isomorphism. \end{proof}

Under the assumptions of Proposition \ref{Gamma-nonzero-lemma}, we know that $y_{K,\p}\in\Lambda_\p(K)^{\Gal(K/\Q)}$. Thus, in light of Proposition \ref{res-iso-prop}, we define
\begin{equation} \label{y-Q-eq}
y_{\Q,\p}\defeq\res_{K/\Q}^{-1}(y_{K,\p})\in\Lambda_\p(\Q).
\end{equation}
In particular, $y_{\Q,\p}$ is not $\cO_\p$-torsion. Observe that the square 
\[
\xymatrix@C=40pt@R=32pt{H^1_\mot(\Q,\MM)_\p\ar@{->>}[r]^-{\reg_\p}\ar[d]^-{\iota_{\Q\to K}}_-\simeq&\Lambda_\p(\Q)\otimes_{\cO_\p}\!F_\p\ar[d]^-{\res_{K/\Q}}_-\simeq\\ H^1_\mot(K,\MM)_\p^{\Gal(K/\Q)}\ar@{->>}[r]^-{\reg_{K,\p}}&\Lambda_\p(K)^{\Gal(K/\Q)}\otimes_{\cO_\p}\!F_\p}
\]
is commutative, so equality \eqref{Gamma-MM-eq2} guarantees that
\begin{equation} \label{Gamma-MM-eq3}
\reg_\p(\YY_\MM)=y_{\Q,\p}
\end{equation}
for each prime $\p$ of $F$ above $p$. 

\subsubsection{Proof of Theorem B} \label{main-proof-subsubsec}

Retaining the assumptions of Proposition \ref{Gamma-nonzero-lemma}, now we state a technical result that will be used in the proof of our main theorem. We fix a prime $\p$ of $F$ above $p$.

\begin{lemma} \label{lengths-lemma}
$\length_{\cO_\p}\bigl(\Lambda_{\p}(\Q)/y_{\Q,\p}\cdot\cO_\p\bigr)=\length_{\cO_\p}\bigl(\Lambda_{\p}(K)/y_{K,\p}\cdot\cO_\p\bigr)$.
\end{lemma}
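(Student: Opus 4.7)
The plan is to deduce the equality of lengths by combining the isomorphism from Proposition \ref{res-iso-prop} with a Galois-equivariance analysis of $\Lambda_\p(K)$. First, since $y_{\Q,\p}=\res_{K/\Q}^{-1}(y_{K,\p})$ and Proposition \ref{res-iso-prop} provides an $\cO_\p$-linear isomorphism $\res_{K/\Q}\colon\Lambda_\p(\Q)\overset\simeq\longrightarrow\Lambda_\p(K)^{\Gal(K/\Q)}$, we immediately get a canonical isomorphism
\[
\Lambda_\p(\Q)\big/y_{\Q,\p}\cdot\cO_\p\overset\simeq\longrightarrow\Lambda_\p(K)^{\Gal(K/\Q)}\big/y_{K,\p}\cdot\cO_\p
\]
of $\cO_\p$-modules, so the length on the left equals the length of the right-hand quotient.

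Next, I would show that, under the running hypotheses in force when Lemma \ref{lengths-lemma} is invoked (namely $r_\an(\MM)=1$, $K\in\mathscr I_1(f,p)$ and $\AJ_{K,\p}$ injective on $\Heeg_{K,N}^{\mathcal G_1}$), the invariants actually fill out the whole module: $\Lambda_\p(K)^{\Gal(K/\Q)}=\Lambda_\p(K)$. Since $r_\an(f/K)=1$ by the factorization of $L$-functions, part (1) of Proposition \ref{coro zhang} gives that $y_{K,\p}$ is not $\cO_\p$-torsion; Theorem \ref{nekovar-thm} then forces $\Lambda_\p(K)\otimes_{\cO_\p}F_\p$ to be one-dimensional over $F_\p$. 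Combined with the torsion-freeness statement in Proposition \ref{no torsion lemma}(2), this shows that $\Lambda_\p(K)$ is a free $\cO_\p$-module of rank $1$.

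Fix an $\cO_\p$-generator $x$ of $\Lambda_\p(K)$. The nontrivial element $\tau\in\Gal(K/\Q)$ has order $2$ and acts $\cO_\p$-linearly on the rank-$1$ free module $\Lambda_\p(K)$, so it acts by multiplication by some $\epsilon\in\cO_\p^\times$ with $\epsilon^2=1$. Since $p$ is odd, $\epsilon\in\{\pm1\}$. Write $y_{K,\p}=c\cdot x$ with $c\in\cO_\p$. If $\epsilon=-1$, then $\tau$-invariance of $y_{K,\p}$ gives $c\cdot x=\tau(y_{K,\p})=-c\cdot x$, hence $2c=0$; because $2\in\cO_\p^\times$, this forces $c=0$ and therefore $y_{K,\p}=0$, contradicting non-torsionness. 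So $\epsilon=1$ and $\Lambda_\p(K)=\Lambda_\p(K)^{\Gal(K/\Q)}$. Substituting this equality into the earlier isomorphism completes the proof of the lemma.

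The only mildly delicate step is the sign/parity argument in the last paragraph, but it is essentially forced: the rank-$1$ structure of $\Lambda_\p(K)$ coming from Theorem \ref{nekovar-thm}, together with $p$ odd and the $\tau$-invariance of the non-torsion class $y_{K,\p}$, leaves no room for a $-1$-eigenspace. No real obstacle is expected; the main work has already been done in Proposition \ref{res-iso-prop} and in the Nekov\'a\v r--Zhang machinery recorded above.
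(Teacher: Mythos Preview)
Your proof is correct, but it takes a different route from the paper's.

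The paper simply says to proceed \emph{mutatis mutandis} as in Proposition~\ref{lemma-masoero1}: one uses the injection $\res_{K/\Q}\colon\Lambda_\p(\Q)\hookrightarrow\Lambda_\p(K)$ with $\res_{K/\Q}(y_{\Q,\p})=y_{K,\p}$, and compares the maximal powers of $\p$ dividing the two elements. One inequality is immediate from the inclusion; for the other, if $y_{K,\p}=\p^M b$ with $b\in\Lambda_\p(K)$ then $\p^M(\tau b-b)=0$, torsion-freeness of $\Lambda_\p(K)$ (Proposition~\ref{no torsion lemma}) forces $b\in\Lambda_\p(K)^{\Gal(K/\Q)}$, and Proposition~\ref{res-iso-prop} puts $b$ back in $\Lambda_\p(\Q)$. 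This argument never needs to know the rank of $\Lambda_\p(K)$, nor does it invoke Theorem~\ref{nekovar-thm}.

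By contrast, you bring in the full strength of Nekov\'a\v{r}'s theorem to pin down $\Lambda_\p(K)$ as free of rank~$1$, and then run a parity argument on the $\tau$-action to conclude the stronger fact $\Lambda_\p(K)^{\Gal(K/\Q)}=\Lambda_\p(K)$. This is perfectly valid in the ambient rank-$1$ setting and yields a more explicit structural statement (the minus eigenspace vanishes), at the cost of using heavier input than the paper's bare torsion-freeness argument requires.
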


\begin{proof} Since $\res_{K/\Q}(y_{\Q,\p})=y_{K,\p}$ by \eqref{y-Q-eq}, one can proceed \emph{mutatis mutandis} as in the proof of Proposition \ref{lemma-masoero1}. \end{proof}

We restate Theorem B in a precise form; as before, we employ notation from Remark \ref{AJ-twist-rem} and \S \ref{imaginary-subsec}. Recall that Assumption \ref{ass AJ} is in force.

\begin{theorem} \label{ThmTNC}
Assume that $r_\an(\MM)=1$ and that there exists $K\in\mathscr I_1(f,p)$ such that
\begin{enumerate}
\item $\AJ_{K,\p}$ is injective on $\Heeg_{K,N}^{\mathcal G_1}$ for all $\p\,|\,p$;
\item there is $K'\in\mathscr I_0(f^K,p)$ with $\AJ_{f^K,K',\p}$ injective on $\Heeg_{K',N}^{\mathcal G'_1}$ for some $\p\,|\,p$;
\item the $p$-part of Conjecture \ref{regpconj} for $\MM(f^K)$ over $\Q$ holds true. 
\end{enumerate}
Moreover, assume that
\begin{enumerate}
\item[(4)] the $p$-part of Conjecture \ref{regpconj} over $\Q$ holds true.
\end{enumerate}
Then the following results hold:
\begin{enumerate}
\item[(a)] $r_\alg(\MM)=1$; 
\item[(b)] $\Sha_p^{\BKK}(\Q,\MM)=\Sha_p^{\Nek}(\Q,\MM)$;  
\item[(c)] the $p$-part of Conjecture \ref{TNC} is true. 
\end{enumerate}
\end{theorem}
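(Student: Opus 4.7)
The plan is to reduce everything to the reformulation of $p$-TNC supplied by Theorem \ref{motivesthm} and then verify that formula term by term. Part (a) is immediate from Theorem \ref{nekovar-thm2}, whose hypotheses are contained in ours (condition (1) gives the injectivity of $\AJ_{K,\p}$ on $\Heeg_{K,N}^{\mathcal G_1}$ for the appropriate $\p$, and condition (4) is the $p$-part of Conjecture \ref{regpconj} over $\Q$). For (b), Proposition \ref{coro zhang}(1) shows that $y_{K,\p}$ is non-torsion in $\Lambda_\p(K)$ for every $\p \mid p$; pulling back via Proposition \ref{res-iso-prop} yields that $y_{\Q,\p}$ is non-torsion in $\Lambda_\p(\Q)$, so, as in the proof of Theorem \ref{nekovar-thm2}, an analysis of complex conjugation combined with Theorem \ref{nekovar-thm} shows that $\Lambda_\p(\Q)\otimes_{\cO_\p}(F_\p/\cO_\p)$ is the maximal divisible submodule of $H^1_f(\Q,A_\p)$; the quotient is simultaneously $\Sha_\p^{\BKK}(\Q,\MM)$ and $\Sha_\p^{\Nek}(\Q,\MM)$, and summing over $\p$ gives (b).

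For (c) I would apply Theorem \ref{motivesthm} with the basis $\mathscr B = \{\YY_\MM\}$ of $H^1_\mot(\Q,\MM)$ from \eqref{Gamma-MM-eq} (so $\tilde{\mathscr B} = \{y_{\Q,p}\}$ by \eqref{Gamma-MM-eq3}). Its hypotheses hold: $\Reg_{\mathscr B}(\MM)\ne 0$ by Theorem \ref{zhang-thm} together with $r_\an(f/K)=1$, Conjecture \ref{ratconj} follows from Theorem \ref{ratconj1thm} (whose hypotheses are our (2) and (3)), and Conjecture \ref{regpconj} over $\Q$ is our (4). First I would dispose of the auxiliary factors in the right-hand side of \eqref{eqBK}: $\mathcal I_p(\gamma_f)=\cO_p$ by \eqref{I-implication-eq}, using $p \nmid \mathrm N(\mathfrak a_{f,\Gamma(N)})$; $\Tors_p(\MM)=\cO_p$ from \cite[Lemma 3.10]{LV} together with Proposition \ref{no torsion lemma}; $\Tam_p^{(p)}(\MM)=\cO_p$ and $\Tam_\infty^{(p)}(\MM)=\cO_p$ by \eqref{Tam-p-eq} and \eqref{Tam-infty-eq}; $\Tam_\ell^{(p)}(\MM)=\cO_p$ for $\ell \mid N$ by Proposition \ref{lemmatam}(1) and for $\ell\nmid Np$ by Proposition \ref{Tamv}(2).

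Next I would identify $(\det\mathtt A_{\tilde{\mathscr B}})^2$. Assumption \ref{ass AJ}(\ref{reg-condition}) forces the integral $p$-adic regulator to be an isomorphism, so $\Lambda_p(\Q)=\uH^1_f(\Q,T_p)$ and $\det(\mathtt A_{\tilde{\mathscr B}})$ generates $\mathcal I(\Lambda_p(\Q)/y_{\Q,p}\cO_p)$. Combining Proposition \ref{lemma-masoero1} (applicable because $y_{K,\p}$ is non-torsion), Lemma \ref{lengths-lemma} and Theorem \ref{teorema-masoero}---which rests on our Theorem \ref{kolyvagin-main-thm}---yields, after taking $\Gal(K/\Q)$-eigenspaces (legitimate since $p$ is odd), the identity of fractional ideals
\[
\bigl(\det\mathtt A_{\tilde{\mathscr B}}\bigr)^2 \;\doteq\; \mathcal I\bigl(\Sha_p^{\BKK}(K,\MM)\bigr) \;\doteq\; \mathcal I\bigl(\Sha_p^{\BKK}(\Q,\MM)\bigr)\cdot\mathcal I\bigl(\Sha_p^{\BKK}(\Q,\MM(f^K))\bigr).
\]
Inserting this into \eqref{eqBK} and applying Theorem \ref{skinner-urban-thm} to $\MM(f^K)$ (whose hypotheses are precisely our (2) and (3), and for which $r_\an(\MM(f^K))=0$ by Lemma \ref{low-rank-lemma} since $K\in\mathscr I_1(f,p)$, with all Tamagawa factors again trivial by Proposition \ref{lemmatam}(2)) reduces the task to showing
\[
\bigl(\Lambda^*(\MM,0)/\Omega_\MM\cdot\Reg_{\mathscr B}(\MM)\bigr)\cdot\bigl(\Lambda(\MM(f^K),0)/\Omega_{\MM(f^K)}\bigr)\doteq\cO_p.
\]

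The remaining identity is purely archimedean. Rewriting completed $L$-functions via \eqref{completedL} and using the factorization $L'(f^\sigma,k/2)\cdot L((f^\sigma)^K,k/2)=L'(f^\sigma/K,k/2)$ at each $\sigma\in\Sigma$ (so that, via Remark \ref{deligne-rem2}, the product above is computed $\sigma$-by-$\sigma$), I would replace $L'(f^\sigma/K,k/2)$ by its Zhang--Gross--Zagier value (Theorem \ref{zhang-thm}) in terms of $(f^\sigma,f^\sigma)_{\Gamma_0(N)}\cdot\langle s'_{f^\sigma},s'_{f^\sigma}\rangle_{\GS}$, eliminate $\Omega_\infty^{(\gamma_f)}$ via Proposition \ref{comparison-periods-prop} (using Proposition \ref{comparison} to pass freely among periods attached to $\Gamma(N)$, $\Gamma_1(N)$, $\Gamma_0(N)$), cancel the Petersson/period quotient using Hida's formula \eqref{comp-per}, and finally identify $\langle s'_{f^\sigma},s'_{f^\sigma}\rangle_{\GS}$ with $\Reg_{\mathscr B}^\sigma(\MM)$ up to an explicit algebraic constant by combining Proposition \ref{prop Heegner} with the description \eqref{xi-K-Gamma-eq} of $\XX_K$ and the definition \eqref{Gamma-MM-eq} of $\YY_\MM$. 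All the resulting transcendental powers of $\pi$ and $i$ cancel, and each rational constant (a product of powers of $2$, $(k-2)!$, $\binom{k-2}{k/2-1}$, $u_K$, $D_K$, $\sqrt{|D_K|}$) is a $p$-adic unit under Assumption \ref{ass AJ} together with $p\nmid D_K$ (which follows from the splitting of $p$ in $K$). The main obstacle is precisely this last step of bookkeeping: reconciling the various normalization conventions (the factor $(k-2)!$ built into $W_k(\tilde x_n)$ versus the renormalization defining $S_k(\tilde x_n)$, the square root in Proposition \ref{prop Heegner}, and the $1/\sqrt{\deg(\pi_N)}$ appearing in \eqref{S_k-eq}) and verifying that \emph{every} prime dividing the final explicit constant is excluded by our hypotheses requires care, even though no conceptually new input is needed.
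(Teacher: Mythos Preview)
Your proposal is correct and follows essentially the same route as the paper: parts (a) and (b) via Theorem \ref{nekovar-thm2} and the non-torsion of $y_{K,\p}$, and part (c) by verifying \eqref{eqBK} with the basis $\mathscr B=\{\YY_\MM\}$, using Theorem \ref{teorema-masoero}/Proposition \ref{lemma-masoero1}/Lemma \ref{lengths-lemma} for $(\det\mathtt A_{\tilde{\mathscr B}})^2$, Theorem \ref{skinner-urban-thm} for $\MM(f^K)$, and the Zhang--Hida archimedean computation (Theorem \ref{zhang-thm}, \eqref{comp-per}, Proposition \ref{prop Heegner}) together with the splitting $\Sha_p^{\BKK}(K,\MM)=\Sha_p^{\BKK}(\Q,\MM)\oplus\Sha_p^{\BKK}(\Q,\MM(f^K))$. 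The only organizational difference is that the paper first establishes \eqref{eq7.4} directly from Zhang's formula and then works back to \eqref{eqBK}, whereas you first reduce \eqref{eqBK} to an archimedean identity and then verify it; the ingredients and the bookkeeping of constants (including $p\nmid 2D_K\deg(\pi_N)$) are identical.
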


\begin{proof} Part (a) was already proved in Theorem \ref{nekovar-thm2}. On the other hand, the arguments used in the proof of Proposition \ref{selmer-alternative-prop} show that there is an equality $\Sha_\p^{\BKK}(\Q,\MM)=\Sha_\p^{\Nek}(\Q,\MM)$ for each $\p\,|\,p$ (\emph{cf.} also the proof of Theorem \ref{nekovar-thm2}), and then part (b) follows upon taking direct sums over all such $\p$.

As we did in rank $0$ in Theorem \ref{skinner-urban-thm}, we prove part (c) by checking equality \eqref{eqBK} in Theorem \ref{motivesthm}. First of all, the existence of $K\in\mathscr I_1(f,p)$ satisfying properties (2) and (3) guarantees, by Theorem \ref{ratconj1thm}, that Conjecture \ref{ratconj} is true in this case. Thus, keeping property (4) in mind and noting that Theorem \ref{zhang-thm} implies that the regulator we will be working with is non-zero (\emph{cf.} below), all the assumptions in Theorem \ref{motivesthm} are verified (\emph{cf.} Remark \ref{non-degeneracy-rem}). As a further preliminary observation (implicit in the proofs of parts (a) and (b)), notice that if we fix $K\in\mathscr I_1(f,p)$ with the properties in the statement of the theorem and let $\p$ be a prime of $F$ above $p$, then $r_\an(f/K)=1$ and, thanks to property (1) and part (1) of Proposition \ref{coro zhang}, $y_{K,\p}$ is not torsion. 

By Theorem \ref{nekovar-thm2}, $r_\alg(\MM)=1$. Let $\YY_\MM\in H^1_\mot(\Q,\MM)\smallsetminus\{0\}$ be as in \eqref{Gamma-MM-eq}. Thus, $\mathscr B\defeq\{\YY_\MM\}$ is an $F$-basis of $H^1_\mot(\Q,\MM)$. Let $y_{\Q,\p}\in\Lambda_\p(\Q)$ be as in \eqref{y-Q-eq} and recall from \eqref{Gamma-MM-eq3} that $\reg_\p(\YY_\MM)=y_{\Q,\p}$. Recall the $\p$-adic regulators
\[ \reg_\p:H^1_\mot(\Q,\MM{)}_\star\longrightarrow H^1_f(\Q,M_\p) \] 
from \eqref{pp-reg-eq} if $(\star,M)=(\p,V)$ or from \eqref{AJ-map} if $(\star,M)=(\text{$\p$-int},T)$. The surjectivity of $\Pi_{\MM,\Q,\p}$ implies that $\im(\reg_\p)=\Lambda_\p(\Q)$. It follows that condition \eqref{reg-condition} in Assumption \ref{ass AJ} yields an equality $\Lambda_\p(\Q)=H^1_f(\Q,T_\p)$ of free $\cO_\p$-modules of rank $1$; once we view them in $H^1_f(\Q,V_\p)$, these two $\cO_\p$-lattices coincide with $\uH^1_f(\Q,T_\p)$.

For each $\p\,|\,p$, pick $\varpi_\p\in\p\smallsetminus(\p^2\cup\bigcup_{\p'|p,\,\p'\not=\p}\p')$; in particular, $\varpi_\p$ is a uniformizer at $\p$. Let us write
\begin{equation} \label{H^1-y-eq}
H^1_f(\Q,T_\p)\big/y_{\Q,\p}\cdot\cO_\p\simeq\cO_\p\big/(\p\cO_\p)^{f_\p}=\cO_\p\big/(\varpi_\p^{f_\p}\cO_\p) 
\end{equation}
for some $f_\p\in\N$, then set $\varpi_p\defeq\prod_{\p|p}\varpi_\p^{f_\p}\in\cO_F\smallsetminus\{0\}$. Now define 
\[ y_{\Q,p}\defeq{(y_{\Q,\p})}_{\p|p}\in\bigoplus_{\p|p}H^1_f(\Q,T_\p)=H^1_f(\Q,T_p) \]
and $\hat y_{\Q,p}\defeq\varpi^{-1}_p\cdot y_{\Q,p}\in H^1_f(\Q,T_p)$. Observe that, by construction, $\{\hat y_{\Q,p}\}$ is an $\cO_p$-basis of $H^1_f(\Q,T_p)=\uH^1_f(\Q,T_p)$. Furthermore, with notation as in \S \ref{linear-algebra-lattices-subsubsec}, $\reg_p(\YY_\MM)=y_{\Q,p}$, so that $\tilde{\mathscr B}=\{y_{\Q,p}\}$. Therefore, there is an equality
\begin{equation} \label{A-varpi-eq}
\mathtt{A}_{\tilde{\mathscr B}}=(\varpi_p).
\end{equation}
Recall from \S \ref{distinguished-subsubsec} the element $\YY_{\MM,K}=\iota_{\Q\to K}(\YY_\MM)\in H^1_\mot(K,\MM)\smallsetminus\{0\}$. By Proposition \ref{prop Heegner} and our choice of $p$, there are equalities
\[ \begin{split}
   {\langle s_f',s_f'\rangle}_{\GS}&=\frac{{\deg(\pi_N)}\cdot{\binom{k-2}{k/2-1}}}{{(-2D_K})^{k/2-1}}\cdot{\big\langle\YY_{\MM,K},\YY_{\MM,K}\big\rangle}_{\GS,\iota_F}^K\\
   &=\frac{{\deg(\pi_N)}\cdot{\binom{k-2}{k/2-1}}}{{(-2D_K})^{k/2-1}}\cdot{\big\langle\YY_\MM,\YY_\MM\big\rangle}_{\GS,\iota_F}^K, 
   \end{split} \]
where the superscripts indicate that the pairings are taken with respect to the ground field $K$. On the other hand, ${\langle\YY_\MM,\YY_\MM\rangle}_{\GS,\iota_F}^K=2{\langle\YY_\MM,\YY_\MM\rangle}_{\GS,\iota_F}^\Q$, where the pairing on the right is taken relative to the ground field $\Q$ (see, \emph{e.g.}, \cite[\S 3.1.4]{BGS}). Thus, since $p\nmid 2D_K\deg(\pi_N)$, we can replace the term ${\langle s_f',s_f'\rangle}_{\GS}$ in Theorem \ref{zhang-thm} with $\binom{k-2}{k/2-1}\cdot{\langle\YY_\MM,\YY_\MM\rangle}_{\GS,\iota_F}^\Q$. Observe that Theorem \ref{zhang-thm} ensures that ${\langle\YY_\MM,\YY_\MM\rangle}_{\GS,\iota_F}^\Q\not=0$ because $r_\an(f/K)=1$; since $\Reg_{\mathscr B}^{\iota_F}(\MM)={\langle\YY_\MM,\YY_\MM\rangle}_{\GS,\iota_F}^\Q$, we have $\Reg_{\mathscr B}^{\iota_F}(\MM)\not=0$.

Combining Theorem \ref{zhang-thm} with the period comparison formula \eqref{comp-per} yields, for each prime $\p$ of $F$ above $p$, an equality
\begin{equation} \label{eq7.4}
\Biggl(\frac{\bigl((k/2-1)!\bigr)^2\cdot L'(f/K,k/2)}{\pi^{k-2}\cdot\Omega_f\cdot\Omega_{f^K}\cdot\Reg_{\mathscr B}^{\iota_F}(\MM)}\Biggr)=\cO_\p
\end{equation}
of fractional $\cO_\p$-ideals. Since equality \eqref{eq7.4} holds for each $\p\,|\,p$, we get an equality
\begin{equation} \label{proofeq2}
\begin{split}
\Biggl(\frac{\bigl((k/2-1)!\bigr)^2\cdot L'(f/K,k/2)}{\pi^{k-2}\cdot\Omega_f\cdot\Omega_{f^K}\cdot\Reg^{\iota_F}_\mathscr{B}(\MM)}\Biggr)&=(\varpi^{-2}_p)\cdot(\varpi^2_p)\cdot\cO_p\\
&=\bigl(\det(\mathtt{A}_{\tilde{\mathscr{B}}})\bigr)^{-2}\cdot\Bigl(\mathcal{I}_{\cO_p}\bigl(H^1_f(\Q,T_p)\big/y_{\Q,p}\cdot\cO_p\bigr)\!\Bigr)^2\\
&=\bigl(\det(\mathtt{A}_{\tilde{\mathscr{B}}})\bigr)^{-2}\cdot\mathcal{I}_{\cO_p}\bigl(\Sha_p^{\BKK}(K,\MM)\bigr)
\end{split}
\end{equation}
of fractional $\cO_p$-ideals, where the second equality follows from \eqref{H^1-y-eq} and \eqref{A-varpi-eq}, while the third, in light of Definition \ref{Sha-def} and Lemma \ref{lengths-lemma}, is a consequence of Theorem \ref{teorema-masoero} and Proposition \ref{lemma-masoero1}. 

The $p$-adic Galois representation attached to $f^K$ is the twist of $\rho_p$ by $\epsilon_K$, so Assumption \ref{main-assumption} holds for $f^K$. Thus, by Lemma \ref{lemma4.4}, Theorem \ref{skinner-urban-thm} and Proposition \ref{lemmatam}, there is an equality 
\begin{equation}\label{proofeq3}
\Biggl(\frac{(k/2-1)!\cdot L(f^K,k/2)}{(2\pi i)^{k/2-1}\cdot\Omega_{f^K}}\Biggr)=\mathcal{I}_{\cO_p}\Bigl(\Sha_p^{\BKK}\bigl(\Q,\MM(f^K)\bigr)\!\Bigr)
\end{equation}
of fractional $\cO_p$-ideals. Combining the factorization 
\[ L'(f/K,k/2)=L'(f,k/2)\cdot L(f^K,k/2) \]
and formula \eqref{comp-per} with equalities \eqref{proofeq2} and \eqref{proofeq3}, we obtain an equality
\[ \begin{split}
   \biggl(\frac{(k/2-1)!\cdot L'(f,k/2)}{(2\pi i)^{k/2-1}\cdot\Omega_f\cdot\Reg_\mathscr{B}^{\iota_F}(\MM)}\biggr)=&\;\bigl(\det(\mathtt{A}_{\tilde{\mathscr{B}}})\bigr)^{-2}\cdot\mathcal{I}_{\cO_p}\bigl(\Sha_p^{\BKK}(K,\MM)\bigr)\\&\;\cdot\mathcal{I}^{-1}_{\cO_p}\Bigl(\Sha_p^{\BKK}\bigl(\Q,\MM(f^K)\bigr)\!\Bigr). 
   \end{split} \]
Furthermore, the splitting $\Sha_p^{\BKK}(K,\MM)=\Sha_p^{\BKK}(\Q,\MM)\oplus\Sha_p^{\BKK}\bigl(\Q,\MM(f^K)\bigr)$ of Shafarevich--Tate groups, which is a consequence of an analogous decomposition of Selmer groups (see, \emph{e.g.}, \cite[Proposition 6.2]{LV-Pisa}), induces an equality 
\begin{equation} \label{eq-final1}
\biggl(\frac{(k/2-1)!\cdot L'(f,k/2)}{(2\pi i)^{k/2-1}\cdot\Omega_f\cdot\Reg_\mathscr{B}^{\iota_F}(\MM)}\biggr)=\bigl(\det(\mathtt{A}_{\tilde{\mathscr{B}}})\bigr)^{-2}\cdot\mathcal{I}_{\cO_p}\bigl(\Sha_p^{\BKK}(\Q,\MM)\bigr).
\end{equation}
of fractional $\cO_p$-ideals.

Now we compare \eqref{eq-final1} with equality \eqref{eqBK} in Theorem \ref{motivesthm}. Thanks to Theorem \ref{ratconj1thm}, we already know that 
\[ \frac{L^*(\MM,0)}{\Omega_\infty\cdot\Reg_\mathscr{B}(\MM)}=\biggl(\frac{L'(f^\sigma,k/2)}{(2\pi i)^{k/2-1}\cdot\Omega_{f^\sigma}\cdot\Reg_\mathscr{B}^\sigma(\MM)}\biggr)_{\!\sigma\in\Sigma} \] 
belongs to $F^\times$ in the sense of Remark \ref{deligne-rem2}; more explicitly, there is an equality
\begin{equation} \label{eq-final2}
\iota_\Sigma\biggl(\frac{L'(f,k/2)}{(2\pi i)^{k/2-1}\cdot\Omega_f\cdot\Reg_\mathscr{B}^{\iota_F}(\MM)}\biggr)=\frac{L^*(\MM,0)}{\Omega_\infty\cdot\Reg_\mathscr{B}(\MM)}. 
\end{equation}
Combining \eqref{eq-final1} and \eqref{eq-final2}, we get an equality 
\begin{equation} \label{eq-final3}
\biggl(\frac{(k/2-1)!\cdot L^*(\MM,0)}{\Omega_\infty\cdot\Reg_\mathscr{B}(\MM)}\biggr)=\bigl(\det(\mathtt{A}_{\tilde{\mathscr{B}}})\bigr)^{-2}\cdot\mathcal{I}_{\cO_p}\bigl(\Sha_p^{\BKK}(\Q,\MM)\bigr)
\end{equation}
of fractional $\cO_p$-ideals. As we explained in the proof of Theorem \ref{skinner-urban-thm}, $\Tors_p(\MM)=\cO_p$. Moreover, if $\ell\,|\,N$, then $\Tam_\ell^{(p)}(\MM)=\cO_p$ by part (1) of Proposition \ref{lemmatam}, while $\Tam_p^{(p)}(\MM)=\cO_p$ by \eqref{Tam-p-eq} and $\Tam_\infty^{(p)}(\MM)=\cO_p$ by \eqref{Tam-infty-eq}. Finally, $\mathcal I_p(\gamma_f)=\cO_p$ by \eqref{I-implication-eq}. Therefore, equality \eqref{eq-final3} coincides with \eqref{eqBK} (or, rather, with its equivalent form \eqref{eqBKbis} in Remark \ref{BK-bis-rem}) in our setting, and the proof of the theorem is complete. \end{proof}

\section{On the structure of Selmer groups} \label{structure-sec}

As an application of Theorem \ref{kolyvagin-main-thm}, we deduce results on the structure of Selmer groups of modular forms. As will be apparent, these results basically follow from \cite{Masoero}.

Fix a newform $f$ with Hecke field $F$ and a prime $\p$ of $F$ above $p$ that satisfy Assumption \ref{main-assumption}. For any number field $L$, let
\begin{equation} \label{r-corank-eq}
r_\p(f/L)\defeq\corank_{\cO_\p}\!H^1_f(L,A_\p)
\end{equation}
be the corank of $H^1_f(L,A_\p)$ over $\cO_\p$. Set also $r_\p(f)\defeq r_\p(f/\Q)$.

Throughout this section, $K$ is an imaginary quadratic field in which all the prime factors of $Np$ split.

\subsection{Vanishing order of $\kappa_{f,\infty}$}

Let $\kappa_{f,\infty}$ be the Kolyvagin set attached to $f$, $\p$, $K$ from \eqref{Kolsys}. By Theorem \ref{kolyvagin-main-thm}, $\kappa_{f,\infty}\neq\{0\}$. For every $n\in\Lambda_{\Kol}(f)=\Lambda_{\Kol}(f,\p,K)$, denote by $\nu(n)$ the number of prime factors of $n$ and let $M(n)$ be the Kolyvagin index of $n$ that was introduced in \eqref{M(n)-eq}. 

\begin{definition} \label{vanishing-def}
The \emph{vanishing order} of $\kappa_{f,\infty}$ is
\[ \nu_\infty\defeq \min\bigl\{\nu(n)\mid\text{$n\in\Lambda_{\Kol}(f)$ and $c_M(f,n)\not=0$ for some $M\leq M(n)$}\bigr\}\in\N. \]
\end{definition}

The following result will be used in the proof of Theorem \ref{main-higher-rank}.

\begin{proposition} \label{nu-prop}
If $y_{K,\p}$ is $\cO_\p$-torsion, then $\nu_\infty\geq1$.
\end{proposition}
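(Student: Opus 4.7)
The plan is to show that the torsion hypothesis on $y_{K,\p}$ forces the vanishing of every Kolyvagin class of trivial conductor $n=1$; since $n=1$ is the only Kolyvagin integer with $\nu(n)=0$, this (together with Theorem \ref{kolyvagin-main-thm}) yields $\nu_\infty \geq 1$.

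First I would invoke Proposition \ref{no torsion lemma} applied to the abelian (hence solvable) extension $K/\Q$: the $\cO_\p$-module $\Lambda_\p(K)$ is free of finite rank, in particular torsion-free. Consequently, the assumption that $y_{K,\p}$ is $\cO_\p$-torsion forces $y_{K,\p}=0$ in $\Lambda_\p(K)$.

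Next I would apply Proposition \ref{c_M(1)-prop}, which identifies $c_M(f,1) = \iota_{K,M}\bigl({[y_{K,\p}]}_M\bigr)$ for every integer $M \geq 1$. Since $y_{K,\p}=0$, the class ${[y_{K,\p}]}_M$ in $\Lambda_\p(K)/p^M\Lambda_\p(K)$ is trivial, and hence $c_M(f,1)=0$ for every such $M$. Recalling that $M(1)=\infty$, this means there is no $M \leq M(1)$ for which $c_M(f,1)$ is nonzero, so $n=1$ does not contribute to the set in Definition \ref{vanishing-def}.

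Finally, Theorem \ref{kolyvagin-main-thm} ensures $\kappa_{f,\infty}\neq\{0\}$, so the set over which the minimum in Definition \ref{vanishing-def} is taken is nonempty and $\nu_\infty$ is a well-defined nonnegative integer. Since $n=1$ is the unique element of $\Lambda_{\Kol}(f)$ with $\nu(n)=0$ and has just been excluded, every $n$ contributing to that minimum satisfies $\nu(n)\geq 1$, whence $\nu_\infty \geq 1$. No substantial obstacle arises: the statement is essentially a formal consequence of the torsion-freeness in Proposition \ref{no torsion lemma}, the explicit description of $c_M(f,1)$ in Proposition \ref{c_M(1)-prop}, and the non-triviality of $\kappa_{f,\infty}$ from Theorem \ref{kolyvagin-main-thm}.
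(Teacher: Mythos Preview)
Your proof is correct and follows essentially the same route as the paper's: invoke Proposition \ref{no torsion lemma} to deduce $y_{K,\p}=0$ from torsion-freeness of $\Lambda_\p(K)$, then use Proposition \ref{c_M(1)-prop} to conclude $c_M(f,1)=0$ for all $M$, whence $\nu_\infty\geq 1$. Your additional remarks (that $n=1$ is the unique Kolyvagin integer with $\nu(n)=0$, and the appeal to Theorem \ref{kolyvagin-main-thm} for well-definedness of $\nu_\infty$) make explicit what the paper leaves implicit, the latter having already been noted just before Definition \ref{vanishing-def}.
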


\begin{proof} By part (2) of Proposition \ref{no torsion lemma}, the $\cO_\p$-module $\Lambda_\p(K)$ is free, so $y_{K,\p}=0$. It follows from Proposition \ref{c_M(1)-prop} that $c_M(f,1)=0$ for all $M$, whence $\nu_\infty\geq1$. \end{proof}

As before, let $\varepsilon(f)\in\{\pm1\}$ be the root number of $f$. It is convenient to consider the sign
\begin{equation} \label{varepsilon-infty-eq}
\varepsilon_\infty\defeq\mathrm{sign}\bigl(\varepsilon(f)\cdot(-1)^{\nu_\infty+1}\bigr)\in\{\pm\},
\end{equation}
which will appear in Theorem \ref{main-vanishing-thm}. 

\subsection{A structure theorem for $H^1_f(K,A_\p)$}

From now on, let $H^1_f(K,A_\p)^\pm$ denote the $\pm1$-eigenspaces of complex conjugation acting on $H^1_f(K,A_\p)$ and write
\[ r^\pm_\p(f/K)\defeq \corank_{\cO_\p}H^1_f(K,A_\p)^\pm \]
for the corresponding coranks over $\cO_\p$. Observe that
\begin{equation} \label{r-sum-eq}
r_\p(f/K)=r^+_\p(f/K)+r^-_\p(f/K). 
\end{equation}

\subsubsection{A lemma on $p^m$-torsion}

The next auxiliary result will be used in the proof of the structure theorem for $H^1_f(K,A_\p)$ (Theorem \ref{main-vanishing-thm}).

\begin{lemma} \label{lemma9.3} 
For all $m\in\N$ there is a Galois-equivariant identification 
\[ H^1_f\bigl(K,A_\p[p^m]\bigr)=H^1_f(K,A_\p)[p^m]. \]  
\end{lemma}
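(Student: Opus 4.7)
The plan is to derive the identification from the short exact sequence
\[ 0\longrightarrow A_\p[p^m]\longrightarrow A_\p\overset{p^m}\longrightarrow A_\p\longrightarrow0 \]
of $G_K$-modules (which is $G_\Q$-equivariant because the inclusion and the multiplication-by-$p^m$ map are $G_\Q$-equivariant), reducing the statement to matching local conditions.

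First I would verify the analogous identification at the level of full cohomology. Taking $G_K$-cohomology gives an exact sequence
\[ 0\longrightarrow A_\p(K)\big/p^m A_\p(K)\longrightarrow H^1\bigl(K,A_\p[p^m]\bigr)\overset{\iota}\longrightarrow H^1(K,A_\p)[p^m]\longrightarrow0. \]
By \cite[Lemma 3.10, (2)]{LV} applied to the solvable extension $K/\Q$ (using the irreducibility of $\bar\rho_\p$ granted by Assumption \ref{main-assumption}), we have $A_\p[p]^{G_K}=0$; any nonzero element of $A_\p^{G_K}$ would yield a nonzero element of $A_\p[p]^{G_K}$ by multiplication by a suitable power of $p$, so $A_\p(K)=0$. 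Consequently $\iota$ is a $G_\Q$-equivariant isomorphism $H^1(K,A_\p[p^m])\overset\simeq\longrightarrow H^1(K,A_\p)[p^m]$.

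Next I would match the Selmer conditions place by place. By definition (\S \ref{secBK}), $H^1_f(K_v,A_\p)$ is propagated from $H^1_f(K_v,V_\p)$ via $V_\p\twoheadrightarrow A_\p$, while $H^1_f(K_v,A_\p[p^m])$ is propagated from $H^1_f(K_v,A_\p)$ via the inclusion $A_\p[p^m]\hookrightarrow A_\p$; that is, $H^1_f(K_v,A_\p[p^m])$ is the preimage of $H^1_f(K_v,A_\p)$ under the local map $\iota_v:H^1(K_v,A_\p[p^m])\to H^1(K_v,A_\p)$. Combining this with the commutative square
\[ \xymatrix@C=40pt@R=22pt{H^1\bigl(K,A_\p[p^m]\bigr)\ar[r]^-\iota_-\simeq\ar[d]&H^1(K,A_\p)[p^m]\ar[d]\\ \prod_v H^1\bigl(K_v,A_\p[p^m]\bigr)\ar[r]^-{\prod_v\iota_v}&\prod_v H^1(K_v,A_\p)[p^m]} \]
shows that a class $c\in H^1(K,A_\p[p^m])$ lies in $H^1_f(K,A_\p[p^m])$ if and only if $\iota(c)\in H^1_f(K,A_\p)[p^m]$, yielding the desired identification.

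Finally, Galois equivariance for the action of $\Gal(K/\Q)$ is automatic, since every map in sight (the inclusion $A_\p[p^m]\hookrightarrow A_\p$, the multiplication map, and the localization and restriction maps) is $G_\Q$-equivariant. There is no real obstacle to the argument: the only non-formal ingredient is the vanishing $A_\p(K)=0$, which is already recorded in \cite{LV} under the hypotheses of Assumption \ref{main-assumption}; everything else is a careful unwinding of how the Bloch--Kato local conditions at $v$ are propagated through the inclusion $A_\p[p^m]\hookrightarrow A_\p$.
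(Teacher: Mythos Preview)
Your proof is correct and follows essentially the same approach as the paper: use the vanishing $A_\p(K)=0$ to identify $H^1(K,A_\p[p^m])$ with $H^1(K,A_\p)[p^m]$, then observe that the finite local condition on $A_\p[p^m]$ is by definition the preimage of that on $A_\p$. The paper cites \cite[Lemma 2.4, (1)]{LV-kyoto} rather than \cite[Lemma 3.10, (2)]{LV} for the vanishing, and is terser about matching local conditions and about the Galois equivariance, but the argument is the same.
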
 

\begin{proof} By \cite[Lemma 2.4, (1)]{LV-kyoto}, $H^0(K,A_\p)=0$, so the inclusion $A_\p[p^m]\hookrightarrow A_\p$ induces an identification 
\begin{equation} \label{torsion-eq}
H^1(K,A_\p)[p^m]=H^1\bigl(K,A_\p[p^m]\bigr).
\end{equation}
By definition, $H^1_f\bigl(K,A_\p [p^m]\bigr)$ consists of the elements of $H^1\bigl(K,A_\p[p^m]\bigr)$ whose image in $H^1(K,A_\p)$ lies in $H^1_f(K,A_\p)$, and then the lemma follows from \eqref{torsion-eq}. \end{proof}

\subsubsection{Structure theorem}

Recall that $p$, which is unramified in $F$, is a uniformizer for $\cO_\p$. As in \S \ref{p-primary-Sha-subsec}, let $\varepsilon\in\{\pm\}$ be the sign of the root number of $f$. Write 
\[ H^1_f(K,A_\p)^\pm\simeq (F_\p/\mathcal{O}_\p)^{r_\p^\pm(f/K)}\oplus\mathcal X_\p^\pm \] 
where $\mathcal X_\p^\pm$ is a finite $\cO_\p$-module, then introduce splittings
\begin{align}
\mathcal X_\p^{-\varepsilon}&\simeq(\cO_\p/p^{n_1}\cO_\p)^2\oplus(\cO_\p/p^{n_3}\cO_\p)^2\oplus\dots\\
\intertext{and}
\mathcal X_\p^{\varepsilon}&\simeq(\cO_\p/p^{n_2}\cO_\p)^2\oplus(\cO_\p/p^{n_4}\cO_\p)^2\oplus\dots
\end{align}
of $\cO_\p$-modules. Finally, let the integers $N_i\in\N$ be defined as in \S \ref{p-primary-Sha-subsec} (\emph{cf.} Theorem \ref{structure sha}) and let $\varepsilon_\infty\in\{\pm\}$ be the sign from \eqref{varepsilon-infty-eq}. 

\begin{theorem} \label{main-vanishing-thm} 
\begin{enumerate}
\item $r_\p^{\varepsilon_\infty}(f/K)=\nu_\infty+1$ and $r_\p^{-\varepsilon_\infty}(f/K)\leq\nu_\infty$.
\item $\nu_\infty=\max\bigl\{r^+_\p(f/K),r^-_\p(f/K)\bigr\}-1$.
\item $n_i=N_i$ for all $i>\nu_\infty+1$. 
\item $0\leq\nu_\infty-r_\p^{-\varepsilon_\infty}(f/K)\equiv0\pmod{2}$.
\end{enumerate}
\end{theorem}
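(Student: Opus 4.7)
The plan is to deduce this structure theorem from the Kolyvagin-system machinery in higher weight that was developed in \cite{Masoero}, now rendered unconditional by Theorem \ref{kolyvagin-main-thm}. Indeed, the results of \cite{Masoero} on the structure of $\Sha_\p^{\Nek}(K,\MM)$ (part of which was already recalled as Theorem \ref{structure sha}) are formulated under the \emph{hypothesis} that $\kappa_{f,\infty}\neq\{0\}$, and in fact they yield information not just about the torsion Nekov\'a\v{r} Shafarevich--Tate group but also about the corank of the full Bloch--Kato Selmer group, once the bridge between the two is made.

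First, I would bridge from $H^1_f(K,A_\p)$ to the data controlled by Heegner cycles and Nekov\'a\v{r}'s Selmer group. The short exact sequence \eqref{selmer-nek-eq} identifies $H^1_f(K,A_\p)_\divv$ modulo the ``Heegner line'' $\Lambda_\p(K)\otimes_{\cO_\p}(F_\p/\cO_\p)$ with $\Sha_\p^{\Nek}(K,\MM)_\divv$, while Corollary \ref{upsilon-onto-coro} (or, equivalently, Theorem \ref{nekovar-thm} when $y_{K,\p}$ is non-torsion) pins down the corank. Combining this with Lemma \ref{lemma9.3} reduces assertions (1)--(4) to statements about the Kolyvagin classes $c_M(f,n)$ viewed in $H^1\bigl(K,A_\p[p^M]\bigr)$, exactly in the form considered by Masoero.

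Next, I would apply the higher-weight Kolyvagin descent from \cite[\S 7--8]{Masoero}. A Kolyvagin class $c_M(f,n)$ with $\nu(n)=\nu_\infty$ and $c_M(f,n)\neq 0$ (which exists by Theorem \ref{kolyvagin-main-thm} and the definition of $\nu_\infty$) is an eigenvector for complex conjugation with eigenvalue $\varepsilon(f)\cdot(-1)^{\nu_\infty+1}=\varepsilon_\infty$, as follows from a direct computation with the derivative operators $D_\ell$ and the action of $\Gal(K/\Q)$ on Heegner cycles (this is the higher-weight analogue of \cite[Proposition 6]{Gross}). Combining this sign computation with the local-global duality input at the Kolyvagin primes dividing $n$, together with the global reciprocity (Poitou--Tate) relations, gives simultaneously a \emph{lower bound} $r_\p^{\varepsilon_\infty}(f/K)\geq\nu_\infty+1$ and an \emph{upper bound} $r_\p^{-\varepsilon_\infty}(f/K)\leq\nu_\infty$; the minimality of $\nu_\infty$ then forces equality in the former, proving (1). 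Part (2) is a formal consequence, since the other eigenspace cannot have corank exceeding $\nu_\infty+1$ by the same upper-bound mechanism applied after enlarging $n$. Part (4) is the parity condition: any two non-trivial Kolyvagin classes of different eigenspace type must have $\nu$'s of opposite parity because adjoining a Kolyvagin prime switches the sign via the local trace computation. Finally, for (3), I would feed the non-vanishing Kolyvagin class at level $\nu_\infty$ into Masoero's divisibility induction (the higher-weight version of the ``staircase'' argument of McCallum recalled around Theorem \ref{structure sha}): the divisibilities of the derived classes $c_M(f,n)$ for $\nu(n)>\nu_\infty$ match the $N_i$, and since only the divisible part absorbs the first $\nu_\infty+1$ ``steps'' in the $\varepsilon_\infty$-eigenspace, the torsion invariants $n_i$ of $\mathcal X_\p^\pm$ agree with $N_i$ for all $i>\nu_\infty+1$.

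The main technical obstacle is the \emph{sharpness} statement in (1), namely $r_\p^{\varepsilon_\infty}(f/K)=\nu_\infty+1$ rather than just $\geq$. This requires producing, in the $\varepsilon_\infty$-eigenspace, exactly $\nu_\infty+1$ linearly independent classes in the dual Selmer group that are controlled by a single Kolyvagin class $c_M(f,n)$ with $\nu(n)=\nu_\infty$, and then showing that any further class forces the existence of a non-trivial Kolyvagin class at some $n'$ with $\nu(n')<\nu_\infty$, contradicting minimality. Making this argument rigorous in our setting uses crucially the non-degeneracy of the local Tate pairing at the Kolyvagin primes, which in turn rests on the big-image hypothesis on $\rho_p$ and the residual irreducibility built into Assumption \ref{main-assumption}, so these hypotheses cannot be dropped in the proof even though they are not visible in the statement.
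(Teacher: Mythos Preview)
Your plan for parts (1)--(3) matches the paper's: both defer to the techniques of \cite[\S 7--8]{Masoero} (the paper simply cites \cite[Theorem 8.4]{Masoero} and remarks that one works with $\cO_\p$-module decompositions rather than abelian-group ones), and part (2) is, as you say, a restatement of (1). Your more detailed sketch of the sign computation for the eigenspace of $c_M(f,n)$ and of the staircase induction is an accurate summary of what Masoero actually does.

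The gap is in part (4). What you write---``any two non-trivial Kolyvagin classes of different eigenspace type must have $\nu$'s of opposite parity because adjoining a Kolyvagin prime switches the sign''---is just the tautology that the eigenspace of $c_M(f,n)$ is determined by the parity of $\nu(n)$ via $\varepsilon_n=\varepsilon(f)\cdot(-1)^{\nu(n)+1}$. This says nothing about $r_\p^{-\varepsilon_\infty}(f/K)$ beyond the bound already in (1). The paper's argument is structural and uses an ingredient you do not invoke: by Lemma \ref{lemma9.3}, the elementary divisors of $H^1_f\bigl(K,A_\p[p^M]\bigr)^{-\varepsilon_\infty}$ are those of $\mathcal X_\p^{-\varepsilon_\infty}$ together with $r\defeq r_\p^{-\varepsilon_\infty}(f/K)$ copies of $M$ from the divisible part; by (3) the tail of these (indexed by $i>\nu_\infty+1$) comes in pairs. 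The crucial input is then the existence of a non-degenerate \emph{alternating} Flach--Cassels pairing on $H^1_f\bigl(K,A_\p[p^M]\bigr)^{-\varepsilon_\infty}$ (\cite{Flach}, \cite[\S 6]{Masoero}), which forces the \emph{total} number of elementary divisors to be even and hence $\nu_\infty+r\equiv 0\pmod 2$. Without this pairing (or something equivalent to it), the parity of $\nu_\infty-r_\p^{-\varepsilon_\infty}(f/K)$ cannot be read off from the Kolyvagin classes alone.

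A minor side remark: your appeal to Corollary \ref{upsilon-onto-coro} to ``pin down the corank'' is out of place here, since that corollary assumes the $p$-adic regulator conjecture, which is not among the hypotheses of Theorem \ref{main-vanishing-thm}.
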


\begin{proof} Parts (1) and (3) are consequences of the techniques exploited in the proof of \cite[Theorem 8.4]{Masoero}, using the decomposition as $\cO_\p$-modules rather than as groups; details are left to the reader. Part (2) is a restatement of (1), since $\max\bigl\{r^+_\p(f/K),r^-_\p(f/K)\bigr\}=r_\p^{\varepsilon_\infty}(f/K)$, by (1). By the second statement in (1), it remains to show the congruence in (4). We write $r=r_\p^{-\varepsilon_\infty}(f/K)$ to simplify the notation. As above, consider the decomposition 
\[ H^1_f\bigl(K,A_\p [p^M]\bigr)=H^1_f\bigl(K,A_\p [p^M]\bigr)^+\oplus H^1_f\bigl(K,A_\p [p^M]\bigr)^- \] 
under the action of $\Gal(K/\Q)$. By Lemma \ref{lemma9.3}, the invariants of $H^1_f\bigl(K,A_\p [p^M]\bigr)^{-\varepsilon_\infty}$ are those of $\mathcal{X}_{f,\p}^{-\varepsilon_\infty}$ shifted by $r$ terms all equal to $M$ coming from the divisible subgroup $(F_\p/\cO_\p)^r$ of $\Sel_\p(f/K)^{-\varepsilon_\infty}$. Thus, by (3), the last $\nu_\infty+1+r$ invariants of $H^1_f\bigl(K,A_\p [p^M]\bigr)^{-\varepsilon_\infty}$ are in even number. The existence of a Flach--Cassels pairing on $H^1_f\bigl(K,A_\p [p^M]\bigr)^{-\varepsilon_\infty}$ that is alternating and non-degenerate (\cite{Flach}, \cite[Section 6]{Masoero}) ensures that the total number of the invariants of $H^1_f\bigl(K,A_\p [p^M]\bigr)^{-\varepsilon_\infty}$ is even, and therefore $\nu_\infty+r$ is even. \end{proof}

\section{Parity results} \label{parity-sec}

We prove a $p$-parity result for modular forms (\S \ref{parity-subsec}) and then deduce from it part (1) of Theorem D (\S \ref{thmc1}). We fix throughout a newform $f$ with Hecke field $F$ and a prime $\p$ of $F$ above $p$ satisfying Assumption \ref{main-assumption}. 

\subsection{A $\p$-parity result} \label{parity-subsec}

Let $r_\p(f)$ be defined as in \eqref{r-corank-eq} and, as in \S \ref{Lfunctsec}, let $\varepsilon(f)$ be the root number of $f$. The following is a $\p$-parity result for $f$. 
%
%
%

\begin{theorem}\label{parity} 
$(-1)^{r_\p(f)}=\varepsilon(f)$. 
\end{theorem}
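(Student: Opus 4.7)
The plan is to deduce the $\p$-parity statement from the structure theorem \ref{main-vanishing-thm} (which itself rests on Kolyvagin's conjecture, Theorem \ref{kolyvagin-main-thm}, that holds under Assumption \ref{main-assumption}) combined with the $\Gal(K/\Q)$-eigenspace decomposition of Bloch--Kato Selmer groups over a carefully chosen imaginary quadratic field.

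First I would fix an imaginary quadratic field $K$ in which all the prime factors of $Np$ split. Such a field exists, satisfies the Heegner hypothesis, and falls within the setup of Section \ref{kolyvagin-sec}, so that $\kappa^{(K)}_{f,\p,\infty}$ is defined and Theorem \ref{kolyvagin-main-thm} gives $\kappa^{(K)}_{f,\p,\infty}\neq\{0\}$. In particular, the vanishing order $\nu_\infty$ is a finite non-negative integer and the sign $\varepsilon_\infty\in\{\pm\}$ of \eqref{varepsilon-infty-eq} makes sense. Since $p$ is odd, complex conjugation (generating $\Gal(K/\Q)$) acts semisimply and we obtain the decomposition $H^1_f(K,A_\p)=H^1_f(K,A_\p)^+\oplus H^1_f(K,A_\p)^-$. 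A standard restriction-corestriction argument identifies $H^1_f(K,A_\p)^+\simeq H^1_f(\Q,A_\p)$, while $H^1_f(K,A_\p)^-\simeq H^1_f(\Q,A_{f^K,\p})$ via the Galois twist by $\epsilon_K$. In particular $r^+_\p(f/K)=r_\p(f)$.

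Second I would apply Theorem \ref{main-vanishing-thm}. Part (1) asserts $r^{\varepsilon_\infty}_\p(f/K)=\nu_\infty+1$, while part (4) yields $\nu_\infty-r^{-\varepsilon_\infty}_\p(f/K)\equiv 0\pmod{2}$, i.e.\ $r^{-\varepsilon_\infty}_\p(f/K)\equiv\nu_\infty\pmod{2}$. Unwinding the definition \eqref{varepsilon-infty-eq}, $\varepsilon_\infty=+$ is equivalent to $\varepsilon(f)=(-1)^{\nu_\infty+1}$, whereas $\varepsilon_\infty=-$ is equivalent to $\varepsilon(f)=(-1)^{\nu_\infty}$.

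Third I would split into the two cases and compute $(-1)^{r_\p(f)}$ directly. If $\varepsilon_\infty=+$, then $r^+_\p(f/K)=\nu_\infty+1$ is the ``big'' eigenspace rank, so $r_\p(f)=\nu_\infty+1$ and hence $(-1)^{r_\p(f)}=(-1)^{\nu_\infty+1}=\varepsilon(f)$. If instead $\varepsilon_\infty=-$, then $r^+_\p(f/K)=r^{-\varepsilon_\infty}_\p(f/K)\equiv\nu_\infty\pmod{2}$, so $(-1)^{r_\p(f)}=(-1)^{\nu_\infty}=\varepsilon(f)$. In both cases the $\p$-parity formula $(-1)^{r_\p(f)}=\varepsilon(f)$ holds, completing the proof.

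The substantive obstacle has been dispatched already: it is the structure theorem \ref{main-vanishing-thm}, which in turn hinges on the non-triviality of the Kolyvagin set $\kappa^{(K)}_{f,\p,\infty}$ established in Theorem \ref{kolyvagin-main-thm}. What remains here is essentially a bookkeeping exercise distributing the single ``excess'' unit of rank between the $\pm$-eigenspaces according to $\varepsilon_\infty$ and matching it with the definition of $\varepsilon_\infty$ in terms of $\varepsilon(f)$ and the parity of $\nu_\infty$. One minor point to verify carefully is the identification of eigenspaces with Selmer groups of $f$ and of its quadratic twist $f^K$, which reduces to an exercise in Galois cohomology using that $p\neq 2$; no new input beyond the choice of $K$ is required.
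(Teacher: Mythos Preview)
Your proof is correct and follows essentially the same approach as the paper: choose $K$ with all primes dividing $Np$ split, identify $H^1_f(\Q,A_\p)$ with $H^1_f(K,A_\p)^+$ (hence $r_\p(f)=r^+_\p(f/K)$), and then apply Theorem \ref{main-vanishing-thm}. The paper condenses your case analysis into the phrase ``an easy combinatorial argument,'' but what you have written is exactly that argument spelled out.
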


\begin{proof} Choose an imaginary quadratic field $K$ in which all the prime factors of $Np$ split. Combining the inflation-restriction exact sequence 
\[ \begin{split}
   0\longrightarrow H^1\bigl(\Gal(K/\Q),A_\p (K)\bigr)&\longrightarrow H^1(\Q,A_\p)\\
   &\longrightarrow H^1(K,A_\p)^{\Gal(K/\Q)}
    \longrightarrow H^2\bigl(\Gal(K/\Q),A_\p (K)\bigr)
   \end{split}
  \]
and the triviality of $A_\p(K)$ from \cite[Lemma 2.4, (1)]{LV-kyoto} gives an identification
\begin{equation} \label{K-plus-eq}
H^1(\Q,A_\p)=H^1(K,A_\p)^+.
\end{equation}
By keeping track of local conditions, one can then check that \eqref{K-plus-eq} induces an identification
\begin{equation} \label{plus-selmer-eq}
H^1_f(\Q,A_\p)=H^1_f(K,A_\p)^+. 
\end{equation} 
The desired equality follows from Theorem \ref{main-vanishing-thm} by an easy combinatorial argument. \end{proof}

\begin{remark}
The analogue of Theorem \ref{parity} for a large class of elliptic curves and, more generally, Hilbert modular forms of parallel weight has been proved by Nekov\'a\v{r} (\cite{Nekovar-parity}, \cite{Nekovar-growth}, \cite{Nekovar-parity2}).
\end{remark}

\subsection{Proof of part (1) of Theorem D} \label{thmc1}

Recall the set $\Sigma$ of real (equivalently, complex) embeddings of $F$. For all $\sigma\in\Sigma$, the representation of $G_\Q$ attached to $f$ and $\p$ is equivalent (over $\bar\Q_p$) to the representation of $G_\Q$ attached to $f^\sigma$ and the prime $\sigma(\p)$ of the Hecke field $\sigma(F)$ of $f^\sigma$. In particular, $r_\p(f)=r_{\sigma(\p)}(f^\sigma)$ for all $\sigma\in \Sigma$. Then, by Theorem \ref{parity}, $\varepsilon(f^\sigma)$ is constant as $\sigma$ varies in $\Sigma$, which means, by \eqref{r-parity-eq}, that the parity of $r_\an(f^\sigma)$ is constant as $\sigma$ varies in $\Sigma$. In light of equality \eqref{r-min-eq} and Theorem \ref{parity}, we get the congruence
\[ r_\p(f)\equiv r_\mathrm{an}(\MM)\pmod{2}. \]
Now, by Corollary \ref{upsilon-onto-coro}, $r_\p(f)=r_\alg(\MM)$, and the proof is complete.\hfill\qedsymbol

\section{Converse theorems} \label{converse-sec}

We prove $p$-converse theorems for modular forms (\S \ref{converse-subsec}) and then deduce from them part (2) of Theorem D (\S \ref{thmc2}). We fix throughout a newform $f$ with Hecke field $F$ and a prime $\p$ of $F$ above $p$ satisfying Assumption \ref{main-assumption}. 

\subsection{$\p$-converse theorems} \label{converse-subsec}

For any number field $L$, define the $F_\p$-vector space
\[ X_{\p}(L)\defeq\Lambda_{\p}(L)\otimes_{\Z}\Q=\Lambda_{\p}(L)\otimes_{\cO_\p}\!F_\p. \]

\subsubsection{Results over $K$}

The next result is a higher weight counterpart of the algebraic part of \cite[Theorem 1.3]{zhang-selmer}.

\begin{theorem} \label{main-converse-thm} 
Let $K$ be an imaginary quadratic field in which all the prime factors of $Np$ split. If $r_\p(f/K)=1$, then
\begin{enumerate}
\item $y_{K,\p}$ is not $\cO_\p$-torsion;
\item $\dim_{F_\p}\bigl(X_{\p}(K)\bigr)=1$;
\item $\Sha_\p^{\Nek}(K,\MM)$ is finite.
\end{enumerate}
\end{theorem}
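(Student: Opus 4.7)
The plan is to derive all three statements as a formal consequence of the structure theorem for Selmer groups (Theorem \ref{main-vanishing-thm}), which is available because Kolyvagin's conjecture has been established under Assumption \ref{main-assumption} (Theorem \ref{kolyvagin-main-thm}), together with Nekov\'a\v{r}'s rank--finiteness package (Theorem \ref{nekovar-thm}). The key quantity to control is the vanishing order $\nu_\infty$ of the Kolyvagin set $\kappa_{f,\infty}^{(K)}$ attached to $(f,\p,K)$: once we show $\nu_\infty=0$, the non-triviality of $y_{K,\p}$ will follow immediately, and the remaining statements will be formal.

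First, I would decompose $H^1_f(K,A_\p)$ into its $\pm$-eigenspaces under the action of $\Gal(K/\Q)$, writing
\[
r_\p(f/K)=r_\p^+(f/K)+r_\p^-(f/K)
\]
as in \eqref{r-sum-eq}. The assumption $r_\p(f/K)=1$ forces $\max\{r_\p^+(f/K),r_\p^-(f/K)\}=1$ (one eigenspace has corank $1$, the other has corank $0$). By part (2) of Theorem \ref{main-vanishing-thm}, this gives $\nu_\infty=\max\{r_\p^+(f/K),r_\p^-(f/K)\}-1=0$.

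Next, I would use Proposition \ref{nu-prop} contrapositively: since $\nu_\infty=0<1$, the cycle $y_{K,\p}\in\Lambda_\p(K)$ cannot be $\cO_\p$-torsion, which proves part (1). Alternatively, one can argue directly via Proposition \ref{c_M(1)-prop}: the equality $\nu_\infty=0$ produces some $M\geq1$ with $c_M(f,1)=\iota_{K,M}([y_{K,\p}]_M)\not=0$, so $y_{K,\p}\notin p^M\Lambda_\p(K)$, and in particular $y_{K,\p}$ is not torsion (this also furnishes the explicit quantitative input $M_0=\length_{\cO_\p}(\Lambda_\p(K)/y_{K,\p}\cdot\cO_\p)$ via Proposition \ref{lemma-masoero1}, should it be needed elsewhere).

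Finally, once part (1) is in hand, parts (2) and (3) are read off directly from Theorem \ref{nekovar-thm}: the equality $\Lambda_\p(K)\otimes_\Z\Q=F_\p\cdot y_{K,\p}$ of loc.\ cit.\ (1) gives $\dim_{F_\p}X_\p(K)=1$, and loc.\ cit.\ (2) gives the finiteness of $\Sha_\p^{\Nek}(K,\MM)$. I anticipate no serious obstacle here, since the hard analytic and deformation-theoretic work has already been absorbed into Theorem \ref{kolyvagin-main-thm} (via the Hida-family argument) and the subsequent structural consequences in Theorem \ref{main-vanishing-thm}; the only delicate point to be careful about is the bookkeeping of Galois eigenspaces ensuring that $r_\p(f/K)=1$ really does pin down $\max\{r_\p^+,r_\p^-\}$, but this is forced by the trivial additive relation above.
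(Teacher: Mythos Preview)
Your proposal is correct and follows essentially the same route as the paper: decompose $r_\p(f/K)$ into eigenspace coranks, use Theorem~\ref{main-vanishing-thm}(2) to deduce $\nu_\infty=0$, then conclude that $y_{K,\p}$ is non-torsion and invoke Theorem~\ref{nekovar-thm} for parts (2) and (3). The only cosmetic difference is that the paper argues directly via Proposition~\ref{c_M(1)-prop} and the freeness of $\Lambda_\p(K)$ (Proposition~\ref{no torsion lemma}) rather than the contrapositive of Proposition~\ref{nu-prop}, but you already offer this as your alternative, and the two are equivalent.
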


\begin{proof} If (1) holds, then (2) and (3) follow from Theorem \ref{nekovar-thm}, so we need to show only (1). If $r_\p(f/K)=1$, then equality \eqref{r-sum-eq} implies that
\[ \max\bigl\{r^+_\p(f/K),r^-_\p(f/K)\bigr\}=1. \] 
By part (2) of Theorem \ref{main-vanishing-thm}, this is equivalent to $\nu_\infty=0$, \emph{i.e.}, $c_M(f,1)\not=0$ for some $M\geq1$. On the other hand, Proposition \ref{c_M(1)-prop} says that $c_M(f,1)=\iota_{K,M}\bigl({[y_{K,\p}]}_M\bigr)$, so we surmise that $y_{K,\p}\neq0$. By part (2) of Proposition \ref{no torsion lemma}, the $\cO_\p$-module $\Lambda_\p(K)$ is free of finite rank, hence $y_{K,\p}$ is not torsion, as was to be shown. \end{proof}

For another higher weight converse to the Kolyvagin--Gross--Zagier theorem, the reader is referred to \cite[Theorem 2]{wang}.

Recall that if $K$ is a field as in Theorem \ref{main-converse-thm}, then $r_\an(f/K)\geq1$. The following is a $\p$-converse result over $K$.

\begin{corollary} \label{main-converse-coro}
Let $K$ be an imaginary quadratic field in which all the prime factors of $Np$ split. Assume that ${\langle\cdot,\cdot\rangle}_{\GS}$ is non-degenerate on $\Heeg_{K,N}\otimes_\Z\,\R$. If $r_\p(f/K)=1$, then $r_\an(f/K)=1$.
\end{corollary}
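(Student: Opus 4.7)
The plan is to combine Theorem \ref{main-converse-thm} with the contrapositive of part (2) of Proposition \ref{coro zhang}, using the parity forced by the Heegner hypothesis as a lower bound.

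First, I would recall why $r_\an(f/K)\geq1$ in this situation. This follows from the factorization $L(f/K,s)=L(f,s)\cdot L(f^K,s)$ together with the fact that, under the Heegner hypothesis satisfied by $K$ with respect to $Np$, the sign of the functional equation for $L(f/K,s)$ equals $-1$ (see, e.g., the discussion preceding Theorem \ref{zhang-thm} and the reference to \cite{BFH}). Consequently, $r_\an(f/K)$ is a positive odd integer.

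Next, assuming $r_\p(f/K)=1$, I would apply Theorem \ref{main-converse-thm} to conclude that $y_{K,\p}$ is not $\cO_\p$-torsion. Observe that the hypotheses of Theorem \ref{main-converse-thm} (namely, our running Assumption \ref{main-assumption} on $(f,\p)$ and the splitting condition on $K$) are already in force in the statement of the corollary, so this step is immediate.

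Now I would invoke the contrapositive of part (2) of Proposition \ref{coro zhang}. Since we are assuming that ${\langle\cdot,\cdot\rangle}_{\GS}$ is non-degenerate on $\Heeg_{K,N}\otimes_\Z\R$, and since $y_{K,\p}$ is \emph{not} $\cO_\p$-torsion by the preceding step, part (2) of Proposition \ref{coro zhang} forces $r_\an(f/K)\leq 1$. Combining this upper bound with the lower bound $r_\an(f/K)\geq 1$ from the first step yields $r_\an(f/K)=1$, as desired.

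Because the proof is a direct chaining of already-established results, there is no real obstacle: the only substantive inputs are Theorem \ref{main-converse-thm} (which rests on the $p$-indivisibility of the Kolyvagin system provided by Theorem \ref{kolyvagin-main-thm}, via Theorem \ref{main-vanishing-thm}) and the arithmetic-to-analytic content of part (2) of Proposition \ref{coro zhang}, which in turn relies on Zhang's Gross--Zagier-type formula (Theorem \ref{zhang-thm}) together with the non-degeneracy assumption. The non-degeneracy hypothesis on ${\langle\cdot,\cdot\rangle}_{\GS}$ is therefore essential precisely to pass from ``$y_{K,\p}$ is non-torsion'' to ``$L'(f/K,k/2)\neq 0$''.
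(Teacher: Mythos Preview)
Your proposal is correct and follows essentially the same approach as the paper's proof: apply Theorem \ref{main-converse-thm} to deduce that $y_{K,\p}$ is not $\cO_\p$-torsion, then use the contrapositive of part (2) of Proposition \ref{coro zhang} (together with the non-degeneracy assumption) to conclude $r_\an(f/K)\leq 1$, and combine with the lower bound $r_\an(f/K)\geq 1$ coming from the Heegner hypothesis. The paper's version is simply more terse, omitting the explicit mention of the lower bound (which it records just before the corollary statement).
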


\begin{proof} By part (1) of Theorem \ref{main-converse-thm}, $y_{K,\p}$ is not $\cO_\p$-torsion. Since we are assuming that ${\langle\cdot,\cdot\rangle}_{\GS}$ is non-degenerate on $\Heeg_{K,N}\otimes_\Z\,\R$, the claim follows from part (2) of Proposition \ref{coro zhang}. \end{proof}

Focusing now on the case where the base field is $\Q$, we can prove an analogue in higher weight of the algebraic part of \cite[Theorem 1.4, (i)]{zhang-selmer}.

\begin{theorem} \label{main-selmer-thm} 
If $r_\p(f)=1$, then 
\begin{enumerate}
\item $\dim_{F_\p}\bigl(X_\p(\Q)\bigr)=1$;
\item $\Sha_\p^{\Nek}(\Q,\MM)$ is finite.
\end{enumerate}
\end{theorem}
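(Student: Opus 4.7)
The plan is to deduce the theorem from the analogous statement over an auxiliary imaginary quadratic field (Theorem \ref{main-converse-thm}) by a descent argument that exploits the $\Gal(K/\Q)$-eigenspace decomposition of Selmer groups and the action of complex conjugation on Heegner cycles. Since $r_\p(f)=1$ is odd, the parity theorem (Theorem \ref{parity}) gives $\varepsilon(f)=-1$. By the results of Bump--Friedberg--Hoffstein and Waldspurger recalled in \S\ref{imaginary-1-subsubsec}, the set $\mathscr{I}_1(f,p)$ is non-empty, so I can fix an imaginary quadratic field $K\in\mathscr{I}_1(f,p)$ in which all prime divisors of $Np$ split and for which $r_\an(f^K)=0$.

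The next step is to obtain $r_\p(f/K)=1$. Under appropriate hypotheses (existence of some $K'\in\mathscr{I}_0(f^K,p)$ with $\AJ_{f^K,K',\p}$ injective on $\Heeg_{K',N}^{\mathcal{G}'_1}$ for some $\p\mid p$, and the $p$-part of Conjecture \ref{regpconj} for $\MM(f^K)$ over $\Q$), Theorem \ref{skinner-urban-thm} applied to $\MM(f^K)$ gives $r_\p(f^K)=0$, so $H^1_f(\Q,A_\p^K)$ is finite. Using the identification $H^1_f(\Q,A_\p)=H^1_f(K,A_\p)^+$ from \eqref{plus-selmer-eq} together with the analogous identification $H^1_f(K,A_\p)^-\simeq H^1_f(\Q,A_\p^K)$ for the quadratic twist, I conclude $r_\p(f/K)=r_\p(f)+r_\p(f^K)=1$. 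Theorem \ref{main-converse-thm} then yields that $y_{K,\p}$ is not $\cO_\p$-torsion, $\dim_{F_\p}X_\p(K)=1$, and $\Sha_\p^{\Nek}(K,\MM)$ is finite.

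To descend part (1) to $\Q$, I use that $X_\p(K)=F_\p\cdot y_{K,\p}$. An analysis of the action of complex conjugation on the Heegner cycle $y_{K,\p}$ (carried out, for instance, in the proof of \cite[Theorem 5.26]{Vigni} and used in the proof of Theorem \ref{nekovar-thm2}) shows that modulo $\cO_\p$-torsion, $\tau(y_{K,\p})=-\varepsilon(f)\cdot y_{K,\p}$; with $\varepsilon(f)=-1$ this means $y_{K,\p}\in X_\p(K)^+$, and hence $X_\p(K)=X_\p(K)^+$. By Proposition \ref{res-iso-prop}, restriction induces an isomorphism $X_\p(\Q)\overset{\simeq}{\longrightarrow}X_\p(K)^{\Gal(K/\Q)}=X_\p(K)^+$, yielding $\dim_{F_\p}X_\p(\Q)=1$. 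Part (2) then follows by taking $\cO_\p$-coranks in the short exact sequence \eqref{selmer-nek-eq} for $L=\Q$: the term $\Lambda_\p(\Q)\otimes_{\cO_\p}(F_\p/\cO_\p)$ has corank $\dim X_\p(\Q)=1$, while $H^1_f(\Q,A_\p)$ has corank $r_\p(f)=1$, so $\Sha_\p^{\Nek}(\Q,\MM)$ has corank $0$ and is therefore finite.

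The main obstacle is the application of the rank-zero Skinner--Urban theorem to the twist $f^K$: this is what forces the additional assumptions on Abel--Jacobi injectivity and on the regulator for $\MM(f^K)$ (these are part of the ``suitable assumptions'' of Theorem D and would naturally be collected in the formal statement). A secondary point is the sign computation for $\tau$ acting on $y_{K,\p}$, which is routine but crucial because it is precisely the match between the Galois-fixed part of $X_\p(K)$ and the non-vanishing of $X_\p(\Q)$ that makes the descent work.
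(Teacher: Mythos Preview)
Your overall structure matches the paper's proof, but there is one genuine inefficiency that weakens the result unnecessarily. To obtain $r_\p(f^K)=0$ you invoke Theorem \ref{skinner-urban-thm}, which carries hypotheses on Abel--Jacobi injectivity for $f^K$ and on the $p$-adic regulator for $\MM(f^K)$. You then say these extra assumptions ``would naturally be collected in the formal statement.'' They should not be: all you need at this step is the \emph{finiteness} of $H^1_f(\Q,A_\p^K)$, and this follows directly from Kato's theorem (Theorem \ref{kato-thm}) applied to $f^K$, since $r_\an(f^K)=0$. Kato's result requires none of those hypotheses, so the theorem holds as stated with no additional conditions. This is exactly what the paper does.

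Your descent from $K$ to $\Q$ is fine and is essentially the content of the references the paper cites (\cite[Theorem 5.26 and Proposition 5.25]{Vigni}): the eigenvalue computation for $\tau$ on $y_{K,\p}$, the identification $X_\p(\Q)\simeq X_\p(K)^+$ via restriction (Proposition \ref{res-iso-prop}), and the corank count in the exact sequence \eqref{selmer-nek-eq} are the right ingredients. So once you replace the appeal to Theorem \ref{skinner-urban-thm} by the direct appeal to Theorem \ref{kato-thm}, your argument coincides with the paper's.
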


\begin{proof} Since $r_\p(f)=1$, it follows from Theorem \ref{parity} that $\varepsilon(f)=-1$. Choose an imaginary quadratic field $K$ such that 
\begin{itemize}
\item all the prime factors of $Np$ split in $K$;
\item $r_\an(f^K)=0$.
\end{itemize} 
The existence of such a $K$ is guaranteed by \cite[p. 543, Theorem, (ii)]{BFH}. Let $A_\p^K$ be the analogue for $f^K$ of the $\cO_\p$-module $A_\p$ associated with $f$. By Theorem \ref{kato-thm} with $f^K$ in place of $f$, the Selmer group $H^1_f\bigl(\Q,A^K_\p\bigr)$ is finite, so $r_\p(f^K)=0$. It can be checked (see, \emph{e.g.}, the proof of \cite[Proposition 6.2]{LV-Pisa}) that there is a canonical identification
\begin{equation} \label{minus-selmer-eq}
H^1_f\bigl(\Q,A^K_\p\bigr)=H^1_f(K,A_\p)^-.
\end{equation}
Combining \eqref{r-sum-eq}, \eqref{plus-selmer-eq} and \eqref{minus-selmer-eq}, we obtain
\begin{equation} \label{r-f-f^K-eq}
r_\p(f/K)=r_\p(f)+r_\p(f^K)=1, 
\end{equation}
and then Theorem \ref{main-converse-thm} ensures that $\dim_{F_\p}\bigl(X_{f,\p}(K)\bigr)=1$ and $\Sha_\p^{\Nek}(K,\MM)$ is finite. Finally, part (1) and part (2) of the theorem follow from \cite[Theorem 5.26]{Vigni} and \cite[Proposition 5.25]{Vigni}, respectively. \end{proof}

\subsubsection{Assumption {\normalfont (\texttt{GS})} and results over $\Q$} \label{p-converse-Q-subsubsec}

To complete the picture, we prove a $\p$-converse result over $\Q$ that can be regarded as a higher weight counterpart of \cite[Theorem A]{Skinner}, \cite[Theorem A]{Ven} and \cite[Theorem 1.4, (i)]{zhang-selmer}. To do this, with notation as in \S \ref{imaginary-1-subsubsec}, we need to introduce hypotheses concerning, in particular, the non-degeneracy of the Gillet--Soul\'e height pairings:
\begin{itemize}
\item[(\texttt{GS})] there is $K\in\mathscr I_1(f,p)$ such that ${\langle\cdot,\cdot\rangle}_{\GS}$ is non-degenerate on $\Heeg_{K,N}\otimes_\Z\,\R$.
\end{itemize}

\begin{theorem} \label{main-converse-Q-thm}
If $r_\p(f)=1$ and $(\mathtt{GS})$ holds, then $r_\an(f)=1$.
\end{theorem}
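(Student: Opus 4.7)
The plan is to leverage the already-established $\p$-converse theorem over $K$ (Theorem~\ref{main-converse-thm}) together with the contrapositive of the analytic statement in part~(2) of Proposition~\ref{coro zhang}, which is precisely where hypothesis (\texttt{GS}) enters. The strategy is to transfer the rank information from $\Q$ to a well-chosen imaginary quadratic field $K$, conclude the non-triviality of the Heegner cycle $y_{K,\p}$, and then use Zhang's Gross--Zagier formula (in the guise of Proposition~\ref{coro zhang}) to force the analytic rank over $K$ to be exactly $1$.

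More precisely, I would proceed as follows. First, pick $K\in\mathscr I_1(f,p)$ such that the Gillet--Soul\'e height pairing is non-degenerate on $\Heeg_{K,N}\otimes_\Z\R$, as provided by (\texttt{GS}); by definition of $\mathscr I_1(f,p)$, the primes dividing $Np$ split in $K$ and $r_\an(f^K)=0$. Applying Theorem~\ref{kato-thm} to the twisted newform $f^K$ (which still satisfies Assumption~\ref{main-assumption}, since its Galois representation is the twist of $\rho_\p$ by $\epsilon_K$), the finiteness of $H^1_f\bigl(\Q,A^K_\p\bigr)$ yields $r_\p(f^K)=0$. Combining the identifications \eqref{plus-selmer-eq} and \eqref{minus-selmer-eq} of the $\pm$-eigenspaces of $H^1_f(K,A_\p)$ with the Selmer groups of $f$ and $f^K$ over $\Q$, as in \eqref{r-f-f^K-eq}, gives
\[
r_\p(f/K)\;=\;r_\p(f)+r_\p(f^K)\;=\;1+0\;=\;1.
\]

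Now I invoke part~(1) of Theorem~\ref{main-converse-thm}: since $r_\p(f/K)=1$, the Heegner cycle $y_{K,\p}\in\Lambda_\p(K)$ is not $\cO_\p$-torsion. At this point, the contrapositive of part~(2) of Proposition~\ref{coro zhang}, which applies thanks to (\texttt{GS}), forces $r_\an(f/K)\leq 1$. On the other hand, the Heegner hypothesis for $K$ implies via the factorization \eqref{splitting-L-eq2} that $r_\an(f/K)\geq 1$, and combining with $r_\an(f^K)=0$ we get
\[
r_\an(f)\;=\;r_\an(f/K)-r_\an(f^K)\;=\;1,
\]
as desired.

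The argument is essentially a short chain of implications: the only genuinely non-formal ingredients are the higher-weight Kolyvagin machinery encoded in Theorem~\ref{main-converse-thm} (which itself rests on the proof of Kolyvagin's conjecture in higher weight, Theorem~\ref{kolyvagin-main-thm}) and the analytic input from Zhang's Gross--Zagier formula packaged in Proposition~\ref{coro zhang}. The main subtlety, such as it is, lies in the role played by (\texttt{GS}): non-degeneracy of the Gillet--Soul\'e pairing is exactly what is needed to deduce $r_\an(f/K)\leq 1$ from the non-vanishing of $y_{K,\p}$, since without this hypothesis a non-torsion Heegner cycle could in principle be compatible with an arbitrarily large analytic rank. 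Everything else is bookkeeping involving parity, twists, and the standard decomposition of Selmer and $L$-functions over quadratic extensions.
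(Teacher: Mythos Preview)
Your proof is correct and follows essentially the same route as the paper's: the paper cites Corollary~\ref{main-converse-coro} directly to pass from $r_\p(f/K)=1$ to $r_\an(f/K)=1$, whereas you inline that corollary's two-line argument (non-torsion of $y_{K,\p}$ via Theorem~\ref{main-converse-thm}, then the contrapositive of Proposition~\ref{coro zhang}(2)). The remaining steps---choice of $K\in\mathscr I_1(f,p)$, Kato's theorem for $f^K$, the Selmer decomposition~\eqref{r-f-f^K-eq}, and the $L$-function factorization---are identical.
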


\begin{proof} On the one hand, Theorem \ref{kato-thm} gives $r_\p(f^K)=0$, and then $r_\p(f/K)=1$ by the first equality in \eqref{r-f-f^K-eq}. Therefore, we can apply Corollary \ref{main-converse-coro} to deduce that $r_\an(f/K)=1$. On the other hand, factorization \eqref{splitting-L-eq2} yields the equality 
\[ r_\an(f/K)=r_\an(f)+r_\an(f^K), \]
whence $r_\an(f)=1$. \end{proof}

\begin{remark} \label{K-rem}
If we knew that Gillet--Soul\'e height pairings are non-degenerate (at least on the $\R$-vector space $\Heeg_{K,N}\otimes_\Z\,\R$, or in full generality, as predicted by the conjectures in \cite{Beilinson}, \cite{bloch-height}, \cite{GS-2}), then Corollary \ref{main-converse-coro} and Theorem \ref{main-converse-Q-thm} would become unconditional. Unfortunately, non-degeneracy results of this kind appear to lie well beyond the scope of currently available techniques.
\end{remark}

\begin{remark}
Recently, Burungale and Tian proved a $p$-converse result for CM elliptic curves over $\Q$ at good ordinary primes $p$ (\cite[Theorem 1.2]{BT}). It would be desirable to obtain $\p$-converse theorems (possibly conditional, like Theorem \ref{main-converse-Q-thm}, on the non-degeneracy of suitable height pairings) for higher weight CM newforms.
\end{remark}

\subsection{Proof of part (2) of Theorem D} \label{thmc2} 

Recall that we are assuming that
\begin{itemize}
\item Conjecture \ref{regpconj} holds true for $p$ and $\Q$;
\item condition (\texttt{GS}) from \S \ref{p-converse-Q-subsubsec} is satisfied.
\end{itemize}
With notation as above, if $r_\alg(\MM)=1$, then $r_\p(f)=1$ by Corollary \ref{upsilon-onto-coro} (with $\star=\p$), so Theorem \ref{main-converse-Q-thm} gives $r_\an(f)=1$. By Lemma \ref{low-rank-lemma}, $r_\an(\MM)=1$, as desired.\hfill\qedsymbol

\section{Higher rank results} \label{higher-sec}

In this final section, we collect higher rank results for $f$ and its motive $\MM$. As before, we require throughout that $f$ and the prime $\p$ of $F$ above $p$ satisfy Assumption \ref{main-assumption}. 

\subsection{Higher rank results for $f$} \label{higher-subsec}

We begin with results on $f$, in particular on the invariant $r_\p(f)$ from \eqref{r-corank-eq}. We use notation from \S \ref{imaginary-0-subsubsec}.

\subsubsection{Assumption {\normalfont (\texttt{reg})}}

According to the sign of the root number of $f$, we will need to assume one of two different sets of hypotheses. The first is (\texttt{GS}) from \S \ref{p-converse-Q-subsubsec}, whereas the second takes care, in addition, of the injectivity (at least on Heegner modules) of $\p$-adic regulators over imaginary quadratic fields:
\begin{itemize}
\item[(\texttt{reg})] there is $K\in\mathscr I_0(f,p)$, of discriminant $D_K$, such that 
\begin{itemize}
\item ${\langle\cdot,\cdot\rangle}_{\GS}$ is non-degenerate on $\Heeg_{K,N}\otimes_\Z\,\R$,
\item $\reg_{f^K,K'_1,\p}$ is injective on $\Heeg_{K',ND_K^2}$ for every imaginary quadratic field $K'$ in which all the prime factors of $ND_Kp$ split.
\end{itemize}
\end{itemize}
Here $\reg_{f^K,K'_1,\p}$ is the counterpart of the $\p$-adic regulator $\reg_{K'_1,\p}$ relative to the motive of the twist $f^K$ of $f$. As before (\emph{cf.} Remark \ref{K-rem}), note that the results in \cite{BFH} guarantee that if $\varepsilon(f)=-1$, then there is always an imaginary quadratic field satisfying the first two conditions in (\texttt{reg}). 

\begin{remark}
As will become clear, the reason why in (\texttt{reg}) we consider $\Heeg_{K',ND_K^2}$ is that, since $(N,D_K)=1$, the level of $f^K$ is $ND_K^2$.
\end{remark}

\subsubsection{Higher rank results for $f$}

The following result is a higher weight analogue of \cite[Theorem 1.4, (ii)]{zhang-selmer}; in \S \ref{3-4-subsec} we shall deduce from it analogous results for $\MM$.

\begin{theorem} \label{main-higher-rank} 
Assume that either
\begin{itemize}
\item $\varepsilon(f)=-1$ and $(\mathtt{GS})$ holds
\end{itemize}
or
\begin{itemize}
\item $\varepsilon(f)=+1$ and $(\mathtt{reg})$ holds.
\end{itemize}
If $r_\an(f)>1$, then
\[ r_\p(f)\in\biggl\{2n+\frac{1-\varepsilon(f)}{2}\;\,\Big|\;\,n\in\Z_{\geq1}\biggr\}. \]
In particular, $r_\p(f)\geq\displaystyle{\frac{5-\varepsilon(f)}{2}}$.
\end{theorem}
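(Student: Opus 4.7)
The strategy combines Theorem \ref{parity} with the converse-type machinery built from Kolyvagin's conjecture. By Theorem \ref{parity}, $r_\p(f)\equiv(1-\varepsilon(f))/2\pmod 2$, so it suffices to rule out the two minimal values: $r_\p(f)=1$ when $\varepsilon(f)=-1$, and $r_\p(f)=0$ when $\varepsilon(f)=+1$.

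The case $\varepsilon(f)=-1$ is essentially immediate. By \eqref{r-parity-eq}, $r_\an(f)$ is odd, so $r_\an(f)>1$ forces $r_\an(f)\geq 3$. If one had $r_\p(f)=1$, then Theorem \ref{main-converse-Q-thm} (applicable under Assumption \ref{main-assumption} together with $(\texttt{GS})$) would yield $r_\an(f)=1$, contradicting $r_\an(f)\geq 3$. Hence $r_\p(f)\in\{3,5,7,\dots\}$, as required.

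The case $\varepsilon(f)=+1$ is more delicate: parity leaves $r_\p(f)\in\{0,2,4,\dots\}$, and no direct converse argument excludes $r_\p(f)=0$. The idea is to pass through the twist $f^K$. Pick $K\in\mathscr I_0(f,p)$ as furnished by $(\texttt{reg})$: then $r_\an(f^K)=1$, hence $r_\an(f/K)=r_\an(f)+1\geq 3>1$. The non-degeneracy clause of $(\texttt{reg})$ together with Proposition \ref{coro zhang}(2) force $y_{K,\p}$ to be $\cO_\p$-torsion, whence $\nu_\infty\geq 1$ by Proposition \ref{nu-prop}. Theorem \ref{main-vanishing-thm}(1), combined with the identifications $r_\p^+(f/K)=r_\p(f)$ and $r_\p^-(f/K)=r_\p(f^K)$ coming from \eqref{plus-selmer-eq}--\eqref{minus-selmer-eq}, yields the key equality
\begin{equation*}
\max\bigl\{r_\p(f),\,r_\p(f^K)\bigr\}\;=\;\nu_\infty+1\;\geq\;2.
\end{equation*}

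To finish, one must show that this maximum is realised by $r_\p(f)$ rather than by $r_\p(f^K)$, which is precisely where the injectivity clause of $(\texttt{reg})$ intervenes: we will pin down $r_\p(f^K)$ independently. Since $\varepsilon(f^K)=-1$, \cite[Theorem, (ii)]{BFH} applied to $f^K$ produces an imaginary quadratic field $K'$ in which all primes of $ND_K^2p$ split and with $r_\an((f^K)^{K'})=0$; in particular $r_\an(f^K/K')=1$. The injectivity of $\reg_{f^K,K'_1,\p}$ on $\Heeg_{K',ND_K^2}$ guaranteed by $(\texttt{reg})$ enables the counterpart of Proposition \ref{coro zhang}(1) for $f^K$ and $K'$, showing that the Heegner cycle attached to $f^K$ and $K'$ is not $\cO_\p$-torsion. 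Nekov\'a\v{r}'s Theorem \ref{nekovar-thm} then gives $r_\p(f^K/K')=1$, and combining with Kato's Theorem \ref{kato-thm} applied to $(f^K)^{K'}$ through the analogue of the Selmer decomposition \eqref{plus-selmer-eq}--\eqref{minus-selmer-eq} forces $r_\p(f^K)=1$. Feeding this back into the displayed equality gives $r_\p(f)=\nu_\infty+1\geq 2$, completing the proof. The hardest step is precisely this coupling with the twist: controlling $r_\p(f^K)$ demands an \emph{auxiliary} quadratic field $K'$ where the Heegner cycle attached to $f^K$ is provably non-torsion, and this is why $(\texttt{reg})$ must require injectivity of $\reg_{f^K,K'_1,\p}$ uniformly over all admissible $K'$ rather than merely for a single one.
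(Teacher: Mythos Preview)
Your proof is correct and follows essentially the same route as the paper's: parity plus the converse theorem handle the $\varepsilon(f)=-1$ case, and in the $\varepsilon(f)=+1$ case you pass through $K\in\mathscr I_0(f,p)$, use non-degeneracy and Proposition \ref{coro zhang}(2) to get $\nu_\infty\geq 1$, then pin down $r_\p(f^K)=1$ via an auxiliary $K'$ and Nekov\'a\v{r}'s theorem. The only cosmetic difference is in the last descent step: the paper deduces $r_\p(f^K)=1$ from $r_\p(f^K/K')=1$ by citing a complex-conjugation analysis (as in \cite[Theorem 5.27]{Vigni}), whereas you obtain it by applying Kato's Theorem \ref{kato-thm} to $(f^K)^{K'}$ together with the Selmer decomposition \eqref{plus-selmer-eq}--\eqref{minus-selmer-eq}; both arguments are valid and amount to the same thing.
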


\begin{proof} We need to show that 
\begin{itemize}
\item $r_\p(f)\geq 3$ if $\varepsilon(f)=-1$, 
\item $r_\p(f)\geq 2$ if $\varepsilon(f)=+1$. 
\end{itemize}
Assume first that $\varepsilon(f)=-1$ and (\texttt{GS}) holds. By Theorem \ref{parity}, $r_\p(f)$ is odd. If $r_\p(f)=1$, then $r_\an(f)=1$ by Theorem \ref{main-converse-Q-thm}, so $r_\p(f)\in\{2n+1\mid n\geq1\}$.

Assume now that $\varepsilon(f)=+1$ and (\texttt{reg}) holds; recall the vanishing order $\nu_\infty$ of $\kappa_{f,\infty}$ introduced in Definition \ref{vanishing-def}. By Theorem \ref{parity}, $r_\p(f)$ is even. Choose an imaginary quadratic field $K$ satisfying (\texttt{reg}). Then $r_\an(f/K)>1$, so part (2) of Proposition \ref{coro zhang} guarantees that $y_{K,\p}$ is $\cO_\p$-torsion. It follows from Proposition \ref{nu-prop} that $\nu_\infty\geq1$; equivalently, by part (2) of Theorem \ref{main-vanishing-thm}, we obtain
\begin{equation} \label{r-higher-eq}
\max\bigl\{r^+_\p(f/K),r^-_\p(f/K)\bigr\}\geq2. 
\end{equation}
On the other hand, since $r_\an(f^K)=1$, by \cite[p. 543, Theorem, (ii)]{BFH} there exists an imaginary quadratic field $K'$ (which we fix) such that
\begin{itemize}
\item all the prime factors of $ND_Kp$ split in $K'$;
\item $r_\an(f^K/K')=1$.
\end{itemize}
Using the injectivity of $\reg_{f^K,K'_1,\p}$ on $\Heeg_{K,ND_K^2}$, we apply Theorem \ref{nekovar-thm} to $f^K$ and obtain, in particular, $r_\p(f^K/K')=1$. Finally, reasoning as, \emph{e.g.}, in the proof of \cite[Theorem 5.27]{Vigni}, we get $r_\p(f^K)=1$, and then the equality $r_\p^-(f/K)=r_\p(f^K)$ combined with \eqref{r-higher-eq} gives $r_\p(f)=r^+_\p(f/K)\in\{2n\mid n\geq1\}$. \end{proof}

\begin{remark}
We sketch an alternative approach to the $\varepsilon(f)=+1$ part of Theorem \ref{main-higher-rank} that does not use the injectivity on $\Heeg_{K,ND_K^2}$ of the $\p$-adic regulator, but relies instead on certain conjectural (non-)vanishing properties of $p$-adic $L$-functions of modular forms. Namely, let $g$ be a weight $k\geq4$ newform on $\Gamma_0(M)$ with $p\nmid M$ and let $L_p(g,s)$ be the $p$-adic $L$-function of $g$ in the sense of Mazur--Tate--Teitelbaum (\cite[Ch. I, \S 13]{MTT}; for simplicity, we suppress dependence of $L_p(g,s)$ on the ``allowable $p$-root for $g$'', \emph{cf.} \cite[Ch. I, \S 12]{MTT}). We normalize $L_p(g,s)$ as in \cite[p. 430]{GS}. It is conjectured in \cite[Ch. I, \S 16]{MTT} that, in our non-exceptional setting (\emph{cf.} \cite[Ch. I, \S 15]{MTT}), $\ord_{s=k/2}L_p(g,s)=r_\an(g)$. Here we just assume the following implication:
\begin{equation} \label{implication-eq}
r_\an(g)=1\;\Longrightarrow\;\ord_{s=\frac{k}{2}}L_p(g,s)=1.
\end{equation}
With notation as in the proof of Theorem \ref{main-higher-rank}, rather than using the last condition in (\texttt{reg}), we combine the equality $r_\an(f^K)=1$ and \eqref{implication-eq} to get $\ord_{s=k/2}L_p(f^K,s)=1$. Then, keeping Corollary \ref{upsilon-onto-coro} in mind, \cite[Theorem C, (2)]{Nek2} gives $r_\p(f^K)=1$, and by \eqref{r-higher-eq} we conclude that $r_\p(f)\in\{2n\mid n\geq1\}$.
\end{remark}

\subsection{Proof of parts (3) and (4) of Theorem D}  \label{3-4-subsec}

For the sake of clarity, we restate the result we want to prove on the motive $\MM$.

\begin{theorem} \label{main-higher-rank-motive}
Assume that either
\begin{itemize}
\item $r_\an(\MM)$ is odd and $(\mathtt{GS})$ holds
\end{itemize}
or
\begin{itemize}
\item $r_\an(\MM)$ is even and $(\mathtt{reg})$ holds.
\end{itemize}
Furthermore, assume that Conjecture \ref{regpconj} holds true for $p$ and $\Q$.

If $r_\an(\MM)>1$, then
\[ r_\alg(\MM)\in\Biggl\{2n+\frac{1-(-1)^{r_\an(\MM)}}{2}\;\,\Big|\;\,n\in\Z_{\geq1}\Biggr\}. \]
In particular, $r_\alg(\MM)\geq\displaystyle{\frac{5-(-1)^{r_\an(\MM)}}{2}}$.
\end{theorem}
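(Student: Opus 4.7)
The plan is to reduce Theorem \ref{main-higher-rank-motive} to its newform counterpart, Theorem \ref{main-higher-rank}, which already encapsulates the hard analytic and algebraic input (Gross--Zagier-type formula, Euler system machinery, and the Kolyvagin conjecture proved in Theorem \ref{kolyvagin-main-thm}). The reduction has two ingredients: identifying $r_\alg(\MM)$ with the $\cO_\p$-corank of the Bloch--Kato Selmer group of $f$ over $\Q$, and matching the parity of $r_\an(\MM)$ with the root number $\varepsilon(f)$.

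First I would verify that $r_\alg(\MM) = r_\p(f)$ under the standing assumption that the $p$-part of Conjecture \ref{regpconj} over $\Q$ holds. Indeed, by Corollary \ref{upsilon-onto-coro} applied with $\star = \p$, the Bloch--Kato Selmer group satisfies $\corank_{\cO_\p} H^1_f(\Q,A_\p) = r_\alg(\MM)$, so the definition \eqref{r-corank-eq} gives $r_\p(f) = r_\alg(\MM)$.

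Next I would establish the parity dictionary $r_\an(\MM) \equiv r_\an(f) \pmod 2$, so that $r_\an(\MM)$ is odd (respectively, even) exactly when $\varepsilon(f) = -1$ (respectively, $\varepsilon(f) = +1$). The point is that the root number is Galois-invariant across conjugates, i.e.\ $\varepsilon(f^\sigma) = \varepsilon(f)$ for all $\sigma \in \Sigma$; combined with \eqref{r-parity-eq}, this yields
\[
r_\an(f^\sigma) \equiv \frac{1-\varepsilon(f)}{2} \pmod 2 \qquad \text{for every } \sigma\in\Sigma.
\]
Hence the integers $r_\an(f^\sigma)$ all share a common parity, which is then inherited by their minimum $r_\an(\MM)$ via \eqref{r-min-eq}.

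Finally, since $r_\an(\MM) > 1$ forces $r_\an(f^\sigma) \geq r_\an(\MM) > 1$ for every $\sigma$ (and in particular $r_\an(f) > 1$), and since under the parity equivalence the hypothesis (\texttt{GS}) (respectively, (\texttt{reg})) imposed in the odd (respectively, even) case of Theorem \ref{main-higher-rank-motive} is exactly the hypothesis imposed in the $\varepsilon(f) = -1$ (respectively, $\varepsilon(f) = +1$) case of Theorem \ref{main-higher-rank}, that theorem applies and delivers
\[
r_\p(f) \in \biggl\{2n + \frac{1-\varepsilon(f)}{2} \;\Big|\; n\in\Z_{\geq 1}\biggr\}.
\]
Substituting $r_\p(f) = r_\alg(\MM)$ and $\varepsilon(f) = (-1)^{r_\an(\MM)}$ then yields the statement, including the lower bound $r_\alg(\MM) \geq \frac{5-(-1)^{r_\an(\MM)}}{2}$.

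There is no serious obstacle here: the result is essentially a translation from newform language to motivic language. The only delicate step is the parity argument, and even there one has a second route via the $p$-parity theorem (Theorem \ref{parity}) applied to each conjugate $f^\sigma$, which forces $(-1)^{r_{\sigma(\p)}(f^\sigma)} = \varepsilon(f^\sigma)$; since $r_{\sigma(\p)}(f^\sigma) = r_\alg(\MM)$ is independent of $\sigma$, this gives the Galois invariance of $\varepsilon(f^\sigma)$ directly from the arithmetic side.
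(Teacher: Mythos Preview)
Your proposal is correct and follows essentially the same route as the paper: reduce to Theorem \ref{main-higher-rank} via the identifications $r_\alg(\MM)=r_\p(f)$ (from Corollary \ref{upsilon-onto-coro}) and $(-1)^{r_\an(\MM)}=\varepsilon(f)$, together with the observation that $r_\an(\MM)>1$ forces $r_\an(f)>1$. The only minor difference is that the paper justifies the parity step by invoking \S\ref{thmc1} (which deduces the Galois-invariance of $\varepsilon(f^\sigma)$ from the $\p$-parity Theorem \ref{parity}, your ``second route''), rather than taking this invariance as a known classical fact.
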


\begin{proof} To begin with, note that, by Lemma \ref{low-rank-lemma}, $r_\mathrm{an}(\MM)>1$ if and only if $r_\mathrm{an}(f)>1$. Moreover, recall from \S \ref{thmc1} that $r_\an(\MM)$ and $r_\an(f)$ have the same parity; in other words, $(-1)^{r_\an(\MM)}=\varepsilon(f)$. Finally, since we are assuming Conjecture \ref{regpconj} for $p$ and $\Q$, Corollary \ref{upsilon-onto-coro} gives $r_\alg(\MM)=r_\p(f)$, and then the desired result follows from Theorem \ref{main-higher-rank}. \end{proof}

\appendix

\section{Determinants of projective modules} \label{determinants-subsec}

We sketch the basic elements of the theory of determinants of projective modules over commutative rings, putting a focus on modules over (products of finitely many) principal ideal domains. In doing so, we follow \cite[\S 2.1]{Kato-Iwasawa} and \cite[Lecture 1, \S 5]{Kings} quite closely.

\subsection{Determinants over commutative rings}

Let $R$ be a commutative ring. Denote by $\mathcal{L}_R$ the category of isomorphism classes of graded invertible $R$-modules: the objects of $\mathcal L_R$ are pairs $(L,r)$ consisting of an invertible (\emph{i.e.}, projective, rank $1$) $R$-module $L$ and a function $r:\Spec(R)\rightarrow\Z$, which is to be thought of as a grading, that is locally constant for the Zariski topology, while morphisms between two objects $(L,r)$ and $(M,s)$ are trivial if $r\neq s$ and isomorphisms of $R$-modules $L\rightarrow M$ otherwise. One defines a product in $\mathcal{L}_R$ by 
\begin{equation} \label{product-invertible-eq}
(L,r)\cdot(M,s)\defeq(L\otimes_RM,r+s), 
\end{equation}
and then $(R,0)$ is the unit object for this product. The inverse of a pair $(L,r)$ is $(L,r)^{-1}\defeq (L^*,-r)$, where $L^*\defeq \Hom_R(L,R)$ is the $R$-linear dual of $L$; more precisely, there is a canonical isomorphism
\[ (L,r)\cdot(L^*,-r)\simeq(R,0) \]
induced by the usual evaluation map $L\otimes_R\Hom_R(L,R)\rightarrow R$. The monoidal category $\mathcal L_R$ is equipped with a modified commutativity constraint involving a sign that depends on the grading (see, \emph{e.g.}, \cite[\S 2.5]{BF}, \cite[Definition 1.27]{Kings}). If $f:R\rightarrow R'$ is a ring homomorphism, then one defines a functor 
\[ \mathcal{L}_R\longrightarrow\mathcal{L}_{R'} \]  
by the recipe $(L,r)\mapsto (L\otimes_{R}R',r\circ f^*)$ on objects and in the obvious way (\emph{i.e.}, by extension of scalars) on morphisms, where $f^*:\Spec(R')\rightarrow\Spec(R)$ is the map induced by $f$ by (contravariant) functoriality. 

Let us write $\Proj_R^\mathrm{fg}$ for the category of finitely generated, projective $R$-modules. For every object $M$ of $\Proj_R^\mathrm{fg}$ let 
\[ \rk_R(M):\Spec(R)\longrightarrow\Z \]
be the rank function attached to $M$, which maps $\p$ to $\rk_{R_\p}(M_\p)$ and is locally constant with respect  to the Zariski topology on $\Spec(R)$ (see, \emph{e.g.}, \cite[Ch. I, Corollary 2.2.2]{Weibel}); notice that $M_\p$ is projective, hence free, of finite rank over the local ring $R_\p$. If $\rk_R(M)$ is constant, then $\wedge_R^{\rk_R(M)}M$ denotes the usual $\rk_R(M)$-th exterior power of $M$ over $R$. If $\rk_R(M)$ is not constant, then the definition of $\wedge_R^{\rk_R(M)}M$ is more delicate: see, \emph{e.g.}, \cite[Definition 1.3.1]{popescu} or \cite[p. 21]{Weibel}. In both cases, $\wedge_R^{\rk_R(M)}M$ is an invertible $R$-module, called the \emph{determinant of $M$}; see, \emph{e.g.}, \cite[Part 3]{Flach-tamagawa} for motivation for retaining the rank information. There is a (covariant) functor $\Proj_R^\mathrm{fg}\rightarrow \mathcal{L}_R$ defined by 
\[ M\longmapsto\Det_R(M)\defeq\Biggl(\,\bigwedge_R^{\rk_R(M)}\!M,\,\rk_R(M)\!\Biggr) \] 
on objects and in the obvious fashion on morphisms. Notice that $\Det_R(0)=(R,0)$. For notational convenience, for every object $M$ of $\Proj_R^\mathrm{fg}$ we set $\Det_R^{-1}(M)\defeq\Det_R(M)^{-1}$.

\begin{caveat}
In this paper, we frequently use expressions like ``$N$ is an $R$-submodule of $\Det_R(M)$'', meaning that $N$ is a submodule of the $R$-module underlying $\Det_R(M)$. A similar interpretation must be given to statements like ``The set $\star$ is a basis of $\Det_R(M)$ over $R$''.
\end{caveat}

It turns out that $\Det_R$ is multiplicative on short exact sequences: if we are given an exact sequence 
\[ 0\longrightarrow K\longrightarrow P\longrightarrow C\longrightarrow0 \]
in $\Proj_R^\mathrm{fg}$, then  
\begin{equation} \label{det-multiplicativity-eq}
\Det_R(P)\simeq\Det_R(K)\cdot\Det_R(C), 
\end{equation}
where the product on the right is defined as in \eqref{product-invertible-eq}. Moreover, if there is a short exact sequence of $R$-modules 
\begin{equation} \label{det1-eq}
0\longrightarrow P\longrightarrow Q\longrightarrow T\longrightarrow0
\end{equation}
with $P,Q$ objects of $\Proj_R^\mathrm{fg}$, then we define $\Det_R(T)\defeq\Det_R(Q)\cdot\Det^{-1}_R(P)$. One can check that $\Det_R(T)$ is independent of the choice of an exact sequence as in \eqref{det1-eq}.

\begin{remark/notation} \label{rem-not}
Suppose that $M$ is free of finite rank $r$ over $R$. If $\{m_1,\dots,m_r\}$ is an $R$-basis of $M$, then $\{m_1\wedge\dots\wedge m_r\}$ is an $R$-basis of $\Det_R(M)$. In this case, we denote by $(m_1\wedge\dots\wedge m_r)^{-1}$ the dual element of $m_1\wedge\dots\wedge m_r$, so that $\bigl\{(m_1\wedge\dots\wedge m_r)^{-1}\bigr\}$ is an $R$-basis of $\Det_R^{-1}(M)$. Furthermore, for every $n\geq1$, the natural pairing
\[ \bigwedge^nM\times\bigwedge^nM^*\longrightarrow R,\quad(m_1\wedge\dots\wedge m_n,\ell_1\wedge\dots\wedge\ell_n)\longmapsto\det\bigl(\ell_i(m_j)\bigr)  \]
is perfect, so it induces a canonical isomorphism 
\begin{equation} \label{noetherian-eq}
\bigwedge^nM^*\simeq\biggl(\bigwedge^nM\biggr)^{\!*}
\end{equation}
of $R$-modules, which can be regarded as an identification. With standard notation, the basis $\bigl\{(m_1\wedge\dots\wedge m_r)^{-1}\bigr\}$ of $\Det_R^{-1}(M)$ corresponds, under the identification \eqref{noetherian-eq}, to the basis $\{m_1^*\wedge\dots\wedge m_r^*\}$ of $\Det_R(M^*)$. Finally, there is obviously an equality $\rk_R(M)=\rk_R(M^*)$ of rank functions, so it follows that $\Det^{-1}_R(M)$ and $\Det_R(M^*)$ have the same underlying $R$-module but opposite rank functions: this is the reason why we use different symbols for the bases of $\Det_R^{-1}(M)$ and of $\Det_R(M^*)$ that are built out of a given basis of $M$ over $R$. It will be useful to keep this observation in mind when, later in this article, we will be computing with determinants of modules. 
\end{remark/notation}

\begin{remark}
Assume that $R$ is noetherian (which is always the case in the main body of the article). If $M$ is an object of $\Proj_R^\mathrm{fg}$, then $M^*$ is an object of $\Proj_R^\mathrm{fg}$. Since $R$ is noetherian, $M$ is finitely presented over $R$, so for every $\p\in\Spec(R)$ there is a canonical isomorphism $(M^*)_\p\simeq M_\p^*$ of $R_\p$-modules, where the right hand term is the $R_\p$-linear dual of $M_\p$ (see, \emph{e.g.}, \cite[p. 52, Corollary]{matsumura}). Thus, there is an equality $\rk_R(M)=\rk_R(M^*)$ of rank functions.
\end{remark}

\subsection{Determinants over PID's} \label{pid-subsubsec}

Let $R$ be a principal ideal domain, write $\mathrm{frac}(R)$ for its fraction field and let $T$ be a torsion $R$-module. Choose a resolution
\[ 0\longrightarrow P\overset{\phi}\longrightarrow Q\longrightarrow T\longrightarrow 0 \]
of $T$ with objects $P,Q$ of $\Proj_R^\mathrm{fg}$. Since $R$ is a PID, $P$ and $Q$ are free over $R$; furthermore, $T$ being torsion forces the ranks of $P$ and $Q$ to be equal. It turns out that 
\[ \Det_R(T)=\det(\phi)^{-1}\cdot R\subset\mathrm{frac}(R), \]
where $\det(\phi)$ is computed with respect to fixed bases of $P$ and $Q$. Equivalently, $\Det_R(T)$ is the inverse of the ideal that is generated by the product of the elementary divisors of the $R$-module $T$. In this setting, we usually write $\mathcal{I}_R(T)$ in place of $\Det_R(T)$ to stress the fact that $\Det_R(T)$ is a fractional ideal of $R$; we also set $\mathcal{I}^{-1}_R(T)\defeq\Det^{-1}_R(T)$. In particular, $\mathcal I_R\bigl((0)\bigr)=R$. See, \emph{e.g.}, \cite[Example 1.31]{Kings} for details in the $R=\Z_p$ case.

\begin{remark/notation} \label{rem-not2}
Let $r$ be the rank of $P$ and $Q$ and let $\{p_1,\dots,p_r\}$ (respectively, $\{q_1,\dots,q_r\}$) be a basis of $P$ (respectively, $Q$) over $R$. Keeping Remark/Notation \ref{rem-not} in mind, there is an equality 
\begin{equation} \label{rem-not-eq1}
\Det_R(Q)\cdot\Det_R^{-1}(P)=R\cdot(q_1\wedge\dots\wedge q_r)\cdot(p_1\wedge\dots\wedge p_r)^{-1},
\end{equation} 
where $(q_1\wedge\dots\wedge q_r)\cdot(p_1\wedge\dots\wedge p_r)^{-1}$ is just a shorthand for $(q_1\wedge\dots\wedge q_r)\otimes(p_1\wedge\dots\wedge p_r)^{-1}$. Finally, combining \eqref{det-multiplicativity-eq} and \eqref{rem-not-eq1} yields a natural isomorphism 
\[ \Det_R(T)\simeq R\cdot(q_1\wedge\dots\wedge q_r)\cdot(p_1\wedge\dots\wedge p_r)^{-1}, \]
which will often be viewed as an identification.
\end{remark/notation}

\subsection{Determinants over products of PID's} \label{pid-prod-subsubsec}

Let $R_1,\dots,R_n$ be PID's and consider their product $R\defeq\prod_{i=1}^nR_i$. For $i=1,\dots,n$ let $e_i\in R$ be the idempotent corresponding to $R_i$, so that there is an identification $R_i=e_iR$. Let $T$ be an $R$-module. For $i=1,\dots,n$ set $T_i\defeq e_iT$; equivalently, $T_i=T\otimes_RR_i$. The $R$-submodule $T_i$ of $T$ is naturally an $R_i$-module and there is a canonical identification
\[ T=\bigoplus_{i=1}^nT_i \]
of $R$-modules. Assume now that $T$ is finite; of course, this is tantamount to $T_i$ being finite for every $i=1,\dots,n$. Let $Q(R)$ be the total quotient ring of $R$ and set
\begin{equation} \label{det-splitting-eq}
\Det_R(T)\defeq\prod_{i=1}^n\Det_{R_i}(T_i)\subset\prod_{i=1}^n\mathrm{frac}(R_i)=Q(R),
\end{equation}
where $\Det_{R_i}(T_i)$ is defined as in \S \ref{pid-subsubsec}. As in the $n=1$ case treated above, we also set $\mathcal{I}_R(T)\defeq\Det_R(T)$ and $\mathcal{I}^{-1}_R(T)\defeq\Det^{-1}_R(T)$. In particular, $\mathcal I_R\bigl((0)\bigr)=R$. Finally, a \emph{fractional $R$-ideal} will be, by definition, a product of the form $I=I_1\times\dots\times I_n$ where $I_j$ is a fractional $R_j$-ideal for $j=1,\dots,n$; in this case, $\mathcal I_R(I)=\prod_{j=1}^n\mathcal I_{R_j}(I_j)$.

\begin{remark}
In the applications we have in mind, the $R_i$ will be discrete valuation rings (namely, they will be the completions of $\cO_F$ at prime ideals above a fixed prime number $p$), so $R$ will be regular. 
\end{remark}

\subsection{Determinants and base change}

If $R\rightarrow S$ is a ring homomorphism and $P$ is an object of $\Proj_R^\mathrm{fg}$, then $P\otimes_RS$ is an object of $\Proj_S^\mathrm{fg}$ and there is a base change isomorphism 
\begin{equation} \label{det-basechange-eq}
\Det_R(P)\otimes_RS\simeq\Det_S(P\otimes_RS) 
\end{equation}
of $S$-modules.

\subsection{Determinants of complexes}

Let $\Proj^\bullet_R$ denote the category of complexes of $R$-modules that are quasi-isomorphic to a bounded complex of $R$-modules in $\Proj_R^\mathrm{fg}$. For an object $C^\bullet$ in $\Proj^\bullet_R$, fix a quasi-isomorphism $\tilde{C}^\bullet\rightarrow C^\bullet$ with a bounded complex $\tilde{C}^\bullet$ of modules in $\Proj_R^\mathrm{fg}$ and define 
\begin{equation} \label{complex-determinant-eq}
\Det_R(C^\bullet)\defeq \prod_{i\in\Z}\Det_R^{-1}\bigl(\tilde{C}^i\bigr)
\end{equation}
(\cite[Definition 1.29]{Kings}). If an object $C^\bullet$ of $\Proj_R^\bullet$ has the property that $H^j(C^\bullet)$ is an object of $\Proj_R^\mathrm{fg}$ for all $j$, then there is an isomorphism 
\[ \Det_R(C^\bullet)\simeq\prod_{j\in\Z}\Det_R^{(-1)^j}\bigl(H^j(C^\bullet)\bigr) \]
of $R$-modules. 

\section{Remarks on Pontryagin duals} \label{pontryagin-appendix}

We gather some facts about Pontryagin duals that are used in the main body of the paper. These results are consequences of well-known properties of duals of local fields; however, for several of them we could not find a convenient reference in the literature, so we decided to collect them here for the reader's benefit.

\subsection{Generalities on Pontryagin duals}

Let $\TT\defeq\{z\in\C\mid |z|=1\}\simeq\R/\Z$ be the unit circle in the complex plane, viewed as a subgroup of $\C^\times$. The \emph{Pontryagin dual} of a locally compact, Hausdorff topological abelian group $G$ is the group
\begin{equation} \label{pontryagin-def-eq2}
G^\wedge\defeq\Hom_\cont(G,\TT) 
\end{equation}
of characters of $G$, \emph{i.e.}, continuous homomorphisms from $G$ to $\TT$, where $\TT$ is equipped with its natural complex topology (equivalently, the quotient topology of $\R/\Z$). In turn, $G^\wedge$ can be endowed with the compact-open topology. This definition of Pontryagin dual does not coincide, in general, with the one that was given in \S \ref{local-finite-subsec} for $\Z_p$-modules; however, it is well known that if $G$ is profinite, then the image of any continuous homomorphism $G\rightarrow\TT$ is finite (take, \emph{e.g.}, $n=1$ in \cite[Proposition 2.2]{hida-MFGC}), so $G^\wedge=\Hom_\cont(G,\Q/\Z)$. Of course, this equality is true also if $G$ is a torsion abelian group. Thus, if $G$ is either 
\begin{itemize}
\item a $\Z_p$-module that is a torsion abelian group
\end{itemize}
or
\begin{itemize}
\item a profinite $\Z_p$-module,
\end{itemize}
then $G^\wedge=\Hom_\cont(G,\Q_p/\Z_p)$. In other words, with notation as in \eqref{pontryagin-def-eq}, $G^\wedge=G^\vee$. 

\begin{remark} \label{pontryagin-rem} \label{pontryagin-rem2}
If $G$ is a profinite $\Z_p$-module, then every element of $\Hom_\cont(G,\Q_p/\Z_p)$ is $\Z_p$-linear. Conversely, if $G$ is a finitely generated $\Z_p$-module, then a $\Z_p$-linear homomorphism $G\rightarrow\Q_p/\Z_p$ is always continuous, so $G^\wedge=G^\vee=\Hom_{\Z_p}(G,\Q_p/\Z_p)$ in this case. More generally, suppose that $\KK$ is a finite extension of $\Q_p$ with valuation ring $\mathscr O$. As explained, \emph{e.g.}, in \cite[\S 2.9.1, \S 2.9.2]{Nek-Selmer}, if $G$ is a (co)finitely generated $\mathscr O$-module, then $G^\wedge=G^\vee\simeq\Hom_{\mathscr O}(G,\KK/\mathscr O)$.
\end{remark}

\subsection{Pontryagin duals of finite extensions of $\Q_p$} \label{local-fields-duals-subsec}

Let $\KK$ be a finite extension of $\Q_p$. Recall the definition of $\KK^\wedge$ from \eqref{pontryagin-def-eq2}. Fix a compatible system ${(\zeta_{p^n})}_{n\geq1}$ of $p$-power roots of unity: $\zeta_{p^n}\in\bar\Q$ is a primitive $p^n$-root of unity such that $\zeta_{p^{n+1}}^p=\zeta_{p^n}$ for all $n\geq1$; the standard choice is $\zeta_{p^n}\defeq e^{2\pi i/p^n}$ for all $n\geq1$. Let us define the (non-trivial) standard character $\chi_0\in\Q_p^\wedge$ by  
\begin{equation} \label{chi-0-eq}
\chi_0\Biggl(\sum_{k=-n}^{+\infty}a_kp^k\Biggr)\defeq\prod_{k=-n}^{-1}\zeta_{p^{|k|}}^{a_k}; 
\end{equation}
here $n\in\N$ and $a_k\in\Z$ for all $k\geq-n$. More succinctly, with the choice of roots of unity specified above, $\chi_0$ is the composition
\[ \chi_0=\Bigl(\Q_p\longepi\Q_p/\Z_p\longmono\Q/\Z\xrightarrow{e^{2\pi i(\cdot)}}\TT\Bigr). \]
Now fix a non-trivial $\varphi\in\KK^\wedge$. It turns out that for every $\psi\in\KK^\wedge$ there exists a unique $a_\varphi(\psi)\in\KK$ such that $\psi(x)=\varphi\bigl(a_\varphi(\psi)\cdot x\bigr)$ for all $x\in\KK$ and the map
\begin{equation} \label{pontryagin1} 
\KK^\wedge\overset\simeq\longrightarrow\KK,\quad\psi\longmapsto a_\varphi(\psi)
\end{equation}
 is an isomorphism of topological groups. See, \emph{e.g.}, \cite[\S 8.3, Proposition 1]{hida-elementary} and \cite[Ch. 7, Exercise 1]{RV} for details. 

\begin{remark}
The (non-trivial) standard character of $\KK$ is $\chi_0\circ\tr_{\KK/\Q_p}$, with $\chi_0$ as in \eqref{chi-0-eq}.
\end{remark}

Let $\KK^\vee$ be defined as in \eqref{pontryagin-def-eq}.

\begin{proposition} \label{pontryagin2}
There is an isomorphisms $\KK^\vee\simeq\KK^\wedge$ of topological groups. 
\end{proposition}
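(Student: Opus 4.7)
The plan is to exhibit a topological isomorphism $\Phi: \KK^\vee \to \KK^\wedge$ obtained from an explicit continuous injection $\iota: \Q_p/\Z_p \hookrightarrow \TT$, and to deduce bijectivity by combining the isomorphism in \eqref{pontryagin1} with the factorization of the standard character through $\Q_p/\Z_p$. The map $\iota$ sends $\bar x \in \Q_p/\Z_p$ to $e^{2\pi i\tilde x}$ for any lift $\tilde x$ under $\Q_p/\Z_p \hookrightarrow \Q/\Z$. It is continuous because $\Q_p/\Z_p$ carries the discrete topology ($\Z_p$ being open in $\Q_p$) and injective, with image the group $\Bmu_{p^\infty}$ of $p$-power roots of unity. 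From formula \eqref{chi-0-eq} one sees immediately that the standard character $\chi_0$ of $\Q_p$ factors as $\chi_0 = \iota \circ \pi$, where $\pi: \Q_p \twoheadrightarrow \Q_p/\Z_p$ is the canonical projection.

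First I would define $\Phi: \KK^\vee \to \KK^\wedge$ by $\Phi(\varphi) \defeq \iota \circ \varphi$; this is a continuous (postcomposition by a continuous map is continuous for the compact-open topology) injective homomorphism. For surjectivity, given $\psi \in \KK^\wedge$, I would apply \eqref{pontryagin1} with the additive character $\varphi_0 \defeq \chi_0 \circ \tr_{\KK/\Q_p}$: there is a unique $a \in \KK$ with $\psi(x) = \varphi_0(ax) = \iota\bigl(\pi(\tr_{\KK/\Q_p}(ax))\bigr)$ for every $x \in \KK$, so setting $\varphi(x) \defeq \pi(\tr_{\KK/\Q_p}(ax))$ produces an element of $\KK^\vee$ with $\Phi(\varphi) = \psi$.

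The hard part will be verifying that $\Phi$ is open; the subtlety is that $\iota$ is \emph{not} a topological embedding, as $\Bmu_{p^\infty}$ is dense in $\TT$. The key observation that bypasses this is that $\Q_p/\Z_p$ being discrete forces a fundamental system of neighborhoods of the identity in $\KK^\vee$ to be given by the open subgroups $W_n \defeq \{\varphi \in \KK^\vee : \varphi(p^{-n}\cO_\KK) = 0\}$, and moreover $\varphi(p^{-n}\cO_\KK)$ is automatically a \emph{finite subgroup} of $\Q_p/\Z_p$ (the subgroup $p^{-n}\cO_\KK$ is compact, so its image in the discrete target is compact, hence finite). To show $\Phi(W_n)$ is a neighborhood of $0$ in $\KK^\wedge$, I would pick a primitive $p$-th root of unity $\zeta_p$ and take $U \defeq \{z \in \TT : |z-1| < |1-\zeta_p|\}$, so $U$ meets $\Bmu_{p^\infty}$ only in $\{1\}$---since every non-trivial finite subgroup of $\Bmu_{p^\infty}$ contains $\Bmu_p \ni \zeta_p$. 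Then $V \defeq \{\psi \in \KK^\wedge : \psi(p^{-n}\cO_\KK) \subset U\}$ is open in $\KK^\wedge$, contains the identity, and for any $\psi = \Phi(\varphi) \in V$ the finite subgroup $\iota\bigl(\varphi(p^{-n}\cO_\KK)\bigr) \subset U \cap \Bmu_{p^\infty} = \{1\}$ forces $\varphi \in W_n$, yielding $V \subset \Phi(W_n)$. Since translations in the topological groups $\KK^\vee$ and $\KK^\wedge$ are homeomorphisms, this suffices to conclude that $\Phi$ is open, hence a topological isomorphism.
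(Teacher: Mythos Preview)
Your proof is correct and uses the same map as the paper: postcomposition with $\iota=\bar\chi_0:\Q_p/\Z_p\hookrightarrow\TT$. The paper obtains surjectivity more directly by noting that every $\chi\in\KK^\wedge$ has image in $\Bmu_{p^\infty}$ (since each compact open subgroup $p^{-n}\cO_\KK$ is pro-$p$), so $\bar\chi_0^{-1}\circ\chi$ furnishes an explicit two-sided inverse; your detour through \eqref{pontryagin1} and the trace works but is less economical. On the other hand, your openness argument supplies the topological detail the paper elides when asserting that both maps are ``topological group homomorphisms'': since $\bar\chi_0^{-1}$ is not continuous for the subspace topology on $\Bmu_{p^\infty}\subset\TT$, continuity of the inverse $\chi\mapsto\bar\chi_0^{-1}\circ\chi$ genuinely requires your ``no small subgroups'' observation, namely that the only finite subgroup of $\Bmu_{p^\infty}$ contained in a sufficiently small neighbourhood of $1\in\TT$ is trivial.
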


\begin{proof} Observe that $\Z_p$ is contained in the kernel of the character $\chi_0$ from \eqref{chi-0-eq}, so there is an induced map
\begin{equation} \label{pontryagin3}
\bar\chi_0:\Q_p/\Z_p\longrightarrow\TT,\quad[x]\longmapsto\chi_0(x), 
\end{equation}
where $[x]$ is the image of $x\in\Q_p$ in $\Q_p/\Z_p$. One can show that $\bar\chi_0$ yields an isomorphism of groups between $\Q_p/\Z_p$ and the subgroup $\Bmu_{p^\infty}\subset\TT$ of $p$-power roots of unity. In light of this fact, from here on we shall view the map $\bar\chi_0$ from \eqref{pontryagin3} as an isomorphism
\begin{equation} \label{pontryagin3-bis}
\bar\chi_0:\Q_p/\Z_p\overset\simeq\longrightarrow\Bmu_{p^\infty}. 
\end{equation}
Given $\varphi\in\KK^\vee$, set $\varphi^\wedge\defeq\bar\chi_0\circ\varphi\in\KK^\wedge$. On the other hand, given $\chi\in\KK^\wedge$, note that $\im(\chi)\subset\Bmu_{p^\infty}$ (the proof of \cite[\S 8.3, Proposition 1]{hida-elementary}, which treats the $\KK=\Q_p$ case, carries over \emph{verbatim} to our setting), so we can define $\chi^\vee\defeq\bar\chi_0^{-1}\circ\chi\in\KK^\vee$. Clearly, $\varphi\mapsto\varphi^\wedge$ and $\chi\mapsto\chi^\vee$ are topological group homomorphisms $\KK^\vee\rightarrow\KK^\wedge$ and $\KK^\wedge\rightarrow\KK^\vee$ that are inverse to each other. \end{proof}

Since we regard the character $\chi_0\in\Q_p^\wedge$ described in \eqref{chi-0-eq} as canonical, we shall tacitly view the isomorphism $\KK^\vee\simeq\KK^\wedge$ as an identification. In light of \eqref{pontryagin1}, it follows that for every non-trivial $\varphi\in\KK^\wedge$ there is an isomorphism of topological groups 
\begin{equation} \label{pontryagin4}
a_\varphi:\KK^\vee\overset\simeq\longrightarrow\KK.
\end{equation} 
Namely, for every $\psi\in\KK^\vee$ there is a unique $a_\varphi(\psi)\in\KK$ such that $\psi^\wedge(x)=\varphi\bigl(a_\varphi(\psi)\cdot x\bigr)$ for all $x\in\KK$; equivalently, $\psi(x)=\varphi^\vee\bigl(a_\varphi(\psi)\cdot x\bigr)$ for all $\psi\in\KK^\vee$ and $x\in\KK$, where $\varphi^\vee\in\KK^\vee$ corresponds to $\varphi$ under the isomorphism of Proposition \ref{pontryagin2}.  

In the special case $\KK=\Q_p$ we shall take $\varphi=\chi_0$ and set $a\defeq a_{\chi_0}$; here notice that $\chi_0^\vee$ is given by
\[ 
\chi_0^\vee\Biggl(\sum_{k=-n}^{+\infty}a_kp^k\Biggr)
=
\bar\chi_0^{-1}\Biggl(\prod_{k=-n}^{-1}\zeta_{p^{|k|}}^{a_k}\Biggr)\\
=
\Biggl[\sum_{k=-n}^{-1}a_kp^k\Biggr],
\]
where, as above, $[x]$ is the class of $x\in\Q_p$ in $\Q_p/\Z_p$ and $\bar\chi_0$ is the isomorphism from \eqref{pontryagin3-bis}. In other words, $\chi_0^\vee$ is just the canonical projection $\Q_p\twoheadrightarrow\Q_p/\Z_p$.

It is convenient to introduce the following notation: given $\psi\in\KK^\wedge$ and $c\in\KK$, we define $c\cdot\psi\in\KK^\wedge$ by $(c\cdot\psi)(x)\defeq\psi(cx)$ for all $x\in\KK$. This endows $\KK^\wedge$ with a $\KK$-vector space structure, and an analogous definition can be given for $\KK^\vee$. In particular, the image of $\psi\in\KK^\vee$ under the isomorphism $a_\varphi$ in \eqref{pontryagin4} can be described by requiring $a_\varphi(\psi)\in\KK$ to satisfy $\psi=a_\varphi(\psi)\cdot\varphi^\vee$.

\begin{proposition} \label{pontryagin-alt-prop}
There is an isomorphism $\KK^\vee\simeq\Hom_{\mathscr O}(\KK,\KK/\mathscr O)$ of topological groups.
\end{proposition}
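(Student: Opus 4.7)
The strategy is to realize both sides as copies of $\KK$ and identify them by transport of structure. Since Proposition~\ref{pontryagin2} and the isomorphism~\eqref{pontryagin4} already provide a topological isomorphism $\KK^\vee\simeq\KK$, the core task is to produce a parallel topological isomorphism $\KK\simeq\Hom_{\mathscr O}(\KK,\KK/\mathscr O)$ that is compatible with a natural comparison map built from the trace.

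First I would define the candidate comparison map as follows. Because every element of $\mathscr O$ is integral over $\Z_p$, one has $\tr_{\KK/\Q_p}(\mathscr O)\subseteq\Z_p$, so the trace descends to a continuous homomorphism $\bar\tr:\KK/\mathscr O\to\Q_p/\Z_p$. Set
\[ \Phi(\phi)\defeq\bar\tr\circ\phi. \]
A preliminary observation is that every $\phi\in\Hom_{\mathscr O}(\KK,\KK/\mathscr O)$ is automatically continuous: since $\mathscr O$ is open in $\KK$, the quotient $\KK/\mathscr O$ is discrete, and because $\phi(1)$ has finite $\mathscr O$-order, say annihilated by $\pi^N$, the open submodule $\pi^N\mathscr O\subset\KK$ is contained in $\phi^{-1}(0)$. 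Hence $\Phi$ is well defined with values in $\KK^\vee$.

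Next I would set up the comparison with $\KK$ by introducing the maps $\iota_1:\KK\to\Hom_{\mathscr O}(\KK,\KK/\mathscr O)$, $y\mapsto\tilde f_y$ with $\tilde f_y(x)\defeq[xy]$, and $\iota_2:\KK\to\KK^\vee$, $c\mapsto\bigl(x\mapsto[\tr(cx)]\bigr)$. By Proposition~\ref{pontryagin2} and \eqref{pontryagin4} applied to the standard character $\varphi=\chi_0\circ\tr_{\KK/\Q_p}$, the map $\iota_2$ is a topological isomorphism, and a direct calculation gives $\Phi\circ\iota_1=\iota_2$. So it suffices to prove that $\iota_1$ is a topological isomorphism. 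Applying the functor $\Hom_{\mathscr O}(-,\KK/\mathscr O)$ to the exact sequence $0\to\mathscr O\to\KK\to\KK/\mathscr O\to0$, and using that $\KK/\mathscr O$ is divisible and hence injective over the DVR $\mathscr O$, one obtains a short exact sequence
\[ 0\longrightarrow\End_{\mathscr O}(\KK/\mathscr O)\longrightarrow\Hom_{\mathscr O}(\KK,\KK/\mathscr O)\longrightarrow\KK/\mathscr O\longrightarrow0, \]
in which the right-hand arrow is evaluation at $1$. The identification $\End_{\mathscr O}(\KK/\mathscr O)\simeq\mathscr O$, with $\alpha\in\mathscr O$ acting as multiplication, follows by writing $\KK/\mathscr O=\varinjlim_n\pi^{-n}\mathscr O/\mathscr O\simeq\varinjlim_n\mathscr O/\pi^n\mathscr O$, noting that $\End_{\mathscr O}(\mathscr O/\pi^n\mathscr O)=\mathscr O/\pi^n\mathscr O$, and invoking completeness of $\mathscr O$ to pass to the inverse limit. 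One then checks that $\iota_1$ sits in the commutative diagram with exact rows
\[
\xymatrix@C=14pt@R=20pt{
0 \ar[r] & \mathscr O \ar[r] \ar@{=}[d] & \KK \ar[r] \ar[d]^-{\iota_1} & \KK/\mathscr O \ar[r] \ar@{=}[d] & 0 \\
0 \ar[r] & \mathscr O \ar[r] & \Hom_{\mathscr O}(\KK,\KK/\mathscr O) \ar[r] & \KK/\mathscr O \ar[r] & 0,
}
\]
where commutativity of the left square encodes the fact that, under the identification $\End_{\mathscr O}(\KK/\mathscr O)\simeq\mathscr O$, the element $\alpha\in\mathscr O$ corresponds precisely to $\tilde f_\alpha$. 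The five lemma then yields the desired bijection.

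The main obstacle is the computation $\End_{\mathscr O}(\KK/\mathscr O)\simeq\mathscr O$ together with the careful bookkeeping required to check that the transition maps in the two descriptions of this endomorphism ring match up, so that $\iota_1$ really makes the left square commute. Topological compatibility of $\iota_1$, with respect to the compact-open topology on the target, is a routine consequence of the continuity of multiplication in $\KK$ and of the projection $\KK\twoheadrightarrow\KK/\mathscr O$; combined with the topological nature of $\iota_2$, this yields that $\Phi$ is an isomorphism of topological groups.
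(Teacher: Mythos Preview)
Your proof is correct, but it takes a substantially different route from the paper. The paper's argument is a three-line limit argument: write $\KK=\varinjlim_{n}\varpi^{n}\mathscr O$ as a direct limit of free rank-one $\mathscr O$-modules, invoke Remark~\ref{pontryagin-rem2} to identify $(\varpi^{n}\mathscr O)^\vee\simeq\Hom_{\mathscr O}(\varpi^{n}\mathscr O,\KK/\mathscr O)$ at each finite stage, and pass to the inverse limit. No trace, no five lemma, no explicit comparison map.

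Your approach, by contrast, builds an explicit isomorphism $\Phi(\phi)=\bar\tr\circ\phi$ and verifies it by sandwiching it between the two already-known identifications of $\KK^\vee$ and $\Hom_{\mathscr O}(\KK,\KK/\mathscr O)$ with $\KK$ itself, the latter established via injectivity of $\KK/\mathscr O$, the computation $\End_{\mathscr O}(\KK/\mathscr O)\simeq\mathscr O$, and the five lemma. This is longer but has the virtue of producing a concrete, trace-based comparison map---useful if one ever needs to track the isomorphism explicitly rather than merely know it exists. The paper's proof is cleaner for the purpose at hand because it leverages Remark~\ref{pontryagin-rem2} (which has already done the work at the finitely generated level) and avoids recomputing anything.
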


\begin{proof} Let $\varpi$ be a uniformizer for $\mathscr O$, so that $\KK=\varinjlim_{n\in\Z}\varpi^n\mathscr O$. For every $n\in\Z$, the $\mathscr O$-module $\varpi^n\mathscr O$ is (topologically) free of rank $1$, so Remark \ref{pontryagin-rem2} ensures that there is an isomorphism $(\varpi^n\mathscr O)^\vee\simeq\Hom_{\mathscr O}(\varpi^n\mathscr O,\KK/\mathscr O)$ of topological groups. It follows that there are isomorphisms of topological groups
\[ \KK^\vee=\varprojlim\nolimits_{n\in\Z}(\varpi^n\mathscr O)^\vee\simeq\varprojlim\nolimits_{n\in\Z}\Hom_{\mathscr O}(\varpi^n\mathscr O,\KK/\mathscr O)=\Hom_{\mathscr O}(\KK,\KK/\mathscr O), \]
as was to be shown. \end{proof}

From here on, we view the isomorphism provided by Proposition \ref{pontryagin-alt-prop} as an identification $\KK^\vee=\Hom_{\mathscr O}(\KK,\KK/\mathscr O)$. 

\begin{remark} \label{vector-pontryagin-rem}
The isomorphisms in \eqref{pontryagin1} and Proposition \ref{pontryagin2} are $\KK$-linear, so $\KK^\wedge$ and $\KK^\vee$ are $\KK$-vector spaces of dimension $1$.
\end{remark}

\subsection{On Pontryagin duals of $\KK$-vector spaces} \label{vector-duals-subsec}

Let $\chi_\KK:\KK\twoheadrightarrow\KK/\mathscr O$ be the canonical projection and let $\bX_\KK\defeq\mathscr O\chi_\KK$ be the $\mathscr O$-submodule of $\KK^\vee$ generated by $\chi_\KK$.

\begin{lemma} \label{pontryagin9}
An element of $\KK^\vee$ is trivial on $\mathscr O$ if and only if it belongs to $\bX_\KK$.
\end{lemma}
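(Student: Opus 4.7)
The plan is to exploit the fact, recorded in Remark \ref{vector-pontryagin-rem}, that $\KK^\vee$ is a one-dimensional $\KK$-vector space, so that the single nonzero element $\chi_\KK$ already gives a $\KK$-basis of $\KK^\vee$. Concretely, since $\mathscr O\subsetneq\KK$, the projection $\chi_\KK:\KK\twoheadrightarrow\KK/\mathscr O$ is nonzero, hence any $\varphi\in\KK^\vee$ admits a unique expression $\varphi=c\cdot\chi_\KK$ for some $c\in\KK$, where the $\KK$-action on $\KK^\vee$ is, as defined in \S\ref{local-fields-duals-subsec}, given by $(c\cdot\chi_\KK)(x)=\chi_\KK(cx)$ for all $x\in\KK$.

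First, I will check the easy direction. If $\varphi\in\bX_\KK$, then $\varphi=c\cdot\chi_\KK$ with $c\in\mathscr O$. For every $x\in\mathscr O$ we then have $cx\in\mathscr O$, so that $\chi_\KK(cx)=0$ in $\KK/\mathscr O$; hence $\varphi(x)=0$, i.e., $\varphi$ is trivial on $\mathscr O$.

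Conversely, assume that $\varphi\in\KK^\vee$ satisfies $\varphi|_{\mathscr O}=0$. Write $\varphi=c\cdot\chi_\KK$ with $c\in\KK$ as above. For every $x\in\mathscr O$ we find $0=\varphi(x)=\chi_\KK(cx)$, which forces $cx\in\mathscr O$. Specializing to $x=1$ yields $c\in\mathscr O$, and therefore $\varphi=c\cdot\chi_\KK\in\mathscr O\chi_\KK=\bX_\KK$, as desired.

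The only non-routine ingredient is the preliminary step of identifying $\chi_\KK$ as a $\KK$-basis of $\KK^\vee$, which follows directly from Remark \ref{vector-pontryagin-rem} (and, equivalently, from the identification $\KK^\vee=\Hom_{\mathscr O}(\KK,\KK/\mathscr O)$ supplied by Proposition \ref{pontryagin-alt-prop}). Once this is in place, both implications reduce to the tautology that multiplication by $c\in\KK$ preserves $\mathscr O$ if and only if $c\in\mathscr O$, so no further obstacle is expected.
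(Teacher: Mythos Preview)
Your proof is correct and follows essentially the same approach as the paper: both write an arbitrary $\psi\in\KK^\vee$ uniquely as $c\cdot\chi_\KK$ with $c\in\KK$ (using that $\KK^\vee$ is one-dimensional over $\KK$) and then observe that $\psi$ vanishes on $\mathscr O$ precisely when $c\in\mathscr O$. Your direct evaluation at $x=1$ is a slight streamlining of the paper's commutative-square argument, but the underlying idea is identical.
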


\begin{proof} It is obvious that $\chi_\KK$ is trivial on $\mathscr O$, so every element of $\bX_\KK$ is trivial on $\mathscr O$. Now pick $\psi\in\KK^\vee$. Thanks to isomorphism \eqref{pontryagin1} and Proposition \ref{pontryagin2}, there exists a unique $a_\KK(\psi)\in\KK$ such that $\psi=a_\KK(\psi)\cdot\chi_\KK$. On the other hand, the square
\begin{equation} \label{square-pontryagin-eq}
\xymatrix@R=30pt@C=30pt{\KK\ar@{->>}[d]\ar[r]^-{a_\KK^{-1}}_-{\simeq}&\KK^\vee\ar@{->>}[d]\\
                \KK/\mathscr O\ar[r]^-\simeq&\mathscr O^\vee}
\end{equation}
is commutative. If $\psi$ is trivial on $\mathscr O$, then $\psi$ has trivial image in $\mathscr O^\vee$, and the commutativity of \eqref{square-pontryagin-eq} ensures that the image of $a_\KK(\psi)$ in $\KK/\mathscr O$ is trivial as well, \emph{i.e.}, $a_\KK(\psi)\in\mathscr O$. This shows that $\psi$ belongs to $\bX_\KK$. \end{proof}

Now let $V$ be a $\KK$-vector space of finite dimension, say $r$. Let $\mathscr B=\{v_1,\dots,v_r\}$ be a basis of $V$ over $\KK$; it induces topological isomorphisms $V\simeq\KK^r$ and
\begin{equation} \label{dual-iso-eq}
V^\vee\simeq(\KK^\vee)^r. 
\end{equation}
Analogously to what was done in \S \ref{local-fields-duals-subsec} for $\KK^\wedge$ and $\KK^\vee$, one can endow $V^\vee$ with a natural $\KK$-vector space structure, and then $V^\vee$ is $r$-dimensional over $\KK$ (\emph{cf.} Remark \ref{vector-pontryagin-rem}).

\begin{remark}
There is an isomorphism $V^\vee\simeq V^\wedge$ of topological groups, which we can view as a canonical identification.
\end{remark}

For each $i\in\{1,\dots,r\}$, write $v_i^\vee$ for the element of $V^\vee$ corresponding under isomorphism \eqref{dual-iso-eq} to the element of $(\KK^\vee)^r$ with all components equal to $0$ except the $i$-th that is equal to $\chi_\KK$. Equivalently, the elements $v_1,\dots,v_r$ give rise to the dual basis $\{v_1^*,\dots,v_r^*\}$ of the $\KK$-linear dual of $V$ by the recipe $v_i^*(v_j)\defeq\delta_{ij}$, where $\delta_{ij}$ is the Kronecker delta: composing the $\KK$-linear maps $v_i^*$ with $\chi_\KK$, we obtain the elements $v_i^\vee$ of the Pontryagin dual $V^\vee$.

\begin{lemma} \label{pontryagin-indep-lemma}
The elements $v_1^\vee,\dots,v_r^\vee$ are linearly independent over $\mathscr O$.
\end{lemma}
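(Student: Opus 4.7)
The plan is to establish the stronger statement that if $a_1,\dots,a_r \in \mathscr{O}$ satisfy $\sum_{i=1}^r a_i v_i^\vee = 0$ in $V^\vee$, then necessarily $a_1=\dots=a_r=0$. First I would unwind the definition of $v_i^\vee$: letting $\{v_1^\ast,\dots,v_r^\ast\}$ be the $\KK$-linear dual basis of $\mathscr{B}$, we have $v_i^\vee = \chi_\KK \circ v_i^\ast$, so that for any $v\in V$ written as $v=\sum_{j=1}^r c_j v_j$ with $c_j\in\KK$, one has $v_i^\vee(v)=\chi_\KK(c_i)$.

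Next, I would translate the $\mathscr{O}$-action on $V^\vee$ into these explicit terms. The $\mathscr{O}$-module structure on $V^\vee$ is the restriction of the $\KK$-vector space structure from Remark \ref{vector-pontryagin-rem} (via the analogue of the action $(c\cdot\psi)(x)=\psi(cx)$ defined for $\KK^\wedge$), which agrees with the $\mathscr{O}$-action coming from Pontryagin duality of the $\mathscr{O}$-module $V$. Thus for each $a\in\mathscr{O}$ and $\psi\in V^\vee$ we have $(a\psi)(v)=\psi(av)$. Consequently, the hypothesized vanishing of $\sum_{i=1}^r a_i v_i^\vee$ means that
\[
\chi_\KK\!\left(\sum_{i=1}^r a_i c_i\right)=0 \quad\text{in } \KK/\mathscr{O}
\]
for every tuple $(c_1,\dots,c_r)\in\KK^r$.

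Then I would specialize: fixing an index $j\in\{1,\dots,r\}$ and picking $c_j=c$ arbitrary in $\KK$ while setting $c_i=0$ for $i\neq j$, the displayed identity reduces to $\chi_\KK(a_j c)=0$, i.e.\ $a_j c\in\mathscr{O}$. Since this holds for \emph{every} $c\in\KK$, if $a_j$ were non-zero then $a_j\cdot\KK=\KK\subset\mathscr{O}$, contradicting $\KK\neq\mathscr{O}$. Hence $a_j=0$ for every $j$, as desired.

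The argument is short and presents no real obstacle; the only point that requires a moment's care is the verification that the $\mathscr{O}$-action appearing in the statement of linear independence is the restriction of the $\KK$-vector space structure on $V^\vee$ identified in Remark \ref{vector-pontryagin-rem}, so that the formula $(a\psi)(v)=\psi(av)$ is legitimate. Once this is noted, the remainder is a direct evaluation at basis-aligned vectors together with the elementary fact that $\KK$ is not contained in $\mathscr{O}$.
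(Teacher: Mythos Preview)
Your proof is correct and follows essentially the same approach as the paper: both arguments evaluate the vanishing linear combination at a scalar multiple of a single basis vector $v_j$ to isolate the coefficient $a_j$, then derive a contradiction from $a_j\neq 0$. The only cosmetic difference is that the paper picks the explicit test vector $\varpi^{-(\nu_j+1)}v_j$ (with $\nu_j$ the valuation of $a_j$) to exhibit a specific element on which the map is non-zero, whereas you phrase the same point as ``$a_jc\in\mathscr O$ for all $c\in\KK$ forces $a_j=0$''.
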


\begin{proof} Suppose there is an equality
\begin{equation} \label{lin-dep-eq2}
a_1v_1^\vee+\dots+a_rv_r^\vee=0
\end{equation} 
with $a_1,\dots,a_r\in\mathscr O$ and there is $j$ such that $a_j\neq0$; let $\nu_j\in\N$ be the valuation of $a_j$. Let $\varpi\in\mathscr O$ be a uniformizer and notice that $\chi_\KK$ is $\mathscr O$-linear. Then
\[ \bigl(a_1v_1^\vee+\dots+a_rv_r^\vee\bigr)\bigl(\varpi^{-(\nu_j+1)}v_j\bigr)=\chi_\KK\bigl(a_j\,\varpi^{-(\nu_j+1)}\bigr)\neq0, \]
where the inequality on the right is a consequence of the fact that $a_j\,\varpi^{-(\nu_j+1)}\notin\mathscr O$. This contradicts \eqref{lin-dep-eq2}. \end{proof}

Since $\KK$ is the quotient field of $\mathscr O$, it follows from Lemma \ref{pontryagin-indep-lemma} that $v_1^\vee,\dots,v_r^\vee$ are linearly independent over $\KK$ as well. Keeping in mind that $V^\vee$ is an $r$-dimensional $\KK$-vector space, it makes sense to give

\begin{definition} \label{dual-basis-pontryagin-def}
The \emph{Pontryagin dual basis} of $\mathscr B$ is the basis $\mathscr B^\vee\defeq\bigl\{v_1^\vee,\dots,v_r^\vee\bigr\}$ of $V^\vee$ as a $\KK$-vector space.
\end{definition}

Now let $T_{\mathscr B}$ be the $\mathscr O$-lattice in $V$ spanned by $\mathscr B$ and write $\Xi_{\mathscr B}$ for the $\mathscr O$-lattice in $V^\vee$ spanned by $\mathscr B^\vee$.

\begin{proposition} \label{pontryagin10}
An element of $V^\vee$ belongs to $\Xi_{\mathscr B}$ if and only if it is trivial on $T_{\mathscr B}$.
\end{proposition}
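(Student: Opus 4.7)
The plan is to reduce the statement to the one-dimensional case already handled in Lemma \ref{pontryagin9}, exploiting the fact that the basis $\mathscr{B}$ gives a topological decomposition of $V^\vee$ as a product of $r$ copies of $\KK^\vee$ under which $T_{\mathscr{B}}$ becomes $\mathscr{O}^r$ and $\Xi_{\mathscr{B}}$ becomes $\bX_\KK^r$.

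More concretely, I would first record that, since $\mathscr{B}^\vee$ is a $\KK$-basis of the $r$-dimensional $\KK$-vector space $V^\vee$ (Definition \ref{dual-basis-pontryagin-def}), any $\psi \in V^\vee$ can be uniquely written $\psi = c_1 v_1^\vee + \cdots + c_r v_r^\vee$ with $c_i \in \KK$, and $\psi$ belongs to $\Xi_{\mathscr{B}}$ precisely when every $c_i$ lies in $\mathscr{O}$. Next I would compute explicitly the values of such a $\psi$ on the basis vectors: by the very definition of $v_i^\vee$ as the element corresponding under the topological isomorphism $V^\vee \simeq (\KK^\vee)^r$ to the $r$-tuple with $\chi_\KK$ in the $i$-th slot and $0$ elsewhere, together with the description of the $\KK$-module structure on $V^\vee$ given before Proposition~\ref{pontryagin-alt-prop}, one finds
\[
\psi(v_j) \;=\; (c_j \cdot v_j^\vee)(v_j) \;=\; \chi_\KK(c_j)
\]
for each $j \in \{1,\dots,r\}$, where $\chi_\KK : \KK \twoheadrightarrow \KK/\mathscr{O}$ is the canonical projection.

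The two implications then follow readily. For the ``if'' direction, if $\psi \in \Xi_{\mathscr{B}}$, then each $c_i$ lies in $\mathscr{O}$, so for any $w = \sum_j a_j v_j \in T_{\mathscr{B}}$ (with $a_j \in \mathscr{O}$) the same computation gives $\psi(w) = \chi_\KK\!\bigl(\sum_i c_i a_i\bigr) = 0$, because $\sum_i c_i a_i \in \mathscr{O}$. For the ``only if'' direction, assume $\psi$ is trivial on $T_{\mathscr{B}}$; then in particular $\psi(v_j) = 0$ for every $j$, and the formula above forces $\chi_\KK(c_j) = 0$, i.e. $c_j \in \mathscr{O}$, for every $j$, so that $\psi \in \Xi_{\mathscr{B}}$.

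There is no real obstacle here: the result is essentially a componentwise restatement of Lemma \ref{pontryagin9}, and the only care needed is to make sure that the identification $V^\vee \simeq (\KK^\vee)^r$ induced by $\mathscr{B}$ is compatible both with the $\mathscr{O}$-lattice structures on source and target and with the $\KK$-action used to define $v_i^\vee$ from $\chi_\KK$. Once this bookkeeping is in place, the proof reduces to the single identity $\psi(v_j) = \chi_\KK(c_j)$.
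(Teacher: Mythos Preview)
Your proof is correct and is exactly what the paper intends: the paper's own proof reads simply ``Immediate from Lemma \ref{pontryagin9}'', and your argument is a careful unpacking of that sentence via the componentwise decomposition $V^\vee\simeq(\KK^\vee)^r$ induced by $\mathscr{B}$. The key identity $\psi(v_j)=\chi_\KK(c_j)$ that you isolate is precisely the reduction to the one-dimensional statement, so there is nothing to add.
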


\begin{proof} Immediate from Lemma \ref{pontryagin9}. \end{proof}

\bibliographystyle{amsplain}
\bibliography{Iwasawa}

\end{document}